\newtheorem{theorem}{Theorem}
\newtheorem{lemma}[theorem]{Lemma}
\newtheorem{lem}[theorem]{Lemma}
\newtheorem{proposition}[theorem]{Proposition}
\newtheorem{prop}[theorem]{Proposition}
\newtheorem{property}[theorem]{Property}
\newtheorem{conjecture}[theorem]{Conjecture}
\newtheorem{corollary}[theorem]{Corollary}
\newtheorem{cor}[theorem]{Corollary}
\theoremstyle{definition}
\newtheorem{definition}[theorem]{Definition}
\newtheorem{notation}[theorem]{Notation}
\theoremstyle{remark}
\newtheorem{example}[theorem]{Example}
\newtheorem{remark}[theorem]{Remark}
\newtheorem*{acknow}{Acknowledgments}
\def \la {\lambda}
\def\la {\lambda}
\def\charge{ {\rm {charge}}}
\def\shape{ {\rm {shape}}}
\newcommand{\Bdd}{\mathcal{B}}
\newcommand{\bdy}{\partial}
\newcommand{\Bot}{\mathrm{bot}}
\newcommand{\bp}{\mathbf{p}}
\newcommand{\bq}{\mathbf{q}}
\newcommand{\bqt}{\mathbf{\tilde{q}}}
\newcommand{\br}{\mathbf{r}}
\newcommand{\brt}{\mathbf{\tilde{r}}}
\newcommand{\bt}{\mathbf{t}}
\newcommand{\change}{\Delta}
\newcommand{\changerev}{\Delta}
\newcommand{\changecol}{\Delta_{\rm cs}}
\newcommand{\changerow}{\Delta_{\rm rs}}
\newcommand{\col}{\mathrm{col}}
\newcommand{\comp}{\mathrm{comp}}
\newcommand{\Core}{\mathcal{C}}
\newcommand{\cs}{\mathrm{cs}}
\newcommand{\diag}{d}
\newcommand{\dk}{\widetilde{\mathfrak{S}}}
\newcommand{\dkr}{\mathfrak{S}}
\newcommand{\dom}{\trianglerighteq}
\newcommand{\ecs}{\mathrm{ecs}}
\newcommand{\ers}{\mathrm{ers}}
\newcommand{\Gr}{\mathrm{Gr}}
\newcommand{\hs}{\hat{s}}
\newcommand{\ip}[2]{\langle #1\,,\,#2\rangle}
\newcommand{\Ind}{\mathrm{Ind}}
\newcommand{\Int}{\mathrm{Int}}
\newcommand{\Ksh}{\Pi}
\newcommand{\ksf}{s}
\newcommand{\ksgen}{{\mathfrak s}}
\newcommand{\La}{\Lambda}
\newcommand{\Left}{\mathrm{left}}
\newcommand{\Part}{\mathbb{Y}}
\newcommand{\Path}{\mathcal{P}}
\newcommand{\Patheq}{\overline{\Path}}
\newcommand{\per}{\mathrm{per}}
\newcommand{\pred}{\mathrm{pred}}
\newcommand{\Prim}{\mathrm{Prim}}
\newcommand{\pull}{\mathrm{pull}}
\newcommand{\push}{\mathrm{push}}
\newcommand{\Right}{\mathrm{right}}
\newcommand{\rk}{\mathrm{rank}}
\newcommand{\row}{\mathrm{row}}
\newcommand{\rs}{\mathrm{rs}}
\newcommand{\RT}{\mathrm{Tab}}
\newcommand{\ts}{\tilde{s}}
\newcommand{\SL}{\mathrm{SL}}
\newcommand{\Strong}{\mathrm{STab}}
\newcommand{\Str}{\mathrm{Strip}}
\newcommand{\sucs}{\mathrm{succ}}
\newcommand{\T}{\widetilde{\mathrm{Tab}}}
\newcommand{\tM}{\tilde{M}}
\newcommand{\tm}{\tilde{m}}
\newcommand{\tn}{\tilde{n}}
\newcommand{\tN}{\tilde{N}}
\newcommand{\tS}{\tilde{S}}
\newcommand{\tx}{\tilde{x}}
\newcommand{\Top}{\mathrm{top}}
\newcommand{\Weak}{\mathrm{WTab}}
\newcommand{\WS}{\mathrm{Weak}}
\newcommand{\wt}{\mathrm{wt}}
\newcommand{\Z}{\mathbb{Z}}
\newcommand{\defit}[1]{\textit{#1}}
\begin{document}

\title[$k$-shape poset]{$k$-shape poset and branching of $k$-Schur functions}

\author{Thomas Lam} \address
 {Department of Mathematics, University of Michigan, Ann Arbor MI 48109 USA.}
\email{tfylam@umich.edu}
\author{Luc Lapointe}
\address{Instituto de Matem\'{a}tica Y F\'{i}sica, Universidad de Talca, Casilla 747, Talca, Chile.}
\email{lapointe@inst-mat.utalca.cl}
\author{Jennifer Morse}
\address{Department of Mathematics, Drexel University, Philadelphia, PA 19104 USA.}
\email{morsej@math.drexel.edu}
\author{Mark Shimozono}
\address{Department of Mathematics, Virginia Tech,
         Blacksburg, VA 24061 USA.}
\email{mshimo@vt.edu}

\subjclass{Primary 05E05, 05E10; Secondary 14N35, 17B65}

\begin{abstract}
We give a combinatorial expansion of a Schubert homology class
in the affine Grassmannian $\Gr_{\SL_k}$ into Schubert homology
classes in $\Gr_{\SL_{k+1}}$. This is achieved by studying the
combinatorics of a new class of partitions called {\it $k$-shapes},
which interpolates between $k$-cores and $k+1$-cores.  We define a
symmetric function for each $k$-shape, and show that they expand
positively in terms of dual $k$-Schur functions.  We obtain an explicit
combinatorial description of the expansion of an ungraded $k$-Schur
function into $k+1$-Schur functions.  As a corollary, we give a
formula for the Schur expansion of an ungraded $k$-Schur function.
\end{abstract}

\maketitle

\section{Introduction}

\subsection{$k$-Schur functions and branching coefficients}

The theory of $k$-Schur functions arose from the study of Macdonald
polynomials and has since been connected to quantum and affine Schubert
calculus, $K$-theory, and representation theory.
The origin of the $k$-Schur functions  is related to Macdonald's positivity
conjecture, which asserted that in the expansion
\begin{equation}
\label{macdo}
H_\mu[X;q,t] = \sum_\lambda K_{\lambda\mu}(q,t) \,s_\lambda
\,,
\end{equation}
the coefficients $K_{\lambda\mu}(q,t)$, called $q,t$-Kostka polynomials,
belong to $\Z_{\ge0}[q,t]$.  Although the final piece
in the proof of this conjecture was made by Haiman \cite{H} using
representation theoretic and geometric methods, the long study of this
conjecture brought forth many further problems and theories.  The
study of the $q,t$-Kostka polynomials remains a matter of great interest.

It was conjectured in \cite{LLM} that by fixing an integer $k>0$, any
Macdonald polynomial indexed by $\lambda\in \Bdd^k$ (the set of partitions
such that $\lambda_1\leq k$) could be decomposed
as:
\begin{equation}
 \label{E:kSchurMac}
H_{\mu}[X;q,t\, ] = \sum_{\lambda\in\Bdd^k}
K_{\lambda \mu}^{(k)}(q,t) \, s_{\lambda}^{(k)}[X;t\, ] \quad\text{where}\quad
K_{\lambda \mu}^{(k)}(q,t) \in \Z_{\ge0}[q,t] \, ,
\end{equation}
for some symmetric functions
$s^{(k)}_\la[X;t]$ associated to sets of tableaux called
atoms.  Conjecturally equivalent characterizations of
$s^{(k)}_\la[X;t]$ were later given in \cite{LM:filtr,LLMS}
and the descriptions of \cite{LLM,LM:filtr,LLMS}
are now all generically called (graded) $k$-Schur functions.
A basic property of the $k$-Schur functions is that
\begin{equation}\label{E:klarge}
s_\la^{(k)}[X;t]=s_\la\quad\text{for $k\geq |\lambda|$}\,,
\end{equation}
and it thus follows that Eq.~\eqref{E:kSchurMac} significantly refines
Macdonald's original conjecture since the expansion
coefficient $K_{\lambda\mu}^{(k)}(q,t)$ reduces
to $K_{\lambda\mu}(q,t)$ for large $k$.

Furthermore, it was conjectured that the $k$-Schur functions
satisfy a highly structured filtration, which is our primary focus
here.  To be precise:
\begin{conjecture} \label{CJ:LM}
For $k'>k$ and partitions $\mu\in \Bdd^k$ and $\la\in\Bdd^{k'}$,
there are polynomials $\tilde b_{\mu\la}^{(k\to k')}(t)\in\Z_{\ge0}[t]$
such that
\begin{equation} \label{E:branch}
  \ksf_\mu^{(k)}[X;t] = \sum_{\la\in \Bdd^{k'}} \tilde b_{\mu\la}^{(k\to k')}(t)\,
\ksf_\la^{(k')}[X;t].
\end{equation}
\end{conjecture}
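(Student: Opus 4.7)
The plan is to reduce to a single step and then iterate. Since (\ref{E:branch}) composes associatively in $k$, a positive $k \to k+1$ expansion combined with a positive $(k+1) \to k+2$ expansion yields a positive $k \to k+2$ expansion, so it suffices to establish $\tilde b_{\mu\la}^{(k\to k+1)}(t) \in \Z_{\ge 0}[t]$ for every $k$. As a concrete first milestone I would specialize to $t=1$ (the ungraded $k$-Schur case): here $\ksf_\mu^{(k)}[X]$ is dual to the affine Schubert basis of $H_*(\Gr_{\SL_k})$, and under the natural embedding $\Gr_{\SL_k} \hookrightarrow \Gr_{\SL_{k+1}}$ the branching problem is equivalent to expanding a $k$-core Schubert class in the $(k+1)$-core basis.

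The combinatorial heart of the plan is to interpolate between the $k$-core and $(k+1)$-core models. I would introduce a new class of partitions, the \emph{$k$-shapes}, which contains both $k$-cores and $(k+1)$-cores as distinguished subclasses, and attach to each $k$-shape $\nu$ a symmetric function $g_\nu$. The $g_\nu$ should be designed so that $g_\nu$ reduces to the dual $k$-Schur function when $\nu$ is a $k$-core and to the dual $(k+1)$-Schur function when $\nu$ is a $(k+1)$-core. Equipping the set of $k$-shapes with a natural partial order -- the \emph{$k$-shape poset} -- whose saturated chains connect a $k$-core to a nearby $(k+1)$-core, one can attempt to prove by induction along the poset a local positivity rule: at each covering $\mu \cpreceq \nu$, the transition from $g_\mu$ to $g_\nu$ introduces nonnegative integer coefficients in the dual $k$-Schur basis. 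Accumulating these local positivities along a chain from a $(k+1)$-core down to its underlying $k$-cores produces a combinatorial $\Z_{\ge 0}$-expansion of dual $(k+1)$-Schurs in dual $k$-Schurs, and dualizing yields the branching formula for ungraded $k$-Schurs.

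The main obstacle is constructing $g_\nu$ and proving the local positivity. This demands a tableau model on $k$-shapes generalizing the strong/weak tableau models at the two extremes, together with a bijective or operator-theoretic argument showing that transitions across a covering in the $k$-shape poset correspond to nonnegative moves on these tableaux. Verifying that $g_\nu$ indeed specializes to the correct dual $k$-Schur (resp.\ dual $(k+1)$-Schur) at each extreme is a nontrivial compatibility check, likely requiring a Pieri- or Monk-type characterization of dual $k$-Schurs that transports cleanly to the interpolating objects. The fully graded ($t$-dependent) version of Conjecture~\ref{CJ:LM} carries an additional difficulty: one must define a statistic on $k$-shape tableaux that specializes to the charge on $k$-tableaux at both extremes, which appears substantially harder and likely falls outside the reach of the ungraded strategy sketched above.
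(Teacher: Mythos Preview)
The statement is a conjecture, and the paper does not prove it in full; only the ungraded specialization $t=1$ is established (Theorem~\ref{T:branch}), while the graded version is left as Conjecture~\ref{CJ:branch}. Your high-level plan for the $t=1$ case --- reduce to a single step $k\to k+1$, interpolate between $k$-cores and $(k+1)$-cores via a poset of ``$k$-shapes'', attach interpolating symmetric functions that specialize to dual $k$-Schur functions at the extremes, and dualize --- is precisely the architecture the paper adopts.

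There is, however, a substantive gap between your sketch and what the paper actually does. You describe the branching coefficient as coming from ``accumulating local positivities along a chain'' in the $k$-shape poset, with a ``local positivity rule at each covering''. This is not how the coefficient arises. In the paper, $b_{\mu\la}^{(k)}$ equals the number of \emph{equivalence classes} of paths from $\la$ to $\mu$ under a nontrivial diamond equivalence on paths (\S\ref{sec:equivalence}), not the number of saturated chains. Distinct paths can be equivalent, and without collapsing them the count is wrong. Correspondingly, the proof is not an induction along coverings but a global bijection (Theorem~\ref{T:pushpullbij}) between $k$-shape tableaux of shape $\la$ and pairs consisting of a weak tableau of some $k$-core shape $\mu$ together with a path class in $\Patheq^k(\la,\mu)$. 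Constructing this bijection (the pushout/pullback) and showing it respects the equivalence is the bulk of the paper; your ``tableau model on $k$-shapes'' and ``nonnegative moves'' gesture toward this but miss the central role of the path equivalence.

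On the graded case you are correct that a statistic is required. The paper does define a charge on moves (Definition~\ref{D:charge}) constant on equivalence classes, and conjectures it yields $b_{\mu\la}^{(k)}(t)$, but this is not proved; your assessment that the graded version lies beyond the ungraded machinery matches the paper's position.
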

\noindent
In particular, the Schur function expansion of a
$k$-Schur function is obtained from \eqref{E:klarge}
and \eqref{E:branch} by letting $k'\to\infty$.
The remarkable property described in Conjecture~\ref{CJ:LM}
provides a step-by-step approach to understanding $k$-Schur functions since
the polynomials
$\tilde b_{\mu\la}^{(k\to k')}(t)$ can be expressed positively in terms of the
\defit{branching polynomials} $$\tilde b^{(k)}_{\mu\la}(t) :=
\tilde b^{(k-1\to k)}_{\mu\la}(t)\,,$$
via iteration
(tables of branching polynomials are given in Appendix~\ref{Appendix}).

It has also come to light that ungraded $k$-Schur
functions (the case when $t=1$) are intimately tied to
problems in combinatorics, geometry, and representation theory
beyond the theory of Macdonald polynomials.
Thus, understanding the \defit{branching coefficients},
$$
\tilde b^{(k)}_{\mu\la}:= \tilde b^{(k)}_{\mu\la}(1)\,
$$
gives a step-by-step  approach to problems in
areas such as affine Schubert calculus and $K$-theory
(for example, see \S\S\ref{SS:geom}).

Our work here gives a combinatorial description for the
branching coefficients, proving Conjecture~\ref{CJ:LM}
when $t=1$.  We use the ungraded $k$-Schur functions
$s_\lambda^{(k)}[X]$ defined in \cite{LM:ktab}, which coincide
with those defined in \cite{LLMS} terms of strong $k$-tableaux.
Moreover, we conjecture a formula for the branching polynomials
in general.  The combinatorics behind these formulas
involves a certain {\it $k$-shape poset}.

\subsection{$k$-shape poset}
A key development in our work is the introduction of a new family
of partitions called $k$-shapes and a poset on these partitions
(see \S\ref{sec:kshapes} for full details and examples).
Our formula for the branching coefficients is given in
terms of path enumeration in the $k$-shape poset.

For any partition $\la$ identified by its
Ferrers diagram, we define its \defit{$k$-boundary} $\bdy\la$
to be
the cells of $\la$ with hook-length no greater
than $k$.  $\bdy \la$ is a skew shape, to which
we associate compositions $\rs(\la)$ and $\cs(\la)$, where $\rs(\la)_i$
(resp. $\cs(\la)_i$) is the number of cells in the $i$-th row (resp.
column) of $\bdy \la$.
A partition $\la$ is said to be a \defit{$k$-shape} if
both $\rs(\la)$ and $\cs(\la)$ are partitions.
The rank of $k$-shape $\la$ is defined to be $|\bdy \la|$,
the number of cells in its $k$-boundary.
$\Ksh^k$ denotes the set of all $k$-shapes.

We introduce a poset structure on $\Ksh^{k}$ where the partial order
is generated by distinguished downward relations in the poset called
\defit{moves} (Definition \ref{D:defmove}).  The set of $k$-shapes
contains the set $\Core^k$ of all $k$-cores (partitions with no
cells of hook-length $k$) {\it and} the set $\Core^{k+1}$
of $k+1$-cores.  Moreover, the maximal elements of $\Ksh^k$ are given by
$\Core^{k+1}$ and the minimal elements by $\Core^{k}$.
In Definition~\ref{D:charge} we give a charge statistic on moves
from which we obtain an equivalence relation on paths (sequences
of moves) in $\Ksh^k$; roughly speaking, two paths are equivalent if
they are related by a sequence of charge-preserving diamonds
(see Eqs.~\eqref{E:elemequiv}-\eqref{E:charge}).  Charge is thus
constant on equivalence classes of paths.

For $\la,\mu\in\Ksh^k$, $\Path^k(\la,\mu)$ is the set of paths in
$\Ksh^k$ from $\la$ to $\mu$ and $\Patheq^k(\la,\mu)$ is the set of
equivalence classes in $\Path^k(\la,\mu)$. Our main result is that
the branching coefficients enumerate these equivalence classes.  To
be precise, for $\la\in\Core^{k+1}$ and $\mu\in\Core^k$, set
\begin{align}
b_{\mu \lambda}^{(k)}(t) &:=\tilde b_{\rs(\mu)
\rs(\lambda)}^{(k)}(t)
\\
 b_{\mu \lambda}^{(k)} &:=\tilde b_{\rs(\mu) \rs(\la)}^{(k)}
\end{align}
so that
\begin{align} \label{E:bdef}
  s_\mu^{(k-1)}[X] = \sum_{\la\in \Core^{k+1}} b_{\mu\la}^{(k)}
  s_\la^{(k)}[X].
\end{align}
Hereafter, we will label $k$-Schur functions by cores rather than
$k$-bounded partitions using the bijection between $\Core^{k+1}$ and
$\Bdd^k$ given by the map $\rs$.

\begin{theorem} \label{T:branch}
For all $\la \in\Core^{k+1}$ and $\mu\in\Core^k$,
\begin{equation} \label{E:branching}
  b^{(k)}_{\mu\la}
=  |\Patheq^k(\la,\mu)|.
\end{equation}
\end{theorem}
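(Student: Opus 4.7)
The plan is to establish Theorem~\ref{T:branch} by passing to the dual basis and proving a cover/telescoping identity on symmetric functions attached to $k$-shapes. Writing $\mathfrak{F}^{(k)}_\lambda$ for the basis of symmetric functions dual to $\{s^{(k)}_\nu\}$ under the Hall pairing, the branching relation \eqref{E:bdef} is equivalent to
\begin{equation*}
\mathfrak{F}^{(k)}_\lambda[X] \;=\; \sum_{\mu\in \Core^k} b^{(k)}_{\mu\lambda}\; \mathfrak{F}^{(k-1)}_\mu[X],
\end{equation*}
so it suffices to show that this coefficient equals $|\Patheq^k(\lambda,\mu)|$.

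The first step is to exploit the symmetric function $g_s[X]$ attached to each $k$-shape $s$ that is announced in the abstract, and to verify the two boundary identifications: $g_\lambda = \mathfrak{F}^{(k)}_\lambda$ for a maximal $\lambda \in \Core^{k+1}$, and $g_\mu = \mathfrak{F}^{(k-1)}_\mu$ for a minimal $\mu \in \Core^k$. At each extreme the $k$-shape is literally a core and the tableau model underlying $g_s$ should degenerate to the strong (respectively weak) tableau description of the corresponding dual $k$-Schur function.

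The second step is a local covering identity along moves in the $k$-shape poset, namely $g_s = \sum_{s' \cpreceq s} g_{s'}$, summed over all covers in $\Ksh^k$. Iterating downward from a maximal $\lambda$ to a minimal $\mu$, every path in $\Path^k(\lambda,\mu)$ contributes one copy of $g_\mu = \mathfrak{F}^{(k-1)}_\mu$, giving a priori a cumulative coefficient of $|\Path^k(\lambda,\mu)|$. One then shows that the equivalence relation on paths collapses this count to $|\Patheq^k(\lambda,\mu)|$: two paths related by a charge-preserving diamond should correspond to interchanging independent moves and hence represent the same contribution on the tableau side. The charge statistic encodes exactly which identifications among paths are forced.

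The main obstacle will be the combination of the telescoping identity and the diamond collapse. Establishing $g_s = \sum_{s' \cpreceq s} g_{s'}$ calls for a genuine combinatorial bijection between the tableaux enumerated by $g_s$ and the disjoint union of tableaux enumerated across its covers, and this is where the fine interplay between $\rs(s)$, $\cs(s)$, and the definition of moves must be harnessed. Equally delicate is verifying that charge-preserving diamonds are exactly the identifications forced on the tableau side: no more, to avoid double-counting, and no fewer, to avoid missing coincidences. Once these two points are secured, grouping paths in $\Path^k(\lambda,\mu)$ by equivalence class yields the claimed formula, which via Hall duality is precisely $b^{(k)}_{\mu\lambda} = |\Patheq^k(\lambda,\mu)|$.
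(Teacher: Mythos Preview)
Your high-level setup is right and matches the paper: work on the dual side, attach a symmetric function to each $k$-shape, and check the two boundary identifications (at $\Core^{k+1}$ and $\Core^k$) so that the desired coefficient can be read off via the Hall pairing. This is exactly how the paper deduces Theorem~\ref{T:branch} from Theorem~\ref{T:ktab_decomp}. (One small correction: at both boundaries the tableau model degenerates to \emph{weak} tableaux for the dual $k$-Schur functions; strong tableaux describe the $k$-Schur functions themselves, not their duals.)

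The genuine gap is your proposed mechanism for getting from $g_\lambda$ down to the minimal elements. The local covering identity $g_s = \sum_{s'\cpreceq s} g_{s'}$ is not true and is not what the paper proves. If it held, iterating it from a maximal $\lambda$ to the minimal elements would force $g_\lambda = \sum_{\mu\in\Core^k} |\Path^k(\lambda,\mu)|\, g_\mu$, with the coefficient equal to the number of \emph{paths}, not equivalence classes. That number is strictly larger in general (already in the paper's example $\Ksh^3_5$ there are two equivalent paths from $(3,2,1)$ to $(4,2,1,1)$, so $|\Path|=2$ while $|\Patheq|=1$), so the identity cannot hold as stated. Your subsequent ``collapse by equivalence'' step therefore has no content: once a numerical identity is established you cannot retroactively shrink the coefficient.

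What the paper actually does is bypass any step-by-step telescoping. It constructs a single weight-preserving bijection
\[
\RT^k_\lambda \;\longrightarrow\; \bigsqcup_{\mu\in\Core^k}\Weak^k_\mu \times \Patheq^k(\lambda,\mu)
\]
(the pushout/pullback of Theorem~\ref{T:pushpullbij}), so that the equivalence classes appear directly as part of the output data rather than as an after-the-fact identification. The diamond equivalences are used not to collapse an overcount but to show that different orders of pushing a tableau along a path yield the same result, i.e., that the map to $\Patheq^k(\lambda,\mu)$ is well defined. This is the substantive combinatorial work, and your sketch does not provide a substitute for it.
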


We conjecture that the charge statistic on paths gives the branching polynomials.

\begin{conjecture} \label{CJ:branch}
For all $\la \in\Core^{k+1}$ and $\mu\in\Core^k$,
\begin{equation} \label{E:gradedbranching}
b^{(k)}_{\mu\la}(t)
  = \sum_{[\bp]\in \Patheq^k(\la,\mu)} t^{\charge(\bp)}.
\end{equation}
\end{conjecture}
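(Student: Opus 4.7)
The plan is to lift the proof of Theorem~\ref{T:branch} to the graded setting, tracking the $\charge$ statistic throughout. A natural strategy is to attach to each $k$-shape a $t$-weighted symmetric function satisfying a $\charge$-tracking recursion across moves, and then match the boundary values at the maximal and minimal elements of $\Ksh^k$ to the graded $k$-Schur and $(k-1)$-Schur functions.

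Concretely: First, for each $\nu \in \Ksh^k$ define a symmetric function $F_\nu(X;t)$ whose specialization $F_\nu(X;1)$ coincides with the $k$-shape symmetric function used in the proof of Theorem~\ref{T:branch}. Second, establish a graded local identity of the form
\[
F_\nu(X;t) \;=\; \sum_{\nu \,\cpreceq\, \nu'} t^{\charge(\nu' \to \nu)}\, F_{\nu'}(X;t),
\]
summed over covering moves $\nu' \to \nu$ in $\Ksh^k$, with $\charge$ as in Definition~\ref{D:charge}. Third, verify the boundary identities $F_\lambda(X;t) = \ksf_\lambda^{(k)}[X;t]$ for maximal $\lambda \in \Core^{k+1}$ and $F_\mu(X;t) = \ksf_\mu^{(k-1)}[X;t]$ for minimal $\mu \in \Core^{k}$. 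Iterating the local identity upward from $\mu$ then writes $\ksf_\mu^{(k-1)}[X;t]$ as a $\charge$-weighted sum over paths in $\Path^k(\lambda,\mu)$ for varying $\lambda \in \Core^{k+1}$, with coefficient $\ksf_\lambda^{(k)}[X;t]$. Fourth, apply the diamond relations~\eqref{E:elemequiv}--\eqref{E:charge}---suitably promoted from the $t=1$ case to genuine identities among the $F_\nu(X;t)$---to collapse this path sum onto a sum over equivalence classes in $\Patheq^k(\lambda,\mu)$, each contributing $t^{\charge(\bp)}$ for any representative $\bp$.

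The hard part is the combination of Steps~1 and~3: matching $F_\lambda(X;t)$ at the maximum with the graded $k$-Schur function $\ksf_\lambda^{(k)}[X;t]$. In the ungraded case several equivalent combinatorial descriptions of $\ksf_\lambda^{(k)}[X]$ are available---via $k$-tableaux, strong tableaux, or affine Schubert classes---so an ungraded $F_\lambda$ can be aligned with any one of them. The graded setting lacks this flexibility: the graded $k$-Schur functions are presently characterized only through the Macdonald expansion~\eqref{E:kSchurMac}, so proving Conjecture~\ref{CJ:branch} would in effect produce a new, $k$-shape-theoretic combinatorial formula for $\ksf_\lambda^{(k)}[X;t]$, itself a substantial open problem. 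This is presumably why~\eqref{E:gradedbranching} is presented only as a conjecture. A plausible route is to construct $t$-graded Pieri or affine nilCoxeter operators on symmetric functions that implement the $\charge$-weighted moves of $\Ksh^k$ and are compatible with Macdonald positivity, thereby supplying both the recursion of Step~2 and the boundary identification of Step~3 in one stroke.
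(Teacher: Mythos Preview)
This statement is Conjecture~\ref{CJ:branch}, which the paper does \emph{not} prove; there is no proof to compare against. The paper does sketch a strategy in \S\ref{Subssec:strong}: take the strong-tableau definition \eqref{E:strongdef} of $\ksf_\la^{(k)}[X;t]$ and construct a bijection \eqref{E:strongbij} between $\Strong_\mu^k$ and $\bigsqcup_\la \Strong_\la^{k+1}\times\Patheq^k(\la,\mu)$ satisfying ${\rm spin}(U')={\rm spin}(T')+\charge(\bp)$. This would require extending the notion of strong strip from cores to $k$-shapes, which is left open.

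Your plan is organized differently and has two concrete issues. First, the local recursion in Step~2, $F_\nu = \sum_{\nu\,\cpreceq\,\nu'} t^{\charge(\nu'\to\nu)} F_{\nu'}$, cannot hold as written: a $k$-shape may have several covers, and iterating such a relation counts each \emph{path} once rather than each \emph{equivalence class} once, so you would not land on $\Patheq^k(\la,\mu)$. The paper avoids this by \emph{defining} the graded homology $k$-shape function directly as the equivalence-class sum \eqref{E:ksf}; with that definition the maximal-boundary identity $\ksgen_\la^{(k)}[X;t]=\ksf_\la^{(k)}[X;t]$ for $\la\in\Core^{k+1}$ is trivial, and the entire content of the conjecture is the minimal-boundary identity $\ksgen_\mu^{(k)}[X;t]=\ksf_\mu^{(k-1)}[X;t]$ for $\mu\in\Core^k$. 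Second, your claim that graded $k$-Schur functions are ``presently characterized only through the Macdonald expansion~\eqref{E:kSchurMac}'' is not accurate: the paper records both the strong-tableau formula \eqref{E:strongdef} and the atom formula \eqref{E:atoms} as (conjecturally equivalent) combinatorial definitions, and its proposed attack on Conjecture~\ref{CJ:branch} targets the strong-tableau one specifically. So the obstruction is not the absence of a combinatorial model for $\ksf_\la^{(k)}[X;t]$, but rather the lack of a $k$-shape generalization of strong strips compatible with spin.
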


\subsection{$k$-shape functions}

The proof of Theorem \ref{T:branch} relies on the introduction
of a new family of symmetric functions
indexed by $k$-shapes.  These functions generalize the
dual (affine/weak) $k$-Schur functions studied in
\cite{LM:QC,Lam:affstan, LLMS}.

The images of the dual $k$-Schur functions
$\{\WS^{(k)}_\la[X]\}_{\la\in\Core^{k+1}}$ form a basis for the
quotient
\begin{align}\label{E:symmquotient}
\Lambda/I_k \quad \text{where}\quad I_k = \langle
m_\lambda:\lambda_1>k\rangle
\end{align}
of the space $\Lambda$ of symmetric functions over $\mathbb{Z}$,
while the ungraded $k$-Schur functions
$\{s^{(k)}_\la[X]\}_{\la\in\Core^{k+1}}$ form a basis for the
subring $\Lambda^{(k)}=\mathbb{Z}[h_1,\dots, h_k]$ of $\Lambda$. The
Hall inner product $\ip{\cdot}{\cdot}:\La \times \La \to \mathbb{Z}$
is defined by $\ip{m_\la}{h_\mu}=\delta_{\la\mu}$. For each $k$
there is an induced perfect pairing $\ip{\cdot}{\cdot}_k:
\Lambda/I_k \times \Lambda^{(k)} \to \mathbb{Z}$, and it was shown
in \cite{LM:QC} that
\begin{equation} \label{Eq:dual1}
\ip{\WS^{(k)}_\la[X]}{s_\mu^{(k)}[X]}_k = \delta_{\la\mu}
\end{equation}
Moreover, it was shown in \cite{Lam:kSchur} that
$\{\WS^{(k)}_\la[X]\}_{\la\in\Core^{k+1}}$ represents Schubert
classes in the cohomology of the affine Grassmannian
$\Gr_{SL_{k+1}}$ of $SL_{k+1}$.

The original characterization of $\WS^{(k)}_\la[X]$ was given in
\cite{LM:QC} using $k$-tableaux. A $k$-tableau encodes a sequence of
$k+1$-cores
\begin{equation}
\label{seqshapes}
\emptyset = \la^{(0)} \subset \la^{(1)}\subset \dotsm\subset
\la^{(N)}=\la\,,
\end{equation}
where $\la^{(i)}/\la^{(i-1)}$
are certain horizontal strips.
The weight of a $k$-tableau $T$ is
\begin{equation}
\label{kweight} \wt(T)=(a_1,a_2,\dotsc,a_N)\quad\text{where} \quad
a_i = |\bdy \la^{(i)}|-|\bdy \la^{(i-1)}|\,.
\end{equation}
For $\la$ a $k+1$-core, the dual $k$-Schur function is defined as
the weight generating function
\begin{equation}
\WS^{(k)}_\la[X] =
\sum_{T\in \Weak_\lambda^k} x^{\wt(T)}\,,
\end{equation}
where $\Weak_\lambda^k$ is the set of $k$-tableaux of shape
$\lambda$.

Here we consider \defit{$k$-shape tableaux}.
These are defined similarly, but now
we allow the shapes in \eqref{seqshapes} to be
$k$-shapes and $\la^{(i)}/\la^{(i-1)}$
are certain \defit{reverse-maximal} strips
(defined in \S\ref{sec:strips}).  The weight
is again defined by \eqref{kweight} and
for each $k$-shape $\la$, we then define the \defit{cohomology
$k$-shape function} $\dkr_\la^{(k)}$ to be the weight generating
function
\begin{align}
  \dkr_{\la}^{(k)}[X] &= \sum_{T\in \RT_{\la}^k} x^{\wt(T)}\,,
\end{align}
where $\RT_{\la}^k$ denotes the set of reverse-maximal $k$-shape
tableaux of shape $\la$.

We show the $k$-shape functions are symmetric and that when $\la$ is
a $k+1$-core,
\begin{equation}
\label{E:reduces} \WS_\la^{(k)}[X]=\dkr_\la^{(k)}[X] \mod I_{k-1}\,,
\end{equation}
(see Proposition~\ref{L:maxcore}).
We give a combinatorial expansion of any $k$-shape function in terms
of dual $(k-1)$-Schur functions.

\begin{theorem} \label{T:ktab_decomp} For $\la\in\Ksh^k$, the
cohomology $k$-shape function $\dkr_\la^{(k)}[X]$ is a symmetric
function with the decomposition
\begin{align}\label{E:ktabdecomp}
\dkr_\la^{(k)}[X] = \sum_{\mu\in \Core^k} |\Patheq^k(\la,\mu)|\,
\WS_\mu^{(k-1)}[X]\,.
\end{align}
\end{theorem}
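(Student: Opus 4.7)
The plan is to prove Theorem~\ref{T:ktab_decomp} by producing a weight-preserving bijection
\[\RT_\la^k \;\longleftrightarrow\; \bigsqcup_{\mu\in\Core^k} \Patheq^k(\la,\mu)\times \Weak_\mu^{k-1},\]
from which \eqref{E:ktabdecomp} follows by summing over weights and applying the definition of $\WS_\mu^{(k-1)}[X]$ as the generating function of $(k{-}1)$-tableaux. Before constructing the bijection I would verify that $\dkr_\la^{(k)}[X]$ is symmetric. The natural mechanism is a Bender--Knuth-style involution on $\RT_\la^k$ that interchanges the multiplicities of two consecutive values $i,i{+}1$ in $\wt(T)$. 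Each value in a $k$-shape tableau encodes a reverse-maximal strip, so the involution reduces to a local commutation lemma for two consecutive reverse-maximal strips $\la^{(i)}/\la^{(i-1)}$ and $\la^{(i+1)}/\la^{(i)}$, replacing the intermediate $k$-shape $\la^{(i)}$ by a suitable alternative. This commutation should follow from the structural analysis of reverse-maximal strips developed in~\S\ref{sec:strips}.

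With symmetry in hand, I would build the bijection by a top-down peeling procedure. If $\la\notin\Core^k$, then the topmost strip $\la/\la^{(N-1)}$ should single out a canonical downward move $\la\cpreceq\tilde\la$ in $\Ksh^k$; pushing this move through $T$ yields $\tilde T\in\RT_{\tilde\la}^k$ of the same weight and records $\la\to\tilde\la$ as the initial step of a path $\bp$. Iterating, one obtains a descending path in $\Ksh^k$ terminating at some $\mu\in\Core^k$ together with a residual tableau $T''\in\RT_\mu^k$. Since $\mu$ is a minimal element of $\Ksh^k$, a reverse-maximal $k$-shape strip on $\mu$ should coincide with a horizontal $(k{-}1)$-strip of the $k$-core $\mu$, identifying $T''$ with an element of $\Weak_\mu^{k-1}$. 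Weight preservation at each step is then automatic.

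The main obstacle will be showing that this construction descends to the equivalence classes $\Patheq^k(\la,\mu)$. A priori, distinct admissible initial moves give different peeling sequences; one must check that any two such sequences differ only by a composition of charge-preserving diamonds, so that they produce the same class $[\bp]\in\Patheq^k(\la,\mu)$ and the same residual $T''$, and conversely that every pair $([\bp],T'')$ arises from a unique $T\in\RT_\la^k$. This reduces to a diamond lemma at the tableau level: whenever two downward moves out of $\la$ fit into a charge-preserving diamond of Eqs.~\eqref{E:elemequiv}--\eqref{E:charge}, the corresponding one-step reductions of any $T\in\RT_\la^k$ must commute. Verifying this confluence --- and confirming that the moves extracted by the peeling are precisely the admissible moves of Definition~\ref{D:defmove} rather than arbitrary covers in $\Ksh^k$ --- will be the core technical burden of the proof.
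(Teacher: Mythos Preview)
Your overall architecture --- a weight-preserving bijection
\[
\RT_\la^k \;\longleftrightarrow\; \bigsqcup_{\mu\in\Core^k} \Patheq^k(\la,\mu)\times \Weak_\mu^{k}
\]
together with a confluence/diamond argument showing the construction descends to equivalence classes --- is exactly what the paper does: Theorem~\ref{T:ktab_decomp} is obtained by specializing the pushout bijection of Theorem~\ref{T:pushpullbij} to $\nu=\emptyset$, and the confluence you anticipate is precisely Proposition~\ref{P:pushseqequiv}. So the skeleton is right, but two points deserve correction.

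First, your proposed independent proof of symmetry via a Bender--Knuth involution is unnecessary and probably harder than the route the paper takes. The paper simply observes that once \eqref{E:ktabdecomp} is established, symmetry is inherited from the known symmetry of the dual $(k{-}1)$-Schur functions $\WS_\mu^{(k-1)}[X]$. No commutation lemma for adjacent reverse-maximal strips is needed (and none is proved).

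Second, and more substantively, your mechanism for producing the path is not quite right. You write that ``the topmost strip $\la/\la^{(N-1)}$ should single out a canonical downward move $\la\to\tilde\la$''; but strips and moves are genuinely different objects (strips change rank, moves do not), and a reverse-maximal strip does not by itself pick out a move. In the paper, the path is generated in the opposite direction: one processes the strips of $T$ from the \emph{bottom} up, at each stage \emph{maximizing} the current strip --- this maximization is what produces augmentation moves --- and then pushing the accumulated path through the next strip via the pushout of \S\ref{sec:rowpushout}--\S\ref{sec:columnpushout}. The residual tableau one lands on is a \emph{maximal} (not reverse-maximal) $k$-shape tableau on a $k$-core, and Proposition~\ref{L:maxcore}(1) is what identifies it with a weak tableau in $\Weak_\mu^{k}$. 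Your claim that a reverse-maximal strip on a $k$-core coincides with a weak $(k{-}1)$-strip is not what is used (Proposition~\ref{P:reversemaxcore} concerns $(k{+}1)$-cores, not $k$-cores). Once you adopt the correct mechanism, the ``core technical burden'' you identify --- that different choices of move yield equivalent outputs --- is exactly the content of \S\ref{sec:pushaug}--\S\ref{sec:commutingcube}, and your instinct that this is where the real work lies is correct.
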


It is from this theorem that we deduce Theorem \ref{T:branch}.
Letting $\la\in \Core^{k+1}$ and $\mu\in \Core^k$, we have
\begin{align*}
  b_{\mu\la}^{(k)} &= \ip{\WS^{(k)}_\la[X]}{s_\mu^{(k-1)}[X]}_{k} \\
&=\ip{\WS^{(k)}_\la[X]}{s_\mu^{(k-1)}[X]}_{k-1} \\
  &= \ip{\dkr^{(k)}_\la[X]}{s_\mu^{(k-1)}[X]}_{k-1} \\
  &= \Patheq^k(\la,\mu)
\end{align*}
using \eqref{E:bdef}, \eqref{Eq:dual1} for $k-1$, \eqref{E:reduces},
and Theorem \ref{T:ktab_decomp}.

A (homology) $k$-shape function can also be defined for each
$k$-shape $\mu$ by
\begin{equation}\label{E:ksf}
\ksgen_\mu^{(k)}[X;t] = \sum_{\la\in \Core^{k+1}}\sum_{[\bp]\in
\Patheq^k(\la,\mu)} t^{\charge(\bp)}\, \ksf^{(k)}_\la[X;t]\,,
\end{equation}
and its ungraded version is
$\ksgen_\mu^{(k)}[X]:=\ksgen_\mu^{(k)}[X;1]$. We trivially have from
this definition that $\ksgen_\mu^{(k)}[X] = \ksf_\mu^{(k)}[X]$ when
$\mu \in \Core^{k+1}$. Further, from \eqref{E:ksf} at $t=1$,
Theorem~\ref{T:branch} and \eqref{E:bdef}, we have that
\begin{equation} \label{Eq:kmoins1}
  \ksgen_\mu^{(k)}[X] = \ksf_\mu^{(k-1)}[X]\qquad\text{for $\mu\in
  \Core^k$.}
\end{equation}
The Pieri rule for ungraded homology $k$-shape functions is given by
\begin{theorem}\label{T:Pieri}
For $\la \in
\Ksh^k$ and $r \leq k-1$, one has
$$
h_r[X] \, \ksgen^{(k)}_\la[X] = \sum_{\nu \in \Ksh^k} \ksgen^{(k)}_\nu[X]
$$
where the sum is over maximal strips $\nu/\la$ of rank $r$.
\end{theorem}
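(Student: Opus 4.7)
The plan is to derive the Pieri rule for the homology $k$-shape functions by expanding $\ksgen^{(k)}_\la[X]$ via its defining formula \eqref{E:ksf} (at $t=1$) as a nonnegative integer combination of $k$-Schur functions indexed by $(k+1)$-cores, applying the known ungraded Pieri rule for $k$-Schur functions to each summand, and then matching coefficients of $\ksf^{(k)}_{\nu'}[X]$ on both sides of the desired identity.

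Concretely,
\begin{align*}
h_r[X]\cdot \ksgen^{(k)}_\la[X] = \sum_{\nu\in\Core^{k+1}} |\Patheq^k(\nu,\la)|\; h_r[X]\cdot \ksf^{(k)}_\nu[X],
\end{align*}
and the (ungraded) $k$-Schur Pieri rule expands $h_r[X]\cdot \ksf^{(k)}_\nu[X]$ as $\sum_{\nu'}\ksf^{(k)}_{\nu'}[X]$ over $(k+1)$-cores $\nu'$ obtained from $\nu$ by a weak horizontal $r$-strip. A preliminary step is to verify that, when $\nu,\nu'\in\Core^{k+1}$, these weak horizontal $r$-strips coincide with the maximal rank-$r$ strips of \S\ref{sec:strips}, so that both sides of the Pieri identity are expressed in the basis $\{\ksf^{(k)}_{\nu'}[X]\}$. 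Equating coefficients of $\ksf^{(k)}_{\nu'}[X]$ then reduces the theorem to the combinatorial identity
\begin{align*}
\sum_{\nu\in\Core^{k+1}:\,\nu'/\nu\text{ max strip of rank }r} |\Patheq^k(\nu,\la)| \;=\; \sum_{\mu\in\Ksh^k:\,\mu/\la\text{ max strip of rank }r} |\Patheq^k(\nu',\mu)|,
\end{align*}
for every $\nu'\in\Core^{k+1}$.

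The heart of the proof is a bijection $\Phi$ realizing this identity, built from an atomic commutation diamond: given a single move $\eta\to\eta_1$ in $\Ksh^k$ together with a maximal rank-$r$ strip $\eta\subset\eta'$, produce a canonical maximal rank-$r$ strip $\eta_1\subset\eta_1'$ and a move $\eta'\to\eta_1'$ making a commutative square (and symmetrically for the inverse construction). Iterating this local commutation transports a pair (path $\nu\to\cdots\to\la$, strip $\nu\subset\nu'$) into a pair (path $\nu'\to\cdots\to\mu$, strip $\la\subset\mu$), preserving rank throughout. The main obstacle is the local commutation itself: a move is prescribed by specific row- and column-shape changes (Definition~\ref{D:defmove}) while a maximal strip is a skew shape with its own constraints from \S\ref{sec:strips}, so a geometric case analysis based on the relative position of the strip cells and the cells affected by the move is required. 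Once the local commutation is canonical, $\Phi$ automatically respects the generating diamond equivalences from \eqref{E:elemequiv}, so it descends to a bijection between the disjoint unions $\bigsqcup_\nu \Patheq^k(\nu,\la)$ and $\bigsqcup_\mu \Patheq^k(\nu',\mu)$, completing the proof of Theorem~\ref{T:Pieri}.
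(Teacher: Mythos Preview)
Your overall structure matches the paper's proof exactly: expand $\ksgen^{(k)}_\la$ via \eqref{E:ksf} at $t=1$, apply the $k$-Schur Pieri rule, and reduce to a combinatorial identity between path counts. The paper then simply invokes Proposition~\ref{L:maxcore} and Theorem~\ref{T:pushpullbij} to finish. What you are proposing to build as the ``atomic commutation diamond'' is precisely the pushout of \S\ref{sec:rowpushout}--\S\ref{sec:columnpushout}, and the iterated bijection $\Phi$ is Theorem~\ref{T:pushpullbij} itself. So the paper does not redo this work inside the proof of Theorem~\ref{T:Pieri}; it cites the main bijection.

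Two genuine gaps in your sketch, should you try to carry it out independently. First, the identification on the $(k+1)$-core side is not ``weak strips $=$ maximal strips'' but ``weak strips $=$ \emph{reverse} maximal strips'' (Proposition~\ref{L:maxcore}(2)); since $\nu'\in\Core^{k+1}$ is maximal in $\Ksh^k$, every strip $\nu'/\nu$ is trivially maximal, so your condition as written does not force $\nu\in\Core^{k+1}$. The pushout/pullback bijection of Theorem~\ref{T:pushpullbij} runs between reverse-maximal strips on one side and maximal strips on the other, which is exactly what the third equality in the paper's proof needs. Second, and more seriously, the assertion that ``once the local commutation is canonical, $\Phi$ automatically respects the generating diamond equivalences'' is false. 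A canonical local rule does not by itself descend to equivalence classes; one must show that pushing out along two diamond-equivalent paths yields equivalent outputs. This is Proposition~\ref{P:pushseqequiv}, whose proof (the commuting-cube analysis of \S\ref{sec:commutingcube}) is one of the most delicate parts of the paper and is in no sense automatic. Relatedly, a single pushout need not produce a maximal strip; augmentation moves must be interspersed (Proposition~\ref{P:canonicalpushseq}), and one must additionally verify that different choices of augmentation give equivalent output paths (Proposition~\ref{P:commuaugmen}).
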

When $\la$ is a $k$-core, Theorem \ref{T:Pieri} implies the Pieri
rule for $(k-1)$-Schur functions proven in \cite{LM:ktab}.

Here we have introduced the cohomology $k$-shape functions as the
generating function of tableaux that generalize $k$-tableaux
(those defining the dual $k$-Schur functions).  There is another
family of ``strong $k$-tableaux" whose generating functions
are $k$-Schur functions \cite{LLMS}. The generalization of
this family to give a direct characterization of homology $k$-shape
functions remains an open problem
(see \S\S\ref{Subssec:strong} for further details).


Theorems~\ref{T:ktab_decomp} and \ref{T:Pieri} are proved
using an explicit bijection (Theorem \ref{T:pushpullbij}):
\begin{equation}\label{E:weakbij}
\begin{split}
\RT^k_\la &\longrightarrow \bigsqcup_{\mu\in\Core^k} \Weak^k_\mu \times \Patheq^{k}(\la,\mu) \\
T &\longmapsto (U,[\bp])
\end{split}
\end{equation}
such that $\wt(T)=\wt(U)$.
The bulk of this article is in establishing this bijection, which
requires many intricate details.  See \S\S\ref{outline} for pointers
to the highlights of our development.

\subsection{Geometric meaning of branching coefficients}\label{SS:geom}

It is proven in
\cite{Lam:kSchur} that the ungraded $k$-Schur functions are
Schubert classes in the homology of the affine Grassmannian
$\Gr_{SL_{k+1}}$ of $SL_{k+1}$.
The ind-scheme $\Gr_{SL_{k+1}}$ is an affine Kac-Moody homogeneous
space and the homology ring $H_*(\Gr_{\SL_{k+1}})$ has a basis of
fundamental homology classes $[X_\la]_*$ of Schubert varieties
$X_\la\subset\Gr_{\SL_{k+1}}$, and $H^*(\Gr_{\SL_{k+1}})$ has the
dual basis $[X_\la]^*$, where $\la$ runs through the set of
$k+1$-cores.

There is a weak homotopy equivalence between $\Gr_{\SL_{k+1}}$ and
the topological group $\Omega SU_{k+1}$ of based loops $(S^1,1)\to(SU_{k+1},\text{id})$ into $SU_{k+1}$.  This induces isomorphisms of dual Hopf algebras
$H_*(\Omega SU_{k+1})\cong H_*(\Gr_{\SL_{k+1}})$ and $H^*(\Omega SU_{k+1})\cong H^*(\Gr_{SL_{k+1}})$.
The Pontryagin product in $H_*(\Omega SU_{k+1})$ is induced by the product in the group $\Omega SU_{k+1}$.

Using Peterson's characterization of the Schubert basis of $H_*(\Gr_{SL_{k+1}})$
and the definition of \cite{LM:ktab} for $s_\la^{(k)}[X]$,
it is shown in \cite{Lam:kSchur} that there is a Hopf algebra isomorphism
\begin{equation}
\label{E:homiso}
\begin{aligned}
    H_*(\Gr_{\SL_{k+1}}) \cong H_*(\Omega SU_{k+1})&\overset{j^{(k)}}\longrightarrow \Z[h_1,h_2,\dotsc,h_k] \subset \Lambda\\
[X_\la]_*   &\longmapsto s_\la^{(k)}[X]
\end{aligned}
\end{equation}
mapping homology Schubert classes to $k$-Schur functions.

Let $i^{(k)}: \Omega SU_k \to \Omega SU_{k+1}$ be the inclusion map and
$i^{(k)}_*:H_*(\Omega SU_k)\to H_*(\Omega SU_{k+1})$ the induced map on homology. We have
the commutative diagram
\begin{equation}
\label{E:branchiso}
\begin{diagram}
\node{H_*(\Omega SU_k)} \arrow{e,t}{j^{(k-1)}} \arrow{s,t}{i^{(k)}_*} \node{\Z[h_1,h_2,\dotsc,h_{k-1}]} \arrow{s,b}{\text{incl}} \\
\node{H_*(\Omega SU_{k+1})} \arrow{e,b}{j^{(k)}} \node{\Z[h_1,h_2,\dotsc,h_k]}
\end{diagram}
\end{equation}
Then
\begin{equation}\label{E:geombranchhom}
  i^{(k)}_*([X_\mu]_*) = \sum_{\la\in\Core^{k+1}} b^{(k)}_{\mu\la} [X_\la]_* \qquad\text{for $\mu\in\Core^k$}.
\end{equation}
It is shown using geometric techniques that $b^{(k)}_{\mu\la} \in \Z_{\geq 0}$ in \cite{Lam:note}.

This entire picture can be dualized.  There is a Hopf algebra isomorphism
\cite{Lam:kSchur}
\begin{equation}
\label{E:cohomiso}
\begin{aligned}
 H^*(\Gr_{\SL_{k+1}}) &\longrightarrow \La/I_k  \\
 [X_\la]^*&\longmapsto \WS_\la^{(k)}[X]
\end{aligned}
\end{equation}
mapping cohomology Schubert classes to dual $k$-Schur functions.
 Writing $i^{(k)*}:H^*(\Omega SU_{k+1})\to H^*(\Omega SU_k)$
and $\pi^{(k)}:\La/I_k \to \La/I_{k-1}$ for the natural projection, we have the commutative diagram
\begin{equation}
\begin{diagram}
\node{H^*(\Omega SU_{k+1})} \arrow{e,t}{\cong} \arrow{s,t}{i^{(k)*}} \node{\La/I_k} \arrow{s,b}{\pi^{(k)}} \\
\node{H^*(\Omega SU_k)} \arrow{e,b}{\cong} \node{\La/I_{k-1}}
\end{diagram}
\end{equation}

\begin{equation}\label{E:geombranchcohom}
  i^{(k)*} ([X_\la]^*) = \sum_{\mu\in\Core^k} b^{(k)}_{\mu\la} [X_\mu]^* \qquad\text{for $\la\in\Core^{k+1}$}
\end{equation}
Using \eqref{E:cohomiso} and \eqref{E:geombranchcohom}, one has
\begin{equation}\label{E:weakbranch}
  \pi^{(k)}(\WS^{(k)}_\la[X]) = \sum_{\mu\in\Core^k} b^{(k)}_{\mu\la} \,\WS^{(k-1)}_\mu[X]
\end{equation}

The combinatorics of this article is set in the cohomological side of
the picture.  However, we also speculate that the $k$-shape functions
$\ksgen_\la^{(k)}[X]$ ($\la \in \Ksh^k$) represent naturally-defined
finite-dimensional subvarieties of $\Gr_{\SL_{k+1}}$, interpolating between
the Schubert varieties of $\Gr_{\SL_{k+1}}$ and (the image in $\Gr_{\SL_{k+1}}$
of) the Schubert varieties of $\Gr_{\SL_{k}}$.  Definition \eqref{E:ksf}
would then express the decomposition of this subvariety in terms of Schubert
classes in $H_*(\Gr_{\SL_{k+1}})$.

\subsection{$k$-branching polynomials and strong $k$-tableaux}
\label{Subssec:strong}

The results of this paper suggest an approach to proving
Conjecture \ref{CJ:branch}.  Recall that the conjecture concerns
the graded $k$-Schur functions $s_\lambda^{(k)}[X;t]$, for
which there are several conjecturally equivalent characterizations.
Our approach lends itself to
proving the conjecture for the description of $k$-Schur
functions given in \cite{LLMS}; that is,
as the weight generating function of strong $k$-tableaux:
\begin{equation} \label{E:strongdef}
  \ksf_\la^{(k)}[X;t] = \sum_{T\in\Strong_\la^{k+1}} x^{\wt(T)} t^{{\rm spin}(T)}
\end{equation}
where $\Strong_\la^{k+1}$ is the set of strong $(k+1)$-core tableaux of
shape $\la$ and ${\rm spin}(T)$ is a statistic assigned to strong tableaux.
Note, it was shown \cite{LLMS} that the $s_\lambda^{(k)}$ used in
this article equals the specialization of this function when $t=1$.

To prove Conjecture \ref{CJ:branch}, it suffices to give a bijection for
each $\mu\in\Core^k$:
\begin{equation}\label{E:strongbij}
\begin{split}
\Strong^{k}_\mu &\to \bigsqcup_{\la\in\Core^{k+1}} \Strong^{k+1}_\la \times \Patheq^{k}(\la,\mu) \\
U' &\mapsto (T',[\bp])
\end{split}
\end{equation}
such that
\begin{equation}
\wt(U')=\wt(T')\quad\text{and}\quad
{\rm spin}(U') ={\rm spin}(T')+\charge(p).
\end{equation}
To achieve this, the notion of strong strip (defined on cores) needs
to be generalized to certain intervals
$\mu\subset\la$ of $k$-shapes $\la,\mu\in\Ksh^k$.

We should point out that the symmetry of the $k$-Schur
functions defined by \eqref{E:strongdef} is non-trivial.
A forthcoming paper of
Assaf and Billey \cite{AB} proves this result, as well as the
positivity of $b_{\mu\la}^{k\to\infty}(t)$, using dual equivalence
graphs.  The bijection described above would also give a direct
proof of the symmetry.

\subsection{Tableaux atoms and bijection  \eqref{E:weakbij}}
The earliest characterization of $k$-Schur functions is the tableaux
atom definition of \cite{LLM}.  The definition has the form
\begin{equation}
\label{E:atoms}
\ksf_{\mu}^{(k)}[X;t] = \sum_{T \in \mathbb A_\mu^{(k)}} t^{\charge(T)}
s_{\shape(T)}\,,
\end{equation}
where $\mathbb A_\mu^{(k)}$ is a certain set of tableaux constructed
recursively using katabolism.  It is immediate from the definition that
$$
b_{\mu\lambda}^{k\to\infty}(t)=\sum_{T\in\mathbb A_\mu^{(k)}\atop
\shape(T)=\lambda}t^{\charge(T)}
\,.
$$
Unfortunately, actually determining which tableaux are in an atom
$\mathbb A_\mu^{(k)}$ is an extremely intricate process.

Nonetheless, the construction of our bijection \eqref{E:weakbij} was guided
by the tableaux atoms and has led us to yet another conjecturally
equivalent characterization for the $k$-Schur functions.  In particular,
iterating the bijection from a tableau $T$ of weight $\mu\vdash n$, we get:
\begin{equation}
T \mapsto (T^{(n-1)},[\bp_{n-1}]), \, T^{(n-1)} \mapsto (T^{(n-2)},[\bp_{n-2}]),
\dots, \, T^{(k+1)} \mapsto (T^{(k)},[\bp_{k}])
\,.
\end{equation}
Namely, this provides a bijection between
$T$ and $(T^{(k)},[\bp_{n-1}],\dots,[\bp_k])$.
We then say that $T^{(k)}$ is the $k$-tableau associated to $T$
and conjecture that
\begin{conjecture}  Let $\rho$ be the  unique element of $\Core^{k+1}$
such that $\rs(\rho)=\mu$, and let
$T_{\mu}^{(k)}$ be the unique $k$-tableau of
weight $\mu$ and shape $\rho$ (see \cite{LM:cores}).
Then
\begin{equation}
\mathbb A_\mu^{(k)} = \left\{ T {\rm ~of~weight~} \mu \, \big| \, T_{\mu}^{(k)}
{\rm ~is~the~} k{\text -}{\rm tableau~associated~to~} T \right\}
\,.
\end{equation}
\end{conjecture}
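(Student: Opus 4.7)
The plan is to establish the conjecture by matching the recursive katabolism definition of the atom $\mathbb A_\mu^{(k)}$ against a step-by-step analysis of the iterated bijection $T\mapsto (T^{(k)},[\bp_{n-1}],\dots,[\bp_k])$. Since \eqref{E:weakbij} at parameter $k'$ sends a $k'$-shape tableau to a $k'$-tableau, and a $k'$-tableau (whose shape is a $(k'+1)$-core, hence a $(k'-1)$-shape) can in turn be fed into the bijection at parameter $k'-1$, one obtains a descending chain of applications $T\mapsto T^{(n-1)}\mapsto T^{(n-2)}\mapsto\cdots\mapsto T^{(k)}$. In view of \eqref{E:atoms}, Theorem~\ref{T:branch}, and Theorem~\ref{T:ktab_decomp}, the two sides of the conjectured equality enumerate the Schur expansion of $s_\mu^{(k)}[X]$ with matching multiplicities; the task is therefore to promote this generating-function identity to an equality of the underlying sets.

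I would argue by descending induction on $k'$ from $k'=n-1$ down to $k'=k$, where the statement at level $k'$ reads: the iterated bijection at parameters $n-1,\dots,k'$ carries a tableau $T$ of weight $\mu$ to $T_\mu^{(k')}$ if and only if $T$ lies in the atom $\mathbb A_\mu^{(k')}$. The base case $k'=n-1$ reduces to the fact that the unique $(n-1)$-tableau of weight $\mu$ and shape $\rho$ is the single element of $\mathbb A_\mu^{(n-1)}$, via \cite{LM:cores}. The inductive step rests on a central lemma: one application of \eqref{E:weakbij} at parameter $k'-1$ agrees, as a map on Young tableaux, with exactly one katabolism step relating $\mathbb A_\mu^{(k')}$ to $\mathbb A_\mu^{(k'-1)}$ in the definition of \cite{LLM}. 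Granting the lemma, iterating downward gives the full correspondence, and uniqueness of $T_\mu^{(k)}$ from \cite{LM:cores} pins down the terminal iterate.

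The main obstacle is this lemma: the identification of one katabolism step with one application of \eqref{E:weakbij}. Katabolism is purely tableau-theoretic --- strip off the reading-word letters equal to a designated value, apply jeu-de-taquin slides, and test an $R$-condition on the residue --- while \eqref{E:weakbij} is defined through the $\push$ and $\pull$ maps on reverse-maximal strips in the $k$-shape poset $\Ksh^{k'}$. Reconciling the two requires showing that the letters stripped off by katabolism correspond exactly to the topmost reverse-maximal strip in the $k'$-shape filtration of $T$ produced by the bijection, and that the $R$-condition which gates the katabolism step is equivalent to the existence of an appropriate class in $\Patheq^{k'}(\la,\mu)$. This in turn would hinge on a careful understanding of how jeu-de-taquin slides interact with the moves and charges of $\Ksh^{k'}$; I would first verify the correspondence against the small-rank data in Appendix~\ref{Appendix}, then attempt the general argument by tracking individual cells through both operations and showing that the output of katabolism agrees with the tableau obtained by stripping the topmost reverse-maximal strip in the iterated push-pull bijection.
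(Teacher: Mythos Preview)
The statement you are attempting to prove is a \emph{conjecture} in the paper, not a theorem: the paper offers no proof, only citing \cite{LP} for supporting evidence (charge compatibility). There is therefore no ``paper's own proof'' to compare against.

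Your proposal does not close the gap either. The reduction to the ``central lemma'' --- that one katabolism step coincides with one application of \eqref{E:weakbij} --- is not proved; you yourself describe it as something you ``would attempt'' after checking small cases. That lemma is essentially the entire content of the conjecture, so the proposal amounts to a restatement rather than a proof. More fundamentally, the generating-function equality you invoke at the outset is itself conjectural: equation \eqref{E:atoms} is the \cite{LLM} tableau-atom definition of $s_\mu^{(k)}[X;t]$, whereas Theorem~\ref{T:branch} and Theorem~\ref{T:ktab_decomp} concern the \cite{LM:ktab}/\cite{LLMS} definition, and the paper explicitly notes these are only \emph{conjecturally} equivalent. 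So even the claim that both sides enumerate the Schur expansion with matching multiplicities is not available as a theorem, and you cannot use it as a starting point.

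In short: the statement is open, your plan identifies a plausible line of attack, but the key step is left as an unproven lemma, and the numerical coincidence you lean on is itself part of the web of conjectures surrounding the various definitions of $k$-Schur functions.
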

Support for this conjecture is given in \cite{LP} where it
is shown that the bijection between $T$ and
$(T^{(k)},[\bp_{n-1}],\dots,[\bp_k])$ is compatible with charge.
In particular, it is shown that one can define a charge on
$k$-tableaux satisfying the relation
\begin{equation}
\charge(T) = \charge(T^{(k)}) + \charge([\bp_{n-1}]) + \cdots +
\charge([\bp_{k}])
\,.
\end{equation}

\subsection{Connection with representation theory}
In his thesis, L.-C. Chen \cite{Chen} defined a family of graded
$S_n$-modules associated to skew shapes whose row shape and column
shape are partitions. Applying the
Frobenius map (Schur-Weyl duality) to the characters of these
modules, one obtains symmetric functions. Chen has a remarkable
conjecture on their Schur expansions, formulated in terms of
katabolizable tableaux.  We expect that if the skew shape is the
$k$-boundary of a $k$-shape $\la$ then the resulting symmetric
function is the homology $k$-shape function $\ksgen_\la[X;t]$
defined in \eqref{E:ksf}. In \cite{Chen}, an important conjectural
connection is also made between the above $S_n$-modules and certain
virtual $GL_n$-modules supported in nilpotent conjugacy classes, via
taking the zero weight space.

Using a subquotient of the extended affine Hecke algebra, J. Blasiak
\cite{B} constructed a noncommutative analogue of the Garsia-Procesi
modules $R_\la$, whose Frobenius image is the modified Hall-Littlewood
symmetric function. In this setup there is
an analogue of katabolizable tableaux and conjectured analogues of
homology $k$-shape functions and the atoms of \cite{LLM} and
\cite{Chen}.

\subsection{Outline}
\label{outline} In \S\ref{sec:kshapes} we define basic objects of
interest here such as $k$-shapes, moves and the $k$-shape poset, and
give some of their elementary properties. In \S\ref{sec:equivalence}
we introduce an equivalence relation on paths in the $k$-shape poset
called diamond equivalence and show that it is generated by a
smaller set of equivalences called elementary equivalences. In
\S\ref{sec:strips} we introduce covers and strips for $k$-shapes,
and prove that there is a unique path in the $k$-shape poset
allowing the extraction of a maximal strip from a given strip
(Proposition~\ref{P:maxstripunique}). In \S\ref{sec:strips} we also
state the main result (bijection \eqref{E:weakbij}) of this article
 (Theorem \ref{T:pushpullbij}) and show how it leads to
Theorem~\ref{T:ktab_decomp} and Theorem~\ref{T:Pieri}.
Elementary properties of the functions $\dkr_{\la}^{(k)}[X]$
and $\ksgen_\la^{(k)}[X]$ such as triangularity and conjugation
are given in \S\S\ref{subsec:elemen}.

The remaining sections, which contain the bulk of the technical
details in this article, are concerned with the proof of bijection
\eqref{E:weakbij} by iteration of the \defit{pushout}. This
bijection sends compatible initial pairs (certain pairs $(S,m)$
consisting of a strip $S$ and a move $m$, both of which start from a
common $k$-shape) to compatible final pairs (certain pairs $(S',m')$
consisting of a strip $S'$ and a move $m'$, both of which end at a
common $k$-shape). The basic properties of the pushout are
established in \S\ref{sec:rowpushout} and \S\ref{sec:columnpushout}.
The most technical parts of this article (\S\ref{sec:pushaug} and
\S\ref{sec:commutingcube}) are devoted to the interaction between
pushouts and equivalences in the $k$-shape poset.
 The basic statement can be summarized as: {\it
pushouts send equivalent paths to equivalent paths}.  In
\S\ref{sec:pullbacks}-\S\ref{sec:pullbackequivalence} we
develop, in a brief form, the pullback, which is inverse to the
pushout (\S\ref{S:prooftheo}).

For those interested in getting a
quick hold on the pushout algorithm on which bijection
\eqref{E:weakbij} relies, we suggest reading the
beginning of  \S\ref{sec:kshapes}, \S\ref{sec:equivalence},
\S\ref{sec:strips} and \S\ref{sec:pushaug}
to get the basic definitions and ideas, along
with  \S\S\ref{SS:maxstripalg} and \S\S\ref{SS:canonical pushout sequence}
that describe canonical processes to
obtain a maximal strip and to perform the pushout respectively.

\setcounter{tocdepth}{1}
\tableofcontents

\begin{acknow}
This project was supported by NSF
grants DMS-0638625, DMS-0652641, DMS-0652648, DMS-0652668, and DMS-0901111.
T.L. was
supported by a Sloan Fellowship.
L.L was supported by FONDECYT (Chile) grant \#1090016 and
by CONICYT (Chile) grant ACT56 Lattices and Symmetry.

This project benefited from discussions with  Jason Bandlow, Francois
Descouens, Florent Hivert, Anne Schilling, Nicolas Thi\'{e}ry, and Mike Zabrocki.
\end{acknow}

\section{The $k$-shape poset}\label{sec:kshapes}

For a fixed positive integer $k$, the object central to our
study is a family of ``$k$-shape" partitions that contains
both $k$ and $k+1$-cores.  The formula for $k$-branching
coefficients counts paths in a poset on the $k$-shapes.  As
with Young order,
we will define the order relation in terms of adding boxes
to a given vertex $\lambda$, but now the added boxes must
form a sequence of ``strings".  Here we introduce
$k$-shapes, strings, and moves -- the ingredients for
our poset.

\subsection{Partitions}
Let $\Part =
\{\la=(\la_1\ge\la_2\ge\dotsm)\in\Z_{\ge0}^\infty\mid
\text{$\la_i=0$ for $i\gg0$}\}$
denote the set of partitions.  Each $\la\in\Part$
can be identified with its Ferrers diagram
$\{(i,j)\in\Z_{>0}^2\mid j\le \la_i\}$.
The elements of $\Z_{>0}^2$ are called cells.  The row and column
indices of a cell $b=(i,j)$ are denoted $\row(b)=i$ and $\col(b)=j$.
We use the French/transpose-Cartesian depiction of $\Z_{>0}^2$:
row indices increase from bottom to top.
The transpose involution on $\Z_{>0}^2$ defined by $(i,j)\mapsto(j,i)$
induces an involution on $\Part$ denoted $\la\mapsto\la^t$.
The \defit{diagonal index} of $b=(i,j)$ is given by $\diag(b)=j-i$
and we then define the \defit{distance} between
cells $x$ and $y$ to be $|\diag(x)-d(y)|$.

The arm (resp. leg) of $b=(i,j)\in \la$ is defined by
$a_\la(b)=\la_i-j$ (resp. $l_\la(b)=\la^t_j-i$) is the number
of cells in the diagram of $\la$ in the row of $b$ to its right (resp. in the column of $b$ and above it).
The hook length of $b=(i,j) \in \la$ is defined
by $h_\la(b) = a_\la(b)+l_\la(b)+1$.
Let $\Core^k=\{\la\in\Part\mid \text{$h_\la(b)\ne k$ for all $b\in\la$}\}$
be the set of $k$-cores.

Let $D=\mu/\la$ be a skew shape, the difference of Ferrers diagrams
of partitions $\mu\supset\la$. Although such a set of cells may
be realized by different pairs of partitions,
unless specifically stated otherwise, we shall use the notation $\mu/\la$
with the fixed pair $\la\subset\mu$ in mind. $D$ is referred to
as \defit{$\la$-addable} and \defit{$\mu$-removable}.
A \defit{horizontal} (resp. \defit{vertical}) strip is
a skew shape that contains at most one cell in each column (resp. row).
A $\la$-addable cell (corner) is a skew shape $\mu/\la$
consisting of a single cell.
Define $\Top_c(D)$ and $\Bot_c(D)$ to be the
top and bottom cells in column $c$ of $D$ and
let $\Right_r(D)$ and $\Left_r(D)$
be the rightmost and leftmost cells in row $r$ of $D$.
Let $c^+$ (resp.  $c^-$) denote the column right-adjacent
(resp. left-adjacent) to column $c$.  Similar notation is used
for rows.

\subsection{$k$-shapes}
\label{SS:kshapes}
The $k$-interior of a partition $\la$ is the subpartition
of cells with hook length exceeding $k$:
$$\Int^k(\la)=\{b\in \la \mid h_\la(b) > k\}\,.$$
The \defit{$k$-boundary} of $\la$ is the skew shape of cells with hook bounded by $k$:
$$\bdy^k(\la) =\la/\Int^k(\la)\,.$$
We define the \defit{$k$-row shape}
$\rs^k(\la)\in\Z_{\ge0}^\infty$
(resp. \defit{$k$-column shape} $\cs^k(\la)\in\Z_{\ge0}^\infty$) of $\la$
to be the sequence giving the numbers of cells in the rows (resp.
columns) of $\bdy^k(\la)$.

\begin{definition} \label{D:kshape}
A partition $\la$ is a \defit{$k$-shape} if
$\rs^k(\la)$ and $\cs^k(\la)$ are partitions.
$\Ksh^k$ denotes the set of $k$-shapes and $\Ksh^k_N=\{\lambda\in\Ksh^k:
|\bdy^k(\la)|=N\}$.
\end{definition}

\begin{example}\label{X:kshape} $\la=(8,4,3,2,1,1,1)\in\Ksh^{4}_{12}$,
since $\rs^4(\la)=(4,2,2,1,1,1,1)$ and $\cs^4(\la)=(3,2,2,1,1,1,1,1)$
are partitions and $|\bdy^4(\la)|=4+2+2+1+1+1+1=12$.
$\mu=(3,3,1)\not\in\Ksh^{4}$ since $\rs^4(\mu)=(2,3,1)$ is not a partition.
\begin{equation*}
\begin{matrix}
{\tableau[pby]{\\ \\ \\ \bl& \\ \bl&& \\ \bl&\bl& & \\
\bl&\bl&\bl&\bl&&&&}} &&&&& { \tableau[pby]{\\ && \\ \bl&& }} \\ \\
\bdy^4(\la) &&&&& \bdy^4(\mu)
\end{matrix}
\end{equation*}
\end{example}

\begin{remark}
The transpose map is an involution on $\Ksh^k_N$.
\end{remark}

The set of $k$-shapes includes both the $k$-cores and $k+1$-cores.
\begin{proposition}
$\Core^k\subset \Ksh^k$ and
$\Core^{k+1}\subset\Ksh^k$.
\end{proposition}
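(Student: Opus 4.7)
The plan is to verify, for $p\in\{k,k+1\}$ and $\la\in\Core^p$, that both $\rs^k(\la)$ and $\cs^k(\la)$ are weakly decreasing. Since transposition preserves hook-lengths, $\la^t\in\Core^p$ whenever $\la\in\Core^p$, and by construction $\cs^k(\la)=\rs^k(\la^t)$; so it suffices to prove the single inequality $\rs^k(\la)_i\ge\rs^k(\la)_{i+1}$ for every $i$ and every $p$-core $\la$ with $p\in\{k,k+1\}$.

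First I would record two preliminary facts. The first is the standard bound $\la_i-\la_{i+1}\le p-1$ for any $p$-core: the cell $(i,\la_{i+1}+1)$ satisfies $\la^t_{\la_{i+1}+1}=i$, has arm $\la_i-\la_{i+1}-1$ and leg $0$, so its hook-length is exactly $\la_i-\la_{i+1}$, which would equal $p$ in the critical case $\la_i-\la_{i+1}=p$. The second is the identity
\[
h_\la(i,c)\;=\;h_\la(i+1,c)+(\la_i-\la_{i+1})+1
\qquad\text{for } 1\le c\le\la_{i+1},
\]
together with the observation that within a single row of $\la$, hook-lengths are strictly decreasing in the column index (since $h_\la(i,c+1)-h_\la(i,c)=\la^t_{c+1}-\la^t_c-1\le -1$) and hence distinct.

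The main step splits the row-$i$ cells of $\bdy^k(\la)$ into two groups. In the columns $c\in(\la_{i+1},\la_i]$ one has $\la^t_c=i$, so the hook-lengths are exactly $1,2,\dots,\la_i-\la_{i+1}$, all bounded by $p-1\le k$ by the first fact; these contribute $\la_i-\la_{i+1}$ cells to $\rs^k(\la)_i$ with no counterpart in row $i+1$. In the shared columns $c\le\la_{i+1}$, the displayed identity shows that a cell contributes to $\rs^k(\la)_{i+1}$ but not to $\rs^k(\la)_i$ (a ``lost cell'') precisely when $h_\la(i+1,c)\le k<h_\la(i,c)$, which forces $h_\la(i+1,c)$ to lie in the window $[k-\la_i+\la_{i+1},\,k]$ of $\la_i-\la_{i+1}+1$ consecutive integers. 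The $p$-core hypothesis excludes exactly one value from this window: for $p=k$ the forbidden value is $h_\la(i+1,c)=k$, and for $p=k+1$ it is $h_\la(i+1,c)=k-(\la_i-\la_{i+1})$ (which would produce $h_\la(i,c)=k+1$). Thus the effective window has size $\la_i-\la_{i+1}$ in both cases; by distinctness of row-$(i+1)$ hook-lengths the number of lost cells is at most $\la_i-\la_{i+1}$, which exactly cancels the gain from the unshared columns and yields $\rs^k(\la)_i-\rs^k(\la)_{i+1}\ge 0$. The only delicate point is the case analysis for the forbidden hook-value, since it is precisely this exclusion that makes the gain-loss tally close; once it is isolated, the remainder is direct bookkeeping.
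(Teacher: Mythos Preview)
Your proof is correct and self-contained, but it follows a genuinely different route from the paper's argument. The paper does not compute with hook-lengths directly: it invokes the known bijection $\la\mapsto\rs^k(\la)$ from $\Core^{k+1}$ to $\Bdd^k$ (proved in \cite{LM:cores}) as a black box to conclude that $\rs^k(\la)$ is a partition, obtains the column statement by transpose, and then handles $\Core^k$ via the observation that $\bdy^k(\la)=\bdy^{k-1}(\la)$ for a $k$-core, reducing to the $\Core^k\subset\Ksh^{k-1}$ case. Your argument instead gives a uniform, elementary gain-versus-loss count comparing adjacent rows of $\bdy^k(\la)$, using the $p$-core hypothesis only to excise one hook value from the ``lost cell'' window. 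The upside of your approach is that it is entirely internal to the paper and treats $p=k$ and $p=k+1$ on the same footing; the paper's approach is shorter but exports the work to a cited result.

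One small presentational point: your justification of $\la_i-\la_{i+1}\le p-1$ via the single cell $(i,\la_{i+1}+1)$ only rules out equality with $p$, not the case $\la_i-\la_{i+1}>p$. This is easily patched (the unshared-column cells you already identify have hook-lengths $1,2,\dots,\la_i-\la_{i+1}$, and one of these would equal $p$), but as written the ``first fact'' is slightly under-argued.
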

\begin{proof}
It is shown in
\cite{LM:cores} that
\begin{equation} \label{E:core_to_bounded}
\la \mapsto \rs^k(\la)
\end{equation}
is a bijection from $\Core^{k+1}\to \Bdd^{k}$
implying that $\rs^k(\la)\in\Part$.
Similarly, $\la\mapsto \rs^k(\la^t)=\cs^k(\la)$
is a bijection, and thus
$\Core^{k+1}\subset \Ksh^{k}$.
In particular $\Core^k\subset\Ksh^{k-1}$.
For $\la\in\Core^k$ we have $\bdy^k(\la)=\bdy^{k-1}(\la)$,
from which it follows that $\la\in\Ksh^k$.
\end{proof}

Since $k\ge2$ remains fixed throughout, we shall often suppress $k$
in the notation, writing $\bdy\la$, $\rs(\la)$, $\cs(\la)$, $\Ksh$,
and so forth.

\begin{remark}
A $k$-shape $\la$ is uniquely determined by its row shape $\rs(\la)$ and column shape $\cs(\la)$.
\end{remark}

\begin{remark}
\label{R:rowsequal}
Consider a partition $\lambda$ with addable
corners $x$ and $y$ in columns $c$ and $c^+$,
respectively.
If $h_{\lambda}(\Left_{\row(x)}(\bdy\lambda))=k$ then
$\rs(\lambda)_{\row(x)}=\rs(\lambda)_{\row(y)}$
since the cell below
$\Left_{\row(x)}(\bdy\lambda)$ is not in
$\bdy\lambda$.
\end{remark}

\begin{remark}
\label{R:rowshift} Suppose for some $c, p\ge1$ and $\mu\in\Ksh$, the
cells $\Top_j(\bdy\mu)$ for $c\le j < c+p$, all lie in the same row.
As $\cs(\mu)$ and $\Int(\mu)$ are partitions, it follows that the
cells $\Bot_j(\bdy\mu)$ lie in the same row (say the $r$-th) for
$c\le j<c+p$. Since $\rs(\mu)$ is a partition, one may deduce that
$\mu_{r-1}\ge \mu_r + p$. In particular, there is a $\mu$-addable
corner in the row of $\Bot_c(\bdy\mu)$ for all columns $c$.
\end{remark}

\subsection{Strings}

Given the $k$-shape vertices, the primary notion to define our order
is a string of cells lying at a diagonal distance $k$ or $k+1$ from
one another.  To be precise, let $b$ and $b'$ be \defit{contiguous}
cells when $|\diag(b)-\diag(b')|\in\{k,k+1\}$.
\begin{remark} \label{R:contig}
Since $\la$-addable cells occur on consecutive diagonals,
a $\la$-addable corner $x$ is contiguous with
at most one $\la$-addable corner above (resp. below) it.
\end{remark}

\begin{definition}\label{D:string}
A \defit{string} of \defit{length} $\ell$ is a skew shape
$\mu/\la$ which consists of cells $\{a_1,\dots,a_\ell\}$,
where $a_{i+1}$ is below $a_i$ and they are contiguous
for each $1\leq i<\ell$.
\end{definition}


Note that all cells in a string $s=\mu/\la$
are $\lambda$-addable and $\mu$-removable.
We thus refer to $\lambda$-addable or $\mu$-removable
strings.  Any string $s=\mu/\la$
can be categorized into one of four types depending on the
elements of $\bdy\la\setminus\bdy\mu$, as described by
the following property.

\begin{property}
\label{P:stringtypes}
For any string $s=\mu/\la=\{a_1,a_2,\dotsc,a_\ell\}$,
let $b_0=\Left_{\row(a_1)}(\bdy\la)$,
$b_\ell=\Bot_{\col(a_\ell)}(\bdy\la)$,
and $b_i = (\row(a_{i+1}),\col(a_i))$ for $1\le i <\ell$.
\begin{equation}
\label{lasetminusmu}
\bdy\la\setminus\bdy\mu =
\begin{cases}
\{b_1,\dotsc,b_{\ell-1}\}
& \text{if $h_\la(b_0)< k$ and $h_\la(b_\ell)<k$}
\\
\{b_0,b_1,\dotsc,b_{\ell-1}\}
& \text{if $h_\la(b_0)=k$ and $h_\la(b_\ell)<k$}
\\
\{b_1,\dotsc,b_{\ell-1},b_\ell\}
& \text{if $h_\la(b_0)<k$ and $h_\la(b_\ell)=k$}
\\
\{b_0,b_1,\dotsc,b_{\ell-1},b_\ell\}
& \text{if $h_\la(b_0)=h_\la(b_\ell)= k$}
\end{cases}
\end{equation}
\end{property}
\begin{proof}
For any $1\le i<\ell$,
$b_i\in\partial \lambda\setminus\bdy\mu$ by definition of contiguous.
Otherwise,
$\Left_{\row(a_1)}(\bdy\la)$ and $\Bot_{\col(a_\ell)}(\bdy\la)$
are the only other cells whose hooks may be $k$-bounded in
$\lambda$ and exceed $k$ in $\mu$.
\end{proof}


\begin{definition}\label{D:stringtypes}
A string $s=\mu/\lambda$ is defined to be one of four types,
{\it cover-type, row-type, column-type}, or {\it cocover-type}
when $\bdy\la\setminus\bdy\mu$ equals the first, second, third,
or fourth set, respectively, given in \eqref{lasetminusmu}.
\end{definition}

It is helpful to depict a string $s=\mu/\la$ by its
\defit{diagram}, defined by the following data: cells
of $s$ are represented by the symbol $\bullet$,
cells of $\bdy\la\setminus\bdy\mu$ a represented by $\circ$,
and cells of $\bdy\mu\cap\bdy\la$ in
the same row (resp. column) as some $\bullet$ or $\circ$
are collectively depicted by
a horizontal (resp. vertical) line segment. The four possible
string diagrams are shown in Figure \ref{F:stringdiags}.
\begin{figure}
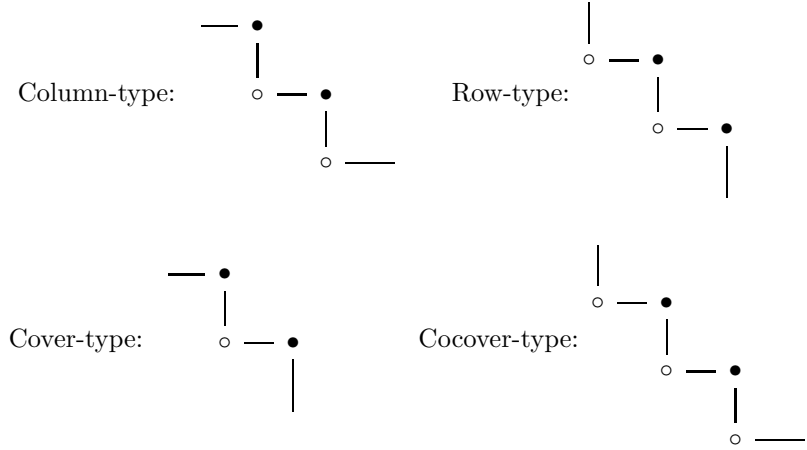

\dgARROWLENGTH=1.1em
$$
\text{Column-type:}
\begin{diagram}
\node{} \arrow{e,-} \node{\bullet} \arrow{s,-}\\ \node[2]{\circ}
\arrow{e,-} \node{\bullet} \arrow{s,-}\\ \node[3]{\circ} \arrow{e,-}
\end{diagram}
\qquad \qquad \text{Row-type:}
\begin{diagram}
\node{} \arrow{s,-}\\ \node{\circ} \arrow{e,-} \node{\bullet}
\arrow{s,-}\\ \node[2]{\circ} \arrow{e,-} \node{\bullet}
\arrow{s,-}\\ \node{}
\end{diagram}
$$
$$
\text{Cover-type:}
\begin{diagram}
\node{} \arrow{e,-} \node{\bullet} \arrow{s,-}\\ \node[2]{\circ}
\arrow{e,-} \node{\bullet} \arrow{s,-}\\ \node{}
\end{diagram}
\qquad \qquad \text{Cocover-type:}
\begin{diagram}
\node{} \arrow{s,-}\\ \node{\circ} \arrow{e,-} \node{\bullet}
\arrow{s,-}\\ \node[2]{\circ} \arrow{e,-} \node{\bullet} \arrow{s,-}
\\ \node[3]{\circ} \arrow{e,-}
\end{diagram}
$$
\caption{Types of string diagrams}
\label{F:stringdiags}
\end{figure}


Given a string $s=\mu/\lambda=\{a_1,\dotsc,a_\ell\}$,
of particular importance are the columns and rows
in its diagram that contain only a $\circ$ or only
$\bullet$.  To precisely specify such rows and columns,
we need some notation.  For a skew shape $D=\mu/\la$, define
$\change_\rs(D)=\rs(\mu)-\rs(\la)\in\Z^\infty$.
The \defit{positively} (resp. \defit{negatively})
\defit{modified rows} of $D$ are those corresponding to
positive (resp. negative) entries in $\change_\rs(D)$.
Similar definitions apply for columns.
It is clear from the Figure~\ref{F:stringdiags} diagrams
that a given string has at most one positively or negatively
modified row and column.  Such rows and columns are
earmarked as follows, given they exist:
\begin{itemize}
\item
$c_{s,u}$ is the unique column negatively modified by $s$.
Equivalently, $c_{s,u}=\col(\Left_{\row(a_1)}(\bdy\lambda))$
if and only if the leftmost column in the diagram of $s$ has a $\circ$

\item $r_{s,d}$ is the unique row negatively modified by $s$.
Equivalently, $r_{s,d}=\row(\Bot_{\col(a_\ell)})$
iff the lowest row in the diagram of $s$ has a $\circ$

\item
$r_{s,u}$ is the unique row positively modified by $s$.
Equivalently, $r_{s,u}=\row(a_1)$ iff
the topmost row in the diagram of $s$ has no $\circ$

\item
$c_{s,d}$ is the unique column positively modified by $s$.
Equivalently, $c_{s,d} = \col(a_\ell)$ if
the rightmost column in the diagram of $s$ has no $\circ$.
\end{itemize}
\noindent
Note that $c_{s,u}<\col(a_1)$ and $r_{s,d}<\row(a_\ell)$ when defined.


\begin{remark}
\label{R:stringchange}
For a $\la$-addable string $s$,
we have the following vector equalities in the
free $\Z$-module $\Z^\infty=\bigoplus_{i\in\Z_{>0}} \Z e_i$
with standard basis $\{e_i\mid i\in \Z_{>0}\}$:
\begin{align}
\label{E:stringcschange}
\change_\cs(s) &= e_{c_{s,d}} - e_{c_{s,u}} \\
\label{E:stringrschange}
\change_\rs(s) &= e_{r_{s,u}} - e_{r_{s,d}}
\end{align}
where by convention $e_i=0$ if the subscript $i$ is not defined
(e.g.  $c_{s,u}$ is not defined
when $\Left_{\row(a_1)}(\bdy\lambda)\not\in\bdy\lambda\setminus\bdy\mu$).
\end{remark}

\subsection{Moves}
Our poset will be defined by taking a $k$-shape $\mu$ to be larger
than $\lambda\in \Pi$ when the skew diagram $\mu/\lambda$ is a
particular succession of strings (called a move).  To this end,
define two strings to be
\defit{translates}
when they are translates of each other in $\Z^2$ by a fixed vector,
and their corresponding modified rows and columns agree in size.
Equivalently,
their diagrams have the property that
$\bullet$'s and $\circ$'s appear in the same relative
positions with respect to each other and the
lengths of each corresponding horizontal and vertical segment
are the same.  We will also refer to cells $a_j$ and
$b_j$ as translates when strings $s_1=\{a_1,\ldots,a_\ell\}$
and $s_2=\{b_1,\ldots,b_\ell\}$ are translates.


\begin{definition} \label{D:defmove}
A \defit{row move} $m$ of rank $r$ and length $\ell$
is a chain of partitions
$\la=\la^0\subset\la^1\subset\dotsm\subset\la^r=\mu$
that meets the following conditions:
\begin{enumerate}
\item \label{I:movestart} $\la\in\Ksh$
\item \label{I:movestrings}
$s_i=\la^i/\la^{i-1}$ is a row-type string consisting of $\ell$ cells
for all $1\le i\le r$
\item \label{I:movetranslates} the strings $s_i$ are translates of each other
\item \label{I:movetopcells} the top cells of $s_1, \dotsc, s_r$ occur
in consecutive columns
from left to right
\item \label{I:moveend} $\mu\in\Ksh$.
\end{enumerate}
We say that $m$ is a row move from $\la$ to $\mu$ and
write $\mu=m*\la$ or $m=\mu/\la$.
A \defit{column move} is the transpose analogue of a row move.
A \defit{move} is a row move or column move.
\end{definition}

\begin{example} \label{X:rowmove}
For $k=5$, a row move of length $1$
and rank $3$ with strings $s_1=\{A\}$, $s_2=\{B\}$,
and $s_3=\{C\}$ is pictured below. The lower case
letters are the cells that are removed
from the $k$-boundary when the
corresponding strings are added.
\begin{equation*}
\begin{diagram}
\node{{\scriptsize \tableau[sby]{\\ \\ \\ & \\ \bl&&& \\ \bl&a&&&\bl A
\\\bl&\bl&b&c&&\bl B& \bl C \\ \bl&\bl&\bl&\bl&\bl&&&&}} }
\node{\tiny\tableau[sby]{\\ \\ \\ & \\ \bl&&&\\ \bl&\circ&&\\
\bl&\bl&\circ&\circ& \\ \bl&\bl&\bl&\bl&\bl&&&&}}
\arrow{e,->}
\node{\tiny\tableau[sby]{\\ \\ \\ & \\ \bl&&&\\ \bl&\bl&&&\bullet \\
\bl&\bl&\bl&\bl&&\bullet&\bullet \\ \bl&\bl&\bl&\bl&\bl&&&&}}
\end{diagram}
\end{equation*}
For $k=3$, a row move of length $2$ and rank $2$
with strings $s_1=\{A_1,A_2\}$ and $s_2=\{B_1,B_2\}$ is:
\begin{equation*}
\begin{diagram}
\node{{ \tableau[sby]{&\\ a_1&b_1&\bl A_1&\bl B_1 \\
\bl&\bl&a_2&b_2&\bl A_2 & \bl B_2}}}
\node[2]{\tiny\tableau[sby]{&\\
\circ&\circ \\ \bl&\bl&\circ&\circ}} \arrow{e,->} \node{\tiny\tableau[sby]{& \\
\bl&\bl&\bullet&\bullet \\ \bl&\bl&\bl&\bl&\bullet&\bullet }}
\end{diagram}
\end{equation*}
\end{example}

Note that a row move from $\la$ to $\mu$ merits its name because
$\bdy\mu$ can be viewed as a right-shift of some rows of
$\bdy \la$.  In particular, $|\bdy\mu|=|\bdy\la|$.

\begin{property}\label{C:rowmovecolsize}
If a row move negatively (resp. positively) modifies a column
then it negatively (resp. positively)
modifies all columns of the same size to the right (resp. left).
\end{property}
\begin{proof}
All of the columns positively (resp. negatively)
modified by a row move, are consecutive
and have the same size, by
Definition \ref{D:defmove}\eqref{I:movetranslates},\eqref{I:movetopcells}.
The result follows from Definition \ref{D:defmove}\eqref{I:moveend}.
\end{proof}

A move $m$ is said to be \defit{degenerate}
if $c_{s_r,u}^+=c_{s_1,d}$. Note that a degenerate
move can be of any rank but always has length 1.
The first move in Example~\ref{X:rowmove} is degenerate.

\begin{property}
\label{P:cond5} Condition \eqref{I:moveend}
of Definition~\ref{D:defmove} is equivalent to
\begin{itemize}
\item $\cs(\la)_{c_{s_r,u}}>\cs(\la)_{c_{s_r,u}^+}$ and
$\cs(\la)_{c_{s_1,d}^-}>\cs(\la)_{c_{s_1,d}}$ if $m$ is nondegenerate.
\item $\cs(\la)_{c_{s_r,u}}>\cs(\la)_{c_{s_1,d}}+1$ if $m$ is degenerate.
\end{itemize}
\end{property}
\begin{proof}
The precise column and row modification of a string is pinpointed
in Remark~\ref{R:stringchange} and immediately implies the claim
by definition of $k$-shape.
\end{proof}

\begin{remark}\label{R:movespec}
Consider a $k$-shape $\la$ and a string
$s_1=\lambda^1/\lambda$. If there is  a row
move from $\lambda$ starting as $\lambda\subset\lambda^1$,
then Conditions (3),(4) and (5) of Definition~\ref{D:defmove}
determine $s_2,\ldots,s_r$ (and thus the move).
Note that Property~\ref{C:rowmovecolsize} and Property~\ref{P:cond5} ensures a unique $r$
since $\cs(\la)_{c_{s_r,u}}>\cs(\la)_{c_{s_r,u}^+}$
implies that an extra row type string would not be a translate of
$s_1$.
\end{remark}

\begin{lemma} \label{L:distancestrings}
Suppose $s$ and $t$ are strings in a move $m$
and the cells $x\in s$ and $y\in t$ are translates of each other.
Then $|\diag(x)-\diag(y)| < k-1$.
\end{lemma}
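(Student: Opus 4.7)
My plan is to reduce by the transpose symmetry of Definition~\ref{D:defmove} to the case where $m$ is a row move with strings $s_1,\dotsc,s_r$ of common length $\ell$. By condition (3), all strings are translates by a common vector; by condition (4), the column component of the shift between consecutive strings equals $+1$. Writing this shift as $v = (a,1)$ for some $a\in\Z$, any translates $x\in s_i$, $y\in s_j$ (with $i\le j$) satisfy $y-x = (j-i)(a,1)$, hence
$$
|\diag(x)-\diag(y)| \;=\; (j-i)\,|1-a| \;\le\; (r-1)\,|1-a|.
$$
The lemma therefore reduces to showing $(r-1)|1-a|\le k-2$, where the burden rests on extracting row/column constraints from the fact that both $\la$ and $\mu = m\ast\la$ are $k$-shapes.

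The cleanest case is $a=0$, where the top cells $a_1^{s_1},\dotsc,a_1^{s_r}$ all lie in a common row $r_0$. Since each $s_i$ is row-type, Remark~\ref{R:stringchange} gives $\change_\rs(s_i) = e_{r_0}$ (the negatively-modified row $r_{s_i,d}$ being absent), so $\change_\rs(m) = r\,e_{r_0}$ and therefore $\rs(\mu)_{r_0} = \rs(\la)_{r_0}+r$. The universal bound $\rs(\mu)_{r_0}\le k$ --- valid because $\Left_{r_0}(\bdy\mu)$ has hook at most $k$ and hence arm at most $k-1$ --- combined with $\rs(\la)_{r_0}\ge 1$ (a top cell lives in that row), forces $r\le k-1$, so $(r-1)|1-a|=r-1\le k-2$ as required.

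For the remaining case $a\ne 0$, the top cells lie in the $r$ distinct rows $r_0,\,r_0+a,\dotsc,r_0+(r-1)a$, and I would analyze $\change_\rs(m)$ distributed across these rows (together with the dual statement for $\change_\cs$ across the consecutive columns $c_{s_i,u}$). The plan is to consider the pair of hook-length-$k$ cells $b_0^{s_1}$ and $b_0^{s_r}$ (or $b_\ell^{s_1}$ and $b_\ell^{s_r}$ depending on type): they lie in rows and columns that differ by exactly $(r-1)a$ and $r-1$ respectively, and the hook-length-$k$ constraint $a_\la(b_0^{s_i})+l_\la(b_0^{s_i})+1=k$ together with the partition property of $\rs(\la)$ and $\cs(\la)$ produces the bound $(r-1)|1-a|\le k-2$ directly.

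The main obstacle is the $a\ne 0$ case: the accounting needs to track how the staggered top cells interact with the partition constraint on $\rs$ without collapsing into a proliferation of subcases. I expect a unified argument by computing $\diag\bigl(b_0^{s_r}\bigr)-\diag\bigl(b_0^{s_1}\bigr) = (r-1)(1-a)$ and using the fact that both endpoint cells have hook $k$ in $\la$ while all intervening cells of $\bdy\la$ on that diagonal band are constrained by the $k$-shape property. The column-move case is then the transpose of everything above, applied to $\la^t$.
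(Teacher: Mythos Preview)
Your proposal rests on two incorrect premises.

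First, the assumption that consecutive strings differ by a common shift vector $v=(a,1)$ is false. Condition~(3) of Definition~\ref{D:defmove} asks only that the strings be pairwise translates in the sense of having the same diagram; nothing forces a single vector to work for every consecutive pair. The first move in Example~\ref{X:rowmove} ($k=5$) already shows this: the top cells $A,B,C$ sit at $(3,5)$, $(2,6)$, $(2,7)$, so the shift $s_1\to s_2$ is $(-1,1)$ while $s_2\to s_3$ is $(0,1)$. Your identity $|\diag(x)-\diag(y)|=(j-i)|1-a|$ therefore fails, and the reduction to bounding $(r-1)|1-a|$ collapses.

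Second, even granting the $a=0$ scenario, the computation of $\change_\rs$ is wrong. For a \emph{row-type} string $s$ one has $h_\la(b_0)=k$ by Definition~\ref{D:stringtypes}, so the cell $b_0=\Left_{\row(a_1)}(\bdy\la)$ is removed from $\bdy\la$ and the topmost row of the diagram \emph{does} carry a $\circ$. Hence $r_{s,u}$ is undefined and Remark~\ref{R:stringchange} gives $\change_\rs(s)=0$, not $e_{r_0}$. This is exactly the statement (noted after Definition~\ref{D:defmove} and again in \S\ref{SubS:poset}) that row moves preserve $\rs$: one has $\rs(\mu)_{r_0}=\rs(\la)_{r_0}$, so no bound on $r$ emerges this way. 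You appear to have confused row-type with cover-type.

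The paper's argument is of a different nature: assuming $\diag(a_1^r)-\diag(a_1^1)\ge k-1$, it locates the negatively modified column $c_{s_r,u}$ among (or adjacent to) the columns $\col(a_1^j)$ and shows directly that $\cs(\mu)$ then fails to be a partition, contradicting $\mu\in\Ksh$.
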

\begin{proof}
Let $m=s_1\cup \cdots \cup s_r$ be a row move from $\la$ to $\mu$
and let $s_j=\{a_1^j, \dots, a_n^j \}$ for $j=1,\ldots,r$.
It suffices to prove the case where $x=a_1^1$ and $y=a_1^r$
are the topmost cells of $s_1$ and $s_r$, respectively.
First suppose that $d(a_1^r)-d(a_1^1) \geq k$.
Then $c_{s_r,u}\geq \col(a_1^1)$ since $a_1^1,a_1^r\in \mu$,
and further, $c_{s_r,u}<\col(a_1^r)$.
Thus $c_{s_r,u}=\col(a_1^j)$ for some $j<r$
since $a_1^1,\ldots,a_1^r$ occur in adjacent columns
by Definition~\ref{D:defmove} of row move.
Moreover, $m$ is a row move implies that $s_j$ and $s_r$
are translates and therefore $\col(a_1^j)=c_{s_r,u}$ of
$\bdy(\la\cup s_1\cup\dotsm\cup s_{j})$
has the same length as $\col(a_1^r)$ in
$\bdy(\la\cup s_1\cup\dotsm\cup s_{r}=\mu)$.
However, column $c_{s_r,u}$ is negatively
modified by $s_r$ implying the contradiction $\mu\not\in \Pi$.
In the case that $d(a_r^1)-d(a_1^1) = k-1$,
the top cell in column $c_{s_r,u}$ of
$\bdy(\la\cup s_1\cup\dotsm\cup s_{r-1})$ is
left-adjacent to $a_1^1$.
However, this column is negatively modified by $s_r$
implying that in $\partial \mu$, it is shorter than
$\col(a_1^1)$.
Again, the assumption that $\mu\in\Ksh$ is contradicted.
\end{proof}

\begin{corollary}
\label{C:rankbound} The rank of a move is at most $k-1$.
\end{corollary}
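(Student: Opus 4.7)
The plan is to apply Lemma \ref{L:distancestrings} directly to the top cells $a_1^1$ and $a_1^r$ of the first and last strings $s_1, s_r$ of the move $m$. These are translates of each other by Definition \ref{D:defmove}\eqref{I:movetranslates}, so the lemma yields $|\diag(a_1^r) - \diag(a_1^1)| < k-1$.

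By Definition \ref{D:defmove}\eqref{I:movetopcells}, $\col(a_1^r) - \col(a_1^1) = r-1$. Writing $\delta := \row(a_1^{j+1}) - \row(a_1^j)$ for the (constant) per-step row shift between consecutive strings, we obtain $\row(a_1^r) - \row(a_1^1) = (r-1)\delta$, and hence
\[
\diag(a_1^r) - \diag(a_1^1) = (r-1)(1 - \delta).
\]
Provided $\delta \le 0$, we have $1-\delta \ge 1$, and the bound from the lemma then forces $r-1 \le (r-1)(1-\delta) < k-1$, i.e.\ $r \le k-1$.

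The main step of the plan, and the principal (though mild) obstacle, is to verify that $\delta \le 0$. My intended argument: once $s_1$ is added, row $\row(a_1^1)$ of $\la^1$ has length exactly $\col(a_1^1)$, because the top cell of a row-type string extends its own row by exactly one and no other cell of $s_1$ lies in this row. Any addable corner of $\la^1$ in column $\col(a_1^1)+1$ in a row strictly above $\row(a_1^1)$ would require $\la^1_{\row(a_1^1)} \ge \col(a_1^1)+1$, contradicting the previous sentence. Applied to $a_1^2$, this yields $\row(a_1^2) \le \row(a_1^1)$, and thus $\delta \le 0$. The column-move case is handled by transposition, since the transpose involution preserves $\Ksh^k_N$ and interchanges row and column moves of the same rank.
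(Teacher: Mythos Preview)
Your approach is essentially the paper's: derive the bound directly from Lemma~\ref{L:distancestrings} applied to the top cells of the first and last strings. However, your claim that the per-step row shift $\delta$ is \emph{constant} is false. In the first case of Example~\ref{X:rowmove} ($k=5$, rank $3$, length $1$), the top cells $A,B,C$ sit at $(3,5)$, $(2,6)$, $(2,7)$, so the successive row shifts are $-1$ and $0$. Definition~\ref{D:defmove}\eqref{I:movetranslates} only asserts that each pair of strings is related by \emph{some} translation vector; it does not force the same vector for consecutive pairs. Consequently your formulas $\row(a_1^r)-\row(a_1^1)=(r-1)\delta$ and $\diag(a_1^r)-\diag(a_1^1)=(r-1)(1-\delta)$ are not valid as written.

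The fix is immediate and already implicit in your own argument. Your addability reasoning (that $a_1^{j+1}$ is $\la^j$-addable in column $\col(a_1^j)+1$, while $\la^j_{\row(a_1^j)}=\col(a_1^j)$) shows $\delta_j:=\row(a_1^{j+1})-\row(a_1^j)\le 0$ for every $j$; just apply it with $\la$ replaced by $\la^j$. Summing yields $\row(a_1^r)-\row(a_1^1)\le 0$, hence
\[
\diag(a_1^r)-\diag(a_1^1)=(r-1)-\bigl(\row(a_1^r)-\row(a_1^1)\bigr)\ge r-1,
\]
and Lemma~\ref{L:distancestrings} gives $r-1<k-1$ as desired.
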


\begin{property}
\label{P:movestrip} $~$
\begin{enumerate}
\item
If $m$ is a row move where $\mu=m*\la$, then $\mu/\la$ is a horizontal strip
\item If $M$ is a column move where $\mu=M*\la$,
then $\mu/\la$ is a vertical strip
\item Any cell common to a row and a column move
from the same shape $\la$, is a $\la$-addable corner.
\end{enumerate}
\end{property}
\begin{proof}
Consider a row move $m$ from $\la$ to $\mu$ with
strings $s_1,s_2,\dotsc,s_r$ and let $s_1=\{a_1,a_2,\dotsc,a_\ell\}$.
Suppose that $\mu/\la$ is not a horizontal strip.
Since the strings are translates and their topmost
cells occur in consecutive columns by the definition
of move, a violation of the horizontal strip condition must
occur where $a_2$ lies below the top cell $b_1$ of string $s_i$,
for some $i>1$.
Therefore, $|\diag(a_1)-\diag(b_1)|\in \{k-1,k\}$
since the definition of string implies
$|\diag(a_1)-\diag(a_2)|\in \{k,k+1\}$.
However, Lemma~\ref{L:distancestrings} is contradicted
implying $\mu/\la$ is a horizontal strip.
By the transpose argument, we also have that a column move
is a vertical strip.  (1) and (2) imply (3).
\end{proof}

\begin{proposition} \label{P:uniquedecomp}
Let $m$ be a row or column move from $\lambda$ to $\mu$.  Then
the decomposition of $m=\mu/\lambda$ into strings (according
to Definition~ \ref{D:defmove}) is unique.
\end{proposition}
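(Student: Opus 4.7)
The plan is to show the decomposition is uniquely determined by the skew shape $m=\mu/\la$. By the transpose involution on $\Ksh^k$ (the remark following Definition \ref{D:kshape}), which exchanges row and column moves, I may assume $m$ is a row move; then the argument reduces to identifying the first string $s_1$, since Remark \ref{R:movespec} forces the remaining strings $s_2,\ldots,s_r$ from $s_1$.

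First, Property \ref{P:movestrip}(1) tells me that $\mu/\la$ is a horizontal strip, so each column contains at most one cell of $\mu/\la$. Because the cells of any individual string are $\la$-addable corners, and such corners strictly increase in column as the row decreases, the top cell of each $s_i$ is its leftmost. Combining this with Definition \ref{D:defmove}\eqref{I:movetopcells} (top cells in the consecutive columns $c_1,\ldots,c_1+r-1$) and Definition \ref{D:defmove}\eqref{I:movetranslates} (the translate condition), the intra-string column offsets $\gamma_j$ between consecutive cells must satisfy $\gamma_j\geq r$ in order to avoid two string cells sharing a column and so violating the horizontal strip property. Consequently every non-top cell of every $s_i$ occupies a column $\geq c_1+r$, and the unique leftmost cell of $\mu/\la$ is forced to be the top cell $a_1$ of $s_1$.

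Starting from $a_1$, I then reconstruct $s_1$ using Remark \ref{R:contig}: each successive cell is the unique $\la$-addable corner below the previous one and contiguous to it, and iteration produces a chain $a_1,a_2,a_3,\ldots$ whose prefix inside $\mu/\la$ gives $s_1$; Remark \ref{R:movespec} then determines $s_2,\ldots,s_r$ and hence the entire move. The main obstacle is verifying that this prefix has exactly length $\ell$, i.e.\ that the chain cannot continue inside $\mu/\la$ past $a_\ell$ to yield a distinct, longer candidate for $s_1$. Ruling this out amounts to showing that a hypothetical $a_{\ell+1}\in\mu/\la$ belonging to another string $s_j$ with $j\geq 2$ would either collide in column with a cell of $s_1$, violating the horizontal strip property, or else conflict with the row-type classification of $s_j$ (Property \ref{P:stringtypes}) under the rigid translate structure. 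This interplay between the translate structure, the string types, and the $k$-shape constraint on $\mu$ is the crux of the uniqueness argument.
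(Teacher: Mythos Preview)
Your reduction to row moves and the identification of the leftmost cell of $\mu/\la$ with the top cell $a_1$ of $s_1$ are both sound, and the strategy of rebuilding $s_1$ cell by cell via Remark~\ref{R:contig} is reasonable. The genuine gap is exactly where you flag it: you do not actually prove that the contiguity chain stops at $a_\ell$. Your proposed mechanisms, a column collision with $s_1$ or a conflict with the row-type classification of $s_j$, are too vague, and in fact the first one cannot work: your own inequality $\gamma_j\ge r$ shows that the $p$-th cell of $s_j$ can never share a column with any cell of $s_1$, so no collision is produced. One way to close the gap is via Lemma~\ref{L:distancestrings}: if the chain continued to a $\la$-addable $a_{\ell+1}\in s_j$ sitting in position $p$, then $a_{\ell+1}$ and $a_p$ are translates, so $|\diag(a_{\ell+1})-\diag(a_p)|<k-1$; but contiguity with $a_\ell$ forces $\diag(a_{\ell+1})-\diag(a_\ell)\ge k$, and $\diag(a_\ell)-\diag(a_p)\ge 0$ with equality only when $p=\ell$, which already yields a contradiction.

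The paper's proof bypasses this difficulty entirely. Instead of tracing the contiguity chain and arguing that it terminates, the paper reads the rank $r$ directly from the intrinsic invariant $\change_\cs(\mu/\la)$: by \eqref{E:stringcschange} and the translate property one has $\change_\cs(\mu/\la)=\sum_{j=0}^{r-1}(e_{c_{s_1,d}+j}-e_{c_{s_1,u}+j})$, and since $c_{s_r,u}<\col(a_1)\le c_{s_1,d}$ there is no cancellation, so $r$ equals the number of $+1$ entries. Then $\ell=|\mu/\la|/r$ is forced, and $s_1$ is simply the first $\ell$ cells of the contiguity chain from the leftmost cell. This is considerably cleaner than having to argue about where the chain exits $\mu/\la$.
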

\begin{proof}
Given row move $m$ from $\lambda$ to $\mu$,
Remark~\ref{R:movespec} implies it suffices to
show that the $\lambda$-addable string $s_1$ is uniquely
determined.  By \eqref{E:stringcschange} and
Definition \ref{D:defmove}\eqref{I:movetranslates},
for any $m=\mu/\lambda = \{s_1,\ldots,s_r\}$,
\begin{equation}\label{E:rowmovecschange}
\change_\cs(\mu/\lambda) = \sum_{j=0}^{r-1} (e_{c_{s_1,d}+j}-e_{c_{s_1,u}+j}).
\end{equation}
Since $c_{s_r,u} < \col(a_1)\le \col(a_\ell)=c_{s_1,d}$ there is no
cancellation in this formula, so the rank of $m$ can be read from the
number of consecutive $+1$'s in $\change_\cs(\mu/\lambda)$
(and is independent of $s_1,\ldots,s_r$).
The length of $m$ is then simply $|\mu/\lambda|/r$.
Since the leftmost cell of the horizontal strip $m$ must be
the top cell of the first string $s$ of $m$ and the length
of $s$ is determined, by Remark \ref{R:contig} it follows
that the $\la$-addable
string $s_1$ is determined.
\end{proof}

\subsection{Poset structure on $k$-shapes} \label{SubS:poset}
We endow the set $\Ksh_N$ of $k$-shapes of fixed size $N$,
with the structure of a directed acyclic graph with an edge from $\la$ to $\mu$
if there is a move from $\la$ to $\mu$.
Since a row (resp. column) move from $\la$ to $\mu$ satisfies $\rs(\la)=\rs(\mu)$ and
$\cs(\la) \dom \cs(\mu)$ (resp. $\cs(\la)=\cs(\mu)$ and $\rs(\la)\dom\rs(\mu)$),
this directed graph induces a poset structure on $\Ksh_N$
which is a subposet of
the Cartesian square of the dominance order $\dom$ on
partitions of size $N$.
\begin{proposition}  An element of the $k$-shape poset is maximal
(resp. minimal)
if and only if it is a
$(k+1)$-core (resp. $k$-core).
\end{proposition}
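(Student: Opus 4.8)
The plan is to characterize maximality and minimality directly through the existence of moves, reducing everything to a statement about when a $k$-shape admits an upward (resp.\ downward) move. I would first establish the easy direction: a $(k+1)$-core is maximal and a $k$-core is minimal. For a $(k+1)$-core $\la$, every cell of $\bdy\la$ has hook length at most $k$, so $\la$ has no cell of hook length exactly $k$ in the sense relevant to column-size jumps; more usefully, I would argue that any row move out of $\la$ would force a negatively modified column $c_{s_1,d}$, but Property~\ref{P:cond5} then requires $\cs(\la)_{c_{s_1,d}^-}>\cs(\la)_{c_{s_1,d}}$, and for a $(k+1)$-core the column shape $\cs(\la)$ is ``saturated'' in a way that prevents the configuration needed for the string $s_1$ to exist as a row-type string. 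Dually for column moves. So $(k+1)$-cores are maximal and (by the transpose argument, using that $\Core^k$ consists of $k$-cores and transposition swaps rows and columns) $k$-cores are minimal.

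For the converse, suppose $\la\in\Ksh_N$ is not a $(k+1)$-core; I must produce a move out of $\la$. Since $\la$ is not a $(k+1)$-core, there is a cell $b\in\la$ with $h_\la(b)=k+1$, equivalently $\bdy^{k+1}(\la)\supsetneq\bdy^k(\la)$. The key is to locate a $\la$-addable row-type string $s_1$: I would take a $\la$-addable corner $x$ whose leftmost cell in its row of $\bdy\la$ has hook length $<k$ and whose contiguous partner below exists (using Remark~\ref{R:contig} and Remark~\ref{R:rowshift}), so that adding the string decreases a column size and increases a row size as in \eqref{E:stringrschange}, \eqref{E:stringcschange}. Then Remark~\ref{R:movespec} extends $s_1$ canonically to a full row move, provided the $k$-shape conditions \eqref{I:movestart} and \eqref{I:moveend} hold; condition~\eqref{I:movestart} is given, and \eqref{I:moveend} is exactly the column-strictness in Property~\ref{P:cond5}, which I would verify holds because the presence of a hook-length-$(k+1)$ cell forces enough room in $\cs(\la)$. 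The symmetric argument, applied to $\la^t$ and transposed back, shows that if $\la$ is not a $k$-core then it admits a column move downward, hence is not minimal.

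The main obstacle I expect is the existence step in the converse: showing that a non-$(k+1)$-core $k$-shape genuinely admits \emph{some} row move, i.e.\ that one can always find a starting string $s_1$ satisfying all five conditions of Definition~\ref{D:defmove} simultaneously. The subtlety is that the obvious candidate string might fail condition~\eqref{I:moveend} (the terminal $k$-shape condition), or the canonical extension of $s_1$ by Remark~\ref{R:movespec} might run off the diagram. I would handle this by a careful choice: among all cells $b\in\la$ with $h_\la(b)=k+1$, pick one that is extremal in an appropriate sense (say, minimizing $\diag(b)$, or choosing the column of smallest size among columns touched by such cells), and show that the induced addable corner and its contiguous partner give a string whose canonical row-move extension automatically terminates at a $k$-shape. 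The combinatorial bookkeeping here — tracking how row and column shapes change along the string, and invoking Property~\ref{C:rowmovecolsize} to see that the modified columns all have equal size — is where the real work lies, but it is the kind of finite case analysis that the earlier remarks (\ref{R:rowsequal}, \ref{R:rowshift}, \ref{R:movespec}) and Property~\ref{P:cond5} are evidently designed to streamline.
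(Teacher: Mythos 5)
Your proposal inverts the direction of the $k$-shape poset, and the error propagates through every step. In the paper, a move from $\la$ to $\mu$ adds cells ($\la\subset\mu$ and $|\bdy\la|=|\bdy\mu|$) but makes $\la$ \emph{larger} in the poset, since $\cs(\la)\dom\cs(\mu)$ or $\rs(\la)\dom\rs(\mu)$; the $(k+1)$-cores sit at the top of the Hasse diagrams in Example~\ref{X:kshapeposet} and the $k$-cores at the bottom. Hence a minimal element is one admitting no move \emph{out} (nothing is addable), and the paper establishes this for $k$-cores by noting the absence of hook-length-$k$ cells, while a maximal element is one admitting no move \emph{in} (no move can be removed). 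You attempt to show $(k+1)$-cores are maximal by arguing that no row move can leave them — but that is the criterion for minimality, and it is also false on its face: for $k=3$, the $4$-core $(2,1)$ admits the row move $\{(1,3)\}$ to $(3,1)$, so $(k+1)$-cores certainly can have outgoing moves. Your appeal to the transpose involution to pass from the maximal case to the minimal case is likewise a non-sequitur: transposition is an automorphism of $\Ksh^k_N$ that exchanges row and column moves, maps $(k+1)$-cores to $(k+1)$-cores and $k$-cores to $k$-cores, and therefore preserves (rather than swaps) each of the sets of maximal and minimal elements.

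The converse has the same mismatch of hook lengths. To show a $k$-shape that is not a $k$-core admits a move out, the relevant data is a cell $b$ with $h_\la(b)=k$; the paper picks $b$ rightmost among such cells of $\bdy\la$, uses the addable corner at the end of its row to launch a maximal row-type string via Lemma~\ref{L:makestring}, and then shows this string can be replicated $t+1$ times across the column plateau $\cs(\la)_{\col(b)}=\dotsm=\cs(\la)_{\col(b)+t}$ to produce a bona fide row move, verifying condition~\eqref{I:moveend} by the estimate $h_\la(c')\le h_\la(d)+2<k+1$. You instead start from a cell $b$ with $h_\la(b)=k+1$ and propose an extremality choice (minimizing $\diag(b)$), but hook length $k+1$ is the wrong condition for an addable row-type string; it is what one would examine for a \emph{removable} string, i.e.\ for a move into $\la$, which governs maximality and is only sketched as "similar" in the paper. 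As a minor point, you also swap the notation: $c_{s,d}$ is the positively modified column and $c_{s,u}$ the negatively modified one, not vice versa.
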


\begin{proof}
Since a $k$-core has no hook sizes of size $k$, it also has no row-type or column-type strings addable.  Thus $k$-cores are minimal elements of the $k$-shape poset.  Now suppose $\la$ is a minimal element of the $k$-shape poset, and suppose $\la$ has a hook of size $k$.  Let us take the rightmost such cell of $\bdy \la$, say $b$.  Then there is a $\la$-addable corner $a_1$ at the end of the row of $b$.  Let $s = \{a_1,a_2,\ldots,a_\ell\}$ be the longest row-type string with top cell $a_1$ (see Lemma \ref{L:makestring}).

Suppose $\cs(\la)_{\col(b)} = \cs(\la)_{\col(b)+1} = \cdots = \cs(\la)_{\col(b)+t} > \cs(\la)_{\col(b)+t+1}$.  Then the bottom cells in $\bdy \la$ of columns $\col(b),\col(b) + 1,\ldots, \col(b) + t+1$ all lie in the same row as $b$, for otherwise such a cell would have a hook-length of size $k$ (or $\la$ would not be a $k$-shape).  Since $\la$ is a $k$-shape, there are $t$ successively addable cells to the right of $a_1$, on the same row as $a_1$.  A similar argument shows that we can in fact find $t+1$ row-type strings $s=s_1, s_2, \ldots, s_{t+1}$ whose cells are on exactly the same set of rows and which have identical diagrams.  We claim that $m = s_1 \cup s_2 \cup \cdots \cup s_{t+1}$ is a row move on $\la$.  Let $\mu = m*\la$.  By construction, $\cs(\mu)_{\col(b)+t} \geq \cs(\mu)_{\col(b)+t+1}$.  It thus suffices to show that $\cs(\mu)_{\col(a_\ell)-1} \geq \cs(\mu)_{\col(a_\ell)}$.  Since $s$ is row-type, and cannot be extended further below, the cell $d = \Bot_{\col(a_\ell)}(\bdy \la)$ has hook length $< k-1$.  Suppose $\cs(\la)_{\col(a_\ell)-1} = \cs(\la)_{\col(a_\ell)}$.  Since $a_\ell$ is $\la$-addable, the bottom cell $c$ of column $\col(a_\ell)-1$ in $\bdy \la$ must be above the bottom of column $\col(a_\ell)$.  But the cell $c'$ directly below $c$ has hook length $h_\la(c') \leq h_\la(d) + 2 <k+1$.  This is a contradiction.

The proof that the $(k+1)$-cores are exactly the maximal elements is similar.
\end{proof}

\begin{example} \label{X:kshapeposet} The graph $\Ksh^2_4$ is pictured below. Only the
cells of the $k$-boundaries are shown. Row moves are indicated by $r$ and column moves by $c$.
\dgARROWLENGTH=2.5em
\begin{equation}
\begin{diagram} \label{E:poset2,4}
\node{\tableau[pby]{ \\ \\ \bl& \\ \bl& }} \arrow{se,b}{r} %
\node[2]{\tableau[pby]{ \\ \\ \bl& & }} \arrow{sw,t}{c} \arrow{se,t}{r} %
\node[2]{\tableau[pby]{ & \\ \bl&\bl&&}} \arrow{sw,b}{c} \\
\node[2]{\tableau[pby]{ \\ \\ \bl & \\ \bl&\bl& }} \arrow{se,b}{r} %
\node[2]{\tableau[pby]{\\ \bl&\\ \bl&\bl&&}} \arrow{sw,b}{c} \\
\node[3]{\tableau[pby]{ \\ \bl& \\ \bl&\bl& \\ \bl&\bl&\bl&}}
\end{diagram}
\end{equation}
The graph $\Ksh^3_5$ is pictured below.
\begin{equation}
\begin{diagram} \label{E:poset3,5}
\node{\tableau[pby]{ \\ \\ \\ \bl& \\ \bl& }} \arrow[2]{s,t}{r} %
\node{\tableau[pby]{ \\ \\ \\ \bl&&}} \arrow{s,t}{r}  %
\node{\tableau[pby]{ \\ & \\ \bl&& }} \arrow{sw,t}{c} \arrow{se,t}{r} %
\node{\tableau[pby]{ \\ \\ \bl &&&}} \arrow{s,b}{c} %
\node{\tableau[pby]{ & \\ \bl&\bl&&& }} \arrow[2]{s,b}{c} \\
\node[2]{\tableau[pby]{\\ \\ \bl& \\ \bl & & }} \arrow{sw,b}{c} \arrow{se,b}{r} %
\node[2]{\tableau[pby]{\\ & \\ \bl& \bl & & }} \arrow{sw,b}{c} \arrow{se,b}{r} \\
\node{\tableau[pby]{ \\ \\ \bl& \\ \bl&\\ \bl&\bl&}} %
\node[2]{\tableau[pby]{\\ \\ \bl& \\ \bl&\bl&&}} %
\node[2]{\tableau[pby]{\\ \bl&& \\ \bl&\bl&\bl&&}}
\end{diagram}
\end{equation}
\end{example}

\subsection{String and move miscellany}
Here we highlight a number of lemmata about
strings that will be needed later.


In the special case that $\mu$ or $\lambda$ is a $k$-shape,
the string $s=\mu/\lambda$ obeys  a number of
explicit properties.

\begin{lemma}\label{L:makestring}
Let  $\lambda\in\Ksh$ and $s=\{a_1,\dots,a_\ell\}$ be a
$\lambda$-addable string.
\begin{enumerate}
\item If $s$ negatively modifies a row, then it can be extended below to a $\la$-addable string that does not have negatively modified rows.
\item If $s$ negatively modifies a column, then it can be extended above to a $\la$-addable string that does not have negatively modified columns.
\end{enumerate}
\end{lemma}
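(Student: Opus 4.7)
The plan is to prove part (1) by explicitly constructing the extension one cell at a time, and then to deduce part (2) by transposing rows and columns. Writing $a_\ell = (r,c)$ and $b_\ell = \Bot_{\col(a_\ell)}(\bdy\lambda) = (r_0,c)$, the hypothesis that $s$ negatively modifies a row is equivalent, via the characterization of $r_{s,d}$ following Property~\ref{P:stringtypes}, to $h_\lambda(b_\ell) = k$.

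The heart of the argument is to identify a canonical next cell $a_{\ell+1} := (r_0, \lambda_{r_0}+1)$ and to verify three things: that it is $\lambda$-addable, that it is contiguous with $a_\ell$, and that it extends $s$ in the required direction (below $a_\ell$). Addability of $a_{\ell+1}$ amounts to the strict inequality $\lambda_{r_0-1} > \lambda_{r_0}$ (or $r_0 = 1$). This is the main technical obstacle and is where the $k$-shape hypothesis enters: since $h_\lambda(r_0-1,c') = h_\lambda(r_0,c') + 1$ whenever $\lambda_{r_0-1} = \lambda_{r_0}$, the existence of a hook-$k$ cell in row $r_0$ would force $\rs(\lambda)_{r_0} > \rs(\lambda)_{r_0-1}$, contradicting that $\rs(\lambda)$ is a partition. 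A direct calculation using $\alpha + \gamma + 1 = k$, where $\alpha = \lambda_{r_0} - c$ and $\gamma = \lambda^t_c - r_0 = r - 1 - r_0$ are the arm and leg of $b_\ell$, gives $\diag(a_{\ell+1}) - \diag(a_\ell) = (\alpha+1) + (\gamma+1) = k+1$, so the two cells are contiguous. Furthermore $r_0 < r$ and $\lambda_{r_0} + 1 > c$, matching the required direction, and all previous cells of $s$ lie in rows strictly above $r_0$ and columns at most $c$, so $a_{\ell+1}$ remains addable to $\lambda \cup \{a_1,\dots,a_\ell\}$.

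Iterating this construction produces a chain $s \subsetneq s^{(1)} \subsetneq s^{(2)} \subsetneq \cdots$ of $\lambda$-addable strings. Termination is automatic since each step strictly decreases the row index of the bottom cell, which is bounded below by $1$. At termination, either the new bottommost column lies strictly to the right of $\lambda_1$, in which case no cell of $\bdy\lambda$ occupies that column, or else the bottom cell of that column in $\bdy\lambda$ has hook less than $k$. In either case, the condition that defines $r_{s,d}$ fails for the final string, so the final string has no negatively modified row. Since only $b_\ell$ can produce a negative row modification (the intermediate $b_i$'s contribute balanced $\pm 1$ to adjacent rows), eliminating this single source suffices.

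Part (2) follows from the same argument applied to $\lambda^t$, exchanging rows with columns and the role of $b_\ell$ with that of $b_0$; the fact that $\Ksh^k$ is closed under transposition (as remarked after Definition~\ref{D:kshape}) allows one to transport the construction of part (1) verbatim to yield an extension above that eliminates the unique negatively modified column $c_{s,u}$.
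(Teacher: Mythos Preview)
Your proof is correct and follows essentially the same approach as the paper's. The paper's version is more terse: it cites Remark~\ref{R:rowshift} to obtain the $\lambda$-addable corner in the row of $b_\ell=\Bot_{\col(a_\ell)}(\bdy\lambda)$, asserts the diagonal distance is $k+1$, and concludes ``by induction,'' whereas you unpack the content of Remark~\ref{R:rowshift} (the $k$-shape inequality $\rs(\lambda)_{r_0}\le\rs(\lambda)_{r_0-1}$ forcing $\lambda_{r_0-1}>\lambda_{r_0}$), carry out the hook-length computation explicitly, and make the termination argument precise.
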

\begin{proof}
Let $s$ negatively modify a row. By Remark \ref{R:rowshift},
there is a $\la$-addable cell $x$ in the row of
$b=\Bot_{\col(a_\ell)}(\bdy\la)$ and we have $h_\la(b)=k$.
Therefore $d(x)-d(a_\ell)=k+1$ and $s\cup\{x\}$ is a $\la$-addable
string that extends $s$ below. The required string exists by induction.
Part (2) is similar.
\end{proof}

\begin{lemma}  \label{L:comparelengths}
Let  $\lambda\in\Ksh$ and $s=\{a_1,\dots,a_\ell\}$ be a
$\lambda$-addable string.
\begin{enumerate}
\item
$\lambda_{\row(a_{j})-1}-\lambda_{\row(a_j)}
\geq \lambda_{\row(a_i)-1}-\lambda_{\row(a_i)}$ for all $i < j$,
\item
$\lambda^t_{\col(a_{j})-1}-\lambda^t_{\col(a_j)} \geq
\lambda^t_{\col(a_i)-1}-\lambda^t_{\col(a_i)}$ for all $j > i$,
\end{enumerate}
with the convention that
$\lambda_{\row(a_\ell)-1}$ (resp. $\lambda^t_{\col(a_1)-1}$) is infinite
if $a_\ell$ (resp. $a_1$) lies in the first row (resp. column) of $\lambda$.
\end{lemma}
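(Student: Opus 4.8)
The plan is to prove part (1) and then invoke the transpose symmetry to obtain part (2), since conjugating $\lambda$ interchanges the roles of rows and columns and reverses the order of the cells in a $\lambda$-addable string (an addable string for $\lambda$ becomes, after transpose, an addable string for $\lambda^t$ read from bottom to top). So it suffices to show that for consecutive cells $a_i, a_{i+1}$ in the string one has
\begin{equation*}
\lambda_{\row(a_{i+1})-1}-\lambda_{\row(a_{i+1})} \;\geq\; \lambda_{\row(a_i)-1}-\lambda_{\row(a_i)},
\end{equation*}
and then chain these inequalities. Here I should be careful with the stated convention: if $a_{i+1}$ lies in the first row then the left side is $+\infty$ and there is nothing to prove, so assume $\row(a_{i+1})\geq 2$.

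First I would set up the geometry. Write $a_i=(\row(a_i),\col(a_i))$ and $a_{i+1}=(\row(a_{i+1}),\col(a_{i+1}))$; since $a_{i+1}$ is below $a_i$ and they are contiguous, $\diag(a_i)-\diag(a_{i+1})\in\{k,k+1\}$, and both cells are $\lambda$-addable corners. The quantity $\lambda_{\row(a)-1}-\lambda_{\row(a)}$ for a $\lambda$-addable corner $a$ is exactly the arm of the cell $b=(\row(a)-1,\col(a))$ lying directly below $a$ in $\lambda$ (the number of cells to its right in its row, namely $\lambda_{\row(a)-1}-\col(a)$ if we also account for the shift $\col(a)=\lambda_{\row(a)}+1$; more precisely $\lambda_{\row(a)-1}-\lambda_{\row(a)} = a_\lambda(b)+1$ where $b$ is the cell just below $a$). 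The key observation is that the cell directly below $a_{i+1}$, call it $b_{i+1}=(\row(a_{i+1})-1,\col(a_{i+1}))$, satisfies $h_\lambda(b_{i+1})\le k$: indeed $b_{i+1}$ lies in $\bdy\lambda$ because otherwise $\Int^k(\lambda)$ would fail to be a partition, using that $\rs^k(\lambda)$ and $\cs^k(\lambda)$ are partitions (this is essentially the content of Remark~\ref{R:rowshift} applied at column $\col(a_{i+1})$). Meanwhile the cell $b_i=(\row(a_i)-1,\col(a_i))$ directly below $a_i$ — this is precisely the cell $b_i$ from Property~\ref{P:stringtypes} when we extend the string — has hook length exactly $k$ or less depending on string type; in all cases $h_\lambda(b_i)\le k$ as well, and in fact the contiguity relation forces a precise relationship between the hooks of $b_i$ and $b_{i+1}$.

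The main work is the inequality $\lambda_{\row(a_{i+1})-1}-\lambda_{\row(a_{i+1})} \geq \lambda_{\row(a_i)-1}-\lambda_{\row(a_i)}$, and I expect this to be the main obstacle. The cleanest route I see is to compare hook lengths. Let $r=\row(a_i)$, $r'=\row(a_{i+1})$, $c=\col(a_i)$, $c'=\col(a_{i+1})$, so $c'<c$ (since $a_{i+1}$ is strictly below $a_i$ and lower addable corners are strictly to the left) and $r'<r$. Consider the cell $u=(r'-1,c')\in\bdy\lambda$ just below $a_{i+1}$ and the cell $v=(r-1,c)$ just below $a_i$; I want $a_\lambda(u)\geq a_\lambda(v)$, equivalently $\lambda_{r'-1}-c' \geq \lambda_{r-1}-c$. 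Now $\lambda_{r-1}\geq \lambda_r = c-1$ and $\lambda_{r'-1}\geq \lambda_{r'}=c'-1$; the point is to bound $\lambda_{r-1}$ from above. Because $h_\lambda(v)\le k$ while $v$'s diagonal is $c-r$, and because $h_\lambda(u)\le k$ with $u$ on diagonal $c'-r' = (c-r) - (\diag(a_i)-\diag(a_{i+1})) = (c-r) - k$ or $(c-r)-k-1$ — wait, more carefully, $\diag(u) = \diag(a_{i+1}) - 1$ and $\diag(v)=\diag(a_i)-1$, so $\diag(v)-\diag(u) = \diag(a_i)-\diag(a_{i+1}) \in \{k,k+1\}$. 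So $u$ and $v$ sit on diagonals differing by $k$ or $k+1$, both with $k$-bounded hook lengths, both on the lower boundary of $\lambda$. I would argue: the row of $v$ extends to column $\lambda_{r-1}$; since $v=(r-1,c)$ has hook $\le k$, we get $\lambda_{r-1}-c + (\text{leg of }v) + 1 \le k$, i.e. $\lambda_{r-1} \le c + k - 1 - l_\lambda(v)$. Similarly $\lambda_{r'-1} \ge c' - 1$ but I need a lower bound of the right shape — actually since $u\in\bdy\lambda$ with hook $\le k$ and $u$ is near the bottom of its column (its column is $\col(a_{i+1})=c'$, and $\Bot_{c'}(\bdy\lambda)$ has hook possibly $=k$), I'd instead directly use: $a_\lambda(u) = \lambda_{r'-1}-c'$ and I claim this is $\ge \lambda_{r-1}-c = a_\lambda(v)$ by tracking how the boundary ``staircase'' between diagonals $c-r$ and $c'-r'$ (which differ by exactly $k$ or $k+1$, well inside the boundary by Lemma~\ref{L:distancestrings}-type reasoning, though that lemma is about moves) forces the arms to be monotone. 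The honest version: consider the staircase path of $\bdy\lambda$; as one moves from the column of $a_{i+1}$ leftward-and-up... Actually the soundest argument is induction on $\ell$ reducing to the $\ell=2$ case, and for $\ell=2$, a direct case analysis on the four string types of Property~\ref{P:stringtypes}, using in each case that the cell below $a_1$ has hook $\le k$ (with equality governed by the type) and comparing explicitly with the cell below $a_2$. I expect this case analysis, combined with carefully invoking that $\lambda$ is a $k$-shape (so both $\rs^k$ and $\cs^k$ are partitions) to pin down the boundary shape near $a_1$ and $a_2$, to be where all the real content lies; the chaining over $i<j$ and the transpose deduction of part (2) are then immediate.
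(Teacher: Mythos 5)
Your reduction to consecutive cells $a_i, a_{i+1}$ and the plan to get part (2) by transposing are both correct, and your reformulation of the desired inequality as $a_\lambda(u)\ge a_\lambda(v)$ for the cells just below the addable corners is exactly the right reformulation. But the proposal is not a proof: it is an outline that defers the key step (``I expect this case analysis \dots to be where all the real content lies''). You never actually establish the inequality $\lambda_{r'-1}-\lambda_{r'}\ge\lambda_{r-1}-\lambda_r$.

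There is also an orientation error in your setup. In the paper's French convention (row indices increase from bottom to top, $\lambda_1\ge\lambda_2\ge\cdots$), a cell below $a_i$ is in a lower-indexed row, which is at least as long, so its addable corner is weakly \emph{further to the right}. Thus $\col(a_{i+1})>\col(a_i)$, i.e.\ $c'>c$, not $c'<c$ as you wrote; correspondingly $\diag(a_{i+1})-\diag(a_i)\in\{k,k+1\}$, with the sign opposite to what you state. One sees this is forced: Property~\ref{P:stringtypes} places $b_i=(\row(a_{i+1}),\col(a_i))$ inside $\bdy\lambda\subset\lambda$, which requires $\lambda_{\row(a_{i+1})}=\col(a_{i+1})-1\ge\col(a_i)$. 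This isn't merely cosmetic — any case analysis you run with the corners ordered the wrong way will not match the actual geometry.

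The missing idea is that Remark~\ref{R:rowshift} gives the whole inequality directly, not just the boundary-membership fact you use it for. Take $r=\row(a_i)$, $c=\col(a_i)$ and $p=\lambda_{r-1}-\lambda_r$. The columns $c,c+1,\dots,c+p-1$ all have height $r-1$ in $\lambda$ (row $r$ stops at column $c-1$, row $r-1$ reaches column $c+p-1$), so the cells $\Top_j(\bdy\lambda)$ for $c\le j<c+p$ all lie in row $r-1$. Remark~\ref{R:rowshift} then says the cells $\Bot_j(\bdy\lambda)$ for $c\le j<c+p$ lie in a common row $R$, and $\lambda_{R-1}\ge\lambda_R+p$. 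It only remains to identify $R=\row(a_{i+1})$: the cell $b_i=(\row(a_{i+1}),c)$ has $h_\lambda(b_i)\in\{k-1,k\}$ (contiguity), while the cell directly below it has hook $h_\lambda(b_i)+(\lambda_{\row(a_{i+1})-1}-\lambda_{\row(a_{i+1})})+1\ge h_\lambda(b_i)+2> k$ since $a_{i+1}$ is an addable corner; so $b_i=\Bot_c(\bdy\lambda)$ and $R=\row(a_{i+1})$. This gives $\lambda_{\row(a_{i+1})-1}-\lambda_{\row(a_{i+1})}\ge p$, which is exactly the consecutive case of part (1); chaining and transposition finish. Your proposed four-way case split on string types is unnecessary — the $k$-shape condition via Remark~\ref{R:rowshift} carries all the content.
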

\begin{proof}
Part (1) follows from Remark~\ref{R:rowshift} and Part (2) follows by
transposition.
\end{proof}

\begin{lemma} \label{L:distancecontiguouscells}
Let $s=\{a_1,\dots,a_n\}$ and $t=\{b_i,b_{i+1}\}$ be $\la$-addable strings for
some $\la\in\Ksh$. If
$\row(b_i)=\row(a_j)+1$ and
$\row(b_{i+1})=\row(a_{j+1})+1$  (or
$\col(b_i)=\col(a_j)+1$ and
$\col(b_{i+1})=\col(a_{j+1})+1$), then
$|\diag(b_i)-\diag(b_{i+1})|\leq
|\diag(a_j)-\diag(a_{j+1})|$.
This also holds if $s$ and $t$ are $\la$-removable.
%
%
\end{lemma}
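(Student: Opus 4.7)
The plan is to convert the diagonal inequality into an inequality of row-length differences of $\lambda$ and then invoke Lemma~\ref{L:comparelengths}. I will carry out the row case under the $\lambda$-addable hypothesis in detail; the column case and the removable case are variations on the same computation.

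Put $r=\row(a_j)$ and $r'=\row(a_{j+1})$, so $r'<r$ and by hypothesis $\row(b_i)=r+1$, $\row(b_{i+1})=r'+1$. Since each of the four cells is a $\lambda$-addable corner, $\col(a_j)=\lambda_r+1$, $\col(b_i)=\lambda_{r+1}+1$, $\col(a_{j+1})=\lambda_{r'}+1$, and $\col(b_{i+1})=\lambda_{r'+1}+1$, giving
\[
\col(a_j)-\col(b_i)=\lambda_r-\lambda_{r+1},\qquad \col(a_{j+1})-\col(b_{i+1})=\lambda_{r'}-\lambda_{r'+1}.
\]
Both $\diag(a_{j+1})-\diag(a_j)$ and $\diag(b_{i+1})-\diag(b_i)$ lie in $\{k,k+1\}$ and are positive, so the absolute values in the statement of the lemma drop out. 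Expanding $\diag(x)=\col(x)-\row(x)$ and combining with the identities above yields
\[
\bigl(\diag(b_{i+1})-\diag(b_i)\bigr)-\bigl(\diag(a_{j+1})-\diag(a_j)\bigr)=(\lambda_r-\lambda_{r+1})-(\lambda_{r'}-\lambda_{r'+1}).
\]
Thus $|\diag(b_i)-\diag(b_{i+1})|\le|\diag(a_j)-\diag(a_{j+1})|$ is equivalent to
\[
\lambda_{\row(b_i)-1}-\lambda_{\row(b_i)}\le \lambda_{\row(b_{i+1})-1}-\lambda_{\row(b_{i+1})},
\]
which is precisely Lemma~\ref{L:comparelengths}(1) applied to the $\lambda$-addable string $t=\{b_i,b_{i+1}\}$. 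The column case works identically, reducing to the conclusion of Lemma~\ref{L:comparelengths}(2) applied to $t$.

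For the $\lambda$-removable case the identity $\col=\lambda_{\mathrm{row}}+1$ is replaced by $\col=\lambda_{\mathrm{row}}$, but the differences $\col(a_j)-\col(b_i)$ and $\col(a_{j+1})-\col(b_{i+1})$ still simplify to $\lambda_r-\lambda_{r+1}$ and $\lambda_{r'}-\lambda_{r'+1}$. The reduction above therefore applies verbatim, provided one has the removable analog of Lemma~\ref{L:comparelengths}. This analog is proved by repeating the Remark~\ref{R:rowshift}-based argument: the $k$-shape hypothesis $\lambda\in\Ksh$ constrains the row gaps $\lambda_{r-1}-\lambda_r$ at rows containing a removable corner in exactly the same way as at rows containing an addable corner, and the analogous statement for columns follows by transposition. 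The only (minor) obstacle is packaging this removable analog cleanly; no new structural ideas beyond the fact that $\rs(\lambda)$ and $\cs(\lambda)$ are partitions are needed.
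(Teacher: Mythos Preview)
Your argument is correct and proceeds along the same lines as the paper's: both reduce the diagonal inequality to the monotonicity built into the $k$-shape condition on $\lambda$. The packaging differs slightly. The paper writes the diagonal distance between contiguous cells $x$ (higher) and $x'$ as $\cs(\lambda)_{\col(x)}+\rs(\lambda)_{\row(x')}$; since $\row(b_i)-\row(b_{i+1})=\row(a_j)-\row(a_{j+1})$ forces $\cs(\lambda)_{\col(b_i)}=\cs(\lambda)_{\col(a_j)}$, the inequality comes directly from $\rs(\lambda)$ being a partition (namely $\rs(\lambda)_{\row(b_{i+1})}\le\rs(\lambda)_{\row(a_{j+1})}$). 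You instead expand everything in terms of the row gaps $\lambda_r-\lambda_{r+1}$ and invoke Lemma~\ref{L:comparelengths} on the string $t$, which is the same $k$-shape monotonicity one level removed. The paper's route is marginally more direct (no auxiliary lemma needed, and the $\lambda$-removable case falls out of the same $\cs+\rs$ decomposition without having to formulate a removable analog of Lemma~\ref{L:comparelengths}); your route has the virtue of making the dependence on Lemma~\ref{L:comparelengths} explicit.
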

\begin{proof}
Note that for any $x$ contiguous to and higher than $x'$,
$\cs(\lambda)_{\col(x)}=\row(x)-\row(x')$
and $|\diag(x)-\diag(x')|=
\cs(\lambda)_{\col(x)}+ \rs(\lambda)_{\row(x')}$.
Thus, $\cs(\lambda)_{\col(b_i)}=\cs(\lambda)_{\col(a_j)}$.
Since $\lambda\in \Pi$ implies that
$\rs(\lambda)_{\row(b_{i+1})}\leq
\rs(\lambda)_{\row(a_{j+1})}$ we then have our claim.
\end{proof}

\begin{lemma} \label{L:addablecornerbelow}
Let $\lambda\in\Ksh$.  Consider a $\lambda$-addable corner $b$
and some $x\not\in\lambda$ in a lower row than $\row(b)$
that is right-adjacent to a cell in $\lambda$.
If $|\diag(b)-\diag(x)|=k+1$ then $x$ is $\lambda$-addable
and if $|\diag(b)-\diag(x)|=k$ then either $x$ or the cell
immediately below $x$ is $\lambda$-addable.
\end{lemma}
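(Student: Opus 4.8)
The plan is to argue by analyzing the hook-length of the cell $b$ and the column/row geometry around $x$, using the hypothesis that $\lambda$ is a $k$-shape to control the relevant hook-lengths. First I would set up coordinates: write $b=(i,c)$ for the $\lambda$-addable corner, so that since $b$ is addable, $h_\lambda$ at the cells of $\bdy\lambda$ directly left of $b$ and directly below $b$ are at most $k$; more precisely $\Left_{i}(\bdy\lambda)$ and $\Bot_c(\bdy\lambda)$ sit just inside $\lambda$. The key quantitative fact, as used repeatedly in the excerpt (e.g.\ in the proof of Lemma~\ref{L:distancecontiguouscells}), is that for cells $y$ higher than $y'$ with $y$ contiguous to $y'$ one has $|\diag(y)-\diag(y')|=\cs(\lambda)_{\col(y)}+\rs(\lambda)_{\row(y')}$; here I would instead use the analogous bookkeeping relating $\diag(b)-\diag(x)$ to the number of $\bdy\lambda$-cells encountered going from row $\row(b)$ down to row $\row(x)$ and from column $\col(x)$ over to column $\col(b)$.

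The main step is: let $x'$ be the cell immediately left of $x$, which lies in $\lambda$ by hypothesis. Consider the hook in $\lambda$ based at the appropriate corner cell of the ``staircase'' region between $b$ and $x$. Since $b$ is $\lambda$-addable, the cell $a:=\Bot_{\col(b)}(\bdy\lambda)=(\row_a, c)$ with $\row_a\le i$ has the property that everything to its right in row $\row_a$ and everything above it in column $c$ up to row $i-1$ lies in $\lambda$, so its hook-length is at least $\diag(b)-\diag(a)+1$. One checks that $\diag(b)-\diag(x)\in\{k,k+1\}$ forces the cell $x'$ (or the cell just below $x'$, i.e.\ $(\row(x)-1,\col(x)-1)$) to have hook-length close to $k$: explicitly, the hook of $x'$ contains the horizontal run from $x'$ leftward?--- no; rather the arm of $x'$ reaches to column $\col(b)-1$ and its leg reaches up near row $i$, giving $h_\lambda(x')$ within one of $\diag(b)-\diag(x)$. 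When $|\diag(b)-\diag(x)|=k+1$: I claim $h_\lambda(x')\le k$, hence the cell $x=(x')^{+}$ to its right together with the cell above $x'$ being compared shows $x$ is $\lambda$-addable --- the point is that $x'$ being an inner-corner-like cell with bounded hook forces $x\notin\lambda$ to actually be addable, i.e.\ the cell directly above $x'$ is not in $\lambda$ either, which follows because if it were, the hook at that cell would be forced to equal $k$, contradicting $\lambda\in\Core\text{-compatibility}$... more carefully, it contradicts $\lambda$ being a $k$-shape via Remark~\ref{R:rowsequal}/Remark~\ref{R:rowshift}. When $|\diag(b)-\diag(x)|=k$: the same computation gives $h_\lambda(x')\le k+1$ but possibly $h_\lambda(x')=k+1$; in that case $x$ need not be addable, but then $h_\lambda$ at the cell directly below $x'$ is $k$ (one less), which by the $k$-shape property (the analogue of Remark~\ref{R:rowshift}: a cell of hook $k$ forces an addable corner in its row) yields that the cell below $x$ is $\lambda$-addable.

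Concretely the key steps, in order, are: (1) fix coordinates and record that $b$ being addable means $h_\lambda(\Left_i(\bdy\lambda))\le k$ and $h_\lambda(\Bot_c(\bdy\lambda))\le k$ while the cells just below/left of these have hook $>k$; (2) identify $x'$ the cell left of $x$ and compute its arm and leg in $\lambda$ in terms of $\diag(b)-\diag(x)$, obtaining $h_\lambda(x')\le |\diag(b)-\diag(x)|$ (using that all cells between $x'$ and the staircase at $b$ lie in $\lambda$ since $b$ is addable and $\lambda$ is a partition); (3) in the case $|\diag(b)-\diag(x)|=k+1$, deduce $h_\lambda(x')\le k+1$, split on whether $h_\lambda(x')\le k$, and in the boundary case use that $\lambda$ is a $k$-shape (Remarks~\ref{R:rowsequal},~\ref{R:rowshift}) to rule out $x\in\lambda$, concluding $x$ is addable; (4) in the case $|\diag(b)-\diag(x)|=k$, similarly get $h_\lambda(x')\le k$, and if $x\in\lambda$ look at the cell $y$ directly below $x$: its hook in $\lambda$ is then bounded by $k-1$ on the arm side so $h_\lambda$ of the cell left of $y$ is $\le k$, and since $y\notin$ the interior we get $y$ is $\lambda$-addable via Remark~\ref{R:rowshift}. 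I expect step (3)--(4), namely pinning down exactly when the ``or the cell immediately below'' alternative is forced and verifying it does not violate the $k$-shape condition, to be the main obstacle: the hook-length arithmetic is routine, but correctly handling the degenerate configurations (when rows or columns of $\bdy\lambda$ coincide, so that the relevant cells are at the very boundary) requires careful appeal to the structure of $k$-shapes in Remarks~\ref{R:rowsequal} and~\ref{R:rowshift}.
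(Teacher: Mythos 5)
There is a genuine gap. The paper's proof hinges on one clean observation about the cell $a=(\row(x),\col(b))$ (not the cell $x'$ left of $x$ that you work with): since $b=(\row(b),\col(b))$ is $\lambda$-addable, the column $\col(b)$ of $\lambda$ has height $\row(b)-1$, so
\[
h_\lambda\bigl((\row(x),\col(b))\bigr)
= \la_{\row(x)} - \col(b) + (\row(b)-1) - \row(x) + 1
= \diag(x)-\diag(b)-1 .
\]
Hence $|\diag(b)-\diag(x)|=k+1$ forces $h_\lambda(a)=k$, so $\Bot_{\col(b)}(\bdy\lambda)$ lies exactly in $\row(x)$, and Remark~\ref{R:rowshift} hands you a $\lambda$-addable corner in $\row(x)$, which is necessarily $x$. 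When $|\diag(b)-\diag(x)|=k$ we get $h_\lambda(a)=k-1$, so $\Bot_{\col(b)}(\bdy\lambda)$ is in $\row(x)$ or $\row(x)-1$; if $x$ fails to be addable then $\la_{\row(x)-1}=\la_{\row(x)}$, which simultaneously forces $\Bot_{\col(b)}(\bdy\lambda)$ into row $\row(x)-1$ and makes the addable corner given by Remark~\ref{R:rowshift} in that row be precisely the cell immediately below $x$. You never establish this identity, and working with $x'$ (arm $0$, leg pointing straight up) does not tie cleanly to $\diag(b)-\diag(x)$.

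Two of your steps are actually incorrect. First, ``rule out $x\in\lambda$'' is vacuous since $x\notin\lambda$ is a hypothesis; what must be shown is that $x$ is \emph{addable}, i.e.\ $\la_{\row(x)-1}>\la_{\row(x)}$ in the paper's French convention — your phrasing in terms of ``the cell directly above $x'$'' addresses the wrong row. Second, the appeal to ``contradicting $\lambda\in\Core$-compatibility'' is wrong: $\lambda$ is only assumed to be a $k$-shape, and $k$-shapes routinely have cells of hook length exactly $k$ (indeed, the proof relies on producing such cells to invoke Remark~\ref{R:rowshift}). You do correctly identify Remark~\ref{R:rowshift} as the decisive tool in the end, but the hook-length bookkeeping that feeds into it needs to be redone around the cell $(\row(x),\col(b))$ rather than $x'$, and the case analysis must be organized around whether $x$ is addable, not whether $x\in\lambda$.
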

\begin{proof}
When $|\diag(b)-\diag(x)|=k+1$, $\Bot_{\col(b)}(\bdy\lambda)$ is
in the row of $x$ and thus Remark~\ref{R:rowshift}
implies that $\row(x)$ has $\lambda$-addable corner
(namely $x$).  If $|\diag(b)-\diag(x)|=k$, then either
$\Bot_{\col(b)}(\bdy\lambda)$ is in the row of $x$
(and as before, $x$ is $\lambda$-addable) or
$\Bot_{\col(b)}(\bdy\lambda)$ is in the row below $x$. 
If $x$ is not $\la$-addable then the latter case holds and the cell immediately
below $x$ is $\la$-addable.
\end{proof}

\begin{lemma} \label{C:samesize}
Let $m$ be a row or column move from $\lambda$ to $\mu$.
For any cells $a,b\in m$ that are translates of each other,
$\rs(\lambda)_{\row(a)}=
\rs(\lambda)_{\row(b)}$,  $\cs(\lambda)_{\col(a)}=
\cs(\lambda)_{\col(b)}$, $\rs(\mu)_{\row(a)}=\rs(\mu)_{\row(b)}$
and  $\cs(\mu)_{\col(a)}=
\cs(\mu)_{\col(b)}$.
\end{lemma}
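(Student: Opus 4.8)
The plan is to reduce, via transposition, to a row move $m\colon \lambda=\lambda^0\subset\lambda^1\subset\cdots\subset\lambda^r=\mu$ with strings $s_j=\{a_1^j,\dots,a_\ell^j\}$, and then to show that each of the four quantities in the statement is constant as a function of the string index. Since ``is a translate of'' is an equivalence relation on $\{s_1,\dots,s_r\}$ (Definition~\ref{D:defmove}(3)) and the asserted equalities are transitive, it suffices to treat $a=a_i^j$ and $b=a_i^{j+1}$, corresponding cells of two consecutive strings. Every string of a row move is row-type, and for a row-type string neither $r_{s,u}$ nor $r_{s,d}$ is defined, so $\change_\rs(s)=0$ by Remark~\ref{R:stringchange}; hence $\rs(\lambda)=\rs(\lambda^1)=\cdots=\rs(\mu)$, the two ``$\rs$'' equalities coincide, and I may write $\rs(\lambda)$ for any partition in the chain. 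By Definition~\ref{D:defmove}(3)--(4), $s_{j+1}$ is the translate of $s_j$ by a vector of horizontal component $1$, so $\col(a_i^{j+1})=\col(a_i^j)+1$, the difference $\row(a_i^{j+1})-\row(a_i^j)$ is the same for all $i$, and $c_{s_{j+1},u}=c_{s_j,u}+1$ (equivalently $c_{s_j,u}=c_{s_1,u}+j-1$ by \eqref{E:rowmovecschange}).

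The crucial input — and, I expect, the main obstacle — is the bound $\col(a_\ell^1)-\col(a_i^1)\ge r$ for every $i<\ell$; from it, the $r$ consecutive ``interior'' columns $\col(a_i^1),\dots,\col(a_i^r)$ are untouched by every string of $m$. To prove the bound, suppose $g:=\col(a_\ell^1)-\col(a_i^1)\le r-1$ and put $j=g+1\le r$. Then $\col(a_i^j)=\col(a_i^1)+g=\col(a_\ell^1)$, so the cells $a_i^j$ and $a_\ell^1$ lie in the same column of the horizontal strip $\mu/\lambda$ (Property~\ref{P:movestrip}) and therefore coincide. Now $a_i^1\in s_1$ and $a_i^j=a_\ell^1\in s_j$ are translates in $m$, so their diagonal distance is less than $k-1$ by Lemma~\ref{L:distancestrings}; this contradicts $\diag(a_\ell^1)-\diag(a_i^1)=\sum_{t=i}^{\ell-1}\bigl(\diag(a_{t+1}^1)-\diag(a_t^1)\bigr)\ge(\ell-i)k\ge k$, each summand being $k$ or $k+1$ by contiguity. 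Interior columns are not negatively modified by $m$ either, since negatively modified columns lie strictly left of $\col(a_1^1)\le\col(a_i^1)$.

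It remains to assemble the pieces. For $i<\ell$, the column $\col(a_i^j)$ has the same size in $\bdy\lambda^{j-1}$, in $\bdy\lambda$, and in $\bdy\mu$, so by the first identity in the proof of Lemma~\ref{L:distancecontiguouscells} (valid for consecutive cells of an addable string over any partition) applied to the contiguous pair $a_i^j\supset a_{i+1}^j$ in the $\lambda^{j-1}$-addable string $s_j$, one gets $\cs(\lambda)_{\col(a_i^j)}=\cs(\lambda^{j-1})_{\col(a_i^j)}=\row(a_i^j)-\row(a_{i+1}^j)=\row(a_i^1)-\row(a_{i+1}^1)$, which is independent of $j$; the same value holds over $\mu$. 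For $i=\ell$, the column $\col(a_\ell^j)=c_{s_j,d}$ is positively modified only by $s_j$, and the columns $c_{s_1,d},\dots,c_{s_r,d}$ all have equal size in $\lambda$ by Property~\ref{C:rowmovecolsize}, hence equal size in $\mu$ after adding $1$. For the $\rs$ equality, the number of cells of $\bdy\lambda^{j-1}$ in the row of $a_i^j$ equals $\col(a_i^j)$ minus the column of the leftmost such cell, namely $c_{s_j,u}$ when $i=1$ and $\col(a_{i-1}^j)$ when $i\ge 2$ (by the second identity in the proof of Lemma~\ref{L:distancecontiguouscells}); since all of these quantities increase by $1$ when $j$ is replaced by $j+1$, $\rs(\lambda)_{\row(a_i^j)}=\rs(\lambda)_{\row(a_i^{j+1})}$, and this also gives $\rs(\mu)_{\row(a_i^j)}=\rs(\mu)_{\row(a_i^{j+1})}$. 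Taking $a=a_i^j$, $b=a_i^{j+1}$ and combining the cases $i<\ell$, $i=\ell$, and $i=1$ completes the proof; everything after the column bound is bookkeeping with \eqref{E:rowmovecschange} and the contiguous-cell identities.
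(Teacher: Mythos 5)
Your argument is genuinely different from the paper's, which quotes the translate-diagram property and Property~\ref{P:movestrip} and concludes in two sentences. You instead reduce to adjacent strings and verify the four equalities by explicit bookkeeping with the contiguity identities, with a useful separation bound $\col(a_\ell^1)-\col(a_i^1)\ge r$ (for $i<\ell$) that you correctly deduce from Lemma~\ref{L:distancestrings}.

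However, the assembly step has a gap: the assertion that ``negatively modified columns lie strictly left of $\col(a_1^1)$'' (equivalently $c_{s_r,u}<\col(a_1^1)$, i.e.\ $\rs(\lambda)_{\row(a_1^1)}\ge r$) is not a consequence of anything you established and is not obvious. Your separation bound controls the positively modified columns $c_{s_{j'},d}$, but says nothing about the $c_{s_{j'},u}$. This matters: if $d:=\col(a_1^1)-c_{s_1,u}$ were $<r$, then $\col(a_i^j)$ would lie in $\{c_{s_1,u},\dots,c_{s_r,u}\}$ exactly when $j\le r-d$, so $\change_\cs(m)_{\col(a_i^j)}$ would depend on $j$ and the $\cs(\mu)$ half of the claim would fail, even though the $\cs(\lambda)$ half (which only sees strings $s_{j'}$ with $j'<j$) still holds. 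The assertion is true but requires its own argument, comparable in weight to your separation bound. Here is one repair in the same spirit. Suppose $c_{s_{j'},u}=\col(a_1^1)$ for some $j'$ (necessarily $j'\ge 2$). Then $b_0^{j'}=(\row(a_1^{j'}),\col(a_1^1))$; since $a_1^{j'}$ is $\lambda^{j'-1}$-addable in column $\col(a_1^1)+j'-1$, and $a_1^1$ is the top of column $\col(a_1^1)$ in $\lambda^{j'-1}$ (no later string enters that column, as $m$ is a horizontal strip), one computes
$h_{\lambda^{j'-1}}(b_0^{j'})=(j'-2)+\bigl(\row(a_1^1)-\row(a_1^{j'})\bigr)+1=\diag(a_1^{j'})-\diag(a_1^1)$.
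Since $s_{j'}$ is row-type, this hook equals $k$; but Lemma~\ref{L:distancestrings} applied to the translate cells $a_1^1\in s_1$ and $a_1^{j'}\in s_{j'}$ gives $\diag(a_1^{j'})-\diag(a_1^1)<k-1$, a contradiction. Since $c_{s_1,u},\dots,c_{s_r,u}$ are consecutive and $c_{s_1,u}<\col(a_1^1)$, all of them are $<\col(a_1^1)$. With this filled in, your assembly goes through.
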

\begin{proof}
Consider the case that $m$ is a row move (the
column case follows by transposition).  By definition of move,
the strings of $m$
have diagrams which are translates of
each other. Since
Property~\ref{P:movestrip} implies the strings never lie on top
of each other, if cell $a$  is the translate of
cell $b$ then $\cs(\lambda)_{\col(a)}=
\cs(\lambda)_{\col(b)}$ and  $\cs(\mu)_{\col(a)}=
\cs(\mu)_{\col(b)}$.  Since strings in a row move never change the row reading
we have by translation of diagrams that $\rs(\lambda)_{\col(a)}=
\rs(\lambda)_{\col(b)}$ and  $\rs(\mu)_{\col(a)}=
\rs(\mu)_{\col(b)}$.
\end{proof}

Let $b$ be a cell in a skew shape $D$. Define the
\defit{indent} of $b$ in $D$ by $\Ind_D(b)=\col(b)-\col(\Left_{\row(b)}(D))$;
this is
the number of cells strictly to the left of $b$ in its row in $D$.
If $D$ is a horizontal strip and $b\in D$ then
$b$ is $\la$-addable if and only if $\Ind_D(b)=0$.

\begin{lemma} \label{L:addable} Let $\la\in\Ksh$, $m$ a row move from $\la$,
and $s$ a string of $m$. Then $\Ind_m(b)$ is constant for $b \in s$.
In particular, if some cell of $m$ is $\la$-addable,
then so is every cell in its string.
\end{lemma}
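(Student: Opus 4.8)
The plan is to show that the indent is constant along each string of a row move, and then deduce the ``in particular'' claim from the characterization of addable cells in a horizontal strip. Let $m=s_1\cup\dots\cup s_r$ be the row move from $\la$, with $s_i=\{a^i_1,\dots,a^i_\ell\}$ indexed so that $a^i_{j+1}$ lies below and contiguous to $a^i_j$, and with the top cells $a^1_1,\dots,a^r_1$ occurring in consecutive columns from left to right (Definition~\ref{D:defmove}\eqref{I:movetopcells}). Fix a string $s=s_i$; I want to compare $\Ind_m(a^i_j)$ for the successive cells $a^i_1,a^i_2,\dots,a^i_\ell$ of $s$. Since $m/\la$ is a horizontal strip (Property~\ref{P:movestrip}(1)), each row of $\bdy(m/\la)$ is an interval of consecutive columns, so for a cell $b$ of $m$ in row $r$, $\Ind_m(b)=\col(b)-\col(\Left_r(m))$.

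First I would handle the step from $a^i_j$ to $a^i_{j+1}$ within a fixed string. By Lemma~\ref{L:comparelengths}(1) and the definition of string (successive cells at diagonal distance $k$ or $k+1$), the rows occupied by the cells of a single string are determined by $\la$ alone via $\cs(\la)$ and $\rs(\la)$, and the \emph{same} is true for every translate string $s_{i'}$ of $s_i$ by Definition~\ref{D:defmove}\eqref{I:movetranslates}. Concretely, since strings are translates, the cells $a^1_j,a^2_j,\dots,a^r_j$ (the $j$-th cells of each string) occupy $r$ consecutive columns and all lie in one row, call it $\rho_j$; moreover $\rho_1>\rho_2>\dots>\rho_\ell$ because within each string successive cells strictly descend. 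So row $\rho_j$ of $m$ contains exactly the cells $a^1_j,\dots,a^r_j$ (here one uses that the strings never overlap a row twice, which follows from Property~\ref{P:movestrip}(1), and that no cell of $s_{i'}$ with $i'$ fixed lands in row $\rho_j$ for two different $j$). Hence $\Left_{\rho_j}(m)=a^1_j$, and $\Ind_m(a^i_j)=\col(a^i_j)-\col(a^1_j)$. But $\col(a^i_j)-\col(a^1_j)$ is just the horizontal component of the fixed translation vector taking $s_1$ to $s_i$ (the translates have the same cell-to-cell offsets), which is the constant $\col(a^i_1)-\col(a^1_1)$, independent of $j$. This proves $\Ind_m(b)$ is constant for $b\in s_i$.

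For the ``in particular'' statement: $m/\la$ is a horizontal strip, so for $b\in m$ one has $b$ is $\la$-addable iff $\Ind_m(b)=0$ (the remark just before the lemma). If some cell $b\in s$ is $\la$-addable then $\Ind_m(b)=0$, hence $\Ind_m(b')=0$ for every $b'\in s$ by the constancy just proved, hence every $b'\in s$ is $\la$-addable. The column-move case follows by transposition. I expect the main obstacle to be the bookkeeping in the first step — precisely justifying that row $\rho_j$ of the skew shape $m/\la$ consists of exactly the $j$-th cells of the $r$ strings, with nothing above or below intruding; this is where one leans on Property~\ref{P:movestrip}(1) (horizontal strip, so one cell per column) together with the translate condition and the consecutiveness of the top columns, and it may require a short argument via Lemma~\ref{L:distancestrings} to rule out two cells of distinct strings colliding in the same column.
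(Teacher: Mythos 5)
There is a genuine gap. Your proof hinges on the claim that for each $j$ the $j$-th cells $a^1_j,a^2_j,\dots,a^r_j$ of the $r$ strings all lie in a single row $\rho_j$, equivalently that the translation vector from $s_i$ to $s_{i+1}$ is always $(0,1)$. You justify this only by appealing to the translate condition, but translates may differ by a vector with nonzero vertical component. The first picture in Example~\ref{X:rowmove} (the degenerate $k=5$ row move of length $1$ and rank $3$) shows exactly this: the top cell $A$ of $s_1$ sits one row above the cells $B$ and $C$ of $s_2$ and $s_3$, so the three ``$j=1$'' cells occupy two different rows. As a result your identity $\Ind_m(a^i_j)=\col(a^i_j)-\col(a^1_j)=i-1$ fails: in that example the actual indents are $\Ind_m(A)=0$, $\Ind_m(B)=0$, $\Ind_m(C)=1$, not $0,1,2$. (The lemma itself is vacuous for length-$1$ strings, but the fact that your formula gives the wrong numbers there shows the row-alignment step cannot be taken for granted; it would have to be proved, at minimum for strings of length $\geq 2$, and you neither state nor prove such a restriction.)

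The paper's proof does not go through this global row-alignment picture at all. It argues by induction on the value of $\Ind_m(b)$, using Definition~\ref{D:defmove}\eqref{I:movetranslates} \emph{locally}: if $b$ has indent $d>0$, the cell of $m$ immediately to its left lies in a translate string of smaller indent, and constancy transfers along the translate; the base case $\Ind_m(b)=0$ is handled by $\la$-addability as in your last paragraph. This inductive route avoids having to decide whether the strings stack into a single block of rows. To repair your argument you would either need to prove that all translation vectors are $(0,1)$ when $\ell\geq 2$ (nontrivial, and not obviously true), or abandon the closed-form expression for $\Ind_m$ and argue cell by cell along the string as the paper does.
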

\begin{proof} The first assertion follows by induction on $\Ind_m(b)$
using Definition \ref{D:defmove}\eqref{I:movetranslates}. The second
holds by Proposition~\ref{P:uniquedecomp}.
\end{proof}

\section{Equivalence of paths in the $k$-shape poset}
\label{sec:equivalence}

\subsection{Diamond equivalences}

\begin{definition} \label{D:charge}
Given a move $m$, the \defit{charge} of $m$, written $\charge(m)$, is
$0$ if $m$ is a row move and $r\ell$ if $m$ is
a column
move of length $\ell$ and rank $r$. Notice that in the column case,
$r \ell$ is simply the number of cells in the move $m$
when viewed as a skew shape.  The charge of a path $(m_1,\dots,m_n)$ in $\Ksh_N$
is
$\charge(m_1)+\cdots+\charge(m_n)$, the sum of the charges of the moves that
constitute the path.
\end{definition}

Let $\equiv$ be the equivalence relation on directed paths in $\Ksh_N$
generated by the following {\it diamond equivalences}:
\begin{equation} \label{E:elemequiv}
\tM m \equiv \tm M
\end{equation}
where $m,M,\tm,\tM$ are moves (possibly empty) between $k$-shapes
such that the diagram
 \dgARROWLENGTH=2.5em
\begin{equation} \label{E:commudiagram}
\begin{diagram}
\node[2]{\la} \arrow{sw,t}{m} \arrow{se,t}{M} \\ \node{\mu}
\arrow{se,b}{\tilde M} \node[2]{\nu} \arrow{sw,b}{\tilde m} \\
\node[2]{\gamma}
\end{diagram}
\end{equation}
commutes and the charge is the same on both sides of the diamond:
\begin{equation} \label{E:charge}
\charge(m)+\charge(\tM)=\charge(M)+\charge(\tm).
\end{equation}
The commutation is equivalent to the equality $\tM\cup m = \tm\cup
M$ where a move is regarded as a set of cells.  Observe that
the charge is by definition constant
on equivalence classes of paths.

\begin{example}
Continuing Example \ref{X:kshapeposet}, the two paths in $\Ksh^2_4$ from $\la=(3,1,1)$ to
$\nu=(4,3,2,1)$ have charge 2 and 3 respectively, and so are not equivalent.  Thus by Theorem \ref{T:branch}  one has $b^{(k)}_{\mu\la} = 2$, and according to Conjecture \ref{CJ:branch}, we have $b^{(k)}_{\mu\la} = t^2 + t^3$.

\dgARROWLENGTH=2.5em
\begin{equation}
\begin{diagram}
\node[2]{\tableau[pby]{ \\ \\ \bl& & }} \arrow{sw,t}{c} \arrow{se,t}{r} \\
\node{\tableau[pby]{ \\ \\ \bl & \\ \bl&\bl& }} \arrow{se,b}{r} %
\node[2]{\tableau[pby]{\\ \bl&\\ \bl&\bl&&}} \arrow{sw,b}{c} \\
\node[2]{\tableau[pby]{ \\ \bl& \\ \bl&\bl& \\ \bl&\bl&\bl&}}
\end{diagram}
\end{equation}

The two paths in $\Ksh^3_5$ from $\la=(3,2,1)$ to
$\nu=(4,2,1,1)$ are diamond equivalent, both having charge 1.  Thus by Theorem \ref{T:branch}  one has $b^{(k)}_{\mu\la} = 1$, and according to Conjecture \ref{CJ:branch}, we have $b^{(k)}_{\mu\la} = t$.

\begin{equation}
\begin{diagram} 
\node[2]{\tableau[pby]{ \\ & \\ \bl&& }} \arrow{sw,t}{c} \arrow{se,t}{r} \\
\node{\tableau[pby]{\\ \\ \bl& \\ \bl & & }}\arrow{se,b}{r} %
\node[2]{\tableau[pby]{\\ & \\ \bl& \bl & & }} \arrow{sw,b}{c}  \\
\node[2]{\tableau[pby]{\\ \\ \bl& \\ \bl&\bl&&}} %
\end{diagram}
\end{equation}
\end{example}

We will describe in more detail in this section when two moves $m$ and
$M$ can obey a diamond equivalence.  We will also see that
the relation $\equiv$ is generated by special diamond
equivalences called {\it elementary equivalences} (see Proposition~\ref{P:diamond}).

\subsection{Elementary equivalences}
We require a few more notions to define elementary equivalence.

Let $m$ and $M$ be moves from $\la\in\Ksh$.  We say that $m$ and $M$
{\it intersect} if they are non-disjoint as sets of cells.  Similarly, we say
that two strings $s$ and $t$ intersect if they have cells in common.
We say that the pair $(m,M)$ is
{\it reasonable} if for every string $s$ and $t$ of $m$ and $M$ respectively
that intersect, we either have $s \subseteq t$ or $t \subseteq s$.

Let $s$ and $t$ be intersecting strings.
Either the highest (resp. lowest) cell of $s\cup t$ is in $s\setminus t$,
or in $s\cap t$, or in $t\setminus s$; in these cases we say that
$s$ continues above (resp. below) $t$, or $s$ and $t$ are matched above (resp. below), or
$t$ continues above (resp. below) $s$.
We say that $m$ continues above (resp. below) $M$,
or $m$ and $M$ are matched above (resp. below),
or $M$ continues above (resp. below) $m$, if the corresponding relation holds for all pairs
of strings $s$ in $m$ and $t$ in $M$ such that $s\cap t\ne \emptyset$.

We say that the disjoint strings $s$ and $t$ are {\it contiguous} if
$s \cup t$ is a string. We say that the moves $m$ and $M$ are not
contiguous if no string of $m$ is contiguous to a string of $M$.

For the sake of clarity, the overall picture is presented first, the
proofs being relegated to Subsections \ref{SS:introwcolmoves},
\ref{SS:introwrowmoves} and \ref{SS:diamond}.

The following lemma asserts that any pair of intersecting strings $s\subset m$
and $t\subset M$ are in the same relative position.
\begin{lemma} \label{L:moves_relative_position} Let $m$ and $M$ be intersecting
$\la$-addable moves for $\la\in\Ksh$. Then
$m$ continues above $M$ (resp. $m$ and $M$ are matched above, resp. $M$ continues above $m$)
if and only if there exist strings $s\subset m$ and $t\subset M$ such that
$s$ continues above $t$ (resp. $s$ and $t$ are matched above, resp. $t$ continues above $s$).
A similar statement holds with the word ``above" replaced by the word ``below".
\end{lemma}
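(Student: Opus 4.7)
The forward direction is tautological: if $m$ continues above $M$, every intersecting pair of strings witnesses the relation, and the hypothesis that $m\cap M\ne\emptyset$ guarantees at least one such pair. My plan is therefore to prove the converse, namely that once a single witness pair $(s_0,t_0)$ satisfies one of the three relations (say $s_0$ continues above $t_0$), every pair $(s,t)\in m\times M$ with $s\cap t\ne\emptyset$ satisfies the same one.

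The structural input is the rigidity granted by Definition~\ref{D:defmove}: within a single move, the strings are translates of one another by a fixed vector whose direction is determined by the type of the move. For a row move this vector has column component $1$; for a column move it has row component $1$. A preliminary step (using Property~\ref{P:cond5}, Lemma~\ref{C:samesize}, and the terminal $k$-shape condition of Definition~\ref{D:defmove}) is to show that the transverse component of this translation vanishes, so that the strings of a row move sweep a common row set and the strings of a column move sweep a common column set. Once this is in hand, the row-row case follows immediately: every string of $m$ has its top cell in one fixed row $r_m$, every string of $M$ has its top cell in one fixed row $r_M$, the assumption on $(s_0,t_0)$ forces $r_m>r_M$, and this inequality carries over to every intersecting pair. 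The column-column case is obtained by transposition, and the matched and continuing-below subcases are handled analogously by the same bookkeeping on rows (or columns).

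The expected main obstacle is the mixed case, in which $m$ is a row move and $M$ is a column move (or vice versa). Here the two translation directions are orthogonal, so a priori the vertical coordinate of $\mathrm{top}(s)-\mathrm{top}(t)$ could vary with the choice of intersecting pair. To rule this out I plan to exploit Lemma~\ref{L:distancestrings}, Lemma~\ref{L:distancecontiguouscells}, Property~\ref{P:stringtypes}, and the $k$-shape condition on $\la$: together these impose tight diagonal-distance constraints that severely restrict which row-type and column-type strings can share a cell. The crux, and the portion that will require the most intricate diagram chasing, is to check that this restricted family of intersections is narrow enough that the relative height of the tops remains constant across it. This is precisely the material to be carried out in the subsections that follow, where the row-column and row-row (resp.\ column-column) interactions are analyzed in detail before being combined to complete the proof.
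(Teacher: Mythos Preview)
Your approach for the row--row case rests on the claim that the strings of a row move all occupy a common set of rows, i.e., that the translation vector between consecutive strings has vanishing row component. This is false. Condition~(\ref{I:movetopcells}) of Definition~\ref{D:defmove} only forces the top cells of $s_1,\dotsc,s_r$ into consecutive \emph{columns}; the translate condition~(\ref{I:movetranslates}) is a pairwise relation and does not pin down a single translation vector common to all consecutive pairs. The first (degenerate) move in Example~\ref{X:rowmove} is already a counterexample: there $s_1=\{A\}$ sits one row above $s_2=\{B\}$ and $s_3=\{C\}$. Lemma~\ref{C:samesize}, which you cite, asserts only that $\rs(\la)$ takes equal values at the rows of translate cells, not that those row indices coincide. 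So the quick reduction you propose for the row--row case does not go through, and with it the claim that the mixed case is the only hard one collapses.

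The paper does not attempt any such global rigidity. Instead, treating the mixed case (Lemma~\ref{L:continuesexistscol}) and the row--row case (Lemma~\ref{L:continuesexistsrow}) separately, it first observes that only \emph{primary} strings (those made of $\la$-addable corners) can participate in an intersection (Remark~\ref{R:intprim}), and then proves a propagation lemma (Lemma~\ref{L:nextint} and its analogue Lemma~\ref{L:nextintrow}): if $s$ continues below $t$, then the neighbouring primary string of $m$ also meets some string of $M$ and again continues below it. The argument is a local one, comparing row shapes at adjacent primary strings and invoking the fact that $M*\la\in\Ksh$. A violation of uniform relative position is then turned into a violation of the diagonal bound of Lemma~\ref{L:distancestrings}. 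You should replace the false ``common row set'' claim by this inductive propagation along primary strings.
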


\begin{notation} \label{N:arrow}
For two sets of cells $X$ and $Y$, let $\rightarrow_X(Y)$ (resp.
$\uparrow_X(Y)$) denote the result of shifting to the right (resp.
up), each row (resp. column) of $Y$ by the number of cells of $X$ in
that row (resp. column). Define $\leftarrow_X(Y)$ and
$\downarrow_X(Y)$ analogously.
\end{notation}

\subsection{Mixed elementary equivalence}

\begin{definition} \label{D:rowcolcommute}
A \defit{mixed elementary equivalence} is a relation of the form
\eqref{E:elemequiv}
arising from a
row move $m$ and column move $M$ from some $\la\in\Ksh$, which has
one of the following forms:
\begin{enumerate}
\item $m$ and $M$ do not intersect and $m$ and $M$ are not contiguous.
Then $\tilde m=m$ and
$\tilde M=M$.
\item $m$ and $M$ intersect and
\begin{itemize}
\item[(a)] $m$ continues above and below $M$. Then
\begin{equation*}
\tilde m=\rightarrow_M(m) \qquad\text{and}\qquad
\tilde M=\rightarrow_m(M)
\end{equation*}
\item[(b)] $M$ continues above and below $m$. Then
\begin{equation*}
\tilde m= \uparrow_M(m)\qquad\text{and}\qquad \tilde M=
\uparrow_m(M).
\end{equation*}
\end{itemize}
\end{enumerate}
\end{definition}
\begin{remark}
If the pair $(m,M)$ defines a mixed elementary equivalence
then $m$ and $M$ are reasonable.
\end{remark}

\begin{example} \label{X:rowcol} For $k=4$ the
following diagram defines a mixed elementary equivalence via Case
(2)(a). The black cells indicate those added to the original shape.
\begin{equation*}
\begin{diagram}
\node[2]{{\tableau[pby]{\\ \\ \\ \\ \bl & & \\ \bl & & \\ \bl&\bl&\bl&&&&\\ }}} \arrow{sw,t}{m} \arrow{se,t}{M} \\
\node{{\tableau[pby]{\\ \\ \\ \bl&\fl \\ \bl&& \\ \bl&\bl& &\fl \\
\bl&\bl&\bl&\bl&&&&\fl }}} \arrow{se,b}{\tilde M}
\node[2]{{\tableau[pby]{\\ \\ \\ \\ \bl & & \\ \bl &&&\fl \\ \bl&\bl&\bl&\bl&&& }}} \arrow{sw,b}{\tilde m} \\
\node[2]{{\tableau[pby]{\\ \\ \\ \bl &\fl \\ \bl &&  \\ \bl &\bl& &
\fl&\fl \\ \bl&\bl&\bl&\bl&\bl&&&\fl }}}
\end{diagram}
\end{equation*}
\end{example}

\begin{proposition}\label{P:relmix} If $(m,M)$ defines a mixed elementary equivalence,
then the prescribed sets of cells $\tm$ and $\tM$
define a diamond equivalence.
\end{proposition}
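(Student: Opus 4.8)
The plan is to verify, for each of the three forms in Definition~\ref{D:rowcolcommute}, the conditions that make the relation $\tM m\equiv\tm M$ a diamond equivalence in the sense of \eqref{E:elemequiv}--\eqref{E:charge}: that $\mu:=m*\la$ and $\nu:=M*\la$ are $k$-shapes (immediate, as $m$ and $M$ are moves), that $\tM$ is a column move from $\mu$ and $\tm$ a row move from $\nu$, that the square \eqref{E:commudiagram} commutes --- equivalently $\tM\cup m=\tm\cup M$ as sets of cells --- and that the charge balance \eqref{E:charge} holds. I would dispose of the charge condition first: since $m$ and $\tm$ are row moves, $\charge(m)=\charge(\tm)=0$, so \eqref{E:charge} reduces to $\charge(\tM)=\charge(M)$, and as $\tM$ and $M$ are column moves their charges equal their cell-counts; since $\tM$ is obtained from $M$ by the cellwise bijection $\rightarrow_m$ (resp.\ $\uparrow_m$, resp.\ the identity in form (1)), these counts agree. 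Everything thus reduces to three assertions: $\tM$ is a column move from $\mu$, $\tm$ is a row move from $\nu$, and $\tM\cup m=\tm\cup M$.

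Form (1) I would handle directly. When $m$ and $M$ are disjoint and non-contiguous, adjoining the cells of $m$ to $\la$ changes hook lengths only for cells weakly southwest of a cell of $m$, and the non-contiguity hypothesis, together with the fact that the $\circ$-cells attached to a string lie at diagonal distance $k$ or $k+1$ from its cells (Property~\ref{P:stringtypes}, Lemma~\ref{L:distancestrings}), prevents any cell of $\bdy\la\setminus\bdy\nu$ or any cell of $M$ from changing its $k$-boundary status. Hence $\tM=M$ is still a column move from $\mu$, symmetrically $\tm=m$ is still a row move from $\nu$, and $\tM\cup m=M\cup m=\tm\cup M$ is clear.

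Forms (2a) and (2b) are transposes of each other, so I would treat only (2a), where $m$ continues above and below $M$ and $(m,M)$ is reasonable --- Lemma~\ref{L:moves_relative_position} makes this a property of the pair rather than of individual strings. The geometric picture is that each column-type string of $M$ sits as an interior sub-run of a row-type string of $m$, so adjoining $m$ merely slides each row of $M$ rightward by the number of cells of $m$ in that row; this is the content of $\tM=\rightarrow_m(M)$, and symmetrically $\tm=\rightarrow_M(m)$. The set identity $\tM\cup m=\tm\cup M$ is then a row-by-row count: in a row meeting both $m$ and $M$, the hypothesis forces the cell of $M$ to lie interior to the block of cells of $m$, and inserting that cell and sliding past $m$ leaves the same set of occupied columns whichever move is performed first; rows meeting only one of the two moves are trivial.

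The main work, and the step I expect to be the real obstacle, is checking the five conditions of Definition~\ref{D:defmove} (transposed) for $\tM$ as a column move from $\mu$, and likewise for $\tm$ from $\nu$. Conditions (1) and (5) ask $\mu,\gamma\in\Ksh$; $\mu\in\Ksh$ is given, and for $\gamma$ I would compute $\change_\cs(\gamma/\mu)$ and $\change_\rs(\gamma/\mu)$ via Remark~\ref{R:stringchange} and apply the $k$-shape criterion of Property~\ref{P:cond5}, transporting the inequalities that witness ``$M$ is a column move from $\la$'' across $\rightarrow_m$ by noting (a horizontal-strip bookkeeping in the spirit of Lemma~\ref{C:samesize}) that $\rightarrow_m$ leaves unchanged the sizes of the columns modified by $M$. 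Conditions (2)--(4) --- that the strings of $\tM$ are column-type, pairwise translates, with bottom cells in consecutive rows --- follow because $\rightarrow_m$ carries the unique string decomposition of $M$ (Proposition~\ref{P:uniquedecomp}) to a valid decomposition of $\tM$, the ``continues above and below'' hypothesis being exactly what ensures that the $\circ$-cells $\bdy\la\setminus\bdy\mu$ attached to each string (Property~\ref{P:stringtypes}) survive the adjunction of $m$, so no string changes type; the dual verification for $\tm$ from $\nu$ interchanges the roles of rows and columns and of $m$ and $M$. The delicate point throughout is ruling out the creation or destruction of boundary cells that would, say, turn a column-type string into a cocover-type string or break the partition condition on $\rs(\gamma)$ or $\cs(\gamma)$, and it is this local analysis of how a row move and a column move interlock that presumably motivates deferring the proof to Subsections~\ref{SS:introwcolmoves}, \ref{SS:introwrowmoves} and \ref{SS:diamond}.
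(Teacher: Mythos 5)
Your skeleton matches the paper's: dispose of \eqref{E:charge} by noting that row moves have charge $0$ and that $\rightarrow_m$ is a cell-wise bijection from $M$ to $\tM$; verify the set identity $\tM\cup m=\tm\cup M$ row by row; and then check the move axioms for $\tm$ and $\tM$. Form (1) and the charge argument are fine. One small imprecision in the set-identity argument: in a row meeting both moves, the common cell is a $\la$-addable corner by Property~\ref{P:movestrip}(3), so the cell of $M$ sits at the \emph{left end} of the $m$-block in that row (indent $0$), not ``interior'' to it; the bookkeeping you then do ($m$-cells shift right by $1$, $M$-cell shifts right by $n$) still produces the same column-set, so the conclusion survives, but only because of that positioning, which you should justify rather than assume.

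The real problem is that your treatment of forms (2a)/(2b) stops short of the proof. You announce that conditions (2)--(4) of Definition~\ref{D:defmove} ``follow because $\rightarrow_m$ carries the unique string decomposition of $M$ to a valid decomposition of $\tM$,'' but this is precisely the statement that has to be proved, and it is where essentially all of the paper's effort goes. The hard parts are: showing that the shifted string $t=\rightarrow_M(s_1)$ satisfies the contiguity condition (a short computation with diagonal indices at the top and bottom boundary of $s_1\cap M$); showing that $t$ is actually $(M*\la)$-addable, which the paper does cell-by-cell using Lemma~\ref{L:addablecornerbelow} and then rules out a bad configuration by pitting an addable and a removable string against Lemma~\ref{L:distancecontiguouscells}; showing that there is ``room'' for the translate strings $t_2,\dots,t_q$ (Lemma~\ref{L:comparelengths} together with the horizontal-strip structure); and invoking Lemma~\ref{L:nextint} to see that every primary string of $m$ meets $M$, so that the shift is applied consistently across all strings. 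Your proposal does not carry out any of this, and it is not a routine translation of the $k$-shape criterion in Property~\ref{P:cond5}: the danger that $\rightarrow_M(s_1)$ fails to be a single addable string, or that the shifted cells collide with $\bdy(M*\la)$, is real and must be excluded by the diagonal-distance estimates above. You correctly flag this as ``the delicate point,'' but flagging it is not the same as closing it; as written the proof has a genuine gap at exactly the step that matters.
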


\subsection{Interfering row moves and perfections}
To define row equivalence we require the notions of interference and
perfections.

Let $m$ and $M$ be row moves from $\la\in\Ksh$ of respective ranks $r$ and $r'$
and lengths $\ell$ and $\ell'$ such that $m\cap M=\emptyset$.

\begin{remark} \label{R:norowrowcontig} Suppose a cell in the string $s$
of $m$ is above and contiguous with a cell in the string $t$ of $M$.
If all the cells of $s$ are not
above all the cells of $t$ then using Property~\ref{P:movestrip} and
Lemma \ref{L:addable} one may deduce the contradiction that $m$ and $M$
intersect.
If the cells of $s$ are above those of $t$, we have a contradiction to
Definition~\ref{D:stringtypes}.
Therefore $m$ and $M$ are not contiguous.
In particular the diagrams of the strings of $m$ and $M$ are
unaffected by the presence of the other move.
\end{remark}

Say that the pair $(m,M)$ is \defit{interfering} if $m \cap M= \emptyset$
and
$\cs(\la\cup m \cup M)$
is not a partition.
Let $m=s_1\cup\dotsm \cup s_r$ and $M=t_1\cup\dotsm\cup t_{r'}$.
We immediately have
\begin{lemma}\label{L:interfere} Suppose $(m,M)$ is interfering.
Say the top cell of $m$ is above the top cell of $M$. Then
\begin{enumerate}
\item \label{I:interfere} $c_{s_1,d}^- = c_{t_{r'},u}$.
In particular $m$ and $M$ are nondegenerate.
\item \label{I:intabovebelow} Every cell of $m$ is above every cell of $M$.
\item \label{I:intoneoff} $\cs(\la)_{c_{s_1},d} = \cs(\la)_{c_{t_{r'},u}} + 1$.
\end{enumerate}
\end{lemma}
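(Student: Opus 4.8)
The plan is to push everything down to the column shape $\cs(\la)$ and the explicit column‑change vectors. First, from $m\cap M=\emptyset$ and Remark~\ref{R:norowrowcontig}, the moves $m$ and $M$ are not contiguous and the diagram of every string is unaffected by the other move, so $\cs(\la\cup m\cup M)=\cs(\la)+\change_\cs(m)+\change_\cs(M)$. By \eqref{E:rowmovecschange}, $\change_\cs(m)$ is $+1$ on a block of consecutive columns whose left end is $c_{s_1,d}$ and $-1$ on a block of consecutive columns whose right end is $c_{s_r,u}$, with $c_{s_r,u}<c_{s_1,d}$, and with $c_{s_r,u}^+=c_{s_1,d}$ exactly when $m$ is degenerate; the analogous description holds for $M$.

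Second, I would locate the failure. Since $m$ and $M$ are moves, $\cs(\la\cup m)$ and $\cs(\la\cup M)$ are partitions while $\cs(\la\cup m\cup M)$ is not, so there is a column $c$ with $\cs(\la\cup m\cup M)_{c-1}<\cs(\la\cup m\cup M)_c$. Setting $\delta_0=\cs(\la)_{c-1}-\cs(\la)_c\ge 0$ and letting $\delta_m,\delta_M$ be the backward differences of $\change_\cs(m),\change_\cs(M)$ at $c$, one has $\delta_0+\delta_m+\delta_M<0$ while $\delta_0+\delta_m\ge 0$ and $\delta_0+\delta_M\ge 0$ (partition conditions for $\la\cup m$ and $\la\cup M$). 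From the shape of the change vectors, $\delta_m<0$ forces $c\in\{c_{s_1,d},c_{s_r,u}^+\}$ (and $\delta_m=-2$ only if $m$ is degenerate), and similarly for $\delta_M$; running through the finitely many sign patterns gives $\delta_0=1$ — which is \eqref{I:intoneoff} — and pins $c$ to $\{c_{s_1,d},c_{s_r,u}^+\}\cap\{c_{t_1,d},c_{t_{r'},u}^+\}$ with both moves nondegenerate.

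Third, I would discard the spurious options. If $c=c_{s_1,d}=c_{t_1,d}$, the bottom cells of the first strings of $m$ and $M$ are two $\la$-addable corners in the same column, hence equal, contradicting $m\cap M=\emptyset$; if $c=c_{s_r,u}^+=c_{t_{r'},u}^+$, the two hook-length-$k$ cells of $\bdy\la$ removed by the last strings of $m$ and $M$ lie in the same column and so coincide (hook lengths strictly increase up a column), whence the $\la$-addable corners ending their rows coincide — again impossible. This leaves the two mirror options $(c,c-1)=(c_{s_1,d},c_{t_{r'},u})$ and $(c,c-1)=(c_{t_1,d},c_{s_r,u})$. To select the first I would use the hypothesis: no row of $\bdy\la$ meets both $m$ and $M$ (the leftmost cell of either move in a row it occupies is $\la$-addable, and two such would coincide), so the row sets of $m$ and $M$ are disjoint; as the top cells of the strings of a row move weakly descend left to right, the top cell of $m$ is the top cell of $s_1$ and that of $M$ is the top cell of $t_1$, and $\row(\text{top of }s_1)>\row(\text{top of }t_1)$ together with Lemmas~\ref{L:distancestrings} and \ref{L:comparelengths} and the down-and-right nesting of addable corners forces $c_{s_1,d}\le c_{t_1,d}$, so (strictness) the second option fails. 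This gives \eqref{I:interfere}, and the same ``$m$ above $M$'' geometry, via Lemma~\ref{L:distancestrings}, rules out interleaving of the two disjoint row sets, giving \eqref{I:intabovebelow}.

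The step I expect to be the main obstacle is exactly this last one. The column-shape computation is symmetric in $m\leftrightarrow M$ and by itself admits both mirror configurations (and even degenerate ones); only the $k$-shape geometry — uniqueness of $\la$-addable corners in a row or column, strict monotonicity of hook lengths along columns, and the distance estimate of Lemma~\ref{L:distancestrings} — together with the assumption that $m$ sits above $M$ singles out the correct configuration and yields that every cell of $m$ lies above every cell of $M$.
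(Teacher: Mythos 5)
The paper gives no proof at all for this lemma (it is introduced with ``We immediately have''), so there is nothing to compare against; I will assess the proposal on its own terms. The column‑shape–vector framework is reasonable, but several steps are wrong or unsupported.

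The arithmetic step is overstated. The constraints $\delta_0\ge0$, $\delta_0+\delta_m\ge0$, $\delta_0+\delta_M\ge0$, $\delta_0+\delta_m+\delta_M<0$ are perfectly compatible with a degenerate pattern such as $\delta_m=-2$, $\delta_M=-1$, $\delta_0=2$; the sign analysis alone gives neither $\delta_0=1$ nor nondegeneracy. Those conclusions only fall out after the geometric elimination, so stating them as an output of ``running through the sign patterns'' is incorrect (though they can be recovered once the spurious geometric cases are killed).

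More seriously, the elimination of $c=c_{s_r,u}^+=c_{t_{r'},u}^+$ rests on a false claim. The cell removed from the boundary by the last string $s_r$ of $m$ is $\Left_{R_m}(\bdy\la^{r-1})$ where $\la^{r-1}=\la\cup s_1\cup\dotsm\cup s_{r-1}$; its hook length is $k$ with respect to $\la^{r-1}$, not with respect to $\la$ (in $\la$ it is $k-r+1$). So the ``two hook‑length‑$k$ cells of $\bdy\la$'' do not exist for $r>1$, and when $r\ne r'$ the two removed cells have different $\la$‑hooks and the uniqueness argument does not apply. (The parenthetical ``hook lengths strictly increase up a column'' is also backwards — they strictly decrease — though the uniqueness it invokes would be fine either way.) A correct substitute is to show the removed cells are actually $\Bot_{c-1}(\bdy\la)$ for both moves, using $\la_{R_m-1}\ge\la_{R_m}+r$ (from $m$ being a horizontal strip with $\mu_{R_m}=\la_{R_m}+r$) to force $h_\la(R_m-1,c_{s_r,u})>k$; then the two bottom cells coincide, $R_m=R_M$, and the unique addable corner at the end of that row lies in both $m$ and $M$. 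The selection of the correct mirror option and the derivation of~(2) are also hand‑wavy: Lemmas~\ref{L:distancestrings} and \ref{L:comparelengths} are statements about strings \emph{within a single} move and do not on their own compare $s_1$ to $t_1$ or ``rule out interleaving'' of $m$ and $M$, and ``down‑and‑right nesting'' does not obviously prevent two disjoint chains of addable corners from crossing. A direct contradiction for the mirror case $c=c_{t_1,d}=c_{s_r,u}^+$ is available: $\row(b_{\ell'}^1)=\la^t_c+1\le R_M<R_m$ gives $(R_m-1,c)\notin\la$, while $\Bot_{c-1}(\bdy\la)=(R_m,c-1)$ gives $(R_m-1,c-1)\in\Int\la$, so $\la_{R_m-1}=c-1$, contradicting $\la_{R_m-1}\ge\la_{R_m}+1\ge c$ from the $\la$‑addability of $a_1^1$; and~(2) follows cleanly from~(1) once one observes $\col(b_1^1)>c_{s_1,d}=\col(a_\ell^1)$, so the addable corner $b_1^1$ is strictly lower than $a_\ell^1$.
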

\noindent Property (1) tells us that the pair $(m,M)$ can only be interfering
if the last negatively  modified column of $M$
is just before the first positively modified column of $m$
(or similarly with $m$ and $M$ interchanged).

Suppose $(m,M)$ is interfering and the top cell of $m$ is above the
top cell of $M$. A \defit{lower} (resp. upper) \textit{perfection}
of the pair $(m,M)$ is a $k$-shape of the form $\la\cup m \cup M
\cup m_\per$ (resp. $\la\cup m\cup M\cup M_\per$) where $m_\per$
(resp. $M_\per$) is a $(\la\cup m\cup M)$-addable skew shape such
that $m\cup m_\per$ (resp. $M\cup M_\per$) is a row move from
$M*\la$ (resp. $m*\la$) of rank $r$ (resp. $r'$) and length
$\ell+\ell'$ and $M \cup m_\per$ (resp. $m\cup M_\per$) is a row
move from $m*\la$ (resp. $M*\la$) of rank $r+r'$ and length $\ell'$
(resp. $\ell$). We say that $(m,M)$ is lower-perfectible (resp.
upper perfectible) if it admits a lower (resp. upper) perfection. By
Lemma \ref{L:perfection}, the lower (resp. upper) perfection is
unique if it exists.

\begin{example} \label{X:twocompletions} For $k=5$,
row moves $m=\{A\}$ and $M=\{B\}$ from $\la$ are pictured below
together with $\bdy\la$.
\begin{equation*}
\tableau[sby]{ \\ & \\ & \\ & & \bl D \\ \bl& & &\bl A \\ \bl&\bl& &
& \\ \bl&\bl&&&&\bl B&\bl C}
\end{equation*}
The pair $(m,M)$ is interfering: the skew shape $\bdy(\la\cup m\cup
M)$ is pictured below.
\begin{equation*}
\tableau[pby]{\\ &  \\ & \\ & \\ \bl&\bl &&\fl \\ \bl&\bl& & & \\
\bl&\bl &\bl& & &\fl }
\end{equation*}
The lower and upper perfections both exist, with $m_\per=\{C\}$ and $M_\per=\{D\}$. They are pictured
as the bottom shapes in the left and right diagrams respectively.
\begin{equation*}
\begin{diagram}
\node[2]{{\tableau[pby]{\\ & \\ & \\ & \\ \bl & & \\ \bl&\bl&&& \\ \bl&\bl&&&}}} \arrow{sw,t}{m} \arrow{se,t}{M} \\
\node{{\tableau[pby]{\\ & \\ & \\ & \\ \bl &\bl &&\fl \\ \bl&\bl&&&
\\ \bl&\bl&&&}}} \arrow{se,b}{M\cup m_\per}
\node[2]{{\tableau[pby]{\\ & \\ & \\ & \\ \bl & & \\ \bl&\bl&&& \\ \bl&\bl&\bl&&&\fl}}} \arrow{sw,b}{m\cup m_\per} \\
\node[2]{{\tableau[pby]{\\ &  \\ & \\  &  \\ \bl&\bl &&\fl \\
\bl&\bl& & & \\ \bl&\bl &\bl&\bl & &\fl&\fl }}}
\end{diagram}\qquad\qquad
\begin{diagram}
\node[2]{{\tableau[pby]{\\ & \\ & \\ & \\ \bl & & \\ \bl&\bl&&& \\ \bl&\bl&&&}}} \arrow{sw,t}{m} \arrow{se,t}{M} \\
\node{{\tableau[pby]{\\ & \\ & \\ & \\ \bl &\bl &&\fl \\ \bl&\bl&&&
\\ \bl&\bl&&&}}} \arrow{se,b}{M\cup M_\per}
\node[2]{{\tableau[pby]{\\ & \\ & \\ & \\ \bl & & \\ \bl&\bl&&& \\ \bl&\bl&\bl&&&\fl}}} \arrow{sw,b}{m\cup M_\per} \\
\node[2]{{\tableau[pby]{\\ &  \\ & \\ \bl& & \fl \\ \bl&\bl &&\fl \\
\bl&\bl& & & \\ \bl&\bl &\bl& & &\fl }}}
\end{diagram}
\end{equation*}
\end{example}

\begin{lemma} \label{L:perfection}
Suppose $(m,M)$ are interfering row moves with the top cell of $m$ above that of $M$.
\begin{enumerate}
\item Suppose a lower perfection $\rho$ exists. Then it is unique: $m_\per$ is such that
$m\cup m_\per$ is the unique move from $\la\cup M$ obtained by extending
each of the strings of $m$ below by $\ell'$ cells, and also $M\cup m_\per$
is the unique move from $\la\cup m$ obtained by adding $r$ more translates
to the right of the strings of $M$.
\item Suppose an upper perfection $\rho$ exists. Then it is unique: $M_\per$ is such that
$M\cup M_\per$ is the unique move from $\la\cup m$ obtained by extending
each of the strings of $M$ above by $\ell$ cells, and also
$m\cup M_\per$ is the unique move from $\la\cup M$ given by adding
$r'$ more translates to the left of the strings of $m$.
\end{enumerate}
\end{lemma}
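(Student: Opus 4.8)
The plan is to use the hypothesis that the pair $(m,M)$ is assumed to \emph{admit} a lower perfection, so that \emph{both} conditions in the definition of a lower perfection hold at once; it then suffices to show that either one of them already pins $m_\per$ down, after which the two descriptions must agree. I will carry this out for the lower perfection; the upper case is the mirror image, with ``below''/``right'' interchanged with ``above''/``left'' and Lemma~\ref{L:interfere} read with the top cell of $M$ above that of $m$. Throughout I write $m=s_1\cup\dots\cup s_r$ and $M=t_1\cup\dots\cup t_{r'}$, the strings of each move being translates of one another.

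First I would record the geometry coming from Lemma~\ref{L:interfere}: every cell of $m$ lies strictly above every cell of $M$, so $m$ occupies rows entirely disjoint from those of $M$, and moreover the negatively modified columns of $M$ abut, from the left, the positively modified columns of $m$, with $\cs(\la)$ dropping by exactly one across that boundary (parts (1) and (3) of Lemma~\ref{L:interfere}). Next I would analyze the condition that $M\cup m_\per$ is a row move from $\la\cup m$ of rank $r+r'$ and length $\ell'$. Because $m$ has no cells in the rows (nor the rows just below those) of $M$, each $t_i$ is still a $(\la\cup m)$-addable string; then Lemma~\ref{L:addable}, the uniqueness of the string decomposition (Proposition~\ref{P:uniquedecomp}), and the fact that contiguity is one-to-one above and below (Remark~\ref{R:contig}) together force the $t_i$ to be exactly $r'$ of the $r+r'$ strings of $M\cup m_\per$, so that the remaining $r$ strings, each of length $\ell'$, constitute $m_\per$, are translates of the $t_i$, and have top cells in columns consecutive with those of $t_1,\dots,t_{r'}$. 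Using parts (1) and (3) of Lemma~\ref{L:interfere} one shows these $r$ new strings sit to the \emph{right} of the strings of $M$, so $M\cup m_\per$ is obtained by appending $r$ translates on the right; by Remark~\ref{R:movespec} this determines the move, hence $m_\per$, uniquely. Finally, since this same $m_\per$ also satisfies the other condition, that $m\cup m_\per$ is a row move from $\la\cup M$ of rank $r$ and length $\ell+\ell'$, and the strings there have length $\ell+\ell'$ and contain the $s_i$, Remark~\ref{R:contig} identifies them as the unique $\ell'$-cell downward extensions of the $s_i$, giving the second description in the statement. The upper perfection is handled by the symmetric argument, yielding $M_\per$ as the union of the $\ell$-cell upward extensions of the strings of $M$, equivalently that $m\cup M_\per$ arises by appending $r'$ translates on the left of the strings of $m$.

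The hard part will be the side-determination: showing that the $r$ extra strings of $m_\per$ attach to the right of the strings of $M$ (equivalently, below the strings of $m$) rather than in the other, superficially admissible, position. This is the only place where the precise interference data of Lemma~\ref{L:interfere} is genuinely needed; the alternative should be excluded by observing that it would either place a cell of $m_\per$ in a column already filled to capacity by $m$ or make $\la\cup m\cup M\cup m_\per$ fail to be a $k$-shape. Everything else is bookkeeping with Proposition~\ref{P:uniquedecomp}, Lemma~\ref{L:addable}, and Remark~\ref{R:contig}, closely parallel to the proof of Proposition~\ref{P:uniquedecomp}.
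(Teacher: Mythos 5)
Your argument is correct and takes essentially the same route as the paper: read $m_\per$ off one of the two defining conditions of the perfection, then check it is forced to agree with the other description, with Lemma~\ref{L:interfere} pinning down the geometry. The only difference is which condition you start from — the paper begins with the requirement that $m\cup m_\per$ be a rank-$r$, length-$(\ell+\ell')$ row move from $M*\la$ (reading off which columns $m_\per$ negatively modifies), while you begin with the requirement that $M\cup m_\per$ be a rank-$(r+r')$, length-$\ell'$ row move from $m*\la$ (reading off $m_\per$ as $r$ translates of $t_1$), which makes the ``side-determination'' you flag explicit where the paper leaves it implicit in the step ``it follows that $m_\per$ \ldots must negatively modify precisely the columns \ldots''.
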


\subsection{Row elementary equivalence}

\begin{definition}
\label{D:rowrowcommute} A \defit{row elementary equivalence} is a
relation of the form \eqref{E:elemequiv}
arising from two row moves $m$ and $M$ from some $\la\in\Ksh$, which
has one of the following forms:
\begin{enumerate}
\item \label{it:rowequivnoninterf}
$m$ and $M$ do not intersect and $(m,M)$ is non-interfering. Then $\tm= m$ and $\tM =M$.
\item
\label{it:rowequivinterf} $(m,M)$ is interfering (and say the top
cell of $m$ is above the top cell of $M$) %
and $(m,M)$ is lower (resp. upper) perfectible by adding the set of
cells $x=m_\per$ (resp. $x=M_\per$). Then $\tm= m \cup x$ and $\tM=
M \cup x$.
\item \label{it:rowequivmatch}
$m$ and $M$ intersect and are matched above (resp. below).
In this case $\tilde m= m \setminus (m \cap M)$ and $\tilde M= M
\setminus (m \cap M)$.
\item \label{it:rowequivabovebelow}
$m$ and $M$ intersect and $m$ continues above and below $M$. In this case
$\tilde m= \uparrow_{m\cap M}(m)$ and $\tilde M= \uparrow_{m\cap
M}(M)$.
\item  \label{it:rowequivempty} $M=\emptyset$ and there is a row move $m_\per$ from $m*\la$
such that $m\cup m_\per$ is a row move from $\la$.
Then $\tM=m_\per$ and $\tm=m\cup m_\per$.
\end{enumerate}
In cases \eqref{it:rowequivabovebelow} and \eqref{it:rowequivempty}
the roles of $m$ and $M$ may be exchanged. In case
\eqref{it:rowequivinterf}, $(m,M)$ may be both lower and upper
perfectible, in which case both perfections yield row elementary
equivalences. In case \eqref{it:rowequivempty}, $m_{\per}$ can
continue the strings of $m$ above or below.  This case can thus be
considered as a degeneration of case~\eqref{it:rowequivinterf}.
\end{definition}
\begin{remark}
If the pair $(m,M)$ satisfies a row elementary equivalence
then $m$ and $M$ are reasonable.
\end{remark}

\begin{example} \label{X:rowrow}
We give examples of row elementary equivalences.
For case \eqref{it:rowequivinterf} see Example \ref{X:twocompletions}.
On the left we give for $k=4$ a case \eqref{it:rowequivmatch} example
with moves matched above,
and on the right, for $k=5$ an example of case \eqref{it:rowequivabovebelow}.
\begin{equation*}
\begin{diagram}
\node[2]{{\tableau[pby]{\\ & \\ &  \\ \bl & & \\ \bl &\bl&\bl& & & }}} \arrow{sw,t}{m} \arrow{se,t}{M} \\
\node{{\tableau[pby]{\\ & \\ \bl&&\fl \\ \bl&\bl&&\fl \\
\bl&\bl&\bl&&& }}} \arrow{se,b}{\tilde M}
\node[2]{{\tableau[pby]{\\ & \\ &  \\ \bl&\bl& &\fl \\ \bl&\bl&\bl&\bl&&&\fl}}} \arrow{sw,b}{\tilde m} \\
\node[2]{{\tableau[pby]{ \\ & \\ \bl&&\fl \\ \bl&\bl&&\fl \\
\bl&\bl&\bl&\bl&&&\fl }}}
\end{diagram}
\qquad\qquad
\begin{diagram}
\node[2]{{\tableau[pby]{\\ \\ \\ & \\ \bl & && \\ \bl & & & \\ \bl&\bl&&&\\ \bl&\bl&\bl&\bl&\bl&&&&\\}}} \arrow{sw,t}{m} \arrow{se,t}{M} \\
\node{{\tableau[pby]{\\ \\ \\ & \\ \bl&&&\\ \bl&\bl &&&\fl\\
\bl&\bl&\bl&\bl&&\fl&\fl\\ \bl&\bl&\bl&\bl&\bl&&&&\\}}}
\arrow{se,b}{\tilde M} \node[2]{{\tableau[pby]{\\ \\ \\ \bl &&\fl \\
\bl &&& \\ \bl & & & \\ \bl&\bl&\bl&&&\fl\\
\bl&\bl&\bl&\bl&\bl&\bl&&&&\fl \\}}} \arrow{sw,b}{\tilde m} \\
\node[2]{{\tableau[pby]{\\ \\ \\ \bl &&\fl \\ \bl &&& \\ \bl
&\bl&\bl &&\fl&\fl \\ \bl&\bl&\bl&\bl&&\fl&\fl \\
\bl&\bl&\bl&\bl&\bl&\bl&&&&\fl \\}}}
\end{diagram}
\end{equation*}

\end{example}

\begin{proposition}\label{P:rel} If $(m,M)$ defines a row elementary equivalence,
then the prescribed sets of cells $\tm$ and $\tM$
define a diamond equivalence.
\end{proposition}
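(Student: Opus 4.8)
The plan is to work through the five cases of Definition~\ref{D:rowrowcommute} one at a time, exploiting first a reduction that disposes of the charge condition. In every case the moves $m$, $M$, $\tm$, $\tM$ are \emph{row} moves --- for $\tm$ and $\tM$ this is part of what must be shown --- so by Definition~\ref{D:charge} each has charge $0$ and the charge equality~\eqref{E:charge} reads $0=0$. Hence the proposition reduces to the three combinatorial assertions: $\tM$ is a row move from $\mu:=m*\la$, $\tm$ is a row move from $\nu:=M*\la$, and $\tM\cup m=\tm\cup M$ as sets of cells. Granting these, the square~\eqref{E:commudiagram} commutes with bottom vertex $\gamma$ determined by $\gamma/\la=\tM\cup m=\tm\cup M$, and $\gamma=\mu\cup\tM\in\Ksh$ since it is the endpoint of the move $\tM$; so $(m,M,\tm,\tM)$ defines a diamond equivalence.

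Cases~\eqref{it:rowequivnoninterf}, \eqref{it:rowequivinterf} and~\eqref{it:rowequivempty} will be handled quickly. In case~\eqref{it:rowequivnoninterf}, $\tm=m$ and $\tM=M$; Remark~\ref{R:norowrowcontig} says that $m$ and $M$ are not contiguous and that neither move's string diagrams are affected by the cells of the other, so (the string types of Property~\ref{P:stringtypes} depending only on hook lengths of cells unaffected by the other, far-away, move) each condition of Definition~\ref{D:defmove} for $M$ survives enlarging the ambient shape from $\la$ to $\mu=\la\cup m$; condition~\eqref{I:moveend} holds because $\cs(\la\cup m\cup M)$ is a partition by non-interference and $\rs$ is preserved by a row move. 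Symmetrically $m$ is a row move from $\nu$, and $\tM\cup m=M\cup m=\tm\cup M$ is evident. In case~\eqref{it:rowequivinterf} the three assertions are, verbatim, the statement that $\la\cup m\cup M\cup m_\per$ is a lower perfection of $(m,M)$, together with Lemma~\ref{L:perfection}; the upper-perfection subcase and the degenerate case~\eqref{it:rowequivempty} (where $M=\emptyset$) are formally identical.

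The real work lies in the intersecting cases~\eqref{it:rowequivmatch} and~\eqref{it:rowequivabovebelow}. The set identity $\tM\cup m=\tm\cup M$ is a short column-by-column check: a row move is a horizontal strip (Property~\ref{P:movestrip}), so $m$, $M$ and $m\cap M$ have at most one cell per column, and fixing a column and distinguishing whether it contains a cell of $m$, of $M$, of both (which, since $m\cap M\subseteq m,M$, forces $m\cap M$ to contain exactly that shared cell), or of neither, one sees the two sides agree; in case~\eqref{it:rowequivmatch} this is immediate since $\tm\cup M=(m\setminus(m\cap M))\cup M=m\cup M=\tM\cup m$. The remaining task --- showing $\tm$ is a row move from $\nu$ and $\tM$ a row move from $\mu$ --- is the crux. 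First I would note that $(m,M)$ is reasonable: two intersecting $\la$-addable strings agree on the diagonals they share by Remark~\ref{R:contig}, so one is a sub-interval of the other. Then Lemma~\ref{L:moves_relative_position} propagates the chosen relative position (matched above/below, or $m$ continuing above and below $M$) to every intersecting pair of strings, reducing everything to a string-by-string analysis. One then shows that $m\cap M$ cuts each relevant string into sub-intervals uniformly, so that the strings of $\tM$ are again equal-length row-type strings with top cells in consecutive columns: in case~\eqref{it:rowequivmatch} these are essentially the strings of $M$ disjoint from $m$, occupying the block of $M$'s columns not shared with $m$, while in case~\eqref{it:rowequivabovebelow} they are the strings of $M$ shifted up one row where they lay inside $m$, i.e.\ $\uparrow_{m\cap M}(M)$.

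The main obstacle is the final step: verifying that these prescribed strings are $\mu$-addable, of the correct (row) type, and leave a $k$-shape, after the ambient partition has been enlarged by $m$. This requires the string lemmas of \S\ref{sec:kshapes} --- Property~\ref{P:cond5} and Lemmas~\ref{L:makestring}, \ref{L:comparelengths}, \ref{L:distancecontiguouscells}, \ref{L:addablecornerbelow}, \ref{C:samesize}, \ref{L:addable} --- to control hook lengths, contiguity, and addability of corners in $\mu$, and it must accommodate several degenerate subcases (leftover strings contiguous or not, strings of unequal length, strings touching the first row). Carrying out this bookkeeping is exactly what is deferred to \S\S\ref{SS:introwrowmoves} and~\ref{SS:diamond}.
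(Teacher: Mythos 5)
Your outline captures the right structure: the charge equality is trivially $0=0$, Cases (1), (2), (5) are disposed of quickly, and the crux is the intersecting cases. The decompositions you describe for $\tM$ in Cases (3) and (4) — strings of $M$ disjoint from $m$, resp.\ $\uparrow_{m\cap M}(M)$ — match the paper, and the set-identity check is fine.

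However, there is a genuine gap. For Cases (3) and (4), you stop at ``verifying that these prescribed strings are $\mu$-addable, of the correct (row) type, and leave a $k$-shape... Carrying out this bookkeeping is exactly what is deferred to \S\S\ref{SS:introwrowmoves} and~\ref{SS:diamond}.'' This is circular: \S\S\ref{SS:introwrowmoves} is where Proposition~\ref{P:rel} \emph{itself} is proved (and \S\S\ref{SS:diamond} concerns the converse direction, Lemmas~\ref{L:mixeddiamond} and~\ref{L:rowdiamond}), so there is nowhere left to defer to. The deferred ``bookkeeping'' is precisely the content of the proposition for its main case; a proof proposal must supply it.

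You also miss two economies the paper exploits and which make the remaining verification short. First, Case~\eqref{it:rowequivabovebelow} is handled by observing it is formally parallel to Case~(2) of the mixed equivalence (Proposition~\ref{P:relmix}), so the real new work is confined to Case~\eqref{it:rowequivmatch} only — your proposal treats both as equally hard. Second, in Case~\eqref{it:rowequivmatch} the decisive observation is that since $m$ and $M$ intersect they cannot interfere, so $\cs(\tM\cup m\cup\la)=\cs(M\cup m\cup\la)$ is already a partition; combined with Lemma~\ref{L:introw}'s decomposition $\tM=t_{p+1}\cup\dotsm\cup t_q$ and non-contiguity, this immediately gives that $\tM$ is a row move from $m*\la$. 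For $\tm=(s_1\setminus t_1)\cup\dotsm\cup(s_p\setminus t_p)$, the key point is a short hook-length check: the string $t_1\subset M$ already removed the cell in column $c_{u_1,d}$ from the boundary, so $u_1$ is row-type as a $M*\la$-addable string. Neither of these observations appears in your sketch, and without them the argument does not close.
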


\subsection{Column elementary equivalence}
\label{SS:column_elementary_equivalence}

\begin{definition} \label{D:colcolcommute}
There is an obvious transpose analogue of row elementary equivalence
which we shall call \defit{column elementary equivalence}.
\end{definition}

Since \eqref{E:charge} is obviously satisfied in the case of
a column elementary equivalence, the transpose analogue of Proposition
\ref{P:rel} holds.

\subsection{Diamond equivalences are generated by elementary equivalences}

\begin{lemma} \label{L:mixeddiamond}
Any diamond equivalence $\tilde M m \equiv \tilde m M$ in which
$m$ is a row move and $M$ a column move from some $\la\in\Ksh$,
is a mixed elementary equivalence.
\end{lemma}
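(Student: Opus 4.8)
The plan is to start from an arbitrary diamond equivalence $\tilde M m \equiv \tilde m M$ with $m$ a row move and $M$ a column move from a common $k$-shape $\la$, and to show that the pair $(m,M)$ must fall into one of the cases of Definition~\ref{D:rowcolcommute}. The first observation is that, since $m$ changes only $\cs$ (leaving $\rs$ fixed) and $M$ changes only $\rs$ (leaving $\cs$ fixed), the commuting square forces $\tilde m$ to be a column move and $\tilde M$ a row move, and moreover $\rs(\mu)=\rs(\la)$, $\cs(\nu)=\cs(\la)$, while $\cs(\gamma)=\cs(\mu)$ and $\rs(\gamma)=\rs(\nu)$. Thus the diamond is entirely determined by the two changes $\change_\cs(m)$ and $\change_\rs(M)$, together with the requirement $\tilde M\cup m=\tilde m\cup M$ as sets of cells. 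The charge condition \eqref{E:charge} is automatic here: $\charge(m)=\charge(\tilde M)=0$ since both are row moves, and $\charge(M)=\charge(\tilde m)$ follows because $M$ and $\tilde m$ are column moves with the same underlying set of cells once we know $m\cup\tilde M=M\cup\tilde m$ and $m,\tilde M$ are horizontal strips disjoint from the common part (so $M$ and $\tilde m$ have the same length and rank).

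Next I would do the case analysis on whether $m$ and $M$ intersect. If they are disjoint, I claim they cannot be contiguous: a string of $m$ contiguous to a string of $M$ would, upon forming $m\cup M$, produce a single longer object of mixed row/column type, contradicting the fact (via Property~\ref{P:movestrip} and Lemma~\ref{L:addable}, in the spirit of Remark~\ref{R:norowrowcontig}) that row-type and column-type strings from the same $k$-shape sit in incompatible positions — here one would also use Lemma~\ref{L:distancestrings} to control diagonal distances. So in the disjoint case $(m,M)$ is non-contiguous and the diamond must be the trivial one $\tilde m=m$, $\tilde M=M$, i.e.\ Case~(1). If $m$ and $M$ intersect, Property~\ref{P:movestrip}(3) tells us every cell common to $m$ and $M$ is a $\la$-addable corner; combined with Lemma~\ref{L:addable} (which says $\la$-addability is a string-wide property for $m$) and its column analogue for $M$, an intersecting string $s$ of $m$ and $t$ of $M$ must meet in a single $\la$-addable corner, and that corner is the top cell of $s$ forced to be at indent $0$. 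Using Lemma~\ref{L:moves_relative_position} (the ``relative position'' lemma) I would then argue that globally either $m$ continues above and below $M$, or $M$ continues above and below $m$ — the ``matched'' possibility is excluded here because a match would force a string of $m$ and a string of $t$ of $M$ to share more than one cell, impossible since their shared cells are isolated $\la$-addable corners and a single-cell overlap cannot be ``matched above and below'' for strings of length $>1$ on both sides (and the length-one subcases degenerate into the continues-above-and-below pattern). This pins us into Case~(2)(a) or (2)(b), and the explicit formulas $\tilde m=\rightarrow_M(m)$, $\tilde M=\rightarrow_m(M)$ (resp.\ the $\uparrow$ versions) are then forced by $\tilde M\cup m=\tilde m\cup M$ and Notation~\ref{N:arrow}: shifting each row of $m$ right past the cells of $M$ in that row is the unique way to rebuild a horizontal strip from $M*\la$ whose union with $M$ equals $m\cup\tilde M$.

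The main obstacle I anticipate is the intersecting case: ruling out ``mixed'' configurations in which some pairs of strings have $m$ continuing above while others have $M$ continuing above, and showing that the intersection pattern is forced to be uniform. Lemma~\ref{L:moves_relative_position} is precisely designed to handle this, so the work is to verify its hypotheses apply and to chase the consequences of "the shared cells are all $\la$-addable corners" carefully enough that the four string-relative-positions collapse to the two listed in Definition~\ref{D:rowcolcommute}. A secondary but real subtlety is confirming that $\tilde m,\tilde M$ produced by the shift operators genuinely are a column move and a row move respectively (conditions (1)--(5) of Definition~\ref{D:defmove}), but this should follow from the hypothesis that we already have \emph{some} diamond equivalence: $\tilde m$ and $\tilde M$ are given to be moves, so once we identify their cell sets with the shifts, we are done. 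I would close by noting that in every case the pair $(m,M)$ is reasonable, matching the Remark after Definition~\ref{D:rowcolcommute}, which confirms consistency.
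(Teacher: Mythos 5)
Your plan reverses the logic in a way that introduces two genuine gaps. The paper's proof does not try to show that the ``bad'' configurations (disjoint-and-contiguous, or intersecting-but-not-reasonable) cannot arise for a row move and a column move from the same $\la$; they \emph{can}. Rather, it shows that in those configurations no diamond equivalence can close. Your proof tries to rule the configurations out a priori, and that is where it fails.

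First, in the disjoint case you assert that $(m,M)$ cannot be contiguous, leaning on Remark~\ref{R:norowrowcontig}. But that remark is about two \emph{row} moves; the underlying argument (strings of a row move are horizontal-strip-like and addable, so a contiguity would force an intersection) does not apply to a row move versus a column move, and indeed a bottom cell of a row-type string of $m$ can be contiguous to the top cell of a column-type string of $M$ without any intersection. What one must show is that \emph{if} such a contiguity is present, the constraints $\Delta_{\rs}(\tM)=\Delta_{\rs}(M)$, $\Delta_{\cs}(\tm)=\Delta_{\cs}(m)$ (which follow since $m,\tM$ are row moves and $M,\tm$ are column moves), together with charge conservation forcing $M$ and $\tM$ to have the same rank and length, lead to a contradiction: one traces the required $\tM$ (or $\tm$) near the contiguity point and finds it cannot be built out of column-type (resp.\ row-type) strings. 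Your proposal does not do this.

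Second, in the intersecting case you claim the shared cells of $s\subset m$ and $t\subset M$ form a single $\la$-addable corner. That is false: by Lemma~\ref{L:stringint} the intersection $s\cap t$ is a $\la$-addable \emph{string}, possibly of length $>1$, and every cell in it is an addable corner by Property~\ref{P:movestrip}(3). More seriously, you never treat the ``not reasonable'' subcase where, say, $s$ continues below $t$ while $t$ continues above $s$ — this is allowed by Lemma~\ref{L:stringint} (e.g.\ $\min(B)=1$, $\max(A)=\ell$, $\min(A)>1$, $\max(B)<\ell'$), and Property~\ref{L:notfinishsame} only excludes exact matching, not this partial overlap. Lemma~\ref{L:moves_relative_position} tells you the above-relation and the below-relation are each globally consistent, but it does not say the two must agree; the ``continues above on one side, below on the other'' pattern must be excluded by a separate argument. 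The paper does this with a diagonal-chasing contradiction based on $\Delta_{\rs}(\tM)=\Delta_{\rs}(M)$ and the observation that $\tM$ is a vertical strip disjoint from $m$. Without this, your case analysis is incomplete, so the reduction to cases (2)(a)/(2)(b) of Definition~\ref{D:rowcolcommute} is not established.
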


\begin{lemma} \label{L:rowdiamond}
 Let $m$ and $M$ be row (resp. column) moves such that $\tilde M m\equiv\tilde m  M$
  is a diamond equivalence.  Then the relation $\tilde M m \equiv \tilde m  M$ can be
generated by row (resp. column) elementary equivalences.
\end{lemma}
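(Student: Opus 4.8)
By transposition it suffices to treat two row moves $m,M$ from a common $\la\in\Ksh$; since row moves all have charge $0$, condition \eqref{E:charge} is automatic, so a ``diamond equivalence'' $\tM m\equiv\tm M$ here means simply that $m,M,\tm,\tM$ are row moves and $\tM\cup m=\tm\cup M$ as sets of cells. We must show this particular diamond equivalence lies in the equivalence relation on paths generated by the row elementary equivalences (the converse statement, that each row elementary equivalence is in particular a diamond equivalence, is Proposition~\ref{P:rel}). The plan is to argue by induction on $|m|+|M|$, splitting into cases according to the relative position of $m$ and $M$; a recurring tool is that case~\eqref{it:rowequivempty} of Definition~\ref{D:rowrowcommute} lets one \emph{refine} a row move into a valid two-step composition of sub-moves (and conversely) within the generated relation, so that factoring a move is a legal manipulation.

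\textbf{Degenerate and disjoint cases.}
If $m=\emptyset$ (the case $M=\emptyset$ being symmetric) then $\nu=\la$ and $\tm=\tM$ is a row move from $\la$ with $\tM$ a row move from $m*\la$, which is exactly case~\eqref{it:rowequivempty}; so assume $m,M$ nonempty. Suppose next $m\cap M=\emptyset$. By Remark~\ref{R:norowrowcontig} the moves are then automatically non-contiguous, so their string diagrams are unaffected by one another. If $(m,M)$ is non-interfering, then $\cs(\la\cup m\cup M)$ is a partition and, since row moves preserve row shape, so is $\rs(\la\cup m\cup M)$; hence $\la\cup m\cup M\in\Ksh$, $M$ is a row move from $m*\la$ and $m$ is a row move from $M*\la$, giving the closure $\tm=M,\ \tM=m$ of case~\eqref{it:rowequivnoninterf}. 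One must then check this is the only closure, i.e.\ that the given $\gamma$ equals $\la\cup m\cup M=\mu\vee\nu$; I would get this from $\gamma\supseteq\mu\vee\nu$ together with the uniqueness of the string decomposition of $\tM=\gamma/\mu$ (Proposition~\ref{P:uniquedecomp}) and the horizontal-strip property (Property~\ref{P:movestrip}). If instead $(m,M)$ is interfering, then by Lemma~\ref{L:interfere} one move lies entirely above the other and their modified columns abut, so $\cs(\la\cup m\cup M)$ fails to be a partition in exactly one place; the only repairs within $\Ksh$ are the lower and upper perfections, which are unique when they exist (Lemma~\ref{L:perfection}), each producing case~\eqref{it:rowequivinterf}.

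\textbf{Intersecting case.}
Now $m\cap M\neq\emptyset$. First I would verify that the pair $(m,M)$ coming from a genuine diamond is reasonable (every pair of intersecting strings is nested), using Property~\ref{P:movestrip}, Lemma~\ref{L:addable}, and the string estimates of \S\ref{sec:kshapes}; granting this, Lemma~\ref{L:moves_relative_position} forces all intersecting string pairs into one common relative position, so that $m$ and $M$ are matched above or one continues above the other, and independently matched below or one continues below the other. In the ``pure'' configurations --- matched above and below (case~\eqref{it:rowequivmatch}), or $m$ (resp.\ $M$) continuing both above and below (case~\eqref{it:rowequivabovebelow}) --- the prescribed $\tm,\tM$ visibly satisfy $\tM\cup m=\tm\cup M$, and via Property~\ref{P:cond5} and Lemma~\ref{L:comparelengths} one checks the shifts/deletions yield genuine row moves between $k$-shapes; then the given diamond coincides with the elementary one by the same uniqueness argument as above. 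In a ``mixed'' configuration (for instance $m$ continuing above $M$ but matched below it, or $m$ continuing above while $M$ continues below) the diamond is not itself elementary: here I would factor $m=m'\cup m''$ and $M=M'\cup M''$ compatibly --- realizing the factorizations by case~\eqref{it:rowequivempty} --- so that the ``above'' interaction of $m'$ with $M'$ is one of the pure elementary equivalences while the residual diamond assembled from $m'',M''$ has strictly smaller total cell count and is handled by induction, then compose.

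\textbf{Main obstacle.}
The crux is this last factorization: one must split both $m$ and $M$ so that every intermediate partition is again a $k$-shape and every piece is again a row move of the expected rank and length --- recall that a move only constrains its two endpoints to lie in $\Ksh$, so naive splits by rank or by length need not be valid --- which is where Lemma~\ref{L:distancestrings}, Corollary~\ref{C:rankbound}, Lemma~\ref{L:makestring}, and the reformulation of condition~\eqref{I:moveend} in Property~\ref{P:cond5} are needed, and then one must check that the composite of the peeled-off elementary equivalence with the inductively produced chain reproduces the original diamond as an equality of cell sets. A secondary but still delicate point, used throughout, is the uniqueness of the closure of a diamond with prescribed legs $m,M$: that $\gamma$ --- and hence $\tm$ and $\tM$ --- is forced to be the partition predicted by the non-interfering / perfection / matching dichotomy, for which Proposition~\ref{P:uniquedecomp} is the essential input.
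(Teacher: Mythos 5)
Your proposal has two substantive gaps. First, you assert that a diamond equivalence forces $(m,M)$ to be reasonable, but this is false and the paper explicitly addresses it: the final paragraph of the paper's proof treats non-reasonable pairs (strings $s\subset m$, $t\subset M$ intersecting with neither nested in the other) and shows that when every string of $m$ meets a string of $M$ and conversely, a diamond does exist --- namely $\bar m=m\setminus M$, $\bar M=M\setminus m$ with $\bar m M=\bar M m =\mathcal M$ a single move --- and is then decomposed into two Case~\eqref{it:rowequivmatch} triangles through the intermediate $k$-shape reached by $\mathcal M$. Your ``verification of reasonableness'' step cannot succeed, and without it your appeal to Lemma~\ref{L:moves_relative_position} does not cover all diamonds.

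Second, you have misread Case~\eqref{it:rowequivmatch} of Definition~\ref{D:rowrowcommute}: ``matched above (resp.\ below)'' means matched above \emph{or} matched below, not both simultaneously. Thus the configuration you call ``mixed'' (``$m$ continuing above $M$ but matched below it'') is already a Case~\eqref{it:rowequivmatch} elementary equivalence and needs no factorization. The paper handles it directly: with $m$ and $M$ matched below, $m$ continuing above, with $p$ and $q$ strings respectively, it shows by a precise combinatorial argument (using Property~\ref{C:rowmovecolsize} and the diagonal-distance estimate of Lemma~\ref{L:distancestrings}) that the closure is unique when $q>p$, and that when $p=q$ the diamond degenerates and is factored through the intermediate shape $(m\setminus M)*M*\la$ into a Case~\eqref{it:rowequivmatch} triangle and a trivial Case~\eqref{it:rowequivempty} triangle. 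Your proposed split $m=m'\cup m''$, $M=M'\cup M''$ ``realized by Case~\eqref{it:rowequivempty}'' is not what is needed here, is not carried out, and indeed you yourself flag it as the ``main obstacle'' without resolving it; the paper avoids that difficulty entirely by a direct case analysis rather than an induction on $|m|+|M|$.
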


We have immediately:

\begin{proposition} \label{P:diamond} The equivalence relations generated respectively
by diamond
equivalences and by elementary equivalences are identical.
\end{proposition}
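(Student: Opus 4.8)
The plan is to prove the two inclusions of equivalence relations separately. Write $\equiv$ for the equivalence relation on directed paths in $\Ksh_N$ generated by diamond equivalences and $\equiv_{\mathrm{el}}$ for the one generated by mixed, row, and column elementary equivalences. Since each of these relations is by construction the smallest equivalence relation on paths closed under the corresponding local replacements $\tM m \leftrightarrow \tm M$ applied to subsegments, it suffices to show: (a) every elementary equivalence is a diamond equivalence; and (b) every diamond equivalence is realized by a finite chain of elementary equivalences.

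Part (a) is immediate from results already in hand: Proposition~\ref{P:relmix} for mixed elementary equivalences, Proposition~\ref{P:rel} for row elementary equivalences, and the transpose analogue of Proposition~\ref{P:rel} (noted after Definition~\ref{D:colcolcommute}, using that \eqref{E:charge} is automatic for column moves) for column elementary equivalences. This gives $\equiv_{\mathrm{el}}\subseteq{\equiv}$.

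For part (b), a defining diamond equivalence $\tM m\equiv\tm M$ comes from two moves $m,M$ (possibly empty) from a common $k$-shape $\la\in\Ksh$. As every move is a row move or a column move, I would split --- up to the evident symmetry $m\leftrightarrow M$, $\tm\leftrightarrow\tM$ of the diamond relation --- into the exhaustive cases: (i) one of $m,M$ a row move and the other a column move; (ii) $m$ and $M$ both row moves; (iii) $m$ and $M$ both column moves. Case (i) is handled by Lemma~\ref{L:mixeddiamond}, which says the diamond equivalence is a single mixed elementary equivalence. Cases (ii) and (iii) are handled by Lemma~\ref{L:rowdiamond} (for row moves, resp. its column analogue), which says the diamond equivalence is generated by row, resp. column, elementary equivalences; the degenerate subcases where one of $m,M$ is empty are exactly case~\eqref{it:rowequivempty} of Definition~\ref{D:rowrowcommute} and its transpose, and are already included in Lemma~\ref{L:rowdiamond}. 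This proves (b), hence ${\equiv}\subseteq\equiv_{\mathrm{el}}$, and together with (a) the equality ${\equiv}=\equiv_{\mathrm{el}}$.

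I do not anticipate a genuine obstacle at this level: the substance is entirely in the four cited statements, and the proposition is their bookkeeping assembly. The only points needing care are verifying that the trichotomy (i)--(iii), together with the ``possibly empty'' convention on the two moves, genuinely covers every diamond equivalence, and that the $m\leftrightarrow M$ symmetry of $\tM m\equiv\tm M$ justifies normalizing which of the two moves is the row move in case (i).
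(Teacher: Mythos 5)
Your proof matches the paper's own argument exactly: both directions are reduced to Propositions~\ref{P:relmix} and \ref{P:rel} (plus their column transpose) for one inclusion and to Lemmas~\ref{L:mixeddiamond} and \ref{L:rowdiamond} (with its column analogue) for the other, with the same case split by move type. The extra care you note about the empty-move convention and the $m\leftrightarrow M$ symmetry is sound but is exactly the bookkeeping the paper leaves implicit.
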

\begin{proof}
Lemma~\ref{L:mixeddiamond} and Lemma~\ref{L:rowdiamond} imply that
diamond equivalences are generated by elementary
  equivalences.  Since elementary equivalences are diamond equivalences
  by Propositions \ref{P:relmix} and \ref{P:rel},
the proposition follows.
\end{proof}

\subsection{Proving properties of mixed equivalence}
\label{SS:introwcolmoves}
For the rest of this subsection we assume that $m$ and $M$ are respectively
row and column moves from $\la\in\Ksh$.

\begin{property} \label{L:notfinishsame}  Strings of $m$ and $M$ cannot
be matched above or below.
\end{property}
\begin{proof}
By Property~\ref{P:movestrip}(3) the cells of the strings of $m$ and $M$
that meet are $\lambda$-addable.
The lemma then easily from considering the diagrams of $m$ and $M$.
\end{proof}

\begin{property} \label{L:uniqstrint}
Any string of $m$ (resp. $M$) meets at most one string of $M$ (resp. $m$).
\end{property}
\begin{proof}
Suppose there is some string $s=\{a_1,a_2,\ldots\}\subset m$ where $a_i\in
t$ and $a_j\in \bar t$ for distinct column-type strings $t,\bar t\in M$.  Let $i$ and $j$
be such that $j-i$ is minimum where $i<j$.

We first show that $a_i$ is not the bottom cell of $t$.  If this
were the case, the distance between the bottom cell of
$t$ and the bottom cell of $\bar t$ would be larger than $k-1$, which
would contradict Lemma~\ref{L:distancestrings}. Therefore
$a_i$ is not the lowest cell of $t$.

The cells $a_i$ and $a_j$ are $\lambda$-addable by
Property~\ref{P:movestrip}(3) and so is $s$ by Lemma~\ref{L:addable}.
Thus by Remark~\ref{R:contig}, the cell of $t$ contiguous to and below $a_i$
is $a_{i+1}$.  If $a_{i+1}\neq a_j$ we have a contradiction
to the choice of $i$ and $j$.   If $a_{i+1}=a_{j}$ we have the contradiction
that $t$ and $\bar t$ intersect.

Taking transposes, every string of $M$ meets at most one string of $m$.
\end{proof}

\begin{lemma} \label{L:stringint}
Suppose $s=\{a_1,\dotsc,a_\ell\}$ and $t=\{b_1,\dotsc,b_{\ell'}\}$
are strings in $m$ and $M$ respectively such that
$s\cap t\ne\emptyset$.
Then $s\cap t$ is a $\la$-addable string
and there are intervals $A\subset [1,\ell]$ and $B\subset [1,\ell']$ such that
$s\cap t = \{a_j\mid j\in A\} = \{b_j\mid j\in B\}$. Moreover, either $\min(A)=1$
or $\min(B)=1$, and also either $\max(A)=\ell$ or $\max(B)=\ell'$.
\end{lemma}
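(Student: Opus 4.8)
The plan is to leverage the rigidity of the ``contiguous, strictly descending'' structure of a string together with the observation that any cell lying in both a row move and a column move must be a $\la$-addable corner. First I would record the only place the hypotheses $s\subseteq m$ and $t\subseteq M$ are used: by Property~\ref{P:movestrip}(3) every cell of $s\cap t$ is a $\la$-addable corner, so fixing any cell $c\in s\cap t$ and applying Lemma~\ref{L:addable} to the string $s$ of the row move $m$ (and its transpose to the string $t$ of the column move $M$) shows that \emph{every} cell of $s$ and \emph{every} cell of $t$ is a $\la$-addable corner. From this point on the argument is driven entirely by Remark~\ref{R:contig}: a $\la$-addable corner is contiguous with at most one $\la$-addable corner above it, and with at most one below it.

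Next I would take $c$ to be the topmost cell of $s\cap t$, i.e.\ the one of largest row index. Since the cells of a string have strictly decreasing row index as their position index increases, $c=a_{i_0}=b_{j_0}$ where $i_0=\min\{j:a_j\in t\}$ and $j_0=\min\{j:b_j\in s\}$. Then run a ``fellow-travelling'' argument downward: if $a_p=b_q$ lies in $s\cap t$ with $p<\ell$ and $q<\ell'$, then $a_{p+1}$ and $b_{q+1}$ are both $\la$-addable corners contiguous with and below $a_p=b_q$, so by Remark~\ref{R:contig} they coincide and hence lie in $s\cap t$. Iterating from $c$ yields $a_{i_0}=b_{j_0},\,a_{i_0+1}=b_{j_0+1},\dots,a_{i_0+D}=b_{j_0+D}$ in $s\cap t$, where $D=\min(\ell-i_0,\ell'-j_0)$, the iteration stopping exactly because $a_{i_0+D}$ is the bottom cell of $s$ or $b_{j_0+D}$ is the bottom cell of $t$. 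No cell of $s\cap t$ lies above this run (it would have larger row index than $c$) or below it (it would be a cell of $t$ strictly below the bottom cell of $t$, or a cell of $s$ below $a_\ell$). Hence $A:=[i_0,i_0+D]$ and $B:=[j_0,j_0+D]$ are intervals with $s\cap t=\{a_j:j\in A\}=\{b_j:j\in B\}$, and $\max A=\ell$ or $\max B=\ell'$. The symmetric ``fellow-travelling upward'' from $c$ shows that if $c$ were neither the top cell of $s$ nor the top cell of $t$, the cells of $s$ and of $t$ immediately above $c$ would both be $\la$-addable corners contiguous with and above $c$, hence equal by Remark~\ref{R:contig}, contradicting the maximality of $c$ in $s\cap t$; therefore $\min A=1$ or $\min B=1$. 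Finally $s\cap t$ is a contiguous strictly descending chain of $\la$-addable corners, so $\la\cup(s\cap t)$ is a partition and $s\cap t=(\la\cup(s\cap t))/\la$ is a $\la$-addable string, as required.

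I expect the main obstacle to be purely organizational: keeping the two index sets $A$ and $B$ synchronized under the fellow-travelling and making sure the iteration terminates for the ``correct'' reason, so that ``$\max A=\ell$ or $\max B=\ell'$'' and the dual statement for the minima come out exactly as stated. All the geometric substance is already packaged in Property~\ref{P:movestrip}(3), Lemma~\ref{L:addable}, and Remark~\ref{R:contig}, so there is no genuinely hard step; checking that $s\cap t$ is literally a skew shape (for instance by observing that adjoining to $\la$ a contiguous descending run of $\la$-addable corners yields a partition, or by deleting a bottom-up contiguous run of removable corners from the top partition of $t$) is a routine verification I would dispatch in a line.
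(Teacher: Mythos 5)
Your proposal is correct and follows essentially the same route as the paper's proof: invoke Property~\ref{P:movestrip}(3) and Lemma~\ref{L:addable} to get that every cell of $s$ and of $t$ is a $\la$-addable corner, then use Remark~\ref{R:contig} for the fellow-travelling argument in both directions. The paper compresses all of this into four sentences and ends with ``The Lemma follows''; you have merely unpacked the bookkeeping of the intervals $A$ and $B$, which the paper leaves implicit.
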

\begin{proof} Let $x\in s\cap t$. By Property~\ref{P:movestrip} and
Lemma \ref{L:addable} all the cells of $s$ and $t$ are $\la$-addable.
Suppose both $s$ and $t$ contain cells below (resp. above) $x$.
Since cells in strings satisfy a contiguity property,
there are $\la$-addable cells $z\in s$ and $z'\in t$ such that
$z$ and $z'$ are contiguous with and below (resp. above) $x$.
By Remark \ref{R:contig}, $z=z'$. The Lemma follows.
\end{proof}

Call a row-type (resp. column-type) string of $m$ (resp. $M$) \defit{primary}
if it consists of $\la$-addable corners. Write $\Prim(m)$ for the set of
primary strings of $m$; the dependence on $\la$ is suppressed.
The strings of $m$ (resp. $M$) are totally ordered, and this induces an order on the primary strings.
For $s\in\Prim(m)$ with $s\ne\max(\Prim(m))$ (resp. $s\ne \min(\Prim(m))$)
we write $\sucs(s)$ (resp. $\pred(s)$) for the cover (resp. cocover)
of $s$ in $\Prim(m)$.

\begin{remark} \label{R:intprim} By Lemma \ref{L:stringint},
if $s$ is a string in $m$
and $t$ a string in $M$ such that $s\cap t\ne\emptyset$ then
$s\in\Prim(m)$ and $t\in\Prim(M)$.
\end{remark}

\begin{lemma}
\label{L:nextint} Suppose $s$ is a string in $m$ and $t$ a string in $M$
such that $s\cap t\ne\emptyset$.
\begin{enumerate}
\item If $s$ continues below (resp. above) $t$ and $s\ne\min(\Prim(m))$
(resp. $s\ne \max(\Prim(m))$)
then there is a string $t'\in\Prim(M)$ such that $t'>t$ (resp. $t'<t$),
$\pred(s) \cap t' \ne\emptyset$ (resp. $\sucs(s)\cap t'\ne\emptyset$),
and $\pred(s)$ (resp. $\sucs(s)$) continues below (resp. above) $t'$.
\item If $t$ continues above (resp. below) $s$ and $t\ne\min(\Prim(M))$
(resp. $t\ne\max(\Prim(M))$), then there is a string $s'\in\Prim(m)$ such that
$s'>s$ (resp. $s'<s$), $s'\cap \pred(t)\ne\emptyset$ (resp. $s'\cap\sucs(t)\ne\emptyset$),
and $\pred(t)$ (resp. $\sucs(t)$) continues above (resp. below) $s'$.
\end{enumerate}
\end{lemma}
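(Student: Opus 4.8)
The plan is to treat only the first assertion of part (1) in the form ``$s$ continues below $t$''; the ``resp.'' version and both halves of part (2) are proved by the same method (and can alternatively be deduced from this one via the transpose and $180^\circ$‑rotation symmetries of the local configuration of $m$, $M$ and $\la$ around the crossing of $s$ and $t$). So suppose $s$ continues below $t$ and $s\neq\min(\Prim(m))$. Write $m=s_1\cup\cdots\cup s_r$ and $M=t_1\cup\cdots\cup t_{r'}$ with strings in their total orders, and $s=\{a_1,\dots,a_\ell\}$, $t=\{b_1,\dots,b_{\ell'}\}$ listed from top to bottom. By Remark~\ref{R:intprim}, $s\in\Prim(m)$ and $t\in\Prim(M)$, so by Property~\ref{P:movestrip} and Lemma~\ref{L:addable} every cell of $s$ and of $t$ is a $\la$-addable corner; by Lemma~\ref{L:stringint}, $s\cap t=\{a_i:i\in A\}=\{b_j:j\in B\}$ for intervals $A\subseteq[1,\ell]$ and $B\subseteq[1,\ell']$. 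Since $s$ continues below $t$, the lowest cell of $s\cup t$ is $a_\ell\notin t$, hence $\max A<\ell$, and the ``bottom'' clause of Lemma~\ref{L:stringint} then forces $\max B=\ell'$. Consequently the bottom cell $b_{\ell'}$ of $t$ equals $a_{\max A}$, and $x:=a_{\max A+1}$ is a $\la$-addable corner of $s$ lying contiguously below $b_{\ell'}$ and outside $t$.

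Next I would pass from $s$ to $\pred(s)$. Since the strings of $m$ are translates of one another (Definition~\ref{D:defmove}\eqref{I:movetranslates}) with top cells in consecutive columns (Definition~\ref{D:defmove}\eqref{I:movetopcells}), $\pred(s)$ is a left‑shifted translate of $s$; here one first checks that $\Prim(m)$ is an initial segment of $s_1,\dots,s_r$ (so $\pred(s)$ is the string immediately preceding $s$), which follows from Lemma~\ref{L:addable} together with the horizontal‑strip property of $m$. Let $\hat a\in\pred(s)$ and $\hat x\in\pred(s)$ be the translates of $b_{\ell'}=a_{\max A}$ and of $x$, so $\hat a$ lies one column to the left of $b_{\ell'}$ and $\hat x$ lies contiguously below $\hat a$. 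The crux is then to produce $t'$: I claim that the unique string of $M$ meeting $\pred(s)$ (unique by Property~\ref{L:uniqstrint}) is the string $t'=t_{j+1}$ immediately following $t=t_j$ in $M$, and that $\hat a\in\pred(s)\cap t'$. The picture is that the step from $t$ to $t'$ mirrors the step from $s$ to $\pred(s)$ along the staircase where $m$ and $M$ overlap: because $b_{\ell'}$ is the bottom cell of the column‑type string $t$, Lemma~\ref{L:addablecornerbelow} applied to the $\la$-addable corner $b_{\ell'}$ and the cell $\hat a$ (at distance $k$ or $k+1$) shows $\hat a$, or the cell just below it, is $\la$-addable; since $\cs(\la)$ is a partition and $M$ is a column move, this corner is forced to lie on the next string $t_{j+1}$ of $M$, with Lemma~\ref{L:distancecontiguouscells} comparing the steps of $t_j$ and $t_{j+1}$ and Lemma~\ref{L:distancestrings} ruling out later strings of $M$ (or disjointness from $\pred(s)$).

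It then remains to verify that $\pred(s)$ continues below $t'$. By the translate property of $m$, $\pred(s)$ reaches as far below $\hat a$ as $s$ reaches below $a_{\max A}$ — in particular $\hat x\in\pred(s)\setminus t'$ sits below $\hat a$ — whereas $t'$ reaches below $\hat a$ by no more than $t$ reaches below $b_{\ell'}$ (Lemma~\ref{L:comparelengths}, or Lemma~\ref{L:distancecontiguouscells} applied to the consecutive strings $t_j,t_{j+1}$). Hence $\pred(s)$ descends strictly past the bottom cell of $t'$, so the lowest cell of $\pred(s)\cup t'$ is not in $t'$; Property~\ref{L:notfinishsame} rules out $\pred(s)$ and $t'$ being matched below, so $\pred(s)$ genuinely continues below $t'$, and since $t_{j+1}>t_j$ in $M$ we have $t'>t$ as required.

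I expect the main obstacle to be the identification of $t'$ as exactly the next string $t_{j+1}$ of $M$, together with $t'>t$. A priori the $\la$-addable corner near $\hat a$ need not lie in $M$ at all, or it could lie on a string of $M$ far above $t$; excluding these possibilities is where one must combine the distance estimates of Lemmas~\ref{L:distancestrings} and~\ref{L:distancecontiguouscells} with the full force of the $k$-shape hypothesis ($\rs(\la)$ and $\cs(\la)$ both partitions), which rigidly pins down the arrangement of addable corners just below the bottom of $t$. Once $t'$ is located, verifying $\hat a\in\pred(s)\cap t'$ and the ``continues below'' relation (and the symmetric cases) should be comparatively routine bookkeeping with the diagrams of Figure~\ref{F:stringdiags}.
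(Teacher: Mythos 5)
Your reduction to a single case is reasonable in spirit, and the local setup (identifying $b_{\ell'}=a_{\max A}$, the cell $x$ below it, and their translates $\hat a,\hat x$ in $\pred(s)$) matches the paper. But the crux of the lemma is \emph{existence}: one must show that $M$ actually has a string $t'$ meeting $\pred(s)$, not merely locate where such a string could sit if it existed. Your argument for this --- ``since $\cs(\la)$ is a partition and $M$ is a column move, this corner is forced to lie on the next string $t_{j+1}$'' --- is a gap. The fact that $\hat a$ is a $\la$-addable corner (which you already know, since $\pred(s)\in\Prim(m)$; the appeal to Lemma~\ref{L:addablecornerbelow} is also misapplied, as that lemma concerns a cell $x$ in a \emph{lower} row than the given corner, whereas $\hat a$ is higher than $b_{\ell'}$) does not oblige $M$ to add $\hat a$. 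The distance estimates from Lemmas~\ref{L:distancestrings} and~\ref{L:distancecontiguouscells} only constrain where a string of $M$ near $\pred(s)$ could be, not that one is present.

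The paper supplies precisely this missing forcing step, and it runs through the \emph{row shape}, not the column shape. Because $b'$ (your $x$) is the cell of $s$ contiguous to and below $b$ (your $b_{\ell'}$), the move $M$ removes the cell $(\row(b'),\col(b))$ from $\bdy\la$ and, by Property~\ref{L:uniqstrint}, does not add $b'$; hence $\rs(M*\la)_{\row(b')}=\rs(\la)_{\row(b')}-1$. By Corollary~\ref{C:samesize}, the translates $c'\in\pred(s)$ of $b'$ satisfies $\rs(\la)_{\row(c')}=\rs(\la)_{\row(b')}$, and $\row(c')>\row(b')$ since both are $\la$-addable corners with $\pred(s)$ to the left. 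For $M*\la$ to be a $k$-shape, $\rs(M*\la)$ must remain a partition, which forces $M$ to remove $(\row(c'),\col(c))$ without adding $c'$; this is what produces $t'>t$ with $\pred(s)\cap t'\ne\emptyset$ and $\pred(s)$ continuing below $t'$. I'd also caution that the internal ``resp.'' case of part~(1) ($s$ continues above $t$) does not follow by a formal rotation symmetry of the $k$-shape poset; the paper reruns the argument with a dual row-shape estimate ($M$ adds a cell to the row of $b$ and must do the same to the row of $c$), and you should expect to have to do that too.
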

\begin{proof} We prove (1) as (2) is the transpose analogue.
Suppose $s$ continues below $t$ and $s\ne\min(\Prim(m))$.
Let $b$ be the bottom cell in $t$; it is also the bottom cell of $s\cap t$.
By hypothesis the string $s$ has a $\la$-addable cell $b'\not\in t$,
contiguous to and below $b$.  $M$ shortens the row of $b'$ since
$(\row(b'),\col(b))$ is removed by $M$ and $b'$ is not added by $M$ by
Property~\ref{L:uniqstrint}.
Let $c$ and $c'$ be the translates in $\pred(s)$ of the cells $b$ and $b'$ in $s$.
Note that $\row(b')<\row(c')$ since $b'$ and $c'$ are $\la$-addable
and $c' \in \pred(s)$.
Furthermore, by Corollary~\ref{C:samesize},
$\rs(\la)_{\row(c')}=\rs(\la)_{\row(b')}$.
Now, from a previous comment $\rs(M * \lambda)_{\row(b')}
= \rs(\la)_{\row(b')}-1$.  In order for  $M*\la$ to belong to $\Ksh$,
$M$ must also remove the cell $(\row(c'),\col(c))$ without adding the
cell $c'$.  Therefore there is a string $t'>t$ such that
$\pred(s)\cap t' \ne \emptyset$ and such that $\pred(s)$ continues below $t'$.

Suppose $s$ continues above $t$ and $s\ne\max(\Prim(m))$.
Let $b$ be the highest cell in $t$, $b'$ the cell of $s$ below and contiguous with
$b$. Let $c$ and $c'$ be the translates in $\sucs(s)$ of
$b$ and $b'$ in $s$. One may show that $M$ adds a cell to the row of $b$ and removes
none. $M$ must do the same to the row of $c$ since $M*\la\in\Ksh$. The rest of the argument
is similar to the previous case.
\end{proof}

%

\begin{lemma} \label{L:continuesexistscol} Lemma \ref{L:moves_relative_position} holds
for a row move and a column move.
\end{lemma}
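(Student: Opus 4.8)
The ``only if'' direction is immediate: since $m$ and $M$ intersect there is at least one pair of intersecting strings $s\subset m$, $t\subset M$, and the universal statement ``$m$ continues above $M$'' specializes to ``$s$ continues above $t$'' for this pair, and similarly in the remaining cases. So the content is the converse, and the plan is as follows. First I would trim the list of cases: by Property~\ref{L:notfinishsame} no string of $m$ can be matched above or below a string of $M$, so those clauses of Lemma~\ref{L:moves_relative_position} are vacuous here; and by Lemma~\ref{L:stringint}, for every intersecting pair $(s,t)$ exactly one of ``$s$ continues above $t$'' and ``$t$ continues above $s$'' holds, and likewise exactly one of the two ``below'' relations. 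Hence it suffices to show that \emph{all} intersecting pairs agree on the above-relation and, separately, that they all agree on the below-relation; the four clauses of Lemma~\ref{L:moves_relative_position} then follow at once.

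By Property~\ref{L:uniqstrint} and Remark~\ref{R:intprim}, the intersecting pairs form a partial matching between $\Prim(m)$ and $\Prim(M)$. The engine of the argument is Lemma~\ref{L:nextint}: it shows that this matching is order-reversing and, crucially, that the above- and below-relations are \emph{reproduced} as one moves from a matched pair to the adjacent matched pair. For instance, if $(s,t)$ is matched with $s$ continuing above $t$ and $s\neq\max(\Prim(m))$, then $\sucs(s)$ is again matched, to some $t'<t$, and $\sucs(s)$ still continues above $t'$; the three companion statements --- walking downward in $\Prim(m)$, and the two for walking in $\Prim(M)$ --- are the other parts of Lemma~\ref{L:nextint}. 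Writing the matched pairs as a chain $(s_1,t_1),\dots,(s_N,t_N)$ with $s_1<\dots<s_N$ (and $t_1>\dots>t_N$), one walks from any matched pair to any other along this chain, applying the appropriate part of Lemma~\ref{L:nextint} at each step; because each application re-derives the same above-relation (resp. below-relation) for the next pair, the relation cannot flip along the chain, so it is globally constant. This is exactly what was needed.

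The step requiring the most care is arranging this walk so that it reaches \emph{every} matched pair. The above-relation propagates in one direction along the chain and the below-relation in the other, so one must verify that at each step the hypothesis of the applicable part of Lemma~\ref{L:nextint} holds whenever a further matched pair remains to be reached --- equivalently, that the matched primary strings form an unbroken interval in each of $\Prim(m)$ and $\Prim(M)$ with consecutive matched pairs related by $\sucs$/$\pred$. This interval structure is itself produced by Lemma~\ref{L:nextint} (its conclusions place the next matched partner at $\sucs(s)$, resp. $\pred(t)$), so the bookkeeping closes up; but it is the point at which the endpoint cases --- when a primary string is extremal in $\Prim(m)$ or in $\Prim(M)$ --- must be handled explicitly. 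The ``below'' halves of the statement are then obtained by the symmetric argument, using the companion clauses already present in Lemma~\ref{L:nextint}.
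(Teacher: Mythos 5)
Your proof is correct, but it takes a genuinely different route from the paper's. The paper also argues by contradiction, but it is a hybrid argument: assuming $s$ continues below $t$ while $s'$ does not continue below $t'$, it appeals to Lemma~\ref{L:nextint} (iterating part (1) downward in $\Prim(m)$) only to conclude that $s'>s$, and then switches to a quantitative diagonal-distance bound. Namely, letting $b,b',c,c'$ be the bottom cells of $s,s',t,t'$, it observes $d(c)<d(b)<d(b')\leq d(c')$ with $d(b)-d(c)\geq k$ (contiguity inside the string $s$), so the translate cells $c$ and $c'$ of the column move $M$ are separated by more than $k-1$, contradicting Lemma~\ref{L:distancestrings}. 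Your approach never invokes Lemma~\ref{L:distancestrings}: it iterates both parts of Lemma~\ref{L:nextint} to propagate the above- and below-relations along the chain of matched primary pairs in both directions and derives the contradiction directly from the matching structure. The trade-off is that the paper's distance bound short-circuits what in your argument is the second half of the walk, making it shorter, while your version demands the extra bookkeeping you flag (choosing whether to step in $\Prim(m)$ or $\Prim(M)$, checking that the matched strings form an interval so the walk actually reaches the target pair, and handling the extremal endpoint cases) — but it makes the propagation mechanism fully explicit and avoids the quantitative input. Both are valid; the two arguments have genuinely different lemma dependencies.
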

\begin{proof}
Suppose $s$ and $s'$ are strings of $m$ and $t$ and $t'$
are strings of $M$
such that $s\cap t\ne\emptyset$, $s'\cap t'\ne\emptyset$, $s$ continues below $t$,
and $s'$ does not continue below $t'$. By Lemma \ref{L:nextint},
$s'>s$. Let $b,b',c,c'$ be the bottom cells of $s,s',t,t'$.
We have that $d(c)<d(b)<d(b') \leq d(c')$.  Since the distance between
$c$ and $b$ is more than $k-1$,
the distance between $t$ and $t'$ is also more than $k-1$.  But this violates
Lemma~\ref{L:distancestrings}.
\end{proof}

\begin{proof}[Proof of Proposition \ref{P:relmix}]

Let $m=\{s_1,s_2,\dotsc,s_r\}$ and $s_1=\{a_1,\dotsc,a_\ell\}$.

(1) The disjointness of $m$ and $M$ implies the commutation of
\eqref{E:commudiagram}, and \eqref{E:charge} holds trivially in this
case. So it suffices to show that $m$ is a row move from $M*\la$;
showing that $M$ is a column move from $m*\la$ is similar.

Since $M\cap m=\emptyset$, $s_1$ is a $(M*\la)$-addable string. The
diagram of the string $s_1$ remains the same in passing from $\la$
to $M*\la$; the only place it could change is in the row of $a_1$
and the column of $a_\ell$, and this could only occur if $a_1$ or
$a_\ell$ were contiguous with a cell of $M$, which is false by
assumption. So $s_1$ is a row-type $(M*\la)$-addable string.
The argument for the other strings of $m$ is similar.

Since $M$ is a column move from $\la$,
$\cs(\la)=\cs(M*\la)$. But then Property~\ref{P:cond5}
implies that $(M*\la)\cup m\in\Ksh$. This proves that $m$ is
a row move from $M*\la$ as required.

(2) We prove case (a) as (b) is similar.
By definition of $\tilde M$, $\tilde M$ contains the same number of strings
as $M$ and the strings of $\tilde M$ are of the same length as those of $M$.
Thus \eqref{E:charge} is satisfied.

By Lemma \ref{L:nextint}, all primary strings of $m$ meet $M$.
In particular,
since the first string $s_1$ of $m$ is always primary, it meets $M$.
The string $s_1$
meets a single string $\hs$ in $M$ and
$s_1\cap\hs =\hs = \{a_p,a_{p+1},\dotsc,a_n\}$ for some $1<p\le n<\ell$
by Lemma \ref{L:stringint} since $s_1$ continues above and below $\hs$.

We now show that $t=\rightarrow_M(s_1)$ is a $(M*\la)$-addable string.
By Property~\ref{P:movestrip} $M$ is a vertical strip and
$$t = \{a_1,a_2,\ldots,a_p^\dagger,a_{p+1}^\dagger,\ldots,a_{n}^\dagger,
a_{n+1},\ldots,a_\ell\}$$ where $a^\dagger$ denotes the cell
right-adjacent to the cell $a$.

Since $a_p$ is the top cell of the column-type $\la$-addable
string $\hs$ and there is a $\la$-addable corner $a_{p-1}$
contiguous to and above $a_p$, by Definition~\ref{D:stringtypes}
we have $\diag(a_p)-\diag(a_{p-1})=k$ and
$\diag(a_p^\dagger)-\diag(a_{p-1})=k+1$. Similarly,
since $a_n$ is the bottom cell of $\hs$,
we have $\diag(a_{n+1}-\diag(a_n)=k+1$ and $\diag(a_{n+1})-\diag(a_n^\dagger)=k$.
Thus the cells of $t$ satisfy the contiguity conditions for a string.

Let $\mu=M*\la$.
For $i<p$ and $i>n$, $a_i$ is $\mu$-addable since
it is $\la$-addable and $a_i\not\in M$ by Property~\ref{L:uniqstrint}.

Let $c$ be the column of $a_{p-1}$.  Since $a_p$ is the top cell of a string in
$M$, there is no cell removed in the row of $a_p$ when going from $\lambda$ to
$\mu$, and thus $\Bot_c(\partial \mu)$ still lies in the row of
$a_p$. By Remark~\ref{R:rowshift} there is a $\mu$-addable corner in the
row
of $a_p$ and it corresponds to $a_p^{\dagger}$.

Now observe that if
$a_{p+1}^\dagger$ is not a $\mu$-addable corner, then there is a
$\mu$-addable corner $e$ below it by Lemma~\ref{L:addablecornerbelow} (since
$a_{p+1}^\dagger$ is a distance $k$ or $k+1$ from the $\mu$-addable corner
$a_p^\dagger$).
Since $M$ is a column move,
there is a $\mu$-removable string with cells $c_p$ and $c_{p+1}$
in the columns of $a_p$ and $a_{p+1}$ which is a translate of string $\hat s$ (the
two strings may coincide).
The distance between $a_p^{\dagger}$ and $e$
is thus larger (by exactly one unit) than the distance between $c_p$ and $c_{p+1}$. Furthermore,
$a_p^{\dagger}$ and $e$ lie in columns immediately to the right
of those of $c_p$ and $c_{p+1}$ respectively.
We then have the contradiction that
the removable string containing $c_p$ and $c_{p+1}$ and the addable string containing
$a_{p}^\dagger$ and $e$
violate Lemma~\ref{L:distancecontiguouscells}.
Therefore $a_{p+1}^\dagger$ is a $\mu$-addable corner
and repeating the previous argument again and again
we get that $a_i^{\dagger}$ is $\mu$-addable for any $p<i\leq
n$.

Therefore $t$ is a $\mu$-addable string.
It is of row-type since the top and bottom of its string diagram
are unaffected by adding $M$ to $\la$ and
coincide with the top and bottom of the diagram of the $\la$-addable row-type string $s$.

Suppose there are $q$ strings of $m$ in the rows of $s_1$, and let
 $t_j=\rightarrow_M(s_j)$ for
$1\le j\le q$.
It follows from the results of
Subsection \ref{SS:introwcolmoves} and the
translation property of strings in row moves, that
$t_j$ is a translate of $t_1$: the top and bottom of $t_j$ agree with
those of $s_j$, and $\rightarrow_M$ right-shifts
the $p$-th through $n$-th cells in $s_j$ to obtain $t_j$, which are the
same positions within the string $s_1$ that are right-shifted
to obtain $t_1$. In particular $t_j$ is a row-type string.

We claim that $t_j$ is
$(M\cup t_1\cup\dotsm\cup t_{j-1})*\lambda$-addable for $1\le j \le q$.
It holds for $j=1$.  For the general case, since $m$ is a
horizontal strip, we have that $\lambda_{\row(a_{p-1})-1}-\lambda_{\row(a_{p-1})} \geq q$.
In $\mu$
we still have $\mu_{\row(a_{p-1})-1}-\mu_{\row(a_{p-1})} \geq q$ since there is
no cell of $M$ in $\row(a_{p})$. By Lemma~\ref{L:comparelengths}
applied to the string $t$ we have that $\mu_{\row(a_{i})-1}-\mu_{\row(a_{i})} \geq q$
for any $p \leq i \leq n$.  This immediately implies that $t_j$ is
$(M\cup t_1\cup\dotsm\cup t_{j-1})*\lambda$-addable for $1\le j \le q$.

The same approach shows that $t_i'=\rightarrow_M(s_i)$ is a $\mu$-addable
string for any other primary string $s_i$ of $m$, and that the strings lying
in the rows of $s_i$ can be right-shifted as prescribed.  Moreover, arguing as in the
proof of Lemma \ref{L:continuesexistscol}, one may show
that $\hs_i=s_i \cap t_i$ consists of the $p$-th through $n$-th cells of $s_i$
(using the same $p$ and $n$ as for $s_1$). It follows that all the strings $\ts_i$ are translates
of each other.

$\tm * M * \la$ is a $k$-shape, because
$M*\la$ is, and because the condition in Property~\ref{P:cond5}
is unchanged in passing from the move $\la\to m*\la$, to the move
$M*\la\to \tm*M*\la$. Therefore $\tm$ is a row move from $M*\la$
with first strings $\ts_1,\dots,\ts_r$.

We must show that $\tM$ is a column move from $m*\la$.
It is a vertical strip, being the difference of
partitions $m*\la$ and $\la\cup M\cup \tm$,
and having at most one cell per row by definition.

It was shown previously that for any string $s'$ of $M$ that meets $m$,
$s'$ is contained in the string of $m$ that it meets.
Note that since strings in a move are translates of each other, we have that if the primary
string
$t=\{a_1,\dots,a_{\ell} \}$ of $m$ is such that there are $n$ cells of
$m$ in the row of $a_1$, then there are also $n$ cells of $m$ in the row
of $a_i$ for all $i$.
It follows that under $\rightarrow_m$,
every string in $M$ is translated directly to the right by some
number of cells (possibly zero). Therefore $\tM$ is the disjoint union of strings
that are translates of each other and which start in consecutive rows.
Since $\tM$ is an $m*\la$-addable vertical strip we deduce that
it is a column move from $m*\la$.
\end{proof}

\subsection{Proving properties of row equivalence}
\label{SS:introwrowmoves}
We state the analogues of results in Subsection \ref{SS:introwcolmoves}
for intersections of row moves $m$ and $M$ from $\la\in\Ksh$.

\begin{property} \label{L:uniqstrintrow}
Every string of $m$ meets at most one string of $M$.
\end{property}

\begin{lemma} \label{L:stringintrow}
Suppose $s=\{a_1,\dotsc,a_\ell\}$ and $t=\{b_1,\dotsc,b_{\ell'}\}$
are strings in $m$ and $M$ respectively such that $s\cap t\ne\emptyset$.
Then $s\cap t$ is a string and there are intervals $A\subset [1,\ell]$ and $B\subset [1,\ell']$ such that
$s\cap t = \{a_j\mid j\in A\} = \{b_j\mid j\in B\}$. Moreover, either $\min(A)=1$ or $\min(B)=1$,
and also either $\max(A)=\ell$ or $\max(B)=\ell'$.
\end{lemma}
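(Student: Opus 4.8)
The plan is to replay, as far as possible, the proof of Lemma~\ref{L:stringint} (the mixed row/column case). Write $s=\{a_1,\dots,a_\ell\}$ and $t=\{b_1,\dots,b_{\ell'}\}$. Since the cells of a string are strictly decreasing in row index, $s\cap t$ inherits a single total order from both chains, so the first two assertions --- that $s\cap t$ is a string and that it is cut out by intervals $A\subseteq[1,\ell]$ and $B\subseteq[1,\ell']$ --- both follow from the \emph{matching claim}: whenever $x\in s\cap t$ and each of $s$ and $t$ contains a cell immediately below $x$ in its chain, those two cells coincide (and symmetrically for ``above''). Indeed, granting the claim, a short induction along the chain order shows $s\cap t$ occupies consecutive positions in $s$ and in $t$; and if the bottom cell of $s\cap t$ were the bottom neither of $s$ nor of $t$, then both strings would have a cell below it, which by the claim would be a common cell lying below $s\cap t$ --- a contradiction. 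Hence $\max A=\ell$ or $\max B=\ell'$, and dually $\min A=1$ or $\min B=1$.

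So everything reduces to the matching claim. Let $y\in s$ and $y'\in t$ be the cells immediately below $x$; both lie strictly below $x$ and are contiguous with $x$. If $x$ is $\la$-addable, the argument of Lemma~\ref{L:stringint} carries over unchanged: by Property~\ref{P:movestrip} and Lemma~\ref{L:addable} every cell of $s$ and of $t$ is then a $\la$-addable corner, so $x,y,y'$ are $\la$-addable corners and Remark~\ref{R:contig} forces $y=y'$. The new phenomenon --- impossible in the mixed case, where Property~\ref{P:movestrip}(3) makes any cell shared by a horizontal and a vertical strip over $\la$ automatically a corner --- is that here $x$ need not be $\la$-addable, since $m$ and $M$ are both horizontal strips.

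The main obstacle is this non-addable case. My plan is to exploit that $m$ and $M$ are horizontal strips with the same base partition $\la$: in any row they meet, their leftmost cells coincide (with the unique $\la$-addable corner of that row), so their common cells in that row form a contiguous run back to that corner, and $x$ sits at one and the same positive indent within its string of $m$ and within its string of $t$. Combining this with the translate structure of row moves (Definition~\ref{D:defmove}), Property~\ref{L:uniqstrintrow}, and the distance estimates of Lemma~\ref{L:distancestrings} and Lemma~\ref{L:distancecontiguouscells}, one rules out $x$ being contiguous with two distinct cells below it that belong respectively to $m$ and to $M$, so $y=y'$ again. With the matching claim in hand for all $x$, the induction and endpoint bookkeeping sketched above complete the proof.
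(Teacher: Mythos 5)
The paper gives no proof of this lemma: it is stated, together with Property~\ref{L:uniqstrintrow}, Lemma~\ref{L:nextintrow}, and Lemma~\ref{L:continuesexistsrow}, purely as the row--row analogue of the mixed-case results of \S\ref{SS:introwcolmoves}, so you are supplying a proof, not matching one. Your framework is sound: the reduction to the ``matching claim'' is right, and you have correctly identified the genuine new obstacle, namely a common cell $x$ of positive indent, which cannot occur in the mixed case because of Property~\ref{P:movestrip}(3). Your observation that $x$ has one and the same indent $d$ in $m$ and in $M$ (both being $\la$-addable horizontal strips) is also correct. (One small quibble: in the addable sub-case you invoke Property~\ref{P:movestrip}, but that is the mixed-case fact; Lemma~\ref{L:addable} is what shows the whole primary string is $\la$-addable, after which Remark~\ref{R:contig} closes the case.)

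The gap is the non-addable case, which is the entire content of the lemma beyond Lemma~\ref{L:stringint}. You do not prove the matching claim there; you assert that the translate structure together with Property~\ref{L:uniqstrintrow}, Lemma~\ref{L:distancestrings}, and Lemma~\ref{L:distancecontiguouscells} ``rules out'' two distinct contiguous cells below $x$, but no argument is given, and the two distance lemmata are not the right tools here: Lemma~\ref{L:distancestrings} bounds the diagonal separation of translate cells within a single move, and Lemma~\ref{L:distancecontiguouscells} compares two $\la$-addable strings in adjacent rows or columns --- neither configuration is the one you face. The idea that does close the gap is the translate reduction you gesture at but never carry out. By Definition~\ref{D:defmove}\eqref{I:movetranslates}--\eqref{I:movetopcells} and Lemma~\ref{L:addable}, the strings of a non-degenerate row move are the $(0,j-1)$ right-shifts of the primary string; so the strings $s\subset m$ and $t\subset M$ through $x$ are the $(0,d)$-translates of $s_1\subset m$ and $t_1\subset M$ respectively. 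The cells $y\in s$ and $y'\in t$ immediately below $x$ are therefore the $(0,d)$-translates of $y-(0,d)\in s_1$ and $y'-(0,d)\in t_1$, both $\la$-addable and both contiguous with and below the $\la$-addable corner $x-(0,d)$; Remark~\ref{R:contig} gives $y-(0,d)=y'-(0,d)$, hence $y=y'$. (For a degenerate move the strings are singletons, so there is nothing to prove.) Replace your last paragraph with this argument and drop the two distance lemmata.
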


\begin{lemma}
\label{L:nextintrow} Suppose $m=s_1\cup s_2\cup\dotsm\cup s_p$ and
$M=t_1\cup t_2\cup\dotsm\cup t_q$ are row moves on $\la\in\Ksh$ with given
string decomposition such that $m\cap M\ne \emptyset$.
\begin{enumerate}
\item The leftmost cell of $m\cap M$ is contained in either $s_1$ or $t_1$.
\item The rightmost cell of $m\cap M$ is contained in either $s_p$ or $t_q$.
\end{enumerate}
\end{lemma}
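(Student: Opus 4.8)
The plan is to locate the leftmost cell $b$ of $m\cap M$, show it must be $\lambda$-addable, and then pin down which strings of $m$ and $M$ can contain it. First I would argue that $b$ is $\lambda$-addable: since a row move is a horizontal strip (Property~\ref{P:movestrip}), each of $m$ and $M$ has at most one cell in any column and is a contiguous interval in any row, so $m\cap M$ is a contiguous interval in each row; hence $b$ is the leftmost cell of $m$ or of $M$ in its row, and in either case $b$ is $\lambda$-addable. If $b$ lies in the first column it is the leftmost cell of both strips, hence the top cell of $s_1$ and of $t_1$, and we are done; otherwise the cell directly left of $b$ lies in $\lambda$, hence in neither strip, so $b$ is the leftmost cell of \emph{both} $m$ and $M$ in its row. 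Writing $s_i,t_j$ for the strings of $m,M$ through $b$, it follows that $b$ is a $\lambda$-addable corner of both, so by Lemma~\ref{L:addable} both $s_i$ and $t_j$ are primary.

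Next I would dispose of two easy cases. Let $(v,1)$ be the common translation carrying each string of $m$ to the next. If $v\ge 0$, no string $s_\iota$ with $\iota\ge 2$ can be primary: its predecessor's top cell $c$ lies in $m\setminus\lambda$, so $\lambda_{\row(c)}\le\col(c)-1$, whereas primality of $s_\iota$ would force $\lambda_{\row(c)+v}=\col(c)$, impossible in a partition for $v\ge 0$. Since $s_i$ is primary this gives $i=1$, so $b\in s_1$; similarly if the analogous datum for $M$ is $\ge 0$ then $b\in t_1$. So I may assume the strings of $m$ and of $M$ strictly descend to the right ($v<0$). Then I would apply Lemma~\ref{L:stringintrow} to the intersecting pair $(s_i,t_j)$: the leftmost cell of $s_i\cap t_j$ is $b$, which as the top cell of the string $s_i\cap t_j$ is the top cell of $s_i$ or of $t_j$. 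Since the hypotheses are symmetric in $m$ and $M$, I may assume $b$ is the top cell of $s_i$, and it remains to prove $i=1$.

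Suppose $i\ge 2$. Since the strings of $m$ descend to the right, $s_{i-1}$ has its top cell $c$ one column to the left of $b$ and in a higher row, and (as $m$ is a horizontal strip) $c$ is the unique cell of $m$ in column $\col(b)-1$. Because $b$ and $c$ are translate cells of the move $m$, Lemma~\ref{C:samesize} gives that columns $\col(b)-1$ and $\col(b)$ of $\bdy\lambda$ have equal height. Now $M$ adds the cell $b$ in column $\col(b)$ and $M*\lambda\in\Ksh$, so $\cs(M*\lambda)$ is a partition; I would then argue --- mirroring left-to-right the forcing step in the proof of Lemma~\ref{L:nextint}, and using Remark~\ref{R:rowshift} together with the distance estimates of Lemmas~\ref{L:distancestrings} and~\ref{L:distancecontiguouscells} to pin down positions --- that for $\cs(M*\lambda)$ to remain a partition, $M$ must also add the cell $c$. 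But then $c\in m\cap M$ lies in a smaller column than $b$, contradicting the choice of $b$; hence $i=1$ and $b\in s_1$. Part~(2) is the mirror statement and is proved by the same argument with ``left''/``top'' replaced by ``right''/``bottom'', the last strings $s_p,t_q$ in place of $s_1,t_1$, and the ``$\max A=\ell$ or $\max B=\ell'$'' half of Lemma~\ref{L:stringintrow}.

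The hard part will be the final step: showing that the $k$-shape condition on $M*\lambda$ forces the extra cell $c$ into $M$. This is a careful local analysis of the $k$-boundary of $M*\lambda$ in the two adjacent columns $\col(b)-1,\col(b)$ and in the row of $c$, and is the left-right mirror of the corresponding forcing argument in the proof of Lemma~\ref{L:nextint}. A technical wrinkle to watch is that $c$ need not itself be $\lambda$-addable (the primary strings of a row move need not form an initial segment), so one may first need to establish, from the $k$-shape structure of $\lambda$, that $s_{i-1}$ is primary whenever $s_i$ is, or else to run the forcing argument directly for a non-addable $c$ by arguing instead about $\Bot_{\col(c)}(\bdy\lambda)$ via Lemma~\ref{L:addablecornerbelow}.
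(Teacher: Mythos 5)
Your proposal leans on heavy machinery (Lemma~\ref{L:stringintrow}, Lemma~\ref{L:addable}, Corollary~\ref{C:samesize}, a ``forcing'' step mirroring the proof of Lemma~\ref{L:nextint}), and as you partly acknowledge yourself, it leaves genuine gaps. First, the claimed ``common translation $(v,1)$ carrying each string of $m$ to the next'' does not exist in general: for degenerate moves (single-cell strings) the row shift from one string to the next can vary --- see the first picture in Example~\ref{X:rowmove}, where $A$ is two rows above $B$ and $C$ while $B$ and $C$ share a row --- so the dichotomy $v\ge 0$ versus $v<0$ on which your case split rests is not well posed. Second, and more seriously, the key forcing step ``for $\cs(M*\lambda)$ to remain a partition, $M$ must also add the cell $c$'' is hand-waved, and the sketched comparison of $k$-boundary column heights does not go through: when the string $t_j$ through $b$ has length $\ge 2$, adding $b$ to column $\col(b)$ is compensated by removing the cell below $b$ in the same column from $\bdy\lambda$, so $\cs(M*\lambda)_{\col(b)}=\cs(\lambda)_{\col(b)}$ and no constraint on column $\col(b)-1$ follows. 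The acknowledged wrinkle (that $c$ need not be $\lambda$-addable when $s_{i-1}$ is not primary) is likewise left open.

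The lemma is in fact elementary and none of that machinery is needed. Write $\gamma=\col(b)$ and suppose $b\in s_i\cap t_j$ with $i\ge 2$ and $j\ge 2$. The translate of $b$ in $s_{i-1}$ lies in column $\gamma-1$ and belongs to $m$; likewise the translate of $b$ in $t_{j-1}$ lies in column $\gamma-1$ and belongs to $M$. But $m=(m*\lambda)/\lambda$ and $M=(M*\lambda)/\lambda$ are horizontal strips over the common partition $\lambda$ with $m*\lambda$ and $M*\lambda$ partitions (Property~\ref{P:movestrip} and the definition of move), so each of $m$, $M$ has at most one cell in column $\gamma-1$, and that cell, when present, is forced to sit at position $(\lambda^t_{\gamma-1}+1,\ \gamma-1)$. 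Hence the two translates coincide, giving a cell of $m\cap M$ in a strictly smaller column than $b$ and contradicting the choice of $b$ as leftmost. Thus $i=1$ or $j=1$, which is part~(1); part~(2) is the mirror image, replacing $s_{i-1},t_{j-1}$ by $s_{i+1},t_{j+1}$ and ``column $\gamma-1$'' by ``column $\gamma+1$''. In particular the appeal to Lemma~\ref{L:stringintrow} to locate $b$ at the top of a string, the primality analysis, and the column-shape comparison can all be dropped.
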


\begin{lemma} \label{L:continuesexistsrow} Lemma \ref{L:moves_relative_position} holds
for $m$ and $M$ both row moves.
\end{lemma}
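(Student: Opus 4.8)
The plan is to show that if two intersecting strings $s\subset m$ and $t\subset M$ satisfy one of the three relative positions (say $s$ continues above $t$), then every pair of intersecting strings from $m$ and $M$ satisfies the same relative position. The key tool is the row-move analogue of Lemma~\ref{L:nextint}, which (once the row versions Lemma~\ref{L:stringintrow} and the ``next intersection'' lemma are in hand) propagates an intersection relation from one pair of strings to the next pair along the string orders of $m$ and $M$. Since the strings of each move are totally ordered and only primary strings can intersect (Remark~\ref{R:intprim} carries over via Lemma~\ref{L:stringintrow}), this propagation connects any two intersecting pairs by a finite chain. Thus it suffices to argue that the relative position cannot ``flip'' between two consecutive intersecting pairs in the chain, and this is exactly where the distance constraint of Lemma~\ref{L:distancestrings} enters.

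First I would set up the proof by contradiction exactly as in Lemma~\ref{L:continuesexistscol}: suppose $s\cap t\ne\emptyset$ with $s$ continuing above $t$, and $s'\cap t'\ne\emptyset$ with $s'$ \emph{not} continuing above $t'$. Using the row analogue of Lemma~\ref{L:nextint}, walk from the pair $(s,t)$ toward the pair $(s',t')$ through a sequence of intersecting pairs, each obtained from the previous by replacing $s$ with $\sucs(s)$ or $t$ with $\pred(t)$, etc.; this is possible because $s\ne\max(\Prim(m))$ or $t\ne\min(\Prim(M))$ at each step as long as the relation has not yet flipped. At the first step where the relation changes (say from ``$s$ continues above $t$'' to ``matched above'' or ``$t$ continues above $s$''), extract the top cells of the four strings involved. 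As in the column case one gets a chain of diagonal inequalities $\diag(\text{top of }t)<\diag(\text{top of }s)<\diag(\text{top of }s')\le\diag(\text{top of }t')$, forcing the distance between the relevant cells of $t$ and $t'$ (which are translates in the move $M$) to exceed $k-1$, contradicting Lemma~\ref{L:distancestrings}. The ``below'' and ``matched'' cases, and the case where the roles of $m$ and $M$ are swapped, follow by the same argument with ``top'' replaced by ``bottom'' and string orders reversed.

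The main obstacle I anticipate is \emph{not} the distance estimate itself — that runs parallel to the proof of Lemma~\ref{L:continuesexistscol} — but rather verifying that the propagation step (row analogue of Lemma~\ref{L:nextint}) genuinely applies at each stage. In the mixed row/column setting the partition condition on $\cs(M*\la)$ was used to force $M$ to shorten an extra row; here both moves are row moves, so one must instead use the partition condition on $\cs(\la\cup m\cup M)$ together with the fact that row moves translate their strings to the right (Definition~\ref{D:defmove}\eqref{I:movetranslates},\eqref{I:movetopcells} and Corollary~\ref{C:samesize}) to conclude that if $s$ has an addable cell $b'$ below its overlap with $t$ then the next primary string $\pred(s)$ must overlap a later string of $M$ in the analogous position. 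One should also confirm that the exceptional endpoints — $s=\max(\Prim(m))$ or $t=\min(\Prim(M))$ — cannot be reached before the relation flips, since at those endpoints the chain terminates cleanly and there is nothing left to contradict. Once these bookkeeping points are settled, the contradiction with Lemma~\ref{L:distancestrings} closes the argument.
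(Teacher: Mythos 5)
Your plan is to transplant the proof of Lemma~\ref{L:continuesexistscol} — walk along a chain of intersecting string pairs via a ``next intersection'' lemma and then derive a contradiction from diagonal distances — into the row/row setting. There are two concrete gaps, and both arise from overlooking the structural feature that distinguishes the row/row case from the mixed case.

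First, the propagation step you invoke is not available. You write as if Lemma~\ref{L:nextintrow} were the row analogue of Lemma~\ref{L:nextint}, but it is not: Lemma~\ref{L:nextintrow} merely locates the leftmost (resp.\ rightmost) cell of $m\cap M$ in $s_1$ or $t_1$ (resp.\ $s_p$ or $t_q$), and says nothing about propagating the ``continues above/below'' relation from one intersecting pair $(s,t)$ to $(\sucs(s),\pred(t))$ as Lemma~\ref{L:nextint} does. Without some substitute you have no control over the sign of $\diag(\text{top of }s')-\diag(\text{top of }s)$, and that sign is exactly what the diagonal estimate needs. Indeed, with your stated hypotheses ($s$ continues above $t$, $s'$ does not), if it happened that $s'>s$ then the two applications of Lemma~\ref{L:distancestrings} only force $2\le\diag(\text{top of }t)-\diag(\text{top of }t')\le k-2$, which is consistent; the contradiction only materializes when $s'\le s$. (Separately, your displayed chain has the inequality backwards: ``$s$ continues above $t$'' means the top of $s$ has the \emph{smaller} diagonal index, so it should read $\diag(\text{top of }s)<\diag(\text{top of }t)$; the direct transcription of the column proof — which compares \emph{bottom} cells for ``below'' — reverses sign when you pass to \emph{top} cells for ``above''.)

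Second, and more to the point, the row/row case admits a much simpler direct proof, which is presumably what the paper intends by ``we state the analogues.'' By Definition~\ref{D:defmove}\,(3)--(4) the strings $s_1,\dots,s_p$ of a row move are pairwise translates of each other with top cells in consecutive columns; since they are translates and must coexist in a horizontal strip, the translation vector is $(0,1)$ and hence \emph{all strings of a row move occupy exactly the same rows} (this is used explicitly, e.g., in the proof of Lemma~\ref{L:extendcompletion}, and is visible in Example~\ref{X:rowmove}). Consequently the top cells of all strings of $m$ lie in a single row, and likewise for $M$, so the comparison $\row(\text{top of }s)$ versus $\row(\text{top of }t)$ is identical for every intersecting pair $(s,t)$. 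Using Lemma~\ref{L:stringintrow} to see that equality of these rows forces the two top cells to coincide, that single row comparison already decides whether $s$ continues above $t$, is matched above with $t$, or is continued above by $t$, uniformly over all intersecting pairs. The ``below'' statement is the same argument with bottom cells. No chain-walking and no appeal to Lemma~\ref{L:distancestrings} are needed.
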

%

\begin{lem}\label{L:introw}
Suppose $m = s_1 \cup s_2 \cup \cdots \cup s_p$ and $M = t_1 \cup
t_2 \cup \cdots \cup t_q$ are intersecting moves from $\la\in\Ksh$.
\begin{enumerate}
\item
Suppose that $m$ continues above $M$ but the two are matched below.
Then $p \le q$ and $s_i$ contains $t_i$ and continues above it for $1\le i\le p$.
\item
Suppose that $m$ continues below $M$ but the two are matched above.
Then $p \le q$ and $s_{p-i}$ contains $t_{q-i}$
and continues below it for $0\le i<p$.
\end{enumerate}
\end{lem}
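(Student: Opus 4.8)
The plan is to prove part (1) and obtain part (2) by repeating the argument with the roles of ``top'' and ``bottom'' (and of the indices $1$ versus $p,q$) interchanged. Throughout I would write $s_i=\{a^i_1,\dots,a^i_\ell\}$ and $t_j=\{b^j_1,\dots,b^j_{\ell'}\}$ from top to bottom, and use that inside a row move consecutive strings are translates whose top cells occur in consecutive columns, so every cell of $s_{i+1}$ (resp.\ $t_{j+1}$) is obtained from the corresponding cell of $s_i$ (resp.\ $t_j$) by one and the same right-shift, whose column component is $+1$.

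First I would pass to the string level. By Lemma~\ref{L:continuesexistsrow} the hypothesis of (1) means that for \emph{every} pair of intersecting strings $s\subset m$, $t\subset M$, the string $s$ continues above $t$ and $s,t$ are matched below. Feeding this into Lemma~\ref{L:stringintrow}, where $s\cap t=\{a_j\mid j\in A\}=\{b_j\mid j\in B\}$: ``$s$ continues above $t$'' gives $1\notin A$, hence $\min A>1$ and therefore $\min B=1$; ``matched below'' makes the common lowest cell equal to $a_\ell=b_{\ell'}$, so $\max A=\ell$ and $\max B=\ell'$. Thus $B=[1,\ell']$, i.e.\ $t\subseteq s$, $t$ consists exactly of the bottom $\ell'$ cells of $s$, and the bottom cell of $t$ coincides with the bottom cell of $s$. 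By Property~\ref{L:uniqstrintrow} (used also with $m,M$ interchanged) the relation ``$s_i$ meets $t_j$'' is the graph of a partial injection $\sigma$ on string indices.

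Next I would show $\sigma$ shifts indices by a constant. If $i$ lies in the domain of $\sigma$ then, since $s_i$ and $t_{\sigma(i)}$ are matched below, the bottom cell of $t_{\sigma(i)}$ is $a^i_\ell$, whose column increases by exactly $1$ when $i\mapsto i+1$; and the column of the bottom cell of $t_j$ likewise increases by exactly $1$ when $j\mapsto j+1$. Hence $\sigma$ is strictly increasing with $\sigma(i+1)=\sigma(i)+1$, so the domain of $\sigma$ is an interval $\{a,a+1,\dots,b\}$, the matched strings of $M$ form the consecutive block $t_{\sigma(a)},\dots,t_{\sigma(a)+(b-a)}$, and $m\cap M=t_{\sigma(a)}\cup\dots\cup t_{\sigma(a)+(b-a)}$. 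In particular the leftmost cell of $m\cap M$ is the leftmost cell of $t_{\sigma(a)}$, and its rightmost cell is the rightmost cell of $t_{\sigma(a)+(b-a)}$, which lies in $s_b$.

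It remains to pin down the endpoints: I want $a=1$, $\sigma(1)=1$ and $b=p$, for then $q\ge\sigma(p)=p$ and $t_i\subseteq s_i$ with $s_i$ continuing above $t_i$ for $1\le i\le p$, which is exactly assertion~(1). By Lemma~\ref{L:nextintrow}(1) the leftmost cell of $m\cap M$ lies in $s_1$ or in $t_1$; since it is the leftmost cell of $t_{\sigma(a)}$ and distinct strings of a move are disjoint, either $\sigma(a)=1$, or that cell lies in $s_1$ and hence $a=1$. If $\sigma(a)=1$, then $t_1\subseteq s_a$, so the bottom $\ell'$ cells of $s_a$ are the $\la$-addable corners forming $t_1$, whence by Lemma~\ref{L:addable} the whole string $s_a$ is primary; I would then show that if $a>1$ the string $s_{a-1}$ (a left-translate of $s_a$) must itself meet $M$, contradicting minimality of $a$ — this uses Lemma~\ref{C:samesize} together with the addable-corner lemmata (Remark~\ref{R:rowshift}, Remark~\ref{R:contig}, Lemma~\ref{L:addablecornerbelow}, Property~\ref{C:rowmovecolsize}) and the fact that $\la\in\Ksh$. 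Symmetrically, Lemma~\ref{L:nextintrow}(2) applied to the rightmost cell of $m\cap M$ (the rightmost cell of $t_{\sigma(b)}\subseteq s_b$) forces $b=p$. The hard part will be exactly this endpoint step: ruling out strings of $m$ strictly to the left of $s_a$ or to the right of $s_b$ is the only place the $k$-shape hypothesis on $\la$ is genuinely needed, whereas the rest is bookkeeping with the four string diagrams of Figure~\ref{F:stringdiags}. Part~(2) follows from the mirror-image argument, reading Lemma~\ref{L:nextintrow} from the right and interchanging ``above'' with ``below''.
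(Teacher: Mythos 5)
Your overall plan is reasonable, but the step you yourself flag as ``the hard part'' is exactly the whole content of the lemma, and your sketch of it is pointing in the wrong direction. You reduce to showing $a=1$ and $\sigma(a)=1$; in the case $\sigma(a)=1$ with $a>1$ you say you ``would then show that if $a>1$ the string $s_{a-1}$ (a left-translate of $s_a$) must itself meet $M$, contradicting minimality of $a$.'' That cannot be made to work: under the hypotheses, $s_{a-1}$ genuinely does \emph{not} meet $M$ (since $t_1$ is already the leftmost string of $M$ and, in your own notation, every string $t_j$ with $j\ge1$ is accounted for inside some $s_i$ with $i\ge a$). There is nothing left in $M$ for $s_{a-1}$ to intersect, so you cannot contradict ``minimality of $a$'' by producing an intersection. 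The actual contradiction is a direct column-size one, with no intersection involved: by Lemma~\ref{C:samesize} all positively modified columns of a row move have the same entry of $\cs(\la)$, so if $a>1$ then $\cs(\la)_{c_{s_{a-1},d}}=\cs(\la)_{c_{s_a,d}}=\cs(\la)_{c_{t_1,d}}$, while $c_{s_{a-1},d}=c_{t_1,d}^-$ and Property~\ref{C:rowmovecolsize} (equivalently Property~\ref{P:cond5}) applied to the move $M$ forces $\cs(\la)_{c_{t_1,d}^-}>\cs(\la)_{c_{t_1,d}}$. The same issue recurs at the other endpoint (ruling out $b<p$) and in the reflected argument for part (2).

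By contrast, the paper dispenses with essentially all of your string-level machinery. It observes that ``matched below'' gives $c_{s_i,d}=c_{t_j,d}$ for some $i,j$; Property~\ref{C:rowmovecolsize} then immediately pins the shared positively modified column to the leftmost column of its size, so $c_{s_1,d}=c_{t_1,d}$ and $s_1\cap t_1\ne\emptyset$; one more application of Property~\ref{C:rowmovecolsize}, to the negatively modified columns of $M$, yields $p\le q$; and translation of strings does the rest. Lemmata~\ref{L:continuesexistsrow}, \ref{L:stringintrow}, and the injection $\sigma$ are not needed at all. So even if you repair the endpoint step as above, the proof you are building is a substantially longer detour to reach the same fact that Property~\ref{C:rowmovecolsize} encodes directly at the level of moves.
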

\begin{proof} We prove (1) as (2) is similar. The hypotheses imply that
for some $i$ and $j$ we have $c_{s_i,d}=c_{t_j,d}$. It follows from
Property~\ref{C:rowmovecolsize} that $c_{s_1,d}=c_{t_1,d}$, that is, $s_1$ and $t_1$
intersect. Applying Property~\ref{C:rowmovecolsize} to the upper part of $M$
we conclude that $p\le q$. We have that
$s_i$ meets $t_i$ for $1\le i\le p$ since it is true for $i=1$
and strings in a move are translates. Since $m$ continues above $M$ and they are matched below,
$s_i$ contains $t_i$ and continues above it.
\end{proof}

\begin{proof}[Proof of Lemma \ref{L:perfection}]
We prove (1) as (2) is similar. Let $m_\per$ give rise to the lower
perfection $\la\cup m\cup M\cup m_\per\in\Ksh$. Since $m\cup m_\per$
is a row move from $M*\la$ of rank $r$, it follows that $m_\per$,
viewed as $\la\cup m\cup M$-addable, must negatively modify (by
$-1$) precisely the columns $c_{s_1,u}$ through $c_{s_r,u}$. So
$M\cup m_\per$ is a row move from $m*\la$ which negatively modifies
the $r+r'$ consecutive columns
$c_{t_1,u},\dotsc,c_{t_{r'},u},c_{s_1,d},\dotsc,c_{s_r,d}$.
Therefore $m_\per$ is specified by adjoining to $\la\cup m\cup M$,
translates of $t_1$ in the $r$ columns just after $c_{t_{r'},u}$.
The other claims are clear.
\end{proof}

\begin{proof}[Proof of Proposition \ref{P:rel}]

 Cases \eqref{it:rowequivnoninterf} and \eqref{it:rowequivabovebelow}
are similar to Cases (1) and (2) of mixed equivalence.
Case \eqref{it:rowequivempty} is trivial. Case \eqref{it:rowequivinterf} holds by definition.
So consider Case \eqref{it:rowequivmatch}. We suppose
that $m$ and $M$ are row moves on $\la$ that are matched below, as the ``matched above" case
is similar. If $m$ and $M$ are also matched above then it follows that $m=M$:
intersecting strings must coincide, and Property~\ref{C:rowmovecolsize}
implies that the two moves must modify the same columns. So we may assume that $m$ continues above $M$.
Using the notation of Lemma \ref{L:introw}, we see that $\tM$ decomposes into
strings $t_{p+1},\dotsc,t_q$. These strings neither intersect nor have any cells contiguous with
any of the other strings in $m$ or $M$.
It follows that $\tM$ is a row move from $m*\la$ since $\cs(\tilde M\cup m \cup
\lambda)=\cs(M\cup m \cup\lambda)$ is a partition ($m$ and $M$ are not
interfering).
As a
set of cells, $\tm$ decomposes into strings $u_i:=s_i\setminus t_i$ for $1\le
i\le p$ that are translates of each other.
The top of the diagram of the string $u_1$ coincides with that of $s_1$.
Consider the column $c_{u_1,d}$ in the diagram of $u_1$
as a $M*\la$-addable string. $u_1$ does not remove a cell from this column
since the first string $t_1$ of $M$ already removed such a cell in passing from $\bdy\la$ to $\bdy M*\la$.
Therefore $u_1$ is a row-type $M*\la$-addable string. Similarly it follows that
$\tm$ is a row move from $M*\la$ with strings $u_i$.
\end{proof}

\subsection{Proofs of Lemma~\ref{L:mixeddiamond} and
  Lemma~\ref{L:rowdiamond}}\label{SS:diamond}

\begin{proof}[Proof of Lemma~\ref{L:mixeddiamond}]
By Definition~\ref{D:rowcolcommute} we need to show that if $m$ and
$M$ do not intersect but a cell of $m$ is contiguous to a cell of
$M$ or if $m$ and $M$ intersect and are not reasonable, then $m$ and
$M$ do not define a diamond equivalence.

Suppose there is a diamond equivalence $\tilde m M \equiv \tilde M
m$. By definition we must have $\Delta_{\rs} (\tilde
M)=\Delta_{\rs}(M)$ ($m$ and $\tm$ are row moves and thus do not
change row shapes). As a consequence, $M$ and $\tilde M$ must have
the same rank, and similarly for $m$ and $\tilde m$. The charge
conservation of a diamond equivalence also implies that $M$ and
$\tilde M$ have the same length.

Consider the case where $m$ and $M$ do not intersect but a cell of
$m$ is contiguous to a cell of $M$. Suppose $m$ is above $M$. Then
the bottom cell $a$ of a given string $s$ of $m$ is contiguous to
the top cell $b$ of a given string $t$ of $M$.  Furthermore,
$d(b)-d(a)=k$, for otherwise $s$ and $t$ would not be of row and
column types respectively. Since $\Delta_{\rs}(t)$ has a +1 in
$\row(b)$, there must be a string $\tilde t$ of $\tilde M$ that ends
in $\row(b)$ in order for $\Delta_{\rs}(\tilde M)$ to have a +1 in
$\row(b)$.  But then in $m*\lambda$ the hook-length of the cell in
position $(\row(b),\col(a))$ is $k$ which gives the contradiction
that $\tilde t$ is not a column-type string. Otherwise $m$ is below
$M$, the top cell $a$ of some string $s\subset m$ is contiguous with
the bottom cell $b$ of some string $t\subset M$ with
$d(a)-d(b)=k+1$. Let $r=\row(a)$ and $c=\col(b)$. Then
$h_\la(r,c)=k$. Since $\Delta_\cs(m)=\Delta_\cs(\tm)$, $\tm$ must
remove $(r',c)=\Bot_c(\bdy(M*\la))$ where $r'>r$. Since $M$ is a
column move, $\cs(M*\la)_c=\cs(\la)_c$. Since $m$ contains the cell
$a=(r,\la_r+1)$, $\tm$ must contain the cell $(r',\la_r+1)$ in order
to remove $(r',c)$. But then $M$ contains the cell $a$,
contradicting the disjointness of $m$ and $M$.

Now consider the case where $m$ and $M$ intersect but are not reasonable.
 Suppose there is a
string $s$ of $m$ that meets a string $t$ of $M$, with $s$ continuing below
$t$ but not above it. By Property~\ref{L:notfinishsame},
we know that $t$ finishes above $s$.  Let $t=\{a_1,\dots,a_\ell \}$
and $s$ be such that
$s\cap t= \{a_{i},\dots,a_\ell\}$, and let
$b$ be the cell of $s$  contiguous to and below $a_\ell$
(it  exists by our hypotheses).
Since $M$ is a column move, $\Delta_{\rs}(M)$ has a $-1$
 in $\row(b)$.  Thus
$\Delta_{\rs}(\tilde M)$ must also have a $-1$ in $\row(b)$.  This implies that
there is a string $t'=\{a_1',\dots,a_\ell' \}$ of $\tilde M$ (recall that
$M$ and $\tilde M$ have the same length)
such that
$\Delta_{\rs}(t')$ has a $-1$ in $\row(b)$.  By definition of column moves,
and since $\Delta_{\rs}(M)=\Delta_{\rs}(\tilde M)$
(which implies that $M$ and $\tilde M$ have the same rank), we have that
the upper cells of $t$ and $t'$ must coincide. That is,
$\{a_1',\dots,a_{i-1}' \}=\{a_1,\dots,a_{i-1} \}$.  Note that since $m$ is
a horizontal strip and $\tilde M$ is a vertical strip,
the cells outside $m*\lambda$ catty-corner to $\{a_{i},\dots,a_\ell\}$
are not in $\tilde M m *\lambda$.  Now, the distance between $a_{i-1}$ and
$a_i$ is $k+1$ ($a_i$ is the top cell of a row move).  Thus
from the previous comment and contiguity we have that
$d(a_i')< d(a_i) < d(a_{i+1}') <  d(a_{i+1}) <  \cdots < d(a_\ell')< d(a_\ell)$.
But then we have the contradiction that $a_{\ell}'$ cannot negatively modify
$\row(b)$  since in this row
there is no cell of $\partial (m * \lambda)$ weakly to the
left of $\col(a_\ell)$.
The case where there is a string $s$ of $m$ that
meets a string $t$ of $M$, with $s$ continuing above
$t$ but not below it is similar.
\end{proof}

\begin{proof}[Proof of Lemma~\ref{L:rowdiamond}]

All cases that could produce a diamond equivalence
where $m$ and $M$ do not intersect are covered  by
Definition~\ref{D:rowrowcommute}.  In case (1) there are no strings that could
be added at the same time to $m$ and $M$ to produce moves $\tilde m \neq m$  and
$\tilde M \neq M$.  In case (2), unicity is
guaranteed by Lemma~\ref{L:perfection}.

Suppose we have a diamond
equivalence  $\bar M m \equiv \bar m M$, where $m$ and $M$ are such as in case (3),
and suppose that and $m$ and $M$ are matched below with $m$ continuing above.
As mentioned in the proof of Proposition~\ref{P:rel},
$\tilde m$ decomposes into strings $u_i=s_i \setminus t_{i}$ for $1 \leq
i \leq p$ and $\tilde M$ decomposes into strings $t_{p+1},\dots,t_q$.
We now show that if $q>p$ then $\bar m = \tilde m$ and $\bar M=\tilde M$.
It is obvious that $\tilde m \subseteq \bar m$ and $\tilde M \subseteq \bar M$.
There are two possible options:  either $\bar M$ has more strings than $\tilde
M$ or its strings are extensions of those of $\tilde M$.  Since
$\bar m \setminus \tilde m = \bar M \setminus \tilde M$, in the first option
the extra strings must extend the $u_i$'s below, and in the second option the
extension must form strings to the right of those of $\tilde m$.
The former is impossible since the distance between the bottom cell of any
$u_i$ and the top cell of any of the new strings added is more than $k+1$.  The
latter case is impossible since no new strings can be added to the right of
$\tilde m$ to form a move by Property~\ref{C:rowmovecolsize}.  Thus the only
option is $p=q$.  In this case $\tilde M= \emptyset$, $\tilde m = m \setminus M$
and we have:
\begin{equation}
\begin{diagram} \label{E:commudiagram2}
\node[2]{\cdot} \arrow{sw,t}{M} \arrow{se,t}{m} \\ \node{\cdot}
\arrow{se,t} {m \setminus M }
\arrow{sse,b}
{\bar m} \node[2]{\cdot} \arrow{sw,t} {\emptyset} \arrow{ssw,b} {\bar M}\\
\node[2]{\cdot} \arrow{s,r,1}{ \!\bar M} \\
\node[2]{\cdot} \\
\end{diagram}
\end{equation}
New strings cannot be added to $m \setminus M$ to form a new move.
Strings to the right would violate Property~\ref{C:rowmovecolsize}.
And strings to the left need to be such that
in $M$ the columns $c_{t_1,u}$ and the one to its left are of the same size
(and thus $M$ could not have been a move).   So $m \setminus M$ can be
extended either below or above (not both since otherwise $\bar M$ could
not be a move).  In this case the triangle in the left of the diagram obeys
a relation of the form (3).  Since the other triangle is trivial (a case (5)
with $m_{\per} = \emptyset$), the diamond equivalence $\bar M m = \bar m M$
 is generated by the elementary ones. The case (3) where $m$ and $M$ are
 matched above is similar.

The only other cases that could produce a diamond equivalence
which are not covered by Definition~\ref{D:rowrowcommute} are those where
$m$ and $M$ are not reasonable, that is,
there are
strings $s$ and $t$ of $m$ and $M$ respectively such that $s\cap t \neq
\emptyset$,
$t \nsubseteq s$ and $s \nsubseteq t$.  Suppose that $t$ continues below $s$.
We show that if there are strings $t_{i},\dots, t_{i+j}$ of $M$ that do not intersect strings
of $m$ then there is no possible diamond equivalence  $\bar M m = \bar m M$.
The strings $t_{i},\dots, t_{i+j}$ need to be to the right of those that meet
strings of $m$ by Property~\ref{C:rowmovecolsize} applied to the positively
modified columns of $m$.  For the diamond equivalence to hold, we need a
$M_{\per}$ that extends the strings $t_{i},\dots, t_{i+j}$ above and that
add extra strings to the right of $m \setminus M$.  But this is impossible
by Property~\ref{C:rowmovecolsize}.  In a similar way, if there are strings of
$m$ that do not intersect strings of $M$ then there is no possible diamond equivalence.
Therefore, we are left with the case where the strings $s_1,\dots,s_p$ of $m$
and $t_1,\dots,t_p$ of $M$ each intersect with one another. In this case we
necessarily have $\bar m=m \setminus M$ and $\bar M = M \setminus m$.  But then
$\bar m M=\bar M m = \mathcal M$ is also a move and we have the situation.
\begin{equation}
\begin{diagram} \label{E:commudiagram3}
\node[2]{\cdot} \arrow{sw,t}{m} \arrow{se,t}{M} \arrow[2]{s,r}{\mathcal M}
\\ \node{\cdot}
\arrow{se,b} {M \setminus m}
 \node[2]{\cdot} \arrow{sw,b} {m \setminus M} \\
\node[2]{\cdot}  \\
\end{diagram}
\end{equation}
In this case both triangles correspond to Case (3) of
Definition~\ref{D:rowrowcommute} and thus this diamond equivalence
is also generated by elementary ones.
\end{proof}

\section{Strips and tableaux for $k$-shapes}
\label{sec:strips}

In this section we introduce a notion of (horizontal) strip and
tableau for $k$-shapes.

\subsection{Strips for cores}
We recall from \cite{LM:cores,LLMS} the notion of weak strip and
weak tableau for cores. Let $\tS_{k+1}$ and $S_{k+1}$ be the affine
and finite symmetric groups and let $\tS^0_{k+1}$ denote the set of
minimal length coset representatives for $\tS_{k+1}/S_{k+1}$.
$\Core^{k+1}$ has a poset structure given by the left weak Bruhat
order transported across the bijection $\tS^0_{k+1}\to \Core^{k+1}$.
Explicitly, $\mu$ covers $\la$ in $\Core^{k+1}$ if $\mu/\la$ is a
nonempty maximal $\la$-addable string. Such a string is always of
cover-type and consists of all $\la$-addable cells whose diagonal
indices have a fixed residue (say $i$) mod $k+1$, and corresponds to
a length-increasing left multiplication by the simple reflection
$s_i\in\tS_{k+1}$. A
\defit{weak strip} in $\Core^{k+1}$ is an interval in the left weak
order whose corresponding skew shape is a horizontal strip; its
\defit{rank} is the height of this interval, which coincides with
the number of distinct residues mod $k+1$ of the diagonal indices of
the cells of the corresponding skew shape. For $\la\subset\mu$ in
$\Core^{k+1}$, a \defit{weak tableau} $T$ of shape $\mu/\la$ is a
chain
$$\la=\la^{(0)}\subset\la^{(1)}\subset\la^{(2)}\subset\dotsm\subset\la^{(N)}=\la$$
in $\Core^{k+1}$ where each interval $\la^{(i-1)}\subset\la^{(i)}$
is a weak strip.  The weight of a weak tableau $T$ is the sequence
of nonnegative integers $\wt(T)$ whose $i$-th member $\wt(T)_i$ is
the rank of $\la^{(i)}/\la^{(i-1)}$. Let $\Weak^{k+1}_{\mu/\la}$ be
the set of weak tableaux of $k+1$-cores of shape $\mu/\la$. The
weight generating function of $\Weak^{k+1}_{\mu/\la}$ is denoted by
$\WS^{(k+1)}_{\mu/\la}[X]$.

\subsection{Strips for $k$-shapes}

\begin{definition} \label{D:strip}
A strip of rank $r$ is a horizontal strip $\mu/\la$ of
$k$-shapes such that $\rs(\mu)/\rs(\la)$ is a horizontal strip
and $\cs(\mu)/\cs(\la)$ is a vertical strip, both of size $r$.
A cover is a strip of rank $1$.
\end{definition}

By the assumption that $\rs(\mu)/\rs(\la)$ is a horizontal strip,
distinct modified rows of $\mu/\la$ do not have the same length (in either $\rs(\la)$ or $\rs(\mu)$). The
modified columns however form {\it groups} which have the same
length in both $\cs(\mu)$ and $\cs(\la)$, where by definition two modified
columns $c, c'$ are in the same group if and only if $\cs(\la)_c =
\cs(\la)_{c'}$.

\begin{proposition} A strip $S=\mu/\lambda$ has rank at most $k$.
\end{proposition}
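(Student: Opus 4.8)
The plan is to read the rank off the row shapes and bound it by a single part of $\rs(\mu)$, using only the definition of $k$-shape. By Definition~\ref{D:strip}, the rank $r$ of a strip $S=\mu/\lambda$ equals $|\rs(\mu)/\rs(\lambda)|$, where $\rs(\mu)/\rs(\lambda)$ is a \emph{horizontal} strip sitting inside the Ferrers diagram of the partition $\rs(\mu)$ (this makes sense precisely because $\rs(\lambda)\subseteq\rs(\mu)$, which is implicit in Definition~\ref{D:strip}). A horizontal strip contains at most one cell in each column, and the diagram of $\rs(\mu)$ has exactly $\rs(\mu)_1$ columns, so $r\le\rs(\mu)_1$.

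It then remains to check that $\rs(\mu)_1\le k$, i.e.\ that no row of $\bdy\mu$ contains more than $k$ cells — the same fact that makes the map \eqref{E:core_to_bounded} land in $\Bdd^k$. To see it, I would fix a row $i$ of $\mu$ and scan its cells $(i,\mu_i),(i,\mu_i-1),\dots$ from right to left: the rightmost cell has hook length $h_\mu(i,\mu_i)=\mu^t_{\mu_i}-i+1\ge 1$, and passing from $(i,j)$ to $(i,j-1)$ increases the arm by $1$ while the leg $\mu^t_j-i$ does not decrease (since $\mu^t_{j-1}\ge\mu^t_j$). Hence $h_\mu(i,\mu_i-t)\ge t+1$ for all $t\ge 0$, and taking $t=k$ gives $h_\mu(i,\mu_i-k)>k$, so $(i,\mu_i-k)\notin\bdy\mu$. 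Thus at most $k$ cells of row $i$ lie in $\bdy\mu$, so $\rs(\mu)_i\le k$ for every $i$, whence $\rs(\mu)_1\le k$ and $r\le k$.

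There is essentially no obstacle here; the only point needing a line of care is the well-definedness of $\rs(\mu)/\rs(\lambda)$ as a skew shape, i.e.\ that $\rs(\mu)$ is a genuine partition containing $\rs(\lambda)$, which is exactly what Definition~\ref{D:kshape} and Definition~\ref{D:strip} supply. One could instead run the argument with the vertical strip $\cs(\mu)/\cs(\lambda)$, but bounding its size by the number of nonzero parts of $\cs(\mu)$ does not directly yield $\le k$, so the row-shape version above is the efficient route.
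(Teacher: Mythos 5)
Your proof is correct and follows the same route as the paper: bound the rank by the number of columns available to a horizontal strip inside a $k$-bounded partition. Your choice to use $\rs(\mu)/\rs(\lambda)$ rather than $\cs(\mu)/\cs(\lambda)$, together with your closing caveat that the $\cs$-version does not directly give the bound, is in fact more careful than the paper's own wording: the paper's proof asserts ``$\cs(\mu)/\cs(\lambda)$ is a horizontal strip,'' but by Definition~\ref{D:strip} it is only a vertical strip, and it can genuinely have several cells in one column when distinct modified columns of $S$ lie in a common group (already $\lambda=\emptyset$, $\mu=(2)$, $k=2$ exhibits this, with $\cs(\mu)/\cs(\lambda)=(1,1)$). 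Reading the paper's sentence with $\rs$ in place of $\cs$ makes it correct, and that is exactly your argument; the explicit hook-length verification that $\rs(\mu)_1\le k$ is a small extra step the paper takes for granted, but it is sound and makes the proof self-contained.
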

\begin{proof} Suppose $\mu/\la$ has rank greater than $k$,
that is, $|\cs(\mu)|-|\cs(\la)|>k$. Since $\cs(\mu)/\cs(\la)$ is a
horizontal strip, its cells occur in different columns. Therefore
the $k$-bounded partition $\cs(\mu)$ has more than $k$ columns, a
contradiction.
\end{proof}

\begin{remark} \label{R:small}
Although strips of rank $k$ exist, in the remainder of the
article {\bf we shall only admit strips of rank strictly smaller than $k$}.
For the purposes of this paper, this restriction is not so important:
in Theorem \ref{T:ktab_decomp}, mod the ideal $I_{k-1}$,
monomials with a multiple of $x_i^k$ are killed, and therefore we choose
to leave such tableaux out of the generating function by definition.
Remark~\ref{R:kstrips} will further elaborate on the effects of allowing
strips of rank $k$ in our construction.
\end{remark}

The notion of a strip generalizes that of weak strips for $k$-cores
and $k+1$-cores.

\begin{proposition} \label{P:tabstrip}
Suppose $\mu/\la$ is a strip such that $\mu,\la\in\Core^{k+1}$
(resp. $\mu,\la\in\Core^{k}$).
Then $\mu/\la$ is a weak strip in $\Core^{k+1}$ (resp. $\Core^{k}$).
\end{proposition}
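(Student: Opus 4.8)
The plan is to show that when both endpoints are $(k+1)$-cores, a strip $\mu/\la$ in the sense of Definition \ref{D:strip} coincides with a weak strip in $\Core^{k+1}$, and to obtain the $\Core^k$ case from this by a shift of parameter. First I would treat the $(k+1)$-core case. The key observation is that if $\la,\mu\in\Core^{k+1}$ then $\bdy^k(\la)=\bdy^{k-1}(\la)$ is irrelevant; what matters is that $\mu/\la$ is a horizontal strip of $k+1$-cores, and we must see that the interval $\la\subset\mu$ in $\Core^{k+1}$ (under left weak order) is actually an interval, i.e.\ that $\mu$ lies above $\la$ in the poset and that the height of that interval equals the rank $r$ of the strip. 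Recall from \S\ref{sec:strips} that a cover in $\Core^{k+1}$ adds a maximal $\la$-addable string of cover-type, consisting of all $\la$-addable cells whose diagonals have a fixed residue mod $k+1$; the rank of a weak strip is the number of distinct such residues occurring among its cells.

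The main step is therefore a \emph{diagonal-residue} count. For a horizontal strip $D=\mu/\la$ of $k+1$-cores, I would show that $\cs(\mu)/\cs(\la)$ being a vertical strip of size $r$ (one of the defining conditions of a strip) is exactly equivalent to: the diagonals of the cells of $D$ occupy $r$ distinct residues mod $k+1$, and within each residue class the cells of $D$ form an $\la$-addable string (necessarily of cover-type, as noted in \S\ref{sec:strips} for weak covers). For this I would use the standard correspondence between $k+1$-cores and their $k$-boundaries: adding a cell to column $c$ of $\bdy\la$ that keeps $\cs$ a partition corresponds to sliding an entire diagonal-residue class of boundary cells up by a hook, i.e.\ to a weak cover. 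The condition that $\cs(\mu)/\cs(\la)$ is a horizontal strip (distinct cells in distinct columns of $\cs$) prevents two covers in the same residue, so the $r$ modified columns break into $r$ residue-classes, each contributing one weak cover; concatenating these covers (in the order forced by the horizontal-strip condition on rows, using Remark \ref{R:contig} and the fact that $\la$-addable cells lie on consecutive diagonals) exhibits $\mu/\la$ as a weak strip of rank $r$. Conversely a weak strip is clearly a strip in the new sense since each weak cover changes $\cs$ by adding exactly one box to a column and its skew shape is a horizontal strip.

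For the $\Core^k$ case, the cleanest route is to invoke the parameter shift already used in the excerpt: for $\la\in\Core^k$ one has $\bdy^k(\la)=\bdy^{k-1}(\la)$, so $\la\in\Ksh^k$ and moreover the notion of $k$-shape strip between two $k$-cores is literally the notion of $(k-1)$-shape strip between two $k$-cores; but $k$-cores are the $((k-1)+1)$-cores, so the already-proved case (with $k$ replaced by $k-1$) applies verbatim and yields that $\mu/\la$ is a weak strip in $\Core^k$. I expect the main obstacle to be the bookkeeping in the core/$k$-boundary dictionary: making precise that "adding a box to $\bdy\la$ keeping $\cs$ a partition" $=$ "a weak cover in $\Core^{k+1}$" and that groups of equal-size modified columns correspond to single residue classes. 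This is essentially the content of the bijection \eqref{E:core_to_bounded} from \cite{LM:cores}, together with the description of weak covers recalled at the start of \S\ref{sec:strips}; once those are set up, the rank equality $|\cs(\mu)|-|\cs(\la)|=r=$ (number of residues) $=$ (height of the weak interval) is immediate.
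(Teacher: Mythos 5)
The paper's own proof of Proposition \ref{P:tabstrip} is a short citation: it appeals to Proposition 54 and Theorem 71 of \cite{LM:cores} (which show that the row/column-shape conditions on $(k+1)$-cores give exactly the skew shapes that appear as one letter of a $k$-tableau) and to Lemma 9.1 of \cite{LLMS} (which identifies $k$-tableaux with weak tableaux). Your proposal instead tries to reconstruct the underlying combinatorics directly via a residue count. That is a legitimately different route, but as written it has a real gap that the cited results exist precisely to close.

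The crux you do not establish is the ``maximality'' and ``core-preservation'' step. You decompose the strip $\mu/\la$ into cover-type strings (which is what Lemma \ref{L:coverseq}/Corollary \ref{C:coverseq} give), and you assert that ``within each residue class the cells of $D$ form an $\la$-addable string (necessarily of cover-type)'' which ``contributes one weak cover.'' But a weak cover in $\Core^{k+1}$ is the \emph{maximal} $\la$-addable string of a fixed residue, and moreover it takes a $(k+1)$-core to a $(k+1)$-core. You would need to show both that (a) the cover-type string of a given residue appearing inside the strip is the full set of $\la^{(i)}$-addable cells of that residue at the relevant intermediate step, and (b) each intermediate shape $\la^{(i)}$ is again a $(k+1)$-core, so that the chain is genuinely a chain in the weak order on $\Core^{k+1}$. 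Neither is automatic from $\la,\mu$ both being cores, and the flags you raise yourself (``I would show,'' ``I expect the main obstacle to be the bookkeeping'') are exactly these two points. The paper does not attempt this; it cites LM:cores Theorem 71, where the structure of addable cells of cores and the core-to-core property of covers is worked out in detail, and LLMS Lemma 9.1 to translate $k$-tableaux into weak tableaux. If you want a self-contained proof you would essentially need to reprove those results (or at least the relevant direction), which is substantial.

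Two smaller points. Your reduction of the $\Core^k$ case to the $(k+1)$-core case via $\bdy^k=\bdy^{k-1}$ on $k$-cores is correct and matches the paper's ``the same argument works.'' Also, midway through you refer to ``the condition that $\cs(\mu)/\cs(\la)$ is a horizontal strip,'' but Definition \ref{D:strip} requires $\cs(\mu)/\cs(\la)$ to be a \emph{vertical} strip; you state it correctly earlier, so this is just a slip, but it matters since the ``prevents two covers in the same residue'' inference hinges on which strip condition you actually have.
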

\begin{proof}
It was established in \cite{LM:cores}
that if $\mu,\la\in\Core^{k+1}$,
$\rs(\lambda)/\rs(\mu)$ is a horizontal strip and
$\cs(\lambda)/\cs(\mu)$ is a vertical strip, then
$\mu/\lambda$ is a horizontal strip (Proposition 54 of \cite{LM:cores})
and the cells in $\mu/\lambda$ correspond to one letter in
a $k$-tableau (Theorem 71 of \cite{LM:cores}).   It was further established
in Lemma 9.1 of \cite{LLMS}
that $k$-tableaux and weak tableaux (sequences of
weak strips in $\Core^{k+1}$) are identical.
Therefore $\lambda/\mu$ is a weak strip in
$\Core^{k+1}$.  The same argument works for
$\mu,\lambda \in \Core^{k}$.
\end{proof}

\subsection{Maximal strips and tableaux}

\begin{definition} Let $\la\in\Ksh$ be fixed. Let $\Str_\la\subset\Ksh$ be the
induced subgraph of $\nu\in\Ksh$ such that $\nu/\la$ is a strip. Moves (paths) in $\Str_\la$
are called $\la$-augmentation moves (paths). By abuse of language, if $m$ is a move (path)
from $\mu$ to $\nu$ in $\Str_\la$ we shall say that $m$ is a $\la$-augmentation
move (path) from the strip $\mu/\la$ to the strip $\nu/\la$.
An augmentation of a strip
$S=\mu/\la$ is a strip reachable from $S$ via a $\la$-augmentation path.
A strip $S=\mu/\la$ is maximal if it is maximal in $\Str_\la$, that is,
if it admits no $\la$-augmentation move.
\end{definition}
Diagrammatically, an augmentation move is such that the following diagram commutes
$$
\begin{diagram}
\node{\la} \arrow{s,l}{S}
 \arrow{e,t}{\emptyset} \node{\la} \arrow{s,r}{\tilde S}\\
\node{\mu} \arrow{e,b}{m} \node{\nu}
\end{diagram}
$$
where $S$ and $\tilde S$ are strips and $\emptyset$ denotes the
empty move.

These definitions depend on a fixed $\la\in\Ksh$, which shall
usually be suppressed in the notation. Later we shall consider
augmentations of a given strip $S$, meaning $\la$-augmentations
where $S=\mu/\la$.

Clearly augmentation paths pass through strips of a constant rank.

\begin{definition} Let $\mu\in\Ksh$ be fixed. Let $\Str^\mu\subset\Ksh$ be the
induced subgraph of $\rho\in\Ksh$ such that $\mu/\rho$ is a strip.
A strip $S=\mu/\rho$ is reverse-maximal if $\rho$ is minimal in the graph
$\Str^\mu$ (see Definition~\ref{D:revaugmentation} for more details).
\end{definition}

Let $\mu\supset\la$ with $\la,\mu\in\Ksh$. A \defit{($k$-shape)
tableau} of shape $\mu/\la$ is a sequence $\la = \la^{(0)} \subset
\la^{(1)}\subset \dotsm\subset \la^{(N)}=\mu$ with
$\la^{(i)}\in\Ksh$, such that $\la^{(i)}/\la^{(i-1)}$ is a strip for
all $i$. It is \defit{maximal} (resp. \defit{reverse-maximal}) if
its strips are. The tableau has weight $\wt(T)=(a_1,a_2,\dotsc,a_N)$
where $a_i$ is the rank of the strip $\la^{(i)}/\la^{(i-1)}$ (which
we require to be strictly smaller than $k$ by Remark~\ref{R:small}).
Let
\begin{align}
\label{E:maxtabgf}
  \dk_{\mu/\la}^{(k-1)}[X] &= \sum_{T\in\T_{\mu/\la}^k} x^{\wt(T)} \\
\label{E:rmaxtabgf}
  \dkr_{\mu/\la}^{(k)}[X] &= \sum_{T\in \RT_{\mu/\la}^k} x^{\wt(T)}.
\end{align}
where $\T_{\mu/\la}^k$ (resp. $\RT_{\mu/\la}^k$) denotes
the set of maximal (resp. reverse-maximal) tableaux of shape $\mu/\la$ for $\la,\mu\in\Ksh^k$.

For $k$-cores (resp. $k+1$-cores), the maximal (resp.
reverse-maximal) tableau generating functions reduce to dual $k-1$
(resp. $k$) Schur functions.  The following result is a consequence
of Propositions~\ref{L:stripcore} and \ref{P:reversemaxcore}.
\begin{proposition} \label{L:maxcore} $~$
\begin{enumerate}
\item
For any $\la\in\Core^k$ and $\mu\in\Ksh^k$ such that $\mu/\la$ is a
maximal strip, $\mu\in\Core^k$. In particular, for $\la\in\Ksh^k$,
$\T_\la^k$ is empty unless $\la\in\Core^k$ and in that case
$\T_\la^k = \Weak^k_\la$ and the definition \eqref{E:maxtabgf} of $\dk_{\la}^{(k-1)}[X] $
agrees with the usual definition of the dual $(k-1)$-Schur function
(or affine Schur function or weak Schur function) $\WS_\la^{(k)}[X]$
via weak tableaux.
\item For any $\mu\in\Core^{k+1}$ and $\la\in\Ksh^k$ such that $\mu/\la$
is a reverse-maximal strip, $\la\in\Core^{k+1}$. In particular, for
$\la\in\Core^{k+1}$ and for every weight
$\beta=(\beta_1,\beta_2,\dotsc)$ with $\beta_i\le k-1$ for all $i$,
the set of reverse maximal tableaux of shape $\la$ and weight
$\beta$ is equal to the set of weak $k$-tableaux of shape $\la$ and
weight $\beta$ and thus $\dkr_\la^{(k)}[X] = \WS_\la^{(k)}[X] \mod
I_{k-1}$ where $I_{k-1}$ is defined in \eqref{E:symmquotient}.
\end{enumerate}
\end{proposition}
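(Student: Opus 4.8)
The plan is to deduce both parts from Propositions~\ref{L:stripcore} and \ref{P:reversemaxcore}, which characterize maximal (resp.\ reverse-maximal) strips having a core endpoint, and then to translate the resulting chains of $k$-shapes into weak tableaux by means of Proposition~\ref{P:tabstrip}. So first I would record these two auxiliary propositions in the form: for $\la\in\Core^k$, a strip $\mu/\la$ with $\mu\in\Ksh^k$ is maximal iff $\mu\in\Core^k$ and $\mu/\la$ is a weak strip in $\Core^k$; and dually, for $\mu\in\Core^{k+1}$, a strip $\mu/\rho$ with $\rho\in\Ksh^k$ is reverse-maximal iff $\rho\in\Core^{k+1}$ and $\mu/\rho$ is a weak strip in $\Core^{k+1}$. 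Everything else is bookkeeping.

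For part (1), given a maximal tableau $\emptyset=\la^{(0)}\subset\la^{(1)}\subset\dotsm\subset\la^{(N)}=\la$, I induct on $i$: since $\emptyset\in\Core^k$ and each $\la^{(i)}/\la^{(i-1)}$ is a maximal strip, Proposition~\ref{L:stripcore} forces every $\la^{(i)}\in\Core^k$, so $\T_\la^k=\emptyset$ unless $\la\in\Core^k$. When $\la\in\Core^k$, the same proposition read in both directions identifies maximal tableaux of shape $\la$ with chains of weak strips in $\Core^k$, i.e.\ with weak tableaux, the two notions of rank matching by the $k$-core/$k$-bounded dictionary of \cite{LM:cores}; weak strips in $\Core^k$ have rank $<k$, so the restriction of Remark~\ref{R:small} excludes nothing. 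Hence $\T_\la^k=\Weak_\la^k$ by a weight-preserving bijection, and \eqref{E:maxtabgf} reduces to the weak-tableau definition of the dual $(k-1)$-Schur function $\WS_\la^{(k)}[X]$.

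Part (2) runs in parallel, now inducting \emph{downward} along a reverse-maximal tableau $\emptyset=\la^{(0)}\subset\dotsm\subset\la^{(N)}=\la$ of shape $\la\in\Core^{k+1}$: starting from $\la^{(N)}=\la$, Proposition~\ref{P:reversemaxcore} forces each $\la^{(i)}\in\Core^{k+1}$, and conversely every weak $k$-tableau of shape $\la$ whose weight has all parts $\le k-1$ is a reverse-maximal tableau (each step is a weak strip in $\Core^{k+1}$ of rank $<k$, hence a reverse-maximal strip). Thus $\RT_\la^k$ is exactly the set of weak $k$-tableaux of shape $\la$ and weight $\beta$ with $\beta_i\le k-1$. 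Writing $\WS_\la^{(k)}[X]=\sum_\theta c_\theta\,m_\theta$, where $c_\theta$ counts weak $k$-tableaux of shape $\la$ and weight $\theta$, the generating function $\dkr_\la^{(k)}[X]$ is precisely $\sum_{\theta_1\le k-1}c_\theta\,m_\theta$; since $I_{k-1}=\langle m_\theta:\theta_1\ge k\rangle$, this equals $\WS_\la^{(k)}[X]\bmod I_{k-1}$, which incidentally shows $\dkr_\la^{(k)}[X]$ is symmetric in this case without invoking Theorem~\ref{T:ktab_decomp}.

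The inductions and the computation with $I_{k-1}$ are routine; the real content is Propositions~\ref{L:stripcore} and \ref{P:reversemaxcore} themselves. Their easy halves follow the pattern of the result that the minimal (resp.\ maximal) elements of $\Ksh^k$ are the $k$-cores (resp.\ $(k+1)$-cores): a $k$-core has no addable row- or column-type string, so any strip landing in it admits no augmentation move and is maximal; and a $(k+1)$-core admits no move out of it, so (once one checks $\Str^\mu$ consists of $(k+1)$-cores when $\mu$ does) any strip between $(k+1)$-cores is reverse-maximal. The hard halves are the converses, and this is where I expect the obstacle. For Proposition~\ref{L:stripcore}: given a strip $\mu/\la$ with $\la\in\Core^k$ and some hook of $\bdy\mu$ of length $k$, take the rightmost such cell of $\bdy\mu$, form the longest $\mu$-addable row-type string there via Lemma~\ref{L:makestring}, and extend it to a genuine row move $\nu/\mu$ exactly as in the minimality proof; one must then verify that $\nu/\la$ is again a strip — that $\nu/\la$ stays a horizontal strip and $\cs(\nu)/\cs(\la)$ stays a vertical strip — so that $\nu/\mu$ is an honest $\la$-augmentation move. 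Proposition~\ref{P:reversemaxcore} is proved by the analogous construction, now extracting a reverse-augmentation move from a mis-sized hook of $\bdy\rho$. Controlling these containments, using that $\mu/\la$ (resp.\ $\mu/\rho$) is already a strip rather than merely that the endpoint is a $k$-shape, is the delicate point and where most of the work will go.
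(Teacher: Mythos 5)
Your deduction is correct and matches the paper's: the paper states that Proposition~\ref{L:maxcore} ``is a consequence of Propositions~\ref{L:stripcore} and \ref{P:reversemaxcore}'' without further detail, and your three paragraphs supply exactly that deduction, using Proposition~\ref{P:tabstrip} to translate strips of cores into weak strips and the extremality of cores in the $k$-shape poset (no move out of a $k$-core, no move into a $(k+1)$-core) for the easy halves. Your closing paragraph about proving Propositions~\ref{L:stripcore} and~\ref{P:reversemaxcore} themselves is out of scope for this statement --- the paper proves Proposition~\ref{L:stripcore} separately by decomposing a maximal strip into a sequence of maximal covers via Lemma~\ref{L:coverseq} and Lemma~\ref{L:upperaugmentcore}, rather than by the contrapositive augmentation-move construction you outline, and obtains Proposition~\ref{P:reversemaxcore} from the forward--reverse dictionary --- but this does not affect the correctness of your argument for Proposition~\ref{L:maxcore}.
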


\begin{corollary}
For $\la \in \Core^k$, we have $\dk^{(k-1)}_\la[X] =
\dkr^{(k-1)}_\la[X]$.
\end{corollary}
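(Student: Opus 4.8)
The plan is to show that, for $\la\in\Core^k$, both sides of the asserted identity equal the weak-tableau generating function $\sum_{T\in\Weak^k_\la}x^{\wt(T)}$, i.e.\ the affine (dual $(k-1)$-)Schur function attached to the $k$-core $\la$. For the left-hand side this is immediate from Proposition~\ref{L:maxcore}(1): since $\la\in\Core^k$ one has $\T^k_\la=\Weak^k_\la$, hence $\dk^{(k-1)}_\la[X]=\WS^{(k)}_\la[X]$. For the right-hand side, reading $\dkr^{(k-1)}_\la[X]$ as the generating function $\sum_{T\in\RT^k_\la}x^{\wt(T)}$ of reverse-maximal $k$-shape tableaux of shape $\la$, I would prove the corresponding statement $\RT^k_\la=\Weak^k_\la$.

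The inclusion $\Weak^k_\la\subseteq\RT^k_\la$ is easy: in a weak tableau $\emptyset=\la^{(0)}\subset\la^{(1)}\subset\dotsm\subset\la^{(N)}=\la$ of $k$-cores, each $\la^{(i)}$ is a $k$-core and hence a minimal element of the $k$-shape poset, so no move emanates from any $\la^{(i-1)}$; therefore $\la^{(i-1)}$ is minimal in $\Str^{\la^{(i)}}$, making the interval $\la^{(i)}/\la^{(i-1)}$ a reverse-maximal strip in the sense of Definition~\ref{D:strip} (weak strips of $k$-cores satisfy the row- and column-shape conditions there, as in the core case of Proposition~\ref{P:tabstrip}). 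For the reverse inclusion $\RT^k_\la\subseteq\Weak^k_\la$, take a reverse-maximal $k$-shape tableau $\emptyset=\la^{(0)}\subset\dotsm\subset\la^{(N)}=\la$ and argue by downward induction that every $\la^{(i)}\in\Core^k$: the base case $i=N$ is the hypothesis, and the inductive step is that a reverse-maximal strip whose top is a $k$-core has a $k$-core at the bottom. Once all shapes are $k$-cores every strip is a weak strip of $k$-cores, so $T\in\Weak^k_\la$. Together these give $\RT^k_\la=\Weak^k_\la$, hence $\dkr^{(k-1)}_\la[X]=\WS^{(k)}_\la[X]=\dk^{(k-1)}_\la[X]$; the restriction to strips of rank $<k$ is harmless here, being imposed identically on the two sides.

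The only real work is the inductive step — that reverse-maximality of a strip forces a $k$-core at the bottom when its top is a $k$-core. This is the $k$-core analogue of Proposition~\ref{L:maxcore}(2) (stated there for $k+1$-cores), and I expect it to drop out of the same strip analysis, namely Proposition~\ref{P:reversemaxcore} together with Proposition~\ref{L:stripcore}, applied with the roles of $k$- and $k+1$-cores interchanged. Should one wish to avoid this, the equality also follows from Theorem~\ref{T:ktab_decomp}: a $k$-core $\la$ is minimal in the $k$-shape poset, so the only path out of $\la$ is the empty one, whence $\Patheq^k(\la,\mu)=\emptyset$ for $\mu\neq\la$ and $|\Patheq^k(\la,\la)|=1$, so $\dkr^{(k-1)}_\la[X]$ collapses to the single term indexed by $\la$, which is again the affine Schur function of $\la$ and matches the left-hand side by Proposition~\ref{L:maxcore}(1).
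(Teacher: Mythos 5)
The proposal rests on a misreading of the definition of $\dkr^{(k-1)}_\la[X]$, which sends the whole argument after a different target. By \eqref{E:rmaxtabgf}, $\dkr^{(k)}_{\mu/\la}[X]$ is the generating function of $\RT^{k}_{\mu/\la}$, the reverse-maximal tableaux in $\Ksh^{k}$; consequently $\dkr^{(k-1)}_\la[X]=\sum_{T\in\RT^{k-1}_\la}x^{\wt(T)}$ is a sum over reverse-maximal tableaux in the $(k-1)$-shape poset $\Ksh^{k-1}$, not in $\Ksh^{k}$ as you read it. A $k$-core sits in $\Ksh^{k-1}$ as a \emph{maximal} element (it is a $((k-1)+1)$-core), not a minimal one, so the argument that ``no move emanates from any $\la^{(i-1)}$'' — and with it the identification $\RT^{k}_\la=\Weak^{k}_\la$ — is answering the wrong question. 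What you have in fact sketched is the identity $\dk^{(k-1)}_\la[X]=\dkr^{(k)}_\la[X]$ for $\la\in\Core^{k}$, which is true but is not the corollary. Your fallback via Theorem~\ref{T:ktab_decomp} has the same defect (it again computes $\dkr^{(k)}_\la$), and in addition forward-references a theorem whose proof is not completed until \S\ref{S:prooftheo}.

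The corollary is in fact an immediate consequence of Proposition~\ref{L:maxcore} once the indices are aligned: part~(1), applied to $\la\in\Core^{k}$ viewed as a minimal element of $\Ksh^{k}$, gives $\dk^{(k-1)}_\la[X]=\WS^{(k-1)}_\la[X]$; part~(2), applied with $k$ replaced by $k-1$ to $\la\in\Core^{k}=\Core^{(k-1)+1}$ viewed as a maximal element of $\Ksh^{k-1}$, identifies $\dkr^{(k-1)}_\la[X]$ with the same affine Schur function. No new lemma about reverse-maximal strips from $k$-cores is required. It is also worth flagging that the inductive step you earmark as ``the only real work'' is not the dictionary translate of Proposition~\ref{L:stripcore}: the dictionary in the reverse sections swaps $k\leftrightarrow k+1$, so the reverse analogue of Proposition~\ref{L:stripcore} is exactly Proposition~\ref{P:reversemaxcore} (stated for $(k+1)$-cores). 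The $k$-core-in-$\Ksh^{k}$ statement you want is a separate assertion, not supplied by the citations you give.
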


Theorem \ref{T:ktab_decomp} is established as follows.

\begin{theorem} \label{T:pushpullbij} For all fixed $\mu,\nu\in\Ksh$,
there is a bijection
\begin{equation} \label{E:tabbranchpushout}
\begin{split}
  \bigsqcup_{\la\in\Ksh} \left(\RT_{\mu/\la} \times \Patheq^k(\la,\nu)\right) &\to \bigsqcup_{\rho\in\Ksh} \left(\T_{\rho/\nu} \times
  \Patheq^k(\mu,\rho)\right) \\
  (S,[\bp])&\mapsto (T,[\bq])
\end{split}
\end{equation}
such that
\begin{align*}
  \wt(S) &= \wt(T).
\end{align*}
\end{theorem}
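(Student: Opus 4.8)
The plan is to assemble the bijection by iterating the pushout of \S\ref{sec:rowpushout}--\S\ref{sec:columnpushout}, using the pullback of \S\ref{sec:pullbacks}--\S\ref{sec:pullbackequivalence} to run the construction in reverse, and invoking the commuting-cube results of \S\ref{sec:pushaug}--\S\ref{sec:commutingcube} to see that everything descends to equivalence classes of paths. Recall that the atomic step, the pushout, carries a compatible pair $(S,m)$ --- a strip $S=\mu/\la$ of some rank $r<k$ and a move $m$, both issuing from a $k$-shape $\la$ --- to a compatible pair $(S',m')$, where $m'$ is a move issuing from $\mu$ and $S'$ is a strip of the same rank $r$ issuing from $m*\la$, so that
\begin{equation*}
\begin{diagram}
\node{\la}\arrow{s,l}{S}\arrow{e,t}{m}\node{m*\la}\arrow{s,r}{S'}\\
\node{\mu}\arrow{e,b}{m'}\node{\rho}
\end{diagram}
\end{equation*}
commutes as an identity of skew shapes, where $\rho=m'*\mu$ is the common target of $m'$ and $S'$. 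Proposition~\ref{P:maxstripunique} is the uniqueness input that makes $(S',m')$ well defined, and the pullback is the inverse operation; in particular the pushout is a rank-preserving bijection from compatible initial pairs to compatible final pairs.

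For the global construction, fix $\mu,\nu\in\Ksh$ and take $(S,[\bp])$ on the left, with $S$ a reverse-maximal tableau $\la=\la^{(0)}\subset\la^{(1)}\subset\dots\subset\la^{(N)}=\mu$ and $\bp=(m_1,\dots,m_\ell)$ a path $\la=\nu^{(0)}\to\nu^{(1)}\to\dots\to\nu^{(\ell)}=\nu$. Build an $(N{+}1)\times(\ell{+}1)$ array of $k$-shapes $\xi_{i,j}$ with $\xi_{i,0}=\la^{(i)}$ and $\xi_{0,j}=\nu^{(j)}$, so that the left edge carries the reverse-maximal strips of $S$ read downward and the top edge carries the moves of $\bp$ read rightward, and fill in the array one unit cell at a time, say row by row. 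The cell with top-left corner $\xi_{i-1,j-1}$ has a strip on its left edge and a move on its top edge, both issuing from $\xi_{i-1,j-1}$ (the left edge being the right edge of the previous cell in the row, or a strip of $S$ in the first column; the top edge being the bottom edge of the cell above, or a move of $\bp$ in the first row), so the pushout supplies the strip on its right edge and the move on its bottom edge, both ending at $\xi_{i,j}$. Read $T=\rho/\nu$ off the rightmost column and $\bq$ off the bottom row, with $\rho=\xi_{N,\ell}$, and put $(S,[\bp])\mapsto(T,[\bq])$. Three points need checking here: (a) every cell really receives a compatible pair, so each pushout is legitimate --- this propagates, since the output strip of a pushout together with the next move is again a compatible pair, and the first column is compatible because its strips are reverse-maximal; (b) the rightmost column is a \emph{maximal} tableau, which is where reverse-maximality of $S$ is used; (c) $\wt(T)=\wt(S)$, which is immediate because each pushout preserves the rank of its strip, so every column of the array has the same sequence of ranks as the first column.

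That $T$ and $[\bq]$ depend only on $[\bp]$ is the content of the commuting-cube results: if $\bp'$ is obtained from $\bp$ by a single diamond equivalence, the corresponding arrays differ only in a horizontal band, and the statement \emph{pushouts carry equivalent paths to equivalent paths} shows that they have the same rightmost column and diamond-equivalent bottom rows; since $\equiv$ is generated by diamond equivalences, the assignment is well defined on classes.

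Finally, running the same array from the bottom-right corner, with the pullback replacing the pushout, recovers from a datum $(T,[\bq])$ on the right a reverse-maximal tableau on the left edge and a path on the top edge; since each cell-filling is a bijection between compatible pairs, the two array constructions are two-sided inverses, and the pullback form of the commuting-cube statement shows the result again depends only on $[\bq]$. This produces the rank-preserving bijection \eqref{E:tabbranchpushout}. The one genuinely hard ingredient is the commuting-cube statement of \S\ref{sec:commutingcube} --- the compatibility of the pushout with path equivalences, which also underlies (b); granting it together with Proposition~\ref{P:maxstripunique}, everything else is bookkeeping on top of the local pushout/pullback identities.
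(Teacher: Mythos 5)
Your rectangular array construction is precisely the naive scheme the paper explicitly rules out: right after stating Theorem~\ref{T:pushpullbij} the text warns that one cannot simply reduce to single-move pushouts because ``not all pairs $(S,m)$ admit a pushout.'' Your claim (a) -- that compatibility propagates because ``the output strip of a pushout together with the next move is again a compatible pair, and the first column is compatible because its strips are reverse-maximal'' -- is the crux, and it is not established. The paper proves automatic compatibility of $(S,m)$ only when $S$ is \emph{maximal} (Propositions~\ref{L:Smreasonable}, \ref{P:Smcolreasonable}, \ref{L:maximalinterfere}, \ref{L:maximalinterferecol}, Corollaries~\ref{C:rowmaxnotcontig}, \ref{C:maxcolnotcontig}); reverse-maximality is what guarantees compatibility for \emph{pullbacks}, not pushouts (Corollaries~\ref{C:maxpullrow}, \ref{C:maxpullcol}). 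Reverse-maximal is a minimality condition on $\lambda$ within $\Str^\mu$, maximal is a maximality condition on $\mu$ within $\Str_\lambda$; they are different, and a pushout output $\tilde S$ is in general neither. So the left column of your array need not give compatible pushout data, and even when the first pushout in a row succeeds, the interior cells of your grid involve strips that are outputs of earlier pushouts, hence neither maximal nor reverse-maximal, and nothing forces them to pair compatibly with the incoming move.

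Your claim (b) fails for the same reason: even if every pushout in the grid were legal, the rightmost strips are merely strips, not maximal ones, so $T$ would not lie in $\T_{\rho/\nu}$. The paper's actual construction is the \emph{pushout sequence} of \S\ref{sec:pushaug}: a diagram \eqref{E:pushseqdiag} in which the top row is $\bp$ with empty moves interspersed, and the vertical steps alternate between augmentation moves (maximizing the current strip via \texttt{MaximizeStrip}, justified by Proposition~\ref{P:maxstripunique}) and pushouts of the resulting maximal strip (always compatible), with a final maximization producing the maximal $\tilde S$. Proposition~\ref{P:canonicalpushseq} asserts existence of this canonical sequence, Propositions~\ref{P:commuaugmen} and \ref{P:pushseqequiv} give independence of choices and descent to equivalence classes, and \S\ref{S:prooftheo} inverts it via the reverse-maximal/pullback machinery of \S\ref{sec:pullbacks}--\S\ref{sec:pullbackequivalence} together with Proposition~\ref{P:pullpush}. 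Your invocation of commuting cubes and Proposition~\ref{P:maxstripunique} is pointed in the right direction, but without the interleaved augmentation steps the array is not even well-defined, so the downstream arguments (well-definedness on classes, inversion by pullback, and your weight-preservation argument which implicitly assumes the grid is rectangular) cannot get started. The weight claim itself would still be fine in the corrected version, since both pushouts and augmentations preserve the rank of a strip, but the construction they are attached to needs to be the one with augmentations.
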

The map $(S,[\bp])\to (T,[\bq])$ is called the \defit{pushout} and the inverse bijection
$(T,[\bq])\to(S,[\bp])$ is called the \defit{pullback}, in reminiscence of homological diagrams,
as the following diagram ``commutes" for some $\la,\rho$:
$$
\begin{diagram}
\node{\la} \arrow{s,l}{S}
 \arrow{e,t}{[\bp]} \node{\nu} \arrow{s,r}{T}\\
\node{\mu} \arrow{e,b}{[\bq]} \node{\rho}
\end{diagram}
$$
Since tableaux are sequences of strips, we can immediately reduce the pushout bijection to
the case that $S$ and $T$ are both single strips. One might try to
straightforwardly reduce to the case that paths $\bp$ and $\bq$  are single
moves $m$ and $m'$. This does not work: not all pairs $(S,m)$ admit a pushout.
Those that do will be called \defit{compatible}. The bijection
\eqref{E:tabbranchpushout} is defined by combining certain moves
(called augmentation moves) with pushouts of compatible pairs.
The proof of Theorem \ref{T:pushpullbij} will be
completed in \S\ref{S:prooftheo}.

\begin{proof}[Proof of Theorem \ref{T:ktab_decomp}]
Let $\nu$ be the empty $k$-shape. Then
the only possibility for $\la$ is the empty $k$-shape, $S$ runs over
$\RT_\mu^k$, and by Proposition~\ref{L:maxcore}, $\rho$ runs over
$\Core^k$
and $T$ over $\Weak^k_\rho$.  By Theorem~\ref{T:pushpullbij}, we thus
have a bijection between $\RT_\mu^k$ and $\bigsqcup_{\rho\in\Core^k}
\left(\Weak^k_\rho \times  \Patheq^k(\mu,\rho)\right)$.
Theorem \ref{T:ktab_decomp}
follows since it is known that each generating function
$\WS_\rho^{(k-1)}[X]$ is a symmetric function \cite{LM:ktab}.
\end{proof}

\begin{proof}[Proof of Theorem \ref{T:Pieri}]
The $k$-Schur functions satisfy (essentially by definition) the Pieri rule \cite{LM:ktab}
$
h_r[X] \, \ksf^{(k)}_\mu[X] = \sum_{\rho} \ksf^{(k)}_\rho[X]$
where the sum is over weak strips $\rho/\mu$ of $k+1$-cores of rank $r$.

Thus for a fixed $\la \in \Ksh^k$,
\begin{align*}
h_r[X] \,\ksgen_\la^{(k)}[X]
&= \sum_{\mu \in \Core^{k+1}} |\Patheq^k(\mu,\la)| h_r[X]\,\ksf^{(k)}_\mu[X] \\
&= \sum_{\mu \in \Core^{k+1}}
|\Patheq^k(\mu,\la)| \sum_{\text{rank $r$ weak strips $\rho/\mu$}}
\ksf^{(k)}_\rho[X]\\
& =\sum_{\rho \in \Core^{k+1}} \ksf^{(k)}_\rho[X]
\sum_{\text{rank $r$ reverse maximal strips $\rho/\mu$}} |\Patheq^k(\mu,\la)|  \\
&= \sum_{\rho \in \Core^{k+1}} \ksf^{(k)}_\rho[X]
\sum_{\text{rank $r$ maximal strips $\nu/\la$}} |\Patheq^k(\rho,\nu)| \\
&= \sum_{\text{rank $r$ maximal strips $\nu/\la$}} \ksgen_\nu^{(k)}[X].
\end{align*}
In the third equality we used Proposition \ref{L:maxcore}, and in the fourth equality we used Theorem \ref{T:pushpullbij}.
\end{proof}

\begin{remark} \label{R:kstrips}
Suppose that strips of rank $k$ are
allowed.
The results of this paper hold with a few minor changes.\footnote{However, the
concept of lower augmentable corner
which will be introduced in \S\S\ref{subsectionaugmentation}
needs to be slightly modified: we define an
augmentable corner $b$ of a strip $S=\mu/\la$ as
usual, except we disallow the case that $b$ lies in a row of $S$
that already contains $k$ cells.}
For instance, Theorem~\ref{T:pushpullbij} and Theorem~\ref{T:Pieri}
are still valid (with the case $r=k$ being allowed in
Theorem~\ref{T:Pieri}).  However, as the rest of the remark
should make clear, the extension of Theorem~\ref{T:ktab_decomp}
is somewhat more subtle.

When $\nu= \emptyset$ (and thus also $\mu=\emptyset$),
the bijection on which Theorem~\ref{T:pushpullbij} relies,
associates to a reverse-maximal tableaux $S$
a pair $(T,[\bq])$, where $T$ is a maximal tableau of a
given shape $\rho$. If strips of rank $k$ are allowed then
Proposition~\ref{L:maxcore}
is not valid anymore, as adding a strip of rank $k$ on a $k$-core
does not produce a $k$-core.  Therefore, if the weight of $S$
has entries of size $k$, then the pushout of $S$
does not produce a weak
tableau $T$ and Theorem~\ref{T:ktab_decomp} ceases to be valid.
In the following, we will extend  Theorem~\ref{T:ktab_decomp}
to the case when strips of rank $k$ are allowed.

The fact that $\WS_\rho^{(k-1)}[X]$ is a symmetric function is not sufficient
anymore to prove that $\dkr_\mu^{(k)}[X]$ is a symmetric function.
By Theorem~\ref{T:ktab_decomp}, the sum of the terms that do not involve any
power
$x_i^k$ in $\dkr_\mu^{(k)}[X]$ is a symmetric function.  Furthermore, we have that
if $\rs(\mu)_1 < k$ then  $\dkr_\mu^{(k)}[X]$ does not involve any power
$x_i^k$ and thus $\dkr_\mu^{(k)}[X]$ is a symmetric function in that case
(see the proof of Proposition~\ref{P:dualtriang}).
Now if $\mu$ is a $k$-shape such that $\rs(\mu)_1=k$, then by Lemma \ref{L:surfacestrip}, $\mu$ has a unique reverse maximal strip of rank $k$.  In this manner, it is not too difficult to
see that the sum of the terms in $\dkr_\mu^{(k)}[X]$ that involve powers
of $x_i^k$ is equal to $B_k \, \dkr_\la^{(k)}[X]$, where
$B_k m_{\beta}=m_{(k,\beta)}$ and is thus a symmetric function by induction.
This proves that $\dkr_\mu^{(k)}[X]$ is also a symmetric function
if strips of rank $k$ are allowed.

Finally, to complete the extension of
Theorem~\ref{T:ktab_decomp}, let $\pi^{(k)}$ be the projection
onto $\Lambda/I_{k-1}$.  Then for $\mu\in\Ksh^k$, the
cohomology $k$-shape function $\dkr_\mu^{(k)}[X]$ has the
decomposition
\begin{align}
\pi^{(k)}(\dkr_\mu^{(k)}[X]) = \sum_{\rho\in \Core^k} |\Patheq^k(\mu,\rho)|\,
\WS_\rho^{(k-1)}[X]
\end{align}
The result holds trivially from Theorem~\ref{T:ktab_decomp}
since the projection will kill every $x^{\wt(T)}$ such that $T$ has
a strip of rank $k$.

\end{remark}

\subsection{Elementary properties of $\dkr_{\la}^{(k)}[X]$
and $\ksgen_\la^{(k)}[X]$} \label{subsec:elemen}

\begin{proposition}  For $\la \in \Ksh^k$, let
$\la^r$ (resp. $\la^c$) be the unique element of $\Core^{k+1}$ such that
$\rs(\la)=\rs(\la^r)$ (resp. $\cs(\la)=\cs(\la^c)$). Then
one has
\begin{equation} \label{Eq:PropTriang}
\ksgen_{\la}^{(k)}[X] = \ksf_{\la^r}^{(k)}[X] +
\sum_{\rho\in \Core^{k+1};\, \rs(\rho) \, \triangleright \, \rs(\la)}
|\Patheq^k(\rho,\lambda)|\, \ksf_{\rho}^{(k)}[X]
\end{equation}
and, similarly,
\begin{equation}  \label{Eq:PropTriang2}
\ksgen_{\la}^{(k)}[X] = \ksf_{\la^c}^{(k)}[X] +
\sum_{\rho\in \Core^{k+1}; \, \cs(\rho) \, \triangleright \, \cs(\la)}
|\Patheq^k(\rho,\lambda)|\, \ksf_{\rho}^{(k)}[X]
\end{equation}
\end{proposition}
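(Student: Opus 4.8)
The plan is to read both identities directly off the definition \eqref{E:ksf} of $\ksgen_\la^{(k)}[X]$, using the embedding of the $k$-shape poset into a square of dominance orders to single out the two ``leading'' terms $\ksf_{\la^r}^{(k)}$ and $\ksf_{\la^c}^{(k)}$, and then verifying that each of these occurs with coefficient exactly $1$.

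Setting $t=1$ in \eqref{E:ksf} gives $\ksgen_\la^{(k)}[X]=\sum_{\rho\in\Core^{k+1}}|\Patheq^k(\rho,\la)|\,\ksf_\rho^{(k)}[X]$, so everything reduces to a statement about the numbers $|\Patheq^k(\rho,\la)|$. By \S\ref{SubS:poset}, the map $\nu\mapsto(\rs(\nu),\cs(\nu))$ realizes $\Ksh_N$ as a subposet of the Cartesian square of the dominance order, where a row move fixes $\rs$ and strictly lowers $\cs$, and a column move fixes $\cs$ and strictly lowers $\rs$. Hence a nonempty $\Patheq^k(\rho,\la)$ forces $\rs(\rho)\dom\rs(\la)$ and $\cs(\rho)\dom\cs(\la)$. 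Since $\rho\mapsto\rs(\rho)$ and $\rho\mapsto\cs(\rho)$ are bijections $\Core^{k+1}\to\Bdd^k$ by \eqref{E:core_to_bounded}, the only $(k+1)$-core with row shape $\rs(\la)$ is $\la^r$ and the only one with column shape $\cs(\la)$ is $\la^c$; every other $(k+1)$-core $\rho$ contributing therefore has $\rs(\rho)\,\triangleright\,\rs(\la)$ (resp. $\cs(\rho)\,\triangleright\,\cs(\la)$) strictly. So both displayed formulas follow once we show $|\Patheq^k(\la^r,\la)|=1$ and $|\Patheq^k(\la^c,\la)|=1$.

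These two equalities are exchanged by transposition: the transpose involution of $\Ksh^k_N$ swaps row and column moves, carries a commuting diamond \eqref{E:commudiagram} to a commuting diamond, and preserves the charge balance \eqref{E:charge} since $\charge(m)+\charge(m^t)=|m|$ for every move $m$ and the total cell count agrees on the two sides of a commuting square. Hence transposition induces a bijection $\Patheq^k(\rho,\la)\to\Patheq^k(\rho^t,\la^t)$, and as $(\la^t)^r=(\la^c)^t$ we get $|\Patheq^k(\la^c,\la)|=|\Patheq^k((\la^t)^r,\la^t)|$; thus it suffices to prove $|\Patheq^k(\la^r,\la)|=1$. Because $\rs$ is weakly dominance-decreasing along every directed path while $\rs(\la^r)=\rs(\la)$, the row shape is constant along any path from $\la^r$ to $\la$, and since a column move strictly lowers the row shape, every such path consists of row moves only. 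For existence, I would iterate reverse row moves starting from $\la$: the key point is that a $k$-shape $\nu$ carrying a cell of hook length $k+1$ admits a row move into it — the time-reversed analogue of the construction in \S\ref{SubS:poset} showing that a $k$-shape with a cell of hook length $k$ admits a row move out of it — so that a $k$-shape with row shape $\rs(\la)$ admitting no row move into it must be a $(k+1)$-core, namely $\la^r$; since $\Ksh_N$ is finite the process terminates at $\la^r$, producing at least one path.

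It then remains to show that all row-move paths from $\la^r$ to $\la$ lie in one equivalence class. Here the charge condition \eqref{E:charge} is automatic (row moves have charge $0$), so this is a pure confluence statement: in the subposet of $k$-shapes with row shape $\rs(\la)$, any two saturated chains from $\la^r$ down to $\la$ should be connected by row elementary equivalences. I would establish this by a diamond-lemma argument, using the classification of \S\ref{sec:equivalence} to merge any two row moves out of a common shape into a commuting configuration of row moves (Definition~\ref{D:rowrowcommute}, Lemma~\ref{L:rowdiamond}) and invoking finiteness of the poset for global confluence. This confluence step — verifying that the pairs of row moves actually encountered are reasonable and, when interfering, perfectible, so that the elementary equivalences of \S\ref{sec:equivalence} apply — together with carrying out the reverse row move construction, is the main obstacle; by contrast the dominance bookkeeping and the transpose reduction in the earlier steps are routine.
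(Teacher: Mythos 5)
Your reduction is exactly the paper's: read \eqref{E:ksf} at $t=1$, use the fact from \S\ref{SubS:poset} that a row move preserves $\rs$ while a column move strictly lowers it in dominance, conclude that every $(k+1)$-core $\rho$ contributing has $\rs(\rho)\dom\rs(\la)$, and isolate the coefficient $|\Patheq^k(\la^r,\la)|$. The transpose reduction of $\la^c$ to $\la^r$ is also fine, and your check that transposition respects the charge condition (via $\charge(m)+\charge(m^t)=|m|$ and $|m|+|\tM|=|M|+|\tm|$ for a commuting diamond) is correct and more explicit than the paper's ``follows similarly.''

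Where you diverge is in how you propose to prove $|\Patheq^k(\la^r,\la)|=1$. The paper's proof appeals to an analogy with Proposition~\ref{P:maxstripuniquerev} — uniqueness of reverse-augmentation paths to a reverse-maximal strip — which in turn rests on the completion-move machinery of \S\ref{sec:strips} (Lemmata~\ref{L:maxrowcolcommute}, \ref{L:maxmoverowcommute}, \ref{L:maxmovecolcommute}, \ref{L:anyaugment}), where both row and column augmentation moves occur. You instead observe, correctly and this is not in the paper, that the row shape is constant on any path from $\la^r$ to $\la$, so every such path consists of row moves only, and you propose a direct confluence argument for the row-move subposet using the row elementary equivalences of Definition~\ref{D:rowrowcommute} and Lemma~\ref{L:rowdiamond}. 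That restriction is a genuine simplification: you avoid the mixed row/column augmentation cases entirely. What the paper's route buys is that the needed inductive confluence machinery has already been built for strips, so the ``analogous'' argument is a change of bookkeeping rather than a new lemma; what your route buys is a smaller and cleaner case analysis, since only row-row interactions remain.

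The remaining concern is the one you flag yourself: you have not carried out the confluence step nor the reverse-row-move existence step, and you call them ``the main obstacle.'' That is fair self-assessment. Two things to be careful about if you push this through. First, the claim that a $k$-shape with $\rs=\rs(\la)$ other than $\la^r$ admits a reverse row move is plausible — it is the dual of the explicit row-move construction in the proof that $k$-cores are the minimal elements — but it does need to be written out; the paper only asserts the maximal-element half is ``similar.'' Second, your appeal to a ``diamond-lemma argument'' plus finiteness is too quick as stated: the confluence you need is the kind proved inductively in Proposition~\ref{P:maxstripunique}/\ref{P:maxstripuniquerev}, which requires verifying for each local pair of row moves that the relevant elementary equivalence actually applies (reasonableness, perfectibility in the interfering cases) — Newman's lemma alone does not hand you that. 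So the proposal is structurally sound and arguably cleaner than the paper's route, but the confluence verification still has to be done at the level of detail the paper invests in \S\ref{sec:strips}.
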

\begin{proof}
We will only prove \eqref{Eq:PropTriang}, since
\eqref{Eq:PropTriang2} follows similarly. As already mentioned at
the beginning of \S\S\ref{SubS:poset}, if  $m$ is a column move
(resp. row move) from $\nu$ to $\mu$, then $\rs(\nu) \triangleright
\rs(\mu)$ (resp. $\rs(\nu) = \rs(\mu)$) in the dominance order on
partitions. Since $\lambda$ is obtained from $\rho$ by a sequence of
moves, it only remains to show that
$|\Patheq^k(\lambda^r,\lambda)|=1$. That is, that there exists a
unique equivalence class of paths in the $k$-shape poset from
$\lambda^r$ to $\lambda$.  Or equivalently, that there exists a
unique equivalence class of paths in the $k$-shape poset from
$\lambda$ to $\lambda^r$. The proof is analogous to the proof that
given $\mu/\lambda$ a strip, there exists a unique equivalence class
of paths in $\Str^\mu$ to the reverse-maximal strip $\mu/\nu$ (see
Proposition~\ref{P:maxstripuniquerev}).
\end{proof}

Let $\mu$ be a $k$-shape.  The {\it surface strip} $\mu/\la$ of $\mu$ is the horizontal strip consisting of the topmost cell of each column of $\mu$.

\begin{lemma}\label{L:surfacestrip}
The surface strip of $\mu$ is the unique reverse maximal strip of $\mu$ with rank $\rs(\mu)_1$.
\end{lemma}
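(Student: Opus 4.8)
The plan is to separate the statement into three claims: (i) the surface strip $S=\mu/\la$ is a strip; (ii) it has rank $\rs(\mu)_1$ and is reverse maximal; (iii) it is the only reverse maximal strip of $\mu$ of that rank.

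For (i) and the rank in (ii) the key input is a hook-length identity. Since the cell deleted in column $c$ is $\Top_c(\mu)$, one has $\la^t_c=\mu^t_c-1$ for $1\le c\le\mu_1$, equivalently $\la_i=\mu_{i+1}$; a direct computation of arms and legs then gives $h_\la((i,j))=h_\mu((i+1,j))$ for all $(i,j)\in\la$. Hence $\bdy\la$ is $\bdy\mu$ with its bottom row deleted, so $\rs(\la)=(\rs(\mu)_2,\rs(\mu)_3,\dots)$ is a partition and $|\bdy\mu|-|\bdy\la|=\rs(\mu)_1$. Because the hook lengths strictly decrease along the bottom row of $\mu$, the columns $c$ meeting $\bdy\mu$ in that row form a right-justified block of $\rs(\mu)_1$ columns; deleting the bottom row of $\bdy\mu$ thus removes exactly one cell from each of these and none from the others, so $\cs(\la)$ is a partition and $\cs(\mu)/\cs(\la)$ is a vertical strip of size $\rs(\mu)_1$. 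As $\rs(\mu)/\rs(\la)$ is likewise a horizontal strip of size $\rs(\mu)_1$ and $\mu/\la$ has exactly one cell in each column of $\mu$, Definition~\ref{D:strip} gives that $S=\mu/\la$ is a strip of rank $\rs(\mu)_1$.

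For reverse maximality, note that any strip $\mu/\rho$ is in particular a horizontal strip, so $\rho^t_c\ge\mu^t_c-1=\la^t_c$ for every $c$, i.e.\ $\la\subseteq\rho$. Thus $\la$ is the $\subseteq$-least vertex of $\Str^\mu$; since every edge of $\Str^\mu$ is a move and hence strictly enlarges its partition, no edge of $\Str^\mu$ ends at $\la$, so $\la$ is minimal in $\Str^\mu$ and $S=\mu/\la$ is reverse maximal. The same inequality $\rs(\rho)_i\ge\rs(\mu)_{i+1}$ (coming from ``$\rs(\mu)/\rs(\rho)$ is a horizontal strip'') shows the rank $|\rs(\mu)|-|\rs(\rho)|$ of any strip $\mu/\rho$ is at most $\rs(\mu)_1$, with equality exactly when $\rs(\rho)=(\rs(\mu)_2,\rs(\mu)_3,\dots)=\rs(\la)$.

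For uniqueness, let $\mu/\rho$ be reverse maximal of rank $\rs(\mu)_1$; by the previous paragraph $\rs(\rho)=\rs(\la)$ and $\la\subseteq\rho$. Assume $\rho\ne\la$, so $\rho/\la$ is a nonempty horizontal strip inside the surface strip and $\mu/\rho$ misses the top cell of some column of $\mu$. I would then obtain a contradiction by producing a reverse augmentation of $\mu/\rho$: choose a corner $b$ of $\rho$ lying in $\rho/\la$, let $s$ be the maximal $\rho$-removable row-type string through $b$ (a single cell when that already works), and set $\rho'=\rho\setminus s$. One checks that $\rho'$ is a $k$-shape, that $\mu/\rho'$ is again a strip---necessarily still of rank $\rs(\mu)_1$, since a move preserves the size of the $k$-boundary---and hence that $\rho'\to\rho$ is a move inside $\Str^\mu$ with $\rho'\subsetneq\rho$, contradicting minimality of $\rho$. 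Establishing the existence of such a removable row-type string, and verifying that its removal preserves both the $k$-shape property and the strip conditions, is the technical heart of the argument and relies on the hook-length bookkeeping of \S\ref{sec:kshapes} (Remark~\ref{R:rowshift}, Lemma~\ref{L:makestring}, Property~\ref{C:rowmovecolsize}); everything else is routine.
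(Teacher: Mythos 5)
Your verifications of (i) and the rank/reverse-maximality claims in (ii) are solid and in fact more explicit than the paper's: the hook identity $h_\la((i,j))=h_\mu((i+1,j))$ is a clean way to see that $\bdy\la$ is $\bdy\mu$ with its bottom row deleted, and the argument that $\la$ is the $\subseteq$-least vertex of $\Str^\mu$ (from $\rho^t_c\ge\mu^t_c-1$) immediately gives reverse maximality, which the paper only calls ``clear.''

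The uniqueness step, however, has a genuine gap, and it is larger than you suggest. You establish that any strip $\mu/\rho$ of rank $\rs(\mu)_1$ satisfies $\la\subseteq\rho$ and $\rs(\rho)=\rs(\la)$, but these two facts do \emph{not} force $\rho=\la$. A concrete counterexample: take $k=3$, $\mu=(4,2,1)$, so $\la=(2,1)$, $\rs(\mu)=(2,2,1)$, $\rs(\mu)_1=2$, and let $\rho=(3,1)$. One checks $\rho$ is a $3$-shape with $\rs(\rho)=(2,1)=\rs(\la)$, $\cs(\rho)=(1,1,1)$, and $\mu/\rho$ is a strip of rank $2$ with $\la\subsetneq\rho$. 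So the whole burden really is carried by the step you flag as the ``technical heart'' (exhibiting a reverse augmentation of $\mu/\rho$ when $\rho\neq\la$), and that step is only sketched. It is not routine: your candidate $\rho'=\rho\setminus s$ for a single row-type string $s$ need not give a move (a row move of rank $r>1$ requires $r$ translated strings, and the rank is forced by the column shape via Property~\ref{C:rowmovecolsize}), nor is it obvious that $\rho'$ remains a $k$-shape with $\mu/\rho'$ a strip. In the example above, the reverse augmentation does exist, namely the rank-$1$ row move $(2,1)\to(3,1)$, but a general verification is required. The paper's own uniqueness argument is different in flavour --- it asserts that for a \emph{reverse maximal} strip of rank $\rs(\mu)_1$ the $\cs$-modified columns must be the rightmost $\rs(\mu)_1$ columns (a claim that, as the example above shows with modified columns $\{1,4\}$, is false without reverse maximality) and then pins $\nu$ down from $\rs$ and $\cs$ --- so you cannot appeal to the paper's proof to fill in your gap either. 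Either route can be made to work, but as written your uniqueness argument is incomplete.
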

\begin{proof}
It follows from the definitions and the fact that $\mu$ is a
$k$-shape that the skew shape $\Int(\mu)/\Int(\la)$ is the surface
strip of $\Int(\mu)$.  Thus $\rs(\la)$ is obtained from $\rs(\mu)$
by removing the first row, and $\cs(\la)$ is obtained from
$\cs(\mu)$ by reducing the last $\rs(\mu)_1$ columns each by $1$. In
particular, $\mu/\la$ is a strip.  It is clear that the surface
strip $S= \mu/\la$ is reverse maximal.

Let $S' = \mu/\nu$ be another reverse maximal strip with rank $\rs(\mu)_1$.  The modified columns of $S'$ must be exactly the last $\rs(\mu)_1$ columns, and furthermore, $\rs(\nu) = \rs(\la)$.  It follows that $\nu = \la$.
\end{proof}

\begin{proposition} \label{P:dualtriang}
Let $\la \in \Ksh^k$ be such that $\rs(\la)=\nu$.
If we allow strips of rank $k$, then
\begin{equation}
\dkr_{\la}^{(k)}[X] = m_{\nu} + \sum_{\mu \, \triangleleft \, \nu} \tilde
K_{\nu \mu} m_{\mu}
\end{equation}
for some coefficients $\tilde K_{\nu \mu} \in \mathbb Z_{\geq 0}$.
\end{proposition}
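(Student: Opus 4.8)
The plan is to prove the triangularity statement for $\dkr_\la^{(k)}[X]$ directly from the definition as a generating function over reverse-maximal $k$-shape tableaux. Recall $\dkr_\la^{(k)}[X] = \sum_{T\in\RT_\la^k} x^{\wt(T)}$, where $\RT_\la^k = \RT_{\la/\emptyset}^k$ is the set of reverse-maximal tableaux of shape $\la$, i.e.\ chains $\emptyset=\la^{(0)}\subset\dots\subset\la^{(N)}=\la$ of $k$-shapes whose successive steps $\la^{(i)}/\la^{(i-1)}$ are reverse-maximal strips. Since we are assuming (as in Proposition~\ref{P:dualtriang}) that strips of rank $k$ are allowed, every weight $\wt(T)$ is a composition of $|\bdy\la|$ into parts of size $\le k$. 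The first thing I would do is observe that $\dkr_\la^{(k)}[X]$ is a symmetric function (when strips of rank $k$ are allowed this is exactly the content of Remark~\ref{R:kstrips}), hence it is a nonnegative-integer combination of monomial symmetric functions $m_\mu$ where $\mu$ ranges over partitions of $|\bdy\la|$. So the real task is to (a) identify which partitions $\mu$ can occur, showing they satisfy $\mu\trianglelefteq\nu$ where $\nu=\rs(\la)$, and (b) show the coefficient of $m_\nu$ is exactly $1$.

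For part (a), I would argue that for any $T\in\RT_\la^k$, the partition $\mathrm{sort}(\wt(T))$ is dominated by $\nu=\rs(\la)$. The key mechanism is that each reverse-maximal strip $\la^{(i)}/\la^{(i-1)}$ is a horizontal strip, and its rank equals $|\cs(\la^{(i)})|-|\cs(\la^{(i-1)})|$, which (by Definition~\ref{D:strip}) also equals $|\rs(\la^{(i)})|-|\rs(\la^{(i-1)})|$, the number of cells of $\bdy\la^{(i)}/\bdy\la^{(i-1)}$. Moreover a horizontal strip places at most one cell in each row, so the $j$-th largest part of $\wt(T)$, summed over the top $\ell$ values, cannot exceed the number of cells of $\bdy\la$ lying in its $\ell$ longest rows — and this latter quantity is precisely $\nu_1+\dots+\nu_\ell$. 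More carefully: since each strip contributes at most one cell per row of $\bdy\la$, a standard rearrangement/Hall-type inequality on the bipartite incidence between ``strips'' and ``rows of $\bdy\la$'' gives $\wt(T)^+_1+\dots+\wt(T)^+_\ell \le \nu_1+\dots+\nu_\ell$ for all $\ell$, i.e.\ $\wt(T)^+\trianglelefteq\nu$. This shows $\dkr_\la^{(k)}[X]=\sum_{\mu\trianglelefteq\nu}\tilde K_{\nu\mu}m_\mu$ with $\tilde K_{\nu\mu}\in\Z_{\ge0}$.

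For part (b), the coefficient of $m_\nu$ counts tableaux $T\in\RT_\la^k$ with $\wt(T)$ a rearrangement of $\nu$; I claim there is exactly one, namely the surface-strip tableau obtained by iterating Lemma~\ref{L:surfacestrip}. Starting from $\la$, Lemma~\ref{L:surfacestrip} says the surface strip is the unique reverse-maximal strip of $\la$ of rank $\rs(\la)_1=\nu_1$; removing it yields a $k$-shape $\la'$ with $\rs(\la')=(\nu_2,\nu_3,\dots)$ (since, as noted in the proof of Lemma~\ref{L:surfacestrip}, $\rs(\la')$ is obtained from $\rs(\la)$ by deleting the first row). Iterating, we get a chain whose weight read from the top is $(\nu_1,\nu_2,\dots)$, so reading from the bottom it is the reversal $\overline\nu$, a rearrangement of $\nu$. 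For uniqueness one must check that in \emph{any} $T$ with $\wt(T)$ a rearrangement of $\nu$, each strip must have rank equal to the maximum possible at that stage — this follows because the dominance inequality from part (a) is saturated exactly when, at each step from the bottom, we remove as large a strip as possible from the top, and Lemma~\ref{L:surfacestrip} pins that strip down uniquely. Formally I would induct on $N$ (the number of steps): in a weight-$\overline\nu$ tableau the topmost strip $\la/\la^{(N-1)}$ must have rank $\nu_1$ (the largest part), hence is the surface strip by Lemma~\ref{L:surfacestrip}, hence $\la^{(N-1)}$ is determined and has $\rs(\la^{(N-1)})=(\nu_2,\dots)$; apply the inductive hypothesis.

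The main obstacle I anticipate is part (b), specifically proving that the topmost strip in a weight-$\overline\nu$ tableau is forced to be a surface strip — i.e.\ that having maximal rank $\nu_1$ is enough to conclude it is \emph{the} surface strip rather than merely \emph{a} reverse-maximal strip of that rank. Lemma~\ref{L:surfacestrip} gives exactly this (uniqueness of the reverse-maximal strip of rank $\rs(\mu)_1$), so the argument should go through cleanly; the only subtlety is verifying that a reverse-maximal strip of maximal rank in a reverse-maximal \emph{tableau} is still reverse-maximal relative to the whole $k$-shape, which is immediate from the definition of reverse-maximal tableau. A secondary technical point is making the Hall-type dominance inequality in part (a) fully rigorous — one should phrase it as: color the cells of $\bdy\la$ by which strip of $T$ they belong to; each color class meets each row at most once; therefore the multiset of color-class sizes, sorted, is dominated by the multiset of row sizes, sorted, which is $\nu$. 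This is a clean combinatorial lemma that I would state and prove in one line.
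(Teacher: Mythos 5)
Your overall strategy — symmetry gives an $m_\mu$-expansion, then (a) bound the possible $\mu$ by dominance and (b) show the coefficient of $m_\nu$ is $1$ via the iterated surface strip — is exactly the paper's proof, and your part (b) is correct and matches the paper (Lemma~\ref{L:surfacestrip} pins down the top strip of a reverse-maximal tableau of weight $\overline\nu$; then induct).

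There is, however, a genuine error in your part (a). You write ``a horizontal strip places at most one cell in each row'' and build your Hall-type coloring argument on that. This has the convention backwards: by the paper's definition (\S2.1), a \emph{horizontal} strip has at most one cell in each \emph{column} (it is a vertical strip that has at most one per row). Consequently a single reverse-maximal strip $\la^{(i)}/\la^{(i-1)}$ can perfectly well contribute several cells to a single row of $\bdy\la$, and your bipartite incidence between ``strips'' and ``rows of $\bdy\la$'' need not be $0$--$1$. There is also a second, subtler mismatch: the cells of $\la^{(i)}/\la^{(i-1)}$ that happen to lie in $\bdy\la$ are not $a_i$ in number — $a_i=|\bdy\la^{(i)}|-|\bdy\la^{(i-1)}|$ is a \emph{net} boundary change, not the size of the strip — so even granting a correct per-row/per-column bound, ``coloring the cells of $\bdy\la$ by which strip they belong to'' does not produce color classes of size $a_i$. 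The fix is the one the paper actually uses (and which you brush past at the start of (a)): by Definition~\ref{D:strip}, $\rs(\la^{(i)})/\rs(\la^{(i-1)})$ is a horizontal strip of size $a_i$ in the \emph{straight-shape} Young diagram of $\nu=\rs(\la)$, so the chain of row shapes is precisely a semistandard filling of $\nu$ with content $\wt(T)$; the classical Kostka triangularity (the same fact underlying $h_\mu=\sum_\nu K_{\nu\mu}s_\nu$, with $K_{\nu\mu}=0$ unless $\mu\trianglelefteq\nu$) then gives $\mathrm{sort}(\wt(T))\trianglelefteq\nu$ at once. So the right object to ``color'' is the Young diagram of $\nu$, not $\bdy\la$, and the relevant constraint comes from column-strictness (entries $\le i$ occupy at most $i$ rows), not from a row-incidence bound.
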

\begin{proof}
Let $T\in\T_{\la}^k$, and suppose that $\wt(T)=\mu$.  Since
 $T = \emptyset=\la^{(0)} \subset
\la^{(1)}\subset \dotsm\subset \la^{(N)}=\lambda$
is a sequence of strips, we have in particular that
$\rs(\la^{(i)})/\rs(\la^{(i-1)})$ is a horizontal $\mu_i$-strip
for all $i$.
This gives immediately that $\mu \trianglelefteq \nu$ (think of the triangular
expansion of the homogeneous symmetric functions into Schur functions).
Now, the unique  $T\in\T_{\la}^k$
such that $\wt(T)=\nu$ is obtained by recursively taking the surface strips of $\la$. 
Finally,  $\tilde K_{\nu \mu} \in \mathbb Z_{\geq 0}$ by definition
of $\dkr_{\la}^{(k)}[X]$.
\end{proof}

\begin{proposition}  Let $\la \in \Ksh^k$, and let $\omega: \La \to \La$
be the homomorphism that sends the $r^{th}$ complete symmetric function
to the $r^{th}$ elementary symmetric function.
 Then
\begin{equation} \label{Eq:kschuromega}
\omega(\ksgen_{\la}^{(k)}[X])= \ksgen_{\la'}^{(k)}[X]
\end{equation}
and
\begin{equation} \label{Eq:dualomega}
\omega(i_{k-1}(\dkr_{\la}^{(k)}[X]))= \dkr_{\la'}^{(k)}[X] \mod I_{k-1}
\end{equation}
where $\lambda'$ is the conjugate of $\la$, and where $i_k$ was defined
in \eqref{Eq:dual1}.
\end{proposition}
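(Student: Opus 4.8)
The plan is to deduce both identities from one combinatorial symmetry of the $k$-shape poset — that conjugation of partitions induces a bijection $\Patheq^k(\la,\mu)\to\Patheq^k(\la^t,\mu^t)$ — together with the known conjugation symmetry of the ungraded $k$-Schur functions, $\omega(\ksf^{(k)}_\la[X])=\ksf^{(k)}_{\la^t}[X]$ for $\la\in\Core^{k+1}$ (where $\la^t=\la'$; this is the image under the bijection $\rs$ of the fact that $\omega$ interchanges a $k$-bounded partition with its $k$-conjugate, using that $k$-conjugation on $\Bdd^k$ corresponds to transpose on $\Core^{k+1}$, cf.\ \cite{LM:ktab,LM:cores}). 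First I would set up the combinatorial symmetry. Conjugation interchanges row moves with column moves (Definition~\ref{D:defmove}) without reversing the direction of edges, so it sends a directed path in $\Ksh^k_N$ from $\la$ to $\mu$ to a directed path from $\la^t$ to $\mu^t$; being an involution on $\Ksh^k_N$, it is a bijection on $\Path^k(\la,\mu)$. The point to verify is that it respects the diamond equivalence \eqref{E:elemequiv}: the commutation condition $\tM\cup m=\tm\cup M$ of \eqref{E:commudiagram} is conjugation-invariant, and the charge balance \eqref{E:charge} survives because for a single move $x$ one has $\charge(x)+\charge(x^t)=|x|$ (a row move has charge $0$ and transposes to a column move whose charge equals its cell count, by Definition~\ref{D:charge}), while $|m|+|\tM|=|\tm|+|M|$ since both count the cells of the skew shape $\gamma/\la$; subtracting the two relations yields the charge balance for the conjugate diamond. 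Hence conjugation descends to a bijection of equivalence classes, so $|\Patheq^k(\la,\mu)|=|\Patheq^k(\la^t,\mu^t)|$ for all $\la,\mu\in\Ksh^k_N$.

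Granting this, \eqref{Eq:kschuromega} would follow by applying $\omega$ to the $t=1$ specialization of \eqref{E:ksf}, using the conjugation symmetry of $\ksf^{(k)}_\la[X]$, and reindexing the sum by the bijection $\la\mapsto\la^t$ of $\Core^{k+1}$:
\[
\omega(\ksgen^{(k)}_\mu[X])=\sum_{\la\in\Core^{k+1}}|\Patheq^k(\la,\mu)|\,\ksf^{(k)}_{\la^t}[X]=\sum_{\la\in\Core^{k+1}}|\Patheq^k(\la^t,\mu)|\,\ksf^{(k)}_{\la}[X];
\]
since $|\Patheq^k(\la^t,\mu)|=|\Patheq^k(\la,\mu^t)|$ by the combinatorial symmetry, the right-hand side is $\ksgen^{(k)}_{\mu^t}[X]$ by \eqref{E:ksf} at $t=1$.

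For \eqref{Eq:dualomega} I would first observe that $\omega$ preserves $\Lambda^{(k-1)}=\mathbb{Z}[h_1,\dots,h_{k-1}]$ (since $e_i\in\mathbb{Z}[h_1,\dots,h_i]$), hence preserves its Hall-orthogonal complement $I_{k-1}=\langle m_\la:\la_1>k-1\rangle$ and descends to an involution of $\Lambda/I_{k-1}$; as $\omega$ is self-adjoint for the Hall pairing, its descent is adjoint to $\omega|_{\Lambda^{(k-1)}}$, so pairing against \eqref{Eq:dual1} (for $k-1$) together with $\omega(\ksf^{(k-1)}_\rho[X])=\ksf^{(k-1)}_{\rho^t}[X]$ gives $\omega(\WS^{(k-1)}_\mu[X])\equiv\WS^{(k-1)}_{\mu^t}[X]\pmod{I_{k-1}}$. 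Then, since $i_{k-1}$ sends a class in $\Lambda/I_{k-1}$ to the corresponding honest symmetric function (its expansion in the dual $(k-1)$-Schur basis), Theorem~\ref{T:ktab_decomp} (and Remark~\ref{R:kstrips} if strips of rank $k$ are permitted) gives $i_{k-1}(\dkr^{(k)}_\la[X])=\sum_{\mu\in\Core^k}|\Patheq^k(\la,\mu)|\,\WS^{(k-1)}_\mu[X]$; applying $\omega$, using the congruence just obtained, reindexing $\mu\mapsto\mu^t$ on $\Core^k$, and invoking the combinatorial symmetry once more in the form $|\Patheq^k(\la,\mu^t)|=|\Patheq^k(\la^t,\mu)|$ produces $\sum_{\mu\in\Core^k}|\Patheq^k(\la^t,\mu)|\,\WS^{(k-1)}_\mu[X]$, which is $\dkr^{(k)}_{\la^t}[X]$ modulo $I_{k-1}$ by Theorem~\ref{T:ktab_decomp} applied to $\la^t$. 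I do not expect any serious obstacle: the only genuinely new ingredient is the conjugation bijection on $\Patheq^k$, established above by the short charge computation, and everything else is reindexing plus imported symmetries of $k$-Schur and dual $k$-Schur functions and the formal descent of $\omega$ to $\Lambda/I_{k-1}$. The main point requiring care is matching the indexing conventions in the cited conjugation symmetry (transpose of $(k+1)$-cores versus $k$-conjugate of $k$-bounded partitions); a $t$-graded refinement would additionally need $\charge(\bp^t)=|\mu/\la|-\charge(\bp)$ for $\bp\in\Patheq^k(\la,\mu)$ — which holds because $\sum_i|m_i|=|\mu/\la|$ is constant on $\Patheq^k(\la,\mu)$ — dovetailing with the $t\leftrightarrow t^{-1}$ form of the graded conjugation symmetry, which I do not pursue.
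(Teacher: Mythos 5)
Your proposal is correct and follows essentially the same route as the paper: expand $\ksgen^{(k)}_\la$ (resp.\ $\dkr^{(k)}_\la$) via \eqref{E:ksf} at $t=1$ (resp.\ Theorem~\ref{T:ktab_decomp}), apply $\omega$, invoke the conjugation symmetry of $k$-Schur (resp.\ dual $k$-Schur) functions together with $|\Patheq^k(\la,\mu)|=|\Patheq^k(\la',\mu')|$, and reindex. Two small differences are worth noting. For \eqref{Eq:kschuromega} the paper does not simply cite the conjugation symmetry $\omega(\ksf^{(k)}_\rho)=\ksf^{(k)}_{\rho'}$ as a black box; it obtains it by downward induction on $k$ via $\ksf^{(k)}_\rho=\ksgen^{(k+1)}_\rho$ (Eq.~\eqref{Eq:kmoins1}), with the base case being ordinary Schur functions for $k$ large — a nice self-contained variant of your citation to \cite{LM:ktab,LM:cores} (and indeed the paper itself reverts to a direct citation of \cite{LM:QC} for the same symmetry in the proof of \eqref{Eq:dualomega}). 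Conversely, you go further than the paper in one respect: the paper merely asserts the transposition symmetry of the $k$-shape poset, whereas you actually verify that conjugation respects diamond equivalence via the identity $\charge(x)+\charge(x^t)=|x|$ and the cell-count constancy $|m|+|\tM|=|\tm|+|M|$ — a correct and useful check that the paper leaves implicit. Both buy the same theorem; yours is marginally more explicit on the combinatorial side and marginally less self-contained on the algebraic side.
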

\begin{proof}
For the proof of \eqref{Eq:kschuromega} we proceed by induction.
The result holds for $k$ large since
in that case $\ksf_{\la}^{(k)}[X]=s_\la[X]$ is a usual Schur functions
and it is known that
$\omega(s_\la[X])=s_{\la'}[X]$.  From \eqref{E:ksf} when $t=1$
we get
\begin{equation} \label{Eq:omega2}
\omega(\ksgen_\la^{(k)}[X]) = \sum_{\rho\in \Core^{k+1}}  |\Patheq^k(\rho,\lambda)|\,
\omega(\ksf^{(k)}_\rho[X])
\end{equation}
Since $\ksf^{(k)}_\rho[X]=\ksgen^{(k+1)}_\rho[X]$ from
\eqref{Eq:kmoins1}, we can suppose by induction that
$\omega(\ksf^{(k)}_\rho[X])=\ksf^{(k)}_{\rho'}[X]$.  We also have
$|\Patheq^k(\rho,\lambda)|=|\Patheq^k(\rho',\lambda')|$ by the
transposition symmetry of the $k$-shape poset, and thus
\eqref{Eq:kschuromega} follows from \eqref{Eq:omega2}.

For the proof of \eqref{Eq:dualomega}, we have from
Theorem~\ref{T:ktab_decomp}
that
\begin{align} \label{Eq:dualomega2}
\omega(i_{k-1}(\dkr_\mu^{(k)}[X])) = \sum_{\rho\in \Core^k} |\Patheq^k(\mu,\rho)|\,
\omega(i_{k-1}(\WS_\rho^{(k-1)}[X]))
\end{align}
The duality \eqref{Eq:dual1}
between $k$-Schur functions and dual $k$-Schur functions
implies that
\begin{equation}
\omega(i_{k-1}(\WS_\rho^{(k-1)}[X]))=\WS_{\rho'}^{(k-1)}[X] \mod I_{k-1}
\end{equation}
given that
$\omega(\ksf^{(k-1)}_\rho[X])=
\ksf^{(k-1)}_{\rho'}[X]$ (see \cite{LM:QC}) and that
$\omega$ is an isometry.  The result then
follows from \eqref{Eq:dualomega2}
since, as we saw earlier, $|\Patheq^k(\mu',\rho')|=|\Patheq^k(\mu,\rho)|$.
\end{proof}

\subsection{Basics on strips}

The remainder of this section deals with the properties of strips
and augmentation moves. Sections \ref{sec:rowpushout} and
\ref{sec:columnpushout} study pushouts involving row and column
moves respectively.

The next results help in checking whether something is a strip.
\begin{property} \label{L:checkstrip}
Let $S = \mu/\la$ be a horizontal strip of $k$-shapes
and $c$ a column which contains a cell of $S$.  Then $\cs(\mu)_c \ge \cs(\la)_c$.
\end{property}
\begin{proof} Let $b\in\bdy\la$ be in column $c$ and $b'$ be the cell just above $b$.
Since $\mu/\la$ is a horizontal strip,
$k \geq h_\la(b)\ge h_\mu(b')$. This implies $b'\in \bdy \mu$ and the
result follows since there is a cell of $S$ in column $c$.
\end{proof}

\begin{property}\label{L:innerstrip}
Suppose $S = \mu/\la$ is a strip.  Then $\bdy \la \setminus \bdy
\mu$ is a horizontal strip.
\end{property}
\begin{proof} The lemma follows from Property~\ref{L:checkstrip}
and the fact that $\mu/\lambda$ is a horizontal strip.
\end{proof}

\begin{lemma}\label{L:coverseq}
Let $\mu/\la$ be a strip and $c$ be such that
$cs(\mu)_c = cs(\la)_c + 1\le \cs(\la)_{c-1}$.
Then there is a cover-type $\la$-addable string $s$ such that $c_{s,d} = c$, $\la \cup s \in \Ksh^k$, and
$\mu/(\la \cup s)$ is a strip.
\end{lemma}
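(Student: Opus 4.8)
The plan is to construct the string $s$ explicitly by placing a single $\la$-addable corner in column $c$ and running it upward as far as possible, then verifying the three required conditions. First I would let $b=\Top_c(\bdy\la)$ be the topmost cell of $\bdy\la$ in column $c$; since $\mu/\la$ is a horizontal strip and $\cs(\mu)_c = \cs(\la)_c+1$, the cell of $\mu/\la$ in column $c$ sits directly above $b$, so in particular column $c$ of $\bdy\la$ ends (at the top) at $b$, and $h_\la(b)=k$: indeed, if $h_\la(b)<k$ then the cell above $b$ would already lie in $\bdy\la$, so $\cs(\la)_c$ would be larger; and $h_\la(b)\le k$ since $b\in\bdy\la$. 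Thus there is a $\la$-addable corner $a_1$ at the top of column $c$ (using $h_\la(b)=k$ together with the hypothesis $\cs(\mu)_c\le\cs(\la)_{c-1}$, which guarantees column $c-1$ of $\bdy\la$ is tall enough that $a_1=(\row(b)+1,c)$ is in fact $\la$-addable — this is exactly the kind of statement provided by Remark \ref{R:rowshift} / Remark \ref{R:rowsequal}). Starting from $a_1$, let $s=\{a_1,a_2,\dots,a_\ell\}$ be the maximal $\la$-addable string with top cell $a_1$ obtained by extending downward via contiguity (Lemma \ref{L:makestring} and Remark \ref{R:contig}); by construction $c_{s,d}=\col(a_1)=c$.

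Next I would check $s$ is cover-type. Since $a_1$ is a $\la$-addable corner at the very top of column $c$, the cell $b_0=\Left_{\row(a_1)}(\bdy\la)$ satisfies $h_\la(b_0)<k$: the row of $a_1$ in $\bdy\la$ is the single cell-gap created by the strip condition, so its leftmost cell has a small hook (more precisely, $\cs(\mu)_c\le\cs(\la)_{c-1}$ forces the row of $a_1$ in $\bdy\la$ to be short). At the bottom, because $s$ was chosen maximal and could not be extended below, $\Bot_{\col(a_\ell)}(\bdy\la)$ has hook length $<k-1<k$ (the argument given in the minimality proof of the $k$-shape poset, comparing $h_\la$ of $\Bot_{\col(a_\ell)}$ with the cell below it). By Property \ref{P:stringtypes}, $\bdy\la\setminus\bdy\mu$ for this $s$ would then have to be $\{b_1,\dots,b_{\ell-1}\}$ — i.e. $s$ is of cover-type. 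Then I would invoke Property \ref{P:cond5} (degenerate case with $r=1$, or rather the rank-$1$ case) to verify $\la\cup s\in\Ksh^k$: the only column modified is $c=c_{s,d}$, positively, and the hypothesis $\cs(\la)_c+1\le\cs(\la)_{c-1}$ is precisely $\cs(\la)_{c_{s,d}^-}>\cs(\la)_{c_{s,d}}$, so the column condition holds; the row condition is automatic since $s$ is cover-type and modifies no row negatively, and positively modifies only $\row(a_1)$, which by the strip structure has smaller row-length than $\row(a_1)-1$ in $\bdy\la$.

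The main obstacle — and the step I expect to require the most care — is proving that $\mu/(\la\cup s)$ is again a strip, i.e. that $\rs(\mu)/\rs(\la\cup s)$ is a horizontal strip and $\cs(\mu)/\cs(\la\cup s)$ is a vertical strip, both of the correct size. The cells of $s$ other than $a_1$ sit strictly below $a_1$, on the ``inner'' side of the strip $\mu/\la$; adding them to $\la$ consumes cells of $\bdy\la$ at the positions $b_1,\dots,b_{\ell-1}$ (by the cover-type computation) and introduces new boundary cells at $a_2,\dots,a_\ell$. I would track the effect column by column and row by row: in column $c$, passing from $\la$ to $\la\cup s$ raises $\cs(\cdot)_c$ by $1$, which is exactly what makes $\cs(\mu)_c=\cs(\la\cup s)_c$, removing that column from the modified set; in the other columns touched by $s$ (namely $\col(a_2),\dots,\col(a_\ell)$), the cover-type structure keeps $\cs(\cdot)$ unchanged, so $\mu/(\la\cup s)$ modifies a subset of the columns $\mu/\la$ modified, still with at most one cell per column. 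For rows, I would use Lemma \ref{L:innerstrip} (the inner shape $\bdy\la\setminus\bdy\mu$ is a horizontal strip) together with the explicit description of which rows $s$ touches to confirm the horizontal-strip condition on $\rs(\mu)/\rs(\la\cup s)$ is preserved; that $\la\cup s\subset\mu$ as Ferrers diagrams follows because every $a_i$ lies weakly below $a_1$ and $\mu/\la$ already contains a cell above $b=\Top_c(\bdy\la)$, so $a_1\in\mu$, and contiguity plus Lemma \ref{L:distancestrings}-type distance bounds keep the $a_i$ inside $\mu$. Finally the rank of $\mu/(\la\cup s)$ equals $\mathrm{rank}(\mu/\la)-1$, which is automatically $<k$, so the restriction of Remark \ref{R:small} is met.
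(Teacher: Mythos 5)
Your construction of the string $s$ is oriented the wrong way, and this is a fatal gap. By the paper's conventions, a string $s=\{a_1,\dots,a_\ell\}$ has $a_1$ as the highest (and leftmost) cell and $a_\ell$ as the lowest (and rightmost), and for a cover-type string the positively modified column is $c_{s,d}=\col(a_\ell)$, i.e.\ the column of the \emph{bottom} cell. You place the cell of $\mu/\la$ in column $c$ at the top of your string ($a_1$), extend downward, and then assert ``$c_{s,d}=\col(a_1)=c$.'' That equation is false unless the string has length one: after extending downward the bottom cell $a_\ell$ lies strictly to the right, so $c_{s,d}=\col(a_\ell)>c$. The correct construction (which is what the paper's proof does) is to take the cell $b$ of $\mu/\la$ in column $c$ to be the \emph{bottom} cell $a_\ell$ and to extend the string upward, and crucially to take the maximal such string \emph{subject to the constraint $s\subset\mu$}. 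You omit that constraint as well, and it is exactly what is used later to rule out the row-type alternative: if the resulting $s$ were row-type, the strip condition forces the next cell $b'$ of the would-be extension to lie in $\mu$, contradicting maximality inside $\mu$.

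A second error: you claim $h_\la(b)=k$ for $b=\Top_c(\bdy\la)$, justified by ``if $h_\la(b)<k$ then the cell above $b$ would already lie in $\bdy\la$.'' But the cell above $b$ is the cell of $\mu/\la$ in column $c$, which is not in $\la$ at all, so this reasoning does not go through; in fact $h_\la(b)$ can be anything in $[1,k]$ (and $\Top_c(\bdy\la)$ need not even exist when $\cs(\la)_c=0$). What the hypotheses actually give you (as the paper uses) is that the $\mu/\la$-cell in column $c$ is $\la$-addable, and that $h_\la(\Bot_c(\bdy\la))<k$, which is the statement that rules out the column/cocover types at the bottom of the string. Since your string points the wrong way, the remainder of your argument (the cover-type check, the $k$-shape check via Property~\ref{P:cond5}, and especially the final paragraph about $\mu/(\la\cup s)$ being a strip) is built on the wrong object and does not establish what is needed. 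You would need to restart with $a_\ell=b$ (the column-$c$ cell of $\mu/\la$) as the bottom cell of $s$, take $s$ maximal among $\la$-addable strings contained in $\mu$ ending with $b$, and then follow the paper's case analysis on $s$'s type.
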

\begin{proof}
Let $b$ be the unique cell in column $c$ of $\mu/\la$.
The hypotheses imply that $b$ is $\la$-addable.
Let $s$ be the maximal $\la$-addable string such that
$s\subset\mu$ and $s$ ends with $b$. Say the top cell $y$ of $s$ is
in row $i$. Let $x=(i,j)=\Left_i(\bdy\la)$.  Let $b'$ be the $\la$-addable
cell in column $j$, if it exists.

The string $s$ is of row-type or cover-type by the hypotheses.
Suppose $s$ is of row-type. Then $h_\la(x)=k$. Then $b'\cup s$
is a $\la$-addable string. Since $\mu/\la$ is a strip
we have $\cs(\mu)_j\ge \cs(\la)_j$. But $x=\Bot_j(\bdy\la)\not\in\bdy\mu$.
Hence $b'\in \mu$, contradicting the maximality of $s$.

Therefore $s$ is of cover-type.
Since $\mu/\la$ is a strip, $\rs(\mu)_i\ge\rs(\la)_i$. Suppose
$\rs(\mu)_i=\rs(\la)_i$, and  let
$\mu/\la$ have $\ell$ cells in row $i$.  By supposition,
there are also $\ell$ cells  of $\bdy\la\setminus\bdy\mu$
in row $i$.  Since $\cs(\lambda) \subseteq \cs(\mu)$ and $\mu/\lambda$
is a horizontal strip, there must then be cells of
$\mu/\lambda$ in columns $j,\dots,j+\ell-1$ that are contiguous
to the $\ell$ cells of $\mu/\lambda$ in row $i$.  In particular,
the $\lambda$-addable corner $b'$ is contiguous to $y$.
Again $b'\cup s$ is a $\la$-addable string, contradicting the
maximality of $s$.

Therefore $\rs(\mu)_i>\rs(\la)_i$, which ensures that
$\rs(\mu)_i \geq \rs(\lambda \cup s)_i$.  Since $\rs(\mu)/\rs(\la)$ is a horizontal strip
we deduce that $\la \cup s$ is a $k$-shape.
It then easily follows that
$\mu/(\la\cup s)$ is a strip.
\end{proof}

\begin{corollary}\label{C:coverseq}
Any strip $S=\mu/\la$ of rank $\rho$ has a
decomposition into $\rho$ cover-type strings. More precisely,
for every sequence $c_1,c_2,\dotsc,c_\rho$ of modified columns of $S$
such that $c_i<c_j$ if $\cs(\la)_{c_i}=\cs(\la)_{c_j}$, there is a chain in $\Ksh$:
$\la=\la^{(0)}\subset\dotsm\subset \la^{(\rho)}=\mu$
such that $t_i = \la^{(i)}/\la^{(i-1)}$ is a $\la^{(i-1)}$-addable cover-type
string with modified column $c_i$.
\end{corollary}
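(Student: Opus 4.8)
The plan is to prove the refined (``more precisely'') statement by induction on the rank $\rho$, peeling off one cover-type string from the left at each step via Lemma~\ref{L:coverseq}; the unqualified first assertion is then the special case in which $c_1<c_2<\dotsm<c_\rho$ lists the modified columns of $S$ in increasing order of column index, an enumeration that vacuously meets the hypothesis. The base case $\rho=0$ is trivial.

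For the inductive step, let $\mu/\la$ be a strip of rank $\rho\ge 1$ and let $c_1,\dotsc,c_\rho$ be an admissible enumeration of its modified columns. The crux is to verify the hypotheses of Lemma~\ref{L:coverseq} for the column $c=c_1$. First, $\cs(\mu)_{c_1}=\cs(\la)_{c_1}+1$ because $c_1$ is a modified column of the strip $S$ (every entry of $\change_\cs(S)$ lies in $\{0,1\}$, since $\mu/\la$ is a horizontal strip while $\cs(\mu)/\cs(\la)$ is a vertical strip). Second, one must check $\cs(\mu)_{c_1}\le\cs(\la)_{c_1-1}$: as $\cs(\mu)$ is a partition we have $\cs(\mu)_{c_1-1}\ge\cs(\mu)_{c_1}$; if $c_1-1$ is not a modified column then $\cs(\la)_{c_1-1}=\cs(\mu)_{c_1-1}\ge\cs(\mu)_{c_1}$ and we are done, whereas if $c_1-1$ is a modified column then admissibility together with the fact that $c_1$ heads the enumeration forbids $c_1-1$ from sharing $c_1$'s group, so $\cs(\la)_{c_1-1}\neq\cs(\la)_{c_1}$, and since $\cs(\la)$ is a partition this yields $\cs(\la)_{c_1-1}\ge\cs(\la)_{c_1}+1=\cs(\mu)_{c_1}$. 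This verification of the within-group bookkeeping is the only real obstacle; the rest is formal.

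Given this, Lemma~\ref{L:coverseq} supplies a cover-type $\la$-addable string $t_1$ with $c_{t_1,d}=c_1$ such that $\la^{(1)}:=\la\cup t_1\in\Ksh$ and $\mu/\la^{(1)}$ is again a strip. Since $t_1$ is of cover-type, Remark~\ref{R:stringchange} gives $\change_\cs(t_1)=e_{c_1}$, hence $\cs(\la^{(1)})=\cs(\la)+e_{c_1}$; in particular $\mu/\la^{(1)}$ has rank $\rho-1$, its modified columns are exactly $c_2,\dotsc,c_\rho$, and $\cs(\la^{(1)})$ agrees with $\cs(\la)$ on each of those columns, so $c_2,\dotsc,c_\rho$ is still an admissible enumeration (the groups among $c_2,\dotsc,c_\rho$ are unchanged). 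Applying the inductive hypothesis to $\mu/\la^{(1)}$ with this enumeration yields a chain $\la^{(1)}\subset\dotsm\subset\la^{(\rho)}=\mu$ in $\Ksh$ whose $i$-th step is a cover-type string with modified column $c_i$ for $2\le i\le\rho$; prepending $\la=\la^{(0)}\subset\la^{(1)}$ finishes the argument.
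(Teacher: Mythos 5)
Your proof is correct and follows exactly the approach of the paper, which simply states that the result ``follows by induction from Lemma~\ref{L:coverseq}.'' Your write-up supplies the bookkeeping (verifying the inequality hypothesis of Lemma~\ref{L:coverseq} at $c=c_1$ via the admissibility condition, and confirming that admissibility is preserved under the inductive step) that the paper leaves implicit.
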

\begin{proof}
Follows by induction from Lemma \ref{L:coverseq}.
\end{proof}

\begin{corollary}\label{C:stripmodifiedrows}
Let $S = \mu/\la$ be a strip. If a column contains a cell in $\bdy\la\setminus\bdy\mu$
then it also contains a cell of $S$.
\end{corollary}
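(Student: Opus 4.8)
The plan is to reduce everything to the local structure of a single cover-type string, using Corollary~\ref{C:coverseq}. That corollary provides, for the given strip $S=\mu/\la$ of rank $\rho$, a chain $\la=\la^{(0)}\subset\la^{(1)}\subset\dotsm\subset\la^{(\rho)}=\mu$ in $\Ksh$ in which each $t_i=\la^{(i)}/\la^{(i-1)}$ is a $\la^{(i-1)}$-addable cover-type string (the ordering of the modified columns is irrelevant here, so I would simply fix one such chain). Since $\la\subseteq\la^{(i-1)}\subseteq\la^{(i)}\subseteq\mu$, each $t_i$ is contained in $S$ as a set of cells; hence it suffices to show that every cell of $\bdy\la\setminus\bdy\mu$ lies in the same column as some cell of some $t_i$.

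Next I would follow a fixed cell $b\in\bdy\la\setminus\bdy\mu$ along this chain. The elementary point is that hook lengths are weakly increasing under addition of cells: for $b\in\la\subseteq\la^{(i-1)}\subseteq\la^{(i)}$, both $a_{(\cdot)}(b)$ and $l_{(\cdot)}(b)$ can only grow, so $h_\la(b)=h_{\la^{(0)}}(b)\le h_{\la^{(1)}}(b)\le\dotsm\le h_{\la^{(\rho)}}(b)=h_\mu(b)$. Because $h_\la(b)\le k<h_\mu(b)$, there is an index $i$ with $h_{\la^{(i-1)}}(b)\le k<h_{\la^{(i)}}(b)$, i.e. $b\in\bdy\la^{(i-1)}\setminus\bdy\la^{(i)}$.

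Finally I would invoke Property~\ref{P:stringtypes} for the cover-type string $t_i=\la^{(i)}/\la^{(i-1)}=\{a_1,\dots,a_\ell\}$ (recalling Definition~\ref{D:stringtypes}): for a cover-type string the cells leaving the $k$-boundary are precisely $\{b_1,\dots,b_{\ell-1}\}$ with $b_j=(\row(a_{j+1}),\col(a_j))$. Therefore $b=b_j$ for some $j$ with $1\le j\le\ell-1$, so $\col(b)=\col(a_j)$ and $a_j\in t_i\subseteq S$; thus column $\col(b)$ contains a cell of $S$, which is what we want.

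I do not expect a genuine obstacle: the only steps requiring any care are the existence of the cover-type decomposition (granted by Corollary~\ref{C:coverseq}) and the monotonicity of hook lengths, both routine. What this route neatly sidesteps is the more delicate direct attempt — arguing column by column that $\cs(\mu)_c=\cs(\la)_c$ forces $\bdy\la\setminus\bdy\mu$ to miss column $c$ — which is awkward because a cell can be added to a row far from column $c$; the cover-type decomposition is useful precisely because it records exactly which columns shed boundary cells.
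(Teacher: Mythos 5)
Your proof is correct and takes essentially the same route as the paper: both decompose the strip into cover-type strings via Corollary~\ref{C:coverseq} and then read off from Property~\ref{P:stringtypes} that every cell leaving the $k$-boundary for a cover-type string sits in the column of one of that string's cells. The paper compresses this into a single sentence; your version simply supplies the supporting details (hook-length monotonicity to locate the crossover index $i$, and the explicit form of $\bdy\la^{(i-1)}\setminus\bdy\la^{(i)}$ for a cover-type string), all of which are sound.
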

\begin{proof}
Each such cell is a removed cell for one of the cover-type strings that
constitute $S$.
\end{proof}

\begin{lemma}\label{L:stripcomparelengths}
Suppose $S = \mu/\la$ is a strip, and let
$s=\{a_1,\dots,a_{\ell}\} \subseteq S$ be
a $\la$-addable string (with $a_1$ the topmost).  For each $i \in [1,\ell]$
let $r_i$ (resp. $c_i$) be the row (resp. column) of $a_i$.  Then
\begin{enumerate}
\item If $i<j$ then there are at least as many cells of $S$ in row $r_j$ than
 there are in row $r_i$.
\item If $i>j$ then there are at least as many  $\mu$-addable cells in column $c_j$
than there are in column $c_i$.
\end{enumerate}
\end{lemma}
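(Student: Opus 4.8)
The plan is to reduce both statements to consecutive cells of $s$ and then reinterpret them as monotonicity of hook lengths in $\mu$. By transitivity it suffices to treat $j=i+1$, so fix $i$ and write $a=a_i=(r,c)$, $a'=a_{i+1}=(r',c')$; since $a'$ lies below and is contiguous with $a$, we have $r'<r$, $c'>c$, and $\diag(a')-\diag(a)\in\{k,k+1\}$.

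The key observation is a hook-length dictionary. Since $\mu/\la$ is a horizontal strip and $a\in S$ is $\la$-addable, $a$ is the topmost cell of its column in $\mu$: the cell just above $a$ is not in $\la$, so were it in $\mu$ it would be a second cell of $S$ in that column. Hence $a$ has empty leg in $\mu$, the cells of $S$ in row $r$ are exactly $a$ together with the cells of row $r$ to its right, and their number equals $h_\mu(a)$. Dually, using that $S$ has at most one cell in each column, the number of $\mu$-addable cells in column $c$ equals the hook in $\mu$ of the cell immediately to the left of the $\mu$-addable corner of column $c$. In this language, (1) asserts $h_\mu(a_i)\le h_\mu(a_{i+1})$ and (2) asserts the reverse inequality for the corresponding shifted cells.

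For (1): since $a'$ occupies column $c'$ and $S$ is a horizontal strip, no cell of $S$ in row $r$ reaches column $c'$, so $\mu_r<c'$; therefore every cell $\Top_d(\bdy\mu)$ with $c\le d\le\mu_r$ lies in row $r$, each of hook at most $h_\mu(a)<k$. Remark~\ref{R:rowshift} applied to this block of consecutive columns produces a common row on which all the cells $\Bot_d(\bdy\mu)$ lie, a $\mu$-addable corner on that row, and a corresponding step inequality for $\mu$. Combining this with the contiguity of $s$, the $\la$-addability of $a'$, and---crucially---the $k$-shape condition on $\mu$ itself, one concludes that row $r'$ of $S$ contains at least $h_\mu(a)$ cells, i.e.\ $h_\mu(a')\ge h_\mu(a)$. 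Part (2) is obtained by the transposed version of this analysis, tracking $\mu$-addable cells and columns in place of cells of $S$ and rows, and using Property~\ref{L:innerstrip} and Corollary~\ref{C:stripmodifiedrows} to control the horizontal strip $\bdy\la\setminus\bdy\mu$.

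The main obstacle is this last deduction: extracting from Remark~\ref{R:rowshift} the precise location of the relevant lower row and verifying that the run of $S$ does not shrink as one passes down the string. Both the contiguity constraint on $s$ and the $k$-shape hypothesis on $\mu$ (not just on $\la$) are indispensable here; in particular (2) cannot follow simply from Lemma~\ref{L:comparelengths} applied inside $\mu$, since that would yield the opposite inequality, so the strip hypothesis on $S$ is doing real work.
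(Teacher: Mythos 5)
There is a genuine gap, and also a structural misstep.

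Your hook-length dictionary for part (1) is correct: $a_i$ has empty leg in $\mu$, so $h_\mu(a_i)$ is exactly the number of cells of $S$ in row $r_i$. And the auxiliary observations hold (every $\Top_d(\bdy\mu)$ for $c\le d\le \mu_r$ sits in row $r$, and Remark~\ref{R:rowshift} applies to that run of columns). But the deduction you yourself flag as the ``main obstacle'' is precisely where the proof breaks off. Concretely, Remark~\ref{R:rowshift} gives a row $R$ and the step inequality $\mu_{R-1}\ge\mu_R+n_r$, and one can check $R>r'$, so $\mu_{r'}\ge\mu_{R-1}\ge\mu_R+n_r$ and therefore $n_{r'}=\mu_{r'}-\la_{r'}\ge\mu_R+n_r-\la_{r'}$. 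To conclude $n_{r'}\ge n_r$ you would then need $\mu_R\ge\la_{r'}$, but the only bound you have is $\mu_R\ge\mu_r$, while the horizontal-strip hypothesis gives the \emph{opposite}-direction relation $\la_{r'}\ge\mu_r$. Nothing in the sketch closes this gap. So (1) is asserted, not proved. (The paper's own proof of (1) bypasses hook lengths entirely: it assumes $n_{r_i}>n_{r_{i+1}}$ and derives a contradiction directly from the horizontal-strip property of $\mu/\la$, without detouring through $\Bot_d(\bdy\mu)$ at all.)

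The claim that part (2) is ``the transposed version'' of part (1) is not right, and this matters. The transpose of (1) would be a statement about the number of cells of $S$ in a \emph{column}, which is at most one because $S$ is a horizontal strip -- a trivial fact, not (2). The strip hypothesis is not transpose-symmetric, and the paper's proofs of (1) and (2) rely on genuinely different parts of it: (1) on $\mu/\la$ being a horizontal strip, (2) on $\rs(\mu)/\rs(\la)$ being a horizontal strip together with $\cs(\mu)$ being a partition. The paper's argument for (2) identifies $\Left_{r_{i+1}}(\bdy\la)$ with the cell $(r_{i+1},c_i)$ via contiguity, locates $b=\Bot_{c_i^-}(\bdy\mu)$ in a row $R\ge r_{i+1}+p$ by a $\rs$-horizontal-strip comparison, and then converts $\cs(\mu)_{c_i^-}\ge\cs(\mu)_{c_i}$ into the addable-cell count for column $c_i$; none of this is obtained by transposing the argument for (1). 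Also, your ``dictionary'' for (2), identifying the count with the hook of the cell immediately left of the addable corner of column $c$, is only well-defined when that column actually has a $\mu$-addable corner; it should be stated directly as $\mu^t_{c-1}-\mu^t_c$. So as written, both parts of the lemma remain unproved: (1) because the key step is left as an admitted obstacle, and (2) because it is not obtained by transposition and no independent argument is given.
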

\begin{proof} For (1), let $r_i$ and $r_{i+1}$ violate the first assertion,
and let $b$ be the the rightmost cell of $S$ in row $r_i$.  It is then easy to
see that in $\mu$ we have the contradiction that
the column of $b$ is larger than the column of the first
cell of $S$ in row $r_i$.

For (2), it is enough to consider the case
$c_i$ and $c_{i+1}$.  Since
$a_i$ and $a_{i+1}$ are contiguous, $\Left(\bdy \lambda)_{r_{i+1}}$ lies in
column $c_i$.  Suppose that column $c_{i+1}$ has $p \geq 1$ $\mu$-addable
cells, and let $b=\Bot(\partial \mu)_{c_i^-}$ lie in row $R$.
Then row $R$ is  at  least $p$ rows
above row $r_{i+1}$ since otherwise $\rs(\mu)_R$ would be larger than
$\rs(\lambda)_{r_{i+1}}$ contradicting the fact that
$\rs(\mu)/\rs(\lambda)$ is
 a horizontal strip.  Since $\cs(\mu)_{c_i^-} \geq \cs(\mu)_{c_i}$,
column $c_i$ needs to have at least $p$ $\mu$-addable cells.
\end{proof}

\subsection{Augmentation of strips} \label{subsectionaugmentation}

We first observe the following:
\begin{remark}\label{R:posmodaugmen}
The negatively modified columns (resp. rows) of an
augmentation move of the strip $S$ are positively modified columns (resp. rows)
of $S$.
\end{remark}

\begin{property}\label{L:augmentcolumn}
All augmentation column moves of a strip $S = \mu/\la$ have rank 1.
\end{property}
\begin{proof}
If it were not the case, the modified rows of $m$ (which all have the same length
by definition) would violate the condition that
$\rs(m * \mu)/\rs(\la)$ is a horizontal strip.
\end{proof}

Let $S = \mu/\la$ be a strip. A $\mu$-addable cell $a$ is called
\begin{enumerate}
\item
a \defit{lower augmentable corner} of $S$ if adding $a$ to $\mu$
removes a cell from $\bdy\mu$ in a modified column $c$ of $S$
in the same row as $a$.
\item
an \defit{upper augmentable corner} of $S$ if $a$ does not lie on top of
any cell in $S$ and adding $a$ to $\mu$ removes a cell from
$\bdy \mu$ in a modified row $r$ of $S$ and in the same column as $a$.
\end{enumerate} We say that $a$ is {\it associated} to $c$
(or $r$, respectively).
We call a modified column $c$ of $S$ {\it leading} if the cell $c
\cap S$ (the cell of $S$ in column $c$) is leftmost in its row in $S$.

\begin{lemma}\label{L:augmenmove}
Let $S=\mu/\lambda$ be a strip.  Then any  augmentation move $m$ contains an
augmentable corner of $S$.
\end{lemma}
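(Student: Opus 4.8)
The plan is to start from the augmentation move $m$ (a row move or column move from $\mu$ landing at a $k$-shape $\nu$ with $\nu/\la$ a strip) and locate inside $m$ a $\mu$-addable corner that witnesses one of the two augmentable-corner conditions. By Remark~\ref{R:posmodaugmen}, every negatively modified column (resp.\ row) of $m$ is a positively modified column (resp.\ row) of $S$, i.e.\ a modified column (resp.\ row) of $S$. This is the key link: it says that wherever $m$ ``removes'' a $\mu$-boundary cell, it does so in a column/row already recorded by $S$. So I would argue that the relevant removed cell of $m$, together with the $\mu$-addable cell in $m$ sharing its row (row case) or column (column case), realizes Definition items (1) or (2).

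First I would dispose of the column-move case. By Property~\ref{L:augmentcolumn}, an augmentation column move $m$ of $S$ has rank $1$, so $m$ is a single column-type (or cocover-type) string $s=\{a_1,\dots,a_\ell\}$. Its top cell $a_1=r_{s,u}$-row cell is $\mu$-addable; and since $m$ is nonempty and a column move, it negatively modifies exactly the row $r_{s,d}=\row(\Bot_{\col(a_\ell)}(\bdy\mu))$ (via Remark~\ref{R:stringchange}), which by Remark~\ref{R:posmodaugmen} is a modified row $r$ of $S$, and the removed cell $\Bot_{\col(a_\ell)}(\bdy\mu)$ lies in the column of $a_\ell$. Thus adding $a_\ell$ to $\mu$ removes a cell in the modified row $r$ of $S$ in the same column as $a_\ell$; provided $a_\ell$ does not lie on top of a cell of $S$, this exhibits $a_\ell$ as an upper augmentable corner associated to $r$. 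The ``not on top of a cell of $S$'' condition should follow because $a_\ell=\Bot$-adjacent cell sits at the bottom of a column-type string whose removed cell is in $\bdy\mu\setminus\bdy\nu$; if the cell below $a_\ell$ were in $S=\mu/\la$, then that column of $S$ would already be full in a way incompatible with $\cs(\mu)/\cs(\la)$ being a vertical strip — I would spell this out using Property~\ref{L:innerstrip} (that $\bdy\la\setminus\bdy\mu$ is a horizontal strip) together with the strip structure.

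For the row-move case, $m=s_1\cup\dots\cup s_r$ is a row move from $\mu$. By Property~\ref{P:movestrip}(1), $m$ is a horizontal strip, and it negatively modifies the consecutive columns $c_{s_1,u},\dots,c_{s_r,u}$, each of which is a modified column of $S$ by Remark~\ref{R:posmodaugmen}. Consider the leftmost string $s_1=\{a_1,\dots,a_\ell\}$: its bottom cell $a_\ell$ is $\mu$-addable (being the leftmost cell of the horizontal strip $m$, hence indent $0$, by Lemma~\ref{L:addable}), and $a_\ell$ lies in the column $\col(a_\ell)=c_{s_1,d}$, one column to the right of the negatively modified column $c_{s_1,u}=\col(\Left_{\row(a_1)}(\bdy\mu))$; moreover the removed cell $b_0=\Left_{\row(a_1)}(\bdy\mu)$ of $s_1$ lies in the \emph{row} of $a_1$, not of $a_\ell$. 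To get the definition of \emph{lower} augmentable corner I instead want a removed cell in the same \emph{row} as the addable cell. So the right choice is different: I would take the \emph{bottom} removed cell structure — for a row-type string $s_1$, the removed cells $b_1,\dots,b_{\ell-1}$ satisfy $\row(b_i)=\row(a_{i+1})$ and $\col(b_i)=\col(a_i)$, so $b_{\ell-1}$ lies in the row of $a_\ell$ and in column $\col(a_{\ell-1})$. Adding $a_\ell$ to $\mu$ removes $b_{\ell-1}$ from $\bdy\mu$, in the column $\col(a_{\ell-1})$, which is negatively modified by... no — here I must instead invoke that the cell $b_{\ell-1}$ being removed lies in a modified column of $S$: this follows from Corollary~\ref{C:stripmodifiedrows} applied to the strip $\nu/\la$? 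Not directly. The cleanest route: the removed cell of $m$ adjacent to $a_\ell$ sits in a column $c'$ with $\cs(\mu)_{c'}$ dropping when we add $m$; since $m$ goes from $\mu$ to $\nu$ and $\nu/\la$ is a strip, and $S=\mu/\la$ is a strip, one compares $\cs$-vectors to conclude $c'$ is a modified column of $S$. This $\cs$-bookkeeping — tracking which removed cell of $m$ sits in a modified column (rather than merely a negatively modified column of $m$) and hence realizes the lower-augmentable condition (1) for $a_\ell$, versus the upper-augmentable condition (2) — is the main obstacle. I expect the argument to split on whether $r_{s_r,u}$, the top row of the last string, is already ``used'' by $S$: in one subcase $a_\ell$ (bottom of $s_1$) is lower augmentable associated to a modified column, and in the complementary subcase the top cell $a_1$ of $s_r$ is upper augmentable associated to a modified row, using Remark~\ref{R:posmodaugmen} and a careful look at which of $b_0,b_\ell$ (the ``end'' removed cells in Property~\ref{P:stringtypes}) actually occur, i.e.\ on the row-/cover-/column-/cocover-type of the strings of $m$.

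Once the case analysis is set up, each case is a short verification using Remark~\ref{R:stringchange}, Property~\ref{P:stringtypes} (which precisely describes $\bdy\mu\setminus\bdy\nu$), Remark~\ref{R:posmodaugmen}, and Property~\ref{L:innerstrip}. I would organize the write-up as: (i) column case via Property~\ref{L:augmentcolumn}, giving an upper augmentable corner; (ii) row case, subcase ``$m$ has a negatively modified row'' (i.e.\ some string is row-type or cocover-type, so $b_\ell$ exists) giving an upper augmentable corner at the top of the appropriate string; (iii) row case, subcase ``$m$ has no negatively modified row'' (all strings cover-type), in which case one shows the bottom cell $a_\ell$ of $s_1$, together with the adjacent removed cell $b_{\ell-1}$ lying in its row and in a modified column of $S$, is a lower augmentable corner. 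The verification that the relevant column in (iii) is genuinely a \emph{modified} column of $S$ (not just of $\nu/\mu$) is where I'd be most careful, and I'd handle it by the $\cs$-vector comparison $\cs(\mu)\preceq$ both $\cs(\nu)$-side data and using that $S$ and $\nu/\la$ are strips over the same $\la$.
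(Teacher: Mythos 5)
The paper's proof of this lemma is a one-liner: for an augmentation \emph{row} move $m = s_1\cup\cdots\cup s_r$, the \emph{top} cell $a_1$ of the first string $s_1$ is a lower augmentable corner; for an augmentation \emph{column} move (rank~$1$ by Property~\ref{L:augmentcolumn}), the \emph{bottom} cell of its unique string is an upper augmentable corner. Each follows in one step from Remark~\ref{R:posmodaugmen}: adding $a_1$ to $\mu$ removes $b_0=\Left_{\row(a_1)}(\bdy\mu)$, which lies in the row of $a_1$ and in the negatively modified column $c_{s_1,u}$ of $m$, hence (by Remark~\ref{R:posmodaugmen}) in a modified column of $S$; and $a_1$ is $\mu$-addable because $\Ind_m(s_1)=0$. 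The column case is the exact transpose, with the ``not atop $S$'' clause handled because $\nu/\la$ is a horizontal strip, so no cell of $m\subset\nu/\la$ can sit above a cell of $S\subset\nu/\la$.

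Your column case is essentially this argument (slightly more roundabout on the ``not atop $S$'' point — the cleanest route is the horizontal-strip property of $\nu/\la$ directly, not Property~\ref{L:innerstrip}). Your row case, however, does not close. You seize on the \emph{bottom} cell $a_\ell$ of $s_1$ as the candidate lower augmentable corner, notice that the removed cell adjacent to it in its row lives in column $\col(a_{\ell-1})$ (which is not a negatively modified column of $m$ and hence not controlled by Remark~\ref{R:posmodaugmen}), and then reach for a $\cs$-bookkeeping argument and a three-way case split you never carry out. The missing idea is simply to take the \emph{top} cell $a_1$ of $s_1$ instead: adding $a_1$ removes $b_0$ in column $c_{s_1,u}$, and Remark~\ref{R:posmodaugmen} applies directly. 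Moreover, your proposed subcases~(ii) and~(iii), keyed to whether ``$m$ has a negatively modified row'', are off the mark: by Definition~\ref{D:defmove}, every string of a row move is row-type, so a row move never modifies any row, and (by Property~\ref{P:stringtypes}) a row-type string has $b_0$ but never $b_\ell$ — the opposite of what you wrote. There is no case split to do.
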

\begin{proof} If the strip $S$ admits an augmentation row (resp. column) move $m$ then the top
left (resp. bottom right) cell of $m$ is a lower (resp. upper) augmentable corner of $S$.
\end{proof}

\begin{definition}
A \defit{completion row move} is one in which all strings start in the same row.
It is maximal if the first string cannot be extended below.
A \defit{quasi-completion column
  move} is a column
 augmentation move from a strip $S$ that contains
no lower augmentable corner.
A \defit{completion column move} is a quasi-completion move from a strip $S$ that contains
no upper augmentable corner below its unique
(by Property~\ref{L:augmentcolumn}) string \footnote{The reason for distinguishing
between completion and quasi-completion column moves will
only become apparent in \S\ref{SS:Sectionmaxcomp}
(Lemma~\ref{L:maxcompleteequiv}).}.  A completion
column move or a quasi-completion column move is maximal if its string
cannot be extended above.
A completion move is a completion row/column move.
\end{definition}

The definition of completion move is transpose-asymmetric since strips are.
Our main result for augmentations of strips is the following.
Its proof occupies the remainder of the section.

\begin{prop} \label{P:maxstripunique} Let $S=\mu/\la$ be a strip.
\begin{enumerate}
\item $S$ has a unique maximal augmentation $S'\in\Str_\la$.
\item There is one equivalence class of paths
in $\Str_\la$ from $S$ to $S'$.
\item The unique equivalence class of paths in $\Str_\la$ from $S$ to $S'$
has a representative consisting entirely of maximal completion moves.
\end{enumerate}
\end{prop}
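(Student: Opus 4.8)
The plan is to prove the three parts of Proposition~\ref{P:maxstripunique} more or less simultaneously, by induction on the number of augmentable corners of $S$ (equivalently, on the distance of $S$ from a maximal strip in $\Str_\la$). The base case is when $S$ has no augmentable corner at all: by Lemma~\ref{L:augmenmove}, $S$ then admits no augmentation move, so $S=S'$ is already maximal and all three claims are trivial. For the inductive step, I would first establish the key local lemma: \emph{if $S$ admits any augmentation move, then it admits a maximal completion move}. This is where the definitions of completion row move and (quasi-)completion column move earn their keep. Given an augmentation move $m$, Lemma~\ref{L:augmenmove} produces an augmentable corner $b$ of $S$ in $m$; depending on whether $b$ is a lower or upper augmentable corner I want to build a maximal completion move through $b$. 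In the lower case, $b$ lies in a modified column $c$; take the row-type $\la$-addable string at the end of its row, extend it maximally below (Lemma~\ref{L:makestring}, Lemma~\ref{L:coverseq}-style arguments), and then use Property~\ref{C:rowmovecolsize}/Property~\ref{P:cond5} together with the strip conditions on $\rs$ and $\cs$ to see that the induced row move stays inside $\Str_\la$ — i.e.\ the result is still a strip over $\la$. In the upper case one does the transpose construction to produce a maximal (quasi-)completion column move, using Property~\ref{L:augmentcolumn} to know it has rank $1$.

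With the local lemma in hand, the inductive step proceeds as follows. Pick any maximal completion move $n$ from $S$ to some strip $S_1$, which exists by the local lemma since $S$ is not maximal. Since $n$ is an augmentation move in $\Str_\la$, $S_1$ has strictly fewer augmentable corners than $S$ — or, more carefully, is strictly higher in $\Str_\la$ — so by induction $S_1$ has a unique maximal augmentation $S'$, a unique equivalence class of paths in $\Str_\la$ from $S_1$ to $S'$, and that class has a representative made entirely of maximal completion moves. Prepending $n$ gives a path from $S$ to $S'$ consisting of maximal completion moves, so $S$ has \emph{a} maximal augmentation reachable by maximal completion moves and parts (1) and (3) will follow once uniqueness (part (2)) is checked. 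The real content is uniqueness: I must show that any other augmentation move $n'$ from $S$, leading to a strip $S_1'$, can be completed to a diamond
\begin{equation*}
\begin{diagram}
\node[2]{S} \arrow{sw,t}{n} \arrow{se,t}{n'} \\
\node{S_1} \arrow{se,b}{\tilde n'} \node[2]{S_1'} \arrow{sw,b}{\tilde n} \\
\node[2]{S_2}
\end{diagram}
\end{equation*}
inside $\Str_\la$ with $\tilde n\, n \equiv \tilde n'\, n'$; once this is known, a standard diamond-lemma (Newman's lemma / confluence) argument using the inductive hypothesis on $S_1$, $S_1'$, and $S_2$ forces all maximal augmentations to coincide and all augmentation paths to be equivalent.

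So the main obstacle is the confluence step: producing, for two augmentation moves $n,n'$ out of $S$, a common strip $S_2\in\Str_\la$ with augmentation moves completing the square, together with the charge-matching equality \eqref{E:charge}. Here I would reduce to the case where $n$ and $n'$ are \emph{maximal completion moves} (using the local lemma and the fact that any augmentation move is ``absorbed'' into a maximal completion move — a small separate claim), and then case-split on the types: row/row, column/column, and row/column. The row/column case reduces to the mixed elementary equivalences of Definition~\ref{D:rowcolcommute}: disjoint non-contiguous moves commute trivially (note augmentation row and column moves out of a strip cannot be contiguous, because contiguity would force a hook of size exactly $k$ in a position incompatible with the strip conditions — essentially Remark~\ref{R:norowrowcontig}-type reasoning), and intersecting moves fall under case (2) of that definition with the shift operators $\rightarrow$ and $\uparrow$; one checks the shifted moves land in $\Str_\la$ using Remark~\ref{R:posmodaugmen}. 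The row/row and column/column cases are handled by the interfering/perfectible analysis of Definition~\ref{D:rowrowcommute} and Lemma~\ref{L:perfection}, noting that augmentation column moves have rank $1$ so the column/column case is especially simple. Throughout, the charge condition \eqref{E:charge} is automatic: a row move has charge $0$, and in the column/column and row/column cases the shift operators preserve the rank and length of each move, hence its charge. The delicate point — and the place where the proof will be longest — is verifying that the ``completed'' moves $\tilde n,\tilde n'$ are again augmentation moves of the \emph{correct} strips (i.e.\ remain inside $\Str_\la$, not merely inside $\Ksh$), which is exactly the extra content of working in $\Str_\la$ rather than the ambient $k$-shape poset; this uses the strip conditions ``$\rs(\mu)/\rs(\la)$ horizontal, $\cs(\mu)/\cs(\la)$ vertical'' at every turn, via Property~\ref{L:checkstrip}, Property~\ref{L:innerstrip}, Corollary~\ref{C:coverseq}, and Lemma~\ref{L:stripcomparelengths}.
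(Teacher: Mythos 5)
Your overall strategy — induction on the number of remaining augmentations, reduction to maximal completion moves, and a Newman-lemma/confluence argument — coincides with the paper's. But the places where you assert that the work is small are precisely where the paper's work is concentrated, and a couple of the tool citations are off.

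The step you call a \emph{small separate claim}, namely that every augmentation move is absorbed into (or commutes with) a maximal completion move inside $\Str_\la$, is in fact the technical heart of the argument. It occupies Lemmas~\ref{L:completionstrip}, \ref{L:completionaugment}, \ref{L:columncompletionaugment}, and \ref{L:anyaugment}, each with its own case analysis. A subtlety worth noting: Lemma~\ref{L:anyaugment} only provides commutation of a non-completion augmentation row move $m$ with a \emph{specifically chosen} maximal completion move $M$ (built from the rightmost lower augmentable corner inside $m$), not with an arbitrary one. So you cannot fix your maximal completion move $n$ in advance and expect any other $n'$ to form a diamond with it directly. The paper's formulation of the reduction — show that any augmentation path to a maximal strip is \emph{equivalent to one beginning with a maximal completion move}, and only then invoke the commutation lemmas between two such moves — sidesteps this; your formulation would need the same reorganization to go through.

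Also, the commutation of two maximal completion moves is not Definition~\ref{D:rowrowcommute} plus Lemma~\ref{L:perfection}: those concern arbitrary interfering row moves and do not know about strips. The statements you need are Lemmas~\ref{L:maxrowcolcommute}, \ref{L:maxmoverowcommute}, and \ref{L:maxmovecolcommute}, whose proofs make essential use of the maximality of the completion moves (e.g., that a maximal completion row move cannot be extended below is what rules out the matched-above-but-not-below configuration). Your observation that the charge condition \eqref{E:charge} is automatic is correct, and is exactly why the elementary equivalences upgrade to diamond equivalences once the set-theoretic commutation in $\Str_\la$ is established.
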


Let $m = s_1 \cup s_2 \cup \cdots \cup s_r$ be an augmentation row
move from $S$.  Then $c_{s_i,u}$ is a modified column of $S$ for each
$i \in [1,r]$.  Since $m$ is a move and $m * \mu\in\Ksh$,
the columns $\{c_{s_i,u} \mid i \in [1,r]\}$ are part of a group
of modified columns of $S$ and must be the rightmost $r$ columns in
this group, by Property~\ref{C:rowmovecolsize}.

\begin{lemma}\label{L:extendstring}
Let $s$ be a $\la$-addable
row-type (resp. column-type)
string that cannot be extended below (resp. above).
Then $\cs(\la)_{c_{s,d}}<\cs(\la)_{c_{s,d}^-}$
(resp. $\rs(\la)_{r_{s,u}}<\rs(\la)_{r_{s,u}^-}$).
\end{lemma}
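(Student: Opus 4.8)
The plan is to prove the row-type statement; the column-type statement is its conjugate, since $\Ksh$ is stable under transpose. Write $s=\{a_1,\dots,a_\ell\}$ with $a_\ell$ the bottom cell, and put $c=c_{s,d}=\col(a_\ell)$, $r=\row(a_\ell)$, $\beta=\Bot_c(\bdy\la)$ (well defined because $s$ is row-type, so $h:=\cs(\la)_c\ge1$). Since $a_\ell$ is $\la$-addable one has $\la^t_c=r-1$, and $c\ge2$ because otherwise the cell $\Left_{\row(a_1)}(\bdy\la)$ required for row-type would not exist. Then $\beta=(r-h,c)$, and since $l_\la(\beta)=h-1$ a direct computation gives $h_\la(\beta)=(\la_{r-h}-c)+h$; the row-type condition gives in particular $h_\la(\beta)<k$, i.e.\ $\la_{r-h}\le c+k-h-1$. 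As $\la\in\Ksh$, $\cs(\la)$ is a partition, so $\cs(\la)_{c^-}\ge\cs(\la)_c$ automatically, and it only remains to exclude equality. So I would argue by contradiction, assuming $\cs(\la)_{c^-}=h$ and deducing that $s$ can after all be extended below.

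The crux is a squeeze on $\la_{r-h}$ coming from this assumed equality. Because $a_\ell$ is $\la$-addable, $(r,c^-)\in\la$, so $\la^t_{c^-}\ge r>h$; as $\bdy\la$ meets each column in a contiguous top segment, column $c^-$ of $\bdy\la$ occupies rows $[\la^t_{c^-}-h+1,\la^t_{c^-}]$, so the cell $\delta=(\la^t_{c^-}-h,c^-)$ immediately below it lies in $\la\setminus\bdy\la$, giving $h_\la(\delta)\ge k+1$. Expanding, $h_\la(\delta)=\la_{\la^t_{c^-}-h}+h-c+2$, hence $\la_{\la^t_{c^-}-h}\ge c+k-h-1$; and since $r-h\le\la^t_{c^-}-h$ one gets $\la_{r-h}\ge\la_{\la^t_{c^-}-h}\ge c+k-h-1$. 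Combined with the row-type bound $\la_{r-h}\le c+k-h-1$, this forces $\la_{r-h}=c+k-h-1$, whence $h_\la(\beta)=k-1$ exactly.

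To finish, Remark~\ref{R:rowshift} supplies a $\la$-addable corner $a'=(r-h,\la_{r-h}+1)$ in the row of $\beta$, and a one-line diagonal count shows $\diag(a')-\diag(a_\ell)=(\la_{r-h}-c)+h+1=h_\la(\beta)+1=k$, so $a'$ is contiguous with $a_\ell$ and lies strictly below it. Checking — exactly as in the proof of Lemma~\ref{L:makestring} — that $a'$ is addable to $\la\cup s$ (its row lies below every cell of $s$, so the row length is unchanged there), one concludes from Definition~\ref{D:string} that $s\cup\{a'\}$ is a $\la$-addable string extending $s$ below, a contradiction; therefore $\cs(\la)_{c^-}>\cs(\la)_c$. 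I expect the main difficulty to be spotting the squeeze: one must use the row-type hypothesis and the assumed column-height equality \emph{simultaneously} to trap $\la_{r-h}$ between two equal bounds, which is precisely what pins $h_\la(\beta)$ to the value $k-1$ that makes the corner furnished by Remark~\ref{R:rowshift} land at contiguity distance $k$. By contrast, verifying that $s\cup\{a'\}$ is a genuine string, and excluding the degenerate possibilities $\cs(\la)_{c_{s,d}}=0$ and $c_{s,d}=1$, is routine.
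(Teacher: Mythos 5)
Your proof is correct and follows essentially the same route as the paper's: under the contradictory hypothesis $\cs(\la)_{c_{s,d}}=\cs(\la)_{c_{s,d}^-}$, locate a cell of $\Int(\la)$ adjacent to $\Bot_{c_{s,d}}(\bdy\la)$ to force $h_\la(\Bot_{c_{s,d}}(\bdy\la))=k-1$, then invoke Remark~\ref{R:rowshift} to produce a $\la$-addable corner at contiguity distance $k$ below $a_\ell$, extending $s$ and yielding a contradiction. The only cosmetic difference is the choice of interior witness cell (you take the cell just below $\Bot_{c_{s,d}^-}(\bdy\la)$; the paper takes the cell immediately left of $\Bot_{c_{s,d}}(\bdy\la)$), which leads to the same squeeze on $\la_{r-h}$.
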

\begin{proof}
Let $s=\{a_1,a_2,\dotsc,a_\ell\}$ be a row-type string and
suppose $\cs(\la)_c = \cs(\la)_{c^-}$ where $c=c_{s,d}$.
We have $\la_{c^-}>\la_c$
since $a_{\ell}$ is $\la$-addable, and thus
 the cell immediately to the left of $b=bot_{c}(\bdy\la)$
 is not in $\bdy\la$.
This implies that
$h_\la(b)\ge k-1$
so that $h_{\la\cup s}(b)=k$ given that $s$ is a row-type string.
By Remark \ref{R:rowshift}, there is a $\la$-addable corner
at the end of the row of $b$ that is contiguous with $a_\ell$,
so that $s$ can be extended below, a contradiction. The column-type
case is similar.
\end{proof}

\begin{lemma}\label{L:extendcompletion}
Let $m = s_1 \cup s_2 \cup \cdots \cup s_r$
be a non-maximal completion row move
from $\la\in\Ksh$ and let $t_1 =
s_1 \cup \{a_{\ell+1},a_{\ell+2},\ldots,a_{\ell+\ell'}\}$ be the
maximal row-type string which extends $s_1$ below.  Then there
is a unique completion row move $n = t_1 \cup t_2 \cup \cdots \cup
t_r$ from $\la$.
\end{lemma}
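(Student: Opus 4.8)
The plan is to construct $n$ explicitly from $m$ by extending every string of $m$ downward in exactly the way $s_1$ is extended to $t_1$, and then to check the five conditions of Definition~\ref{D:defmove}. Write $m = s_1\cup\dots\cup s_r$ with $s_i = \{a_1^i,\dots,a_\ell^i\}$; since $m$ is a completion move the top cells $a_1^1,\dots,a_1^r$ all lie in one row, the $s_i$ are translates of one another, and "top cells in consecutive columns" means $s_i$ is the translate of $s_1$ by the vector $(0,i-1)$. Define $t_i = s_i\cup\{a_{\ell+1}^i,\dots,a_{\ell+\ell'}^i\}$, where $a_{\ell+j}^i$ is the translate by $(0,i-1)$ of the cell $a_{\ell+j}$ appearing in $t_1$. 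First I would show that each $t_i$ is a $(\la\cup t_1\cup\dots\cup t_{i-1})$-addable row-type string which is a translate of $t_1$: the contiguity relations among the cells of $t_i$ are the same as those among the cells of $t_1$ (translation preserves diagonal differences); $t_i$ is of row-type because the top of its diagram agrees with that of the row-type string $s_i$ while the cell of $\bdy\la$ below its bottom cell has hook strictly less than $k$ — for $i=1$ this is forced by the maximality of $t_1$ via Remark~\ref{R:rowshift}, and for $i>1$ one checks the rows and columns met by the appended cells are unaffected by $t_1,\dots,t_{i-1}$; and the appended cells are addable because the "room above" available at the appended rows is, by Lemma~\ref{L:comparelengths}(1) applied to $t_1$, at least that available at the rows of $s_1$, which suffices since $m$ already has rank $r$.

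Next I would verify Condition~\eqref{I:moveend}, i.e. $n*\la\in\Ksh$, using Property~\ref{P:cond5}. Since $\ell+\ell'\ge 2$ the move $n$ has length $>1$ and is therefore nondegenerate, so it suffices to check $\cs(\la)_{c_{t_r,u}}>\cs(\la)_{c_{t_r,u}^+}$ and $\cs(\la)_{c_{t_1,d}^-}>\cs(\la)_{c_{t_1,d}}$. Extending a string downward leaves its top untouched, so $c_{t_i,u}=c_{s_i,u}$ for every $i$; hence the first inequality is exactly the one furnished by Property~\ref{P:cond5} applied to the valid move $m$ (using Property~\ref{C:rowmovecolsize} to pass between the degenerate and nondegenerate forms if $m$ happens to be degenerate). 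The second inequality is precisely the conclusion of Lemma~\ref{L:extendstring} applied to the maximal row-type string $t_1$. Since $\cs(\la)$ is a partition, the two decremented and incremented blocks $[c_{s_1,u},c_{s_1,u}+r-1]$ and $[c_{t_1,d},c_{t_1,d}+r-1]$ — which are disjoint because $c_{s_1,u}+r-1<\col(a_1^1)\le\col(a_\ell^1)<\col(a_{\ell+\ell'}^1)=c_{t_1,d}$ — stay weakly decreasing after the uniform shifts, so these two strict inequalities are exactly what guarantees that $\cs(n*\la)$ is a partition. Thus $n$ is a row move from $\la$; it is a completion move since every $t_i$ begins in the row where the $s_i$ begin; and its rank equals $r$, because the run of equal $\cs(\la)$-entries to the right of $c_{t_1,u}=c_{s_1,u}$, which (by Remark~\ref{R:movespec}) determines the rank, is inherited unchanged from $m$.

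For uniqueness: any completion row move from $\la$ whose first string is the given maximal extension $t_1$ has its remaining strings and its rank determined by $t_1$ through Conditions (3)--(5) of Definition~\ref{D:defmove} (Remark~\ref{R:movespec}), hence coincides with the $n$ constructed above; and $t_1$ is by definition the unique maximal downward extension of $s_1$. The step I expect to be the main obstacle is the first one — verifying that the translate structure of $m$ genuinely survives the downward extension, that is, that each appended cell $a_{\ell+j}^i$ is addable to the partially built shape $\la\cup t_1\cup\dots\cup t_{i-1}$ and that the resulting $t_i$ is still of row-type; this requires tracking how the intermediate partitions and their $k$-boundaries evolve, and is where Lemma~\ref{L:comparelengths} (together with the contiguity estimates of Lemma~\ref{L:distancecontiguouscells} and the reasoning already used to identify the maximal elements of $\Ksh$) carries the argument. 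Once that is established, the rest reduces cleanly to Property~\ref{P:cond5}, Lemma~\ref{L:extendstring}, and Remark~\ref{R:movespec}.
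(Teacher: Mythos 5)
Your proposal is correct and follows essentially the same route as the paper's proof: both extend each $s_i$ downward by the corresponding translate of the cells appended to $s_1$, both invoke Lemma~\ref{L:comparelengths} to show there is room for these translates in the lower rows (since the bottom cells $a_\ell^{(1)},\dots,a_\ell^{(r)}$ of $m$ already lie in one row of width $\geq r$), both invoke Lemma~\ref{L:extendstring} to establish that $n*\la\in\Ksh$, and both reduce uniqueness to the fact that a move is determined by its first string. Your write-up is more explicit about unpacking Property~\ref{P:cond5} into the two inequalities on $\cs(\la)$ and about the degenerate case for $m$ (where the citation of Property~\ref{C:rowmovecolsize} is a little off — it is really just the defining relation $c_{s_r,u}^+=c_{s_1,d}$ plugged into the degenerate inequality — but that is a cosmetic point); the paper's proof leaves these checks implicit and is correspondingly terser.
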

\begin{proof}
By Proposition~\ref{P:uniquedecomp}, if $n$ exists, it is determined by $t_1$.
We show that there are strings $t_2, t_3, \ldots, t_r$ that can be
added to $\la \cup t_1$.  Let $s_i = \{a_1^{(i)},\ldots,a_\ell^{(i)}\}$
and $R=\row(a_{\ell+1})$.
Since the cells $a_\ell^{(1)},\dots,a_\ell^{(r)}$ lie in the same row, we have by
Lemma~\ref{L:comparelengths} that there is room for $a_i^{(1)},\dots,a_i^{(r)}$
in the row of $a_{i}=a_i^{(1)}$ for all $i= \ell+1,
\dots,\ell+\ell'$.  These cells obviously
all lie in columns of $\bdy\la$ of the same length.
Thus the result holds since
Lemma~\ref{L:extendstring} allows us to
conclude that $n*\la\in\Ksh$.
\end{proof}

\begin{lemma}\label{L:augmentcorner}
Suppose $a$ is a lower augmentable corner in row $R$ of the strip $S = \mu/\la$,
associated to the column $c$.
\begin{enumerate}
\item $\Bot_c(\bdy\mu)=\Bot_c(\bdy\la)=(R,c)$ and $h_\mu(R,c)=k$.
\item
$c$ is a leading column.
\item
Suppose that $c'>c$ is a leading column such that $\cs(\mu)_{c'}=\cs(\mu)_c$.
Then there is a lower augmentable corner $a'$ which is associated to $c'$.
\item
Let $r$ be the number of cells of $S$ in the row of $\Top_c(\mu)$. Then
$\la_{R^-}\ge \la_R+r$.
\end{enumerate}
\end{lemma}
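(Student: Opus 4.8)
The plan rests on one arithmetic observation: in the strip $S=\mu/\la$ the $k$-boundary cells of any column are its top $\cs(\cdot)$ cells, so for a modified column $c$ — necessarily \emph{positively} modified, since the strip condition forces $\cs(\la)\subseteq\cs(\mu)$ — one has $\mu^t_c=\la^t_c+1$ and $\cs(\mu)_c=\cs(\la)_c+1$, the extra cell of column $c$ of $\mu$ is $\Top_c(\mu)\in S$, and $\Bot_c(\bdy\mu)=\Bot_c(\bdy\la)$ (both at height $\mu^t_c-\cs(\mu)_c+1$). Now write $a=(R,\mu_R+1)$: adjoining $a$ raises the arm of $(R,c)$ by exactly one, so since $(R,c)\in\bdy\mu$ is the removed cell, $h_\mu(R,c)=k$, and the cell below it has hook $\ge k+1$, giving $\Bot_c(\bdy\mu)=(R,c)$. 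To get $(R,c)\in\bdy\la$ I would first rule out $\cs(\mu)_c=1$: in that case $(R,c)=\Top_c(\mu)\in S$ and $\mu_R=k+c-1$ while $\la_R\le c-1$, so row $R$ of $S$ would contain at least $k$ cells — impossible, since by Corollary~\ref{C:coverseq} and Remark~\ref{R:small} $S$ is a union of fewer than $k$ cover-type strings, each meeting a row in at most one cell. Hence $\cs(\la)_c=\cs(\mu)_c-1\ge1$, $\Bot_c(\bdy\la)$ is a genuine cell, and it equals $\Bot_c(\bdy\mu)=(R,c)$ by the observation.

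\textbf{Part (2).} The cell of $S$ in column $c$ is $\Top_c(\mu)$, in some row $\rho>R$ (as $\cs(\mu)_c\ge2$). If $c$ were not leading, there is a cell of $S$ strictly left of $\Top_c(\mu)$ in row $\rho$, so by contiguity of the rows of the horizontal strip $S$ every column $j$ with $c''\le j\le c$ carries a cell of $S$ in row $\rho$, forcing $\mu^t_j=\rho$ there. Since $\cs(\mu)$ is a partition, column $c''$ of $\bdy\mu$ descends at least as low as column $c$, so $(R,c'')\in\bdy\mu$; but the legs of $(R,c'')$ and $(R,c)$ agree while the arm of $(R,c'')$ is strictly larger, whence $h_\mu(R,c'')>k$, a contradiction. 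So $c$ is leading.

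\textbf{Part (3).} With $c<c'$ both leading and $\cs(\mu)_{c'}=\cs(\mu)_c=:s\ge2$, partition-ness of $\cs(\mu)$ and $\cs(\la)$ makes every column of $[c,c']$ a modified column of size $s$ lying in one group. Set $\rho=\mu^t_c$, $\rho'=\mu^t_{c'}$ (so $\rho>\rho'$, both columns being leading), $R''=\rho'-s+1$, so $\Bot_{c'}(\bdy\mu)=(R'',c')$; leading-ness of $c'$ also gives $\mu^t_{c'-1}>\rho'$, so $\Bot_{c'-1}(\bdy\mu)$ lies strictly above row $R''$ and hence $(R'',c'-1)\in\Int(\mu)$. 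As $\Int(\mu)$ is a partition and $(R'',c')\notin\Int(\mu)$, this pins down $\Int(\mu)_{R''}=c'-1$, i.e.\ $\rs(\mu)_{R''}=\mu_{R''}-(c'-1)$; then $(R'',c')\in\bdy\mu$ reads $\rs(\mu)_{R''}\le k-s+1$, while Part~(1) (with the parallel identity $\Int(\mu)_R=c-1$) gives $\rs(\mu)_R=k-s+1$, so the partition inequality $\rs(\mu)_{R''}\ge\rs(\mu)_R$ (valid since $R''<R$) is an equality and therefore $h_\mu(R'',c')=k$. Finally $(R''-1,c')\in\Int(\mu)$ forces $\mu_{R''-1}\ge\rs(\mu)_{R''}+c'=\mu_{R''}+1$, so $a':=(R'',\mu_{R''}+1)$ is $\mu$-addable, and adjoining it removes $(R'',c')\in\bdy\mu$; thus $a'$ is a lower augmentable corner associated to $c'$. \emph{The main obstacle is precisely this hook computation}: a direct comparison of hook lengths in columns $c$ and $c'$ does not close because $\mu_{R''}$ has no evident upper bound, and the resolution is to recast the target as the equality $\rs(\mu)_{R''}=\rs(\mu)_R$ and squeeze it between the partition inequality and the $k$-boundary constraint, with the hypothesis that $c'$ is leading entering exactly to evaluate $\Int(\mu)_{R''}$.

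\textbf{Part (4).} By Part~(2) the leftmost cell of $S$ in the row $\rho=\mu^t_c$ of $\Top_c(\mu)$ is $\Top_c(\mu)$ itself, so $\la_\rho=c-1$ and $r=\mu_\rho-c+1$. Since a second cell of $\mu/\la$ in column $\mu_\rho$ is impossible, $(\rho-1,\mu_\rho)\in\la$, so $\la_{\rho-1}\ge\mu_\rho$; and $\Top_j(\bdy\la)$ lies in row $\rho-1$ for $c\le j\le\la_{\rho-1}$. Applying Remark~\ref{R:rowshift} to $\la$ over these columns — whose bottoms $\Bot_j(\bdy\la)$ all lie in the row $R$, by Part~(1) — yields $\la_{R-1}\ge\la_R+(\la_{\rho-1}-c+1)\ge\la_R+(\mu_\rho-c+1)=\la_R+r$, which is the assertion.
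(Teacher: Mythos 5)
Your argument is correct and follows the same route as the paper's (extremely terse) proof: (1)--(2) are direct hook-length observations, (3) comes down to showing $h_\mu(\Bot_{c'}(\bdy\mu))=k$ and invoking the addable-corner consequence of Remark~\ref{R:rowshift}, and (4) is Remark~\ref{R:rowshift} applied to $\la$. The added value of your write-up is precisely where the paper hand-waves: in (3) the paper merely asserts $h_\mu(\Bot_{c'}(\bdy\mu))\ge h_\mu(\Bot_{c}(\bdy\mu))$, which is \emph{not} true for arbitrary columns of a $k$-shape; you correctly pin down that the leading hypothesis on $c'$ is what makes it work (it forces $\Int(\mu)_{R''}=c'-1$, from which $\rs(\mu)_{R''}=\rs(\mu)_R$ follows by squeezing against the partition inequality), and your ruling out $\cs(\mu)_c=1$ in (1) supplies a detail needed for $\Bot_c(\bdy\la)$ even to be defined.
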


\begin{proof}
(1) and (2) are straightforward using the fact that $c$ is a modified column of $S$.
For (3), let $b=\Bot_{c'}(\partial\mu)$. Then $h_\mu(b)\ge h_\mu(\Bot_{c}(\partial\mu))=k$ by (1).
Thus the addable corner at the end of the row of $b$ (assured to
exist by Remark~\ref{R:rowshift}) must be lower augmentable. (4) is implied by Remark~\ref{R:rowshift}.
\end{proof}

\begin{lemma}\label{L:completionstrip}
Suppose $m$ is a non-maximal completion row move from a strip $S=\mu/\la$.
Let $t_1 = s_1 \cup \{a_{\ell+1},a_{\ell+2},\ldots,a_{\ell+\ell'}\}$ be the maximal
row-type string which extends the first string $s_1=\{a_1,\dotsc,a_\ell\}$ of $m$ below. Then the completion row
move $n$ from $\mu$ of Lemma \ref{L:extendcompletion} is a maximal
completion row move from $S$.
\end{lemma}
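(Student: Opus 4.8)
The plan is to deduce this lemma from Lemma~\ref{L:extendcompletion}. Since $m$ is a completion row move from the strip $S=\mu/\la$, it is in particular a non-maximal completion row move from the $k$-shape $\mu$ whose first string $s_1$ can be extended below; so Lemma~\ref{L:extendcompletion}, applied with $\mu$ in the role of $\la$, yields the unique completion row move $n=t_1\cup\cdots\cup t_r$ from $\mu$, where $m=s_1\cup\cdots\cup s_r$. Write $\nu=m*\mu$ and $\nu'=n*\mu$; both are $k$-shapes, $\nu'\supseteq\nu$, and the extra cells $\nu'/\nu=\bigcup_i\{a^{(i)}_{\ell+1},\dots,a^{(i)}_{\ell+\ell'}\}$ are the common below-extensions of the strings of $m$, all lying in rows strictly below the (common) bottom row of $m$. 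Maximality of $n$ as a completion row move is immediate, since $t_1$ is by construction the longest row-type string extending $s_1$ below. Hence the whole content of the lemma is to show that $n$ is an augmentation move of $S$, i.e.\ that $\nu'/\la$ is a strip.

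The row-shape condition is easy: $n$ is a row move, so $\change_\rs(n)=0$ and $\rs(\nu')=\rs(\mu)=\rs(\nu)$, whence $\rs(\nu')/\rs(\la)=\rs(\nu)/\rs(\la)$ is a horizontal strip because $\nu/\la$ is a strip; moreover $|\bdy\nu'|=|\bdy\nu|$ (both are obtained from $\mu$ by a row move), so the sizes of $\rs(\nu')/\rs(\la)$ and of the forthcoming $\cs(\nu')/\cs(\la)$ will automatically equal the rank of $S$. The heart of the matter is the column shape. The key point is that $n$ and $m$, being completion row moves from the common $k$-shape $\mu$ with $t_i\supseteq s_i$ sharing their top cell and top row, negatively modify exactly the same block of $r$ consecutive columns starting at $c_{s_1,u}=c_{t_1,u}$: the leftmost boundary cell of the common top row, which pins down $c_{s_i,u}=c_{t_i,u}$, is untouched by the below-extensions. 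Consequently $\cs(\nu')-\cs(\nu)=\sum_{i=1}^{r}\bigl(e_{\col(a^{(i)}_{\ell+\ell'})}-e_{\col(a^{(i)}_{\ell})}\bigr)$, so passing from $m$ to $n$ merely slides the block of $r$ consecutive positively modified columns to the right, from the columns of the bottom cells $a^{(i)}_\ell$ of the $s_i$ to the columns of the bottom cells $a^{(i)}_{\ell+\ell'}$ of the $t_i$. Using Lemma~\ref{L:extendstring} for $t_1$ (which, being non-extendable below, satisfies $\cs(\mu)_{c_{t_1,d}}<\cs(\mu)_{c_{t_1,d}^-}$), Property~\ref{C:rowmovecolsize}, and the fact that the modified columns of a strip fall into groups of equal length, one checks that $\cs(\nu')_c\ge\cs(\la)_c$ for every $c$ and that $\cs(\nu')/\cs(\la)$ has at most one cell in each row, hence is a vertical strip.

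Finally, $\nu'/\la$ is a horizontal strip of cells: $n=\nu'/\mu$ is a horizontal strip by Property~\ref{P:movestrip} and $m\cup(\mu/\la)=\nu/\la$ is a horizontal strip, so the only possibility to exclude is a column carrying both a cell of $\mu/\la$ and one of the new cells $\nu'/\nu$. Since $m$ misses every column of $\mu/\la$ (because $\nu/\la$ is a horizontal strip) and the new cells lie strictly below the rows of $m$ and weakly to the right of the corresponding strings of $m$, the vertical-strip property of $\cs(\nu')/\cs(\la)$ just established — which says each column of $\nu'$ exceeds $\la$ only at the top of a string — excludes this, so $\nu'/\la$ is a strip. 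I expect the main obstacle to be the vertical-strip verification in the middle paragraph: ruling out two modified columns of equal length both being positively modified by $n$, which is exactly where Lemma~\ref{L:extendstring}, Property~\ref{C:rowmovecolsize}, and the group structure of a strip's modified columns come into play, and where one must check that lengthening the strings of $m$ does not destroy the vertical-strip property inherited from $S$.
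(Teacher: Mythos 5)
Your decomposition of the problem is right (maximality of $n$ as a completion move is indeed automatic from $t_1$ being maximal; the row-shape condition is immediate since $n$ is a row move; and the genuine content is the column-shape and horizontal-strip conditions for $(n*\mu)/\la$). But the middle paragraph, which you yourself flag as the obstacle, contains a real gap, and it is not one that Lemma~\ref{L:extendstring}, Property~\ref{C:rowmovecolsize}, and the group structure of modified columns can close.

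Here is the problem concretely. Your ``sliding'' picture correctly reduces the vertical-strip check to the claim that the new positively modified columns $c_{t_i,d}=\col\bigl(a^{(i)}_{\ell+\ell'}\bigr)$ are \emph{not} modified columns of $S$; if one of them were, then $\cs(\nu')_{c_{t_i,d}}=\cs(\mu)_{c_{t_i,d}}+1=\cs(\la)_{c_{t_i,d}}+2$ and the vertical strip fails. The lemmas you cite are all statements about $\mu$, $\cs(\mu)$, and the geometry of the move $n$: Lemma~\ref{L:extendstring} gives $\cs(\mu)_{c_{t_1,d}}<\cs(\mu)_{c_{t_1,d}^-}$, Property~\ref{C:rowmovecolsize} says the block $c_{t_1,d},\dots,c_{t_r,d}$ is a run of consecutive columns of equal $\cs(\mu)$-value, and the group structure organizes the modified columns of $S$. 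None of this involves $\la$ in a way that forbids the block $\{c_{t_i,d}\}$ from overlapping a group of $S$-modified columns: both are runs of consecutive columns of a fixed length in $\cs(\mu)$, and nothing you have invoked prevents them from sharing columns. What actually rules this out is the fact that each new cell $a^{(i)}_{\ell+j}$ $(j\ge 1)$ sits directly above a cell of $\la$ rather than of $S$ — equivalently $\mu^t_{c_{t_i,d}}=\la^t_{c_{t_i,d}}$, which by monotonicity of $\Int^k$ gives $\cs(\mu)_{c_{t_i,d}}\le\cs(\la)_{c_{t_i,d}}$ and hence equality. That fact is precisely what the paper establishes by the induction in its proof, using Lemma~\ref{L:augmentcorner}(4) (for $j=1$) and then Corollary~\ref{C:stripmodifiedrows} together with the horizontal-strip property of $\rs(\mu)/\rs(\la)$ (for the inductive step). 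Your proof never appeals to Lemma~\ref{L:augmentcorner} and never argues about where the extension cells sit, so the key content of the lemma is not actually proven.

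There is also a structural inversion worth noting: you try to establish the vertical-strip property of $\cs(\nu')/\cs(\la)$ first and then use it in the final paragraph to deduce that $(n*\mu)/\la$ is a horizontal strip. The paper runs this the other way around — it proves the horizontal-strip property of $(n*\mu)/\la$ by the induction above, and only then concludes the column-shape condition (exactly because ``$a^{(i)}_{\ell+j}$ lies above $\la$'' is the ingredient that makes the column-shape bound work). Your inverted order would be fine if the vertical-strip step were self-contained, but since that step is the one with the gap, the final paragraph inherits it. To repair the argument you essentially have to reinstate the paper's induction: show $a^{(i)}_{\ell+1}$ lies over $\la$ via the lower-augmentable-corner property of $a_{\ell+1}$ and Lemma~\ref{L:augmentcorner}(4), and then propagate down the string using Corollary~\ref{C:stripmodifiedrows} and the horizontal-strip property of $\rs(\mu)/\rs(\la)$.
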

\begin{proof}
We use the notation of Lemma \ref{L:extendcompletion}.
We first show that $n*S$ is a horizontal strip.   Since
$a_{\ell+1}=a_{\ell+1}^{(1)}$ is a lower augmentable corner of the strip $m*S$,
by Lemma~\ref{L:augmentcorner}(4), the cells
$\{a^{(i)}_{\ell+1} \mid i \in [1,r]\}$ lie above cells of $\la$.
Now suppose that the cells $\{a^{(i)}_{\ell+j} \mid i \in [1,r]\}$ do not lie
on any cell of $S$.  Let $R$ be the row of
$\{a^{(i)}_{\ell+j+1} \mid i \in [1,r]\}$, and let $c$ the column of
$a^{(r)}_{\ell+j}$.  Since $\Bot_c(\partial \mu)$ lies in row $R$ and
since there are no cells of $S$ in column $c$
by supposition, we have that $\Left_{R-1}(\partial \lambda)$
is strictly to the right of column $c$ by
Corollary~\ref{C:stripmodifiedrows}.  Therefore, there are at least
$r$ extra cells of $\partial \mu$ in row $R$ to the left of $\Left_{R-1}(\partial \lambda)$.
This implies that $\lambda_{R-1}-\mu_R
\geq r$ since $\rs(\mu)/\rs(\lambda)$ is a horizontal strip.  This proves that
$\{a^{(i)}_{\ell+j+1} \mid i \in [1,r]\}$ also do not lie on on any cell of
$S$ and we get by induction that $S$ is a horizontal strip.
Since $n$ is a row move, $\rs(n*\mu)/\rs(\la)=\rs(\mu)/\rs(\la)$
is a horizontal strip.   Finally, since $n$ removes
cells in the same columns of $\bdy \mu$ as $m$ does and $n*S$ is a horizontal
strip, $\cs(n*\mu)/\cs(\la)$ is
a vertical strip. Hence $n*S$ is a strip in $\Ksh$.
\end{proof}

\begin{lemma}\label{L:upperaugmentcore}  Let $\la\in\Core^k$
and $S = \mu/\la$ a strip
with no lower augmentable
corners.  Suppose $a$ is a $\mu$-addable corner such that adding
$a$ to the shape $\mu$ removes a cell from $\bdy \mu$ in a modified
row $r$ of $S$.  Then $a$ is an upper augmentable corner.
\end{lemma}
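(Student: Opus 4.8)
The plan is to verify, for the given $\mu$-addable corner $a$, the two conditions defining an upper augmentable corner of $S$: that $a$ does not lie on top of any cell of $S$, and that the cell of $\bdy\mu$ pushed out of the $k$-boundary upon adjoining $a$ lies in the same column as $a$ (the ``modified row'' half of the definition being exactly the hypothesis). The starting observation is that adjoining the corner $a$ to $\mu$ raises only the hooks of the cells weakly left of $a$ in its row and weakly below $a$ in its column, each by exactly $1$; since hooks strictly decrease to the left within a row of $\bdy\mu$ and strictly decrease downward within a column, the cell $b$ that leaves $\bdy\mu$ must satisfy $h_\mu(b)=k$, and the cell immediately to its left and the cell immediately below it (when they exist in $\mu$) then have hook $>k$, so $b=\Left_{\row(b)}(\bdy\mu)=\Bot_{\col(b)}(\bdy\mu)$. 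Moreover $b$ lies either in the row of $a$ or in the column of $a$; in the second case the ``same column'' condition is immediate.

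The crux is to exclude the first alternative, i.e.\ that the removed cell $b$ of a modified row satisfies $\row(b)=\row(a)$ with $\col(b)\neq\col(a)$. First, if $\col(b)$ were a modified column of $S$, then --- $b$ lying in the row of $a$ --- $a$ would be a \emph{lower} augmentable corner of $S$, contradicting the hypothesis; so we may assume $\col(b)$ unmodified, $\cs(\mu)_{\col(b)}=\cs(\la)_{\col(b)}$. If $b\in S$, then (cells of the horizontal strip $S$ being column tops in $\mu$) $b=\Top_{\col(b)}(\mu)$ and column $\col(b)$ of $\la$ ends in row $\row(a)-1$; from $h_\mu(b)=k$ we get $\mu_{\row(a)}=k+\col(b)-1\le\la_{\row(a)-1}$, so $\Top_{\col(b)}(\la)$ has hook $\ge k$, hence $>k$ since $\la\in\Core^k$, whence column $\col(b)$ of $\la$ lies entirely in $\Int^k(\la)$ and $\cs(\la)_{\col(b)}=0<\cs(\mu)_{\col(b)}$ --- contradicting that $\col(b)$ is unmodified. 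If instead $b\in\bdy\la$, the same ideas apply with more bookkeeping: using Corollary~\ref{C:stripmodifiedrows} and Property~\ref{L:checkstrip} to control the cells of $\bdy\la\setminus\bdy\mu$ and of $S$ in column $\col(b)$, and upgrading every inequality ``$h_\la\ge k$'' to ``$h_\la>k$'' via $\la\in\Core^k$, one first uses the hypothesis that $\row(a)$ is a modified row to force the \emph{arm} of $b$ (not merely its leg) to have grown from $\la$ to $\mu$, and then derives that $\col(b)$ must after all be a modified column --- the same contradiction. Hence the removed cell of a modified row is $\Bot_{\col(a)}(\bdy\mu)$, and the ``same column'' condition holds.

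Finally, to see that $a$ does not lie on top of a cell of $S$: if it did, the cell $a':=(\row(a)-1,\col(a))$ would lie in $S$, hence equal $\Top_{\col(a)}(\mu)$ and be the unique cell of $S$ in column $\col(a)$, with column $\col(a)$ of $\la$ ending in row $\row(a)-2$. An analysis of the hook lengths down column $\col(a)$ --- now with the removed cell $\Bot_{\col(a)}(\bdy\mu)$ of hook $k$ at the bottom, the cell $a'\in S$ at the top, the relations $\mu_{\row(a)}=\col(a)-1$ and $\mu_{\row(a)-1}\le\la_{\row(a)-2}$, and again $\la\in\Core^k$ --- yields a contradiction, completing the verification that $a$ is an upper augmentable corner of $S$.

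The step I expect to be the main obstacle is the second, ``$b\in\bdy\la$'', case of the middle paragraph: converting the two hypotheses that $\la$ is a $k$-core and that $S$ has no lower augmentable corner into the conclusion that the boundary cell removed by adjoining $a$ must sit in the column of $a$. This requires a careful case-by-case comparison of hook lengths of the relevant cells in $\la$ versus in $\mu$ --- depending on whether $b$ lies in $S$ or in $\bdy\la$, whether its arm or its leg grew, and on the pattern of modified rows and columns of $S$ near $b$ --- and is where essentially all of the combinatorial work resides.
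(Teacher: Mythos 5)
There is a genuine gap, and you flag part of it yourself. The crux of the lemma is to produce a specific cell of $\la$ of hook length exactly $k$, contradicting $\la\in\Core^k$. Your final paragraph (the case you actually need — showing $a$ does not lie atop $S$) stops at ``an analysis of the hook lengths down column $\col(a)$ $\ldots$ yields a contradiction'' without identifying that cell or carrying out the estimate. In the paper's proof the cell is $b'$, the cell directly below $b=\Bot_{\col(a)}(\bdy\mu)$: one shows $b'\in\bdy\la\setminus\bdy\mu$, that $b'=\Left_{\row(b')}(\bdy\la)$, and then the two-sided squeeze
$k \ge h_\la(b') = \rs(\la)_{\row(b')}+\cs(\la)_{\col(a)}-1 \ge \rs(\mu)_{\row(b)}+\cs(\mu)_{\col(a)}-1 = h_\mu(b) = k.$
Crucially, the middle inequality uses $\cs(\la)_{\col(a)}=\cs(\mu)_{\col(a)}$, and \emph{this is where the ``no lower augmentable corner'' hypothesis enters}: if $\col(a)$ were a modified column, then $\row(b)$ would contain a lower augmentable corner. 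Your final paragraph lists $\mu_{\row(a)}=\col(a)-1$, $\mu_{\row(a)-1}\le\la_{\row(a)-2}$ and $\la\in\Core^k$ as the ingredients but never invokes the no-lower-augmentable hypothesis; without it $\cs(\la)_{\col(a)}$ could be one less than $\cs(\mu)_{\col(a)}$ and the estimate only gives $h_\la(b')\ge k-1$, which is no contradiction. So the one case that must be proved is not actually closed by your sketch.

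Your ``middle paragraph'' (forcing the pushed-out cell into the column of $a$) is a deliberate over-delivery relative to the paper, which implicitly takes $b=\Bot_{\col(a)}(\bdy\mu)$ as the cell whose hook is $k$ and only verifies the ``does not lie atop $S$'' clause. That extra work might be defensible as a reading of the statement, but you explicitly leave the hard case ($b\in\bdy\la$) as ``more bookkeeping $\ldots$ where essentially all of the combinatorial work resides'', i.e.\ not done. Between the unverified middle paragraph and the unassembled contradiction in the final paragraph, the proposal does not constitute a proof; at minimum you would need to (i) observe that $\col(a)$ cannot be a modified column of $S$ because of the no-lower-augmentable hypothesis, (ii) deduce via Corollary~\ref{C:stripmodifiedrows}/Property~\ref{L:innerstrip} that $b'$ lies in $\bdy\la\setminus\bdy\mu$ and is leftmost in its row of $\bdy\la$, and (iii) run the hook-length squeeze above to get $h_\la(b')=k$.
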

\begin{proof}
We must show that $a$ does not lie on top of any cell in $S$.
Suppose otherwise. Let $b=\Bot_{\col(a)}(\bdy\mu)$.  We have
that $\col(a)$ is not a modified column of $S$, for otherwise
$\row(b)$ has a lower
augmentable corner for $S$, a contradiction.

Let $b'$ be the cell immediately below $b$.
Since $\col(a)$ is not a modified column but it
contains a cell in $S$, we must have $b' \in \bdy \la - \bdy
\mu$.  Furthermore, Property~\ref{L:innerstrip} implies that $b'=\Left(\bdy\la)_{\row(b')}$.
Since $\rs(\mu)/\rs(\la)$ is a horizontal strip we have
$\rs(\la)_{\row(b')} \geq \rs(\mu)_{\row(b)}$ and hence that
$h_\la(b') \geq h_\mu(b) =
k$.  But $b' \in \bdy \la$ implies $h_\la(b') = k$,
contradicting that $\la\in\Core^k$.
\end{proof}

\begin{example}
The $k$-core condition in Lemma \ref{L:upperaugmentcore} is
necessary. For $k=4$ consider
$$
\tableau[sby]{\bl a \\S\\b&&S\\b'&&&S\\}.
$$
\end{example}

\begin{lemma}\label{L:completionaugment}
Let $a$ be a lower augmentable corner of a strip $S=\mu/\la$ associated to column $c$.
Let $S$ contain $r$ cells in the row containing the cell $c \cap S$.
Suppose that $a$ is chosen rightmost amongst augmentable corners
associated to columns of the same size in $\bdy \mu$. Let $t_1 =
\{a = a_{1},a_{2},\ldots,a_{\ell'}\}$ be the maximal row type
string which extends $a$ below.  Then there is a maximal
completion row move $n$ from $S$ which has rank $r$ and
initial string $t_1$.
\end{lemma}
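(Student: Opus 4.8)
The plan is to write the move $n$ down explicitly and then check the two things it must satisfy: that it is a row move from $\mu$, and that $n*S=(n*\mu)/\la$ is a strip (so that $n$ is an augmentation move of $S$). Set $R:=\row(a)$. By Lemma~\ref{L:augmentcorner}(1), $(R,c)=\Bot_c(\bdy\mu)=\Bot_c(\bdy\la)$ and $h_\mu(R,c)=k$, and $c$ is leading by Lemma~\ref{L:augmentcorner}(2); since hook lengths strictly decrease along a row, $(R,c)=\Left_R(\bdy\mu)$, so the row-type $\mu$-addable string $t_1=\{a=a_1,\dotsc,a_{\ell'}\}$ has $c_{t_1,u}=c$. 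I would take $n=t_1\cup t_2\cup\dotsm\cup t_r$, where $t_i$ is the rightward translate of $t_1$ whose top cell sits immediately right of that of $t_{i-1}$, so that all the $t_i$ begin in the common row $R$ (this is the completion condition) with top cells in consecutive columns, and $c_{t_i,u}=c+i-1$. Granting that $n$ is a valid augmentation move, it is by construction a completion row move of rank $r$ with initial string $t_1$; it is maximal since $t_1$ cannot be extended below; and it is the unique such move by Proposition~\ref{P:uniquedecomp} and Remark~\ref{R:movespec}.

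To see that $n$ is a row move from $\mu$: since $c$ is leading, $c\cap S=\Top_c(\mu)$ is leftmost among the cells of $S$ in its row $R_0$, so the $r$ cells of $S$ in row $R_0$ occupy columns $c,c+1,\dotsc,c+r-1$, each being the topmost cell of its column in $\mu$; hence the boundary tops $\Top_{c+j}(\bdy\mu)$ for $0\le j\le r-1$ all lie in row $R_0$, and Remark~\ref{R:rowshift} gives $\Bot_{c+j}(\bdy\mu)=(R,c+j)$ for those $j$ together with $\mu_{R-1}\ge\mu_R+r$ (the $\la$-analogue being Lemma~\ref{L:augmentcorner}(4)). Propagating this room down $t_1$ with Lemma~\ref{L:comparelengths}(1), and copying the argument of Lemma~\ref{L:extendcompletion} (which uses Lemma~\ref{L:distancecontiguouscells}), shows that the $r$ translates can be adjoined one at a time, each row-type. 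It then remains to check $n*\mu\in\Ksh$ via Property~\ref{P:cond5}. One checks, using that $\cs(\la)$ is a partition together with the column-top data just found, that $\bdy\la\setminus\bdy\mu$ meets none of the columns $c,\dotsc,c+r-1$, so each of these has $\cs(\la)=R_0-R$, is positively modified by $S$, and lies in a single group of $S$; in particular $\cs(\mu)_{c+j}=R_0-R+1$ for $0\le j\le r-1$. The first clause of Property~\ref{P:cond5}, $\cs(\mu)_{c_{t_1,d}^-}>\cs(\mu)_{c_{t_1,d}}$, is exactly Lemma~\ref{L:extendstring} since $t_1$ is row-type and admits no extension below. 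The second clause, $\cs(\mu)_{c+r-1}>\cs(\mu)_{c+r}$, is the crux: unless $c+r$ is positively modified by $S$ and lies in the same $S$-group as $c$ (i.e.\ $\cs(\la)_{c+r}=R_0-R$), the bound $\cs(\mu)_{c+r}\le R_0-R<\cs(\mu)_{c+r-1}$ is immediate from $\cs(\la)$ being a partition and $\cs(\mu)/\cs(\la)$ a vertical strip; and in the remaining case one argues that from $\la^t_{c+r-1}=R_0-1$ one deduces the $S$-cell of column $c+r$ is leftmost in its row, so $c+r$ is leading, whence Lemma~\ref{L:augmentcorner}(3) produces a lower augmentable corner $a'$ associated to $c+r$, necessarily in a row strictly below $R$; since no two addable corners share a column, $\col(a')>\col(a)$, and as the size of column $c+r$ in $\bdy\mu$ equals $R_0-R+1=\cs(\mu)_c$, this contradicts the choice of $a$ as rightmost among augmentable corners associated to columns of equal size in $\bdy\mu$. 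The degenerate case $\ell'=1$ is handled the same way, again via Lemma~\ref{L:extendstring}. This establishes that $n$ is a row move from $\mu$.

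Finally, the proof that $n*S$ is a strip should mirror Lemma~\ref{L:completionstrip}: the $r$ cells of $n$ in row $R$ lie above cells of $\la$ by $\mu_{R-1}\ge\mu_R+r$, and descending the strings of $n$ by induction, using Corollary~\ref{C:stripmodifiedrows} and the fact that $\rs(\mu)/\rs(\la)$ is a horizontal strip, shows no cell of $n$ sits on top of a cell of $S$, so $n*S$ is a horizontal strip; then $\rs(n*\mu)/\rs(\la)=\rs(\mu)/\rs(\la)$ is a horizontal strip since $n$ is a row move, and $\cs(n*\mu)/\cs(\la)$ is a vertical strip since $n$ removes cells from $\bdy\mu$ only in the positively modified columns $c,\dotsc,c+r-1$ of $S$. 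The main obstacle is the $k$-shape verification for $n*\mu$ in the middle step --- pinning down the column-size structure of $\bdy\mu$ around the group $c,\dotsc,c+r-1$ and recognizing that the rightmost-corner hypothesis is exactly what rules out $\cs(\mu)_{c+r}=\cs(\mu)_{c+r-1}$; once that is in hand, everything else is routine bookkeeping or a direct appeal to Lemma~\ref{L:completionstrip} and Lemma~\ref{L:extendcompletion}.
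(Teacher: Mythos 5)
Your proposal takes essentially the same approach as the paper, and correctly identifies the crux: the paper's one-sentence assertion that ``the choice of $a$ guarantees that the negatively modified columns of $n$ have the same size and that the monotonicity of column sizes is preserved'' is an unfolded version of your verification of Property~\ref{P:cond5} via Lemmas~\ref{L:extendstring} and~\ref{L:augmentcorner}(3). The construction ($n = t_1 \cup \dots \cup t_r$ with $r$ translates propagated by Lemmas~\ref{L:comparelengths} and~\ref{L:extendcompletion}, then a strip verification as in Lemma~\ref{L:completionstrip}) also matches the paper, which phrases this as ``apply the construction of Lemma~\ref{L:completionstrip} with $m$ an empty move.'' Your rightmost-corner argument --- showing that $\cs(\mu)_{c+r}=\cs(\mu)_{c+r-1}$ would force $c+r$ to be a leading modified column of $S$ in the same group, hence produce via Lemma~\ref{L:augmentcorner}(3) a lower augmentable corner $a'$ strictly to the right of $a$ --- is a welcome elaboration that the paper leaves implicit.

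One point where your writeup is imprecise is the degenerate case. You write that ``the degenerate case $\ell'=1$ is handled the same way, again via Lemma~\ref{L:extendstring},'' but two things are slightly off. First, $\ell'=1$ does not in itself make $n$ degenerate; degeneracy requires additionally $\col(a)=c+r$ (equivalently $\rs(\mu)_R=r$). Second, and more substantively, Property~\ref{P:cond5} in the degenerate case demands $\cs(\mu)_{c+r-1}>\cs(\mu)_{c+r}+1$, which is strictly stronger than the $\cs(\mu)_{c+r-1}>\cs(\mu)_{c+r}$ that Lemma~\ref{L:extendstring} gives. To close this, one would need to rule out $\cs(\mu)_{c+r}=R_0-R$ in the degenerate subcase by an argument in the same spirit as your non-degenerate one (examining whether column $c+r$ can carry an $S$-cell or an augmentable corner contradicting the rightmost choice of $a$). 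That said, the paper's own proof is at least as terse on this point, so this is a shared omission rather than a defect unique to your approach.
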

\begin{proof}
We apply the construction in Lemma \ref{L:completionstrip} with $m$ an empty move.
$m$ is not maximal since there is a lower augmentable
corner $a$ in some row $R$, which can be extended to a row-type string by
Lemma \ref{L:makestring}. The move $m$ has rank $r$ since $r$ cells can be added
to row $R$ of $\la$ by Lemma~\ref{L:augmentcorner}(4). The choice of $a$ guarantees that
the negatively modified columns of $n$ have the same size and that the
monotonicity of column sizes is preserved. The
argument in Lemma \ref{L:completionstrip} completes the proof.
\end{proof}

\begin{lem}\label{L:anyaugment}
Let $S=\mu/\la$ be a strip with
$t > 1$ lower augmentable corners and $m$ an augmentation row move from $S$.
Then there is a maximal completion row move $M$ from $S$ such that $(m,
M)$ admits an elementary equivalence $\tm M \equiv \tM m$ in $\Str_\la$,
$\tm$ contains $t-1$ lower augmentable corners, and $\tM$ is a maximal completion row move.
\end{lem}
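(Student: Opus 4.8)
The plan is to choose a maximal completion row move $M$ from $S$ wisely and then verify that the pair $(m,M)$ falls into one of the cases of row elementary equivalence (Definition~\ref{D:rowrowcommute}), with the resulting $\tm,\tM$ having the stated properties.

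First I would set up $M$. By Lemma~\ref{L:augmenmove} the top-left cell $a_0$ of $m$ is a lower augmentable corner of $S$, associated (Lemma~\ref{L:augmentcorner}(2)) to a leading column $c_0$ which, as recorded earlier, equals $c_{s_1,u}$ --- the leftmost of the negatively modified columns of $m$, all of which lie in one group of modified columns of $S$. Since $t>1$ there is a further lower augmentable corner; using Lemma~\ref{L:augmentcorner}(3) to make sure the chosen corner is rightmost among those associated to columns of its size (as Lemma~\ref{L:completionaugment} requires) and using Lemma~\ref{L:completionstrip} to build the move, I would take a maximal completion row move $M$ from $S$ based at a lower augmentable corner $a$, chosen distinct from $a_0$ whenever possible and otherwise the completion move at $a_0$ itself.

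Then comes the case analysis following Definition~\ref{D:rowrowcommute}, according to the position of $M$ relative to $m$. If $m$ and $M$ are disjoint and $(m,M)$ is non-interfering, Definition~\ref{D:rowrowcommute}\eqref{it:rowequivnoninterf} gives $\tm=m$, $\tM=M$; the moves are not contiguous by Remark~\ref{R:norowrowcontig}, Proposition~\ref{P:rel} makes this a diamond equivalence, and checking via Property~\ref{P:cond5} that $m$ is a row move from $M\ast\mu$ and $M$ a row move from $m\ast\mu$ --- the relevant column-size inequalities being untouched since $M$ modifies a different column group --- places it in $\Str_\la$. Here $\tM=M$ is a maximal completion move by construction, and the source strip $M\ast S$ of $\tm$ carries $t-1$ lower augmentable corners: by Lemma~\ref{L:augmentcorner}(1) the move $M$ deletes from $\bdy\mu$ a cell of hook $k$ in the column associated to $a$, introduces no new cell of hook $k$ in its positively modified columns, and leaves untouched the rows and columns of the remaining lower augmentable corners of $S$. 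If $m$ and $M$ are disjoint but interfering, Lemma~\ref{L:interfere} identifies which of $M,m$ lies above and the adjacency of their modified columns, and Lemma~\ref{L:perfection} supplies a perfection; choosing the one that enlarges $M$ by adjoining translates of its string to the right --- and hence enlarges $m$ by extending its strings downwards --- Definition~\ref{D:rowrowcommute}\eqref{it:rowequivinterf} gives $\tM=M\cup x$, still a maximal completion move, and $\tm=m\cup x$, an augmentation move from a strip still carrying $t-1$ lower augmentable corners. If instead $m\cap M\ne\emptyset$, then by Lemma~\ref{L:moves_relative_position} together with Lemma~\ref{L:stringintrow} and Lemma~\ref{L:introw} the pair is matched above or below, or one continues above and below the other, so Definition~\ref{D:rowrowcommute}\eqref{it:rowequivmatch} or \eqref{it:rowequivabovebelow} applies, and $\tM$ is $M$ with the common cells removed or $M$ shifted up, which one checks remains a maximal completion row move, with $\tm$ again emanating from a strip carrying $t-1$ lower augmentable corners.

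The hardest part will be the intersecting case: verifying that deleting $m\cap M$ from $M$ (or shifting it up) leaves a \emph{maximal completion} row move --- its strings must remain in a single row, its first string must stay unextendable below, and the column sizes forced by Lemma~\ref{L:augmentcorner}(1),(2),(4) must survive --- while simultaneously keeping exact track that precisely one lower augmentable corner is consumed. This turns on identifying $m\cap M$ as a common initial or terminal segment of strings (Lemma~\ref{L:stringintrow}) and on the containment structure of Lemma~\ref{L:introw}. By contrast, the disjoint-interfering case is routine once Lemma~\ref{L:perfection} is available, and the disjoint non-interfering case is essentially bookkeeping with Property~\ref{P:cond5} and Lemma~\ref{L:augmentcorner}.
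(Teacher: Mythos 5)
Your approach diverges from the paper's at the very first step, and the divergence is the source of the gaps.

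The paper does \emph{not} pick $a$ to be a lower augmentable corner away from $m$; it picks $a$ to be the \emph{rightmost lower augmentable corner of $S$ lying inside $m$} and builds $M$ at that $a$ via Lemma~\ref{L:completionaugment}. This forces the strings of $m$ and $M$ through $a$ to both begin at $a$, whence by Lemma~\ref{L:moves_relative_position} the moves intersect and are matched above; maximality of $M$ then rules out ``matched below'' (that would give $m=M$, impossible since $M$ carries a single lower augmentable corner) and forces $M$ to continue below $m$. The pair $(m,M)$ thus lands exactly in Case~\eqref{it:rowequivmatch} of Definition~\ref{D:rowrowcommute}, and $\tm=m\setminus(m\cap M)$, $\tM=M\setminus(m\cap M)$ read off the conclusion directly: $\tm$ has lost precisely the one corner $a$, and $\tM$ is a lower portion of $M$, hence still a maximal completion row move. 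No case split is needed. Your instinct to take $a$ distinct from the top-left cell of $m$ when possible does the opposite of what the lemma needs: the whole purpose is to \emph{extract} a maximal completion move from $m$ via a matched-above intersection, and a disjoint $M$ cannot do that.

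Beyond the strategic misfire, your case analysis has gaps. In the disjoint non-interfering case you set $\tm=m$, $\tM=M$ and then assert that $M*S$ carries $t-1$ lower augmentable corners because ``$M$ deletes a cell of hook $k$ in the column associated to $a$'' and ``leaves untouched the rows and columns of the remaining lower augmentable corners.'' This is not established: $M$ is a maximal completion move of rank $r$ (the rank equal to the number of cells of $S$ in the relevant row, by Lemma~\ref{L:completionaugment}) and its $r$ negatively modified columns are all modified columns of $S$, so in general more than one hook-$k$ boundary cell is affected; you would need to show no new lower augmentable corners appear and that exactly one disappears, which you do not. More tellingly, with $\tm=m$ unchanged the conclusion ``$\tm$ contains $t-1$ lower augmentable corners'' simply cannot follow from disjointness -- this is precisely the signal that disjointness is the wrong regime. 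Finally, you correctly flag that the intersecting case -- showing the deletion or upward shift of $m\cap M$ still yields a maximal completion move and removes exactly one corner -- is ``the hardest part,'' but you leave it unresolved. In the paper's argument that single case is the \emph{only} case and the verification is immediate from the structure of Case~\eqref{it:rowequivmatch}; the effort you spread across three cases should instead be concentrated on forcing the intersecting, matched-above configuration from the start.
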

\begin{proof}
Let $a$ be the rightmost lower augmentable corner of $S$ inside $m$
(it exists by Lemma~\ref{L:augmenmove}).
Then define $M$ to be the move from $S$ that arises from $a$ as described
in Lemma~\ref{L:completionaugment}.
The strings of $m$ and $M$ containing $a$
both start at $a$. By Lemma \ref{L:moves_relative_position},
 $m$ and $M$ are matched above.

If $m$ and $M$ are matched below, it follows from the proof of
Proposition~\ref{P:rel} that $m=M$.  This is a contradiction since
$M$ contains only one augmentable corner of $S$.  Therefore $M$ continues
below $m$ and
the pair $(m,M)$ is a
Case \eqref{it:rowequivmatch} of an elementary row equivalence:
$\tilde m$ contains all the lower augmentable corners of
$m$ apart from $a$; $\tilde M$ contains a lower part of $M$.  The
claimed properties follow immediately.
\end{proof}

\begin{example} Column completions behave somewhat differently: it is not
always possible to choose the maximal extension of an upper augmentable
corner, e. g.,
$$
\tableau[sby]{S\\\\&&\bl a\\&&&S\\\bl&\bl&&&&S\\}
$$
with $k = 5$.
\end{example}

\begin{lemma} \label{L:everywhere}
Let $S=\mu/\lambda$ be a strip
and let $c=\Left_{R^-}(\bdy\lambda)$ and $b=\Left_R(\bdy\mu)$ for some row $R$.
Suppose that $\rs(\mu)_R= \rs(\lambda)_{R^-}$.
Then $c$ and $b$ lie in the same column if one of the following conditions
is satisfied.
\begin{enumerate}
\item $h_\mu(b)<k-1$.
\item There is a cell of $S$ in the column of $b$ and
$h_\mu(b)=k-1$.
\item There is no upper augmentable corner of $S$ in the column of $b$, there is no
lower augmentable corner of $S$ in row $R$,
row $R$ is modified by $S$ and $h_\mu(b)=k$.
\end{enumerate}
\end{lemma}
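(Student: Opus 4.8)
The plan is to convert the statement into one about the columns of the two leftmost boundary cells, and then argue by a hook-length comparison. Write $b_0=\col(b)$ and $c_0=\col(c)$. Since $\bdy\mu$ has exactly $\rs(\mu)_R=\mu_R-b_0+1$ cells in row $R$ and $\bdy\la$ has exactly $\rs(\la)_{R^-}=\la_{R^-}-c_0+1$ cells in row $R^-$, the hypothesis $\rs(\mu)_R=\rs(\la)_{R^-}$ reads $c_0-b_0=\la_{R^-}-\mu_R=:d$. Because $\mu/\la$ is a horizontal strip, $\mu_R\le\la_{R^-}$, so $d\ge 0$ and $c_0\ge b_0$; the conclusion $\col(b)=\col(c)$ is equivalent to $d=0$, so I would suppose $d\ge 1$ and derive a contradiction in each of the three cases.

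First I would record the basic estimate. The cell $b'=(R^-,b_0)$ lies in $\la$ (as $b_0\le\mu_R\le\la_{R^-}$) but, being strictly to the left of $c=\Left_{R^-}(\bdy\la)$, lies in $\Int(\la)$, so $h_\la(b')\ge k+1$. Reading rightward along row $R^-$ of $\la$ from column $b_0$ to column $c_0$, the hook lengths are strictly decreasing and $h_\la(R^-,c_0-1)\ge k+1$, hence $h_\la(b')\ge k+d$. Writing arms and legs out and using that the horizontal strip $\mu/\la$ adds at most one cell to column $b_0$, one also gets $h_\la(b')=h_\mu(b)+d+\epsilon$, where $\epsilon=1$ if $S=\mu/\la$ has no cell in column $b_0$ and $\epsilon=0$ otherwise. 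Combining, $h_\mu(b)\ge k-\epsilon\ge k-1$. This immediately kills case (1), where $h_\mu(b)<k-1$, and case (2): there $h_\mu(b)=k-1$ forces $\epsilon=1$, i.e. $S$ has no cell in the column of $b$, contradicting the hypothesis of case (2).

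For case (3) one has $h_\mu(b)=k$, and the cells of $\mu$ strictly below $b$ in column $b_0$ all have hook $>k$, so $b=\Bot_{b_0}(\bdy\mu)=\Left_R(\bdy\mu)$. Consequently adjoining the cell $(R,\mu_R+1)$ to $\mu$ — which is $\mu$-addable since $\mu_{R^-}\ge\la_{R^-}=\mu_R+d>\mu_R$ — deletes exactly $b$ from $\bdy\mu$, in column $b_0$ and row $R$; and adjoining $(\mu^t_{b_0}+1,b_0)$, when that cell is $\mu$-addable, also deletes exactly $b$ from $\bdy\mu$, in column $b_0$ and row $R$. If column $b_0$ is a modified column of $S$, then $(R,\mu_R+1)$ is a lower augmentable corner of $S$ in row $R$, contradicting the hypothesis. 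If column $b_0$ is not modified, I would use Property~\ref{L:checkstrip} and Corollary~\ref{C:stripmodifiedrows} together with hook-length bookkeeping to show that $S$ has no cell in column $b_0$, that $(\mu^t_{b_0}+1,b_0)$ is $\mu$-addable and does not lie on top of a cell of $S$, and hence — since row $R$ is modified by $S$ — that $(\mu^t_{b_0}+1,b_0)$ is an upper augmentable corner of $S$ in column $b_0$, again a contradiction.

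The main obstacle is precisely this last step of case (3): showing that when column $b_0$ is unmodified one can genuinely exhibit the upper augmentable corner. The delicate point is ruling out the configuration $\mu^t_{b_0-1}=\mu^t_{b_0}$ with $b_0>1$, in which $(\mu^t_{b_0}+1,b_0)$ is not $\mu$-addable; disposing of it requires combining $h_\mu(b)=k$, the hypothesis that row $R$ is modified by $S$, the standing assumption $d\ge1$ (equivalently $b'\notin\bdy\la$), and the $k$-shape conditions on $\mu$ and $\la$, for instance by tracking where the column heights of $\mu$ and $\la$ drop near column $b_0$.
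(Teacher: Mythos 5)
Your proof is correct and follows essentially the same route as the paper's: both assume $\col(c)>\col(b)$ for contradiction, produce a cell of $\Int(\la)$ in row $R^-$ (you use $b'=(R^-,\col(b))$, the paper uses the left neighbor of $c$) and compare its hook with $h_\mu(b)$ to dispose of cases (1) and (2); and both treat (3) by splitting on whether $\col(b)$ is a modified column of $S$ and exhibiting a lower or upper augmentable corner. The one point you single out as the main obstacle in case (3) is in fact a one-line consequence of $\mu\in\Ksh$ and needs none of the other hypotheses. With $b_0=\col(b)$, suppose $b_0>1$ and $\mu^t_{b_0-1}=\mu^t_{b_0}$, so that $(\mu^t_{b_0}+1,b_0)$ fails to be $\mu$-addable. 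Then the cell $(R,b_0-1)$ has arm one larger and leg equal to those of $b$ in $\mu$, hence hook length $k+1$; so $\Bot_{b_0-1}(\bdy\mu)$ lies strictly above row $R$. Since $h_\mu(b)=k$ gives $\Bot_{b_0}(\bdy\mu)=b=(R,b_0)$, we get
\[
\cs(\mu)_{b_0-1}\le \mu^t_{b_0-1}-(R+1)+1 = \mu^t_{b_0}-R < \mu^t_{b_0}-R+1 = \cs(\mu)_{b_0},
\]
contradicting that $\cs(\mu)$ is a partition. The remaining hypotheses of (3), together with your standing $d\ge1$, enter exactly where you invoke them: $d\ge1$ and unmodifiedness of $b_0$ force $\mu^t_{b_0}=\la^t_{b_0}$, so $(\mu^t_{b_0}+1,b_0)$ does not sit atop a cell of $S$; and the assumptions that row $R$ is modified by $S$ and that $\col(b)$ hosts no upper augmentable corner then give the final contradiction once addability is in hand.
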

\begin{proof}
$c$ cannot be to the left of $b$ by Property \ref{L:innerstrip}.
Suppose that $c$ is to the right of $b$. Let $c'$ be the cell left-adjacent to $c$.

Case (1). Since $h_\mu(b)<k-1$ and $\rs(\mu)_R= \rs(\lambda)_{R^-}$,
we have the contradiction that $h_\la(c')\le 2+h_\mu(b)\le k$.

Case (2). Since there is a cell
of $S$ in the column of $b$, any column of $\la$ to the right
of $b$ is shorter than the column of $c$ in $\mu$.
Given that $h_\mu(b)=k-1$ and $\rs(\mu)_R= \rs(\lambda)_{R^-}$,
we have the contradiction that $h_\la(c')\le 1+h_\mu(b)=k$.

Case (3). By hypothesis $h_\mu(b)=k$.
If there is a cell of $S$ in $\col(b)$ then $\col(b)$ is a modified column of $S$ and we have the
contradiction that there is a lower augmentable corner of $S$ in row $R$
($\la_{R^-} > \mu_{R}$ by hypothesis).
Otherwise we get the contradiction
that there is an upper augmentable corner of $S$ associated to row $R$ in $\col(b)$.
\end{proof}

\begin{lemma}\label{L:columncompletionaugment}
Let $S = \mu/\la$ be a strip without lower augmentable corners
and let $a$ be an upper augmentable corner of $S$. Let $s =
\{a_1,\ldots,a_\ell = a\}$ be the maximal extension of $a$ above,
subject to the condition that the $a_i$ do not lie on top of cells
of $S$.  Then $m = s$ is a quasi-completion column move from $S$.
\end{lemma}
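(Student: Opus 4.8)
The plan is to verify that the single string $m=s$ is a rank-$1$ column move from $\mu$ (in the sense of the transpose of Definition~\ref{D:defmove}) and that $(\mu\cup s)/\la$ is again a strip; granting this, $m=s$ is an augmentation column move from $S=\mu/\la$, and since $S$ has no lower augmentable corner it is a quasi-completion column move by definition. Since column augmentation moves have rank $1$ by Property~\ref{L:augmentcolumn}, conditions \eqref{I:movetranslates}--\eqref{I:movetopcells} are vacuous, and the work splits into three parts: (i) $s$ is a $\mu$-addable column-type string; (ii) $\mu\cup s\in\Ksh$; (iii) $(\mu\cup s)/\la$ is a strip.

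For (i) I would start at the bottom cell $a_\ell=a$: since $a$ is an upper augmentable corner of $S$, adding $a$ to $\mu$ deletes a cell $d\in\bdy\mu$ in column $\col(a)$, so $h_\mu(d)=k$, and a short hook comparison in column $\col(a)$ (arms weakly increase, legs strictly increase, going downward) shows no cell of $\bdy\mu$ lies strictly below $d$; hence $d=\Bot_{\col(a_\ell)}(\bdy\mu)$, so $s$ is column-type or cocover-type. That every $a_i$ is $\mu$-addable I would prove by building upward from $a_\ell$ using the transpose of Lemma~\ref{L:addablecornerbelow}: the cell $a_{i-1}$ contiguous above $a_i$ is $\mu$-addable when $|\diag(a_{i-1})-\diag(a_i)|=k+1$, and when this distance is $k$ the lemma gives that either $a_{i-1}$ or the cell next to it is $\mu$-addable, the ambiguous possibility being ruled out because an $a_{i-1}$ that already belongs to $\mu$ would force a later string cell of $s$ to sit on top of a cell of $S$, against the choice of $s$. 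Lastly, to rule out cocover-type, I would argue by maximality: were $s$ to negatively modify a row at its top as well, Lemma~\ref{L:makestring}(2) would produce a $\mu$-addable cell $x$ contiguous above $a_1$ at distance $k+1$, and Lemma~\ref{L:distancecontiguouscells} together with Property~\ref{L:innerstrip} would show that $x$ does not sit on top of a cell of $S$; this contradicts the maximality in the definition of $s$.

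For (ii)--(iii), $s$ being column-type gives $\cs(\mu\cup s)=\cs(\mu)$, already a partition, so (ii) amounts to checking that $\rs(\mu\cup s)$ — which differs from $\rs(\mu)$ by $+1$ in the row of $a_1$ and $-1$ in the modified row $r$ of $S$ that $a$ is associated to — is a partition; this is exactly Property~\ref{P:cond5} for the nondegenerate rank-$1$ column move $s$, and it follows from the strip structure of $S$ (distinct modified rows of $S$ have distinct lengths in $\rs(\mu)$, by Definition~\ref{D:strip}) together with the fact that $a_1$ is $\mu$-addable. Given $\mu\cup s\in\Ksh$, (iii) follows by a direct check: $(\mu\cup s)/\la$ is a horizontal strip since $\mu/\la$ is and the column-type string $s$ adds at most one cell per column sitting on cells of $\la$, and the requirements that $\rs(\mu\cup s)/\rs(\la)$ be a horizontal strip and $\cs(\mu\cup s)/\cs(\la)=\cs(\mu)/\cs(\la)$ be a vertical strip follow from length comparisons of the kind in Lemma~\ref{L:stripcomparelengths}.

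I expect part (i) to be the crux, specifically the claim that the maximal upward extension of $a$ constrained by ``not on top of $S$'' is \emph{exactly} a column-type string: neither stopping prematurely (every intermediate contiguous cell really is $\mu$-addable) nor running on into cocover-type. The example preceding the statement shows the ``not on top of $S$'' restriction is essential, and the delicate point is to exploit the strip structure of $S$ (through Lemmas~\ref{L:distancecontiguouscells} and \ref{L:stripcomparelengths} and Property~\ref{L:innerstrip}) to pin down the behavior of $\bdy\mu$ immediately above $a_1$.
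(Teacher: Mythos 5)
Your decomposition into (i)~column-type string, (ii)~$k$-shape, (iii)~strip is reasonable, but the weight you assign to each part is off, and there is a real gap in (ii)--(iii). You identify (i) as the crux, but the paper's proof spends essentially all its effort on the strip condition in (iii), specifically on showing that $\rs(m*\mu)/\rs(\la)$ is a horizontal strip at the row $R=\row(a_1)$. The danger is $\rs(\mu)_R=\rs(\la)_{R^-}$, in which case adding a cell to row $R$ would break the horizontal-strip condition with respect to $\rs(\la)$. Your appeal to \dit{distinct modified rows of $S$ have distinct lengths in $\rs(\mu)$} does not exclude this: the row $R$ of the top cell $a_1$ need not be a modified row of $S$ at all, so that distinctness statement says nothing about $\rs(\mu)_R$ versus $\rs(\la)_{R^-}$. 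The paper handles this by a case analysis on $h=h_\mu(\Left_R(\bdy\mu))$ ($h<k-1$, $h=k-1$, $h=k$, corresponding to whether $s$ was truncated by the \dit{not on top of $S$} constraint or not) and in each case invokes Lemma~\ref{L:everywhere} to show that $\Left_R(\bdy\mu)$ and $\Left_{R^-}(\bdy\la)$ sit in the same column, from which the problematic equality $\rs(\mu)_R=\rs(\la)_{R^-}$ forces $a_1$ to sit on top of a cell of $S$ (a contradiction) or, in the $h=k$ case, yields a hook-length contradiction directly. You never cite Lemma~\ref{L:everywhere}, and without it your part~(ii)--(iii) argument does not close.

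A secondary issue is in part~(i): your use of Lemma~\ref{L:makestring}(2) and Lemma~\ref{L:extendstring} to rule out cocover-type implicitly assumes $s$ is maximal in the unconstrained sense (no further $\mu$-addable contiguous cell above $a_1$). But when $s$ has been truncated by the \dit{not on top of $S$} restriction, there may well be a $\mu$-addable cell above $a_1$, so those lemmata do not directly apply. This is precisely why the paper separates the unconstrained case (where $s$ is column-type by Lemmata~\ref{L:makestring} and~\ref{L:extendstring}) from the constrained cases, and why the $h_\mu(\Left_R(\bdy\mu))=k$ subcase — which is exactly when $s$ would be cocover-type — is handled not by showing $s$ is column-type but by deriving a contradiction, showing that situation never arises. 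Your sketch does not distinguish these cases.
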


\begin{proof}
Let $\row(a_1)=R$, $b=(R,c)=\Left_R(\bdy\mu)$ and $d=\Left_{R^-}(\bdy\lambda)$.
Suppose first that $s$ is the maximal extension of $a$ without being
constrained by not lying on top of $S$.  By Lemmata
\ref{L:makestring} and \ref{L:extendstring}, $s$ is a column type
string and $m*\mu$ is a $k$-shape.
It suffices to show that
$\rs(m*\mu)/\rs(\la)$ is a horizontal strip. Since $S$ is a strip,
$\rs(\la)_{R^-}\le \rs(\mu)_{R^-}=\rs(m*\mu)_{R^-}$,
so it remains to show that $\rs(m*\mu)_R \leq \rs(\la)_{R^-}$.
The only way this would fail is if $\rs(\mu)_R = \rs(\la)_{R^-}$.
Since $s$ is maximal, we have $h_b(\mu)<k-1$.  Thus from
Lemma~\ref{L:everywhere}(1),  we have that $b$ and $d$ lie in the same
column.  Since $\rs(\mu)_R = \rs(\la)_{R^-}$ this gives the contradiction
that $a_1$ lies over a cell of $S$.

Now suppose that $s$ is blocked from extending further by the
constraint of not lying on top of $S$.  Consider first the case
that $h_\mu(b) = k-1$, and observe that, as
in the previous case, if $m*\mu$
fails to be a $k$-shape or $\rs(m*\mu)/\rs(\la)$ fails to be a
horizontal strip, then  $\rs(\mu)_R = \rs(\la)_{R^-}$
($S$ is a strip and thus
 $\rs(\mu)_{R-} \geq \rs(\lambda)_{R-} \geq \rs(\mu)_R$).
Lemma~\ref{L:everywhere}(2) then implies that $b$ and $d$ lie in the same
column and the result follows from the argument given in the previous case.
Finally, consider the case that $h_\mu(b) = k$.  Column $c$ is not
a modified column of $S$ since $a_1$ cannot be a lower augmentable
corner.  Thus the cell $b'$ below $b$ is in $\partial\la$
and so is the cell below $a_1$.
This gives the contradiction $h_{b'}(\la)>h_{b}(\mu)=k$.
%
%
\end{proof}

\subsection{Maximal strips for cores}

Recall that a strip $S$ is maximal if it does not admit any augmentation move.

\begin{prop}\label{P:maxstrip}
A strip is maximal if and only if it has no augmentable corners.
\end{prop}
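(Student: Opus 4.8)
The plan is to establish the two implications of the equivalence separately, using only the augmentation lemmata developed in this subsection.

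The direction ``no augmentable corners $\Rightarrow$ maximal'' is immediate: by Lemma~\ref{L:augmenmove}, every augmentation move of $S$ contains an augmentable corner of $S$, so if $S$ has no augmentable corner it admits no augmentation move, i.e.\ it is maximal.

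For the converse I would prove the contrapositive: if $S=\mu/\la$ has an augmentable corner, then it admits an augmentation move. I split into two cases according to the type of the corner. \emph{Case 1: $S$ has a lower augmentable corner.} Pick one such corner $a_0$ associated to a column $c_0$; by Lemma~\ref{L:augmentcorner}(3) (together with the fact that $\mu$ has only finitely many columns) there is a lower augmentable corner $a$ associated to the rightmost leading column $c$ with $\cs(\mu)_c=\cs(\mu)_{c_0}$. Then Lemma~\ref{L:completionaugment} produces a maximal completion row move $n$ from $S$, which is in particular an augmentation move, so $S$ is not maximal. \emph{Case 2: $S$ has no lower augmentable corner but has an upper augmentable corner $a$.} Then Lemma~\ref{L:columncompletionaugment} shows that the maximal extension $s$ of $a$ above (subject to the successive cells not lying on top of cells of $S$) is a quasi-completion column move from $S$; this is again an augmentation move, so $S$ is not maximal.

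I do not anticipate a genuine obstacle here, since the statement is essentially a repackaging of Lemmata~\ref{L:augmenmove}, \ref{L:completionaugment} and \ref{L:columncompletionaugment}. The only points requiring a little care are verifying that one may choose the rightmost lower augmentable corner within a group of equal-size columns (so that the hypothesis of Lemma~\ref{L:completionaugment} is met), and observing that the two cases above are exhaustive: every augmentable corner is either lower augmentable or, in the absence of lower augmentable corners, upper augmentable, so Lemma~\ref{L:columncompletionaugment} applies exactly when it is needed.
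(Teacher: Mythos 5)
Your proof is correct and follows the same route as the paper: one direction is Lemma~\ref{L:augmenmove}, and the converse is obtained by producing a (quasi‑)completion move via Lemma~\ref{L:completionaugment} in the lower‑augmentable case and Lemma~\ref{L:columncompletionaugment} otherwise. You are merely more explicit about the case split and about choosing the rightmost corner so the hypotheses of Lemma~\ref{L:completionaugment} hold, which the paper leaves implicit.
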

\begin{proof} By Lemma~\ref{L:augmenmove}, if
the strip $S$ admits an augmentation move then $S$ has an augmentable corner.
Conversely, if $S$ has an augmentable corner, then $S$ admits a maximal completion move by
Lemmata~\ref{L:completionaugment}
and \ref{L:columncompletionaugment}.
\end{proof}

\begin{lemma}
Let $S = \mu/\la$ be a maximal strip and let $c, c'$ be two
modified columns such that $\cs(\la)_c = \cs(\la)_{c'}$.
Then the cells $S \cap c$ and $S \cap c'$ are on the same row.
\end{lemma}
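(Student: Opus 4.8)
The plan is to use the augmentable‑corner characterization of maximality (Proposition~\ref{P:maxstrip}) and argue by contradiction, after first reducing to adjacent columns and translating the conclusion into a statement about the column heights of $\mu$.

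\emph{Reduction.} Suppose $c<c'$ are modified columns of $S=\mu/\la$ with $\cs(\la)_c=\cs(\la)_{c'}$. Since $\cs(\la)$ is a partition, $\cs(\la)_j=\cs(\la)_c$ for all $c\le j\le c'$, and each such $j$ must be a modified column: a column of $\mu/\la$ receives exactly one cell (as $\mu/\la$ is a horizontal strip), so its $k$‑boundary grows by at most one, while by Property~\ref{L:checkstrip} it does not shrink; hence if $j$ were unmodified we would have $\cs(\mu)_j=\cs(\la)_c$ whereas $\cs(\mu)_c=\cs(\mu)_{c'}=\cs(\la)_c+1$, contradicting that $\cs(\mu)$ is a partition. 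Thus all columns in $[c,c']$ lie in one group, each gaining exactly one boundary cell. The cell of $S$ in a modified column $j$ is the topmost cell of column $j$ of $\mu$ (its hook in $\mu$ is at most $k$ because $\cs(\mu)_j\ge1$), so it lies in row $\mu^t_j$; hence the assertion is equivalent to $\mu^t_c=\mu^t_{c'}$. As $\mu^t_c\ge\mu^t_{c+1}\ge\dots\ge\mu^t_{c'}$, it suffices to exclude $\mu^t_j>\mu^t_{j+1}$ for adjacent modified columns $j,j+1$ of a common group.

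\emph{Main step.} So assume $c,c+1$ are modified columns with $\cs(\la)_c=\cs(\la)_{c+1}$ (whence $\cs(\mu)_c=\cs(\mu)_{c+1}=v+1$, where $v:=\cs(\la)_c$) and $\mu^t_c>\mu^t_{c+1}$. Since columns $c$ and $c+1$ of $\bdy\mu$ have equal size $v+1$, their bottom cells $\Bot_c(\bdy\mu)$ and $\Bot_{c+1}(\bdy\mu)$ lie in rows $\mu^t_c-v$ and $\mu^t_{c+1}-v$, so $\Bot_c(\bdy\mu)$ sits strictly above $\Bot_{c+1}(\bdy\mu)$; equivalently $\Int(\mu)$ steps down between columns $c$ and $c+1$. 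I would then produce a $\mu$‑addable corner $a$ witnessing non‑maximality: using Remark~\ref{R:rowshift} and Lemma~\ref{L:addablecornerbelow} at this step‑down, $a$ can be chosen so that adding it to $\mu$ removes a boundary cell of $\bdy\mu$ either in the modified column $c+1$ and in the row of $a$ (a lower augmentable corner of $S$), or in a modified row of $S$ and in the column of $a$ with $a$ not lying over a cell of $S$ (an upper augmentable corner). The removed boundary cell has hook exactly $k$ in $\mu$: indeed $\cs(\mu)_{c+1}=\cs(\la)_{c+1}+1$ forces that no cell of column $c+1$ of $\la$ had hook $k$, so $\Bot_{c+1}(\bdy\mu)=\Bot_{c+1}(\bdy\la)$, and the step‑down pushes its hook up to $k$. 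This contradicts Proposition~\ref{P:maxstrip}. (Alternatively, apply Lemma~\ref{L:everywhere} to the row $R:=\mu^t_{c+1}$: for a maximal $S$ its hypotheses on augmentable corners are automatically satisfied, forcing $\Left_R(\bdy\mu)$ and $\Left_{R-1}(\bdy\la)$ into the same column, which is incompatible with $\mu^t_c>\mu^t_{c+1}$.)

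The main obstacle is this last step: locating the augmentable corner precisely and carrying out the hook‑length bookkeeping between $\la$ and $\mu$ in columns $c,c+1$ and the adjacent rows to verify that the cell removed from $\bdy\mu$ has hook exactly $k$ and sits in a modified row or column of $S$. The reduction to adjacent columns, the translation into a statement about $\mu^t$, and the invocation of Proposition~\ref{P:maxstrip} are routine manipulations of partitions and $k$‑boundaries.
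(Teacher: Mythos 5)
Your reduction to adjacent modified columns $c,c+1$ of a common group, and the translation of the conclusion into $\mu^t_c=\mu^t_{c+1}$, are correct and match what the paper does implicitly. The problem is the main step, which you explicitly flag as incomplete, and that gap is genuine: the assertion that ``the step-down pushes the hook of $\Bot_{c+1}(\bdy\mu)$ up to $k$'' is not self-evident and the route you sketch does not actually reach $k$. If you only compare $b'=\Bot_{c+1}(\bdy\mu)$ with a cell of $\Int(\mu)$ in column $c$, the hook-difference identity
$h_\mu(r,c)-h_\mu(r,c+1)=1+(\mu^t_c-\mu^t_{c+1})$ forces a loss of \emph{two} units per column (one for the arm, at least one for the leg since $\mu^t_c>\mu^t_{c+1}$), and the best you get from $h_\mu(r,c)\ge k+1$ is $h_\mu(b')\ge k-1$. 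That is one short of what you need, and $h_\mu(b')=k-1$ would \emph{not} produce a lower augmentable corner. The alternative route via Lemma~\ref{L:everywhere} has the same vagueness: you would still have to verify the standing hypothesis $\rs(\mu)_R=\rs(\la)_{R^-}$ and pin down which of its three subcases applies, and the final ``incompatible with $\mu^t_c>\mu^t_{c+1}$'' is not worked out.

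The paper's trick is to do the hook comparison in $\la$, not in $\mu$, and this is precisely where the missing unit comes from. Setting $b'=\Bot_{c+1}(\bdy\la)$ and $b$ to be the cell \emph{just below} $\Bot_c(\bdy\la)$ (so $b\in\Int(\la)$, hence $h_\la(b)\ge k+1$), one finds $h_\la(b')\ge h_\la(b)-2$: the offset of the two cells is $\row(b)-\row(b')=\la^t_c-\la^t_{c+1}-1$ exactly because $\cs(\la)_c=\cs(\la)_{c+1}$, and this offset cancels the leg difference so that the net loss in hook is only $2$. Then $h_\mu(b')\ge h_\la(b')+1\ge h_\la(b)-1\ge k$, and since $c+1$ is a modified column $b'=\Bot_{c+1}(\bdy\mu)\in\bdy\mu$, so $h_\mu(b')=k$ and there is a lower augmentable corner for $S$ at the end of the row of $b'$, contradicting Proposition~\ref{P:maxstrip}. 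In short: your outline is the right strategy and your reduction is fine, but the hook bookkeeping has to be done in $\la$ with the interior cell $b$ below $\Bot_c(\bdy\la)$; the $\Int(\mu)$ step-down alone is not sharp enough.
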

\begin{proof}
Suppose otherwise. We may assume that $c'=c+1$.
Let $b'=bot_{c'}(\bdy\la)$ and $b$ be the cell just below $bot_c(\bdy\la)$.
Then
$$h_\mu(b') \ge h_\la(b') + 1 \geq h_\la(b) - 1 \ge k\,.
$$
Since $c'$ is a modified column, $b'\in\bdy\mu$, that is, $h_\mu(b') = k$.
But then there must be a lower augmentable corner for $S$ at the end of
the row of $b'$, contradicting Proposition \ref{P:maxstrip}.

\end{proof}


\begin{proposition}\label{L:covercore}
Suppose $S = \mu/\la$ is a maximal cover and $\la\in\Core^k$.
Then $\mu\in\Core^k$.
\end{proposition}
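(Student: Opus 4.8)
The plan is to argue by contrapositive through Proposition~\ref{P:maxstrip}: since that result says a strip is maximal exactly when it has no augmentable corner, it suffices to assume $\mu\notin\Core^k$ and exhibit an augmentable corner of $S$. First I would set up the combinatorial picture. By Corollary~\ref{C:coverseq}, $S=\mu/\la$ is a single cover-type string $s=\{a_1,\dots,a_\ell\}$; by Property~\ref{P:stringtypes} the cells leaving the $k$-boundary are exactly $b_i=(\row(a_{i+1}),\col(a_i))$ for $1\le i<\ell$, and since $\la$ is a $k$-core (no hook of length $k$) a direct hook count gives $h_\la(b_i)=k-1$ and $h_\mu(b_i)=k+1$. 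The string cells themselves are $\mu$-removable, so $h_\mu(a_i)=1$. Reading off $\change_\cs(s)$ and $\change_\rs(s)$, the only modified column of $S$ is $c_0=\col(a_\ell)$ and the only modified row is $r_0=\row(a_1)$.

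Next I would locate a hook-$k$ cell. If $b\in\bdy\mu$ has $h_\mu(b)=k$, then $b$ is neither a string cell (hook $1$) nor a $b_i$ (hook $k+1$, and not even in $\bdy\mu$), so $b\in\la$ with $h_\la(b)\le k-1$, and $h_\mu(b)=h_\la(b)+\varepsilon_1+\varepsilon_2$, where $\varepsilon_1$ (resp.\ $\varepsilon_2$) is $1$ precisely when some cell of $s$ lies in row $\row(b)$ right of $b$ (resp.\ in column $\col(b)$ above $b$). A short comparison shows $\varepsilon_1=\varepsilon_2=1$ is impossible: if $a_p$ lies above $b$ and $a_q$ to its right then $b=(\row(a_q),\col(a_p))$, which is one of the cells $b_i$ when $q=p+1$ (excluded since $h_\mu(b)\ne k+1$) and otherwise places $b$ strictly left of $b_{q-1}$ in its row, forcing $h_\la(b)\ge h_\la(b_{q-1})=k-1$ against $h_\la(b)=k-2$. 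Hence exactly one of $\varepsilon_1,\varepsilon_2$ equals $1$ and $h_\la(b)=k-1$.

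Finally I would split into the two cases. In Case $\varepsilon_2=1$ (a string cell above $b$ in its column), choosing $b$ lowest in its $\bdy\mu$-column among hook-$k$ cells forces $b=\Bot_{\col(b)}(\bdy\mu)$, and then that string cell must be $a_\ell$: if it were $a_p$ with $p<\ell$ then $b_p=\Bot_{\col(a_p)}(\bdy\la)$ and $b$ would lie one row above it, whence $h_\la(b)\le h_\la(b_p)-1=k-2$, a contradiction. So $\col(b)=c_0$ and $b=\Bot_{c_0}(\bdy\la)=b_\ell$. The $k$-core condition applied to the cell directly below $b$ in $\la$ forces $\la_{\row(b)-1}>\la_{\row(b)}$, and since no string cell occupies rows $\row(b)$ or $\row(b)-1$ this gives $\mu_{\row(b)-1}>\mu_{\row(b)}$, so $\mu$ has an addable corner $a'$ at the end of row $\row(b)$; adding $a'$ pushes $b$ (which lies in the modified column $c_0$, in row $\row(a')$) out of $\bdy\mu$, so $a'$ is a lower augmentable corner of $S$ (in the sense of \S\S\ref{subsectionaugmentation})---contradicting Proposition~\ref{P:maxstrip}. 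Case $\varepsilon_1=1$ is the transpose-flavoured version: a parallel argument, suitably choosing $b$, forces $\row(b)=r_0$, and the addable corner at the top of column $\col(b)$ becomes an upper augmentable corner, again a contradiction. I expect the second case to be the main obstacle, since strips are not transpose-symmetric: one has to find the correct analogue of ``$b$ lowest in its column'' and, in addition, verify the extra requirement in the definition of an upper augmentable corner that it not lie on top of any cell of $S$; the hook bookkeeping needed to guarantee that this corner is $\mu$-addable is likewise somewhat more delicate.
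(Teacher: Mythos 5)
Your proof is correct and rests on the same mechanism as the paper's, namely that a hook-$k$ cell in $\mu$ could only sit in the modified row $r_0$ or modified column $c_0$, and in either case maximality of $S$ would be contradicted by an augmentable corner. The paper's version is more direct: it observes at once that it suffices to bound $h_\mu$ at $b_0=\Left_{r_0}(\bdy\la)$ and at $b_\ell=\Bot_{c_0}(\bdy\la)$, and then uses maximality to force $h_\la(b_0)<k-1$ and $h_\la(b_\ell)<k-1$, so that nothing in $r_0$ or $c_0$ reaches hook $k$ in $\mu$. Your contrapositive version carries out the same calculation, but spells out the reduction (ruling out the double increase $\varepsilon_1=\varepsilon_2=1$ and pinning the offending cell down to $b_0$ or $b_\ell$) that the paper leaves implicit in the sentence ``it suffices to check.''

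Your closing worry about Case $\varepsilon_1=1$ is unwarranted; the case goes through with no extra difficulty. First, the ``not on top of $S$'' clause in the definition of an upper augmentable corner is automatic: once you have forced $q=1$, the cell $b$ equals $b_0=\Left_{r_0}(\bdy\la)$ and in particular $\col(b)<\col(a_1)\le\col(a_i)$ for every $i$, so there is simply no cell of $S$ in that column. Second, the $\mu$-addability of the corner atop column $\col(b_0)$ needs no delicate hook bookkeeping: it is exactly what the transpose of Remark~\ref{R:rowshift} supplies (there is a $\mu$-addable corner in the column of $\Left_{r}(\bdy\mu)$ for every row $r$, and $\Left_{r_0}(\bdy\mu)=b_0$ since row $r_0$ only gains $a_1$). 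Equivalently, $b_0^{-}=(\row(b_0),\col(b_0)-1)$ lies outside $\bdy\mu$, so $h_\mu(b_0^-)>k=h_\mu(b_0)$, which forces the strict column inequality $\mu^{t}_{\col(b_0)-1}>\mu^{t}_{\col(b_0)}$ needed for the addable corner. This is precisely the same bookkeeping you already did in the $\varepsilon_2=1$ case, so the transpose asymmetry of strips never enters. One small slip: where you write ``forcing $h_\la(b)\ge h_\la(b_{q-1})=k-1$'' the inequality is actually strict since $b$ is strictly left of $b_{q-1}$, but this only strengthens your contradiction.
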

\begin{proof}
It suffices to check $h_\mu(x)$ for cells $x$ in the modified row
or column, such that $h_\mu(x)=h_\la(x)+1$.
For the modified row $r$, let $b=\Left_r(\bdy\la)$.
Then $h_\la(b)<k-1$, for otherwise $S$ is not maximal.
All cells to the left of $b$ have $h_\la > k$.
Similar reasoning applies to the modified column.
\end{proof}


\begin{proposition}\label{L:stripcore}
Suppose $S = \mu/\la$ is a maximal strip
and $\la\in\Core^k$.
Then $\mu\in\Core^k$.
\end{proposition}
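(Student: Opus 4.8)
The plan is to prove the statement by induction on the rank $\rho$ of $S=\mu/\la$, the base case $\rho=0$ being trivial and the rank-one case being exactly Proposition~\ref{L:covercore}. For $\rho\ge 1$ the idea is to peel off from $S$ a \emph{maximal} cover $C=\nu/\la$ (with $\la\subset\nu\subseteq\mu$ and $\mu/\nu$ a strip of rank $\rho-1$) in such a way that $\mu/\nu$ is again a maximal strip; then Proposition~\ref{L:covercore} gives $\nu\in\Core^k$, and the inductive hypothesis applied to $\mu/\nu$ gives $\mu\in\Core^k$.

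To extract the cover, I would use Corollary~\ref{C:coverseq}: fixing a modified column $c$ of $S$ that is leftmost among the modified columns of its size (so that $\cs(\la)_{c-1}>\cs(\la)_c$, using that $\cs(\mu)$ is a partition), one obtains a cover-type string $C$ with $c_{C,d}=c$, $\nu=\la\cup C\in\Ksh$ and $\mu/\nu$ a strip of rank $\rho-1$. The delicate point is to choose $c$ so that $C$ has no augmentable corners. This is where the structure of a maximal strip is used: by Proposition~\ref{P:maxstrip} $S$ has no augmentable corners, and by the lemma preceding Proposition~\ref{L:covercore} the cells of $S$ in modified columns of a common size lie on one row; feeding these facts into the hook-length analysis of Lemma~\ref{L:everywhere} and Lemma~\ref{L:augmentcorner} one checks that some admissible $c$ yields a maximal cover $C$ (for instance, one for which $\Bot_c(\bdy\la)$ has hook at most $k-2$, whose existence must itself be deduced from the maximality of $S$).

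It then remains to show $\mu/\nu$ is maximal. Its modified columns are those of $S$ with $c$ deleted and its modified rows are contained in those of $S$, so any lower augmentable corner of $\mu/\nu$ would already be a lower augmentable corner of $S$ --- impossible. The only subtlety is an upper augmentable corner $a$ of $\mu/\nu$: because $\mu/\nu$ has fewer cells than $S$, such an $a$ might lie on top of a cell of the extracted cover $C$ without lying on top of a cell of $S$. Since every cell of $C$ lies strictly above the row of $\Bot_c(\bdy\la)$, and using the maximality of $C$ established above, this case is ruled out, so $\mu/\nu$ has no augmentable corners and is maximal by Proposition~\ref{P:maxstrip}.

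The hard part is the middle step: that one can always extract a \emph{maximal} cover and that the leftover strip stays maximal. Unlike a naive cover decomposition of $S$, the choice cannot be arbitrary --- a badly chosen cover may introduce a hook-length equal to $k$ --- and confirming that a good choice exists requires the combinatorics of augmentations developed in \S\ref{subsectionaugmentation}.
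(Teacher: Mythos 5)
Your overall plan --- decompose the maximal strip $S$ into a sequence of covers (Corollary~\ref{C:coverseq}), apply Proposition~\ref{L:covercore} step by step, and conclude $\mu\in\Core^k$ --- is the same as the paper's. What you are missing is Lemma~\ref{L:upperaugmentcore}, which is the tool that makes the paper's short proof go through. The paper does not select a special column $c$ and does not need the residual strip $\mu/\nu$ to be maximal: it argues that every cover in the \ref{C:coverseq} decomposition is automatically maximal, because any augmentable corner of a constituent cover propagates to an augmentable corner of $S$. For upper augmentable corners this is exactly Lemma~\ref{L:upperaugmentcore}: when $\la\in\Core^k$ and $S$ has no lower augmentable corners, a $\mu$-addable cell whose addition removes a cell from a modified row of $S$ automatically does not lie on top of $S$, so it is genuinely an upper augmentable corner of $S$, contradicting maximality of $S$.

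Both of your intermediate claims have gaps that this lemma would close. (i) The existence of a leading modified column $c$ for which $\Bot_c(\bdy\la)$ has hook at most $k-2$ is asserted, not proved; it is also not the right criterion (the condition in the proof of Proposition~\ref{L:covercore} is a \emph{consequence} of maximality of an already-given maximal cover, not a sufficient hypothesis for it, and there is an analogous condition on $\Left_r(\bdy\la)$ you have not addressed). (ii) Your argument that $\mu/\nu$ has no upper augmentable corners is not correct as stated: an upper augmentable corner $a$ of $\mu/\nu$ could perfectly well lie on top of a cell of $C\subset S$ and so fail to be an upper augmentable corner of $S$; the remark that ``every cell of $C$ lies strictly above the row of $\Bot_c(\bdy\la)$'' does not rule this out. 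What rules it out is precisely Lemma~\ref{L:upperaugmentcore}, applied to $S$ with base $\la\in\Core^k$. Once you invoke it, both of your careful-choice arguments become unnecessary and your proof collapses to the paper's.
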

\begin{proof} By Proposition~\ref{L:covercore} it suffices to
show that $S$ can be expressed as a sequence of maximal covers.
Construct a sequence of covers for $S$ using Lemma \ref{L:coverseq}.
By Proposition \ref{P:maxstrip}, $S$ has no augmentable corner.  We
claim that this implies that the successive covers constructed have
no augmentable corners which would then imply their maximality.  Note
that a modified row or column of one of these covers is immediately
also one of $S$.

Let $C = \nu/\kappa$ be such a cover.  For lower augmentable
corners, this is clear since such augmentable corners are
augmentable corners of $S$. For an upper augmentable corner $a \notin S$
of $C$, we apply Lemma \ref{L:upperaugmentcore} which implies that
$a$ is an upper augmentable corner of $S$.
\end{proof}

\subsection{Equivalence of maximal augmentation paths}
Let $S=\mu/\la$ be a strip. Suppose $m$ and $M$ are distinct augmentation
moves from $S$. We say that the pair $(m,M)$ defines an augmentation equivalence
if there is an elementary equivalence of the form $\tM m \equiv \tm M$ such that
$\tm$ and $\tM$ are augmentation moves from the strips $M*S$ and $m*S$
respectively. Note that given the elementary equivalence,
$\tm$ and $\tM$ are augmentation moves
if and only if $\tM*m*S$ (or $\tm*M*S$) is a strip.

\begin{lem}\label{L:maxrowcolcommute}
Suppose $m$ and $M$ are respectively a maximal completion row move
and an augmentation column move
from a strip $S$. Then $(m,M)$ defines an augmentation equivalence. Moreover,
\begin{enumerate}
\item If $m\cap M =\emptyset$ then no cell of $m$ is contiguous to a cell of $M$.
\item If $m\cap M\ne \emptyset$ then $m$ continues above and below $M$.
\end{enumerate}
\end{lem}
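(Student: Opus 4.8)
The plan is to establish the three claims in the order (1)--(2) first, then deduce the augmentation equivalence. First I would recall that $m$ is a maximal completion row move, so by Definition it is a row move all of whose strings start in a common row and whose first string $s_1$ cannot be extended below; write $m=s_1\cup\dots\cup s_r$ with top cells of the $s_i$ in consecutive columns. By Property~\ref{L:augmentcolumn}, the augmentation column move $M$ has rank $1$, so $M$ is a single column-type string $t$. The key structural inputs are Lemma~\ref{L:moves\_relative\_position} (applied via Lemma~\ref{L:continuesexistscol} to a row move and a column move), which forces all intersecting pairs of strings to be in the same relative position, and Lemma~\ref{L:distancestrings}/Property~\ref{P:movestrip}, which control distances and the horizontal/vertical strip structure.

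For (1): suppose $m\cap M=\emptyset$ but a cell of $m$ is contiguous with a cell of $M$. Here I would argue exactly as in the relevant part of the proof of Lemma~\ref{L:mixeddiamond}: the only way a cell of the row move $m$ can be contiguous (at distance $k$ or $k+1$) with a cell of the column move $M$ without the two intersecting is ruled out by Remark~\ref{R:norowrowcontig}'s analogue for mixed moves --- more precisely, if the bottom cell of some string of $m$ is contiguous above the top cell of $M$, then the hook-length bookkeeping used in Lemma~\ref{L:mixeddiamond} (the cell of $\partial\mu$ catty-corner to the contiguity point would have hook $k$, contradicting that $M$ or the relevant string is column-type) gives a contradiction; the symmetric case where $m$ lies below $M$ is handled the same way, using that $m$ is a completion move so its first string cannot be extended below. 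Hence no cell of $m$ is contiguous with a cell of $M$, and consequently the string diagrams of $m$ and $M$ are each unaffected by the presence of the other.

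For (2): suppose $m\cap M\neq\emptyset$. By Property~\ref{L:notfinishsame}, strings of $m$ and $M$ cannot be matched above or below, so by Lemma~\ref{L:continuesexistscol}/Lemma~\ref{L:moves\_relative\_position} exactly one of ``$m$ continues above and below $M$'' or ``$M$ continues above and below $m$'' holds. I must rule out the latter. If $M$ continued below $m$, then the column-type string $t=M$ would have a $\la$-addable cell below (and contiguous with) the bottom cell of the string $s_i$ of $m$ that it meets; but $m$ is a \emph{completion} move whose first string cannot be extended below, and by Lemma~\ref{L:comparelengths} and Lemma~\ref{L:extendstring} this blocks such a cell from being $\la$-addable --- more carefully, the bottom cell of $s_i$ is a translate of the bottom cell of $s_1$ by Lemma~\ref{C:samesize}, and maximality of $m$ forces $h_\la$ of the cell below it to be $<k-1$, so it cannot lie at distance $k$ or $k+1$ in the column direction as a continuation of a column-type string. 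Thus $M$ cannot continue below $m$, and the impossibility of $M$ continuing above $m$ is the transpose-flavoured version using that $a$ does not lie on top of cells of $S$ when $M$ is a quasi-completion/completion column move (cf.\ Lemma~\ref{L:columncompletionaugment}); in the general augmentation column case, if $M$ continued above $m$, the top cell of $M$ would, after adjoining $m$, create a cell of hook $k$ just left of a cell of $m$ in a way that contradicts $m*\mu\in\Ksh$ together with Property~\ref{C:rowmovecolsize}. Hence $m$ continues above and below $M$.

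Finally, to conclude that $(m,M)$ defines an augmentation equivalence: in case $m\cap M=\emptyset$ (and, by (1), not contiguous) we are exactly in the situation of a mixed elementary equivalence of type (1) in Definition~\ref{D:rowcolcommute}, so $\tm=m$, $\tM=M$, and Proposition~\ref{P:relmix} gives the diamond equivalence; one checks $\tM*m*S=M*m*S$ is a strip because $\cs$ and $\rs$ changes are unaffected by the disjoint move, so $\tm,\tM$ are augmentation moves from $M*S$, $m*S$. In case $m\cap M\neq\emptyset$, by (2) we are in case (2)(a) of Definition~\ref{D:rowcolcommute} (``$m$ continues above and below $M$''), with $\tm=\rightarrow_M(m)$ and $\tM=\rightarrow_m(M)$, and Proposition~\ref{P:relmix} again supplies the diamond equivalence. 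To see $\tm$ and $\tM$ are \emph{augmentation} moves it suffices, by the remark preceding the lemma, to check $\tM*m*S$ is a strip: this follows since $\tM\cup m=\tm\cup M$ as sets of cells, $m*S$ and $M*S$ are strips, and the row/column shape increments along the common diamond vertex $\gamma$ agree on both sides, so $\rs$ of the top vertex to $\gamma$ is still a horizontal strip and $\cs$ a vertical strip. \textbf{The main obstacle} I anticipate is case (2), specifically ruling out ``$M$ continues above/below $m$'': this is where the maximality of the completion move and the asymmetry of strips genuinely enter, and it requires carefully combining Lemma~\ref{L:distancestrings}, Lemma~\ref{C:samesize}, Lemma~\ref{L:extendstring} and the definition of (quasi-)completion column move rather than a purely formal appeal to earlier results.
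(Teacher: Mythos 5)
Your argument for part (2) ``continues above'' has a genuine gap.  The paper's proof observes first (via Property~\ref{P:movestrip} and Lemma~\ref{L:addable}) that since all of $M$ consists of $\mu$-addable corners, $m\cap M$ lies in the \emph{first} string $s_1$ of $m$; the highest cell $x$ of $m\cap M$, if it were the top of $s_1$, is precisely the lower augmentable corner of $S$ from which the completion move $m$ was built, so $b=\Left_{\row(x)}(\bdy\mu)$ sits in a \emph{modified column of $S$}.  Since $h_\mu(b)=k$, any continuation of $M$ contiguous with and above $x$ must land in that column, hence on top of a cell of $S$, which destroys the horizontal-strip property of $M*\mu/\la$ and contradicts that $M$ is an augmentation move.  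Your proposed route — ``the top cell of $M$ would create a cell of hook $k$ just left of a cell of $m$, contradicting $m*\mu\in\Ksh$ together with Property~\ref{C:rowmovecolsize}'' — does not produce a contradiction: the relevant obstruction is not a $\Ksh$-violation at all but the strip property of $S$ interacting with the lower augmentable corner, and nothing in your sketch uses the fact that $m$ is a \emph{completion} move rather than an arbitrary row move.  Relatedly, you systematically write $\la$-addable where you must mean $\mu$-addable ($m$ and $M$ both emanate from $\mu=S*\la$), which makes the hook-length bookkeeping unreliable.

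Two further, smaller issues: the dichotomy ``exactly one of $m$ continues above and below $M$, or $M$ continues above and below $m$'' is not a direct consequence of Lemma~\ref{L:moves_relative_position}/\ref{L:continuesexistscol} once matching is excluded — a priori one could have $m$ continuing above with $M$ continuing below — so the paper proves the two directions independently.  And for part (1), the paper's argument is simply that the maximality of $m$ forbids contiguity; your detour through an analogue of Lemma~\ref{L:mixeddiamond} is a different (and heavier) route, and that lemma's contiguity argument runs under the standing assumption that a diamond equivalence exists, which is not available here.  The ``continues below'' half of (2) and the final augmentation-equivalence check are broadly on the right track, but the ``continues above'' half as written would not close.
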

\begin{proof} Let $S=\mu/\la$.
For (1) the non-contiguity follows from the maximality of $m$.
The other assertions follow easily in this case.

So let $m\cap M\ne\emptyset$. By Property~\ref{L:augmentcolumn},
$M$ consists of a single $\mu$-addable column-type string $t$.
By Property~\ref{P:movestrip} and Lemma \ref{L:addable}
the first string $s$ of $m$ must be the unique string that meets $M$.
We claim that $m$ continues above and below $M$.
Consider the highest cell $x\in m\cap M$. Suppose $x$ is the highest cell in $s$.
Let $b=\Left_{\row(x)}(\bdy\mu)$. By Definition~\ref{D:stringtypes} $h_\mu(b)=k$.
$x$ is a lower augmentable corner of $S$ so that $b$ lies in a modified column of $S$.
By Property~\ref{L:notfinishsame}, $M$ and $m$ cannot be matched
above and thus we get the contradiction that $M$ needs to
continue above $m$ with an element in the column of $b$ lying on top of $S$.
Therefore $m$ continues above $M$.
Now consider the lowest cell $y\in m\cap M$. Suppose $y$ is the lowest cell
in $s$. By Definition~\ref{D:stringtypes},
$h_\mu(\Bot_{\col(y)}(\bdy\mu))\le k-1$. Again by Property~\ref{L:notfinishsame},
$M$ and $m$ cannot be matched below and thus $y$ is not the lowest cell of
$t$, which gives $h_\mu(\Bot_{\col(y)}(\bdy\mu))= k-1$.
But then $s$ can be extended below, contradicting the maximality of $m$.
Therefore $m$ continues above and below $M$. It is straightforward to check that
in this case, the resulting elementary equivalence $\tM m \equiv \tm M$, when
applied to $S$, ends at a strip.
\end{proof}

\begin{lem}\label{L:maxmoverowcommute}
Suppose $m$ and $M$ are distinct
maximal completion row moves from the strip $S=\mu/\la$. Then $(m,M)$ defines an augmentation
equivalence. Moreover, $m\cap M=\emptyset$ and exactly one
of the following holds:
\begin{enumerate}
\item $m$ and $M$ do not interfere.
\item $(m,M)$ is interfering and lower-perfectible with added cells
$m_\per$ such that $m\cup m_\per$ is a maximal completion row move
from the strip $M*S$ and $M\cup m_\per$ is a maximal completion row
move from $m*S$.
\item The same as (2) with the roles of $m$ and $M$ interchanged.
\end{enumerate}
\end{lem}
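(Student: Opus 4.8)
The plan is to first establish the disjointness $m\cap M=\emptyset$ and pin down how the strings of $m$ and $M$ sit relative to one another, and then to read off the claimed dichotomy from the classification of row elementary equivalences, checking in each case that the resulting elementary equivalence restricts to an augmentation equivalence of strips.

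\emph{Disjointness and relative position.} Write $m=s_1\cup\dotsm\cup s_p$ and $M=t_1\cup\dotsm\cup t_q$, with top cells $a$ of $s_1$ and $a'$ of $t_1$; by Lemma~\ref{L:augmenmove} these are lower augmentable corners of $S$. A maximal completion row move from $S$ is determined by its initial lower augmentable corner: its first string is the maximal row-type string extending that corner below (forced by Remark~\ref{R:contig}), and the remaining strings are then forced as in Lemma~\ref{L:completionaugment} and Proposition~\ref{P:uniquedecomp}. Since a partition has at most one addable corner in each row, $a\ne a'$ and $\row(a)\ne\row(a')$; say $a$ lies above $a'$. As every string of a completion move has its top cell in the common top row, any string of $m$ tops out in $\row(a)$ and any string of $M$ in $\row(a')$, so if $s\subset m$ meets $t\subset M$ then the top of $s$ lies strictly above the top of $t$; by Lemma~\ref{L:stringintrow} and Lemma~\ref{L:moves_relative_position} (valid here by Lemma~\ref{L:continuesexistsrow}), $m$ continues above $M$. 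Looking at the bottom: if $m$ continued below $M$, a cell of $m$ below $t_1$ would extend $t_1$ to a longer $\mu$-addable row-type string, contradicting the maximality of $M$; symmetrically $M$ cannot continue below $m$. Thus $m$ and $M$ would have to be matched below, whence Lemma~\ref{L:introw}(1) gives $t_i\subseteq s_i$ with $s_i$ continuing above $t_i$; but then $a'$, the top of $t_1$, is an interior cell of the row-type string $s_1$, and comparing the hook-length-$k$ cells forced in the relevant rows of $\bdy\mu$ by the row-type conditions for $s_1$ and for $t_1$ (Property~\ref{P:stringtypes}) against the fact that $a'$ is a lower augmentable corner of the strip $S$ contradicts $\cs(\mu)/\cs(\la)$ being a vertical strip. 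Hence $m\cap M=\emptyset$, and by Remark~\ref{R:norowrowcontig} no string of $m$ is contiguous with a string of $M$.

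\emph{The dichotomy.} Being disjoint nonempty row moves, $(m,M)$ is either non-interfering or interfering (by definition of interfering), and this is precisely the split between case (1) and cases (2)/(3); whether (2) or (3) holds is determined by whether $a$ lies above or below $a'$. In the non-interfering case, Definition~\ref{D:rowrowcommute}\eqref{it:rowequivnoninterf} furnishes the elementary equivalence $\tilde M m\equiv\tilde m M$ with $\tilde m=m$, $\tilde M=M$, exactly as in case (1) of mixed equivalence; to see it is an augmentation equivalence one checks that $M*m*S$ is a strip. Indeed $\rs$ is unchanged by the row moves, and because $m$ and $M$ are non-interfering and non-contiguous their column modifications do not pile up --- by Remark~\ref{R:posmodaugmen} the columns negatively modified by $m$ (resp. $M$) are positively modified columns of $S$ --- so $\cs(M*m*\mu)/\cs(\la)$ remains the vertical strip it was for $m*S$ and for $M*S$ separately. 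Hence $m$ is still an augmentation move from $M*S$ and $M$ from $m*S$, which is case (1).

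\emph{The interfering case.} Suppose $a$ lies above $a'$, so the top cell of $m$ lies above that of $M$ and Lemma~\ref{L:interfere} applies: $c_{s_1,d}^-=c_{t_q,u}$ and $\cs(\la)_{c_{s_1,d}}=\cs(\la)_{c_{t_q,u}}+1$. I claim $(m,M)$ is lower-perfectible with $m_\per$ as described in Lemma~\ref{L:perfection}(1), namely extending each string of $m$ below by $\ell'$ cells --- equivalently adjoining $p$ right-translates of $t_1$ to the strings of $M$. Because $c_{s_1,d}^-=c_{t_q,u}$, the columns positively modified by $m$ abut on the left those negatively modified by $M$, so the bottom ends of the extended strings of $m$ land exactly in the columns occupied by the strings of $M$; the two descriptions of $m_\per$ therefore agree, and that $M\cup m_\per$ and $m\cup m_\per$ are genuine row moves follows from Remark~\ref{R:movespec}. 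The remaining point is that $\rho:=\la\cup m\cup M\cup m_\per$ is a $k$-shape; by Property~\ref{P:cond5} this reduces to a handful of column-size inequalities for $\cs(\la)$, which follow from Lemma~\ref{L:interfere}(3), the column comparisons of Lemma~\ref{L:stripcomparelengths} for $S$, and the fact that $m$ and $M$ were already moves from $\la$. Finally, $m\cup m_\per$ only extends the strings of $m$ downward, so it is again a completion row move, and it is maximal from the strip $M*S$ because the only change below $m$ in passing from $\la$ to $M*\la$ is the removal effected by $M$, which $m_\per$ exactly absorbs; after this, the maximality of $m$ from $\la$ together with Lemma~\ref{L:extendstring} leaves no room to extend further. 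The same reasoning shows $M\cup m_\per$ is a maximal completion row move from $m*S$, and that these are augmentation moves (i.e., end at strips) follows once the $k$-shape property of $\rho$ is in hand, as in the non-interfering case.

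\emph{Main obstacle.} The delicate part is the interfering case: showing that interference of two \emph{maximal} completion moves is \emph{always} lower-perfectible --- there is no obstructed interfering configuration --- and that the perfection $m_\per$ restores maximal completion moves on both sides. This requires the hook-length/column-size bookkeeping characteristic of this section: tracking, via Lemma~\ref{L:interfere}, Property~\ref{P:cond5} and Lemma~\ref{L:stripcomparelengths}, that the column abutting $c_{t_q,u}$ on the left and the column abutting $c_{s_1,d}$ on the right have exactly the room required, and that no upper augmentable corner intervenes to block the downward (resp.\ rightward) extension.
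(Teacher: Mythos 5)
Your overall plan --- disjointness first, then the non-interfering vs.\ interfering dichotomy, then perfectibility --- mirrors the paper's, and the disjointness argument is essentially the same in spirit: if $m$ and $M$ intersected, maximality would force (wlog) $m$ to continue above while being matched below with $M$, and then the cell of $m$ directly above the top of $t_1$ sits in the modified column of $S$ associated to that lower augmentable corner, so $m*S$ fails the vertical-strip condition. You phrase this final step as ``comparing hook-length-$k$ cells forced\dots against the fact that $a'$ is a lower augmentable corner,'' which is vaguer than the paper's one-liner (``$m$ must contain a cell in a modified column associated to $M$''), but the underlying idea is the same.

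In the interfering case there are two problems. First, a notational slip that propagates: the completion moves $m$ and $M$ are moves from $\mu$, not from $\la$, so interference concerns $\cs(\mu\cup m\cup M)$, Lemma~\ref{L:interfere}(3) reads $\cs(\mu)_{c_{s_1,d}}=\cs(\mu)_{c_{t_q,u}}+1$, and $\rho=\mu\cup m\cup M\cup m_\per$. Second, and more seriously, your claim that $\rho$ is a $k$-shape ``by Property~\ref{P:cond5}\dots a handful of column-size inequalities\dots which follow from Lemma~\ref{L:interfere}(3), Lemma~\ref{L:stripcomparelengths}, and the fact that $m$ and $M$ were already moves'' is precisely the point that needs an argument, and the tools you cite do not supply one. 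Before Property~\ref{P:cond5} can even be invoked for the move $M\cup m_\per$ from $m*\mu$ (or $m\cup m_\per$ from $M*\mu$), one must know the cells of $m_\per$ exist and are successively addable --- that there is actually room in the rows directly below $m$. The paper earns this with a genuinely geometric step: letting $c$ be the interference column and $r$, $r'$ the rows of $\Bot_{c^-}(\bdy\mu)$ and $\Bot_c(\bdy\mu)$, it shows $r=r'$ --- if $r>r'$, a hook-length estimate plus Remark~\ref{R:rowshift} produces a $\mu$-addable cell below and contiguous with the bottom of the last string of $m$, contradicting maximality of $m$. Only after pinning down $r=r'$ can one deduce $\nu_{r^-}-\nu_r\ge\rk(m)+\rk(M)$ for $\nu=M*\mu$, and then apply Lemma~\ref{L:extendcompletion} and Lemma~\ref{L:comparelengths} to construct $m_\per$. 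Your proof never establishes this row-alignment, and the column-size facts from Lemma~\ref{L:interfere}(3) do not substitute for it. Your ``main obstacle'' paragraph correctly locates where the delicacy lies, but the argument offered stops short of resolving it.
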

\begin{proof}
Suppose that $m \cap M \ne \emptyset$ and $m \neq M$. Then by
maximality we may without loss of generality assume that $m$
continues above $M$ but $(m,M)$ is matched below.  But $m$ must
contain a cell in a modified column associated to $M$,
contradicting the assumption that $S\cup m$ is a strip.

Therefore $m \cap M = \emptyset$. We may assume that $m$ and $M$ interfere,
and that $m$ is above $M$. Let $c$ be the column such that
$\cs(\mu\cup m \cup M)_c = \cs(\mu\cup m\cup M)_{c^-}+1$.
Then $m$ adds the cell atop column $c$ of $\bdy\mu$ and $M$ removes the cell $(r,c^-)=\Bot_{c^-}(\bdy\mu)$.
Let $x=\Left_r(\bdy\mu)$ and $y=\Bot_c(\bdy\mu)=(r',c)$. By Definition~\ref{D:stringtypes}
$h_\mu(x)=k$ and $h_\mu(y)\le k-1$.

Suppose $r>r'$. We have $\rs(\mu)_{r'} \ge \rs(\mu)_r$ and $\cs(\mu)_{\col(x)}=\cs(\mu)_{c^-}=\cs(\mu)_c+1$,
so that $h_\mu(y) \ge h_\mu(x) - 1 = k-1$.
Therefore $h_\mu(y)=k-1$. By Remark \ref{R:rowshift} there is a $\mu$-addable cell in row $r$,
which is below and contiguous with the cell of $m$ in column $c$. This contradicts the maximality of $m$.
Therefore $r=r'$ and $y=(r,c)$.

Let $\nu=\mu\cup M$. We have $h_\nu(y)=k-1$. Since
the negatively modified columns of $M$ and the positively modified columns of
$m$ have their lowest $k$-bounded cell in the same row
and $\rs(\nu)_{r^-}\ge\rs(\nu)_r$, we deduce that $\nu_{r^-}-\nu_r  \ge \rk(m)+\rk(M)$.
Using this and the maximality of $M$, by Lemma \ref{L:extendcompletion}
we may deduce that viewing $m$ as $\nu$-addable,
each of its strings can be maximally extended below to contain a cell in each
of the rows of $M$ by Lemma~\ref{L:comparelengths}.
Call the added cells $m_\per$. It is straightforward
to verify the remaining assertions.
\end{proof}

%
%

\begin{lem}\label{L:maxmovecolcommute}
Suppose $m$ and $M$ are distinct
maximal quasi-completion column moves for the strip $S$.  Then $(m,M)$
defines an augmentation equivalence.  Moreover, exactly one of the
following holds:
\begin{enumerate}
\item $m \cap M = \emptyset$ and $m$ and $M$ do not interfere.
\item $m \cap M \neq \emptyset$ and either $m\subset M$ or $M\subset m$.
\end{enumerate}
\end{lem}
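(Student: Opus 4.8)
The plan is to mimic the proof of Lemma~\ref{L:maxmoverowcommute} with the roles of rows and columns exchanged, while exploiting the transpose-asymmetry of strips to obtain the stronger dichotomy. By Property~\ref{L:augmentcolumn} every augmentation column move has rank one, so, writing $S=\mu/\la$, the moves $m$ and $M$ are each a single $\mu$-addable column-type string, say $s$ and $t$. Everything then reduces to analyzing the relative position of $s$ and $t$ and checking that the resulting commuting square stays inside $\Str_\la$ (recall \eqref{E:charge} is automatic for column elementary equivalences, cf.\ \S\ref{SS:column_elementary_equivalence}).

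First I would treat the intersecting case $s\cap t\ne\emptyset$. By the transpose of Lemma~\ref{L:stringintrow}, $s\cap t$ is a string that is an initial or terminal segment of each of $s$ and $t$; so up to swapping $m$ and $M$, either one contains the other or $s$ continues above the intersection while $t$ continues below it. The ``crossing'' possibility is excluded by maximality: the cell of $s$ directly above the top cell of $t$ is $\mu$-addable (every cell of a string is) and contiguous with it, so adjoining it to $t$ produces a strictly longer $\mu$-addable column-type string; since this changes nothing at the bottom of $t$ it creates no lower augmentable corner, and it preserves the $k$-shape and strip conditions because column moves leave $\cs$ fixed --- hence $M$ would not be a maximal quasi-completion column move, a contradiction. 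Thus $t\subseteq s$ (say), and the same extension argument forces $t$ to be the top segment of $s$. The pair $(m,M)$ then realizes the transpose of case~\eqref{it:rowequivmatch} of Definition~\ref{D:rowrowcommute}: $m$ and $M$ are matched above, $\tM=M\setminus(m\cap M)=\emptyset$ and $\tm=m\setminus(m\cap M)=s\setminus t$, where $s\setminus t$ is a $(M*\mu)$-addable column-type string (at its new top the leftmost cell of the $k$-boundary has hook $k-1$, because adding $t$ bumped the cell directly below the bottom of $t$ out of the $k$-boundary) and $t\cup(s\setminus t)=s$ is again a column move from $\mu$. Here $\tM=\emptyset$ is trivially an augmentation move from $m*S$, and $\tm=s\setminus t$ is an augmentation move from $M*S=(\mu\cup t)/\la$ because $m$ is an augmentation move from $S$, so $(m*M*\mu)/\la=(m*\mu)/\la$ is a strip.

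Next the disjoint case $s\cap t=\emptyset$: I claim $(m,M)$ is non-interfering. If not, the transpose of Lemma~\ref{L:interfere} shows one string lies entirely above the other with its negatively modified row adjacent to the other's positively modified row, and by Remark~\ref{R:posmodaugmen} both these negatively modified rows are positively modified rows of $S$; using Lemma~\ref{L:makestring} and Remark~\ref{R:contig} one then extends the upper string downward until it meets the lower one, contradicting maximality of one of $m,M$. This is precisely where the argument genuinely diverges from Lemma~\ref{L:maxmoverowcommute}: for maximal completion \emph{row} moves interference is possible, but for maximal quasi-completion \emph{column} moves, whose maximality forbids \emph{upward} extension, it is not. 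Given non-interference, the transpose of case~\eqref{it:rowequivnoninterf} of Definition~\ref{D:rowrowcommute} gives $\tm=m$, $\tM=M$; the square commutes trivially, and $\tM*m*S=(m*M*\mu)/\la$ is a strip since $\cs$ is unchanged and $\rs(m*M*\mu)/\rs(\la)$ is a horizontal strip by non-interference together with the transpose of Property~\ref{C:rowmovecolsize}. In both cases $(m,M)$ defines an augmentation equivalence, and the dichotomy (1)/(2) is exactly what the case analysis produced.

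The main obstacle is the verification, in the intersecting case, that the extensions used above really produce \emph{valid} column moves that still define quasi-completion column \emph{augmentation} moves from $S$ --- that the $k$-shape condition (the transpose of Property~\ref{P:cond5}), the condition that $\rs$ changes by a horizontal strip over $\rs(\la)$, and the absence of lower augmentable corners all survive. The cautionary example after Lemma~\ref{L:anyaugment} shows that for column completions one cannot in general take the maximal upward extension; what saves us here is that one only ever extends along cells already known to lie in a valid move, so that obstruction does not arise. I would carry out this bookkeeping through the cover decomposition of strips (Corollary~\ref{C:coverseq}) and Lemmata~\ref{L:everywhere}, \ref{L:columncompletionaugment} and~\ref{L:extendstring}, which package exactly the hook-length computations needed, mirroring the proof of Lemma~\ref{L:maxmoverowcommute} line by line.
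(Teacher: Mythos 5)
Your treatment of the intersecting case $s\cap t\neq\emptyset$ follows the paper's approach (maximality forces $m$ and $M$ to finish at the same point above, whence containment), with more bookkeeping than the paper supplies; that part is sound in spirit.

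The disjoint case, however, has a genuine gap. You reduce interference to an impossible configuration by ``extending the upper string downward until it meets the lower one, contradicting maximality of one of $m,M$.'' This cannot be right. Maximality of a quasi-completion column move is defined as: its string cannot be extended \emph{above}. Extending the upper string \emph{downward} therefore does not touch maximality of either move, and you in fact acknowledge in the next sentence that column maximality only forbids upward extension --- so the purported contradiction evaporates. Worse, the downward extension will not ``meet the lower one'' in general: interference is a statement about adjacency of modified \emph{rows} of the row shape, not about contiguity of cells. Example~\ref{X:twocompletions} (transposed) exhibits two disjoint interfering moves whose cells are at diagonal distance strictly less than $k$, hence non-contiguous; no chain of contiguous extensions connects one string to the other. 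What actually kills interference is the strip condition, as in the paper: if $M$ lies above $m$, the top cell of $m$ is in a row $R$ whose neighbor $R^-$ is the negatively modified row of $M$, hence a positively modified row of $S$ by Remark~\ref{R:posmodaugmen}; the interference identity then gives $\rs(m*\mu)_R=\rs(m*\mu)_{R^-}=\rs(\mu)_{R^-}>\rs(\lambda)_{R^-}$, violating the horizontal strip condition on $\rs(m*\mu)/\rs(\lambda)$. So the ``divergence from the row case'' is real (interference really cannot occur for maximal quasi-completion column moves), but the reason is the asymmetry in the definition of strip ($\rs(\mu)/\rs(\lambda)$ horizontal), not any property of maximality.
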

\begin{proof} Suppose that $m\cap M=\emptyset$ and there is interference.
Recall that $m$ and $M$ are of rank 1 and without loss of generality we can
suppose that $M$ is above $m$.  Then the highest cell of $m$ is in a row $R$
such that $R^-$ is a positively modified row of $S$ by
Remark~\ref{R:posmodaugmen} (since $R^-$
is a negatively
modified row of $M$),
and such that
$\rs(m * \mu)_R=\rs(m * \mu)_{R^-}$.  We thus have the contradiction
that $\rs(m * \mu)/\rs(\lambda)$ is not a horizontal strip.

If $m \cap M \neq \emptyset$, then by maximality they finish at the same point
above.  Given that both are of rank 1, we deduce that
$m \subset M$ or  $M \subset m$.
\end{proof}

We now prove Proposition \ref{P:maxstripunique}.
\begin{proof} Let $S$ be a strip such that
\begin{equation}\label{E:stripsat}
\text{the result holds for any proper augmentation of $S$.}
\end{equation}

Let $(m_1,m_2,\ldots,m_x)$ and $(M_1,M_2,\ldots,M_y)$ be distinct
augmentation paths from $S$ to maximal strips. If $m_1=M_1$ we are done by induction.
So suppose $m_1\ne M_1$. If $m_1$ and
$M_1$ are maximal completion moves then by Lemmata
\ref{L:maxrowcolcommute}, \ref{L:maxmoverowcommute} and
\ref{L:maxmovecolcommute}, the pair $(m_1,M_1)$  defines an augmentation equivalence
$\tM_1 m_1 \equiv \tm_1 M_1$.
By \eqref{E:stripsat} there are equivalences of augmentation paths of the form
\begin{equation*}
m_x \dotsm m_2 m_1 \equiv \dotsm \tM_1 m_1 \equiv \dotsm \tm_1 M_1 \equiv M_y \dotsm M_2 M_1.
\end{equation*}
It thus suffices to show that any augmentation path $(m_1,m_2,\ldots,m_x)$ ending
at a maximal strip, is equivalent to one which begins with a maximal completion
move. If $m_1$ is a non-maximal completion row move, then Lemma
\ref{L:completionstrip} implies that $m_1 \subset m$ where $m$ is a
maximal completion row move with the same lower augmentable corner.
But then $(m \backslash m_1)(m_1) \equiv m$ is a row equivalence and
using \eqref{E:stripsat} we deduce that $(m_1,m_2,\ldots,m_x)$ is
equivalent to a path beginning with $m$.
 A similar argument
works for the column case.

We may thus assume that $m_1$ is
a non-completion augmentation row or column move.  In the case of the
non-completion augmentation row move, the argument
is completed by
Lemma \ref{L:anyaugment}.  In the case of the
non-completion augmentation column move, $S$ either contains some lower augmentable
corners or some upper augmentable corners above the one associated to $m_1$.
In the former case, let $M$ be the maximal completion row move
associated to a lower augmentable corner $a$ of $S$ such as described in
Lemma~\ref{L:completionaugment}.  By Lemma~\ref{L:maxrowcolcommute},
the argument is completed in that case.  In the latter case,
let $M$ be the maximal completion column move
associated to the highest upper augmentable corner $a$ of $S$ such as described in
Lemma~\ref{L:columncompletionaugment}.   The lemma then follows
from Lemma~\ref{L:maxmovecolcommute}.

\end{proof}

\subsection{Canonical maximization of a strip}
\label{SS:maxstripalg}

Let $S=\mu/\la$ be a strip. The following algorithm \texttt{MaximizeStrip} produces an augmentation path
$\bq=(\mu=\mu^0\rightarrow \mu^1\rightarrow\dotsm\rightarrow\mu^M=\rho)$ in $\Str_\la$ ending at a
maximal strip $\rho/\la$. This path is comprised of maximal completion moves; the existence of such a path
is asserted by Proposition \ref{P:maxstripunique}(3).

\begin{tabbing}
xxxx\=xxxx\=xxxx\=xxxx\=\kill
\textbf{proc} \texttt{MaximizeStrip}($\mu,\la$): \\
\>  \text{\textbf{local} $\rho := \mu, q := (\mu)$} \\
\>  \textbf{while True}: \\
\> \>   \textbf{if} the strip $\rho/\la$ has a lower augmentable corner: \\
\>\>\>      let $x$ be the rightmost one \\
\>\>\>            let $s$ be the maximal $\rho$-addable string extending $x$ below \\
\>\>\>            $\rho := \rho \cup s$ \\
\>\>\>            append $\rho$ to $q$ \\
\>\>\>            \textbf{continue} \\
\>\>    \textbf{if} the strip $\rho/\la$ has an upper augmentable corner: \\
\>\>\>      let $x$ be the rightmost one \\
 \>\>\>            let $s$ be the maximal $\rho$-addable string extending $x$ above, \\
\>\>\>\>               subject to not having a cell atop $\rho/\la$ \\
 \>\>\>            $\rho := \rho \cup s$ \\
 \>\>\>            append $\rho$ to $q$ \\
 \>\>\>            \textbf{continue} \\
\>\>        \textbf{break} \\
\>    \textbf{return} $q$
\end{tabbing}
The path $\bq$ is initialized to be the path of length zero starting and ending at $\mu$
and the current strip $\rho/\la$ is initialized to be $\mu/\la$.
Whenever the current strip $\rho/\la$ has a lower augmentable corner,
the algorithm appends a completion row move to $\bq$ by Lemma \ref{L:completionaugment}
and applies the move to $\rho$. Whenever the current strip $\rho/\la$ has no lower augmentable corner but an upper augmentable one,
the algorithm appends a completion column move $m$ to $\bq$ by Lemma \ref{L:columncompletionaugment}
and applies the move to $\rho$. When $\rho/\la$ has no augmentable corners, by Proposition \ref{P:maxstrip}
the algorithm terminates with $\rho/\la$ a maximal strip and returns the current path $\bq$.

\begin{example} \label{X:maximizestrip} Let $k=4$, $\la=(6,6,4,4,2,2,1)$, and $\mu=(7,6,4,4,2,2,2)$.
Calling \texttt{MaximizeStrip} with the strip $\mu/\la$, the output path $\bq=(\mu=\mu^0,\mu^1,\mu^2=\rho)$ is given below.
The boxes of $\bdy\la\cap \bdy\mu^i$ are black and the rest belong to
the strip $\mu^i/\la$.
\begin{equation*}
\tableau[sby]{\bl \\ \fl& \\ \fl&\fl \\ \fl&\fl \\ \bl&\bl&\fl&\fl \\
 \bl&\bl&\fl&\fl\\ \bl&\bl&\bl&\bl&\fl&\fl \\ \bl&\bl&\bl&\bl&\fl &\fl&}
 \qquad
\tableau[sby]{{\color{red}\blacksquare} \\ \fl& \\ \fl&\fl \\ \bl&\fl&{\color{red}\blacksquare} \\ \bl&\bl&\fl&\fl \\
 \bl&\bl&\bl&\fl&{\color{red}\blacksquare} \\ \bl&\bl&\bl&\bl&\fl&\fl \\ \bl&\bl&\bl&\bl&\bl &\fl&}
 \qquad
\tableau[sby]{ \\ \fl& \\ \fl&\fl \\ \bl&\bl& & {\color{blue}\blacksquare} \\ \bl&\bl&\fl&\fl \\
 \bl&\bl&\bl&\bl&&{\color{blue}\blacksquare} \\ \bl&\bl&\bl&\bl&\fl&\fl \\ \bl&\bl&\bl&\bl&\bl &\bl&&{\color{blue}\blacksquare}}
\end{equation*}
$\mu^0/\la$ has no lower augmentable corner but has a unique upper augmentable one, namely,
the lowest red cell in $\mu^1$. $\mu^1/\la$ has a unique lower augmentable corner, the
highest cell colored blue in $\mu^2$. $\mu^2/\la$ is maximal.
\end{example}

\section{Pushout of strips and row moves}
\label{sec:rowpushout} Let $(S,m)$ be an initial pair where
$S=\mu/\la$ is a strip and $m=\nu/\la$ is a nonempty row move.

We say that $(S,m)$ is compatible if it is reasonable, not
contiguous, and is either (1) non-interfering, or (2) is interfering
but is also pushout-perfectible; these notions are defined below.
For compatible pairs $(S,m)$ we define an output $k$-shape
$\eta\in\Ksh$ (see Subsections \ref{SS:rowpushnointerfere} and
\ref{SS:rowpushinterfere} for cases (1) and (2) respectively). This
given, we define the pushout
\begin{equation} \label{E:pushoutdef}
  \push(S,m) = (\tS,\tm)
\end{equation}
which produces a final pair $(\tS,\tm)$ where $\tS=\eta/\nu$ is a
strip and $\tm=\eta/\mu$ is a move (possibly empty). This is
depicted by the following diagram.
\begin{equation} \label{E:pushdiag}
\begin{diagram}
\node{\la} \arrow{s,t}{S} \arrow{e,t}{m} \node{\nu} \arrow{s,b,..}{\tS} \\
\node{\mu} \arrow{e,b,..}{\tm} \node{\eta}
\end{diagram}
\end{equation}

If $S$ is a maximal strip then $(S,m)$ is compatible (Proposition
\ref{L:maximalinterfere}).

\begin{property}\label{L:Smmod}
Let $(S,m)$ be an initial pair. Then a modified column $c$ of $S$
cannot be a negatively modified column of $m$.
\end{property}
\begin{proof}
Suppose otherwise. Let $c$ be the leftmost modified column of
$S=\mu/\la$ that is negatively modified by $m$.  We have that $c$ is
also the leftmost negatively modified column of $m$ since otherwise
$\cs(\mu)$ would not be a partition. By the previous comment,
$b=\Bot_c(\bdy\la)$ is leftmost in its row in $\bdy\la$ and
$h_\la(b) = k$. But looking at $S$ we see that $h_\la(b) < k$, a
contradiction.
\end{proof}

\subsection{Reasonableness}
We say that the pair $(S,m)$ is \defit{reasonable} if for every
string $s$ of $m$, either $s \cap S = \emptyset$ or $s \subset S$.
In other words, every string of $m$ which intersects $S$ must be
contained in $S$.

Suppose the string $s$ of $m$ satisfies $s\subset S$. We say that
$S$ \defit{matches $s$ below} if $c_{s,d}$ is a modified column of
$S$ and otherwise say that $S$ \defit{continues below} $s$.

\begin{lem}\label{L:initialstrings}
Suppose $(S,m)$ is reasonable where $m = s_1 \cup s_2 \cup \cdots
\cup s_r$.  If $S$ matches $s_i$ below then $S$ matches $s_j$ below
for each $j \leq i$.  If $S$ continues below $s_i$ then $S$
continues below $s_j$ for each $s_j$ on the same rows as $s_i$
satisfying $j \leq i$.
\end{lem}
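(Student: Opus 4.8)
The plan is to read everything off from the columns $c_{s_1,d}<c_{s_2,d}<\dots<c_{s_r,d}$, the bottom-modified columns of the strings of $m$. First I would record three bookkeeping facts. (i) By Definition~\ref{D:defmove}(3),(4) these columns are \emph{consecutive}, $c_{s_j,d}=c_{s_1,d}+(j-1)$, and the bottom cells $a^{(1)}_\ell,\dots,a^{(r)}_\ell$ of $s_1,\dots,s_r$ are translates of one another, so Lemma~\ref{C:samesize} produces a single value $v:=\cs(\la)_{c_{s_1,d}}=\dots=\cs(\la)_{c_{s_r,d}}$. (ii) Since $\mu/\la$ is a strip, $\cs(\mu)/\cs(\la)$ is a vertical strip, hence $\cs(\mu)_c\in\{\cs(\la)_c,\cs(\la)_c+1\}$ for every column $c$, and $\cs(\mu)$ is a partition; a modified column of $S$ is one with $\cs(\mu)_c=\cs(\la)_c+1$. (iii) If $c$ is a modified column of $S$ then column $c$ must contain a cell of $S$ (otherwise columns $c$ of $\mu$ and $\la$ coincide, forcing $h_\mu\ge h_\la$ there and hence $\cs(\mu)_c\le\cs(\la)_c$), and because $\mu/\la$ is a horizontal strip that cell is the single cell $(\la^t_c+1,c)=\Top_c(\mu)$; moreover $a^{(j)}_\ell$, being (since $m$ is a horizontal strip) the unique cell of $m$ in column $c_{s_j,d}$, is also forced to equal $(\la^t_{c_{s_j,d}}+1,c_{s_j,d})$.

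For the first assertion I would argue: if $S$ matches $s_i$ below then $\cs(\mu)_{c_{s_i,d}}=v+1$; for $j\le i$ we have $c_{s_j,d}\le c_{s_i,d}$, so the partition property of $\cs(\mu)$ gives $\cs(\mu)_{c_{s_j,d}}\ge v+1$, and then (ii) forces $\cs(\mu)_{c_{s_j,d}}=\cs(\la)_{c_{s_j,d}}+1$, i.e.\ $c_{s_j,d}$ is a modified column of $S$. By (iii) column $c_{s_j,d}$ then contains a cell of $S$, and it is $a^{(j)}_\ell$; hence $s_j\cap S\ne\emptyset$, so $s_j\subset S$ by the reasonableness of $(S,m)$, and ``$S$ matches $s_j$ below'' now makes sense and holds.

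For the second assertion, let $S$ continue below $s_i$, i.e.\ $\cs(\mu)_{c_{s_i,d}}=v$, and fix $j\le i$ with $s_j$ on the same rows as $s_i$. Using that in a row move each string lies weakly below the one immediately to its left (a string lying strictly higher than its left neighbour would have a top cell that is not addable to the partial shape built so far; cf.\ Definition~\ref{D:defmove} and Property~\ref{P:movestrip}), the strings $s_j,s_{j+1},\dots,s_i$ all lie on the same rows as $s_i$, so their bottom cells occupy one common row $R'$ of $S$, in the consecutive columns $c_{s_j,d},\dots,c_{s_i,d}$. Each $s_{j'}$ ($j\le j'\le i$) is obtained from $s_i$ by a purely horizontal leftward shift, so its cells lie in $\mu$ (as $s_i\subset S\subset\mu$ and $\mu$ is a partition) and in $\nu\setminus\la$, whence $s_{j'}\subset\mu\setminus\la=S$; it remains to rule out that any $c_{s_{j'},d}$ is modified. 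If one were, I would take $j'\in[j,i-1]$ maximal with $c_{s_{j'},d}$ modified, so $c_{s_{j'+1},d}$ is not; write $c:=c_{s_{j'},d}$. From $a^{(j')}_\ell=\Top_c(\mu)$ and $a^{(j'+1)}_\ell=\Top_{c+1}(\mu)$ lying in row $R'$, columns $c$ and $c+1$ have height $R'$ in $\mu$ and (each carrying a cell of the horizontal strip $S$) height $R'-1$ in $\la$; hence $\Bot_c(\bdy\la)$ and $\Bot_{c+1}(\bdy\la)$ are the adjacent cells $(R'',c)$ and $(R'',c+1)$ with $R''=R'-v$, and a one-line arm/leg count gives $h_\mu(R'',c)=h_\mu(R'',c+1)+1$. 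But ``$c$ modified'' keeps $(R'',c)$ in the $k$-boundary of $\mu$ ($h_\mu(R'',c)\le k$), whereas ``$c+1$ not modified'', together with $S$ adding a cell at the top of column $c+1$, expels $(R'',c+1)$ from it ($h_\mu(R'',c+1)>k$), contradicting $h_\mu(R'',c+1)=h_\mu(R'',c)-1\le k-1$. Therefore $c_{s_j,d}$ is not modified and $S$ continues below $s_j$.

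The step I expect to be the real obstacle is the last one: squeezing out of the hypothesis ``$s_j$ on the same rows as $s_i$'' the fact that the two relevant boundary cells $\Bot_c(\bdy\la)$ and $\Bot_{c+1}(\bdy\la)$ sit side by side in a single row (this is exactly what makes the arm/leg count yield a contradiction), together with the routine-but-needed monotonicity of string heights in a row move. Everything else is a direct consequence of the partition/vertical-strip bookkeeping (i)--(iii) and of reasonableness; in particular parts (1) and (2) of the lemma are genuinely asymmetric precisely because ``modified'' propagates \emph{leftward} along columns of equal $\cs(\la)$-value while ``unmodified'' propagates \emph{rightward}, and only the second needs the extra ``same rows'' hypothesis to bridge the gap.
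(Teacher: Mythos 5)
Your proof is correct and follows essentially the same two-step plan as the paper's: for the first assertion, read $\cs(\mu)$ being a partition together with the vertical-strip bound $\cs(\mu)_c\le\cs(\la)_c+1$ and reasonableness; for the second, observe that the cells $\Bot_{c_{s_{j'},d}}(\bdy\la)$ all sit in one row and then propagate ``not modified'' leftward along that row. The paper's own treatment of the last step is extremely terse (it appeals to $\bdy\la\setminus\bdy\mu$ being a skew shape, equivalently to $\Int(\mu)$ being a partition), whereas you replace that with an explicit arm/leg hook-length comparison between $(R'',c)$ and $(R'',c+1)$; these are two implementations of the same fact. You are also more careful than the printed proof in two places that deserve it: you verify the weak monotonicity of string rows in a row move (needed to conclude that all strings between $s_j$ and $s_i$ lie on the same rows), and you explicitly establish $s_{j'}\subset S$ before invoking ``matches/continues below'' for $s_{j'}$, which the paper leaves implicit.
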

\begin{proof}
The first assertion follows directly from the assumption that
$\cs(\mu)$ is a partition. In the case of the second assertion, we
have that $\Bot_{c}(\bdy\la)$ belongs to the same row for every
column $c$ corresponding to such $s_j$'s. Given that column $c$ is
not a modified column of $S$ there is a cell of $\partial \lambda
\setminus \partial \mu$ in column $c$.  Given that $\mu/\lambda$
needs to be a skew diagram, the assertion follows.
\end{proof}

\begin{lem}
Suppose $(S,m)$ is reasonable. Then every modified column $c$ of $S$
which contains a cell in $m$ is a positively modified column of $m$.
\end{lem}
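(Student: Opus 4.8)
Suppose $(S,m)$ is reasonable where $S=\mu/\la$ and $m$ is a nonempty row move from $\la$. Then every modified column $c$ of $S$ which contains a cell of $m$ is a positively modified column of $m$.

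\textbf{Plan of proof.} Fix a modified column $c$ of $S$ that contains a cell $b\in m$. The cell $b$ lies in some string $s$ of $m$, and since $b\in s\cap S$, reasonableness forces $s\subset S$; by Lemma~\ref{L:addable} every cell of $s$ is $\la$-addable, so $s$ is a primary string. Now $c$ is modified by $m$: the columns modified by a string are exactly $c_{s,u}$ (negative) and $c_{s,d}$ (positive) by Remark~\ref{R:stringchange}, and $c$ must be one of these for $b\in s$ to force $c$ to be modified at all---but note a subtlety: $c=\col(b)$ need not itself be a modified column of $m$ unless $b=a_1$ or $b=a_\ell$. So I would first reduce to the case where $c$ really is modified by $m$. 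If $c$ is an unmodified column of $m$ (i.e.\ $b$ is an interior cell of $s$, or more precisely $\change_\cs(m)_c=0$), then $\cs(\la)_c=\cs(\mu\cup m)_c$, and I claim this is incompatible with $c$ being modified by $S$: the cell of $\bdy\la\setminus\bdy\mu$ forced into column $c$ (which exists by Corollary~\ref{C:stripmodifiedrows}, since $c$ has a cell of $S$... wait, we want the converse direction) --- more cleanly, I would invoke Property~\ref{L:Smmod}, which says a modified column of $S$ cannot be negatively modified by $m$; so it remains to rule out $c$ being \emph{unmodified} by $m$ while $b\in m$ lies in $c$.

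\textbf{Main argument.} The content is therefore: if $c$ is a modified column of $S$ and column $c$ contains a cell $b$ of $m$, then $c$ must be a \emph{positively} modified column of $m$. By Property~\ref{L:Smmod}, $c$ is not negatively modified by $m$; suppose for contradiction $c$ is unmodified by $m$. Then in the string $s\subset S$ containing $b$, the cell $b$ is neither the top cell $a_1$ (whose column is $c_{s,d}$ after the full string-structure... actually $\col(a_\ell)=c_{s,d}$ is the positively modified column and $\col(a_1)$ is just the top cell's column) --- the precise point is that $\change_\cs(s)=e_{c_{s,d}}-e_{c_{s,u}}$, so $c\notin\{c_{s,d},c_{s,u}\}$. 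Since $c$ is a modified column of $S$, there is a cell of $S$ in column $c$; call it $x$, with $b=\Bot_c(\bdy\la)$ possibly or $x$ above. Now the point of reasonableness plus the string structure: the cell of $\bdy\la$ in column $c$ at the level just below where $b$ sits, call it $b'=(\row(a_{j+1}),c)$ where $b=a_j$ in the listing of $s$, lies in $\bdy\la\setminus\bdy\mu$ by Property~\ref{P:stringtypes} (it is one of the $b_i$'s removed from the $k$-boundary). So column $c$ contains both a cell of $\bdy\la\setminus\bdy\mu$ (namely $b'$) and a cell of $S$ (namely $x$). But $S$ is a strip, hence $\bdy\la\setminus\bdy\mu$ is a horizontal strip by Property~\ref{L:innerstrip}, which is consistent; the real contradiction comes from counting: $\cs(\mu)_c\ge\cs(\la)_c$ by Property~\ref{L:checkstrip}, whereas the fact that $c$ is unmodified by $m$ gives $\cs(\mu\cup m)_c=\cs(\la)_c$, i.e.\ adding $m$ both adds $b$ to column $c$ and removes $b'$ from $\bdy\mu$ in column $c$ --- but $b'\in\bdy\la\setminus\bdy\mu$ means $b'$ was \emph{already} outside $\bdy\mu$, so it cannot be removed from $\bdy\mu$ by $m$ (it isn't there). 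Hence $\change_\cs(\mu\cup m)_c\ge\cs(\mu)_c+1>\cs(\la)_c$ or else the bookkeeping $\change_\cs(m)_c$ must be $+1$ to compensate, contradicting $c$ unmodified. Thus $c$ is positively modified by $m$.

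\textbf{Expected obstacle.} The delicate point is keeping straight the three layers of boundaries: $\bdy\la$, $\bdy\mu$, and $\bdy(\mu\cup m)=\bdy(\nu\cup S')$; in particular understanding which cells of $\bdy\la\setminus\bdy\mu$ remain outside after applying $m$ versus which are re-covered. Property~\ref{P:stringtypes} describes $\bdy\la\setminus\bdy\mu$ for a \emph{single} string relative to $\la$, but here I need the description relative to $\mu$, using that $s\subset S$ so the cells ``removed from the $k$-boundary when adding $s$'' were themselves cells of $S$ or below it. I expect the cleanest route is: since $s\subset S$, every cell of $s$ lies strictly above the corresponding cell of $\bdy\la\setminus\bdy\mu$ that Property~\ref{P:stringtypes} associates to it, and that lower cell is in $\bdy\la\setminus\bdy\mu$ hence \emph{not} in $\bdy\mu$. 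Applying $m$ then genuinely increases $\cs(\cdot)_c$ by a net $+1$ in any column meeting $m$ and modified by $S$, forcing positivity. I would carry this out with a careful case split on whether $b$ is $\la$-addable (i.e.\ $b=a_1$) or an interior cell of its string, using Lemma~\ref{L:addable} to note that in a reasonable pair a string meeting $S$ has all cells $\la$-addable so $b$ interior is still $\la$-addable --- which then tightly constrains the column lengths and yields the contradiction directly from Property~\ref{P:cond5}.
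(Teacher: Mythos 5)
Your high-level strategy is right: by Property~\ref{L:Smmod} the column $c$ cannot be negatively modified by $m$, so it suffices to rule out ``$c$ unmodified by $m$''; and since $m$ and $S$ are both $\la$-addable horizontal strips, the cell of $m$ in column $c$ and the cell of $S$ in column $c$ coincide, so by reasonableness the string $s$ of $m$ through that cell $b$ satisfies $s\subset S$, and if $c$ were unmodified then $b=a_j$ with $j<\ell$, giving the auxiliary cell $b'=(\row(a_{j+1}),c)$. You also correctly expect $b'\in\bdy\la\setminus\bdy\mu$; note that Property~\ref{P:stringtypes} only gives $b'\notin\bdy(\la\cup s)$, and you need the (easy) extra observation that hook lengths are monotone under $\la\cup s\subset\mu$. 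The peripheral claim that Lemma~\ref{L:addable} makes every cell of $s$ $\la$-addable is not justified (that lemma requires some cell of $s$ to already be $\la$-addable), but it is also unnecessary.

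The genuine gap is in the final counting step. The passage ``$\change_\cs(\mu\cup m)_c\ge\cs(\mu)_c+1>\cs(\la)_c$ or else the bookkeeping $\change_\cs(m)_c$ must be $+1$ to compensate'' conflates the boundaries of $\mu$, $\nu=m*\la$, and $\mu\cup\nu$: the hypothesis ``$c$ unmodified by $m$'' concerns $\cs(\nu)_c-\cs(\la)_c$, not anything about $\mu\cup m$, and $m$ does not add a new cell in column $c$ of $\mu$ (the cell $b$ is already in $\mu$). In fact the contradiction has nothing to do with $\nu$ at all. It is a direct comparison of $\cs(\mu)_c$ and $\cs(\la)_c$: in column $c$, $\Top_c(\bdy\mu)=b$ lies exactly one row above $\Top_c(\bdy\la)$, while $b'\in\bdy\la$ and $b'\notin\bdy\mu$ forces $\row(\Bot_c(\bdy\mu))>\row(b')\ge\row(\Bot_c(\bdy\la))$, so $\cs(\mu)_c\le\cs(\la)_c$. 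This contradicts the hypothesis that $c$ is a modified column of $S$ (which says $\cs(\mu)_c=\cs(\la)_c+1$) -- not the hypothesis that $c$ is unmodified by $m$. Equivalently, for a modified column $c$ of $S$ one has $\Bot_c(\bdy\mu)=\Bot_c(\bdy\la)$, so no cell of $\bdy\la$ in column $c$ can leave $\bdy\mu$, ruling out the existence of $b'$ and forcing $b=a_\ell$, hence $c=c_{s,d}$, a positively modified column of $m$.
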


\begin{lemma}\label{L:Smleftcolumns}
Suppose $(S,m)$ is reasonable and $s$ is a string of $m$.
Then $s\subset S$ if and only if $S$ contains a cell in column $c_{s,u}$.
\end{lemma}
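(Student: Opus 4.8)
The statement is a biconditional characterizing which strings $s$ of $m$ are contained in the strip $S=\mu/\la$ purely in terms of whether $\bdy\mu$ (equivalently, the cell data of $S$) contains a cell in the negatively modified column $c_{s,u}$ of $s$. The plan is to prove the two directions separately, using the earlier structural results about strings, moves, and reasonableness.

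\textbf{Forward direction ($s\subset S$ implies a cell of $S$ in column $c_{s,u}$).} Suppose $s=\{a_1,\dots,a_\ell\}\subset S$. By definition $c_{s,u}$ is the column of $\Left_{\row(a_1)}(\bdy\la)$, and this cell lies in $\bdy\la\setminus\bdy\mu$ (it is the $\circ$ at the left of the string diagram, present precisely because $s$ is of row-type or cocover-type; but here $m$ is a row move so all its strings are row-type by Definition~\ref{D:defmove}, hence $c_{s,u}$ is always defined and $\Left_{\row(a_1)}(\bdy\la)\in\bdy\la\setminus\bdy\mu$ by Property~\ref{P:stringtypes}). First I would invoke Property~\ref{L:innerstrip}: since $S=\mu/\la$ is a strip, $\bdy\la\setminus\bdy\mu$ is a horizontal strip. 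Then by Corollary~\ref{C:stripmodifiedrows}, any column containing a cell of $\bdy\la\setminus\bdy\mu$ also contains a cell of $S$. Applying this to the column $c_{s,u}$, which contains the cell $\Left_{\row(a_1)}(\bdy\la)\in\bdy\la\setminus\bdy\mu$, gives the desired conclusion. Actually I should double check that $\Left_{\row(a_1)}(\bdy\la)$ genuinely lies in $\bdy\la\setminus\bdy\mu$ and not merely in $\bdy\la$: this is exactly the content of the row-type case of Property~\ref{P:stringtypes} together with the fact that $s$ being row-type forces $h_\la(b_0)=k$ where $b_0=\Left_{\row(a_1)}(\bdy\la)$.

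\textbf{Reverse direction (a cell of $S$ in column $c_{s,u}$ implies $s\subset S$).} Suppose some cell of $S$ lies in column $c_{s,u}$. By reasonableness, it suffices to show $s\cap S\ne\emptyset$, since then $s\subset S$ automatically. Here I would argue as follows. The column $c_{s,u}$ is negatively modified by $m$ (by Remark~\ref{R:stringchange}, $\change_\cs(s)=e_{c_{s,d}}-e_{c_{s,u}}$), so $c_{s,u}$ is a negatively modified column of $m$. By Property~\ref{L:Smmod}, a modified column of $S$ cannot be a negatively modified column of $m$. Hence $c_{s,u}$ is \emph{not} a modified column of $S$; this means the cell of $S$ in column $c_{s,u}$ is not added by $S$, i.e.\ there is a cell of $\mu/\la$ in column $c_{s,u}$ but $\cs(\mu)_{c_{s,u}}=\cs(\la)_{c_{s,u}}$, so $\bdy\la\setminus\bdy\mu$ also has a cell in column $c_{s,u}$. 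Now the top cell $a_1$ of $s$ lies in column $\col(a_1)=c_{s,u}+1$ (since $\Left_{\row(a_1)}(\bdy\la)$ is in column $c_{s,u}$ and $a_1$ is the $\la$-addable corner immediately right of it — recall $c_{s,u}<\col(a_1)$ and, more precisely, for a row-type string $a_1$ sits at the end of its row in $\bdy\la$). Reading off the column $c_{s,u}$ data of $S$: it has a cell of $S$, a cell of $\bdy\mu\cap\bdy\la$ below that (from the $\cs(\mu)_{c_{s,u}}=\cs(\la)_{c_{s,u}}$ equality and the strip condition), and a cell of $\bdy\la\setminus\bdy\mu$ at the bottom. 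Comparing hook-lengths: the cell of $S$ in column $c_{s,u}$ sits directly above $\bdy\mu\cap\bdy\la$, and using that $h_\mu$ of the cell below $a_1$'s position in $\bdy\la$ equals $k$ (row-type condition), one forces $a_1\in\bdy\mu$, i.e.\ $a_1\in\mu/\la=S$. Thus $s\cap S\ne\emptyset$, and reasonableness finishes the proof.

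\textbf{Main obstacle.} The delicate point is the hook-length bookkeeping in the reverse direction: carefully justifying that the cell of $S$ in column $c_{s,u}$, combined with the row-type condition $h_\la(\Left_{\row(a_1)}(\bdy\la))=k$, forces the top cell $a_1$ of $s$ into $\bdy\mu$. I expect this to require a short case analysis on the position of the bottom cell of column $c_{s,u}$ in $\bdy\mu$ relative to the row of $a_1$, using Remark~\ref{R:rowshift} or Lemma~\ref{L:everywhere}-type reasoning to locate addable corners and compare $\rs$ and $\cs$ values. The forward direction is essentially immediate from Corollary~\ref{C:stripmodifiedrows} once Property~\ref{P:stringtypes} is applied.
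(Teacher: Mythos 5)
Your forward direction is correct and matches the paper's: the paper simply says it is ``immediate from Corollary~\ref{C:stripmodifiedrows}'', and the detail you supply — that $\Left_{\row(a_1)}(\bdy\la)$ lies in $\bdy\la\setminus\bdy\mu$ because $h_\la(\Left_{\row(a_1)}(\bdy\la))=k$ (row-type string) and $a_1\in\mu/\la$ pushes its hook length in $\mu$ past $k$ — is exactly the omitted one-liner.

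Your reverse direction, however, has a genuine gap. The claim that $\col(a_1)=c_{s,u}+1$ is false in general: $\Left_{\row(a_1)}(\bdy\la)$ is the \emph{leftmost} cell of $\bdy\la$ in the row of $a_1$, and $a_1$ is at the \emph{right} end of that row, so $\col(a_1)-c_{s,u}=\rs(\la)_{\row(a_1)}$, which need not be $1$. The subsequent hook-length bookkeeping in column $c_{s,u}$ therefore does not localize to the cell below $a_1$, and the argument does not force $a_1\in\bdy\mu$. A second, more structural problem is that your analysis treats $s$ as if it were $\la$-addable, but $s$ is one of several translate strings in the row move $m$ and may have positive indent $\Ind_m(s)$; your single-column argument does not account for the cells of $m$ sitting between $\la$ and $s$. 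This is precisely what the paper's proof handles by a counting argument: it sets $d$ to be the number of strings of $m$ in the same rows as $s$ that are at or to the left of $s$, deduces that $S$ contains the $d-1$ cells left-adjacent to $x$ in its row, then uses Property~\ref{L:Smmod} to show $S$ contains $d$ cells in the row of $\Bot_{\col(x)}(\bdy\la)$, and this is what places the top cell of $s$ in $S$. Your reduction via reasonableness to ``$s\cap S\ne\emptyset$'' is the right first step (and is the same final step the paper takes), but you need an argument of the paper's counting flavor — tracking how many strings of $m$ and how many cells of $S$ occupy the relevant rows — rather than a one-column hook-length comparison.
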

\begin{proof} The forward direction is immediate from Corollary \ref{C:stripmodifiedrows}.
For the converse, suppose $c_{s,u}$ contains a cell $x$ of $S$.
Let $d$ be the number of strings of $m$ that are in the same rows as $s$
and are equal to $s$ or to its left. Then $S$ contains the $d-1$
cells to the left of $x$.
It follows from Property~\ref{L:Smmod} that
$S$ contains $d$ cells in the row of $\Bot_{\col(x)}(\bdy\la)$.
This puts the top cell of $s$ into $S$. By reasonableness $s\subset S$.
\end{proof}

\begin{proposition}\label{L:Smreasonable}
Suppose $(S,m)$ is an initial pair with $S=\mu/\la$ maximal. Then $(S,m)$ is
reasonable.
\end{proposition}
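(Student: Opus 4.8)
The plan is to argue by contradiction, using that a maximal strip has no augmentable corners (Proposition~\ref{P:maxstrip}). So suppose $(S,m)$ is not reasonable: there is a string $s=\{a_1,\dots,a_\ell\}$ of $m=\nu/\la$, with each $a_{t+1}$ below and contiguous to $a_t$, such that $s\cap S\ne\emptyset$ but $s\not\subseteq S$.

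The first step is to reduce to the case that $s$ is primary, i.e.\ that $s$ consists of $\la$-addable corners. If $x\in s\cap S\subseteq\mu/\la$, then every cell weakly to the left of $x$ in its row lies in $\mu$ (since $\mu\supseteq\la$ is a partition), hence in $S$; since the indent of a cell of $m$ is constant along its string (Lemma~\ref{L:addable}) and the strings of the row move $m$ are translates whose top cells occupy consecutive columns (Definition~\ref{D:defmove}), this shows that if $s$ is not primary then the leftmost string of $m$ lying in the rows of $s$ is primary, meets $S$, and is also not contained in $S$. Hence we may take $s$ primary. Then a cell $a_t$ of $s$ lies in $S$ exactly when it lies in $\mu$, and any cell of $s$ outside $S$ is $\mu$-addable because $\la$ and $\mu$ agree on its row. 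As $s\cap S$ is a nonempty proper subset of $\{a_1,\dots,a_\ell\}$, there is an index $t$ with exactly one of $a_t,a_{t+1}$ in $S$.

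The second step is, in each of the two cases, to locate an augmentable corner of $S$. Take for instance $a_t\in S$, $a_{t+1}\notin S$ (the other case is analogous, yielding an upper rather than a lower augmentable corner). Then $a_{t+1}$ is $\mu$-addable. Because $s$ is a row-type string, Definition~\ref{D:stringtypes} and Remark~\ref{R:stringchange} identify, via hook lengths, the cell of $\bdy\mu$ that leaves the boundary when $a_{t+1}$ (or the $\mu$-addable corner it determines, supplied by Remark~\ref{R:rowshift} and Lemma~\ref{L:addablecornerbelow}) is adjoined to $\mu$, and show it lies in a column $c$ that is positively modified by $m$. By Property~\ref{L:Smmod}, $c$ is not negatively modified by $S$; combining this with the fact that $a_t$ contributes a cell of $S$ in the same group of columns, together with Property~\ref{L:checkstrip} and Corollary~\ref{C:stripmodifiedrows}, one checks that $c$ is in fact a modified column of $S$. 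Unwinding the definition, this exhibits a lower augmentable corner of $S$, contradicting Proposition~\ref{P:maxstrip}.

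The main obstacle is the bookkeeping in the second step: verifying that $c$ is genuinely a modified column of $S$ (not merely a column meeting $S$) and that the corner located satisfies the exact definition of a lower (resp.\ upper) augmentable corner. This is where the hypotheses that $S=\mu/\la$ is a strip of $k$-shapes — so $\rs(\mu)/\rs(\la)$ is a horizontal strip and $\cs(\mu)/\cs(\la)$ a vertical strip — and that $s$ is row-type are really consumed, through the hook-length estimates behind Remark~\ref{R:rowshift} and Lemma~\ref{L:addablecornerbelow}. A secondary point is making the reduction to primary strings airtight; if that proves awkward, one can instead run the case analysis directly on the non-primary string, noting that the indents of a cell of $s\cap S$ in $m$ and in $S$ coincide.
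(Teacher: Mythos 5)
Your overall plan — argue by contradiction, find an augmentable corner of the maximal strip, invoke Proposition~\ref{P:maxstrip} — is the same as the paper's, and your Case A/Case B dichotomy is morally the base case of the paper's argument. But two steps in your sketch are genuine gaps rather than routine bookkeeping.

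The first gap is the reduction to a primary string. You claim that if $s$ meets $S$ but $s\not\subseteq S$, then the leftmost (primary) string $s'$ of $m$ lying in the same rows also ``meets $S$ and is also not contained in $S$.'' The first half is fine: a cell $b_i^{(0)}\in s'$ sits weakly to the left of $a_i\in s\cap S$ in the same row, is in $m$, hence in $\mu\setminus\la=S$. But the second half does not follow. If $a_j\notin S$, i.e.\ $a_j\notin\mu$, the translate $b_j^{(0)}$ of $a_j$ in $s'$ is $d$ columns to the left and may very well lie in $\mu$ (one only knows $\la_{\row(a_j)}\le\mu_{\row(a_j)}\le\col(a_j)-1$, and $\col(b_j^{(0)})=\col(a_j)-d$ can fall in that range). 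So $s'$ may be entirely contained in $S$, and the counterexample does not transfer to the primary string. The paper avoids this exact trap by running an induction on $\Ind_m(s)$ instead of trying to jump directly to indent $0$; the inductive step feeds information from the string $s'$ with $\Ind_m(s')=d-1$ (which by the induction hypothesis is fully absorbed into a single cover-type string $t'$ of $S$) back into the analysis of $s$.

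The second gap is in locating the augmentable corner. You write that the removed cell ``lies in a column $c$ that is positively modified by $m$,'' invoke Property~\ref{L:Smmod}, and say ``one checks'' that $c$ is a modified column of $S$. But $c$ positively modified by $m$ and $c$ modified by $S$ are independent conditions, and Property~\ref{L:Smmod} only rules out $c$ being \emph{negatively} modified by $m$ when $c$ is modified by $S$ — it runs in the wrong direction for what you need. This is precisely where the paper's canonical decomposition of $S$ into cover-type strings (Corollary~\ref{C:coverseq}, with bottom cells indexing the modified columns) is doing real work: in the inductive step one concludes that $\col(a_i)$ is \emph{not} a modified column of $S$ because the cell $b_i$ of $s'$ is not the bottom of its cover-type string $t'$, and that forces $a_{i+1}$ to lie in the cover-type string $t$ through $a_i$. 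Without some substitute for that bookkeeping, ``modified column'' remains uncontrolled. You anticipate this (``A secondary point is making the reduction to primary strings airtight\dots one can instead run the case analysis directly on the non-primary string''), but that fallback is exactly the paper's indent-induction and you have not worked out the step that makes it go through.
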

\begin{proof} Let $s=\{a_1,a_2,\dotsc,a_\ell\}$ be a string of $m$
and suppose $a_i\in S$. As in Corollary \ref{C:coverseq},
we choose the unique decomposition of $S$ into cover-type strings
such that the bottom cell of $t_j$ is the $j$-th modified column of $S$ for all $j$
(going from left to right) and $t_j$ is taken to be maximal
given $t_1,\dotsc,t_{j-1}$.

Suppose $a_i$ is in the string $t$ of $S$.
It suffices to show that (1) $a_{i-1}\in t$ if $i>1$ and (2) $a_{i+1}\in t$ if $i<\ell$.
We prove (2) as (1) is similar.

The proof proceeds by induction on the indent $\Ind_m(s)$ of $s$ in $m$.
Suppose first that $\Ind_m(s)=0$, that is, $s$ is $\la$-addable.
We have $a_{i+1}\in S$, for otherwise it would be a lower augmentable corner of $S$
which would contradict the maximality of $S$ by Proposition \ref{P:maxstrip}.
By the choice of the decomposition of $S$, $a_{i+1}$ and $a_i$ are both in $t$.

Now suppose the Lemma holds for all strings $s'$ of $m$
with $\Ind_m(s')<d$. Let $s'=\{b_1,b_2,\dotsc,b_\ell\}$ be the string of $m$ preceding $s$.
Since $d>0$, $b_j$ is just left of $a_j$ for all $j$. Since $a_i\in S$ it follows that $b_i\in S$.
By induction the cover-type string $t'$ of $S$ containing $b_i$ contains $s'$.
So $\col(b_i)=\col(a_i)^-$ is not a modified column of $S$.
This implies that $\col(a_i)$ is also not a modified column of $S$.
Due to the decomposition of $S$ into covers, this means that $t$ has
a cell below $a_i$, that is, $a_{i+1}\in t$.
\end{proof}

\subsection{Contiguity}
Suppose $(S,m)$ is reasonable where $S=\mu/\la$ and $m$ is a move from $\la$ to $\nu$.
We say that $(S,m)$ is \defit{contiguous} if
there is a cell $b \in \bdy \mu \cap \bdy \nu$ which is not present
in $\bdy (\mu \cup \nu)$; $b$ is called a \defit{disappearing cell}.

\begin{example} The following strip and move (indicated by $S$ and $m$ respectively)
are contiguous for $k=6$ with disappearing cell $b$.
\begin{equation*}
\begin{diagram}
\tableau[sby]{ \\ \\ && \\ &&& \bl m \\ \bl&&& \\ \bl&\bl&\bl&b&&&&\bl S }
\end{diagram}
\end{equation*}
\end{example}

\begin{lem}\label{L:disappearing}
Suppose $b=(r,c)$ is a disappearing cell. Then
\begin{enumerate}
\item
Column $c$ is positively modified by $m$ and contains no cells of $S$,
\item
Row $r$ is modified by $S$.
\item
Column $c$ contains an upper augmentable corner for $S$.
\end{enumerate}
\end{lem}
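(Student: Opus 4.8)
The plan is to unwind the definition of ``disappearing cell'' and read off each of the three conclusions from the local geometry around $b=(r,c)$. By definition, $b\in\bdy\mu\cap\bdy\nu$ but $b\notin\bdy(\mu\cup\nu)$, which means $h_{\mu}(b)\le k$, $h_{\nu}(b)\le k$, yet $h_{\mu\cup\nu}(b)>k$. Since $\mu\cup\nu$ has all the cells of both $\mu$ and $\nu$, its hook lengths at $b$ are weakly larger than those in $\mu$ and in $\nu$; the strict inequality $h_{\mu\cup\nu}(b)>k$ must therefore be caused by cells added \emph{above} $b$ in its column (contributing to the leg) or \emph{to the right} of $b$ in its row (contributing to the arm) when passing from $\mu$ or $\nu$ to $\mu\cup\nu$. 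I will analyze which of $S=\mu/\la$ and $m=\nu/\la$ supplies the extra arm and which supplies the extra leg.

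First I would establish (1). Since $m$ is a row move, $\nu/\la$ is a horizontal strip by Property~\ref{P:movestrip}, and by the remark after Definition~\ref{D:defmove} a row move right-shifts rows of $\bdy\la$, so $\bdy\nu$ has at most one cell per column outside $\bdy\la$; similarly $S$ is a horizontal strip. If column $c$ contained a cell of $S$, then since $c$ already has the $k$-boundary cell $b$ from $\mu$, the extra cell of $S$ in column $c$ lies above $b$ and would force $h_{\mu}$ at the cell below, contradicting that we need $h_\nu(b)\le k$ while $\bdy\mu$ and $\bdy\nu$ differ only in the modified rows/columns of the respective maps — more carefully, I would argue that the only way to raise $h_{\mu\cup\nu}(b)$ past $k$ is for $m$ to add a cell to the right of $b$ in row $r$ (so row $r$ is positively modified by $m$ at a column $>c$) \emph{and} for the extra leg to come from $S$, or symmetrically. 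The asymmetry ``$m$ is a row move, $S$ is a strip'' forces the arm contribution to come from $m$ (row moves extend rows to the right) and the leg contribution from $S$. Thus column $c$ is positively modified by $m$, and $S$ contributes nothing in column $c$, giving (1). Then (2) is immediate: the extra leg at $b$ comes from a cell of $S$ directly above $b$'s column segment, so some cell of $S$ lies in column $c$ — wait, that contradicts (1). So instead the extra leg must come from $S$ \emph{shortening} the interior: precisely, $\bdy\mu$ is smaller than $\bdy(\mu\cup\nu)$ because going from $\la$ to $\mu$ removes cells (this is $\bdy\la\setminus\bdy\mu$, a horizontal strip by Property~\ref{L:innerstrip}), and one of those removed cells sits in column $c$ above $b$; since that removed cell lies in $\bdy\la$ it lies in some row, which is then a modified row of $S$, giving (2).

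For (3) I would combine (1) and (2) with the characterization of upper augmentable corners. By (2), row $r$ is a modified row of $S$; by (1), column $c$ has no cell of $S$ and $c$ is positively modified by $m$, so $h_\mu$ at the top of column $c$ of $\bdy\mu$ is bounded suitably. Using Remark~\ref{R:rowshift} (or Lemma~\ref{L:addablecornerbelow}), the cell $b$ having $h_\mu(b)\le k$ together with the fact that $\bdy\la\setminus\bdy\mu$ has a cell in column $c$ in a modified row $r$ produces a $\mu$-addable cell $a$ in column $c$ that, when added, removes a cell of $\bdy\mu$ in the modified row $r$ of $S$ and in column $c$; checking that $a$ does not lie atop a cell of $S$ uses (1) again (column $c$ contains no cell of $S$). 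By Definition in \S\ref{subsectionaugmentation}, $a$ is an upper augmentable corner of $S$ associated to row $r$, proving (3).

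\textbf{The main obstacle} I anticipate is pinning down rigorously, in step~(1), exactly which of the two maps contributes the extra arm and which the extra leg at $b$, i.e.\ translating the inequality $h_{\mu\cup\nu}(b)>k\ge\max(h_\mu(b),h_\nu(b))$ into a clean combinatorial statement about the modified rows and columns. The subtlety is that $\mu$ and $\nu$ are not comparable in general, so $\bdy\mu$ and $\bdy\nu$ both sit inside the larger diagram $\mu\cup\nu$ and one has to argue that the ``new'' cells contributing to arm and leg of $b$ cannot both come from the same source — this is where the transpose-asymmetry (row move versus horizontal strip) is essential, together with Lemma~\ref{L:distancestrings}/Corollary~\ref{C:rankbound} bounding how far a row move can reach, and the strip conditions forcing $\cs(\mu)/\cs(\la)$ to be vertical. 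Once that bookkeeping is done, (2) and (3) follow quickly from Remark~\ref{R:rowshift} and the definition of upper augmentable corner.
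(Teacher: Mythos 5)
Your proposal contains a genuine gap at the heart of part (1), and the self-correction you attempt mid-argument (``wait, that contradicts (1)'') signals the problem without resolving it.

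The crux of part (1) is to determine, among the four possibilities for column $c$ --- it contains a cell of both $S$ and $m$, of $S$ only, of $m$ only, or of neither above $\bdy\la$ --- which one actually holds. Writing $n_S$ and $n_m$ for the number of cells of $S$ and $m$ in row $r$, the ``both'' and ``neither'' cases each give $h_{\mu\cup\nu}(b)=\max(h_\mu(b),h_\nu(b))\le k$, contradicting disappearance. The genuinely hard case is ``$S$ only,'' which your argument never isolates. The paper rules it out by a reasonableness argument: if $x\in S$ sits atop column $c$ of $\bdy\la$, then $n_m>n_S$ and $h_\nu(b)=k$; since $m$ must remove the cell $b^*$ just left of $b$, one gets $\cs(\bdy\la)_{c^-}=\cs(\bdy\la)_c$, so by Property~\ref{C:rowmovecolsize} the row move $m$ cannot negatively modify column $c^-$ and hence has a cell $x^*$ there in the same string as the rightmost cell $y$ of row $r$ of $m$. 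Since $y\notin S$ (because $n_m>n_S$) but $x^*\in S$ (because $x^*$ is a $\la$-addable corner of the horizontal strip $S$), reasonableness of $(S,m)$ is violated. Only then does ``$m$ only'' survive, forcing $n_S>n_m$, $h_\mu(b)=k$, and column $c$ positively modified by $m$ with no cell of $S$. Your proposed mechanism --- ``the asymmetry forces the arm to come from $m$ and the leg from $S$'' --- gets this backwards (in fact $m$ supplies the leg via the cell $x$ above $b$ in column $c$, and $S$ supplies the arm via $n_S>n_m$), is inconsistent with the very conclusion (1) you then assert, and makes no use of reasonableness, which is the decisive hypothesis here.

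The patch you offer for (2) (reading the leg off $\bdy\la\setminus\bdy\mu$ instead) conflates two different things: a cell of $\bdy\la\setminus\bdy\mu$ in column $c$ need not sit directly above $b$, and even if it did, its presence does not by itself show row $r$ is modified by $S$. The paper's argument for (2) is again via Property~\ref{C:rowmovecolsize}: if $r$ were unmodified, then $S$ removes $n_S$ cells just left of $b$ and has $n_S$ cells just left of $x$; since $m$ is $\la$-addable it also contains those $n_S$ cells just left of $x$, yet $n_S>n_m$ means $m$ modifies column $c$ but not all those columns to its left, contradicting Property~\ref{C:rowmovecolsize}. Your outline for (3) is fine in structure, but it depends on (1) and (2) holding, so the gap propagates.
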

\begin{proof} Let $n_m$ and $n_S$ be respectively the number of cells of $m$ and $S$ in row $r$.
If column $c$ contains a cell of both $S$ and $m$ then we have the contradiction
$h_{\mu\cup\nu}(b)=1+h_\la(b) + \max(n_m,n_S) = \max(h_\mu(b),h_\nu(b))\le k$.
A similar contradiction is reached if column $c$ contains neither a cell of $m$ nor one of $S$.

Suppose column $c$ contains a cell in $S$; it is the cell $x$ atop the column
$c$ of $\bdy\la$.  Then $n_m>n_S$ and
$h_\nu(b)=k$ since $b$ is a disappearing cell.  Let $y$ be the rightmost cell
of $m$ in row $r$, and observe that $y \not \in S$.
The move $m$ removes the cell
$b^*$ just left of $b$ and thus $\cs(\bdy\la)_{c^-}=\cs(\bdy\la)_c$.
By Property~\ref{C:rowmovecolsize} $m$ cannot negatively modify column $c^-$.
Therefore $m$ has a cell $x^*$ in column $c^-$ just left of $x$ that belongs
to the same string of $m$ as $y$.
Since $S$ is $\la$-addable, $x^*\in S$.   But by reasonableness of $(S,m)$,
since $y \not \in S$
we get the contradiction that $x^* \not \in S$.

Therefore column $c$ contains a cell of $m$ (namely, $x$).
We have $n_S>n_m$ and $h_\mu(b)=k$,
and $x$ has no cell of $m$ contiguous to and below it. Item (1) follows.

If $r$ is not a modified row of $S$ then $S$ removes the $n_S$ cells just left of $b$
and $S$ contains $n_S$ cells just left of $x$. Since $m$ is $\la$-addable
$m$ also contains the $n_S$ cells just left of $x$.
But then $m$ doesn't modify some of these columns (since $n_S>n_m$)
while it modifies column $c$,
contradicting Property~\ref{C:rowmovecolsize}. This proves (2).

It follows that $x\in m$ is an upper augmentable corner for
$S$, proving (3).
\end{proof}

By Proposition~\ref{L:Smreasonable}, we have the following corollary.
\begin{cor} \label{C:rowmaxnotcontig}
Suppose $S$ is a maximal strip and $m$ a row move.  Then $(S,m)$ is
non-contiguous.
\end{cor}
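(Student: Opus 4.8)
The plan is to derive a contradiction from the existence of a disappearing cell, using that a maximal strip has no augmentable corners. First I would invoke Proposition~\ref{L:Smreasonable}, which guarantees that the initial pair $(S,m)$ is reasonable whenever $S=\mu/\la$ is a maximal strip; this is precisely the hypothesis under which Lemma~\ref{L:disappearing} applies, so the bulk of the work has already been done upstream. Next, suppose toward a contradiction that $(S,m)$ is contiguous. Then by definition there is a disappearing cell $b=(r,c)\in\bdy\mu\cap\bdy\nu$ which is not present in $\bdy(\mu\cup\nu)$, where $\nu=m*\la$. Applying Lemma~\ref{L:disappearing}(3), column $c$ must contain an upper augmentable corner of $S$. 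But Proposition~\ref{P:maxstrip} asserts that a maximal strip has no augmentable corners, so this is impossible. Hence no disappearing cell exists, and $(S,m)$ is non-contiguous.

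There is essentially no obstacle in this argument: the corollary is an immediate consequence of the two cited results once reasonableness of $(S,m)$ is established, and that is exactly the content of Proposition~\ref{L:Smreasonable}. The only point worth a moment's care is making sure the hypotheses of Lemma~\ref{L:disappearing} are in force before extracting the upper augmentable corner; invoking Proposition~\ref{L:Smreasonable} first handles this cleanly. (Alternatively one could combine parts~(1) and~(2) of Lemma~\ref{L:disappearing} to reach a contradiction, but part~(3) gives the most direct route.)
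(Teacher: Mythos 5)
Your proof is correct and takes essentially the same approach as the paper: reasonableness from Proposition~\ref{L:Smreasonable} makes Lemma~\ref{L:disappearing} applicable, whose part~(3) yields an upper augmentable corner of $S$, contradicting the characterization of maximality in Proposition~\ref{P:maxstrip}. The paper states the corollary as following from Proposition~\ref{L:Smreasonable} with the intermediate steps left implicit; you have simply spelled them out.
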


\subsection{Interference of strips and row moves}

Suppose that $(S,m)$ is reasonable and non-contiguous. Then
$$
\cs(\mu) -\cs(\la) + \cs(m*\la) = \change_\cs(S) +
\change_\cs(m) + \cs(\la).
$$
Recalling Notation \ref{N:arrow} let
\begin{align}\label{E:mprime}
m' &= \bigcup \,\{\,\text{strings $s$}  \subset m \mid \text{$s$ and
$S$ are
  not matched below}\} \\
\label{E:mplusrow}
  m^+&=\,\uparrow_S\!(m').
\end{align}
Define the vector $\change_\cs(m')$ by considering only the modified
columns of strings in $m'$. We say that $(S,m)$ is
\defit{non-interfering} if $\cs(\la)+\change_\cs(S)+\change_\cs(m')$ is
a partition, and \defit{interfering} otherwise.

\begin{remark} \label{R:interference}
$(S,m)$ is interfering if and only if $S$ and the last string $s$ of
$m$ are not matched below, $S$ modifies column $c^+$, and
$\cs(\la)_c = \cs(\la)_{c^+}+1$, where $c=c_{s,u}$.
\end{remark}

\begin{example} \label{X:interf}
With $k=7$ the pair $(S,m)$ is interfering: there is a violation of the $k$-shape property
in $m^+*\mu$. The set of cells $m'$ is comprised of the second and third strings of $m$.
In passing from $m'$ to $m^+$ the third string has been bumped up a row.
\begin{equation*}
\begin{diagram}
\node{\tableau[pby]{\bl \\ & \\ && \\ &&& \\ &&& \\ \bl&\bl&&&& \\ \bl&\bl&\bl&\bl&&&& \\ \bl&\bl&\bl&\bl&\bl&\bl&&&&&&\bl}}
\arrow{e,t}{m} \arrow{s,t}{S}
\node{\tableau[pby]{\bl \\ & \\ && \\ &&& \\ \bl \cdot &\bl \cdot &&&\fl&\fl \\ \bl&\bl&\bl\cdot&&&&\fl \\ \bl&\bl&\bl&\bl&&&& \\ \bl&\bl&\bl&\bl&\bl&\bl&&&&&&\bl}} \\
\node{\tableau[pby]{\fl \\ &&\fl \\ &&&\fl \\ &&& \\ \bl\cdot &&&&\fl \\ \bl&\bl&\bl\cdot&&&&\fl \\ \bl&\bl&\bl&\bl&&&& \\ \bl&\bl&\bl&\bl&\bl&\bl&\bl\cdot&&&&&\fl}}
\arrow{e,b}{m^+}
\node{\tableau[pby]{ \\ && \\ &&& \\ &&& \\ \bl&\bl\cdot&\bl\cdot&&&\fl&\fl \\ \bl&\bl&\bl&&&& \\ \bl&\bl&\bl&\bl&&&& \\ \bl&\bl&\bl&\bl&\bl&\bl&\bl&&&&&}}
\end{diagram}
\end{equation*}
\end{example}

\begin{lem}\label{L:pushbump}
The set of cells $m^+$ satisfies all the conditions for a move from
$\mu$ except that $(m^+)*\mu$ may not be a $k$-shape.  Furthermore,
we have $\cs((m^+)*\mu) = \cs(\la) +\change_\cs(S) +
\change_\cs(m')$.
\end{lem}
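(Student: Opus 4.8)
The plan is to analyze $m^+$ one string at a time. Write the unique string decomposition (Proposition~\ref{P:uniquedecomp}) of the row move as $m=s_1\cup\dots\cup s_r$, indexed left to right, with $s_i=\{a^{(i)}_1,\dots,a^{(i)}_\ell\}$; the $s_i$ are mutual translates and their top cells occur in consecutive columns. By Lemma~\ref{L:initialstrings} the strings of $m$ that are not matched below $S$ form a right segment, so $m'=s_{p+1}\cup\dots\cup s_r$, where $s_1,\dots,s_p$ are precisely the strings matched below $S$, and hence $m^+=s_{p+1}^+\cup\dots\cup s_r^+$ with $s_i^+:=\,\uparrow_S\!(s_i)$ (using \eqref{E:mprime}--\eqref{E:mplusrow} and Notation~\ref{N:arrow}). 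I would first record the elementary fact that every cell of a horizontal strip over $\la$ lies immediately above the column of $\la$ it sits in --- otherwise that column would contain two cells of the strip. Consequently a column meets $S$ and a string $s_i$ only in a cell common to both, so by reasonableness $s_i\cap S=\emptyset$ unless $s_i\subseteq S$, in which case $S$ contains exactly one cell in each column of $s_i$. Thus $s_i^+=s_i$ when $s_i\cap S=\emptyset$, and $s_i^+$ is the translate $s_i+(1,0)$ (a uniform up-shift by one row) when $s_i\subseteq S$.

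The core of the proof is to show that each $s_i^+$ is a $\mu^{i-1}$-addable row-type string of length $\ell$, where $\mu^{i-1}:=\mu\cup s_{p+1}^+\cup\dots\cup s_{i-1}^+$; this exhibits $m^+$ as a move from $\mu$ in the sense of Definition~\ref{D:defmove}, except possibly for condition~\eqref{I:moveend}. Conditions (1)--(4) then follow: condition (1), $\mu\in\Ksh$, holds because $\mu/\la$ is a strip; the contiguity within $s_i^+$ is preserved since it is a translate of $s_i$; the top cells of $s_{p+1}^+,\dots,s_r^+$ lie in consecutive columns since up-shifts preserve columns and those of $s_{p+1},\dots,s_r$ already do; and the $s_i^+$ are mutual translates because translation vectors are unrestricted and two row-type strings of equal length automatically have modified rows and columns of matching sizes. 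The substantive content is (a) the addability of each $s_i^+$ and (b) that $s_i^+$ is of \emph{row-type} rather than of some other type. For (a) one uses the ``sits atop'' observation applied to $\mu=\la\cup S$ (the top cell of $s_{p+1}^+$ is right-adjacent to a cell of $s_p\subseteq S\subseteq\mu$, or lies in the first column, and no cell of $\mu$ lies further left in its row), proceeding inductively on $i$ and noting that the relevant cells of $S$ are not disturbed by the earlier strings $s_{p+1}^+,\dots,s_{i-1}^+$. For (b) one must check that the cells $\Left_{\row(\cdot)}(\bdy\mu^{i-1})$ and $\Bot_{\col(\cdot)}(\bdy\mu^{i-1})$ governing the type of $s_i^+$ have hook lengths in the same ranges as the corresponding cells for $s_i$ in $\bdy\la^{i-1}$; here I would run essentially the argument from the proof of Proposition~\ref{P:relmix}, using the non-contiguity of $(S,m)$ to rule out interference from cells of $S$ outside $m$ and using Remark~\ref{R:rowshift}, Lemma~\ref{L:addablecornerbelow} and the $k$-shape property of $\mu$ to locate the addable and removed cells. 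I expect step (b) --- keeping track of the simultaneous deformation of $\bdy\la$ by $S$ and by $m$ --- to be the main obstacle.

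For the displayed identity, once $m^+$ is known to be a move from $\mu$ (the possible failure of \eqref{I:moveend} not affecting column readings), additivity of $\change_\cs$ along the string chain $\mu=\mu^p\subset\mu^{p+1}\subset\dots\subset\mu^r=(m^+)*\mu$ gives
\[
\cs\bigl((m^+)*\mu\bigr)=\cs(\mu)+\change_\cs(m^+)=\cs(\la)+\change_\cs(S)+\change_\cs(m^+),
\]
so it remains to identify $\change_\cs(m^+)$ with $\change_\cs(m')$. The positively modified column of $s_i^+$ is $\col(a^{(i)}_\ell)$, the same as that of $s_i$, because the up-shifts preserve columns; and the analysis establishing the row-type of $s_i^+$ also tracks the removed cell at the left end of the top row through the shift, identifying the negatively modified column of $s_i^+$ with that of $s_i$. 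Summing these contributions over $i=p+1,\dots,r$ then gives $\change_\cs(m^+)=\change_\cs(m')$ by the definition of the latter, completing the proof.
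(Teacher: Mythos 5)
Your plan follows the paper's route — decompose $m^+$ into candidate strings and prove inductively that they are row-type, addable, and translates of each other — but there is a concrete false step. You assert that \emph{``the $s_i^+$ are mutual translates because translation vectors are unrestricted and two row-type strings of equal length automatically have modified rows and columns of matching sizes.''} This is not true: being a translate in the sense of Definition~\ref{D:defmove} requires, beyond a $\Z^2$ translation, that the lengths of the horizontal and vertical segments of the string diagrams agree, i.e.\ that the modified columns $c_{s,u}$ and $c_{s,d}$ have the same heights in the ambient $k$-boundary. Two row-type strings of the same length can easily fail this. Since you also describe step (b) as only ``checking that $s_i^+$ is of row-type rather than of some other type,'' your plan never verifies the full translate condition, so Definition~\ref{D:defmove}\eqref{I:movetranslates} is not actually established. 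The paper handles exactly this point by showing the \emph{diagram} of $t_i$ is a translate of the diagram of $s_i$, which it gets from Lemma~\ref{L:Smleftcolumns} (no cell of $S$ in column $c_{s,u}$) together with the hypothesis that $S$ continues below $s_i$, and, in the non-bumped case, from non-contiguity and Corollary~\ref{C:stripmodifiedrows} to see that the relevant column is unchanged between $\la$ and $\mu$. That is a real computation, not an automatic consequence of the type and length.

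A second, smaller gap is in the addability step. Your ``sits atop'' observation establishes that the bumped cell in the column of $s_p$ is a fresh cell over $\mu$, but it does not yet give that the top cell of the first bumped string is $\mu$-addable when that string is the first string $s_1$ of the whole move (there is nothing to its left to lean on). The paper closes this via Lemma~\ref{L:continuebelow}, which is precisely the statement that each column of $s_1$ has a $\mu$-addable corner when $S$ continues below $s_1$, and via Lemma~\ref{L:initialstrings} to reduce to the leftmost ($\la$-addable) string on its rows; your plan cites Remark~\ref{R:rowshift} and Lemma~\ref{L:addablecornerbelow} instead, which is not the right tool here. The rest of your outline — including the structural observation that $s_i^+$ is either $s_i$ or $s_i+(1,0)$ by reasonableness, and the derivation of $\cs((m^+)*\mu)=\cs(\la)+\change_\cs(S)+\change_\cs(m')$ once translates and addability are known — is sound and matches the paper in spirit.
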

\begin{proof}
Let $m^+ = t_1 \cup t_2 \cup \cdots \cup t_\rho$ where each $t_i$ is
either a string in $m' \setminus S$, or a string in $m' \cap
S$ shifted upwards.  We assume that the $t_i$ are ordered from
left to right, as is the convention for row moves.  It is clear that
the $t_i$ are weak translates of each other in the correct columns.  In order
to prove the lemma, we will
show
that they are successive
row type addable strings that are translates of the strings of $m$.
 We
proceed by induction on $i$.

First suppose that $t_i \subset m'$ was not bumped up.  By Lemma
\ref{L:Smleftcolumns}, $c_{s,u}$ contains no cells of $S$. By
non-contiguity and Corollary \ref{C:stripmodifiedrows}, column $c_{s,d}$ is
identical in $\la$ and $\mu$, and also in $\nu$ and $\mu \cup
\nu$.  Thus $t_i$ is a row type string of $\mu \cup t_1 \cup
\cdots \cup t_{i-1}$ equal to $s_i$.

Now suppose that $t_i$ was bumped up from $s_i \in m' \cap S$.  By
Lemma \ref{L:initialstrings}, it suffices to check the case that
$s_i$ is $\la$-addable.  First we show that $t_i$ is addable.
This is clear if $s_i$ is not equal to $s_1$, for
$s_{i-1}$ is higher than $s_i$.  Lemma \ref{L:continuebelow} deals
with the case $s_i=s_1$. The diagram of $t_i$ is a translate of that of $s_i$
by Lemma \ref{L:Smleftcolumns} and the assumption that $S$
continues below $s_i$, which ensures that the their modified columns agree in size.
\end{proof}

\begin{lem} \label{L:continuebelow}
Suppose $S$ continues below the first string
$s_1 = \{a_1,a_2,\ldots,a_\ell\}$ of  $m$.  For
each $i \in [1,\ell]$ let $c_i$ be the column containing $a_i$. Then
there is an addable corner of $\mu$ in column $c_i$.
\end{lem}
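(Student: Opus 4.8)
The hypothesis ``$S$ continues below $s_1$'' presupposes $s_1\subset S$; write $s_1=\{a_1,\dots,a_\ell\}$ with $a_1$ topmost, and observe that the columns $c_i=\col(a_i)$ strictly decrease, $c_1>\dots>c_\ell$, since consecutive cells of a string are contiguous $\la$-addable corners. The plan is to reduce to the single column $c_\ell$ and then treat it by a hook-length analysis. For the reduction, apply Lemma~\ref{L:stripcomparelengths}(2) to the $\la$-addable string $s_1\subset S$: for every $i<\ell$ the column $c_i$ contains at least as many $\mu$-addable cells as $c_\ell$, so it suffices to exhibit one $\mu$-addable corner in column $c_\ell$.

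As $a_\ell$ is $\la$-addable and $S$ is a horizontal strip, $a_\ell$ is the only cell of $\mu/\la$ in column $c_\ell$ and it lies atop that column of $\mu$, so $\mu^t_{c_\ell}=\row(a_\ell)=\la^t_{c_\ell}+1$. A $\mu$-addable corner then exists in column $c_\ell$ provided $\mu^t_{c_\ell-1}>\row(a_\ell)$, which holds as soon as $\la^t_{c_\ell-1}>\la^t_{c_\ell}+1$ or $\mu/\la$ meets column $c_\ell-1$. So I may assume that columns $c_\ell-1$ and $c_\ell$ of $\la$ terminate on consecutive rows and that $\mu/\la$ avoids column $c_\ell-1$, and seek a contradiction.

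Let $b_\ell=\Bot_{c_\ell}(\bdy\la)$. Because $s_1$ is a row-type string, $h_\la(b_\ell)<k$ (Definition~\ref{D:stringtypes}). The hypothesis that $S$ continues below $s_1$ says $c_\ell$ is not a modified column of $S$, so $\cs(\mu)_{c_\ell}=\cs(\la)_{c_\ell}$; one checks (this is where the assumptions on $S$ near $c_\ell$ enter) that $a_\ell$ then lies in $\bdy\mu$, so, the count of $k$-bounded cells in column $c_\ell$ being unchanged, $b_\ell$ must leave $\bdy\mu$, i.e.\ $h_\mu(b_\ell)>k$. Since adding $S$ raises the leg of $b_\ell$ by exactly one (the cell $a_\ell$) and its arm by at most one ($\mu/\la$ is a horizontal strip), $h_\la(b_\ell)<k<h_\mu(b_\ell)\le h_\la(b_\ell)+2$, forcing $h_\la(b_\ell)=k-1$ and the presence of a cell $z\in\mu/\la$ in the row of $b_\ell$. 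From $h_\la(b_\ell)=k-1$ and the staircase shape of $\bdy\la$ at $c_\ell-1,c_\ell$ one deduces that the cell left of $b_\ell$ has $\la$-hook $k+1$, hence $b_\ell=\Left_{\row(b_\ell)}(\bdy\la)$, and $z$, lying strictly to the right of column $c_\ell$, is itself a $\la$-addable corner. The concluding step --- and the main obstacle --- is to show that this configuration (the strip $S$ over $\la$ carrying the two $\la$-addable corners $a_\ell$ and $z$, with $\bdy\la$ having the prescribed shape at $c_\ell-1,c_\ell$) cannot coexist with $S$ being a strip and $(S,m)$ being reasonable and non-contiguous; this is established by a hook computation in $\mu\cup\nu$ (where $\nu=m*\la$) over the rows of $b_\ell$ and $a_\ell$ and the columns $c_\ell-1,c_\ell,c_\ell+1$. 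Everything preceding this last step is routine.
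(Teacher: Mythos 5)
Your reduction to the single column $c_\ell$ via Lemma~\ref{L:stripcomparelengths}(2) is the same as the paper's and is fine. The trouble is in the core case, where the proof is both incomplete by your own admission and contains an unjustified step.

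First, the step ``adding $S$ raises\dots its arm by at most one ($\mu/\la$ is a horizontal strip)'' is wrong as stated: a horizontal strip can contribute arbitrarily many cells to a single row, so the arm of $b_\ell$ can grow by more than one. (Being a horizontal strip only bounds the \emph{leg} increase of any cell by one.) Consequently the chain $h_\la(b_\ell)<k<h_\mu(b_\ell)\le h_\la(b_\ell)+2$ does not hold in general, and you cannot conclude $h_\la(b_\ell)=k-1$. Second, you explicitly defer the final contradiction --- which you yourself call ``the main obstacle'' --- to an unstated ``hook computation in $\mu\cup\nu$.'' That is precisely the part that would have to be supplied; without it there is no proof.

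More to the point, the approach misses the simple mechanism the paper uses. The hypothesis that $m$ is a row move gives, via Property~\ref{P:cond5}, that $\cs(\la)_{c^-}>\cs(\la)_c$ where $c=c_\ell=c_{s_1,d}$. This is the crucial input you never invoke. The paper's proof is then short: assume column $c^-$ contains no cell of $S$; by Corollary~\ref{C:stripmodifiedrows} it then contains no cell of $\bdy\la\setminus\bdy\mu$ either. Since $\Bot_c(\bdy\la)$ is removed in passing to $\bdy\mu$ (because $c$ is a modified column of neither $S$ nor $\cs$ but receives $a_\ell$ on top, so one cell must leave at the bottom), and since $\bdy\la\setminus\bdy\mu=\Int(\mu)/\Int(\la)$ is a skew shape of partitions so removal of $(r,c)$ with $(r,c^-)\in\bdy\la$ would force removal of $(r,c^-)$ as well, the bottom of column $c^-$ in $\bdy\la$ must lie strictly above that of column $c$; combined with $\cs(\la)_{c^-}\ge\cs(\la)_c+1$ this gives $\la^t_{c^-}\ge\la^t_c+2$, i.e.\ a $\mu$-addable corner atop column $c$. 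No hook-length bookkeeping is needed. I'd suggest you redo the argument along these lines rather than trying to make the hook estimate work.
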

\begin{proof}
Consider the case $i = \ell$ and set $c=c_{\ell}$.
We prove the equivalent statement that  column $c^-$ either intersects $S$ or satisfies
$(\la)_{c^-} \geq (\la)_{c} + 2$.
Since $m$ is a move,
we have $\cs(\la)_{c^-} > \cs(\la)_{c}$.  Assume there is no cell of $S$ in
column $c^-$. Then
Corollary \ref{C:stripmodifiedrows}  and ``continuing below''
imply that the bottom
of $c^-$ in $\bdy \la$ starts higher than that of $c$. This
implies $(\la)_{c^-} \geq (\la)_{c} + 2$.

The general case then follows from Lemma~\ref{L:stripcomparelengths} since
$s_1$ is $\la$-addable and there is a $\mu$-addable corner in column $c=c_{\ell}$.
\end{proof}

\subsection{Row-type pushout: non-interfering case}
\label{SS:rowpushnointerfere} Let $(S,m)$ be reasonable,
non-contiguous, and non-interfering. Then by definition we declare
$(S,m)$ to be compatible, set $\eta=(m^+)*\mu$, let $(\tS,\tm)$ be
as in \eqref{E:pushdiag}, and define the pushout of $(S,m)$ by
\eqref{E:pushoutdef}. By Lemma \ref{L:pushbump} and Proposition
\ref{L:etastrip}, $\tm$ is a (possibly empty) row move and $\tS$ is
a strip.

\begin{proposition}\label{L:etastrip}
Suppose $(S,m)$ is non-interfering.  Then $\eta/\nu$ is a strip.
\end{proposition}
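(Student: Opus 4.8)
The plan is to verify, in order, the four requirements of Definition~\ref{D:strip} for $\eta/\nu$: that $\eta\in\Ksh$; that $\eta/\nu$ is a horizontal strip; that $\rs(\eta)/\rs(\nu)$ is a horizontal strip; and that $\cs(\eta)/\cs(\nu)$ is a vertical strip, the last two of a common size. I would first dispose of $\eta\in\Ksh$. By Lemma~\ref{L:pushbump}, $\eta/\mu=m^+$ is a disjoint union of row-type addable strings that are translates of the strings of $m$; a row-type string adds and removes exactly one cell in each of the rows of its cells (Property~\ref{P:stringtypes}), hence has $\change_\rs=0$, so summing over the strings gives $\rs(\eta)=\rs(\mu)\in\Part$. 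Lemma~\ref{L:pushbump} also gives $\cs(\eta)=\cs(\la)+\change_\cs(S)+\change_\cs(m')$, which is a partition precisely because $(S,m)$ is non-interfering. Thus $\eta\in\Ksh$; in particular all five conditions of Definition~\ref{D:defmove} now hold for $m^+$, so $m^+$ is a genuine row move from $\mu$ to $\eta$ and $\eta/\mu$ is a horizontal strip by Property~\ref{P:movestrip}.

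Next I would establish simultaneously that $\nu\subseteq\eta$ and that $\eta/\nu$ is a horizontal strip, by a column-by-column count. For a column $c$ and a set of cells $X$, let $\varepsilon_c(X)\in\{0,1\}$ record whether column $c$ contains a cell of $X$. Since the horizontal strips $S=\mu/\la$, $m=\nu/\la$, $m^+=\eta/\mu$ each add at most one cell in column $c$, directly on top of the column, one has $\eta^t_c=\la^t_c+\varepsilon_c(S)+\varepsilon_c(m^+)$ and $\nu^t_c=\la^t_c+\varepsilon_c(m)$. Because $m^+=\uparrow_S(m')$ only moves cells of $m'$ upward within their columns, $m^+$ occupies exactly the columns of $m'\subseteq m$, so $\varepsilon_c(m^+)\le\varepsilon_c(m)$ and $\eta^t_c-\nu^t_c\le\varepsilon_c(S)\le1$. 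Conversely, if $\varepsilon_c(m)=1$, the string of $m$ through column $c$ is, by reasonableness of $(S,m)$, either contained in $S$ (so $\varepsilon_c(S)=1$) or disjoint from $S$; in the latter case that string lies in $m'$ and column $c$ carries no cell of $S$ (both $S$ and $m$ add their cells directly atop column $c$ of $\la$), so $\uparrow_S$ fixes it and $\varepsilon_c(m^+)=1$. Either way $\eta^t_c\ge\nu^t_c$, so $\nu\subseteq\eta$ and $\eta/\nu$ is a horizontal strip.

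Then I would treat the shapes. Since $\rs(\eta)=\rs(\mu)$ and $\rs(\nu)=\rs(\la)$ (a row move preserves the row shape), $\rs(\eta)/\rs(\nu)=\rs(\mu)/\rs(\la)$ is a horizontal strip of size $r:=\rk(S)$ because $S$ is a strip. For the column shapes, combining Lemma~\ref{L:pushbump} with $\cs(\nu)=\cs(\la)+\change_\cs(m)$ and the additivity $\change_\cs(m)=\change_\cs(m')+\change_\cs(m\setminus m')$ over strings gives $\cs(\eta)-\cs(\nu)=\change_\cs(S)-\change_\cs(m\setminus m')$. Now $\change_\cs(S)$ is a $0/1$ vector of weight $r$ supported on the modified columns of $S$; and for each string $s$ of $m\setminus m'$, i.e.\ each string matched below by $S$, the column $c_{s,d}$ is a modified column of $S$, so subtracting the $+1$ of $\change_\cs(s)$ there merely cancels an entry of $\change_\cs(S)$, while $c_{s,u}$ is \emph{not} a modified column of $S$ by Property~\ref{L:Smmod}, so subtracting its $-1$ creates a new $+1$ meeting nothing. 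As the columns $c_{s,u}$ for distinct $s\in m\setminus m'$ are distinct (no cancellation in $\change_\cs(m)$, cf.~\eqref{E:rowmovecschange}), $\cs(\eta)-\cs(\nu)$ is again a $0/1$ vector of weight $r$, so $\cs(\eta)/\cs(\nu)$ is a vertical strip of size $r$; together with the previous paragraph this shows $\eta/\nu$ is a strip of rank $r$.

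The step I expect to be the main obstacle is the column count of the second paragraph. It relies on knowing that the cells of $m^+$ really do sit directly on top of $\mu$ in each column — which is why $(m^+)*\mu$ being a partition (part of the first step) must be secured beforehand — and on reasonableness genuinely forcing every string of $m$ to be either inside $S$ or disjoint from it, so that the dichotomy is exhaustive; non-contiguity (Lemma~\ref{L:disappearing}, no disappearing cell) is precisely what keeps the column lengths compatible. The remaining requirements are then routine bookkeeping with the vector identities provided by Lemma~\ref{L:pushbump}, Property~\ref{P:stringtypes}, and Property~\ref{L:Smmod}.
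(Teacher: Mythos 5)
Your proof is correct and follows essentially the same route as the paper: the core is the vector identity $\cs(\eta)-\cs(\nu)=\change_\cs(S)-\change_\cs(m\setminus m')$ together with Property~\ref{L:Smmod} to show it is a 0--1 vector, while $\rs(\eta)/\rs(\nu)=\rs(\mu)/\rs(\la)$ handles the row shapes. The paper dismisses the containment $\nu\subseteq\eta$ and the horizontal-strip condition for $\eta/\nu$ with ``It is immediate''; your column-by-column count (using reasonableness and the fact that $\uparrow_S$ only moves cells within their columns) is a careful filling-in of that gap, not a different argument.
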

\begin{proof}
It is immediate that $\eta/\nu$ is a horizontal strip.  We have
$\rs(\eta)/\rs(\nu) = \rs(\mu)/\rs(\la)$, which is a horizontal strip by
assumption.  Also \begin{align*} \cs(\eta) -\cs(\nu) &=
\change_\cs(S) + \change_\cs(m') - \change_\cs(m) \\ &=
\change_\cs(S) - \change_\cs(m \setminus m')\end{align*}
Observe that $m \setminus m'$ corresponds to the strings $s$ of $m$ such that
$s$ and $S$ are matched below.  Therefore  $\cs(\eta) -\cs(\nu)$
is a
0-1 vector since
the positively modified columns of $m \setminus m'$
cancel out with some modified columns of $S$, and the negatively
modified columns of $m \setminus m'$ do not coincide with modified
columns of $S$ by Property~\ref{L:Smmod}.
\end{proof}

\subsection{Row-type pushout: interfering case}
\label{SS:rowpushinterfere} Suppose $(S,m)$ is reasonable,
non-contiguous, and interfering. We say that $(S,m)$ is
\textit{pushout-perfectible} if there is a set of cells $m_\comp$
outside $(m^+)*\mu$ such that if
\begin{align} \label{E:etanotation}
\eta = ((m^+)*\mu) \cup m_\comp
\end{align}
then $\eta/\nu$ is a strip and $\eta/\mu$ is a row move from $\mu$
with the same initial string as $m^+$. By
Proposition~\ref{P:uniquedecomp}, $m_\comp$ is unique if it exists.

In the case that $(S,m)$ is pushout-perfectible, then by definition
we declare $(S,m)$ to be compatible. With $\eta$ as in
\eqref{E:etanotation} we define $(\tS,\tm)$ and the pushout of
$(S,m)$ by \eqref{E:pushdiag} and \eqref{E:pushoutdef}. By
definition $\tS$ is a strip and $\tm$ is a row move.

\begin{example} Continuing Example \ref{X:interf}, the cells of
$m_\comp$ are darkened as added to $m^+*\mu$:
\begin{equation*}
\begin{diagram}
\node{\tableau[pby]{ \\ && \\ &&& \\ &&& \\ \bl&\bl&\bl&&&& \\ \bl&\bl&\bl&&&& \\ \bl&\bl&\bl&\bl&&&& \\ \bl&\bl&\bl&\bl&\bl&\bl&\bl&&&&&}}
\arrow{e,t}{m_\comp}
\node{\tableau[pby]{ \\ && \\ &&& \\ &&& \\ \bl&\bl&\bl&&&& \\ \bl&\bl&\bl&\bl\cdot&&&&\fl \\ \bl&\bl&\bl&\bl&&&& \\ \bl&\bl&\bl&\bl&\bl&\bl&\bl&&&&&}}
\end{diagram}
\end{equation*}
\end{example}

\begin{proposition}\label{L:maximalinterfere}
Suppose $(S,m)$ is interfering with $m$ a row move and $S$ maximal.
Then $(S,m)$ is pushout-perfectible (and hence compatible).
Furthermore the strings of $m_\comp$ lie on the same rows as the
final string of $m$ and no column contains both cells of $m_\comp$
and $S$.
\end{proposition}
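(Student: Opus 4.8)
Since $S$ is maximal, Proposition~\ref{L:Smreasonable} shows $(S,m)$ is reasonable and Corollary~\ref{C:rowmaxnotcontig} shows it is non-contiguous; hence the only point at issue is pushout-perfectibility. By Remark~\ref{R:interference} the interference is concentrated at one spot: writing $s=s_r=\{a_1,\dots,a_\ell\}$ for the final string of $m$ and $c=c_{s,u}$, we have that $S$ and $s$ are not matched below (so $s\subset m'$), that $S$ modifies column $c^+$, and that $\cs(\la)_c=\cs(\la)_{c^+}+1$. Feeding this into Lemma~\ref{L:pushbump} --- which asserts that $m^+$ obeys all the conditions for a row move from $\mu$ except possibly $(m^+)*\mu\in\Ksh$, and that $\cs\bigl((m^+)*\mu\bigr)=\cs(\la)+\change_\cs(S)+\change_\cs(m')$ --- one reads off precisely where $\cs\bigl((m^+)*\mu\bigr)$ fails to be a partition: column $c$ has been shortened by one (it carries $c_{s,u}$), while each column $c^+,c^{++},\dots$ in the run of $S$-modified columns of $\cs(\la)$-size $\cs(\la)_{c^+}$ has been lengthened by one.

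First I would construct $m_\comp$ as the pushout analogue of the perfection of Lemma~\ref{L:perfection}: adjoin to $\la\cup m$ successive right-translates of the final string $s$, one column at a time, and stop as soon as the column shape is again a partition. The points to verify, in order, are: (i) \emph{well-definedness} --- at each stage the next translate really is a row-type addable string; this is where one uses that $\mu=S*\la\in\Ksh$ and that the $S$-modified columns $c^+,c^{++},\dots$ form a group of common $\cs(\la)$-length (the strip condition on $S$), so that the obstruction which halted $m$ in $\la$ (Property~\ref{P:cond5}, Remark~\ref{R:movespec}) has been removed by exactly the amount $S$ contributed, while Lemma~\ref{L:comparelengths} applied to $s$ provides the room on the rows of $s$; (ii) \emph{termination at the right place} --- the process stops exactly upon reaching the first column to the right of that group, since a further translate would no longer be a translate, and the $k$-shape property of $\mu$ forces $\cs$ to be monotone from that column on; (iii) \emph{location} --- since $c=c_{s,u}$ is not an $S$-modified column (Property~\ref{L:Smmod}), $s$ is not bumped up in passing to $m^+$, so $m_\comp$ is literally a union of right-translates of the final string $s$ of $m$, and its strings lie on the rows of $s$, as claimed.

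It then remains to check that $\eta=\bigl((m^+)*\mu\bigr)\cup m_\comp$ as in \eqref{E:etanotation} yields a legitimate final pair. That $\eta/\mu$ is a row move from $\mu$ with the same initial string as $m^+$ is immediate from the construction together with Proposition~\ref{P:uniquedecomp}. That $\eta/\nu$ is a strip is verified just as in Proposition~\ref{L:etastrip}: $\eta/\nu$ is a horizontal strip, $\rs(\eta)/\rs(\nu)=\rs(\mu)/\rs(\la)$ is a horizontal strip, and
\[
\cs(\eta)-\cs(\nu)=\bigl(\change_\cs(S)-\change_\cs(m\setminus m')\bigr)+\change_\cs(m_\comp),
\]
where the first summand is a $0$--$1$ vector by the argument of Proposition~\ref{L:etastrip} (using Property~\ref{L:Smmod}), while the second has its $+1$'s on the new columns strictly right of $m$ and its $-1$'s exactly on the columns $c^+,c^{++},\dots$ that $S$ had lengthened, so the two contributions cancel there and the sum is again $0$--$1$. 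Finally, no column carries cells of both $m_\comp$ and $S$: the cells of $m_\comp$ sit strictly to the right of $\col(a_\ell)=c_{s,d}$ on the rows of $s$, and a cell of $S$ in such a column would, together with $s$, produce an augmentable corner of $S$, contradicting maximality via Proposition~\ref{P:maxstrip}.

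I expect the crux to be points (i)--(ii): the bookkeeping that the completion process adds exactly as many translate-strings as there are columns in the relevant group of $S$-modified columns, and that $(m^+)*\mu$ together with those additions is a genuine $k$-shape. This is where the fact that $S$ is a strip rather than a move, and the upward shift $\uparrow_S$ concealed in $m^+=\uparrow_S(m')$, complicate what would otherwise be a routine adaptation of Lemma~\ref{L:perfection}.
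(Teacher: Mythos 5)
Your plan is essentially the paper's proof: same reduction via Proposition~\ref{L:Smreasonable} and Corollary~\ref{C:rowmaxnotcontig}, same pinpointing of the interference via Remark~\ref{R:interference}, same construction of $m_\comp$ as right-translates of the final string $s$ of $m$ living on the rows of $s$, and the same column-shape verification at the end as in Proposition~\ref{L:etastrip}. The one spot your sketch is too light is precisely the spot you flag, points (i)--(ii), and the caution I would raise is that ``the strip condition on $S$ gives a group of common $\cs(\la)$-length'' is not the load-bearing fact here. The group structure is purely definitional; what the paper actually establishes from \emph{maximality} of $S$ (no lower augmentable corner, via Proposition~\ref{P:maxstrip}) is that the cells $\Bot_{c^+}(\bdy\la),\Bot_{c^{++}}(\bdy\la),\dots$ for the modified columns of $S$ in that group all sit in the same row $r$ as $\Bot_{c}(\bdy\la)$, which is the row of the top cell of $s$. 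That same-row property, not the group structure, is what delivers the room inequality $\la_{r^-}-\la_r\geq\ell+\ell'$ and hence $\rho_{r^-}-\rho_r\geq\ell$ for $\rho=(m^+)*\mu$. For the lower rows of $s$ you are right that Lemma~\ref{L:comparelengths} (as used in Lemma~\ref{L:extendcompletion}) propagates the $\la$-inequalities downward, but you need a second appeal to maximality (no \emph{upper} augmentable corner) to transfer these to $\rho$; it is also from that argument, rather than a separate augmentable-corner claim as in your last paragraph, that the paper deduces $m_\comp$ never sits atop a cell of $S$. So make the two augmentable-corner contradictions explicit: the strip condition and $\mu\in\Ksh$ alone do not yield the same-row property or the $\rho$-room.
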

\begin{proof} $(S,m)$ is reasonable and
non-contiguous by Proposition \ref{L:Smreasonable} and Corollary
\ref{C:rowmaxnotcontig}, so it makes sense to consider interference.

By Remark~\ref{R:interference}, $(S,m)$ interferes only if there is
a modified column $c$ of $S$ such that column $c^-$ is the rightmost
negatively modified column of $m$ and $\cs(\la)_c =\cs(\la)_{c^-} -
1$.  Let $b=(r,c)=\Bot_c(\bdy\la)$ and $b'=\Bot_{c^-}(\bdy\la)$. We
have $h_\mu(b) \leq k$ since $c$ is a modified column of $S$. If
$\row(b)<\row(b')$ then $h_\mu(b) \ge h_\la(b')= k$ and thus
$h_\mu(b)=k$.  This means that $S$ has a lower augmentable corner in
row $r$, contradicting Proposition \ref{P:maxstrip} and maximality.
Therefore $b$ and $b'$ are in row $r$, and this row corresponds to
the row of the top cell of the last string of $m$. Now suppose that
$c^+$ is also a modified column of $S$  with
$\cs(\la)_c=\cs(\la)_{c^+}$, and let $\bar b=\Bot_{c^+}(\bdy\la)$.
By the same argument we get that $b$ and $\bar b$ lie in the same
row. Continuing in this way, we get that all modified columns $d$ of
$S$ such that $\cs(\la)_d=\cs(\la)_c$ occupy the same rows. If there
are $\ell$ of them and $\ell'$ cells of $m$ in row $r$, we have
established that $\lambda_{r^-}-\lambda_{r} \ge \ell+ \ell'$.
Therefore $\rho=m^+ * \mu$ is such that $\rho_{r^-}-\rho_{r} \ge
\ell$ since exactly $\ell'$ cells of $m^+ \cup S$ lie in that row by
hypothesis (otherwise column $c$ would not be a modified column of
$S$). By Lemma~\ref{L:extendcompletion}, any row $R$ below row $r$
that contains a cell of the last string of $m$ is also such that
$\lambda_{R^-}-\lambda_{R} \ge \ell+ \ell'$.   Furthermore, for any
such row $R$ we also have $\rho_{R^-}-\rho_{R} \ge \ell$ since
again exactly $\ell'$ cells of $m^+ \cup S$ lie in that row by
hypothesis (otherwise there would be an upper augmentable corner
associated to a given row $R$, contradicting maximality).

We have established that $\ell$ cells can be added to the right of every cell
of the last string of $m$ in $\rho$, and from our proof, these cells do not lie
above cells of $S$.  Let $m_{\comp}$ be the union of those cells.
Defining $\eta =\rho\cup m_\comp$,
it is clear that $\eta/\nu$ is a
horizontal strip.  We have $\rs(\eta) = \rs(\mu)$ so
$\rs(\eta)/\rs(\nu)$ is a horizontal strip.  Finally, one checks
that $\cs(\eta)$ is a partition and $\cs(\eta)/\cs(\nu)$ a
horizontal strip in the same manner as in Proposition~\ref{L:etastrip}.
\end{proof}

\subsection{Alternative description of pushouts (row moves)}
Suppose $m = s_1
\cup s_2 \cup \cdots$ is a row move such that
$\change_{\cs}(s_1)$ affects
columns $c$ and $c + d$.  If $\alpha$ is not a partition, we suppose
that $\alpha_i + 1 = \alpha_{i+1} = \alpha_{i+2} = \cdots =
\alpha_{i+a}
> \alpha_{i+a+1}$.  Then the perfection of $\alpha$ with respect to
$m$ is the vector
$$
\per_{m}(\alpha) = \begin{cases} \alpha + \sum_{j = 1}^{a} (e_{i+j+d}
- e_{i+j}) & \mbox{if $\alpha$ is not a partition} \\
\alpha & \mbox{if $\alpha$ is a partition}\end{cases}
$$
Here $e_j$ denotes the unit vector with a 1 in the $j$-th position
and 0's elsewhere.

Let $(S = \mu/\lambda, m = \nu/\lambda)$ be any initial pair where
$m = s_1 \cup \cdots \cup s_r$.  Let $m'$ be the collection of cells
obtained from $m$ by removing $s_i$ whenever the positively modified
column of $s_i$ is a modified column of $S$.  It is easy to see that
$m'$ is of the form $s_j \cup s_{j+1} \cup \cdots \cup s_r$.  The
{\it expected column shape} $\ecs(S,m)$ of $(S,m)$ is defined to be
$$
\ecs(S,m) = \per_{m}(\cs(\la) +\change_\cs(S) + \change_\cs(m')).
$$

\begin{prop}\label{P:pushcrit}
Let $(S = \mu/\lambda, m = \nu/\lambda)$ be an initial pair where
$m\neq \emptyset$ is a row move. Suppose there exists a $k$-shape $\eta$
so that
\begin{enumerate}
\item $\cs(\eta) = \ecs(S,m)$
\item $\eta/\mu$ is either empty or a row-move whose string diagrams
are translates of those of $m$
\item $\nu \subset \eta$.
\end{enumerate}
Then $(S,m)$ is compatible and $\push(S,m) = (\eta/\nu,\eta/\mu)$.
In particular, $(\eta/\nu)$ is a strip.
\end{prop}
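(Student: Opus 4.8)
The plan is to treat this statement as a converse to the construction of the row-move pushout in \S\S\ref{SS:rowpushnointerfere}--\ref{SS:rowpushinterfere}: from the mere existence of a $k$-shape $\eta$ with properties (1)--(3) we must deduce that $(S,m)$ is reasonable, non-contiguous, and either non-interfering or pushout-perfectible, and then identify $\eta$ with the shape produced by the pushout. The first step is a \emph{uniqueness} observation: a $k$-shape satisfying (1)--(3) is unique. Indeed $\cs(\eta)=\ecs(S,m)$ is a fixed vector depending only on $S$ and $m$, so $\change_\cs(\eta/\mu)=\cs(\eta)-\cs(\mu)$ is determined; if it is zero then $\eta=\mu$, and otherwise one reads the rank of the row move $\eta/\mu$ from its runs of consecutive $+1$'s (as in the proof of Proposition~\ref{P:uniquedecomp}) and its common string length from condition (2), whence the leftmost cell of the horizontal strip $\eta/\mu$, then its first string (Remark~\ref{R:contig}), and then the whole move $\eta/\mu$ and the shape $\eta$ are determined (Remark~\ref{R:movespec}). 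So it suffices to exhibit \emph{one} $k$-shape satisfying (1)--(3) which is moreover the shape occurring in $\push(S,m)$.

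Next I would verify reasonableness and non-contiguity of $(S,m)$ from (1)--(3). If a string $s$ of $m$ met $S$ without being contained in it, then the cells of $s$ lying in $S$ would be in $\mu$ while the remaining cells of $s$ would lie in $\nu\subset\eta$; tracking modified columns via Corollary~\ref{C:stripmodifiedrows} and Property~\ref{L:Smmod} forces $c_{s,d}$ to be a modified column of $S$ while $s$ is not matched below $S$, which is incompatible with $\cs(\eta)$ equalling $\ecs(S,m)$ for a $k$-shape $\eta$ with $\nu\subset\eta$ and $\eta/\mu$ of the prescribed row-move type over $\mu$; hence $(S,m)$ is reasonable. Likewise, if $(S,m)$ had a disappearing cell $b=(r,c)$, then by Lemma~\ref{L:disappearing} column $c$ would be positively modified by $m$, would contain no cell of $S$ but an upper augmentable corner of $S$, and $b$ would lie in $\Int^k(\mu\cup\nu)$; since $\mu\cup\nu\subset\eta$ by (3) this gives $b\in\Int^k(\eta)$, and one then checks that this is incompatible with $\cs(\eta)_c=\ecs(S,m)_c$. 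So $(S,m)$ is reasonable and non-contiguous.

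With these in hand, Lemma~\ref{L:pushbump} gives the set of cells $m^+$ and the vector identity $\cs((m^+)*\mu)=v:=\cs(\la)+\change_\cs(S)+\change_\cs(m')$. If $v$ is a partition then $(S,m)$ is by definition non-interfering, hence compatible: $(m^+)*\mu$ is a $k$-shape (its row shape is $\rs(\mu)$ and its column shape $v$ is a partition), $((m^+)*\mu)/\mu=m^+$ is a row move whose string diagrams are translates of those of $m$ (Lemma~\ref{L:pushbump}), $((m^+)*\mu)/\nu$ is a strip by Proposition~\ref{L:etastrip} so $\nu\subset(m^+)*\mu$, and $\cs((m^+)*\mu)=v=\per_m(v)=\ecs(S,m)$; thus $(m^+)*\mu$ satisfies (1)--(3), and by the uniqueness above $\eta=(m^+)*\mu$ and $\push(S,m)=(\eta/\nu,\eta/\mu)$. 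If $v$ is not a partition then $(S,m)$ is interfering, and I would set $m_\comp:=\eta\setminus((m^+)*\mu)$; the identity $\cs(\eta)=\ecs(S,m)=\per_m(v)=v+\sum_{j=1}^{a}(e_{i+j+d}-e_{i+j})$ shows that $\cs(\eta)-\cs((m^+)*\mu)$ is exactly the column-shape change of a row move of rank $a$ with string diagrams translates of those of $m$, and that its negatively modified columns are columns on which $\cs$ strictly decreases, which forces $(m^+)*\mu\subset\eta$; hence $m_\comp$ is an $((m^+)*\mu)$-addable set of cells forming such a row move from $\mu$ sharing its initial string with $m^+$, so $(S,m)$ is pushout-perfectible, $\eta=((m^+)*\mu)\cup m_\comp$ is the pushout shape, $\eta/\nu$ is a strip by the definition of pushout-perfectibility, and $\push(S,m)=(\eta/\nu,\eta/\mu)$.

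The hard part will be the diagram bookkeeping compressed into the two preceding paragraphs: converting ``a structural violation spoils the expected column shape'' into clean contradictions for reasonableness and non-contiguity, and, in the interfering case, rigorously establishing the inclusion $(m^+)*\mu\subset\eta$ together with the precise string structure of $m_\comp$ (same rows as the final string of $m$, translates of the strings of $m$, common initial string with $m^+$) demanded by the definition of pushout-perfectibility. These are essentially the arguments already carried out in Lemmata~\ref{L:pushbump}, \ref{L:maximalinterfere} and \ref{L:extendcompletion}, now run in reverse, together with Property~\ref{C:rowmovecolsize}.
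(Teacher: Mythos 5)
Your overall strategy is the right one, and the uniqueness observation is correct (though its justification could be simpler: since $\eta/\mu$ is a row move, $\rs(\eta)=\rs(\mu)$, and a $k$-shape is determined by its row and column shape). However, there are two genuine gaps in the execution.

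First, the argument for reasonableness is asserted rather than proved. The phrase ``tracking modified columns \ldots forces $c_{s,d}$ to be a modified column of $S$ while $s$ is not matched below $S$'' is a sketch of a contradiction, but the derivation is absent, and the particular route suggested (via Corollary~\ref{C:stripmodifiedrows} and Property~\ref{L:Smmod}) does not obviously close. The paper's proof instead decomposes $\eta/\mu$ into $m''\cup m_\comp$, where $m''$ is the part modifying the columns of $m'$. The key structural observation --- which your sketch does not contain --- is that $m''$ is a set of $\mu$-addable strings with the same diagrams as the strings of $m'$, so each $m''$-string must be a string of $m'$ possibly shifted up as a whole; if a string $s$ of $m$ met $S$ only partially, the corresponding $m''$-string would fail to be a string. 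The paper also needs $m_\comp\cap m=\emptyset$ (proved separately) together with condition (3) to finish reasonableness. None of this mechanism appears in your sketch.

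Second, in the interfering case you claim ``$\eta/\nu$ is a strip by the definition of pushout-perfectibility,'' but that is circular: pushout-perfectibility \emph{requires} $\eta/\nu$ to be a strip, so it must be proved, not invoked. This is a non-trivial verification; the paper shows, by a careful choice of a leftmost violating cell, that no cell of $m_\comp$ lies above a cell of $S$ (so $\eta/\nu$ is a horizontal strip), then that $\rs(\eta)/\rs(\nu)$ is a horizontal strip and $\cs(\eta)/\cs(\nu)$ a vertical strip. You cannot appeal to Proposition~\ref{L:maximalinterfere} ``run in reverse'' here because that proposition assumes $S$ is maximal, while the present proposition makes no such assumption; and Proposition~\ref{L:etastrip}, which you do invoke in the non-interfering case, is genuinely restricted to that case. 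So the strip property of $\eta/\nu$ is an unproved and essential claim in your interfering branch.
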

\begin{proof}  It is easy to see that $\eta/\mu$ decomposes into row type strings as
$m'' \cup m_\comp$ where $\cs(m'' * \mu) = \cs(\mu)+
\change_\cs(m')$.  Since $m''$ modifies the same columns as $m'$,
and the two have the same diagrams we conclude that each string of
$m''$ is either a string in $m'$ or a string in $m'$ shifted up one
cell.  But $m''$ is a collection of strings on $\mu$, so the strings
in $m'$ must be reasonable with respect to $S$.

We now claim that $m_\comp \cap m = \emptyset$.  Suppose otherwise.
Let $a$ be the rightmost cell in the intersection $m_\comp \cap m$,
lying in a string $s \in (m \setminus m')$ and a string $t \in
m_\comp$.  If $a$ is not the rightmost cell in $s$ we let $b$ be the
cell immediately right of $a$ in $s$. Now $s$ and $t$ have the same
diagram so we deduce that the cell $b'$ after $a$ in $t$ is either
equal to $b$ or immediately to the left of $b$.  In either case,
this contradicts the assumption that $a$ is rightmost.  Thus $a$ is
in the positively modified column $c$ of $s$.  But by the original
assumptions $c$ is also a modified column of $S$.  This contradicts
the fact that $m_\comp \cap S = \emptyset$ and we conclude $m_\comp
\cap m = \emptyset$.  Now we apply (3)
to see that all strings $m
\setminus m'$ must have already been contained in $\mu$ -- thus
$(S,m)$ is reasonable.


Suppose $(S,m)$ is contiguous.  By Lemma \ref{L:disappearing}, this
means there is a disappearing cell $b$, and $b$ is in a column $c$
which does not contain cells of $S$ but does contain cells of $m$ (it is in
fact a positively modified column of $m$).
By reasonableness, the column $c$ thus contains cells of $m'$ and in
particular is not in a modified column of $m_\comp$.  Thus
$\cs(\lambda)_c = \ecs(S,m)_c - 1$.  However, the disappearance
implies $\cs(\eta)_c = \cs(\lambda)_c$, a contradiction.

To show that $\eta/\nu$ is a horizontal strip, we only need to show
that no cell of $m_{\comp}$ lies above a cell of $S$.  Suppose $x$
is the leftmost cell in $m_{\comp}$ that lies above a cell of $S$,
and let $r$ be the row of $x$.  Let $s \in m_{\comp}$ be the string
that contains $x$ and let $c$ be the column of the cell removed in
row $r$ when adding $s$. Since $\rs_r(\mu) \leq \rs_{r^-}(\lambda)$
we have that $c$ is weakly to the right of $\Left_{r^-}(\partial
\la)$ and thus the cell in row $r^-$ and column $c$ belongs to
$\partial\lambda \setminus \partial \mu$.  Hence, by
Corollary~\ref{C:stripmodifiedrows}, there is a cell of $S$ in
column $c$. First assume that $x$ is not the highest cell in $s$,
and let $y$ be above $x$ in $s$. Then $y$ is in column $c$ and we
either have the contradiction that $y$ lies above a cell of $S$ or
that $m_{\comp} \cap S \neq \emptyset$. Now assume that $x$ is the
highest cell in its string. This time we have the contradiction that
$c$ is not a modified column of $S$.


Since $\rs(\eta)=\rs(\mu)$ and $\rs(\nu)=\rs(\lambda)$ we have that
$\rs(\eta)/\rs(\nu)$ is a horizontal strip.  Finally,
by supposition the negatively modified columns of $m_{\comp}$ are
positively modified columns of $S$ and the lowest cell of each string of
$m_{\comp}$ modifies positively its column.
Since $\eta/\nu$
is a horizontal strip, we have that $\cs(\eta)/\cs(\nu)$
is a vertical strip.
\end{proof}

\begin{lem}\label{L:mcompcol}
Let $(S,m)$ be a compatible initial pair with $m$ a row move.
Then the set of strings $m_{\comp}$
and the row move $m$ do not share any columns.
\end{lem}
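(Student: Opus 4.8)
The plan is to reduce to the interfering case and then carry out a column bookkeeping using the explicit description of $m_\comp$ furnished by the perfection formula. If $(S,m)$ is non-interfering then $m_\comp=\emptyset$ and there is nothing to prove, so assume $(S,m)$ is interfering and pushout-perfectible.

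First I would recall from \S\ref{SS:rowpushinterfere} and Proposition~\ref{P:pushcrit} the structure of $\eta=((m^+)*\mu)\cup m_\comp$: the move $\eta/\mu$ is a row move from $\mu$ with the same initial string as $m^+$, its cells decompose as $m^+\cup m_\comp$, and $m^+=\uparrow_S(m')$ where $m'=s_j\cup\cdots\cup s_r$ is the union of those strings of $m$ that are not matched below $S$ --- by Lemma~\ref{L:initialstrings} the strings matched below form the prefix $s_1,\dots,s_{j-1}$. In particular the strings of $\eta/\mu$ are translates of those of $m$, they occur in consecutive columns, $m^+$ furnishes the leftmost $r-j+1$ of them, and $m_\comp$ the remaining ones on the right.

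Next I would pin down which columns $m_\comp$ touches. From Proposition~\ref{P:pushcrit}, $\change_\cs(\eta/\mu)=\ecs(S,m)-\cs(\mu)=\per_m(\alpha)-\cs(\mu)$ with $\alpha=\cs(\la)+\change_\cs(S)+\change_\cs(m')=\cs(\mu)+\change_\cs(m')$, so $\change_\cs(m_\comp)=\per_m(\alpha)-\alpha$. I would then show that $\alpha$ fails to be a partition only at the column $c:=c_{s_r,u}$: on the negatively modified columns of $m'$ one has $\alpha=\cs(\la)-1$ (here $\change_\cs(S)$ vanishes by Property~\ref{L:Smmod}), on the positively modified columns of $m'$ one has $\alpha=\cs(\la)+1$ (these avoid modified columns of $S$ since $s_j,\dots,s_r$ are not matched below), and everywhere else $\alpha\in\{\cs(\la),\cs(\la)+1\}$; combining this with the partition property of $\cs(\la)$, the move condition $\cs(\la)_c>\cs(\la)_{c^+}$, and Remark~\ref{R:interference}, one rules out every candidate column except $c$. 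Hence $\per_m(\alpha)=\alpha+\sum_{p=1}^{a}(e_{c+p+d}-e_{c+p})$, so $m_\comp$ negatively modifies exactly the columns $c_{s_r,u}+1,\dots,c_{s_r,u}+a$ and positively modifies exactly $c_{s_r,d}+1,\dots,c_{s_r,d}+a$.

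Finally I would turn this into column disjointness of cells. The positively modified columns of $m_\comp$ lie strictly to the right of $c_{s_r,d}$, the rightmost column meeting any cell of $m$, and the negatively modified columns of $m_\comp$ are modified columns of $S$, hence disjoint from the negatively modified columns of $m$ by Property~\ref{L:Smmod}; the one remaining danger is that a negatively modified column of $m_\comp$ --- which sits in the gap between the negatively and positively modified columns of $m$ --- coincide with a positively modified column of $m$, or more generally that some cell of $m_\comp$ lie in a column already met by $m$. Here I would use that $\eta/\mu$ is a genuine row move of string width $d$, so its rank $(r-j+1)+a$ is at most $d$ (and strictly less when the strings have length $\ge 2$); this places the leftmost cell of $m_\comp$ strictly to the right of the rightmost cell of $m$. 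The step I expect to be the real obstacle is exactly this separation: squeezing the inequality $d\ge r+a$ out of the fact that $m$, $m'$ and $\eta/\mu$ are all honest row moves together with the interference relation $\cs(\la)_{c_{s_r,u}}=\cs(\la)_{c_{s_r,u}+1}+1$. In particular one must control the matched-below prefix $s_1,\dots,s_{j-1}$ carefully (the cleanest route being to show that in the interfering case $m'=m$, i.e.\ $j=1$, using reasonableness, Property~\ref{L:Smmod} and Proposition~\ref{L:Smreasonable}), and to dispose of the degenerate-move corner cases by hand.
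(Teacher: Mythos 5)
Your proposal goes a fundamentally different route from the paper, and the gap you flag at the end is genuine. The paper's own proof is a two-line argument that completely sidesteps the column bookkeeping: in the proof of Proposition~\ref{P:pushcrit} it is shown that no cell of $m_\comp$ lies above a cell of $S$; hence every cell of $m_\comp$ sits directly above a cell of $\la$. Since $m=\nu/\la$ is a horizontal strip, the cell of $m$ in any column it occupies also sits directly above $\la$, so if $m_\comp$ and $m$ shared a column they would share a cell. That cell cannot be in $m'$ (if it were, it would be in $m^+$, hence in $(m^+)*\mu$, whereas $m_\comp$ is by definition outside that shape), so it lies in $m\setminus m'\subset S\subset\mu\subset (m^+)*\mu$ --- again contradicting the definition of $m_\comp$. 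No perfection arithmetic is needed.

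Your approach tries to locate the columns of $m_\comp$ explicitly via $\change_\cs(m_\comp)=\per_m(\alpha)-\alpha$ and then argue column-disjointness. The calculation of the positively modified columns of $m_\comp$ is fine, but the disjointness of the \emph{cells} of $m_\comp$ from the columns of $m$ cannot be reduced to modified-column disjointness: the strings of $m_\comp$ are translates of $s_r$ lying on the same rows, so their leftmost cells sit only a few columns to the right of $s_r$ and a priori can land inside the column span of $m$. The inequality $d\ge r+a$ you need to rule this out is exactly the content of the lemma, so the argument is circular as written. The auxiliary claim that $m'=m$ in the interfering case is also not something the framework gives you --- Lemma~\ref{L:initialstrings} only says the matched-below strings form a prefix, and there is nothing preventing a nonempty such prefix when $(S,m)$ interferes at $s_r$. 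I would abandon the bookkeeping and use the horizontal-strip observation above; it closes the argument immediately and is what the paper does.
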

\begin{proof} In the proof of Proposition \ref{P:pushcrit} it was shown that
no cell of $m_\comp$ can lie above a cell of $S$. Therefore the cells of $m_\comp$
lie above cells of $\la$. Suppose $m_\comp$ and $m$ share columns.
Then they intersect, and must do so in $m\setminus m'\subset S$, a contradiction.
\end{proof}

\section{Pushout of strips and column moves}
\label{sec:columnpushout}
In this section we consider initial pairs $(S =
\mu/\lambda, m = \nu/\lambda)$ consisting of a strip and a column
move.

We define $(S,m)$ to be compatible if it is reasonable,
non-contiguous, normal, and either (1) it is non-interfering or (2)
it is interfering but is pushout-perfectible; these notions are
defined below. As for row moves, in each of the above cases we
specify an output $k$-shape $\eta\in\Ksh$ and define the pushout of
$(S,m)$ and the final pair $(\tS,\tm)$ as in \eqref{E:pushoutdef}
\eqref{E:pushdiag}.

We omit proofs which are essentially the same in the row and column
cases.

\subsection{Reasonableness}
We say that the pair $(S,m)$ is {\it reasonable} if for every string
$s \subset m$, either $s \cap S = \emptyset$, or $s \subset S$. If
$s \subset m$ is contained inside $S$, we say that $S$
\defit{matches $s$ above} if $r_{s,u}$ is a modified row of $S$.
Otherwise we say that $S$ {\it continues above} $s$.

\begin{lem}\label{L:Sminitial}
Let $(S,m)$ be any initial pair.  If a modified row of $S$
contains a cell of $m$, then that row intersects the initial string of $m$.  If
a modified row of $S$ is a negatively modified row of $m$,
then $S$ intersects the initial string $s \subset m$.  In particular, if $(S,m)$ is reasonable, only the
initial string $s \subset m$ can be matched above.
\end{lem}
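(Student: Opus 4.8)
The plan is to establish the three assertions in turn, the third following formally from the first two once we recall that a column move is a vertical strip (Property~\ref{P:movestrip}(2)). Throughout write $m = s_1 \cup \dots \cup s_r$ for the (unique, by Proposition~\ref{P:uniquedecomp}) decomposition into column-type strings, with $s_1$ the $\la$-addable string. Recall the structure of a column move: the $s_i$ are mutually translates with $s_{i+1}$ obtained from $s_i$ by shifting one row straight up, so that in each column $c$ met by $m$ the cells of $m$ form a stack of $r$ consecutive cells resting on top of column $c$ of $\la$, the bottommost of which is the $\la$-addable cell belonging to $s_1$; thus $s_1$ is exactly the set of $\la$-addable cells of $m$. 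Transposing \eqref{E:rowmovecschange} gives $\change_\rs(m) = \sum_{p=0}^{r-1}(e_{r_{s_1,u}+p} - e_{r_{s_1,d}+p})$ with no cancellation, so the negatively modified rows of $m$ form a block of $r$ consecutive rows disjoint from the rows of $s_1$, while the positively modified rows of $m$ are precisely the rows of the bottom cells of $s_1,\dots,s_r$.

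For the first assertion, let $r'$ be a modified row of $S = \mu/\la$ that contains a cell $x$ of $m$. Since $m$ is a vertical strip, $x$ is the only cell of $m$ in row $r'$; say $x$ lies in $s_j$. If $j = 1$ we are done, so suppose $j \ge 2$. Then $x$ sits $j-1 \ge 1$ rows above the top of its column in $\la$, and the cell witnessing that $r'$ is modified by $S$ -- either a cell of $\mu/\la$ resting atop a column of $\la$ of height $\la^t_{\col(x)} + (j-1)$, or a cell of $\bdy\la \setminus \bdy\mu$ -- is forced to lie strictly to the left of $\col(x)$. Using that the columns met by $s_1$ are exactly the columns carrying $\la$-addable cells of $m$, that their heights strictly decrease along $s_1$ (a consequence of contiguity and $\la$ being a partition), and the monotonicity of column heights along addable strings in a strip (Lemma~\ref{L:comparelengths}, Lemma~\ref{L:stripcomparelengths}), I would show that one of the cells of $s_1$ must then also lie in row $r'$, so that $r'$ meets $s_1$.

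For the second assertion, suppose $r'$ is simultaneously a modified row of $S$ and a negatively modified row of $m$. By the first paragraph, $r'$ lies in the block of $r$ consecutive rows immediately below $s_1$, consisting of rows from which $m$ deletes boundary cells. As $\rs(\mu)/\rs(\la)$ is a horizontal strip, the modified rows of $S$ have pairwise distinct lengths, and no cell of $S$ lies in a column met by $m$ in row $r'$. Tracing the resulting hook-length and boundary constraints on $\Left_{r'}(\bdy\la)$ and the neighbouring column -- exactly as in the proofs of Property~\ref{L:Smmod} and Lemma~\ref{L:Smleftcolumns} in the row case -- one finds that $S$ must reach up onto the top of one of the columns met by $m$; that is, $S$ intersects $s_1$.

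Finally, assume $(S,m)$ is reasonable and suppose $S$ matches $s_j$ above with $j \ge 2$. By definition $s_j \subset S$ and the positively modified row $r_{s_j,u}$ of $s_j$ is a modified row of $S$; moreover $r_{s_j,u}$ contains the bottom cell of $s_j$, which is a cell of $s_j \subset m$. By the first assertion, $r_{s_j,u}$ also contains a cell of $s_1$. Since $s_1$ and $s_j$ are distinct strings of $m$, hence disjoint, $m$ would then contain two cells in row $r_{s_j,u}$, contradicting that $m$ is a vertical strip. Hence only $s_1$ can be matched above. I expect the bookkeeping in the first two assertions to be the main obstacle: because strips are transpose-asymmetric, this lemma is not a formal consequence of its row-move counterparts (Lemma~\ref{L:initialstrings}, Lemma~\ref{L:Smleftcolumns} and the surrounding observations), so one must rerun the boundary arguments from scratch while carefully tracking the stacked translates forming the column move against the horizontal-strip structure of the modified rows of $S$.
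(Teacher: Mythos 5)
Your description of the column-move geometry (strings of $m$ stacked vertically, $s_{i+1}$ obtained from $s_i$ by shifting one row up, $s_1$ the $\la$-addable layer) is correct, but it contains a sign error that signals a backwards picture of the column-type diagram: for a column-type string $s=\{a_1,\dots,a_\ell\}$ with $a_1$ topmost, the diagram of $s$ has no $\circ$ in its \emph{top} row, so $r_{s,u}=\row(a_1)$ is the row of the \emph{top} cell, not the bottom cell. Thus ``the positively modified rows of $m$ are precisely the rows of the bottom cells of $s_1,\dots,s_r$'' should read \emph{top} cells, and likewise ``$r_{s_j,u}$ contains the bottom cell of $s_j$'' in your last paragraph should read top cell. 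That last paragraph survives this slip (you only use that $r_{s_j,u}$ contains some cell of $s_j$), but the error also infects your sketch of the second assertion: $r_{s_1,d}$ is $\row(\Bot_{\col(a_\ell)}(\bdy\la))$, which need not be ``immediately below $s_1$'' at all when the boundary column is long.

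The more serious issue is that the first two assertions --- the actual content of the lemma --- are only sketched, and the sketches would not close as written. For the first, the mechanism you propose (locate a ``witness'' for $r'$ being modified and compare column heights) starts from a false premise: a cell of $\bdy\la\setminus\bdy\mu$ in row $r'$ contributes $-1$, not $+1$, to $\change_\rs(S)_{r'}$, so it cannot witness positive modification; and Lemma~\ref{L:stripcomparelengths} requires the string to lie inside $S$, which is not available here. A direct argument is shorter: if $x=(R,c)\in s_j$ with $j\ge 2$, then $(R-1,c)\in s_{j-1}$ lies outside $\la$, so $\mu_R\le\la_{R-1}<c$ by the horizontal-strip condition on $\mu/\la$; meanwhile $(R,c-1)\in\la^{j-1}$ by addability of $x$ but $(R,c-1)\notin m$ since $m$ is a vertical strip and $x$ already occupies row $R$, so $(R,c-1)\in\la$ and $\la_R=c-1$; if $R$ were a modified row of $S$ then $\mu_R>\la_R$ (since $\Int(\la)\subset\Int(\mu)$), forcing $\mu_R\ge c$, a contradiction. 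The second assertion still needs its own argument; appealing to ``the proofs of Property~\ref{L:Smmod} and Lemma~\ref{L:Smleftcolumns} in the row case'' is not a proof, for precisely the reason you note yourself --- the strip condition is transpose-asymmetric, so the row-move argument does not carry over by formal transposition.
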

\begin{proof}
Follows immediately from the definition of column moves and the
fact that $\rs(\mu)/\rs(\lambda)$ is a
horizontal strip.
\end{proof}

\begin{lem}
Suppose $(S,m)$ is reasonable. Then every modified row $r$ of $S$
which contains a cell in $m$ is a positively modified row of $m$.
\end{lem}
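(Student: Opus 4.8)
The statement is the transpose analogue of the fact recorded above for row moves (the unnumbered lemma following Lemma~\ref{L:initialstrings}), and I would run the same argument with rows and columns interchanged; here is the plan.

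First I would make the two easy reductions. Since $S=\mu/\la$ is a strip, $\rs(\mu)/\rs(\la)$ is a horizontal strip and $\Int(\la)\subseteq\Int(\mu)$, so a modified row $r$ of $S$ is automatically positively modified and satisfies $\mu_r>\la_r$; in particular the leftmost cell of $\mu/\la$ in row $r$ sits in column $\la_r+1$. On the other hand $m$ is a column move, hence a vertical strip by Property~\ref{P:movestrip}(2), so row $r$ contains a single cell $x$ of $m$; being $\la$-addable, $x$ lies in column $\la_r+1$ as well, so $x$ is that leftmost cell of $\mu/\la$ in row $r$, and in particular $x\in S$. By Lemma~\ref{L:Sminitial} the row $r$ meets the initial string $s$ of $m$, so $x\in s$; since $x\in s\cap S\ne\emptyset$, reasonableness of $(S,m)$ forces $s\subset S$.

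Write $s=\{a_1,\dots,a_\ell\}$ from top to bottom, so $x=a_j$ for some $j$. If $j=1$ then $r=\row(a_1)=r_{s,u}$ is the positively modified row of $s$, and since $m$ is a column move there is no cancellation among the rows it modifies through its strings (the column transpose of the computation in the proof of Proposition~\ref{P:uniquedecomp}), so $r$ is a positively modified row of $m$, as required. It therefore suffices to rule out $j\ge2$. Assume $j\ge2$. Then the catty-corner cell $b_{j-1}=(\row(a_j),\col(a_{j-1}))=(r,\col(a_{j-1}))$ lies in $\bdy\la\setminus\bdy(\la\cup s)$ by Property~\ref{P:stringtypes} (the string $s$ being column-type), and since $s\subset S$ gives $\la\cup s\subseteq\mu$ and hook lengths are monotone under inclusion, $b_{j-1}\in\bdy\la\setminus\bdy\mu$. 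As $b_{j-1}$ lies in the same column as $a_{j-1}\in s\subset S$, Corollary~\ref{C:stripmodifiedrows} and the fact that $S$ is a horizontal strip pin the cell of $S$ in that column down to be $a_{j-1}$; tracking the remaining catty-corner cells $b_1,\dots,b_{j-1}\in\bdy\la\setminus\bdy\mu$ against this horizontal strip, together with the transpose of Property~\ref{L:Smmod} and the monotonicity Lemmas~\ref{L:comparelengths} and \ref{L:stripcomparelengths}, contradicts $\rs(\mu)$ being a partition. This is precisely the column transpose of the argument establishing the row-move version.

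The bookkeeping reductions --- that the cell of $m$ in row $r$ automatically lies in $S$, that its string is therefore contained in $S$, and the no-cancellation remark for column moves --- are routine. The one genuinely delicate point, and the main obstacle, is ruling out $j\ge 2$: one must show that a non-top cell of the initial string cannot occupy a modified row of $S$, i.e.\ that the inner cells $\bdy\la\setminus\bdy\mu$ exactly absorb the extra cells of $\mu/\la$ in the rows $\row(a_2),\dots,\row(a_\ell)$. This is where reasonableness and the strip monotonicity lemmas are essential, and it is the transpose of the corresponding step in the row case, which goes through verbatim.
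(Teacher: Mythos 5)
Your reductions are clean and correct: you rightly observe that $r$ being positively modified by $S$ forces $\mu_r > \la_r$, that the unique cell $x$ of the vertical strip $m$ in row $r$ is the $\la$-addable corner $(r,\la_r+1)$ and hence lies in $S$, that Lemma~\ref{L:Sminitial} places $x$ in the initial string $s=s_1$, and that reasonableness then yields $s\subset S$. You also correctly identify that the problem reduces to showing $x$ is the top cell $a_1$, since a column-type string positively modifies only the row of its top cell, and by the no-cancellation argument the positively modified rows of $m$ are exactly the top rows of its strings.

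The gap is in the final step, and the claim that ruling out $j\ge 2$ is ``precisely the column transpose of the argument establishing the row-move version'' is not accurate. The row-move version proceeds by noting that, for $j<\ell$, the column $\col(a_j)$ receives $a_j$ from $S$ and loses $b_j$ from $\bdy\la\setminus\bdy\mu$; since $S$ is a \emph{horizontal} strip and $\bdy\la\setminus\bdy\mu$ is a \emph{horizontal} strip (Property~\ref{L:innerstrip}), each contributes at most one cell per \emph{column}, so the net change in $\col(a_j)$ is zero, contradicting that it is modified by $S$. Transposing this argument to the column-move setting would require ``one cell per \emph{row}'' for both $S$ and $\bdy\la\setminus\bdy\mu$, i.e.\ that they are vertical strips. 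But a strip $S$ is defined asymmetrically: $\rs(\mu)/\rs(\la)$ is horizontal, $\cs(\mu)/\cs(\la)$ is vertical, and $S=\mu/\la$ together with $\bdy\la\setminus\bdy\mu$ are both \emph{horizontal} strips, allowing several cells per row. So even though $b_{j-1}=(r,\col(a_{j-1}))$ is a removed cell in row $r$ and $a_j$ is an added cell, there is no a priori reason the net change in row $r$ is zero: row $r$ may carry further cells of $S$ and further removed cells, and your references to Lemmas~\ref{L:comparelengths}, \ref{L:stripcomparelengths}, and a transposed Property~\ref{L:Smmod} are invoked but not actually assembled into an argument. (The transposed Property~\ref{L:Smmod} is itself not stated or proved in the column section, and its proof also leans on the horizontal-strip structure, so it cannot be cited as a direct transpose either.) The paper states both the row and column versions of this lemma without proof, so the authors clearly regard the verification as routine; your write-up correctly isolates where the content lies (``the one genuinely delicate point\ldots is ruling out $j\ge 2$'') but then asserts, rather than supplies, the argument at exactly that point.
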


\begin{lem}\label{L:Smrow}
Suppose $(S,m)$ is reasonable.  If $s \nsubseteq S$ then $S$ does
not contain a cell in row $r_{s,d}$.
\end{lem}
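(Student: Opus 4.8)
The plan is to prove the column‑move analogue of Lemma~\ref{L:Smleftcolumns} — namely that, for $(S,m)$ reasonable with $m=\nu/\la$ a column move and $s$ a string of $m$, one has $s\subset S$ if and only if $S$ contains a cell in row $r_{s,d}$ — and then to read off Lemma~\ref{L:Smrow} as the contrapositive of the ``contains a cell $\Rightarrow$ contained'' implication. Here, for a column‑type string $s=\{a_1,\dots,a_\ell\}$, the row $r_{s,d}=\row(\Bot_{\col(a_\ell)}(\bdy\la))$ is the unique negatively modified row of $s$, and it plays for column moves exactly the role that the negatively modified column $c_{s,u}$ plays for row moves (a column move being the transpose analogue of a row move, with $c_{s,u}\leftrightarrow r_{s,d}$ under that transpose). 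Since the strip $S$ is a horizontal strip in both the row and the column settings, one cannot simply transpose Lemma~\ref{L:Smleftcolumns}; instead I would re‑run its proof with the roles of rows and columns interchanged, substituting the column‑move versions of the auxiliary statements used there — in particular Lemma~\ref{L:Sminitial} takes over from Property~\ref{L:Smmod}, and the fact that $\rs(\mu)/\rs(\la)$ is a horizontal strip takes over from Corollary~\ref{C:stripmodifiedrows}.

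Concretely, for the direction we actually need: assume $S=\mu/\la$ contains a cell $x$ in row $r_{s,d}$; the goal is $s\subset S$. Let $d$ be the number of strings of $m$ that lie in the columns of $s$ and are equal to $s$ or lie below it; because the strings of $m$ are translates whose top cells occupy consecutive rows, these are $d$ successive strings, and — since $m$ is a column move, so $m*\la\in\Ksh$ — their negatively modified rows form $d$ consecutive rows ending at $r_{s,d}$, all of the same length in $\cs(\la)$, so that the cells they remove from $\bdy\la$ in those rows lie in a single block. Using that $S$ is a horizontal strip, that $\rs(\mu)$ and $\cs(\mu)$ are partitions, and that (by the column analogue of Property~\ref{L:Smmod}, controlled via Lemma~\ref{L:Sminitial}) these $d$ rows are not modified rows of $S$, one propagates the single cell $x\in S$ to a forced cell of $S$ in each of these rows, and then upward through the columns of $s$, forcing the translate of the top cell $a_1$ of $s$ to belong to $S$. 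Hence $a_1\in S$, and by reasonableness $s\subset S$. Taking the contrapositive gives Lemma~\ref{L:Smrow}.

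The main obstacle — exactly as in the proof of Lemma~\ref{L:Smleftcolumns} and in Proposition~\ref{L:Smreasonable} — is the geometric bookkeeping in the middle step: carefully tracking, via the ``group'' structure of the modified rows and columns of the strip and the translate structure of the strings of $m$, precisely which cells of $S$ are forced by $x$, and verifying that this chain of forced cells reaches the top of the string $s$. The one genuine departure from the row case is that strips are transpose‑asymmetric, so a modified row of $S$ can in principle be a negatively modified row of $m$; this possibility is exactly what Lemma~\ref{L:Sminitial} pins down, which is why that lemma enters the argument where Property~\ref{L:Smmod} entered the proof of Lemma~\ref{L:Smleftcolumns}.
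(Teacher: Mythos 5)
The paper does not actually write out a proof of Lemma~\ref{L:Smrow}: it falls under the blanket statement ``We omit proofs which are essentially the same in the row and column cases,'' with Lemma~\ref{L:Smleftcolumns} as the intended row-move model. Your proposal correctly identifies this, correctly unwinds the contrapositive, and correctly observes that the transposition is not literal because strips are not transpose-symmetric, so Lemma~\ref{L:Sminitial} must play the role that Property~\ref{L:Smmod} plays in the row case. That much matches what the paper intends.

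However, there is a genuine gap at the load-bearing step. You assert that ``these $d$ rows are not modified rows of $S$ (by the column analogue of Property~\ref{L:Smmod}, controlled via Lemma~\ref{L:Sminitial}).'' But Lemma~\ref{L:Sminitial} does \emph{not} give the column analogue of Property~\ref{L:Smmod}. Property~\ref{L:Smmod} says outright that a modified column of $S$ cannot be a negatively modified column of a row move; its proof exploits that $\cs(\mu)$ is a partition \emph{and} that adding a cell atop a modified column forces the hook of the bottom cell (which has hook $k$) past $k$, a contradiction with growth. When you transpose, the partition argument on $\rs(\mu)$ still eliminates all but the lowest negatively modified row, but the final contradiction evaporates: a modified row can gain cells on the right while simultaneously losing its leftmost cell, so no contradiction arises. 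This is precisely why the paper does not state a column analogue of Property~\ref{L:Smmod} and instead records the weaker Lemma~\ref{L:Sminitial}, whose second sentence explicitly contemplates a modified row of $S$ being a negatively modified row of $m$ and merely concludes that $S$ then meets the initial string. Your closing paragraph acknowledges this asymmetry, but the body of your argument nonetheless asserts the unmodified-rows claim as if it held, which makes the propagation step unjustified.

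To close the gap you would need a case split: either show that, with your choice of the $d$ rows, the bad case (one of them being a modified row of $S$) cannot occur, or handle it directly by invoking Lemma~\ref{L:Sminitial} and reasonableness to get $s_1\subset S$ and then argue from there that $s\subset S$ (for instance, by combining $s_1\subset S$ with the horizontal-strip constraint on $S$ and the translate structure of the strings to force a contradiction when $s\ne s_1$). As written, the case is left open, and since the whole counting argument hinges on the row being unmodified, this is a real gap rather than a cosmetic one. (A minor side note: ``all of the same length in $\cs(\la)$'' should read $\rs(\la)$, since you are comparing row lengths.)
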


\begin{proposition} \label{P:Smcolreasonable}
Let $S$ be a maximal strip and $m$ a column move. Then $(S,m)$ is
reasonable.
\end{proposition}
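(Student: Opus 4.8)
The plan is to mirror the proof of Proposition~\ref{L:Smreasonable} (the row-move analogue), transposing the roles of rows and columns, strings and cover-types, and augmentable-corner conditions throughout. Let $S = \mu/\la$ be maximal and $m$ a column move from $\la$ to $\nu$. We must show that any string $t = \{b_1,\dots,b_{\ell'}\}$ of $m$ (ordered from top to bottom) with $b_i \in S$ for some $i$ satisfies $t \subset S$; equivalently $b_{i-1}\in S$ when $i>1$ and $b_{i+1}\in S$ when $i<\ell'$.

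First I would fix, via the column-transpose of Corollary~\ref{C:coverseq}, a decomposition of the strip $S$ into rank-$1$ strips $t_1,\dots,t_\rho$ taken from top to bottom by modified row, each maximal given its predecessors; since $S$ is a horizontal strip and $\cs(\mu)/\cs(\la)$ is a vertical strip, reading off the transposed statement of Corollary~\ref{C:coverseq} applied to $S^t$ gives such a chain, where the $i$-th step's modified row is the $i$-th modified row of $S$. Then I would induct on the \emph{indent} of the string $t$ within the column move $m$ — i.e.\ on how far $t$ sits from the $\la$-addable string of $m$, using the transpose of Lemma~\ref{L:addable} to know all cells of $t$ share this indent and are simultaneously in or out of $S$. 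In the base case, $t$ is $\la$-addable; then $b_{i+1}\in S$ must hold, for otherwise $b_{i+1}$ would be an \emph{upper} augmentable corner of $S$ (it lies in a modified column — wait, rather: its column is negatively modified by $m$, hence a positively modified row condition applies), contradicting maximality via the transpose of Proposition~\ref{P:maxstrip}; and $b_{i-1}\in S$ similarly. By the chosen decomposition, consecutive cells of $t$ lying in $S$ land in the same $t_j$. For the inductive step, let $t'$ be the string of $m$ immediately above $t$ (indent one less); since $b_i\in S$ we get the translate cell in $t'$ is in $S$, so by induction $t' \subset$ some $t_j$ of $S$, which forces $r_{t,d}$ (the relevant modified row of $t$) not to be a modified row of $S$; reading off the decomposition of $S$ into rank-$1$ strips this means $t_j$ has a cell beyond $b_i$, putting $b_{i+1}$ (and similarly $b_{i-1}$) in $S$.

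The main obstacle I expect is getting the augmentable-corner bookkeeping exactly right in the transposed setting: for column moves the asymmetry in the definition of strip (rows versus column groups) means the clean statement ``the positively modified column of $s_i$ is a modified column of $S$'' from the row case becomes a statement about negatively modified rows of $m$ and upper augmentable corners, and one must check carefully that a failure $b_{i+1}\notin S$ genuinely produces an upper augmentable corner of $S$ rather than something weaker — this uses that $b_{i+1}$ does not lie on top of a cell of $S$, which in turn follows because $b_{i+1}$ is $\la$-addable in the base case and, in the inductive step, because its column was shown not to be a modified column of $S$. Modulo carefully invoking Lemma~\ref{L:Sminitial}, the transposes of Property~\ref{L:innerstrip}, Corollary~\ref{C:stripmodifiedrows}, and Proposition~\ref{P:maxstrip} (for the ``no augmentable corners'' characterization of maximality), and the column-transpose of Lemma~\ref{L:upperaugmentcore} is \emph{not} needed here (that was for the core statement), the argument is a routine dualization of Proposition~\ref{L:Smreasonable} and I would write it as such, pointing to that proof for the details that go through verbatim after transposition.
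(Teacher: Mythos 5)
The paper itself supplies no proof of Proposition~\ref{P:Smcolreasonable}: it is one of the results covered by the blanket remark at the start of \S\ref{sec:columnpushout} (``We omit proofs which are essentially the same in the row and column cases''). So your overall plan---mirror the proof of Proposition~\ref{L:Smreasonable}---is exactly what the authors intend, and the structure you sketch (cover decomposition, base case via maximality, induction on indent) is the right one.

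That said, there is a genuine error in the way you set up the decomposition. You propose to obtain a decomposition of $S$ into rank-$1$ strips by modified row ``via the column-transpose of Corollary~\ref{C:coverseq} applied to $S^t$.'' But the definition of \emph{strip} is transpose-asymmetric (the paper makes this explicit when discussing completion moves, and it is why there are two distinct notions $\T$ and $\RT$): $\mu/\la$ a strip does \emph{not} imply $\mu^t/\la^t$ is a strip, since the roles of $\rs$ and $\cs$ in Definition~\ref{D:strip} are not symmetric. So Corollary~\ref{C:coverseq} cannot be applied to $S^t$, and there is no ``column-transpose'' of it available out of the box. What you actually want is still the cover decomposition of the \emph{same} strip $S$ from Corollary~\ref{C:coverseq}, just traversed in a different order: each cover contributes a unique modified row (its $r_{t_j,u}$), and since the modified rows of $S$ have distinct $\rs$-values, one can relabel the chain by modified row rather than by modified column. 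Then the argument proceeds against the column move $m$ as in the row case, with one cell of $m$ per row rather than per column.

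The other soft spot is your base-case claim that $b_{i+1}\notin S$ produces an \emph{upper} augmentable corner. Your parenthetical (``it lies in a modified column --- wait, rather: its column is negatively modified by $m$, hence a positively modified row condition applies'') shows you haven't pinned this down. The verification needs two facts, both of which you should state: (i) $b_{i+1}$ does not lie atop a cell of $S$, which in the base case follows because $b_{i+1}$ is $\la$-addable so the cell directly below it is in $\la$, not in $S$; and (ii) adding $b_{i+1}$ to $\mu$ removes a cell of $\bdy\mu$ lying in a modified \emph{row} of $S$ in the same column as $b_{i+1}$. Item (ii) is where the hook-length bookkeeping happens: the cell $b_i\in S$ and the contiguity $\diag(b_{i+1})-\diag(b_i)\in\{k,k+1\}$ force the cell at position $(\row(b_i),\col(b_{i+1}))$ to leave $\bdy\mu$ when $b_{i+1}$ is added, and $\row(b_i)$ is a modified row of $S$ precisely because $b_i\in S$ and $S$ is a horizontal strip with $\rs(\mu)/\rs(\la)$ a horizontal strip. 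Once these are written out, the inductive step transposes in the way you describe, and the appeal to Proposition~\ref{P:maxstrip} for maximality closes the argument, just as in the row case.
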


\subsection{Normality}
Let $s \subset m$ be the initial string of $m$. We say that $(S,m)$
is \defit{normal}, if it is reasonable and in the case that $S$
continues above $s$ then (a) none of the modified rows of $S$
contain cells of $s$ (and by Lemma \ref{L:Sminitial}, none of the
modified rows of $S$ contain cells of $m$) and (b) the negatively
modified row of $s$ is not a modified row of $S$.

\begin{proposition}\label{L:Smreasonablenormal}
Let $S$ be a maximal strip and $m$ any column move.  Then $(S,m)$ is
normal.
\end{proposition}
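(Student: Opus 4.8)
The plan is to combine the already-established reasonableness (Proposition \ref{P:Smcolreasonable}) with a direct geometric analysis in the case that $S$ continues above the initial string $s$ of the column move $m$. Write $S = \mu/\la$ with $\la, \mu \in \Ksh$, and let $s = \{a_1, \dots, a_\ell\}$ be the initial (topmost) string of $m$, with $a_1$ the highest cell. Since $(S,m)$ is reasonable by Proposition \ref{P:Smcolreasonable}, it only remains to verify the two conditions (a) and (b) in the definition of normality, and only under the hypothesis that $S$ continues above $s$, i.e. that the positively modified row $r_{s,u} = \row(a_1)$ of $s$ is \emph{not} a modified row of $S$ (equivalently, $s \not\subset S$ in the sense relevant here; more precisely, $S$ does not match $s$ above).

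First I would prove (a): no modified row of $S$ contains a cell of $s$. Suppose toward a contradiction that some modified row $r$ of $S$ contains a cell $a_i \in s$. The cell $a_i$ is $\la$-addable (being in a string of the column move $m$ from $\la$), hence $a_i = \Left_{r}(\mu/\la)$ would force $\row(a_i)$ to be a modified \emph{row} of $S$ only if $a_i$ lies in $S$; but even if $a_i \notin S$, the row $r$ being modified by $S$ together with $a_i$ lying in an $\la$-addable column forces, via Corollary~\ref{C:stripmodifiedrows} and the fact that $\mu/\la$, $\cs(\mu)/\cs(\la)$ are strips, that $\Bot_{\col(a_i)}(\bdy\la)$ and the relevant modified column of $S$ interact so as to produce a hook of length exactly $k$ at $\Left_{r}(\bdy\mu)$; pushing this upward along the string $s$ (using that consecutive cells of $s$ are at diagonal distance $k$ or $k+1$, so that $\rs(\la)$ being a partition controls the row lengths) one reaches $a_1$ and deduces that $\row(a_1)$ is also forced to be modified by $S$ — contradicting the standing hypothesis that $S$ continues above $s$. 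The precise bookkeeping is just an iteration of Lemma~\ref{L:everywhere}-type arguments on $\la$ versus $\mu$, transposed to the column setting, together with the maximality of $S$ via Proposition~\ref{P:maxstrip} to rule out augmentable corners appearing. The ``In particular'' clause in the normality definition then follows from Lemma~\ref{L:Sminitial}.

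Next I would prove (b): the negatively modified row $r_{s,d}$ of $s$ is not a modified row of $S$. Here $r_{s,d} = \row(\Bot_{\col(a_\ell)}(\bdy\la))$, and by definition $r_{s,d} < \row(a_\ell)$. Suppose $r_{s,d}$ is a modified row of $S$. Then $S$ contains a cell, say $z$, in row $r_{s,d}$; since $\bdy\la \setminus \bdy\mu$ is a horizontal strip (Property~\ref{L:innerstrip}) and $\Bot_{\col(a_\ell)}(\bdy\la)$ has hook length $k$ in $\la$ (as $s$ has a $\circ$ in its lowest row by Definition~\ref{D:stringtypes}, $s$ being the initial — hence $\la$-addable — string and $S$ continuing above it, so $s$ is of cocover- or column-type), one gets that $z$ lies weakly left of $\Bot_{\col(a_\ell)}(\bdy\la)$ in its row, hence the cell of $S$ in row $r_{s,d}$ produces a lower augmentable corner of $S$ at the right end of that row (by Remark~\ref{R:rowshift}, transposed) — contradicting Proposition~\ref{P:maxstrip} and maximality. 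This is the step I expect to be the main obstacle, because it requires carefully identifying the \emph{type} of the initial string $s$ (cocover- versus column-type) from the hypothesis ``$S$ continues above $s$'', and then correctly locating the hook of length $k$ at the bottom of $s$'s column in $\la$ and tracking how $S$ modifies that region; the transpose asymmetry of strips means one cannot simply invoke the row-move analogue. Once (a) and (b) are in hand, normality holds by definition, completing the proof. $\square$
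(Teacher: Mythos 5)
Your opening is right: reasonableness comes from Proposition \ref{P:Smcolreasonable}, and the rest is a maximality argument via Proposition \ref{P:maxstrip}. But the core iteration for condition (a) runs in the wrong direction, and this is a genuine gap. The paper starts from the \emph{top} row of the initial string $s$ — the one row known to be unmodified because $S$ continues above $s$ — and propagates non-modified-ness \emph{downward}: the $p\geq 1$ cells of $S$ in that row, via Property \ref{L:innerstrip}, give $\lambda_{r^-}\geq\lambda_r+p+1$, hence a $\mu$-addable corner $b^*$ in that row; if the next row of $s$ below were modified, $b^*$ would be an upper augmentable corner of $S$, contradicting maximality. The inequality $\lambda_{r^-}\geq\lambda_r+p+1$ then propagates to the lower rows by Lemma \ref{L:comparelengths}, and the same step repeats down through $s$ and past it to the negatively modified row, giving (a) and (b) in a single pass. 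Your proposal instead assumes a modified row somewhere inside $s$ and tries to ``push modified-ness upward'' to $\row(a_1)$. There is no mechanism for that: Lemma \ref{L:comparelengths} only widens the gap $\lambda_{r^-}-\lambda_r$ as one moves down, so starting in the middle you cannot manufacture the addable corner one row above that is needed to produce the upper-augmentable-corner contradiction. Invoking ``Lemma \ref{L:everywhere}-type arguments'' does not repair this — that lemma is not what the paper's proof uses here — and the sentence ``one reaches $a_1$ and deduces that $\row(a_1)$ is also forced to be modified'' is precisely the assertion that has to be proved, not a consequence of the tools you cite.

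Two smaller points. The parenthetical ``(equivalently, $s\not\subset S$\,\dots)'' is wrong: ``$S$ continues above $s$'' presupposes $s\subset S$ and only forbids $r_{s,u}$ from being a modified row, so every $a_i$ lies in $S$ and the ``even if $a_i\notin S$'' branch never arises. The worry about distinguishing cocover-type from column-type for $s$ is also a non-issue: the strings of a column move are column-type by definition. Finally, your separate treatment of (b) via a lower augmentable corner diverges from the paper, which handles (b) by the last step of the same downward iteration (still via an upper augmentable corner); as written your (b) argument is too sketchy to verify.
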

\begin{proof} The pair $(S,m)$ is reasonable by Proposition
\ref{P:Smcolreasonable}. Suppose $S$ continues above $s = \{a_1,
a_2, \dotsc, a_\ell\}$, where the cells are indexed by decreasing
diagonal index. By Lemma~\ref{L:Sminitial}, normality cannot be
violated if $s$ is not the initial string of $m$, so we suppose $s$
is the initial string of $m$. Since $S$ does not match $s$ above by
definition, the row $r_\ell$ containing $a_\ell$ is not a modified
row of $S$.  The claim is thus trivial if $\ell= 1$ so we assume
$\ell > 1$.  Suppose $r_\ell$ contains $p \geq 1$ cells of $S$,
implying that the $p$ leftmost cells of $r_\ell$ are moved when
going from $\bdy \lambda$ to $\bdy \mu$ (and none of the columns of
these $p$ cells are modified columns of $S$).
  It follows from Property~\ref{L:innerstrip} and
$\rs(\lambda)_{r_\ell} < \rs(\lambda)_{r_\ell^-}$ that
$\lambda_{r_\ell^-} \geq \lambda_{r_\ell} + p + 1$.  In particular,
there is an addable corner $b^*$ on row $r_\ell$ of $\mu$.

It is easy to see that the row $r_{\ell-1}$ containing $a_{\ell-1}$
contains at least $p$ cells of $S$, with equality if and only if
$r_{\ell-1}$ is not a modified row of $S$.  If $r_{\ell-1}$ is a
modified row of $S$, then the addable corner $b^*$ will be an upper
augmentable corner for $S$, contradicting maximality and
Proposition \ref{P:maxstrip}.  So $r_{\ell-1}$ is not a modified row
of $S$.

Since $\lambda_{r_\ell^-} \geq \lambda_{r_\ell} + p + 1$, we get by Lemma~\ref{L:comparelengths}
that $\lambda_{r_{\ell-1}^-} \geq \lambda_{r_{\ell-1}} + p + 1$, so that
row $r_{\ell-1}$ of $\mu$
also has an addable corner. Continuing as before we see that $(S,m)$ is normal.
\end{proof}

\begin{lem}\label{L:normal}
Suppose $(S,m)$ is normal and let $s \subset m$ be any string such
that $S$ continues above $s$. Then $S$ contains the same number of
cells in each row $r$ containing a cell of $s$, and also the same
number of cells in the negatively modified row of $s$. Furthermore,
if $s$ is the initial string of $m$, then each such row $r$ has a
$\mu$-addable corner.
\end{lem}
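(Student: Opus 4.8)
The plan is to prove Lemma~\ref{L:normal} by combining the normality hypothesis with the structural results on strings and strips already established, and then to bootstrap from the initial string (or, more generally, from whichever string we are handed) down through the rows of~$s$ using Lemma~\ref{L:comparelengths} and Property~\ref{L:innerstrip}.

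First I would reduce to the case that $s$ is the initial string of~$m$. Indeed, if $S$ continues above a string $s$ which is \emph{not} the initial string, then by Lemma~\ref{L:Sminitial} no modified row of $S$ contains a cell of $m$, and in particular no modified row of $S$ contains a cell of $s$ or lies in the negatively modified row of $s$; the argument below then applies verbatim (and more easily) since every row in question is unmodified by $S$. When $s$ is the initial string, normality gives us exactly conditions (a) and (b): the modified rows of $S$ avoid all cells of $s$, and the negatively modified row $r_{s,d}$ of $s$ is not a modified row of $S$. So in all cases, for every row $r$ containing a cell of $s$ and for the row $r_{s,d}$, the row $r$ is not a modified row of~$S$.

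Now let $s=\{a_1,\dots,a_\ell\}$ with cells indexed by decreasing diagonal index (so $a_1$ is topmost), and let $r_i=\row(a_i)$, with $r_{\ell}^{\,-}=r_{s,d}$ the negatively modified row. The key point is that for a row $r$ \emph{not} modified by $S$, the number of cells of $S$ in row $r$ equals the number of cells of $\bdy\lambda\setminus\bdy\mu$ in row $r$, and these are the leftmost $p$ cells of row~$r$ in $\bdy\lambda$, where $p$ is that common number. I would start at the bottom: by Property~\ref{L:innerstrip}, $\bdy\lambda\setminus\bdy\mu$ is a horizontal strip, and since $r_{\ell}^{\,-}=r_{s,d}$ satisfies $\rs(\lambda)_{r_{\ell}^{-}} < \rs(\lambda)_{r_{\ell}}$ (as $a_\ell$ is addable and lies in a string whose diagram has a $\circ$ below it), one deduces, as in the proof of Proposition~\ref{L:Smreasonablenormal}, that if row $r_{\ell}$ carries $p$ cells of $S$ then $\lambda_{r_{\ell}^{-}}\geq\lambda_{r_{\ell}}+p+1$, which both forces a $\mu$-addable corner in row $r_{\ell}$ and, combined with the contiguity of $a_{\ell}$ with $a_{\ell-1}$ and the fact that $r_{\ell-1}$ is unmodified by $S$, forces row $r_{\ell-1}$ to carry the same number $p$ of cells of $S$. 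Iterating up the string via Lemma~\ref{L:comparelengths} (which controls how the quantities $\lambda_{r_i^{-}}-\lambda_{r_i}$ grow as $i$ decreases), I get $\lambda_{r_i^{-}}\geq\lambda_{r_i}+p+1$ for every $i$, hence a $\mu$-addable corner in each row $r_i$ and exactly $p$ cells of $S$ in each row $r_i$; the same estimate at $i=\ell$ also pins down the count in $r_{\ell}^{-}=r_{s,d}$.

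The main obstacle I anticipate is the bookkeeping that guarantees the count of $S$-cells is \emph{exactly} $p$ (not just $\geq p$) in every relevant row, and in particular that $r_{s,d}$ carries exactly $p$: one has to rule out a jump in the number of $S$-cells when passing from $r_i$ to $r_{i-1}$, which is precisely where ``continues above'' (so the diagram of $s$ has no $\circ$ on top) together with the horizontal-strip property of $\rs(\mu)/\rs(\lambda)$ and of $\bdy\lambda\setminus\bdy\mu$ must be invoked — a $+1$ in the count would produce either an extra modified row of $S$ meeting $s$ (contradicting (a)) or a violation of one of these horizontal-strip conditions. Everything else is a routine unwinding of the definitions and of Lemma~\ref{L:comparelengths} and Property~\ref{L:innerstrip}, essentially the same computation already carried out in the proof of Proposition~\ref{L:Smreasonablenormal}.
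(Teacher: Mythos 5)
Your high-level plan matches the paper's: the paper's own proof of Lemma \ref{L:normal} consists of exactly two sentences, saying that the first statement ``follows easily from the definition of normality'' and that the $\mu$-addable corner is ``proven as in Proposition~\ref{L:Smreasonablenormal}''. You correctly identify that Lemma~\ref{L:Sminitial} reduces to the initial-string case and that the engine is the iteration in Proposition~\ref{L:Smreasonablenormal} via Property~\ref{L:innerstrip} and Lemma~\ref{L:comparelengths}. But the execution has several genuine errors that, taken together, make your argument run in the wrong direction.

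First, the indexing is self-contradictory: you write ``indexed by decreasing diagonal index (so $a_1$ is topmost)''. Among $\la$-addable corners the diagonal index \emph{increases} as you go down, so decreasing diagonal index means $a_1$ is at the bottom and $a_\ell$ at the top. This is exactly the convention used in the proof of Proposition~\ref{L:Smreasonablenormal}, where ``the row $r_\ell$ containing $a_\ell$'' is the \emph{top} row, $r_{s,u}$. Second, and as a consequence, your iteration goes the wrong way. The paper anchors the estimate $\lambda_{r_\ell^-}\ge\lambda_{r_\ell}+p+1$ at the \emph{top} of the string (the positively modified row of $s$), and then pushes it \emph{downward} via Lemma~\ref{L:comparelengths}, which says precisely that the gaps $\lambda_{\row(a)-1}-\lambda_{\row(a)}$ grow as you descend the string. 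You instead try to start at the bottom and iterate upward, claiming the gaps ``grow as $i$ decreases''; with $a_1$ topmost this is the opposite of what Lemma~\ref{L:comparelengths} gives, so the propagation step does not close.

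Third, the identification $r_\ell^-=r_{s,d}$ is false in general. The negatively modified row of a column-type string is $r_{s,d}=\row(\Bot_{\col(a_\ell)}(\bdy\la))$, the bottom of column $\col(a_\ell)$ in the $k$-boundary; it lies weakly below $r_\ell-1$ but is typically much lower (the vertical segment in the string diagram can span many rows). Relying on this equality to ``pin down the count in $r_{\ell}^{-}=r_{s,d}$'' leaves the negatively modified row unaddressed. Finally, the asserted inequality $\rs(\lambda)_{r_{\ell}^{-}} < \rs(\lambda)_{r_{\ell}}$ is backwards: $\rs(\la)$ is a partition and $r_\ell^-<r_\ell$, so $\rs(\lambda)_{r_\ell^-}\ge\rs(\lambda)_{r_\ell}$; the inequality used in Proposition~\ref{L:Smreasonablenormal} is $\rs(\lambda)_{r_\ell}<\rs(\lambda)_{r_\ell^-}$, at the positively modified (top) row. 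To repair the proof, adopt the bottom-to-top indexing of Proposition~\ref{L:Smreasonablenormal}, anchor at the top of $s$, iterate downward, and treat the negatively modified row $r_{s,d}$ as a separate unmodified row whose count is controlled by the same Property~\ref{L:innerstrip} argument rather than by a false equality with $r_\ell^-$.
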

\begin{proof}
The first statement follows easily from the definition of normality.
The last statement is proven as in  Proposition~\ref{L:Smreasonablenormal}.
\end{proof}

\subsection{Contiguity}
Suppose $(S,m)$ is reasonable.  We say that $(S,m)$ is contiguous if
there is a cell $b \in \bdy \mu \cap \bdy \nu$ which is not present
in $\bdy (\mu \cup \nu)$.  Call such a $b$ a {\it disappearing cell}.

\begin{lem}\label{L:disappearingcol}
Suppose $b=(r,c)$ is a disappearing cell. Then
\begin{enumerate}
\item
Row $r$ is a positively modified row of $m$ and does not contain cells
of $S$,
\item
Column $c$ is a modified column of $S$.
\item
Row $r$ contains a lower augmentable corner for $S$.
\end{enumerate}
\end{lem}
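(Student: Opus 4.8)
The plan is to mirror the proof of Lemma~\ref{L:disappearing} for the transpose situation, being careful that strips are \emph{not} transpose-symmetric, so the role of "rows of $S$" and "columns of $S$" is genuinely different. Let $b=(r,c)$ be a disappearing cell, so $b\in\bdy\mu\cap\bdy\nu$ but $b\notin\bdy(\mu\cup\nu)$. Write $n_m$ (resp. $n_S$) for the number of cells of $m$ (resp. $S$) in column $c$. First I would show $c$ contains a cell of exactly one of $m$ and $S$. If $c$ contains a cell of both, then the top of column $c$ grows by $1$ and its bottom drops, giving $h_{\mu\cup\nu}(b)=1+h_\la(b)+\max(n_m,n_S)=\max(h_\mu(b),h_\nu(b))\le k$, contradicting disappearance; if $c$ contains a cell of neither, a similar count of $h_{\mu\cup\nu}(b)$ against $\max(h_\mu(b),h_\nu(b))$ again contradicts disappearance. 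So exactly one holds.

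Next I would rule out the case that column $c$ contains a cell of $S$ (and hence none of $m$). If the cell $x=\Top_c(\bdy\mu)$ of $S$ in column $c$ were present, then $n_S>n_m$ and $h_\nu(b)=k$ (disappearance forces the bottom of $c$ in $\nu$ to be $b$). Since $m$ is a column move and removes the cell $b^*$ just \emph{below} $b$ (so $\rs(\bdy\la)_{r^+}=\rs(\bdy\la)_r$ in the relevant sense), Property~\ref{C:rowmovecolsize}'s transpose tells us $m$ cannot negatively modify row $r^+$; hence $m$ has a cell $x^*$ in row $r^+$ adjacent to and below $x$, lying in the same string of $m$ as the cell of $m$ in row $r$. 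As $S$ is $\la$-addable, $x^*\in S$, but then reasonableness of $(S,m)$ applied to that string of $m$ forces $x^*\notin S$ (since the string's cell in row $r$ is not in $S$), a contradiction. Therefore column $c$ contains a cell of $m$ (namely $x=\Top_c(\bdy\mu)$), we have $n_S>n_m$, $h_\mu(b)=k$, and $x$ has no cell of $m$ contiguous below it; since $h_\mu(x)=k$ this is the defining configuration of $x$ being a modified cell, so $c$ is a modified column of $S$ — that is (2) — and simultaneously (1)'s claim that row $r$ is a positively modified row of $m$ (it is the bottom of a string of $m$), with row $r$ containing no cell of $S$.

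For the remaining half of (1) and for (3), I would argue as in Lemma~\ref{L:disappearing}(2),(3). If row $r$ were \emph{not} a modified row of $S$, then $S$ would remove the $n_S$ cells just below $b$ and contain $n_S$ cells just below $x$, so (since $m$ is $\la$-addable) $m$ would also contain those $n_S$ cells below $x$; but $n_S>n_m$ forces $m$ to fail to modify one of these rows while it modifies row $r$, contradicting the transpose of Property~\ref{C:rowmovecolsize}. Hence row $r$ is modified by $S$, completing (1). Finally, the cell $x\in m$ sits at the bottom of its column in $\bdy\mu$ with $h_\mu(x)=k$ and lies in the modified row $r$ of $S$ with $\la_{r^+}>\mu_r$ coming from $n_S>n_m$; thus $x$ — or, more precisely, the $\mu$-addable corner at the bottom of column $c$ of $\mu$ guaranteed by the transpose of Remark~\ref{R:rowshift} — is a lower augmentable corner for $S$ associated to column $c$, giving (3).

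The main obstacle I anticipate is getting the transpose bookkeeping exactly right: because a strip $S=\mu/\la$ requires $\rs(\mu)/\rs(\la)$ horizontal and $\cs(\mu)/\cs(\la)$ vertical (an asymmetric condition), the "disappearing cell" analysis for a column move is not the literal transpose of the row-move case. The delicate point is the step ruling out "$c$ contains a cell of $S$": one must correctly identify which adjacent string of $m$ the cell $x^*$ belongs to and invoke reasonableness in the right direction, using that $S$ is $\la$-addable to pin down $x^*\in S$. Once that step is secured, the rest is a routine hook-length count entirely parallel to Lemma~\ref{L:disappearing}.
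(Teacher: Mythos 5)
Your transpose bookkeeping goes wrong in exactly the place you flagged as delicate, and the error is fatal. You set up the dichotomy ``column $c$ contains a cell of exactly one of $m,S$'' and resolve it by concluding that column $c$ contains a cell of $m$ (not $S$). But claim~(2) of the lemma asserts that $c$ is a modified column of $S$, and a modified column of a strip must contain a cell of $S$ (Corollary~\ref{C:stripmodifiedrows}, using that strips have $\cs(\mu)/\cs(\la)$ a vertical strip, so modified columns are positively modified). So if your intermediate conclusion were right, (2) would be false. Your later remark that $h_\mu(x)=k$ is ``the defining configuration of $x$ being a modified cell, so $c$ is a modified column of $S$'' is not a valid inference — $x$ is a cell of $m$, and no such criterion for being a modified column of $S$ exists. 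Likewise your treatment of (1) drifts into proving that row $r$ is a modified row of $S$ (the transpose of \ref{L:disappearing}(2)), which would mean $r$ \emph{does} contain cells of $S$, contradicting the half of (1) you just asserted.

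The correct picture, and the one the paper proves, is the opposite: since $m$ is now a column move (a vertical strip) and $S$ is still a horizontal strip, the cell of $m$ is the unique cell in \emph{row} $r$ (so $r$ is a positively modified row of $m$, the bottom of one of its strings), while the cell $a$ of $S$ sits in \emph{column} $c$, which is where modification by $S$ is detected. Concretely, the paper's proof begins by supposing row $r$ contains $p\ge1$ cells of $S$; it then lets $b'$ be the cell below $\Bot_c(\bdy\nu)$ with $R=\row(b')$, $h=\cs(\nu)_c$, and runs a short chain of inequalities to show $\rs(\la)_R-\rs(\mu)_r\le -1$, contradicting that $\rs(\mu)/\rs(\la)$ is a horizontal strip. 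So row $r$ has no cells of $S$, one cell $x\in m$, and column $c$ has one cell $a\in S$, no cells of $m$, with $h_\la(b)=k-1$. From there, (1) is finished by a reasonableness argument (if $r$ were not positively modified by $m$ there would be $a^*\in m$ below $b^*$ with $a^*\in S$, whence $x\in S$, a contradiction); (2) is established by assuming $c$ is not modified by $S$, taking the cell $x'\in S$ contiguous to and below $a$ in the cover-string decomposition of $S$, and deriving a contradiction with $m$ being $\la$-addable; and (3) then reads off that $x$ is a lower augmentable corner. The asymmetry you were right to worry about is precisely that you cannot carry over the ``look at column $c$'' bookkeeping from \ref{L:disappearing}; the column move forces you to look at row $r$ for $m$ and at column $c$ for $S$.
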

\begin{proof}
Suppose row $r$ contains $p\ge1$ cells of $S$.
Then $m$ must contain cells in column $c$. Let $b'$ be the cell below
$\Bot_c(\bdy\nu)$, $R=\row(b')$, and $h=\cs(\nu)_c$.
We have the sequence of inequalities:
\begin{align*}
\rs(\lambda)_R - \rs(\mu)_r &=  (h+ \rs(\lambda)_R) - (h-1+\rs(\mu)_r)-1  \\
  & \leq k+1 - (h-1+\rs(\mu)_r)-1 \\
  & \leq k+1 -h_{\nu \cup \mu}(b) -1 \\
  & \leq -1
\end{align*}
which contradicts the fact that
$\rs(\mu)/\rs(\lambda)$ is a horizontal strip.
Thus row $r$ contains no cells of $S$ and contains exactly one
cell $x\in m$.  Also column $c$ exactly one cell $a\in S$ and no cells of $m$.
Thus $h_\la(b) = k - 1$.

Suppose $r$ is not a positively modified row of $m$.  Then $m$
contains a cell $a^*$ in the column of the cell $b^*$ immediately left of $b$, and we
have $h_\la(b^*) = k$.  But $a^*$ is in the same row as $a$, so
$a^* \in S$ as well.  But by reasonableness, $x\in S$, a contradiction.  This proves (1).

Suppose $c$ is not a modified column of $S$. Then there is a cell $x'\in S$
contiguous to and below $a\in S$. Considering hook lengths we conclude
that $x'$ is just below $x$, $S$ removes the cell $b'$ just below $b$,
$h_\la(b')=k$, and $\la_r=\la_{r^-}$. But since $m$ is $\la$-addable
it follows that $x'\in m$. But then row $r^-$ must be positively modified by $m$,
contradicting $h_\la(b')=k$. This proves (2) and that $x$ is $\la$-addable.

For (3), the cell $x$ is a lower augmentable corner for $S$.
\end{proof}

\begin{cor} \label{C:maxcolnotcontig}
Suppose $S$ is a maximal strip and $m$ any move.  Then $(S,m)$ is
not contiguous.
\end{cor}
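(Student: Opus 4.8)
The plan is to reduce everything to two facts already in hand, splitting according to whether $m$ is a row move or a column move (these being the only two possibilities covered by ``any move''). If $m$ is a row move, the statement is precisely Corollary~\ref{C:rowmaxnotcontig}, so there is nothing further to do. If $m$ is a column move, I would first observe that Proposition~\ref{P:Smcolreasonable} guarantees $(S,m)$ is reasonable; this is exactly what is needed for the notion of contiguity to be defined and for Lemma~\ref{L:disappearingcol} to apply.

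Assuming then, for contradiction, that $(S,m)$ is contiguous, there is a disappearing cell $b=(r,c)$, that is, $b\in\bdy\mu\cap\bdy\nu$ but $b\notin\bdy(\mu\cup\nu)$. Part~(3) of Lemma~\ref{L:disappearingcol} says that row $r$ then contains a lower augmentable corner for $S$. But $S$ is a maximal strip, so Proposition~\ref{P:maxstrip} tells us $S$ has no augmentable corners at all. This contradiction shows $(S,m)$ cannot be contiguous, which finishes the column case and hence the corollary.

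I do not expect any real obstacle: the substantive work was done in Lemma~\ref{L:disappearingcol}, whose three conclusions about a disappearing cell were tailored precisely to force an augmentable corner into existence, and here we simply read off conclusion~(3) and contradict maximality via Proposition~\ref{P:maxstrip}. The one point deserving a moment's care is to confirm reasonableness before invoking Lemma~\ref{L:disappearingcol} (so that contiguity is even well-posed), which is supplied by Proposition~\ref{P:Smcolreasonable} in the column case and, implicitly through Corollary~\ref{C:rowmaxnotcontig} (which rests on Proposition~\ref{L:Smreasonable}), in the row case.
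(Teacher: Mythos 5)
Your argument is correct and matches the paper's own (unwritten) proof exactly: the row case is Corollary~\ref{C:rowmaxnotcontig}, and the column case follows from Proposition~\ref{P:Smcolreasonable} (to establish reasonableness, hence that contiguity is well-posed), Lemma~\ref{L:disappearingcol}(3) (a disappearing cell produces a lower augmentable corner), and Proposition~\ref{P:maxstrip} (maximality rules out any augmentable corner). The paper leaves this proof implicit, stating the corollary directly after Lemma~\ref{L:disappearingcol}, and your reconstruction is precisely the intended chain of reasoning.
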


\subsection{Interference of strips and column moves}
Suppose that $(S,m)$ is normal and non-contiguous. Define
$\change_\rs(S) = \rs(\mu) -\rs(\la)$ and $\change_\rs(m) =
\rs(m*\la)-\rs(\la)$. Similarly define $\change_\rs(s)$ for a
column-type string $s$.  Thus
$$
\rs(\mu) -\rs(\la) + \rs(m*\la) = \change_\rs(S) + \change_{\rs}(m)
+ \rs(\la).
$$
Recalling Notation \ref{N:arrow} let
\begin{align}\label{E:mprimecol}
m' &= \{\text{strings } s \subset m \mid \text{$s$ and $S$ are not
matched above}\} \subset m \\
\label{L:mpluscol}
  m^+&=\,\rightarrow_S\!(m').
\end{align}
By Lemma \ref{L:Sminitial}, the set $m'$ is obtained from $m$ by
possibly removing the initial string of $m$. Define the vector
$\change_{\rs}(m')$ by considering only the modified rows of strings
inside $m'$.

If $m'\ne\emptyset$ we say that $(S,m)$ is \defit{non-interfering}
if $\rs(\la) +\change_\rs(S) + \change_\rs(m')$ is a partition and
\defit{interfering} otherwise. If $m'=\emptyset$ we say that $(S,m)$
is non-interfering if $\rs(\mu)/\rs(\nu)$ is a horizontal strip and
interfering otherwise (observe that $\rs(\la) +\change_\rs(S) +
\change(m')=\rs(\la) +\change_\rs(S)=\rs(\mu)$ is always a partition
in that case). The latter case is referred to as \defit{special
interference}.

\begin{lem}\label{L:pushbumpcol}
The set of cells $m^+$ satisfies all the conditions for a move on
$\mu$ except that $(m^+)*\mu$ may not be a $k$-shape.  Furthermore,
we have $\rs((m^+)*\mu) = \rs(\la) +\change_\rs(S) +
\change_\rs(m')$.  In particular, $(m^+)*\mu$ is always a $k$-shape
when $(S,m)$ is non-interfering.
\end{lem}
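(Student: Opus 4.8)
The plan is to follow the proof of Lemma~\ref{L:pushbump}, interchanging the roles of rows and columns throughout. Write $m=s_1\cup s_2\cup\cdots\cup s_r$ with the strings ordered as in Definition~\ref{D:defmove} for a column move, and write $m^+=t_1\cup t_2\cup\cdots\cup t_\rho$ where each $t_i$ is a string of $m'$, shifted to the right by $\rightarrow_S$ (Notation~\ref{N:arrow}) when it lies inside $S$ and left unchanged otherwise. By Lemma~\ref{L:Sminitial} only the initial string of $m$ can be matched above, so $m'$ is $m$ with at most its initial string removed; and by Lemma~\ref{L:normal}, whenever $S$ continues above a string $s\subset m'$ it contains the same number of cells in every row meeting $s$ and in the negatively modified row of $s$, so $\rightarrow_S$ shifts the entire diagram of $s$ by one fixed amount. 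Thus the $t_i$ lie on the same rows as the $s_i$ and are weak translates of one another; the heart of the argument is to show that they are in fact successive column-type $(\mu\cup t_1\cup\cdots\cup t_{i-1})$-addable strings that are honest translates of one another -- i.e., conditions \eqref{I:movestrings}--\eqref{I:movetopcells} of Definition~\ref{D:defmove} for a column move on $\mu$ -- after which the first assertion is immediate, condition \eqref{I:moveend} being exactly the possible failure of $(m^+)*\mu\in\Ksh$.

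This is proved by induction on $i$, following the two cases in the proof of Lemma~\ref{L:pushbump}. If $t_i=s_i$ was not shifted, then $s_i\cap S=\emptyset$ by reasonableness, the negatively modified row of $s_i$ contains no cell of $S$ by Lemma~\ref{L:Smrow}, and non-contiguity (no disappearing cell, by Lemma~\ref{L:disappearingcol}) together with the horizontal-strip structure of $\bdy\la\setminus\bdy\mu$ (Property~\ref{L:innerstrip}) forces that row to be identical in $\la$ and $\mu$ and in $\nu$ and $\mu\cup\nu$; hence $t_i=s_i$ is a column-type string addable to $\mu\cup t_1\cup\cdots\cup t_{i-1}$ with the diagram it has in $m$. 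If $t_i$ was right-shifted from $s_i\in m'\cap S$, then by Lemma~\ref{L:Sminitial} it suffices to treat the case that $s_i$ is $\la$-addable; $t_i$ is then addable unless $s_i$ is the initial string of $m$, and for the initial string Lemma~\ref{L:normal} provides the needed $\mu$-addable corner in each row of $s_i$ (this is the column-case replacement for the use of Lemma~\ref{L:continuebelow} in Lemma~\ref{L:pushbump}). Finally, the diagram of $t_i$ is a genuine (horizontal) translate of that of $s_i$, again by Lemma~\ref{L:normal}, which makes the modified rows of $t_i$ agree in size with those of $s_i$; hence all the $t_i$ are mutual translates and the induction closes.

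For the \emph{Furthermore}: since $\rightarrow_S$ moves cells only horizontally within their rows, $m^+$ and $m'$ occupy exactly the same rows with the same cell counts, and by the first assertion each $t_i$ is a horizontal translate of the corresponding string of $m'$, so $\change_\rs(m^+)=\change_\rs(m')$. Combined with $\rs(\mu)=\rs(\la)+\change_\rs(S)$ and $\rs((m^+)*\mu)=\rs(\mu)+\change_\rs(m^+)$ -- which holds because $m^+$ is now a valid column pre-move, so Remark~\ref{R:stringchange} applies step by step -- this gives $\rs((m^+)*\mu)=\rs(\la)+\change_\rs(S)+\change_\rs(m')$. For the \emph{in particular}: being a column-type pre-move, $m^+$ leaves the column shape unchanged, $\cs((m^+)*\mu)=\cs(\mu)$, which is a partition since $\mu\in\Ksh$; so the only possible obstruction to $(m^+)*\mu\in\Ksh$ is its row shape, and by the displayed formula this is a partition precisely when $(S,m)$ is non-interfering (if $m'=\emptyset$, then $m^+=\emptyset$ and $(m^+)*\mu=\mu\in\Ksh$ anyway).

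The main obstacle is the induction step -- showing that each right-shifted string $t_i$ is genuinely addable to the intermediate shape and that its diagram remains an honest translate of $s_i$'s, so the $t_i$ stay mutual translates lying on consecutive rows -- and in particular the treatment of the initial string of $m$ when $S$ continues above it. This is exactly where normality (not merely reasonableness) of $(S,m)$ is used, via Lemma~\ref{L:normal}, in the same way the row case of Lemma~\ref{L:pushbump} uses Lemma~\ref{L:continuebelow}; the remaining bookkeeping -- the invariance of the column shape and the row-shape formula -- is then routine.
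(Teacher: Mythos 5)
Your proof follows the paper's own (one-sentence) proof exactly: it is the term-by-term transposition of the inductive argument for Lemma~\ref{L:pushbump}, with the column-move lemmas (Lemma~\ref{L:Sminitial}, Lemma~\ref{L:Smrow}, Lemma~\ref{L:disappearingcol}, Lemma~\ref{L:normal}) replacing their row-move analogues, and with the extra \emph{in particular} clause correctly deduced from the row-shape formula, the invariance of $\cs$, and the definition of interference. The only deviation from the paper's stated references is that you rely on the ``Furthermore'' clause of Lemma~\ref{L:normal} where the paper cites Lemma~\ref{L:continuebelowcol} in addition; since Lemma~\ref{L:normal} already supplies the $\mu$-addable corners in each row of the initial string, and Lemma~\ref{L:continuebelowcol}'s stronger conclusion (that those corners do not lie atop cells of $S$) is invoked later in Proposition~\ref{L:etastripcol} rather than here, this substitution does not create a gap.
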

\begin{proof}
The proof is similar to Lemma \ref{L:pushbump}, except that we now
use Lemmata \ref{L:normal} and \ref{L:continuebelowcol}.
\end{proof}

\begin{lem} \label{L:continuebelowcol}
Suppose $S$ continues above the first string
$s_1 = \{a_1,a_2,\ldots,a_\ell\}$ of  $m$.  For
each $i \in [1,\ell]$ let $r_i$ be the row containing $a_i$. Then
there is an addable corner of $\mu$ in row $r_i$.  Moreover, the
addable corner of $\mu$ in row $r_i$ does not lie above a cell of $S$.
\end{lem}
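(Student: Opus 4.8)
The first assertion of the lemma is precisely the last sentence of Lemma~\ref{L:normal}, applied to the initial string $s=s_1$ of the column move $m$ (recall that throughout this subsection $(S,m)$ is normal; the hypothesis that $S$ continues above $s_1$ presupposes $s_1\subset S$, so Lemma~\ref{L:normal} applies). Thus the real content is the ``moreover'' clause, and the plan is to extract from the proof of Lemma~\ref{L:normal} (equivalently, of Proposition~\ref{L:Smreasonablenormal}) the quantitative inequality that underlies it. One cannot simply transpose the proof of Lemma~\ref{L:continuebelow}, since strips are transpose-asymmetric and Corollary~\ref{C:stripmodifiedrows} has no row-version; this is exactly why the column case goes through normality instead.

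Concretely, write $p\ge1$ for the common number of cells of $S$ in each row meeting $s_1$ and in the negatively modified row of $s_1$ (Lemma~\ref{L:normal}); since $S=\mu/\la$ is a horizontal strip, this gives $\mu_{r_i}=\la_{r_i}+p$ for every $i$. Let $r=r_{s_1,u}$ be the positively modified row of $s_1$. Because $S$ continues above $s_1$, the row $r$ is not modified by $S$, so the $p$ cells that $S$ adds to row $r$ of $\bdy\mu$ are balanced by $p$ cells of $\bdy\la$ leaving the boundary from row $r$; combining this with Property~\ref{L:innerstrip} (that $\bdy\la\setminus\bdy\mu$ is a horizontal strip) and with $\rs(\la)_{r^-}>\rs(\la)_r$, which holds because $m$ is a column move (the transpose of Property~\ref{P:cond5}, i.e.\ of condition \eqref{I:moveend} of Definition~\ref{D:defmove}), one deduces $\la_{r^-}\ge\la_r+p+1$ exactly as in the proof of Proposition~\ref{L:Smreasonablenormal}. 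Applying Lemma~\ref{L:comparelengths}(1) to the $\la$-addable string $s_1$, the difference $\la_{\row(a)-1}-\la_{\row(a)}$ is nondecreasing as $a$ descends $s_1$, so $\la_{r_i^-}\ge\la_{r_i}+p+1$ for every $i\in[1,\ell]$.

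It remains to read off the ``moreover''. For each $i$ we have $\mu_{r_i}+1=\la_{r_i}+p+1\le\la_{r_i^-}\le\mu_{r_i^-}$, so $(r_i,\mu_{r_i}+1)$ is the $\mu$-addable corner of row $r_i$; its column index equals $\la_{r_i}+p+1\le\la_{r_i^-}$, so the cell $(r_i^-,\mu_{r_i}+1)$ directly below it lies inside $\la$ and hence is not a cell of $S=\mu/\la$, which is exactly the assertion that this corner does not lie above a cell of $S$. The one delicate point is the base estimate $\la_{r^-}\ge\la_r+p+1$: this is where one must carefully combine the normality of $(S,m)$, Property~\ref{L:innerstrip}, and the column-move structure to control the interior of $\la$ immediately below row $r$; once that inequality is in hand, the rest is the monotonicity of Lemma~\ref{L:comparelengths} and routine bookkeeping, and the lemma is used (via Lemma~\ref{L:pushbumpcol}) precisely in the setting where all these hypotheses hold.
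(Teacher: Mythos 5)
Your proof is correct and takes essentially the same route as the paper's.  Both arguments reduce the ``moreover'' clause to the single inequality $\la_{r^-}\ge\la_r+p+1$ at the top row of $s_1$ and then propagate it via Lemma~\ref{L:comparelengths}; the only differences are cosmetic: you use the standard indexing ($a_1$ the top cell, so $r=r_{s_1,u}=r_1$), whereas the paper's proof works with $a_\ell$ as the top cell (consistent with the ``decreasing diagonal index'' convention it inherits from the proof of Proposition~\ref{L:Smreasonablenormal}), so your $r$ is the paper's $r_\ell$.  You also usefully make explicit two things the paper leaves implicit: that the first assertion is just the last sentence of Lemma~\ref{L:normal}, and that the inequality $\rs(\la)_{r^-}>\rs(\la)_r$ is exactly the $k$-shape constraint at the lowest positively modified row of the column move (the transpose of Property~\ref{P:cond5}), which the paper asserts without justification.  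Your closing calculation $\mu_{r_i}+1=\la_{r_i}+p+1\le\la_{r_i^-}\le\mu_{r_i^-}$, together with the observation that the cell $(r_i^-,\mu_{r_i}+1)$ therefore lies in $\la$ and hence not in $S$, cleanly packages the ``does not lie above a cell of $S$'' conclusion.
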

\begin{proof}
Consider the case $i = \ell$ and set $r=r_{\ell}$.
Since $S$ does not match $s_1$ above by definition,
the row $r_\ell$ containing $a_\ell$ is not a modified row of
$S$ by normality.
Suppose
$r_\ell$ contains $p \geq 1$ cells of $S$, implying that the $p$
leftmost cells of $r_\ell$ are moved when going from $\bdy \lambda$
to $\bdy \mu$ (and none of the columns of these $p$ cells are modified columns of $S$).
  It follows from Property~\ref{L:innerstrip} and
$\rs(\lambda)_{r_\ell} < \rs(\lambda)_{r_\ell^-}$ that
$\lambda_{r_\ell^-} \geq \lambda_{r_\ell} + p + 1$.  In particular,
there is an addable corner in row $r_\ell$ of $\mu$ and it does not
lie above a cell of $S$.

Since $s_1 = \{a_1,a_2,\ldots,a_\ell\}$ is $\la$-addable,
 Lemma~\ref{L:comparelengths} ensures that
$\lambda_{r_i^-} \geq \lambda_{r_i} + p + 1$ for all $i$.
There are exactly $p$ cells of $S$ in
row $r_i$ for all $i$ by Lemma~\ref{L:normal}.  So again
there is an addable corner in row $r_i$ of $\mu$ and it does not
lie above a cell of $S$.
\end{proof}

\subsection{Column-type pushout: non-interfering case}
\label{SS:columnpushnointerfere} Suppose $(S,m)$ is normal,
non-contiguous, and non-interfering. In this case, by definition
$(S,m)$ is declared to be compatible where we set $\eta=(m^+)*\mu$
and define $(\tS,\tm)$ by \eqref{E:pushdiag} and the pushout of
$(S,m)$ by \eqref{E:pushoutdef}. $\tm$ is a (possibly empty) column
move and $\tS$ is a strip by Lemma \ref{L:pushbumpcol} and
Proposition \ref{L:etastripcol}.

\begin{proposition}\label{L:etastripcol}
Suppose $(S,m)$ is normal, non-contiguous, and non-interfering.
Then $\eta/\nu$ is a strip.
\end{proposition}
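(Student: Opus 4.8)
The plan is to follow the template established by Proposition~\ref{L:etastrip} for the row case, adapting the three defining conditions of a strip (horizontal strip, $\rs$ horizontal strip, $\cs$ vertical strip) to the column setting. So suppose $(S,m)$ is normal, non-contiguous, and non-interfering, with $S=\mu/\la$, $m=\nu/\la$ a column move, $\eta=(m^+)*\mu$, $\tS=\eta/\nu$, and $\tm=\eta/\mu$. I must show $\eta/\nu$ satisfies all three conditions of Definition~\ref{D:strip}.

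First I would argue that $\eta/\nu$ is a horizontal strip. Since $\eta\supset\nu$ by construction (both contain $\la$ and $m^+$ shifts strings of $m$ only to the right, staying in columns to the right of the modified columns of $S$), the only way $\eta/\nu$ could fail to be a horizontal strip is if some cell of $\tm=\eta/\mu$, i.e.\ some cell of $m^+$ or an additional cell, sits directly above a cell of $S$; but by Lemma~\ref{L:continuebelowcol}, each addable corner of $\mu$ in the relevant rows does not lie above a cell of $S$, and the bumping operation $\rightarrow_S$ was defined precisely to avoid this. This step is essentially bookkeeping identical to the row case and I would keep it terse. Next, for the $\rs$-condition: by Lemma~\ref{L:pushbumpcol} we have $\rs((m^+)*\mu)=\rs(\la)+\change_\rs(S)+\change_\rs(m')$, so $\rs(\eta)-\rs(\nu)=\change_\rs(S)+\change_\rs(m')-\change_\rs(m)=\change_\rs(S)-\change_\rs(m\setminus m')$. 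Now $m\setminus m'$ consists exactly of those strings $s\subset m$ that are matched above by $S$ (by Lemma~\ref{L:Sminitial}, at most the initial string of $m$), so the positively modified row of each such string cancels with a modified row of $S$, while its negatively modified row is, by Lemma~\ref{L:Smrow} (and normality), disjoint from the modified rows of $S$. Hence $\rs(\eta)-\rs(\nu)$ is a $0$–$1$ vector, and non-interference guarantees $\rs(\eta)$ is a partition, so $\rs(\eta)/\rs(\nu)$ is a horizontal strip of the correct size.

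Finally, for the $\cs$-condition I would observe that a column move leaves column shapes almost unchanged: $\cs(\nu)=\cs(m*\la)=\cs(\la)$ since $m$ is a column move, and $m^+$ is again a column move (by Lemma~\ref{L:pushbumpcol}) so $\cs(\eta)=\cs((m^+)*\mu)=\cs(\mu)=\cs(\la)+\change_\cs(S)$. Therefore $\cs(\eta)-\cs(\nu)=\change_\cs(S)$, which is a vertical strip of size $r=\rk(S)$ because $S$ is itself a strip. Combining the three conditions shows $\eta/\nu$ is a strip of rank $r$. I expect the main obstacle — though a mild one here, since the hard work was done in establishing normality (Proposition~\ref{L:Smreasonablenormal}) and Lemmata~\ref{L:pushbumpcol} and \ref{L:continuebelowcol} — to be making the horizontal-strip verification airtight in the case where $m'=\emptyset$ is treated as non-special-interfering: one must double-check that the additional cells (if any) introduced by $m^+$ beyond $\mu$ still respect the horizontal-strip property against $\nu$, which again comes down to the "does not lie above a cell of $S$" clause of Lemma~\ref{L:continuebelowcol} together with the fact that $\nu/\la=m$ is a vertical strip (Property~\ref{P:movestrip}), so cells of $m^+$ in new columns cannot collide vertically with $\nu$.
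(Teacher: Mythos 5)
The proposal contains a genuine gap in the treatment of the $\rs$-condition, and the gap is precisely the piece of the proposition that carries the real content.

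You argue that $\rs(\eta)-\rs(\nu)=\change_\rs(S)-\change_\rs(m\setminus m')$ is a $0$-$1$ vector and conclude that $\rs(\eta)/\rs(\nu)$ is therefore a horizontal strip. Both steps are wrong. First, a $0$-$1$ difference between nested partitions is exactly the definition of a \emph{vertical} strip (at most one cell per row), not a horizontal strip (at most one cell per column, i.e.\ the interlacing $\rs(\nu)_{r^-}\ge\rs(\eta)_r$). You have imported the wrong analogy from the row case: in Proposition~\ref{L:etastrip} the nontrivial check was that $\cs(\eta)/\cs(\nu)$ is a \emph{vertical} strip, for which the $0$-$1$ argument is the correct one. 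In the column case, duality flips the easy and hard conditions: $\cs(\eta)/\cs(\nu)=\cs(\mu)/\cs(\la)$ is trivially a vertical strip (you got this right), but $\rs(\eta)/\rs(\nu)$ must be a \emph{horizontal} strip, and this is the substantive check. Second, the claim that $\rs(\eta)-\rs(\nu)$ is a $0$-$1$ vector is also false in general: $\change_\rs(S)=\rs(\mu)-\rs(\la)$ is the difference of a horizontal strip of partitions, and a horizontal strip can put several cells in one row, so $\change_\rs(S)$ can have entries $\ge 2$. (Your invocation of Lemma~\ref{L:Smrow} is also misplaced: that lemma applies when $s\nsubseteq S$, whereas the strings $s\in m\setminus m'$ are \emph{contained} in $S$, and normality's clause about $r_{s,d}$ likewise only applies when $S$ continues above $s$, not when it matches $s$ above.)

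The paper's actual proof devotes its bulk to establishing the interlacing inequality $\rs(\nu)_{r^-}\ge\rs(\eta)_r\ge\rs(\nu)_r$. The easier inequality follows because the only string removed in forming $m'$ has its positively modified row canceled by a modified row of $S$. The harder inequality $\rs(\nu)_{r^-}\ge\rs(\eta)_r$ is handled by writing $\rs(\nu)_{r^-}-\rs(\eta)_r=\rs(\la)_{r^-}-\rs(\mu)_r+\Delta_\rs(m)_{r^-}-\Delta_\rs(m')_r$, noting $\rs(\la)_{r^-}-\rs(\mu)_r\ge 0$ since $S$ is a strip, and then doing a careful case analysis on where the $\Delta$-terms can be negative: the top negatively modified row of $m$, the positively modified row of the initial string, degenerate versus non-degenerate moves, and $m=m'$ versus $m\ne m'$. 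Each case is dispatched using normality (which forces equalities like $\rs(\la)_{r^-}=\rs(\mu)_{r^-}$) together with the $k$-shape conditions on $\la$ or $\eta$. None of this appears in your proposal, and there is no shortcut around it: this case analysis is where the hypothesis of normality does its work. You also conjecture the main obstacle is in the $m'=\emptyset$ subcase of the cell-level horizontal strip, but that case is actually handled by definition in the paper (non-interference there \emph{means} $\rs(\mu)/\rs(\nu)$ is a horizontal strip); the real obstacle is the $m'\ne\emptyset$ case of the $\rs$-level horizontal strip, which your argument silently assumes away.

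Your opening step (that $\eta/\nu$ is a horizontal strip of cells, via Lemma~\ref{L:continuebelowcol}) and your treatment of the $\cs$-condition are both correct and match the paper.
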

\begin{proof}
That $\eta/\nu$ is a horizontal strip is not difficult
(Lemma~\ref{L:continuebelowcol} ensures that the cells of $m^+$
do not lie above cells of $S$).   We also
have $\cs(\eta)/\cs(\nu) = \cs(\mu)/\cs(\lambda)$.

If $m'=\emptyset$ we have by definition that if $(S,m)$ is non-interfering
then $\rs(\eta)/\rs(\nu)=\rs(\mu)/\rs(\nu)$ is a horizontal strip.  Thus
$\eta/\nu$ is a strip.

Suppose $m'$ is not empty and that $\rs(\eta)=\rs(\la) +\change_\rs(S) +
  \change_{\rs}(m')$ is a partition.
We must prove that $\rs(\nu)_{r^-} \geq \rs(\eta)_r \geq \rs(\nu)_r$ for each row
$r$.
Recall that modified rows of $m'$ are modified rows of $m$ and that the only
string that may possibly be in $m\setminus m'$ is the initial one.
Therefore
the second inequality follows from the fact that
if $m\setminus m'$ is not empty then
the positively modified row of the initial string of
$m$ is a modified row of $S$.

To prove the first inequality, observe that
$$
\rs(\nu)_{r^-}-\rs(\eta)_r=\rs(\lambda)_{r^-}-\rs(\mu)_r+\Delta_{\rs}(m)_{r^-}
-\Delta_{\rs}(m')_{r} \, ,
$$
with $\rs(\lambda)_{r^-}-\rs(\mu)_r \geq 0$ since $S$ is a strip.

Suppose that $m=m'$.
Then the first inequality can
only fail if $r^-$ is the uppermost negatively modified row of $m$ or if
$r$ is the positively modified row of the initial string of $m$.

Suppose that $m$ is non-degenerate.
Let $r^-$ be the uppermost negatively modified row of $m$.  By
normality we have $\rs(\lambda)_{r^-}=\rs(\mu)_{r^-}$.  Therefore if the first
inequality fails, we have $\rs_{r^-}(\mu)=\rs_{r^-}(\lambda) = \rs_r(\mu)$
which is a contradiction since $\eta$ would not then be a $k$-shape
($r^-$ is a negatively modified row of $m'$ that is not a modified row of $S$
by normality).  Let $r$ be the positively modified row of the initial string
of $m$. By
normality we have $\rs(\lambda)_{r}=\rs(\mu)_{r}$.  Therefore if the first
inequality fails, we have $\rs(\lambda)_{r^-}=\rs(\mu)_r = \rs(\lambda)_r$
which is a contradiction since $\lambda$ would not then be a $k$-shape
($r^-$ is a negatively modified row of $m$).

Suppose that $m$ is degenerate, and let $r^-$ be the uppermost negatively
modified row of $m$ (and thus $r$ is the positively modified row of the initial string
of $m$).  By
normality we have $\rs(\lambda)_{r^-}=\rs(\mu)_{r^-}$
and $\rs(\lambda)_{r}=\rs(\mu)_{r}$.  Therefore if the first
inequality fails, we have $\rs(\lambda)_{r^-}\leq \rs(\mu)_r +1= \rs(\lambda)_r+1$
which is a contradiction since $\lambda$ would not then be a $k$-shape
($\rs(\lambda)_{r^-}-\rs(\lambda)_r \geq 2$ since
$m$ is degenerate).

Finally, suppose that $m$ and $m'$ are distinct.  The only case to consider
that was not considered in
the case $m=m'$ is when $r^-$ is the negatively modified row of the first
string
of $m$.  By hypothesis $m'$ is not empty and so $r$ is also a negatively
modified row of $m'$.  The first inequality then follows immediately.
\end{proof}

\subsection{Column-type pushout: interfering case}
\label{SS:columnpushinterfere} Suppose $(S,m)$ is normal,
non-contiguous, and interfering. We say that $(S,m)$ is \textit{
pushout-perfectible} if there is a set of cells $m_\comp$ outside
$(m^+)*\mu$ so that if
\begin{align} \label{E:etanotationcol}
  \eta = ((m^+)*\mu) \cup m_\comp
\end{align}
then $\eta/\nu$ is a strip and $\eta/\mu$ is a column move from
$\mu$ whose strings have the same diagram as those of $m$.  Since
$\rs(\eta)/\rs(\nu)$ is a horizontal strip, $m_\comp$ can only be a
single column-type string and will thus be unique if it exists.

In the case that $(S,m)$ is pushout-perfectible, by definition we
declare $(S,m)$ to be compatible where $\eta$ is specified by
\eqref{E:etanotationcol} and define $(\tS,\tm)$ by
\eqref{E:pushdiag} and the pushout of $(S,m)$ by
\eqref{E:pushoutdef}. By definition, $\tm$ is a column move and
$\tS$ is a strip.

\begin{example} This is an example of special interference for $k=3$.
In $\mu=S*\la$ and $\nu=m*\la$ the new cells added to $\la$ are shaded.
In the lower right $k$-shape the cells of $m_\comp$ are shaded.
\begin{equation*}
\begin{diagram}
\node{{\tableau[pby]{\bl \\ \\ \\  \bl &  \\ \bl &  & }}} \arrow{s,t}{S} \arrow{e,t}{m}
\node{{\tableau[pby]{\fl \\ \\ \bl  & \fl \\  \bl &  \\ \bl & \bl & }}} \arrow{s,b}{\tS} \\
\node{{\tableau[pby]{\bl \\ \fl \\ \\ \bl & \fl  \\ \bl& & \fl \\ \bl&\bl&\bl&\fl&\fl}}} \arrow{e,b}{\tm}
\node{{\tableau[pby]{\fl \\  \\ \bl & \fl \\ \bl &   \\ \bl&\bl &  \\ \bl&\bl&\bl&&}}}
\end{diagram}
\end{equation*}
\end{example}

\begin{lemma} \label{L:specialint}
Let $(S,m)$ be such that $m'$ is empty.
 Then there is (special)
interference iff $\rs(\mu)_{r^-}=\rs(\mu)_{r}$, where $r^-$ is the negatively modified
row of $m$ ($m$ is necessarily of rank 1).
\end{lemma}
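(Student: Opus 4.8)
The plan is to unwind all the definitions so that the statement becomes an elementary comparison of the three partitions $\rs(\la)$, $\rs(\mu)=\rs(S*\la)$ and $\rs(\nu)=\rs(m*\la)$. First I would record the consequences of the hypothesis $m'=\emptyset$. By Lemma~\ref{L:Sminitial}, only the initial string of $m$ can fail to lie in $m'$, so $m'=\emptyset$ forces $m$ to consist of a single string $s$, which (since $m$ is a column move) is column-type; hence $m$ has rank $1$ and $S$ matches $s$ above, i.e. $r_{s,u}$ is a modified row of $S$. Let $r^-:=r_{s,d}$ be the negatively modified row of $m$ and write $r:=(r^-)^+$ for the row directly above it. Two identities are then in force: $\rs(\mu)=\rs(\la)+\change_\rs(S)$, where $\change_\rs(S)$ is a $0$-$1$ vector and $\rs(\mu)/\rs(\la)$ is a horizontal strip because $S$ is a strip; and $\rs(\nu)=\rs(\la)+e_{r_{s,u}}-e_{r^-}$ by Remark~\ref{R:stringchange} (both $r_{s,u}$ and $r^-$ are defined as $s$ is column-type).

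Next I would rewrite the assertion ``$\rs(\mu)/\rs(\nu)$ is a horizontal strip'' --- whose negation is, in the case $m'=\emptyset$, precisely the definition of special interference --- as the system of interlacing inequalities $\rs(\nu)_R\ge\rs(\mu)_{R^+}$ over all rows $R$; the containment $\rs(\nu)\subseteq\rs(\mu)$ that this presupposes is automatic, since the only row at which $\rs(\nu)$ exceeds $\rs(\la)$ is $r_{s,u}$, which is a modified row of $S$. For every row $R\ne r^-$ one has $\rs(\nu)_R\ge\rs(\la)_R\ge\rs(\mu)_{R^+}$, the last step because $\rs(\mu)/\rs(\la)$ is a horizontal strip; so the only inequality that can fail is the one at $R=r^-$, namely $\rs(\la)_{r^-}-1\ge\rs(\mu)_r$. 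Since $\rs(\la)_{r^-}\ge\rs(\mu)_r$ always holds (again because $S$ is a strip), this fails --- equivalently, $(S,m)$ has special interference --- precisely when $\rs(\la)_{r^-}=\rs(\mu)_r$.

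Finally I would replace $\rs(\la)_{r^-}$ by $\rs(\mu)_{r^-}$: since $r^-$ is a negatively modified row of the column move $m$, it is not a modified row of $S$ (the transpose of Property~\ref{L:Smmod}), so $\change_\rs(S)_{r^-}=0$ and hence $\rs(\mu)_{r^-}=\rs(\la)_{r^-}$. Substituting yields the claimed criterion $\rs(\mu)_{r^-}=\rs(\mu)_r$. I do not expect a real obstacle here; the only points needing care are keeping straight which row of $m$ is positively versus negatively modified, invoking the two ``automatic'' inequalities for the correct rows, and citing --- or, since the paper states only the row version, re-deriving by transposition --- the column analogue of Property~\ref{L:Smmod}.
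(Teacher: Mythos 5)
Your proof follows the same computation as the paper's: write out the interlacing inequalities for $\rs(\mu)/\rs(\nu)$ to be a horizontal strip, observe that only the inequality at row $r^-$ can fail, and convert the resulting criterion $\rs(\la)_{r^-}=\rs(\mu)_r$ into the stated one by replacing $\rs(\la)_{r^-}$ with $\rs(\mu)_{r^-}$. The last step is where you and the paper diverge in justification, and neither account holds up as written. You appeal to ``the transpose of Property~\ref{L:Smmod},'' but that property does not transpose to a statement about (horizontal) strips and column moves: its proof relies on the fact that in a modified \emph{column} of $S$ exactly one cell is added and none removed (since both $\mu/\la$ and $\bdy\la\setminus\bdy\mu$ are horizontal strips), whereas a modified \emph{row} of $S$ can gain several cells while losing one, and the argument collapses. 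The paper instead cites ``normality,'' but the normality condition (b) is asserted only when $S$ \emph{continues above} the initial string; when $m'=\emptyset$, $S$ \emph{matches $s$ above}, so normality is vacuous.

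In fact the claim that $r^-$ is not a modified row of $S$ can fail for a general normal, non-contiguous initial pair with $m'=\emptyset$. With $k=4$, $\la=(4,2,1)$, $\mu=(7,3,2)$, $\nu=(4,2,2)$, one has $\rs(\la)=(3,2,1)$, $\rs(\mu)=(4,3,2)$, $\rs(\nu)=(2,2,2)$, a column move $m=\{(3,2)\}$ with negatively modified row $r^-=1$, and a strip $S=\mu/\la$ with $(3,2)\in S$ and all three rows modified; the pair is reasonable, normal, non-contiguous, and $m'=\emptyset$, yet $r^-=1$ \emph{is} modified by $S$. Here $\rs(\mu)/\rs(\nu)=(4,3,2)/(2,2,2)$ has two cells in column $3$, so there is special interference, while $\rs(\mu)_{r^-}=4\ne 3=\rs(\mu)_r$, contradicting the stated equivalence. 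The resolution is that the lemma is only ever invoked with $S$ a \emph{maximal} strip (in Proposition~\ref{L:maximalinterferecol}), and it is maximality---by producing an augmentable corner when $r^-$ is modified---that rules out this situation; to make either your proof or the paper's rigorous, the maximality hypothesis needs to be stated and the case $r^-$ modified ruled out from it rather than from normality or from Property~\ref{L:Smmod}.
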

\begin{proof} Let $m'=\emptyset$. In this case there is interference iff
$\rs(\mu)/\rs(\nu)$ is not a horizontal strip. We have
$$
\rs(\nu)_{r^-}-\rs(\mu)_r=\rs(\lambda)_{r^-}-\rs(\mu)_r+\Delta_{\rs}(m)_{r^-} \, ,
$$
with $\rs(\lambda)_{r^-}-\rs(\mu)_r \geq 0$ since $S$ is a strip.
The inequality $\rs(\nu)_{r^-}-\rs(\mu)_r \geq 0$ can thus only fail when
$r^-$ is the negatively modified row
of $m$.  In that case, by normality we have
$\rs(\lambda)_{r^-}=\rs(\mu)_{r^-}$.  Therefore we obtain
$$
\rs(\nu)_{r^-}-\rs(\mu)_r=\rs(\mu)_{r^-}-\rs(\mu)_r-1 < 0 \iff \rs(\mu)_{r^-}=\rs(\mu)_r
$$
and the lemma follows.
\end{proof}

\begin{proposition}\label{L:maximalinterferecol}
Suppose $(S,m)$ is interfering, $S$ is maximal, and $m$ is a column
move. Then $(S,m)$ is pushout-perfectible (and hence compatible).
Moreover $m_\comp$ consists of a single string lying in the same
columns as the last string of $m$.
\end{proposition}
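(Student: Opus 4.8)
The plan is to mirror the proof of Proposition~\ref{L:maximalinterfere} in transpose, substituting the column‑move machinery of this section (Lemmata~\ref{L:pushbumpcol}, \ref{L:continuebelowcol}, \ref{L:normal}, \ref{L:specialint}, Proposition~\ref{L:etastripcol}) for the row‑move machinery, and keeping careful track of the transpose‑asymmetry of strips. Since $S$ is maximal, Propositions~\ref{P:Smcolreasonable} and~\ref{L:Smreasonablenormal} make $(S,m)$ reasonable and normal, and Corollary~\ref{C:maxcolnotcontig} makes it non‑contiguous; hence interference is well defined, $m^+=\rightarrow_S(m')$ satisfies all the conditions for a move from $\mu$ by Lemma~\ref{L:pushbumpcol}, and the only thing left to do is to produce the completing set $m_\comp$ — whose uniqueness and one‑string form are already recorded in~\S\ref{SS:columnpushinterfere}.

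First I would pin down the local configuration forced by interference. When $m'=\emptyset$ (special interference), Lemma~\ref{L:specialint} directly gives a row $R$ with $\rs(\mu)_{R^-}=\rs(\mu)_R$, where $R^-$ is the unique negatively modified row of the rank‑$1$ move $m$; combining with normality ($\rs(\la)_{R^-}=\rs(\mu)_{R^-}$) this reads $\rs(\mu)_R=\rs(\la)_{R^-}$. When $m'\neq\emptyset$, the vector $\change_\rs(m')$ carries a single consecutive block of $-1$'s, so — exactly as in Lemma~\ref{L:interfere} and the column analogue of Remark~\ref{R:interference} — the failure of $\rs(\la)+\change_\rs(S)+\change_\rs(m')$ to be a partition can only occur at the uppermost negatively modified row $R^-$ of $m$, with $R$ a modified row of $S$ and $\rs(\la)_{R^-}=\rs(\la)_R+1$; normality again yields $\rs(\mu)_R=\rs(\la)_{R^-}$. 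Thus in every case the hypothesis of Lemma~\ref{L:everywhere} holds at the pair of rows $R,R^-$ with $R$ modified by $S$, and $R$ is the row of the last string of $m$.

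Now comes the step where maximality is essential. Put $b=\Left_R(\bdy\mu)$. Since $S$ is maximal it has no augmentable corners (Proposition~\ref{P:maxstrip}), so there is no lower augmentable corner of $S$ in row $R$ and no upper augmentable corner of $S$ in $\col(b)$; hence, whichever of the three cases of Lemma~\ref{L:everywhere} the hook length $h_\mu(b)$ falls into, one concludes — handling $h_\mu(b)=k$ by part~(3) via maximality, and the cases $h_\mu(b)\le k-1$ by parts~(1),(2) or by a hook‑length contradiction with maximality as in Lemma~\ref{L:columncompletionaugment} — that $\Left_R(\bdy\mu)$ and $\Left_{R^-}(\bdy\la)$ lie in the same column. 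Iterating this over the group of modified rows of $S$ of the same length (again using the absence of augmentable corners) forces all those rows to start in the same column and shows that $\la_{R^-}-\la_R$ exceeds the number of cells of $m$ in row $R$ by at least the number of such modified rows of $S$; equivalently $\rho:=(m^+)*\mu$ leaves room to append exactly one translated column‑type string to the last string of $m$, lying in its columns and over cells of $\la$ only (by Lemma~\ref{L:continuebelowcol}), with the ``exactly one'' being the horizontal‑strip constraint on $\rs(\eta)/\rs(\nu)$ noted in~\S\ref{SS:columnpushinterfere}. Take $m_\comp$ to be this string and $\eta=\rho\cup m_\comp$ as in~\eqref{E:etanotationcol}; then $\eta/\nu$ is a horizontal strip, $\cs(\eta)/\cs(\nu)$ a vertical strip, and $\rs(\eta)/\rs(\nu)$ a horizontal strip, checked just as in Propositions~\ref{L:etastripcol} and~\ref{L:maximalinterfere}, so $\eta/\nu$ is a strip and $\eta/\mu$ is a column move from $\mu$ all of whose strings have the diagram of those of $m$. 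Hence $(S,m)$ is pushout‑perfectible, with $m_\comp$ as described.

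I expect the one genuine obstacle to be the middle of the argument: correctly reading off from the interference condition the equality $\rs(\mu)_R=\rs(\la)_{R^-}$ together with the identification of $R$, and then the hook‑length bookkeeping that feeds Lemma~\ref{L:everywhere} and shows that the bumped move $m^+$ leaves precisely enough room for a single appended column‑type string and no more. The remaining verifications — that $\eta/\nu$ is a strip and that $\eta/\mu$ is a column move — are routine transcriptions of the corresponding non‑interfering column case and of the interfering row case, Proposition~\ref{L:maximalinterfere}.
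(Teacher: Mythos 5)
Your proposal follows essentially the same skeleton as the paper's proof: establish reasonableness, normality and non-contiguity from maximality; pinpoint where the row-shape vector fails to be a partition (distinguishing $m'=\emptyset$, via Lemma~\ref{L:specialint}, from $m'\neq\emptyset$); show via Lemma~\ref{L:everywhere} and the no-augmentable-corners consequence of maximality that $\Left_R(\bdy\mu)$ and $\Left_{R^-}(\bdy\la)$ sit in the same column; conclude there is an addable corner above the top cell of the last string $s$ of $m$; extend that to a full column-type string $m_\comp$ and verify the strip conditions. That is exactly how the paper argues.

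There is, however, one step in the middle that does not transfer from Proposition~\ref{L:maximalinterfere} the way you describe. You propose ``iterating this over the group of modified rows of $S$ of the same length'' to produce $m_\comp$. In the row case the modified \emph{columns} of $S$ can genuinely form groups of equal length (since $\cs(\mu)/\cs(\la)$ is only required to be a vertical strip), and those groups are what give the several strings of $m_\comp$. But in the column case the modified \emph{rows} of $S$ have pairwise distinct lengths because $\rs(\mu)/\rs(\la)$ is a horizontal strip; your ``group'' is a singleton and the iteration is vacuous. What actually drives the construction of $m_\comp$ in the paper is an iteration over the cells of the last string $s$ of $m$: once one addable corner exists above the top cell $a_1$ of $s$, Lemma~\ref{L:stripcomparelengths} (applied through the argument of Lemma~\ref{L:columncompletionaugment}) propagates an addable corner above each $a_i$, giving the single translated column-type string $m_\comp$. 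Relatedly, your appeal to Lemma~\ref{L:continuebelowcol} is misdirected: that lemma concerns the rows of the \emph{first} string of $m$ when $S$ continues above it, not the columns over the last string; what you want is the transpose-and-last-string analogue, which is what the paper's reference to Lemma~\ref{L:columncompletionaugment} supplies. With these corrections the remaining verifications that $\eta/\nu$ is a strip are indeed routine, matching the paper.
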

\begin{proof} By Proposition \ref{L:Smreasonablenormal}
and Corollary \ref{C:maxcolnotcontig}, $(S,m)$ is normal and not
contiguous, so that it makes sense to refer to interference.

Suppose $m^{+}$ is non-empty so that $\rs(\la) +\change_\rs(S) + \change_\rs(m')$ is not a partition.
Let $r^-$ be the negatively modified row of the final string $s$ of
$m$. We may assume that $r^-$ is not a modified row of $S$, for
otherwise $s$ would have to be initial, and Lemma~\ref{L:Smrow}
would imply that $s \subseteq S$ and thus that $s$ is matched above,
implying that $(S,m)$ does not interfere.
Also, $r$ must be a modified row of $S$ for interference to occur.
Note that since $\rs(\la) +\change_\rs(S) + \change_\rs(m')$ is not
a partition we have $\rs(\mu)_{r^-}=\rs(\mu)_{r}$ and thus
$\rs(\lambda)_{r^-}=\rs(\mu)_{r}$ ($r^-$ is not a modified row of $S$).

We claim that $\eta$ has an addable corner directly above the first
cell $a$ of $s$. Since $h_\la(\Left_{r^-}(\bdy\la))=k$ by the
definition of a move, we have from $\rs(\lambda)_{r^-}=\rs(\mu)_{r}$
that $h=h_\mu(\Left_r(\bdy\mu))$ is $k-1$ or $k$. In either case
(using Lemma~\ref{L:everywhere}(3) when $h=k$) we see that
$\Left_{r^-}(\bdy\la)$ lies in the same column as
$\Left_r(\bdy\mu)$. We then have immediately that there is an
addable corner directly above the first cell $a$ of $s$. Since $s$
is $\la$-addable, we obtain from Lemma~\ref{L:stripcomparelengths}
that there is a $\mu$-addable corner above every cell of $m$. The
rest of the proof that $m^+ \cup m_{\comp}$ is a column move is
analogous to the proof of Lemma \ref{L:columncompletionaugment}.

Suppose $m^+$ is empty and
$\rs(\eta)/\rs(\nu)=\rs(\mu)/\rs(\nu)$ is not a horizontal strip.
Recall that only the initial string of $m$ can disappear and thus
$m$ is of rank 1.  From Lemma~\ref{L:specialint},
the negatively modified row of $m$ is in a row $r^-$
such that row $r$ is a modified row of $S$ with
$\rs(\la)_{r^-}=\rs(\mu)_{r^-}=\rs(\mu)_{r}$ (recall that
$\rs(\lambda)_{r^-}=\rs(\mu)_{r^-}$ by normality).
Again  $\eta$ has an addable
corner directly above the first cell $a$ of $s$ by Lemma~\ref{L:everywhere}(3).
The rest of the proof that $m_{\comp}$ is a column move
is as in the non-empty case.

Since the cells of $m_{\comp}$ lie above cells of $m$ we have that
$\eta/\nu$ is a horizontal strip.  Obviously $\cs(\eta)/\cs(\nu)=
\cs(\mu)/\cs(\la)$ is a vertical strip.  We thus only have to prove
that $\rs(\eta)/\rs(\nu)$ is a horizontal strip. If $m'$ is
non-empty there is interference only if $\rs(\la) +\change_\rs(S) +
\change_\rs(m')$ is not a partition.   Following the proof of
Proposition~\ref{L:etastripcol} we have that $\rs(\nu)_i \geq
\rs(\eta)_{i+1}$ for all $i$ except possibly when $i=R$ is the
highest positively modified row of $m$. In that case
$\rs(\eta)/\rs(\nu)$ is a horizontal strip since the positively
modified row $R^+$ of $m_{\comp}$ lies in the row above row $R$ and
$\rs(\mu)/\rs(\lambda)$ is a horizontal strip by definition (that
is, given $\lambda_R \geq \mu_{R^+}$, $\eta_{R^+}=\mu_{R^+}+1$ and
$\nu_R=\lambda_R+1$, we have $\nu_R\geq \eta_{R^+}$). If $m'$ is
empty, then by Lemma~\ref{L:specialint} there is interference iff
$\rs(\mu)_i=\rs(\mu)_{i+1}$, where $i=r^-$ is the negatively
modified row of $m=s$.  In that case,  given $\eta_{r}=\mu_{r}-1$
and $\nu_{r^-}=\la_{r^-}-1$, the fact that $\la_{r^-} \geq \mu_{r}$
guarantees that $\nu_{r^-} \geq \eta_{r}$.  So we only have to check
what happens at the positively modified row of $m_{\comp}$. The
result follows just as in the $m' \neq \emptyset$ case.
\end{proof}

\begin{lemma} \label{L:liesabove}
Suppose $(S,m)$ is pushout-perfectible. If any cell of the string
$s=m_{\comp}$ lies above a cell of $S$ then $s$ lies in the same
columns as the final string of $m$.
\end{lemma}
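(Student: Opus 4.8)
The plan is to read off the position of $m_\comp$ from the explicit formula $\eta=((m^+)*\mu)\cup m_\comp$ together with the rigidity of column moves. First I would record two facts already available. (i) No cell of $m^+=\,\rightarrow_S\!(m')$ lies above a cell of $S$: this is exactly what Lemma~\ref{L:continuebelowcol} gives, and it is used in the proofs of Proposition~\ref{L:etastripcol} and Lemma~\ref{L:maximalinterferecol}. (ii) $\eta/\mu$ is a column move, so by Proposition~\ref{P:uniquedecomp} its string decomposition is unique; since $m_\comp$ is a single column-type string and $\eta/\mu=m^+\cup m_\comp$, that decomposition is exactly the strings of $m^+$ together with $m_\comp$. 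Combining (i) and (ii): every cell of $\eta/\mu$ lying above a cell of $S$ must belong to $m_\comp$, so the hypothesis of the lemma is simply the statement that $m_\comp$ is the string of $\eta/\mu$ carrying such cells.

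Next I would pin down $m_\comp$ by revisiting why $(S,m)$ interferes. If $m'=\emptyset$ (special interference) then $m^+=\emptyset$ and $m$ has rank $1$ by Lemma~\ref{L:specialint}, so $m=s$ is its own final string and $\eta/\mu=m_\comp$; special interference forces $\rs(\mu)_{r^-}=\rs(\mu)_r$ with $r^-$ the negatively modified row of $s$ and $r$ a modified row of $S$, whence Lemma~\ref{L:everywhere}(3) places $\Left_{r^-}(\bdy\la)$ in the column of $\Left_r(\bdy\mu)$, i.e.\ there is a $\mu$-addable corner directly above the first cell of $s$. If $m'\neq\emptyset$, then the failure of $\rs(\la)+\change_\rs(S)+\change_\rs(m')$ to be a partition occurs --- by the analysis in the proofs of Proposition~\ref{L:etastripcol} and Lemma~\ref{L:maximalinterferecol}, whose localization of the defect does not use maximality --- at the negatively modified row $r^-$ of the final string $s$ of $m$, with $r^-$ not a modified row of $S$ (otherwise $s$ would be the initial string, matched above, and there would be no interference) and $r$ a modified row of $S$; again $\rs(\la)_{r^-}=\rs(\mu)_{r^-}=\rs(\mu)_r$, and Lemma~\ref{L:everywhere}(1)--(3) produces a $\mu$-addable corner directly above the first cell of $s$. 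In either case, applying Lemma~\ref{L:stripcomparelengths} to the $\la$-addable string $s$ gives a $\mu$-addable corner above every cell of $s$, and, arguing as in Lemma~\ref{L:columncompletionaugment}, these corners assemble into a column-type string occupying exactly the columns of $s$.

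Finally I would argue this string is forced to be $m_\comp$: since $m_\comp$ is unique if it exists (the remark following \eqref{E:etanotationcol}) and a column move is determined by its first string together with the translation and consecutivity constraints (Proposition~\ref{P:uniquedecomp}, Remark~\ref{R:movespec}), the only column-type string one may adjoin to $m^+$ so as to obtain a column move $\eta/\mu$ with the string diagrams of $m$ and with $\rs(\eta)$ a partition is the one built above; hence $m_\comp$ lies in the columns of the final string $s$ of $m$, as claimed. The step I expect to be most delicate is the orientation bookkeeping in the second paragraph --- confirming that the $\rs$-defect created by interference genuinely sits at the negatively modified row of the \emph{final} string and not near the initial string; it is precisely here that the hypothesis that a cell of $m_\comp$ lies above a cell of $S$ is used, to exclude the complementary configuration (the one with the defect localized near the initial string), in which $m_\comp$ would repair a different row and need not align columnwise with the final string of $m$.
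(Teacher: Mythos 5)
Your proposal takes a genuinely different route from the paper, and it contains a gap that you yourself flag but do not close.

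The paper's proof is three sentences: fix a cell $a\in m_\comp$ that lies directly above a cell of $S$; observe that by the very construction of $m_\comp$ as the topmost string of the column move $\eta/\mu$, there is a cell of the final string $t$ of $m$ in the row directly below $a$; use that $S$ is a horizontal strip to identify that cell with the cell of $S$ under $a$; and finish by the translate property. No re-analysis of interference is needed, and the hypothesis enters exactly once, at the identification step.

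Your second paragraph instead re-derives the \emph{location} of $m_\comp$ by re-running the interference analysis from Proposition~\ref{L:etastripcol} and Lemma~\ref{L:maximalinterferecol}. That analysis, as written in the paper, invokes maximality of $S$ at several load-bearing moments: the conclusion ``$r^-$ is not a modified row of $S$, else $s$ would be initial and matched above'' rests on Lemma~\ref{L:Smrow} and reasonability, and the passage to a $\mu$-addable corner above the first cell of $s$ goes through Lemma~\ref{L:everywhere}(3), whose hypotheses include the absence of lower and upper augmentable corners in specific positions. Those non-augmentability hypotheses follow from maximality of $S$ but are not automatic for a merely pushout-perfectible pair. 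You assert that the ``localization of the defect does not use maximality,'' but then, in your closing paragraph, you concede that the decisive step --- placing the defect at the final string rather than near the initial one --- is ``precisely here that the hypothesis that a cell of $m_\comp$ lies above a cell of $S$ is used.'' That concession is exactly the gap: the second paragraph runs unconditionally, as though the localization were free, while the final paragraph admits it is not and defers the needed argument to the hypothesis without actually making it. As it stands, your argument would prove the unconditional statement, which is false (the hypothesis in the lemma is not vacuous). To repair it you would have to show concretely how ``some cell of $m_\comp$ lies above $S$'' replaces maximality in verifying the hypotheses of Lemma~\ref{L:everywhere} and in excluding the alternative defect location; but at that point it is simpler, and closer to the paper's intent, to work directly with the offending cell $a$ as the paper does rather than to reconstruct $m_\comp$ from the interference data.

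Point (ii) of your first paragraph (every cell of $\eta/\mu$ above $S$ must lie in $m_\comp$) is true but unused: the hypothesis already gives you a cell of $m_\comp$ above $S$, so you gain nothing by observing that cells of $m^+$ cannot be above $S$.
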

\begin{proof} Let $a$ be a cell of $s$ that lies above a cell of $S$.
From the definition of $s=m_{\comp}$, there is a cell of the final string $t$ of
$m$ in the row below that of $a$.  Hence, since $S$ is a horizontal strip,
$a$ also lies above a cell of $t$.  Finally, since $s$ and $t$ are translates
the lemma follows.
\end{proof}

\begin{lem}\label{L:upperbelow}
Suppose $(S,m)$ is pushout-perfectible. If the string $s=m_\comp$
does not lie in the same column as the last string of $m$ then the
first cell $a$ of $s$ is an upper augmentable corner of $S$.
\end{lem}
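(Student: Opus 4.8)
The plan is to verify the three defining properties of an upper augmentable corner of the strip $S=\mu/\la$ for the top cell $a$ of the column-type string $s=m_\comp$: that $a$ is $\mu$-addable; that $a$ does not lie on top of a cell of $S$; and that adjoining $a$ to $\mu$ removes from $\bdy\mu$ a cell in the column of $a$ lying in a modified row of $S$.

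The ``not on top of $S$'' condition is immediate: it is the contrapositive of Lemma~\ref{L:liesabove} applied to the cell $a\in m_\comp$, using the hypothesis that $s$ does not lie in the columns of the final string of $m$. I would record as a \emph{locality} observation the stronger statement that Lemma~\ref{L:liesabove} then yields --- no cell of $s$ lies above a cell of $S$ --- together with its consequence: since $m^+=\rightarrow_S(m')$ occupies the columns of $m'\subseteq m$, passing from $\mu$ to $m^+*\mu$ to $\eta=((m^+)*\mu)\cup m_\comp$ does not disturb the $k$-boundary in the columns met by $s$ except through the cells of $s$ itself, so that $\bdy\mu$, $\bdy(m^+*\mu)$ and $\bdy\eta\setminus s$ agree there.

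For $\mu$-addability: $\eta/\mu=m^+\cup m_\comp$ is a column move from $\mu$ all of whose strings are translates sharing the common diagram of the strings of $m$ (part of the definition of pushout-perfectibility), with a unique decomposition into strings by the transpose of Proposition~\ref{P:uniquedecomp}. Using the locality observation I would show that $s=m_\comp$ does not arise from $m^+$ by extending or translating one of its strings upward --- so nothing of $\eta/\mu$ lies directly below $a$ in its column --- and hence $a$ is $\mu$-addable (indeed $s$ is then the initial string of $\eta/\mu$). For the third property, recall that for a column-type string $\{a=a_1,\dots,a_\ell\}$ addable to a $k$-shape $\kappa$, the cell $\Bot_{\col(a_1)}(\bdy\kappa)$ has hook length exactly $k$ and is removed from the $k$-boundary when the string is adjoined; taking $\kappa=m^+*\mu$ and using the locality observation identifies this cell with $\Bot_{\col(a)}(\bdy\mu)$ and gives $h_\mu(\Bot_{\col(a)}(\bdy\mu))=k$, so adjoining $a$ alone to $\mu$ removes it from $\bdy\mu$. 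That the row $r$ of this cell is a modified row of $S$ is then exactly the assertion ``$r$ must be a modified row of $S$ for interference to occur'' from the proof of Proposition~\ref{L:maximalinterferecol}: it uses only that $(S,m)$ is interfering --- automatic here, since pushout-perfectibility presupposes interference --- and not the maximality of $S$.

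I expect the $\mu$-addability step, and the locality observation underpinning it, to be the main obstacle: one must extract from Lemma~\ref{L:liesabove} and the column-move structure of $\eta/\mu$ that the hypothesis genuinely pins $s=m_\comp$ down as the initial string of $\eta/\mu$, lying in a block of columns in which $\la$, $\mu$, $m^+*\mu$ and $\eta$ agree away from the cells of $s$; once that is in place, the remaining two properties are either a direct reference to an earlier result or a transcription of the relevant bookkeeping in the proof of Proposition~\ref{L:maximalinterferecol}.
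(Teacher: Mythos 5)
Your plan correctly identifies the two results the paper actually invokes — Lemma~\ref{L:liesabove} for the ``not above $S$'' clause, and the interference analysis of Proposition~\ref{L:maximalinterferecol} for the ``modified row'' clause — and it is right that the remaining burden is the hook-length condition. But the route you take to that condition has a genuine gap, and two of your supporting observations are off.

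The crux of the lemma is establishing $h_\mu\bigl(\Bot_{\col(a)}(\bdy\mu)\bigr)=k$, which the paper obtains directly ``from the hypotheses'' (the row $r$ above the highest negatively modified row of $m$ is a modified row of $S$ by interference, and $h_\mu(r,\col(a))=k$). You instead invoke as a ``recall'' that for a column-type string $\{a_1,\dots,a_\ell\}$ addable to $\kappa$, the cell $\Bot_{\col(a_1)}(\bdy\kappa)$ has hook exactly $k$. That is not a general fact. From Property~\ref{P:stringtypes}, the cell with hook $k$ for a column-type string is $\Bot_{\col(a_\ell)}(\bdy\kappa)$ at the \emph{bottom} of the string; for the removed cell $b_1=(\row(a_2),\col(a_1))$ in the column of $a_1$ one computes $h_\kappa(b_1)=\diag(a_2)-\diag(a_1)-1\in\{k-1,k\}$, which depends on the contiguity distance. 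In the maximal setting of Proposition~\ref{L:maximalinterferecol} the relevant hook is likewise only shown to be $k-1$ or $k$, with the $k$ case singled out via maximality; here it is precisely the hypothesis (that $m_\comp$ does \emph{not} lie in the columns of the last string of $m$) that rules out $k-1$. Your argument asserts the $=k$ case as a known fact and therefore begs the question at the key step.

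Two smaller issues in the supporting ``locality'' reasoning: (i) the claim that $m^+=\rightarrow_S(m')$ ``occupies the columns of $m'\subseteq m$'' is false in general, since $\rightarrow_S$ shifts each row of $m'$ to the \emph{right} by the number of $S$-cells in that row; and (ii) for a column move the strings are ordered bottom-to-top and $m_\comp$ (which repairs interference at the topmost negatively modified row of $m$) is the \emph{final}, topmost string of $\eta/\mu$, not the initial one, so the $\mu$-addability of its first cell does not come out of the transpose of Lemma~\ref{L:addable} in the way you suggest. With these imprecisions the locality step does not pin down $\bdy\mu$ and $\bdy(m^+*\mu)$ in the columns of $s$ the way your plan requires.
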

\begin{proof}
By the definition of interference,
$S$ modifies the row $r$ above the highest negatively modified
row of $m$. From the hypotheses, $h_\mu(r,\col(a))=k$, so that
the cell $a$ is contiguous and above a cell of $S$ in row $r$.
By Lemma \ref{L:liesabove} $a$ does not lie above a cell of $S$
and is therefore an upper augmentable corner of $S$.
\end{proof}
\subsection{Alternative description of pushouts (column moves)}
Let $(S,m)=(\mu/\lambda,\nu/\lambda)$ be any initial pair where $m =
s_1 \cup \cdots \cup s_r$ is a column move.
 Let $m'$ be the collection of cells
obtained from $m$ by removing $s_i$ whenever the positively modified
row of $s_i$ is a modified row of $S$.  It is easy to see that
$m'$ is of the form $s_1 \cup s_{2} \cup \cdots \cup s_r$
or $s_2 \cup \cdots \cup s_r$.
Suppose that $\change(s_1)$ affects
rows $c$ and $c + d$.  If $\alpha$ is not a partition, we suppose
that $\alpha_i + 1 = \alpha_{i+1} > \alpha_{i+2}$.
We say that there is interference
if $\alpha$ is not a partition or if
$m'$ is empty and $\alpha_i=\alpha_{i+1}$, where $i=c$ is the negatively
modified row of $s_1$.
Then the perfection of $\alpha$ with respect to
$(S,m)$ is the vector
$$
\per_{S,m}(\alpha) = \begin{cases} \alpha + e_{i+d+1}
- e_{i+1} & \mbox{if there is interference}
 \\
\alpha & \mbox{otherwise}\end{cases}
$$
The
{\it expected row shape} $\ers(S,m)$ of $(S,m)$ is defined to be
$$
\ers(S,m) = \per_{S,m}(\rs(\la) +\change_\rs(S) + \change_\rs(m')).
$$

\begin{prop}\label{P:pushcritcol}
Let $(S = \mu/\lambda, m = \nu/\lambda)$ be an initial pair with $m$
a nonempty column move. Suppose there exists a $k$-shape $\eta$ such
that
\begin{enumerate}
\item $\rs(\eta) = \ers(S,m)$.
\item $\eta/\mu$ is either empty or a column move
whose strings are translates of those of $m$.
\item $\nu \subset \eta$.
\end{enumerate}
Then $(S,m)$ is compatible and $\push(S,m) = (\eta/\nu,\eta/\mu)$.
In particular $\eta/\nu$ is a strip.
\end{prop}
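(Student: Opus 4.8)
The plan is to transpose the argument of Proposition~\ref{P:pushcrit}, inserting the extra bookkeeping forced by the column-specific notions of normality and special interference introduced in \S\ref{sec:columnpushout}. Write $m = s_1\cup\dotsm\cup s_r$, and recall that $m'$ is obtained from $m$ by deleting $s_1$ exactly when the positively modified row of $s_1$ is a modified row of $S$, so that $m'$ is $m$ or $s_2\cup\dotsm\cup s_r$. First I would decompose $\eta/\mu$. By hypothesis~(2) it is a column move; comparing hypothesis~(1), $\rs(\eta)=\ers(S,m)=\per_{S,m}(\rs(\la)+\change_\rs(S)+\change_\rs(m'))$, with $\rs(\mu)=\rs(\la)+\change_\rs(S)$, one sees that this column move breaks into $m''\cup m_\comp$, where $m''$ modifies exactly the rows modified by $m'$ (with diagrams translating those of the corresponding strings of $m$, after the rightward bumping $\rightarrow_S$ that produces $m^+$ in Lemma~\ref{L:pushbumpcol}), and $m_\comp$ is the single extra column-type string realizing the perfection $e_{i+d+1}-e_{i+1}$, which is empty precisely when there is no interference. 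Consequently each string of $m''$ is a string of $m'$, possibly shifted one cell to the right; since $m''$ consists of strings addable to $\mu$, the strings of $m'$ are reasonable with respect to $S$, and—by identifying which strings of $m'$ must be bumped rightward in passing to $m''$—one obtains that $(S,m)$ is normal, conditions (a) and (b) of normality being exactly what make the bumped strings addable.

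Next I would establish $m_\comp\cap m=\emptyset$ by the extremal-cell argument of Proposition~\ref{P:pushcrit}: the highest cell of a nonempty intersection would lie in a string $t\subset m_\comp$ and in a string $s\in m\setminus m'$ (the only string of $m$ not already accounted for by $m''$, using Lemma~\ref{L:Sminitial}); since $s$ and $t$ are translates, chasing the cell immediately below forces the offending cell into the positively modified row of $s$, which is then a modified row of $S$—contradicting $m_\comp\cap S=\emptyset$. With $m_\comp\cap m=\emptyset$, hypothesis~(3) forces the strings of $m\setminus m'$ to lie already in $\mu$, so $(S,m)$ is reasonable. Non-contiguity then follows: a disappearing cell $b=(r,c)$ has, by Lemma~\ref{L:disappearingcol}, its row $r$ positively modified by $m$ and disjoint from $S$; reasonableness places $r$ among the rows of $m'$, and $r$ is not a modified row of $m_\comp$ (else $\rs(\eta)/\rs(\nu)$ would fail to be a horizontal strip), whence $\rs(\la)_r=\rs(\eta)_r-1$, contradicting $\rs(\eta)_r=\rs(\la)_r$.

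Having shown $(S,m)$ reasonable, normal and non-contiguous, I would then verify that $\eta/\nu$ is a strip. Since $m$ and $\eta/\mu$ are column moves, $\cs(\eta)=\cs(\mu)$ and $\cs(\nu)=\cs(\la)$, so $\cs(\eta)/\cs(\nu)=\cs(\mu)/\cs(\la)$ is a vertical strip because $S$ is one; together with hypothesis~(3) and the fact (as in Lemma~\ref{L:continuebelowcol} and Lemma~\ref{L:liesabove}) that no cell of $m^+$ or of $m_\comp$ sits over $S$ in a forbidden way, $\eta/\nu$ is a horizontal strip. That $\rs(\eta)/\rs(\nu)$ is a horizontal strip is the content of the case analysis of Propositions~\ref{L:etastripcol} and \ref{L:maximalinterferecol}: when there is no interference, $\rs(\eta)=\rs(\la)+\change_\rs(S)+\change_\rs(m')=\rs\bigl((m^+)*\mu\bigr)$ by Lemma~\ref{L:pushbumpcol} and the argument of Proposition~\ref{L:etastripcol} applies; when there is interference—including the special interference case $m'=\emptyset$ detected by Lemma~\ref{L:specialint}—the string $m_\comp$ is situated exactly above the highest positively modified row of $m$ (resp.\ repairs the equality $\rs(\mu)_{r^-}=\rs(\mu)_r$), and the bookkeeping of Proposition~\ref{L:maximalinterferecol} shows $\rs(\eta)/\rs(\nu)$ is a horizontal strip. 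Thus $\eta/\nu$ is a strip. To conclude: if $(S,m)$ is non-interfering then $\eta$ and $(m^+)*\mu$ are both $k$-shapes with the same row shape and the same column shape, hence coincide, so $\push(S,m)=(\eta/\nu,\eta/\mu)$ by the definition in \S\ref{SS:columnpushnointerfere}; if $(S,m)$ is interfering then $\eta=\bigl((m^+)*\mu\bigr)\cup m_\comp$ with $\eta/\nu$ a strip and $\eta/\mu$ a column move whose strings are translates of those of $m$, so $(S,m)$ is pushout-perfectible with completion $m_\comp$, hence compatible, and $\push(S,m)=(\eta/\nu,\eta/\mu)$ by the definition in \S\ref{SS:columnpushinterfere}.

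The step I expect to be the main obstacle is the verification that $\rs(\eta)/\rs(\nu)$ is a horizontal strip across every interference sub-case, and in particular the special interference case $m'=\emptyset$, where $m^+$ is empty and the defect to be repaired is an equality of row lengths rather than a non-partition vector; this requires the delicate, normality-dependent length comparisons used in Propositions~\ref{L:etastripcol} and \ref{L:maximalinterferecol}. A related subtlety is extracting normality of $(S,m)$ from the mere existence of an $\eta$ satisfying the arithmetic hypotheses—that is, arguing that a string of $m$ above which $S$ continues must be one of the rightward-bumped strings of $m''$, which is precisely what pins down conditions (a) and (b) of normality.
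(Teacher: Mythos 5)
Your proposal follows the same route as the paper's proof: decompose $\eta/\mu$ into $m''\cup m_\comp$, deduce reasonableness and non-contiguity by transposing the argument of Proposition~\ref{P:pushcrit} (with Lemma~\ref{L:disappearingcol} replacing Lemma~\ref{L:disappearing}), extract normality from the existence of the rightward-bumped strings in $\eta/\mu$, and verify $\eta/\nu$ is a strip by invoking Propositions~\ref{L:etastripcol} and \ref{L:maximalinterferecol} in the non-interfering and interfering cases respectively. The only cosmetic difference is that you spell out some steps the paper disposes of by citing the row case, and your phrasing that normality conditions (a) and (b) are "exactly what make the bumped strings addable" runs the inference in the wrong direction—the point is that the existence of addable bumped strings in $\eta/\mu$ forces equal $S$-box counts across the rows of $s$, from which (a) and (b) follow—but the intended argument is clearly the paper's.
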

\begin{proof}
It is easy to see that $\eta/\mu$ decomposes into column type
strings as $m'' \cup m_\comp$ where $\cs(m'' * \mu) = \cs(\mu)+
\change_\cs(m')$. The proof of reasonableness and non-contiguity of
$(S,m)$ is similar to the proofs in Proposition \ref{P:pushcrit}
with Lemma~ \ref{L:disappearing} replaced by
Lemma~\ref{L:disappearingcol}.

To prove normality, suppose the first string $s$ of $m$ is continued
above by $S$. Then $s \in m'$ and
since $m_{\rm comp}$ cannot affect the rows affected by the strings of $m'$,
there must exist a string of $\eta/\mu$ that is the rightward shift
of $s$. This implies that $S$ contains the same number
of boxes in each row $r$ containing a box of $s$ and also in the negatively
modified row of $s$.  Therefore, none of the rows containing a box of $s$
is a modified row of $S$ (given that the uppermost row containing a box of $s$
is by hypothesis not a modified row of $S$), and also the negatively modified
row of $s$ is not a modified row of $S$.

If $(S,m)$ is non-interfering then $\eta/\nu$ is a strip by
Proposition~\ref{L:etastripcol}.

If $(S,m)$ is interfering then $m_{\comp}$ is a single string $t$.
Suppose a cell $x$ of $t$ lies above a cell $y$ of $S$.  Since $S$
is a horizontal strip  $y$ is $\lambda$-addable, and thus by
hypothesis $y$ is also a cell of $m$ ($t$ is a translate of the
strings of $m$ and it starts one row above the final string of $m$).
Therefore $\eta/\nu$ is a horizontal strip. Obviously
$\cs(\eta)/\cs(\nu)=\cs(\mu)/\cs(\lambda)$ is a vertical strip so it
only remains to show that $\rs(\eta)/\rs(\nu)$ is a horizontal
strip. This is done as in the proof of
Proposition~\ref{L:maximalinterferecol}.
\end{proof}

\section{Pushout sequences}
\label{sec:pushaug}

Consider an initial pair $(S,\bp)$ consisting of a strip $\mu/\la$ for $\la,\mu\in\Ksh$ and a path $\bp$ from $\la$ to $\nu\in\Ksh$.
A \textit{pushout sequence} from $(S,\bp)$ is a sequence of augmentation moves and pushouts
which produces a final pair $(\tS,\bq)$ consisting of a \textit{maximal} strip $\tS=\eta/\nu$ and a path $\bq$ from $\mu$ to $\eta$
for some $\eta\in\Ksh$:
\begin{align} \label{E:pushseq}
\begin{diagram}
\node{\la} \arrow{e,t}{\bp} \arrow{s,t}{S} \node{\nu} \arrow{s,b}{\tS} \\
\node{\mu} \arrow{e,b}{\bq} \node{\eta}
\end{diagram}
\end{align}
More precisely, a pushout sequence is defined by a diagram of the form
\begin{align}\label{E:pushseqdiag}
\begin{diagram}
  \node{\la^0} \arrow{e,t}{m^1} \arrow{s,t}{S^0} \node{\la^1} \arrow{e,t}{m^2}   \arrow{s,t}{S^1}  \node{\dotsm} \arrow{e,t}{}
\node{\la^{L-1}} \arrow{e,t}{m^L} \arrow{s,t}{S^{L-1}} \node{\la^L} \arrow{s,t}{S^L} \\
  \node{\mu^0} \arrow{e,b}{n^1} \node{\mu^1} \arrow{e,b}{n^2}\node{\dotsm} \arrow{e,t}{} \node{\mu^{L-1}} \arrow{e,b}{n^L} \node{\mu^L}
\end{diagram}
\end{align}
where $\la^0=\la$, $S=S^0$, the top row of \eqref{E:pushseqdiag}
consists of the path $\bp$ (possibly with empty moves interspersed),
each $S^i$ is a strip with $\tS=S^L$ maximal, the $n^i$ are
(possibly empty) moves, the bottom row of \eqref{E:pushseqdiag} is
the path $\bq$, and for each $1\le i\le L$, the diagram
\begin{align}
\begin{diagram}
\node{\la^{i-1}} \arrow{e,t}{m^i} \arrow{s,t}{S^{i-1}} \node{\la^i} \arrow{s,b}{S^i} \\
\node{\mu^{i-1}} \arrow{e,b}{n^i} \node{\mu^i}
\end{diagram}
\end{align}
defines an augmentation move if $m^i$ is empty, or the pushout of a compatible pair if $m^i$ is not empty.

The main technical work in this paper is to establish the following existence and uniqueness
properties of pushout sequences.

\begin{prop} \label{P:canonicalpushseq} Each initial pair $(S,\bp)$
admits a canonical pushout sequence, which repeatedly maximizes the
current strip and pushes out the resulting maximal strip with the
next move, and ends with maximization.
\end{prop}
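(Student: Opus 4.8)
The plan is to construct the canonical pushout sequence directly, alternating maximization of the current strip with a single pushout against the next move of $\bp$, and to check at each stage that the operation invoked is defined. Write $\bp=(p_1,\dotsc,p_n)$ as a sequence of nonempty moves from $\la$ to $\nu$. I would build the diagram \eqref{E:pushseqdiag} in phases $0,1,\dotsc,n$. In phase $0$, apply \texttt{MaximizeStrip}$(\mu,\la)$ to obtain an augmentation path from $S^0=S$ to a maximal strip; in phase $j$ for $1\le j\le n$, first push out the current (maximal) strip against $p_j$, and then apply \texttt{MaximizeStrip} to the resulting strip. By Proposition~\ref{P:maxstripunique}(3) each \texttt{MaximizeStrip} call produces a well-defined path of maximal completion moves ending at the unique maximal augmentation of its input, and by Proposition~\ref{P:maxstrip} it terminates once no augmentable corner remains; these moves supply the columns of \eqref{E:pushseqdiag} with empty $m^i$.

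The essential point is that each pushout step is legitimate. At the moment we push out a move $m$ (one of the $p_j$) the current strip $S^\ast$ is maximal, so $(S^\ast,m)$ is compatible: it is reasonable by Proposition~\ref{L:Smreasonable} or Proposition~\ref{P:Smcolreasonable} according as $m$ is a row or column move; non-contiguous by Corollary~\ref{C:rowmaxnotcontig} or Corollary~\ref{C:maxcolnotcontig}; normal, in the column case, by Proposition~\ref{L:Smreasonablenormal}; and, when interfering, pushout-perfectible by Proposition~\ref{L:maximalinterfere} or Proposition~\ref{L:maximalinterferecol}. Hence $\push(S^\ast,m)=(\tS,\tm)$ is defined, with $\tS$ a strip and $\tm$ a possibly empty move of the same type as $m$, and the resulting square is exactly of the form appearing in \eqref{E:pushseqdiag}. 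Moreover $\tS$ has the same rank as $S^\ast$: a row-move pushout leaves the $\rs$ of both partitions defining the strip unchanged and a column-move pushout leaves both $\cs$ values unchanged, so $\tS$ remains in the admissible class of strips of rank $<k$ (Remark~\ref{R:small}) and the next \texttt{MaximizeStrip} call is meaningful.

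Since $\bp$ has finitely many moves and each maximization phase is finite, the procedure halts, producing a diagram of the shape \eqref{E:pushseqdiag}: its top row is $\bp$ with the maximization empty moves interspersed, every square is either an augmentation-move square or the pushout of a compatible pair, and, because phase $n$ ends with a maximization step, the last strip $S^L$ is maximal. Therefore the output is a pushout sequence from $(S,\bp)$, and it is canonical because every choice --- which strip to maximize, how to maximize it (namely by \texttt{MaximizeStrip}), and which move to push out next --- is forced. The substantive work has all been done in the cited results; the only thing to watch is that each pushout is preceded by a maximization, so that the hypothesis ``$S$ maximal'' in Propositions~\ref{L:Smreasonable}, \ref{P:Smcolreasonable}, \ref{L:Smreasonablenormal}, \ref{L:maximalinterfere} and \ref{L:maximalinterferecol} is always in force, and that pushouts preserve strip rank.
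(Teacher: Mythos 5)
Your proof matches the paper's: the paper proves this by exhibiting the algorithm \texttt{PushoutSequence}, which alternates \texttt{MaximizeStrip} with a single pushout against the next move of $\bp$ and ends with a final maximization, with compatibility at each pushout step justified exactly as you do via the maximality of the current strip (reasonableness, non-contiguity, normality, and pushout-perfectibility for maximal strips). Your observation that pushouts preserve strip rank is a harmless extra remark the paper does not spell out, but otherwise the argument is identical.
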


We prove Proposition \ref{P:canonicalpushseq} in Subsection
\ref{SS:canonical pushout sequence} by giving an algorithm which
computes the canonical pushout sequence.

\begin{prop} \label{P:pushseqequiv}
Pushout sequences take equivalent paths to equivalent paths. That
is, if $(S,\bp)$ and $(S,\bp')$ are initial pairs with $\bp\equiv
\bp'$ and there are pushout sequences from $(S,\bp)$ and $(S,\bp')$
that produce the final pairs $(\tS,\bq)$ and $(\tS',\bq')$
respectively, then $\tS=\tS'$ and $\bq\equiv\bq'$.
\end{prop}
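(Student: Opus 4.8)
The plan is to reduce Proposition~\ref{P:pushseqequiv} to a single ``local'' commutation result: given a pushout (or augmentation move) applied to the first move of each of the two sides of a diamond equivalence, one can complete the resulting configuration so that the final strips agree and the two bottom paths are again diamond-equivalent. Since $\bp\equiv\bp'$ means $\bp$ and $\bp'$ are connected by a chain of elementary equivalences (Proposition~\ref{P:diamond}), and since a pushout sequence is built from a sequence of single moves/augmentations, it suffices to treat the case where $\bp$ and $\bp'$ differ by a single elementary equivalence $\tM m\equiv \tm M$ situated somewhere along the path, and where we push one move at a time. The strategy is the usual ``diamond-chasing'': propagate the equivalence downward through the pushout sequence one level at a time, maintaining at each level a diamond in the $k$-shape poset whose top edge is an elementary equivalence and whose two sides are the (images of the) two candidate pushed-out paths.

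First I would set up the inductive framework. I would take a pushout sequence from $(S,\bp)$, given by diagram~\eqref{E:pushseqdiag}, and compare it with a pushout sequence from $(S,\bp')$. Using Proposition~\ref{P:canonicalpushseq} (canonical pushout sequences exist and are unique up to equivalence of the output path), I would first reduce to comparing each arbitrary pushout sequence with the \emph{canonical} one; this is legitimate because if I can show every pushout sequence from $(S,\bp)$ produces a final pair equivalent to the canonical one, then two pushout sequences from equivalent initial paths produce equivalent final pairs by transitivity. Concretely, by Proposition~\ref{P:maxstripunique}(2), at each stage the choice of maximization is unique up to equivalence, so the only freedom is in how a given move is pushed out; and for a fixed compatible pair $(S,m)$ the pushout is unique by construction (Propositions~\ref{P:pushcrit} and \ref{P:pushcritcol}). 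Thus the content is entirely in showing that when $\bp$ is replaced by $\bp'$ via one elementary equivalence of moves, the pushed-out path changes only by elementary equivalences, and the terminal strip is unchanged.

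The heart of the argument, then, is a ``commuting cube'' lemma: suppose at some stage of the pushout sequence we have a maximal strip $S^{i-1}=\mu^{i-1}/\la^{i-1}$, and the path $\bp$ makes two consecutive moves $m$ then $M'$ (as part of one side of the elementary diamond) while $\bp'$ makes $M$ then $m'$, with $\tM m\equiv\tm M$. Pushing $m$ out against $S^{i-1}$ produces $(\tS,\tm_1)$ and a new maximal strip obtained by re-maximizing; pushing $M$ out produces $(\tS',\tM_1)$ and another re-maximized strip; I must show these two can be continued so the bottom paths close up into a diamond $\widetilde{\tM_1}\,\tm_1\equiv \widetilde{\tm_1}\,\tM_1$ which is again generated by elementary equivalences, and the resulting maximal strips coincide. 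This is precisely the statement that ``pushouts send equivalent paths to equivalent paths'' at the level of a single diamond, and it is to be established by case analysis following the classification of elementary equivalences (Definitions~\ref{D:rowcolcommute}, \ref{D:rowrowcommute}, \ref{D:colcolcommute}): in each case the pushout formulas (the $\rightarrow_S$, $\uparrow_S$ operators, the perfection operators $\per_m$, $\per_{S,m}$) interact with the shift/perfection operators defining the elementary equivalence, and one checks the expected column/row shapes agree on both ways around, invoking Propositions~\ref{P:pushcrit} and \ref{P:pushcritcol} to conclude that the common $\eta$ one constructs is indeed the pushout. The uniqueness of re-maximization (Proposition~\ref{P:maxstripunique}) then guarantees the terminal strips match.

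The main obstacle will be this commuting-cube case analysis, since the pushout operation is already delicate (with interference, special interference, contiguity, and perfectibility to track) and one must verify compatibility is preserved all the way around the cube; in particular one needs to know that if a pair is compatible on one face of the cube, the ``transported'' pair on the adjacent face is still compatible, which is not automatic and will require a careful matching of the reasonableness/non-contiguity/perfectibility hypotheses against the definitions of the elementary equivalences. I would isolate this as a separate proposition (the cube lemma), prove it by the case-by-case verification sketched above — treating mixed, row, and column elementary equivalences in turn, and within each the sub-cases of Definition~\ref{D:rowrowcommute} — and then assemble the global statement by a straightforward induction on the length of the pushout sequence together with induction on the number of elementary equivalences connecting $\bp$ to $\bp'$. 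The base case (empty paths, or $\bp=\bp'$) is Proposition~\ref{P:canonicalpushseq} and the uniqueness of pushout of a single compatible pair; the inductive step is exactly one application of the cube lemma followed by the inductive hypothesis applied to the shortened tails of the two pushout sequences.
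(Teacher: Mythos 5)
Your proposal takes essentially the same route as the paper: the paper also reduces, via Proposition~\ref{P:commuaugmen} (the $\bp=\bp'$ case) and Proposition~\ref{P:canonicalpushseq}, to the case where $S$ is already maximal and $\bp=\tn m\equiv\tm n=\bp'$ is a single elementary equivalence starting at $\la$, and then proves the resulting commuting-cube statement by the very case analysis you outline (Lemma~\ref{L:topequiv} for the top face, followed by a verification of expected row/column shapes around the cube via Propositions~\ref{P:pushcrit} and \ref{P:pushcritcol}, with the degenerate cases $M=\emptyset$, $m=\emptyset$, $\tm=\emptyset$ handled separately). The only minor discrepancy is one of attribution — the uniqueness of the output path over all pushout sequences from a fixed $(S,\bp)$ is Proposition~\ref{P:commuaugmen}, not Proposition~\ref{P:canonicalpushseq}, which only asserts existence of the canonical sequence — but this does not affect the logic of your argument.
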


It follows that pushout sequences define a map
$(S,[\bp])\to(\tS,[\bq])$ where $(S,\bp)$ is an initial pair and
$(\tS,\bq)$ is a final pair with $\tS$ maximal, fitting the diagram
\eqref{E:pushseq}.

The special case $\bp'=\bp$ of Proposition \ref{P:pushseqequiv} is
proved in Subsection \ref{SS:Sectionmaxcomp}. The general case is
proved in Section \ref{sec:commutingcube}.

\subsection{Canonical pushout sequence}
\label{SS:canonical pushout sequence} The following algorithm
\texttt{PushoutSequence} produces a canonical pushout sequence from
$(S=\mu/\la,\bp)$. It suffices to produce the path $\bq$, as the
output strip $\tS$ is defined by the last elements of $\bp$ and
$\bq$. We may assume that $\bp=(\la=\la^0,\la^1,\dotsc,\la^L)$ has
no empty moves and $m^i$ is the move from $\la^{i-1}$ to $\la^i$.
Let
$$\texttt{PushoutCompatiblePair}(\rho,\la^{i-1},\la^i)$$ compute the
following pushout and return $\eta$
\begin{align}
\begin{diagram}
\node{\la^{i-1}} \arrow{e,t}{m^i} \arrow{s,t}{} \node{\la^i} \arrow{s,b,..}{} \\
\node{\rho} \arrow{e,b,..}{} \node{\eta}
\end{diagram}
\end{align}
as specified in Subsections \ref{SS:rowpushnointerfere} and \ref{SS:rowpushinterfere} if $m^i$ is a row move and
\ref{SS:columnpushnointerfere} and \ref{SS:columnpushinterfere} if $m^i$ is a column move.

\begin{tabbing}
xxxx\=xxxx\=xxxx\=xxxx\=\kill
\textbf{proc} \texttt{PushoutSequence}($\mu,\la,p$): \\
\>  \text{\textbf{local} $q := (\mu)$, $q'$} \\
\>  $\rho := \mu$ \\
\>  \textbf{for} $i$ \textbf{from} $1$ to \textbf{length}(p): \\
\> \>    $q' = \texttt{MaximizeStrip}(\rho,\la^{i-1})$ \\
\> \>    extend $q$ by $q'$ \\
\> \>    $\rho := \mathbf{last}(q')$ \\
\> \>    $\rho := \texttt{PushoutCompatiblePair}(\rho,\la^{i-1},\la^i)$ \\
\> \>    append $\rho$ to $q$ \\
\>   $q' := \texttt{MaximizeStrip}(\rho,\la)$ \\
\>   extend $q$ by $q'$ \\
\>    \textbf{return} $q$
\end{tabbing}
This procedure builds up a path $q$, implemented as a list of
shapes. The variable $q$ is initialized to be the list with a single
item $\mu$. For each move $m^i$ in $p$, the current strip is
maximized. By Propositions \ref{L:maximalinterfere} and
\ref{L:maximalinterferecol}, the resulting initial pair is
compatible and hence its pushout with the current move is
well-defined. The output strip (given by the last shapes in $q$ and
$p$ respectively) is maximal due to the last invocation of
\texttt{MaximizeStrip}. The ``extension" step takes the path $q$,
given as a list of $k$-shapes, and extends it by the path $q'$. Note
that the last element of $q$ equals the first element of $q'$.

\subsection{Pushout sequences from $(S,p)$ are equivalent} \label{SS:Sectionmaxcomp}
In this subsection we prove the following result, which is the
$\bp=\bp'$ case of Proposition \ref{P:pushseqequiv}.

\begin{proposition} \label{P:commuaugmen}
Let $S=\mu/\la$ be a strip and $\bp$ a path in $\Ksh$ from $\la$ to
$\nu$. Then any two pushout sequences from $(S,\bp)$ produce the
same strip and equivalent paths.
\end{proposition}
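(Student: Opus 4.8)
The goal is to show that any two pushout sequences from a fixed initial pair $(S,\bp)$ yield the same maximal strip and equivalent paths. The natural strategy is induction on the length $L$ of $\bp$. The base case $L=0$ is exactly Proposition~\ref{P:maxstripunique}: a pushout sequence from $(S,())$ is just an augmentation path from $S$ to a maximal strip, and that proposition asserts the endpoint is unique and all such paths are equivalent. For the inductive step, write $\bp=(m^1,\bp')$ where $\bp'$ is a path of length $L-1$ starting from $\la^1=m^1*\la$. A pushout sequence from $(S,\bp)$ consists of: (i) an augmentation path from $S$ to some strip, (ii) a pushout against $m^1$, (iii) a pushout sequence from the resulting pair $(S^1,\bp')$. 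So I would first reduce to understanding how the initial block of augmentation moves interacts with the first pushout, and then invoke the inductive hypothesis on the remainder.

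\textbf{Key steps.} First I would establish the crucial commutation lemma: if $S$ is a strip, $m$ a move, and we have \emph{both} an augmentation move $a$ from $S$ (to a strip $S_1$, with corresponding move $n$ on the bottom) \emph{and} a compatible pushout of $(S,m)$, then there is a compatible pushout of $(S_1,m)$ and the two ways of getting from the top-left corner to the bottom-right corner of the resulting cube agree, i.e.\ the path built from $a$ followed by the pushout of $(S_1,m)$ is equivalent to the path built from the pushout of $(S,m)$ followed by an augmentation. This is where Lemmata~\ref{L:maxrowcolcommute}, \ref{L:maxmoverowcommute}, \ref{L:maxmovecolcommute} and the alternative pushout criteria (Propositions~\ref{P:pushcrit} and \ref{P:pushcritcol}) do the work: the expected column/row shape $\ecs(S,m)$ (resp.\ $\ers$) changes predictably under augmentation, and one checks the hypotheses of Proposition~\ref{P:pushcrit}/\ref{P:pushcritcol} still hold for the augmented strip, so the pushout exists and its output $k$-shape is forced. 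Second, since by Proposition~\ref{P:maxstripunique} any two augmentation paths from $S$ to a maximal strip are equivalent, and since $m$ can be pushed out against a maximal strip (Propositions~\ref{L:maximalinterfere}, \ref{L:maximalinterferecol}), I can normalize any pushout sequence: slide all the initial augmentation moves to the front, maximize $S$ first, then pushout against $m^1$. Repeating the commutation lemma along the path of augmentation moves, together with the ``pushouts send equivalent paths to equivalent paths'' principle applied to the trivial equivalence, shows every pushout sequence from $(S,\bp)$ is equivalent to one of this canonical form. Third, once normalized, the output strip $S^1$ after the first pushout is determined (pushout output is unique by Proposition~\ref{P:uniquedecomp} plus the criteria), so I apply the inductive hypothesis to $(S^1,\bp')$ to conclude the remaining portions of the two sequences give the same final maximal strip and equivalent paths. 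Concatenating the equivalences finishes the induction.

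\textbf{Main obstacle.} The hard part is the commutation lemma in full generality — showing that an augmentation move and a pushout against $m$ commute up to diamond equivalence, in \emph{every} combination of row/column augmentation, row/column move, interfering/non-interfering, contiguous/non-contiguous. Each combination requires verifying that the hypotheses of the alternative pushout description (Proposition~\ref{P:pushcrit} or \ref{P:pushcritcol}) survive the augmentation, and that the charges on the two sides of the resulting diamond match (Eq.~\eqref{E:charge}); charge is only nonzero for column moves, so the delicate cases are those where a column move's rank or length is altered by the interaction. I expect to lean heavily on the fact that augmentation column moves have rank $1$ (Property~\ref{L:augmentcolumn}) to control charge bookkeeping, and on Lemma~\ref{L:mcompcol} and its column analogue to know exactly which columns/rows are touched. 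A secondary subtlety is ensuring that when I ``slide augmentation moves to the front'' I am genuinely producing an equivalence of paths in $\Ksh_N$ and not merely a rearrangement — this is handled by Proposition~\ref{P:maxstripunique}(2),(3), but must be applied to the correct sub-strip at each stage, which requires care about which $\la^i$ the augmentation is relative to.
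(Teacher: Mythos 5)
Your general direction is right (reduce to a single move, then use a commutation between an augmentation and a pushout), and this is essentially the approach the paper takes, but there is a gap in the key lemma you propose: you ask for a commutation lemma "in full generality" showing that \emph{any} augmentation move commutes with a pushout up to diamond equivalence. The paper's Proposition~\ref{L:commutingaugment} and its workhorse Lemma~\ref{L:maxcompleteequiv} are stated and proved \emph{only} for a \defit{maximal completion move} $x$; the lemmas \ref{L:maxrowcolcommute}, \ref{L:maxmoverowcommute}, \ref{L:maxmovecolcommute} you cite likewise assume the augmentation move is a maximal completion move. Proving commutation for an arbitrary augmentation move would require substantially more case analysis (an arbitrary augmentation move and $M$ need not satisfy a \emph{single} elementary equivalence), and it is not what the paper does.

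The paper avoids needing the stronger statement by a different induction. After reducing to $\bp=m$ a single move, the induction is not just on the length of $\bp$ but on how far $S$ is from maximal. One then compares an \emph{arbitrary} pushout sequence against the \emph{canonical} one. If the arbitrary sequence starts with an augmentation move $m'$, no commutation is needed: both continuations are pushout sequences from $(m'\cup S,\,m)$, which is strictly closer to maximal, so induction applies, together with Proposition~\ref{P:maxstripunique} to identify the two maximization paths. If the arbitrary sequence instead starts with the pushout $\push(S,m)=(S',M)$, then $S$ is not yet maximal, so the \emph{first} move $x$ of the canonical maximization of $S$ is automatically a maximal completion move, and that is the only place commutation (Proposition~\ref{L:commutingaugment}) is invoked; the remaining pieces again compare two pushout sequences from $(S\cup x,\,m)$, which is closer to maximal, so induction finishes. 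Your outline does not include this secondary induction, and without it you really would need the general commutation lemma, which is harder and is not supplied by the results you cite.
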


We shall reduce the proof of Proposition \ref{P:commuaugmen} to that
of Proposition \ref{L:commutingaugment} and then use the rest of the
subsection to prove the latter.

Consider the setup of Proposition \ref{P:commuaugmen}. By induction
on the number of moves in $\bp$ we may assume that $\bp=m$ is a
single move. We may assume that one of the pushout sequences to be
compared, is the canonical one, which first passes from the strip
$S$ to its maximization $S_{\max}$ by the augmentation path $\br$,
then does the pushout $\push(S_{\max},m)=(S'',\tm)$, and finally
maximizes the resulting strip via the augmentation path $\brt$,
resulting in the maximal strip $\tS$ and the path $\bq=\brt\tm\br$.

Consider any other pushout sequence from $(S,m)$, which produces
$(\tS',\bq')$, say. Suppose the first operation in this pushout
sequence is an augmentation move $m'$. The move $m'$ is the first in
the output path $\bq'$; let the path $\bqt$ be the rest of $\bq'$.
Let $\bt$ be any augmentation path from $m'\cup S$ to its
maximization. By Proposition \ref{P:maxstripunique}, this
maximization is equal to $S_{\max}$ and $\bt m' \equiv \br$. We have
\begin{align}
  \bq' = \bqt m' \equiv \brt \tm \bt m' \equiv \brt \tm \br = \bq,
\end{align}
which holds by induction since $\bqt$ and $\brt\tm \bt$ are
equivalent, being produced from the same pair $(m'\cup S,m)$ by
pushout sequences, with $m'\cup S$ closer to maximal than $S$.

We may therefore assume that the first operation in the pushout
sequence producing $(\tS',\bq')$, is a pushout, and in particular
that $(S,m)$ is compatible. Let $\push(S,m)=(S',M)$. Writing
$\bq'=\bqt M$, $\bqt$ is an augmentation path that maximizes $S'$
and produces $\tS'$.

We may also assume that $S$ is not already maximal, for otherwise
there is only one way to begin the pushout sequence from $(S,m)$.
Then $\br$ is nonempty; let its first move be $x$ and $\br'$ the
remainder of $\br$. Since $x$ is a move in the canonical
maximization of $S$, it is a maximal completion move that augments
$S$.

We apply Proposition \ref{L:commutingaugment}, using the label
$S'\cup \tx$ for the front right upward arrow. Let $\mathbf{y}$ be
an augmentation path that maximizes the strip $S'\cup\tx$. We have
\begin{align*}
  \bq' = \bqt M \equiv \mathbf{y}\tx M \equiv \mathbf{y} \tM x
  \equiv \brt \tm \br' x = \bq.
\end{align*}
The first equivalence holds by Proposition \ref{P:maxstripunique}
since both $\bqt$ and $\mathbf{y}\tx$ are maximizations of $S'$. The
second holds by the equivalence of the top face of
\eqref{E:cubeaugment} in Proposition \ref{L:commutingaugment}. The
third equivalence holds by induction since $\mathbf{y}\tM$ and $\brt
\tm \br'$ are equivalent, being the paths produced by two pushout
sequences from $(S\cup x, m)$ with $S\cup x$ closer to maximal than
$S$.

Thus we have reduced the proof of Proposition \ref{P:commuaugmen} to
that of Proposition \ref{L:commutingaugment}.

\begin{proposition}\label{L:commutingaugment}
Let $(S = \mu/\lambda, m = \nu/\lambda)$ be a compatible initial
pair with $\push(S,m) = (S',M)$ and let $x = \kappa/\mu$ be a
maximal completion move that augments $S$.  Then we have the
commuting cube
\begin{equation}\label{E:cubeaugment}
\begin{diagram} \node[2]{\mu}  \arrow{sw,t}{M} \arrow[2]{e,t}{x}
\node[2]{\kappa} \arrow{sw,t,3,..}{\tilde M} \\
\node{\cdot} \arrow[2]{e,t,3,..}{\tilde x}
\node[2]{\eta}\\
\node[2]{\lambda} \arrow[2]{n,l,1}{S} \arrow[2]{e,b,1}{\emptyset}
\arrow{sw,b}{m} \node[2]{\lambda} \arrow{sw,b}{m}
\arrow[2]{n,r}{S \cup x} \\
\node{\nu} \arrow[2]{e,b}{\emptyset} \arrow[2]{n,l}{S'}
\node[2]{\nu} \arrow[2]{n,r,3,..}{\tilde S}
\end{diagram}
\end{equation}
in which vertical edges are strips and other edges are moves, the
left and right faces are pushouts, the front and back faces are
augmentations, and the top face is an elementary equivalence.
\end{proposition}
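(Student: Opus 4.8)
The plan is to prove the Proposition by a case analysis on the type of the move $m$ (row or column) and on the type of the maximal completion move $x$, handling the two cases of $m$ through Proposition~\ref{P:pushcrit} and Proposition~\ref{P:pushcritcol} respectively. Throughout, write $\pi := M\ast\mu = S'\ast\nu$ for the target shape of the pushout $\push(S,m)$. The strategy in each case is: (i) analyze $(M,x)$ as a pair of moves issuing from $\mu$, show it satisfies one of the elementary equivalence patterns, and thereby define $\tilde M$, $\tilde x$, and the target shape $\eta := \tilde M\ast\kappa = \tilde x\ast\pi$ (the top face); (ii) verify via the appropriate pushout criterion that $(S\cup x,m)$ is compatible with $\push(S\cup x,m) = (\eta/\nu,\tilde M)$, and set $\tilde S := \eta/\nu$ (the right face); (iii) observe the remaining faces are then automatic.

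For step (i) I would first note that $x$, being a \emph{maximal} completion move augmenting the strip $S = \mu/\la$, has no cell contiguous with any cell of $M$ (maximality), and that compatibility of $(S,m)$ together with the structure of $M$ coming from the pushout — its strings are translates of those of $m$ by Proposition~\ref{P:pushcrit}(2), and its completion cells $m_\comp$, when present, lie above cells of $\la$ and share no column (resp. row) with $S$ by Lemma~\ref{L:mcompcol} — forces $(M,x)$ to be reasonable and not contiguous. Identifying which of the patterns of Definitions~\ref{D:rowcolcommute}, \ref{D:rowrowcommute}, \ref{D:colcolcommute} applies — disjoint vs. intersecting, matched vs. continuing, non-interfering vs. interfering-and-perfectible for same-type moves, and the degenerate case $M = \emptyset$ — then prescribes $\tilde M$ and $\tilde x$ by the shift operations of Notation~\ref{N:arrow}, together with perfection cells when $(M,x)$ interferes. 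Propositions~\ref{P:relmix} and \ref{P:rel} and the transpose of \ref{P:rel} then yield the elementary equivalence $\tilde M x\equiv\tilde x M$, giving the commuting top face and the shape $\eta$.

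For step (ii), taking $m$ a row move, I would apply Proposition~\ref{P:pushcrit} to the initial pair $(S\cup x = \kappa/\la,\ m = \nu/\la)$ with the $k$-shape $\eta$ built in step (i). Its three hypotheses are checked directly: $\nu\subset\eta$ since $\nu\subset\pi$ (as $S'$ is a strip) and $\pi\subset\eta$ (as $\tilde x$ is a move); $\eta/\kappa = \tilde M$ is empty or a row move whose string diagrams are translates of those of $m$, because $\tilde M$ is obtained from $M$ by a uniform shift coming from $x$ and the strings of $M$ are already translates of those of $m$; and $\cs(\eta) = \ecs(S\cup x, m)$ is a bookkeeping identity comparing the $\change_\cs$ contributions of the shifted moves, using that $x$ and $m_\comp$ occupy disjoint columns and that the modified columns of $x$ are absorbed correctly by $\per_m$. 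The conclusion is that $(S\cup x,m)$ is compatible and $\push(S\cup x,m) = (\eta/\nu,\tilde M)$; in particular $\eta/\nu$ is a strip, which we name $\tilde S$. The column case is identical with Proposition~\ref{P:pushcritcol} and the expected row shape $\ers$ replacing Proposition~\ref{P:pushcrit} and $\ecs$.

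Finally, since $\eta/\nu = \tilde S$ is a strip, $S' = \pi/\nu$ is a strip, and $\tilde x = \eta/\pi$ is a move, $\tilde x$ is by definition an augmentation move of $S'$ producing $\tilde S$ — this is the front face — while the back and bottom faces are tautologies and the left face is the given pushout. The hard part will be the interfering cases: the interference status of $(S,m)$ and of $(S\cup x,m)$ need not agree, so one must track exactly which rows and columns are modified and match the completion cells $m_\comp$ of the first pushout against those of the second, now perturbed by the cells of $x$; in the column case the special-interference phenomenon of Lemma~\ref{L:specialint} must also be handled. Maximality of $x$ is invoked repeatedly to exclude the degenerate contiguity and augmentable-corner configurations that would otherwise obstruct reasonableness in step (i) or the strip conditions in step (ii).
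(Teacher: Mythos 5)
Your proposal follows essentially the same route as the paper. The paper first extracts your step~(i) as a stand-alone result (Lemma~\ref{L:maxcompleteequiv}), handling the four type-combinations of $m$ and $x$ to show $(M,x)$ satisfies an elementary equivalence; it then reduces the proposition, exactly as you do, to verifying via Proposition~\ref{P:pushcrit}/\ref{P:pushcritcol} that $(S\cup x,m)$ is compatible with $\push(S\cup x,m)=(\eta/\nu,\tilde M)$, with the $\ecs$/$\ers$ identity being the substantive check and the front face then following automatically.

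One subtlety you slide past: when $(x,M)$ interferes, or when $M=\emptyset$, the elementary equivalence $\tilde M x\equiv\tilde x M$ is not unique (e.g.\ both a lower and an upper perfection can exist, as in Example~\ref{X:twocompletions}), so ``prescribes $\tilde M$ and $\tilde x$'' is not quite right as stated. The paper handles this by having the case analysis in Lemma~\ref{L:maxcompleteequiv} single out one particular $\tilde M$ in each such case, and that specific choice is the one against which the pushout criterion is verified; you need the analogous disambiguation. Also, the assertion that maximality of $x$ alone forces non-contiguity with $M$ is an oversimplification — for mixed types this requires arguments through Lemmata~\ref{L:liesabove} and \ref{L:upperbelow} and the structure of $m_\comp$ from Propositions~\ref{L:maximalinterfere} and \ref{L:maximalinterferecol} — but you do flag the interference bookkeeping as the hard part, so the plan is sound.
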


\begin{lem}\label{L:maxcomplete}
Let $x$ be a maximal completion row move and $m$ a row move on
$\lambda$ such that $x$ and $m$ interfere and $x$ is above $m$. Then
$(x,m)$ is lower-perfectible.
\end{lem}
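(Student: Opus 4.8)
The goal is to prove Lemma~\ref{L:maxcomplete}, which is essentially the ``completion'' half of the interference analysis for two row moves, but now one of the moves ($x$) is known to be a \emph{maximal completion row move} augmenting some strip. The natural route is to verify the hypotheses of the interference/perfection machinery already set up in Section~\ref{sec:kshapes}, in particular Lemma~\ref{L:perfection}, whose conclusion is exactly that a lower perfection exists and is unique once we know the setup is genuinely interfering and appropriately maximal. So the first thing I would do is record the combinatorial picture: write $x = u_1\cup\dots\cup u_s$ and $m = s_1\cup\dots\cup s_r$ with $x$ above $m$, and invoke Lemma~\ref{L:interfere} to pin down that the last negatively modified column of $m$ is exactly one column before the first positively modified column of $x$, with column-size differing by $1$ (parts (1) and (3) of that lemma), and that every cell of $x$ is above every cell of $m$ (part (2)). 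This is the bookkeeping that makes the lower perfection a candidate.

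\textbf{Key steps.} The substantive point is to show there is ``room'' to extend each string of $x$ downward by $\ell'$ cells (where $\ell'$ is the length of the strings of $m$), landing in exactly the rows occupied by $m$, so that $x\cup x_\per$ becomes a row move from $m*\la$ and $m\cup x_\per$ becomes a row move from $x*\la$. First I would use that $x$ is a \emph{maximal completion} row move: maximality means the first string of $x$ cannot be extended below, which by Lemma~\ref{L:extendstring} forces $\cs(\la)_{c_{x,d}} < \cs(\la)_{c_{x,d}^-}$, i.e. the rightmost positively-modified column of $x$ is strictly shorter than its left neighbor. Combined with the interference data from Lemma~\ref{L:interfere}, this tells us the bottom cells of the negatively modified columns of $m$ and the positively modified columns of $x$ all lie in one common row $r$ (the row of the top cell of the last string of $m$) -- one argues this row-alignment exactly as in the proof of Proposition~\ref{L:maximalinterfere}. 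Then I would count: if $x$ has rank $s$ and $m$ has rank $r$ and length $\ell'$, the $k$-shape constraint at row $r$ in $m*\la$ gives $(m*\la)_{r^-} - (m*\la)_r \ge s$, so there is room to append $s$ translates of the strings of $m$ to the right, and by Lemma~\ref{L:comparelengths} applied downward along the strings, the same room persists in every lower row occupied by $m$. That yields $x_\per$ as the union of these added cells, and Lemma~\ref{L:extendcompletion}/Lemma~\ref{L:comparelengths} confirm that $x\cup x_\per$ is a genuine row move (each extended string of $x$ maximally extended below through the rows of $m$), while $m\cup x_\per$ is a row move from $x*\la$. Finally, one checks $\la\cup x\cup m\cup x_\per \in\Ksh$ using Property~\ref{P:cond5}, exactly as in Lemma~\ref{L:maximalinterfere}: the relevant column-size inequalities are inherited. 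Uniqueness is then immediate from Lemma~\ref{L:perfection}(1).

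\textbf{Main obstacle.} The delicate part is the row-alignment claim: showing that the bottom cells of the negatively modified columns of $m$ and the positively modified columns of $x$ all sit in a single common row, and that this row (together with the lower rows traversed by $m$) has enough width. If this row-alignment failed -- say $\Bot_c(\bdy\la)$ for the interference column $c$ were strictly lower than its neighbor -- then a hook-length computation ($h_\mu(y)\ge h_\mu(x)-1 = k-1$ after adding $x$) would produce a $\la$-addable cell contiguous and below the last cell of $x$ in column $c$, contradicting that $x$ is maximal (cannot be extended below). Making this argument airtight -- carefully tracking which cells are in $\bdy\la$ versus $\bdy(x*\la)$ and invoking Remark~\ref{R:rowshift} -- is where the proof does its real work; the rest is then an application of the perfection lemmas already in place. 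I expect the cleanest writeup to essentially quote the relevant paragraph of the proof of Proposition~\ref{L:maximalinterfere} with ``strip $S$'' replaced by ``move $m$ and its interference with the maximal completion move $x$''.
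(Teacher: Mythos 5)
Your proposal is correct and takes essentially the same approach as the paper: you reduce to the row-alignment claim that $\Bot_c(\bdy\la)$ and $\Bot_{c^-}(\bdy\la)$ lie in the same row, establish it via a hook-length estimate combined with the maximality of the first string of $x$, and then appeal to the extension machinery of Lemma~\ref{L:extendcompletion} and Lemma~\ref{L:comparelengths} to produce $x_\per$, with uniqueness handled by Lemma~\ref{L:perfection}. One small slip in your ``main obstacle'' paragraph: since $\Int(\la)$ is a partition, $\Bot_c$ is weakly \emph{higher} than $\Bot_{c^-}$, so the case one must rule out is $\Bot_c$ strictly higher (not lower); this does not affect the soundness of the overall argument.
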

\begin{proof}
Let $a=(r,c)$ be the lowest cell of the initial string $t$ of $x$.
Then $c^-$ is a negatively modified column of $m$ and
$\cs(\lambda)_c = \cs(\lambda)_{c^-} - 1$.  We claim that
$b=\Bot_c(\bdy\la)$ and $b^-=\Bot_{c^-}(\bdy\la)$ are on the same
row. This follows from the estimate $h_\la(b) \geq h_\la(b^-) - 1 =
k - 1$ and the assumption that $t$ is maximal. It follows that there
is a $(m*\la)$-addable corner in the row containing $b^-$ and $b$ by
Remark~\ref{R:rowshift}. The rest of the proof is similar to Lemma
\ref{L:extendcompletion}.
\end{proof}

\begin{lem}\label{L:maxcompletecol}
Let $x$ be a maximal completion column move and $m$ a column move
from $\lambda$ such that $x$ and $m$ interfere and $x$ is below $m$.
Then $(x,m)$ is the transpose analogue of a lower-perfectible
interfering pair of row moves.
\end{lem}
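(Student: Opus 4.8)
The plan is to deduce this lemma directly from Lemma \ref{L:maxcomplete} by transposing. Write $x^t$, $m^t$, $\lambda^t$ for the images of $x$, $m$, $\lambda$ under the transpose involution on $\Ksh^k$. First I would record how the hypotheses transform. By Definition \ref{D:defmove} a column move is precisely the transpose of a row move, so $x^t$ and $m^t$ are row moves from $\lambda^t$. Since $x$ is a completion column move it is an augmentation column move consisting of a single string (rank $1$ by Property \ref{L:augmentcolumn}), hence $x^t$ is a single row-type string, i.e.\ a completion row move; and the condition that the string of $x$ cannot be extended above becomes the condition that the string of $x^t$ cannot be extended below, so $x^t$ is a \emph{maximal} completion row move. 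The notion ``interfering'' for a pair of column moves is by construction the transpose of the corresponding notion for row moves (replace $\cs$ by $\rs$ in the definition preceding Lemma \ref{L:interfere}), so $(x^t,m^t)$ is an interfering pair of row moves; and the hypothesis ``$x$ is below $m$'' translates, through the transpose correspondence between the ``above/below'' conventions for column and for row moves, into the statement that the top cell of $x^t$ is above the top cell of $m^t$.

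With these identifications in place, Lemma \ref{L:maxcomplete} applies to the pair $(x^t,m^t)$ and yields that $(x^t,m^t)$ is lower-perfectible. Transposing the resulting lower perfection back through $t$ exhibits $(x,m)$ as the transpose analogue of a lower-perfectible interfering pair of row moves, which is exactly the assertion.

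The only content beyond this bookkeeping is to verify that each auxiliary notion used in Lemma \ref{L:maxcomplete} and its proof — ``completion move'', ``maximal'', ``interfering'', the ``above/below'' convention, and ``lower-perfectible'' — is genuinely interchanged by $t$ with its column-side counterpart. I expect the one mild friction point to be the observation that the definition of ``completion move'' is transpose-asymmetric: this asymmetry concerns only completion row moves of rank $>1$ versus the rank-$1$ column moves, and here $x$ already has a single string, so no asymmetry is felt and the transpose of a maximal completion column move is legitimately a maximal completion row move. Once this is noted the lemma is immediate.
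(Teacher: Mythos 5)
Your approach — reduce to Lemma~\ref{L:maxcomplete} by the transpose involution — matches what the paper almost certainly has in mind (the statement is even phrased in terms of the ``transpose analogue,'' and the proof of Lemma~\ref{L:maxcomplete} uses only structural data about $x$ and $m$, never the underlying strip), but you should be aware that it requires verifying a longer dictionary than you explicitly check, and one of the entries you do check is mis-diagnosed.

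On the asymmetry of ``completion move'': you attribute it to rank ($>1$ row strings versus rank-$1$ column strings). That is a side effect, not the source. The genuine asymmetry is that a completion \emph{row} move is a purely structural condition (``all strings start in the same row; the first cannot be extended below''), whereas a completion \emph{column} move is defined relative to a strip $S$ (it is a quasi-completion move from $S$, with constraints about augmentable corners of $S$, constructed subject to its cells not lying atop $S$). Your conclusion that $x^t$ satisfies the \emph{row} definition still holds — since the row definition is purely structural, it is implied by the transposed structural data: rank $1$, and the string not extendable below — but the reason is that the row definition is weaker, not that the asymmetry only involves higher ranks. If ``maximal'' for the column move were constrained-maximal (``cannot be extended further subject to not lying atop $S$''), this would not automatically give structural non-extendability for $x^t$, so you should note that the stated definition is the unconstrained one, or check that in the interfering case the constrained blockage implies the structural one.

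On ``above/below'': this is the step you assert and do not check. For interfering row moves, the cells are separated by row index (Lemma~\ref{L:interfere}(2)); for interfering column moves the transpose analogue separates cells by \emph{column} index, so the literal row meaning of ``$x$ is below $m$'' is not what is being transposed. What one needs is that the paper's orientation convention for interfering column moves is exactly the image under transpose of the convention for row moves, so that ``$x$ below $m$'' (columns) becomes ``$x^t$ above $m^t$'' (rows). This is consistent with the case structure of the proof of Lemma~\ref{L:maxcompleteequiv} ((I) ``$x$ above $M$'' uses \ref{L:maxcomplete}; (IV) ``$M$ above $x$,'' i.e. $x$ below $M$, uses \ref{L:maxcompletecol}), but it is exactly the kind of bookkeeping the paper's terse ``similar to Lemma~\ref{L:maxcomplete}'' avoids by redoing the argument directly in the column picture with $\rs$ in place of $\cs$. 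Your proposal is correct in substance, but a complete write-up should either unpack this dictionary or, more simply, re-derive the two or three lines of Lemma~\ref{L:maxcomplete}'s proof in the column setting as the paper does.
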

\begin{proof}
The proof is similar to that of Lemma~\ref{L:maxcomplete}.
\end{proof}

\begin{lem}\label{L:maxcompleteequiv}
Let $(S = \mu/\lambda, m = \nu/\lambda)$ be a compatible initial
pair, $\push(S,m) = (S',M)$ and let $x = \eta/\mu$ be a maximal
completion move for $S$. Then $x$ and $M$ define an elementary
equivalence.
\end{lem}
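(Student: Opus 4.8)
The plan is to prove Lemma~\ref{L:maxcompleteequiv} by a case analysis that parallels Lemmas~\ref{L:maxrowcolcommute}, \ref{L:maxmoverowcommute} and \ref{L:maxmovecolcommute}, with the role of ``augmentation move of $S$'' now played by the move $M$ coming out of the pushout. Recall that by the constructions of Sections~\ref{sec:rowpushout}--\ref{sec:columnpushout} the move $M$ has the same type (row or column) as $m$ and its string diagrams are translates of those of $m$ (Propositions~\ref{P:pushcrit} and \ref{P:pushcritcol}); in particular if $M=\emptyset$ there is nothing to prove, since then the equivalence is the trivial one (Definition~\ref{D:rowrowcommute}\eqref{it:rowequivnoninterf} or its column/mixed analogue), so we assume $M\ne\emptyset$. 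The charge condition~\eqref{E:charge} is automatic whenever $x$ and $M$ have the same type, and in the mixed case it follows from the string-count argument used in the proof of Proposition~\ref{P:relmix}, so throughout we need only check the combinatorial position hypotheses of an elementary equivalence. The first step is to record what is known about $M$ as a move from $\mu$: writing $M=m^+\cup m_\comp$, the negatively modified columns (resp.\ rows) of $M$ are never modified columns (resp.\ rows) of $S$ --- this uses Property~\ref{L:Smmod}, Lemma~\ref{L:Smrow} and the fact that $\tS=S'$ is a strip --- while every modified column (resp.\ row) of $S$ carrying a cell of $M$ is positively modified by $M$; moreover no column (resp.\ row) carries cells of both $m_\comp$ and $S$, and $m_\comp$ shares no column (resp.\ row) with $m$ (Lemmata~\ref{L:mcompcol}, \ref{L:liesabove}, \ref{L:upperbelow}). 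These facts are exactly what make the position of $x$ relative to $M$ behave like its position relative to an augmentation move of $S$.

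Next I would treat the case in which $x$ and $M$ are both row moves, following Lemma~\ref{L:maxmoverowcommute}. If $x\cap M=\emptyset$, then either $(x,M)$ is non-interfering, giving Definition~\ref{D:rowrowcommute}\eqref{it:rowequivnoninterf}, or it is interfering, and then --- say $x$ is above $M$ --- Lemma~\ref{L:maxcomplete} applied with $\lambda:=\mu$ and $m:=M$ shows that $(x,M)$ is lower-perfectible, giving Definition~\ref{D:rowrowcommute}\eqref{it:rowequivinterf}; the case $M$ above $x$ is symmetric, extending $M$ above using the maximality of $x$ just as in the proof of Lemma~\ref{L:maxmoverowcommute}. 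If $x\cap M\ne\emptyset$, then a cell of $x\cap M$ is $\mu$-addable, so by Remark~\ref{R:contig} and Lemma~\ref{L:moves_relative_position} the intersecting strings of $x$ and $M$ are matched above, and maximality of $x$ together with Property~\ref{L:notfinishsame}-type reasoning (as in Lemma~\ref{L:anyaugment} and the proof of Proposition~\ref{P:rel}) forces $x$ to continue below $M$ or $x=M$, landing us in Definition~\ref{D:rowrowcommute}\eqref{it:rowequivmatch}, with the subcase where $x$ continues above and below $M$ being Definition~\ref{D:rowrowcommute}\eqref{it:rowequivabovebelow}. In each subcase one verifies, as in the proof of Proposition~\ref{P:rel}, that applying the resulting elementary equivalence to the present situation still ends at a $k$-shape.

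The mixed case ($x$ a row completion move and $M$ a column move, or $x$ a column completion move and $M$ a row move, according to the type of $m$) is handled as in Lemma~\ref{L:maxrowcolcommute}: maximality of $x$ forbids $x$ and $M$ from being contiguous when disjoint (Definition~\ref{D:rowcolcommute}(1)), and when they intersect, Property~\ref{L:notfinishsame} together with the maximality of $x$ forces the row move of the pair to continue above and below the column move (Definition~\ref{D:rowcolcommute}(2)(a) or (2)(b)). Here the fact that a completion column move (not merely a quasi-completion one) has no upper augmentable corner below its string is precisely what rules out the bad matchings --- which is the reason for distinguishing the two notions. Finally, the case in which $x$ and $M$ are both column moves mirrors Lemma~\ref{L:maxmovecolcommute}: both have rank $1$ (Property~\ref{L:augmentcolumn} and the rank-$1$ structure of $m_\comp$ in the column pushout), so either they are disjoint and non-interfering, or disjoint and interfering --- in which case Lemma~\ref{L:maxcompletecol} supplies the perfection --- or they intersect and one contains the other, again an elementary column equivalence.

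The main obstacle is the interfering/intersecting subcases in which $M$ genuinely carries its completion string(s) $m_\comp$: one must check that $m_\comp$ neither meets $x$ nor obstructs the perfection produced by Lemma~\ref{L:maxcomplete} (resp.\ Lemma~\ref{L:maxcompletecol}), and this is where the finer structure of $m_\comp$ from Propositions~\ref{P:pushcrit} and \ref{P:pushcritcol} --- it lies in columns (resp.\ rows) disjoint from those of $m$, its cells sit over cells of $\lambda$ rather than $S$, and in the column case it is a single string --- must be combined with the maximality of $x$. Organizing the four type-combinations so that each reduces cleanly to an already-established elementary-equivalence criterion is the bulk of the work; everything else is bookkeeping of modified rows and columns.
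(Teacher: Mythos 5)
Your plan mirrors the paper's four-way case split (by whether $x$ and $M$ are row or column moves), but several of the reductions you make to earlier lemmata are not legitimate, because those lemmata concern a fundamentally different situation than the one at hand. Lemmata~\ref{L:maxrowcolcommute}, \ref{L:maxmoverowcommute} and \ref{L:maxmovecolcommute} are all about pairs of \emph{augmentation moves of $S$} (or pairs of maximal completion moves from $S$), whereas $M=\push(S,m)_2$ is built from shifted copies of strings of $m$ together with $m_\comp$. It need not be a completion move, need not be an augmentation move of $S$, and its cells sit in positions governed by the pushout analysis rather than by any maximality property. So ``handled as in Lemma~\ref{L:maxrowcolcommute}'' and similar appeals do not discharge the burden; the paper instead must argue directly about where cells of $M$ can lie (which strings came from $m'$ pushed up, which from $m_\comp$, and so on) and rule out each forbidden configuration.

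The most serious concrete gap is your treatment of the interfering row--row subcase when $M$ lies above $x$. You assert this is ``symmetric'' to the $x$ above $M$ subcase and handled by Lemma~\ref{L:maxcomplete} or by reasoning as in Lemma~\ref{L:maxmoverowcommute}. Neither works: Lemma~\ref{L:maxcomplete} is genuinely one-sided (it extends the strings of $x$ \emph{below}, which requires $x$ on top), and the symmetry in Lemma~\ref{L:maxmoverowcommute} comes from both moves being maximal completion moves, which $M$ is not. The paper's actual argument here is quite different: it deduces from $c$ being a positively modified column of $M$ but not of $S$, and from $x$ being a completion move, that $\cs(\lambda)_c=\cs(\lambda)_{c^-}$, hence that all negatively modified columns of $x$ are positively modified columns of $m$; from this it builds the upper perfection $M_\per$ by shifting the relevant strings of $m$ up one cell and verifies compatibility of $(S\cup x,m)$. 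You would need something of this sort.

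Two smaller issues. In the intersecting row--row subcase you conclude that the intersecting strings of $x$ and $M$ ``are matched above,'' which overlooks the possibility that $x$ continues above $M$; the correct dichotomy (per the paper) is that $M$ cannot continue below $x$ by maximality, and $M$ cannot continue above $x$ by a contradiction via Lemma~\ref{L:mcompcol}, which still leaves both cases~\eqref{it:rowequivmatch} and~\eqref{it:rowequivabovebelow}. Also, your preliminary claim that ``no column (resp.\ row) carries cells of both $m_\comp$ and $S$'' is false for column pushouts: Lemma~\ref{L:liesabove} explicitly allows $m_\comp$ to lie above cells of $S$ provided it shares columns with the final string of $m$, and the paper's case IV argument has to cope with exactly this possibility (using Lemma~\ref{L:upperbelow} and the completion/quasi-completion distinction, which you correctly flag but do not actually deploy).
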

\begin{proof}
By the definition of pushout, if $m$ is a row (resp. column) move
then $M$ is either a row (resp. column) move or empty. In some cases
it will be shown that $(S\cup x,m)$ is compatible. In that case we
write $\push(S\cup x,m)=(\tS,\tM)$.

$~$

\noindent{\it I) $m$ is a row move and $x$ a maximal completion row
move.}

Suppose first that $x$ and $M$ intersect.  Then the first string $s
\in x$ must intersect a string $t$ of $M$.  Since $x$ is maximal,
$M$ cannot continue below $x$.  Let us suppose that $M$ continues
above $x$.  Let $b'$ be the first cell in $s$; it is a lower
augmentable corner for some modified column $c$ of $S$ containing a
cell $a$. It follows that $t$ must contain a cell $b$ in column $c$
(on top of $a$). But $b \in S'$ as well and $S'$ is a horizontal
strip so $a \in m$.  It is clear that $t$ must be part of $m_\comp$,
but this contradicts Lemma \ref{L:mcompcol}.  Thus if $x$ and $M$
intersect, they satisfy an elementary row equivalence.

Now suppose that $x$ and $M$ interfere.  If $x$ is above $M$ then by
Lemma \ref{L:maxcomplete} a lower perfection $M_{\per}$ exists. Note
that we can then easily check that $(S \cup x, m)$ is compatible.
Then $\tilde M=M \cup M_{\per}$ and $\tilde S=S' \cup x \cup
M_{\per}$ (see \eqref{E:cubeaugment}). If $M$ occurs above $x$, let
$c$ be the leftmost positively modified column of $M$. By definition
of pushout, $c$ is not a positively modified column of $S$. And for
$x$ to be a completion move, $c^-$ needs to be a positively modified
column of $S$. Since $\cs(\mu)_c = \cs(\mu)_{c^-} - 1$ we thus  have
$\cs(\lambda)_c = \cs(\lambda)_{c^-}$. Therefore for $c$ to be a
positively modified column of $M$, all the negatively modified
columns of $x$ had to be positively modified columns of $m$.
Therefore, the strings of $m$ that are contiguous to strings of $x$
are all continued above and below in $S\cup x$.  Since $(S,m)$ is
compatible, this gives that $(S \cup x, m)$ is compatible with
$\tilde M=M \cup M_{\per}$ and  $\tilde S = S' \cup x \cup
M_{\per}$, where $M_{\per}$ is given by the strings of $m$ that are
contiguous to strings of $x$ pushed above one cell. It is then easy
to see that $(x,M)$ is upper perfectible by $M_{\per}$.


If $M$ is empty then it is easy to check that $(S\cup x, m)$ is
compatible (with pushout $(\tS,\tM)$, say) such that either $\tM$ is
empty or $\tM$ is a row move from $x*\mu$ and $\tx:=\tM \cup x$ is a
row move from $\mu$ with $\tM$ extending the strings of $x$ above.
Either way we obtain an elementary equivalence $\tx M \equiv \tM x$.

\noindent{\it II) $m$ is a row move and $x$ is a maximal completion
column move.}

Let $M$ and $x$ be intersecting moves. We show that $x$ continues
above and below $M$, so that $M$ and $x$ satisfy an elementary
equivalence. Since a row and a column move cannot be matched above
and cannot be matched below, it suffices to show that $M$ does not
continue above $x$ and does not continue below $x$. Suppose that $M$
continues above $x$. Let $b$ be the highest cell of $M \cap x$ and
$s$ the string in $M$ containing $b$. Let $a$ be the cell above and
contiguous to $b$ in $s$. By the maximality of $x$, $a$ has to lie
above a cell of $S$. So the string $s$ of $M$ was pushed above
during the pushout. Hence the cell below $b$ is also in $S$. This
contradicts the fact that $x$ is a completion move. Suppose that $M$
continues below $x$. Let $b$ be the lowest cell of $x$ and let $s$
be the string of $M$ containing $b$. The cell $b$ is an upper
augmentable corner for some modified row $R$ of $S$. Since $M$
continues below $x$, the string $s$ contains a cell $b'$ in row $R$.
The string $s$ of $M$ cannot have come from pushing above a string
of $m$ since the cell below $b$ is not in $S$ by definition of upper
augmentable corner. If $s \subset m$ then all cells of $S$ in row
$R$ are also in $m$ and thus row $R$ cannot be a modified row of $S$
by reasonableness. Thus $s\subset m_{\rm comp}$. Since the cell to
the left of $b'$ lies in $\mu$, it cannot also lie in $m_{\rm
comp}$. So all the cells outside of $\la$ and to the left of $b'$
lie in $m \cap S$.  But $(S,m)$ is reasonable, so row $R$ would not
be a positively modified row of $S$, a contradiction.

Suppose $M$ and $x$ do not intersect and are contiguous. In this
case $M$ is above $x$.  Let $b$ be the highest cell of $x$ and $a$
the cell of $M$ contiguous with $b$. By maximality of $x$, $a$ has
to lie above a cell $a'$ of $S$. But since this implies that the
string $s$ of $M$ that contains $a$ was pushed above during the
pushout, we have that the column of $a'$ is not a modified column of
$S$. Therefore there needs to be a cell of $S$ below $b$.  But this
is a contradiction to the fact that $x$ is a completion move.

Suppose $M$ is empty. In this case one may deduce that $(S\cup x,m)$
is compatible with $\tM$ empty.
Then $x$ and $M$ satisfy a trivial equivalence.

\noindent {\it III) $m$ is a column move and $x$ is a maximal
completion row move.}

Let $M$ and $x$ be intersecting.  By maximality of $x$, $x$
continues below $M$.  Suppose that $M$ continues above $x$. Let $b$
be the highest cell of $x$.  It is a lower augmentable corner of $S$
associated to a modified column $c$ of $S$.  Since $M$ continues
above, there is a cell $a$ of $M$ in column $c$ that lies above a
cell of $S$.  Suppose $a$ belongs to $m^+$. By
Lemma~\ref{L:continuebelowcol}, $a$ does not belong to the first
string of $m^+$ and so there is a cell of $m^+$ in the row below
that of $a$.  Given that $S$ is a horizontal strip, $a$ lies above a
cell of $m \cap S$ and so does $b$ by reasonableness and translation
of strings in a move.  But then we have the contradiction that $b$
lies above a cell of $S$. Therefore $a \in m_{\comp}$.  By
Lemma~\ref{L:liesabove} the cells of $m_{\comp}$ are in the same
column as the final string of $m$ and thus we get again the
contradiction that $b$ lies above a cell of $m \cap S$. Therefore if
$M$ and $x$ intersect $x$ continues above and below $M$.

Suppose $M$ and $x$ do not intersect and $M$ and $x$ are contiguous.
$M$ cannot be below $x$ due to the maximality of $x$. If $M$ is
above $x$ then a contradiction is reached as in the previous
paragraph.

Suppose $M$ is empty. Then $m$ is a single string that is matched
above by $S$ and $(S,m)$ is non-interfering. Since $x$ is a
completion row move for $S$, it follows that $m$ is matched above by
$S\cup x$ and $(S\cup x,m)$ is non-interfering. Therefore $(S\cup
x,m)$ is compatible with $\tM$ empty.

Hence $M$ and $x$ satisfy an elementary equivalence.

\noindent {\it IV) $m$ is a column move and $x$ a maximal completion
column move.}

Let $M$ and $x$ be intersecting. Suppose $M$ continues above $x$.
Let $b$ be the highest cell of $x$ and let $a$ be the cell of $M$
contiguous to it from above. By maximality of $x$, cell $a$ lies
above a cell of $S$. Therefore by Lemma~\ref{L:continuebelowcol} $a$
belongs to $m_{\comp}$ and we have as before the contradiction that
$x$ lies above a cell of $m$. Suppose $M$ continues below $x$. Let
$s$ be the string of $M$ that meets $x$. The bottommost cell $a$ of
$x$ is an upper augmentable corner of $S$ associated to row $R$,
say. The cell $a$ is also in $s$ and since $M$ continues below $x$,
$s$ has a cell $b$ contiguous to and below $a$. Since row $R$ has a
$\mu$-addable cell contiguous to $a$, we deduce that it is $b$. The
cell $b$ cannot belong to $m$ since $m$ is a vertical strip and
there are cells of $S$ to the left of $b$ since $R$ is a modified
row of $S$. Neither can $b$ belong to $m^+$ since in this case $R$
could not be a modified row of $S$ by normality. So $a$ and $b$
belong to $m_{\comp}$.  Since $m_{\comp}$ does not lie above the
last string of $m$ (otherwise there would be a cell of $S$ below
$a$), there is an upper augmentable corner of $S$ below $a$ by
Lemma~\ref{L:upperbelow}. This contradicts the assumption that $x$
is a maximal completion column move.

Now suppose that $x$ and $M$ interfere.  As in $(I)$, Lemma
\ref{L:maxcompletecol} covers the case where $M$ is above $x$. And
if $M$ is below $x$, the transpose of the argument given in $(I)$,
shows that $(M,x)$ satisfies the transpose analogue of an
upper-perfectible pair of interfering row moves.

Suppose $M$ is empty. Then $m$ consists of a single string that is
matched above by $S$ and $(S,m)$ is non-interfering. Since $m$ is
also contained in $S\cup x$ we see that $(S\cup x,m)$ is normal.
Suppose $m$ is matched above by $S\cup x$. The case that special
interference occurs for $(S\cup x,m)$, is handled by Lemma
\ref{L:perfectionspecialinters} below; in particular $M$ and $x$
satisfy an elementary equivalence. Otherwise $(S\cup x,m)$ is
non-interfering and therefore compatible. Then $\tM$ is empty, which
leads to a trivial elementary equivalence for $M$ and $x$. Otherwise
$m$ is continued above by $S\cup x$. Then the negatively modified
row of $x$ is the positively modified row of $m$ (say the $r$-th)
and $\Delta_\rs(S)_r=1$. In this case one may deduce the
noninterference of $(S\cup x,m)$ from that of $(S,m)$. Therefore
$(S\cup x,m)$ is compatible. We have $\tM=m^+$ (where $m^+$ is
defined for the pair $(S\cup x,m)$) which is continued above by $x$
to be a column move $x\cup \tM$ from $\mu$.

Hence $M$ and $x$ satisfy an elementary equivalence.
\end{proof}

\begin{lem}\label{L:perfectionspecialinters}
Suppose $m=s$ is a column move such that $\push(S,m)=(S \setminus m,
\emptyset)$, and suppose that $x$ is a maximal completion column
move from $\mu$ such that $(S\cup x,m)$ is in the special
interference case. Then $(S\cup x,m)$ is pushout-perfectible, with
$m_\comp$ such that $\push(S \cup x,m)=((S\cup x\cup m_{\comp}
)\setminus m,m_{\comp})$. Moreover $m_{\rm comp}$ corresponds to $m$
shifted up one cell and $m_{\comp}$ extends $x$ above to a column
move from $\mu$.
\end{lem}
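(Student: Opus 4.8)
The plan is to deduce the statement from a single application of Proposition~\ref{P:pushcritcol} to the pair $(S\cup x,m)$, after producing the output $k$-shape $\eta$ by hand. First I would fix notation: by Property~\ref{L:augmentcolumn} the completion column move $x$ has rank~$1$, so $x$ is a single $\mu$-addable column-type string; write $m = s = \{a_1,\dots,a_\ell\}$ with $a_1$ topmost, let $r^-$ be the negatively modified row of $s$, set $r = (r^-)^+$, and put $\kappa = x*\mu$. The hypothesis $\push(S,m) = (S\setminus m,\emptyset)$ unwinds (via the definitions in \S\ref{sec:columnpushout}) to: $m = s\subseteq S$, $s$ is matched above by $S$, and $(S,m)$ is non-interfering; by Lemma~\ref{L:specialint} the last point gives $\rs(\mu)_{r^-} > \rs(\mu)_r$. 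Applying Lemma~\ref{L:specialint} to $(S\cup x,m)$ (for which $s$ is still matched above, so the associated $m'$ is again empty), the special-interference hypothesis gives $\rs(\kappa)_{r^-} = \rs(\kappa)_r$ together with $r$ a modified row of $S\cup x$.

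Next I would pin down the local geometry near $s$. Since $x$ has rank~$1$, it alters each entry of $\rs$ by at most one, so the two relations above force $\rs(\mu)_{r^-} = \rs(\mu)_r + 1$ and force $x$ to modify at least one of the rows $r^-,r$ with the appropriate sign. Because $s\subseteq S$ is matched above by $S$, the row $r^-$ is not a modified row of $S$ (it carries $\Bot_{\col(a_\ell)}(\bdy\lambda)\in\bdy\lambda\setminus\bdy\mu$), so Remark~\ref{R:posmodaugmen} forbids $r^-$ from being a negatively modified row of the augmentation move $x$; hence $r$ is the positively modified row of $x$, i.e. $r = \row(b)$ for the top cell $b$ of $x$. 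Combining $\rs(\mu)_{r^-} = \rs(\mu)_r+1$ with this, and arguing with Remark~\ref{R:rowshift}, Lemma~\ref{L:everywhere}(3) and the hook-length/row-length bookkeeping used in the proof of Proposition~\ref{L:maximalinterferecol}, one obtains a $\kappa$-addable corner directly above $a_1$; since $s$ is $\lambda$-addable, Lemma~\ref{L:stripcomparelengths} then yields a $\kappa$-addable corner directly above each $a_i$.

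I would then set $m_\comp = \{a_1',\dots,a_\ell'\}$, where $a_i'$ is the cell directly above $a_i$, and $\eta = \kappa\cup m_\comp$, and verify: $m_\comp$ is a $\kappa$-addable column-type string that is a translate of $s$; its positively modified row is $(\row(a_1))^+$, and --- the crucial point --- its negatively modified row is $r$ rather than $r^-$, since $\rs(\mu)_{r^-} = \rs(\mu)_r + 1$ and $r$ positively modified by $x$ together imply that $\Bot_{\col(a_\ell)}(\bdy\kappa)$ lies one row above $\Bot_{\col(a_\ell)}(\bdy\lambda)$; finally $\kappa\cup m_\comp\in\Ksh$, and $x\cup m_\comp$ is a column move from $\mu$ extending $x$ above. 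Then $\rs(\eta) = \rs(\kappa) + e_{(\row(a_1))^+} - e_r$, which equals $\ers(S\cup x,m) = \per_{S\cup x,m}(\rs(\kappa))$ (the shift being $d = \row(a_1) - r^-$ in the definition of perfection, so that $r^-+d+1 = (\row(a_1))^+$ and $r^-+1 = r$); condition~(2) of Proposition~\ref{P:pushcritcol} holds since $\eta/\kappa = m_\comp$ is a column move whose single string is a translate of $s$; and condition~(3) is immediate from $\nu = \lambda\cup m \subseteq \mu \subseteq \kappa \subseteq \eta$. Proposition~\ref{P:pushcritcol} then gives that $(S\cup x,m)$ is compatible with $\push(S\cup x,m) = (\eta/\nu,\eta/\kappa) = \big((S\cup x\cup m_\comp)\setminus m,\; m_\comp\big)$, and the two ``moreover'' clauses are exactly the properties of $m_\comp$ established above.

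The hard part will be the middle paragraph: proving that augmenting a non-interfering $(S,m)$ by the \emph{maximal completion} column move $x$ can only create special interference in the minimal way ($\rs(\mu)_{r^-} = \rs(\mu)_r + 1$, with $r$ rather than $r^-$ modified by $x$), and then checking that the upward shift of $s$ is genuinely $\kappa$-addable with negatively modified row exactly $r$. This is the same flavour of accounting as in Lemmata~\ref{L:everywhere}, \ref{L:columncompletionaugment}, \ref{L:maxcompletecol} and Proposition~\ref{L:maximalinterferecol}; the one extra subtlety compared with those is that every ``addable corner directly above'' claim must be verified inside $\kappa = x*\mu$ rather than inside $\mu$, and this is precisely where the maximality and the completion properties of $x$ (no lower augmentable corner of $S$, no upper augmentable corner of $S$ below the string of $x$) enter.
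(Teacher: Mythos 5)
Your proposal is correct and takes essentially the same route as the paper: identify that $r=(r^-)^+$ must be the positively modified row of $x$ with $\rs(\mu)_{r^-}=\rs(\mu)_r+1$, establish a $\kappa$-addable corner directly above $a_1$ via the hook-length bookkeeping and Lemma~\ref{L:everywhere}(3), propagate up via Lemma~\ref{L:stripcomparelengths}, and conclude that $m_\comp$ is the upward shift of $s$ extending $x$. The only cosmetic difference is that you package the final verification through Proposition~\ref{P:pushcritcol}, whereas the paper's proof defers this last bookkeeping to the argument of Proposition~\ref{L:maximalinterferecol}; the intermediate assertion that $r^-$ is not a modified row of $S$ (hence, by Remark~\ref{R:posmodaugmen}, not a negatively modified row of $x$) is stated rather than fully argued in both your write-up and the paper, so this is not a gap you introduced.
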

\begin{proof}
Let $\eta= x * \mu$. By assumption $m\subset S$, the single string
of $m$ matches $S\cup x$ above, and $\rs(\eta)/\rs(\nu)$ is not a
horizontal strip where $\nu=m*\la$.

In this case, we must have $\rs(\eta)_{R^+}=\rs(\mu)_R$, with $R$
the negatively modified row of $m$ and $R^+$ the positively modified
row of $x$. Let $b$ be the leftmost cell in $\partial \eta$ in row
$R^+$, and let $c$ be the leftmost cell in $\partial \lambda$ in row
$R$. We claim that $b$ and $c$ lie in the same column.

Since the hook-length of $c$ in  $\la$ is $k$ by definition of moves,
we have from
$\rs(\mu)_{R}=\rs(\eta)_{R^+}$ that
the hook-length of $b$ in $\eta$
is $k-1$ or $k$.  In the case that it is
equal to $k-1$ we easily see that
$b$ and $c$ lie
in the same column.  In the other case,
the claim follows from Lemma~\ref{L:everywhere}(3).

The rest of the proof is then exactly as
in the proof of
Proposition~\ref{L:maximalinterferecol}.
\end{proof}

%

\begin{proof}[Proof of Proposition \ref{L:commutingaugment}]
The existence of an equivalence $\tilde M x = \tilde x M$ is
guaranteed by Lemma \ref{L:maxcompleteequiv}. In some cases (when $M
= \emptyset$ or when $(x,M)$ interferes) there may be more than one
choice for such an equivalence.  In such cases, the proof of Lemma
\ref{L:maxcompleteequiv} provides a particular $\tM$. In the other
cases $M$ and $x$ define a unique elementary equivalence which
uniquely specifies $\tM$.

It suffices to show that $(S \cup x, m)$ is compatible and that
$\push(S \cup x, m) = (\tilde S, \tilde M)$ for some strip $\tilde
S$ and with $\tM$ specified as above. It will then follow that
$\tilde x$ augments $S'$ to give $\tilde S$.

Let $\kappa$ and $\eta$ be defined by $S \cup x = \kappa/\lambda$
and $\eta = \tM * \kappa$. We will use the criteria of Propositions
\ref{P:pushcrit} and Proposition \ref{P:pushcritcol}. It is clear
that (when it is nonempty) $\tM$ has the same diagram as $M$ which
has the same diagram as $m$.  It is also clear from the
commutativity of the top face that $\nu \subset \eta$.  It remains
to check Condition (1) of Proposition \ref{P:pushcrit} (or
Proposition \ref{P:pushcritcol}).

{\it I) $m$ and $x$ are row moves.} The proof of Lemma
\ref{L:maxcompleteequiv} deals with the cases where $(x,M)$ is
interfering. If $(S\cup x,m)$ interferes while $(S,m)$ does not,
then $x$ and $M$ interfere and this case has already been covered.
Suppose that $(S,m)$ interferes while $(S \cup x,m)$ does not. In
this case the negatively modified columns of $m$ are immediately to
the left of the negatively modified columns of $x$, and we have that
$x$ and $M$ are matched above with $x$ continuing below $M$.
 So we get
\begin{align*}\ecs(S \cup x,m) &= \cs(\la) + \change_\cs(S\cup x) +
\changecol(m')\\
&= \cs(\mu) + \changecol(x) + \changecol(m') \\
&= \cs(\mu) + \changecol(\tilde x) + \changecol(M).
\end{align*}
If some (but not all) positively modified columns of $m$ are
negatively modified columns of $x$, then $M$ and $x$ interfere.
Hence this case has already been covered. If the positively modified
columns of $m$ are all negatively modified columns of $x$, then $M =
\emptyset$, which was covered in the proof of Lemma
\ref{L:maxcompleteequiv}.

Finally, in all the other cases $(x,M)$ does not interfere and $M$
is nonempty, so that there is a unique choice for the equivalence
$\tx M \equiv \tM x$. Moreover, the positively modified columns of
$m$ are not negatively modified columns of $x$, and if there is
interference in $(S,m)$ and $(S \cup x,m)$ then it will occur in the
same positions and require the same perfection (that is, the
interference has nothing to do with $x$). Let $m'$ be defined as
usual when calculating the pushout $(S \cup x, m)$.  Let $m_1$ be
the strings of $m$ matched below by $S$.  Let $m_2$ be the strings
of $m$ matched below by $x$. Then $\changecol(m') = \changecol(m)-
\changecol(m_1 \cup m_2)$ and we calculate:
\begin{align*}
\ecs(S \cup x,m) &= \per_{m}(\cs(\la) +\changecol(S \cup x) +
\changecol(m'))
\\
& = \per_{m}\bigl(\cs(\la) +\change_\cs(S) + \changecol(x) + \changecol(m')\bigr)
\\
& = \per_{m}\bigl(\cs(\mu) + \changecol(x) + \changecol(m) - \changecol(m_1 \cup m_2)\bigr)  \\
& = \per_{m}\bigl(\cs(\mu) + \changecol(m) - \changecol(m_1)+  \changecol(x)-\changecol(m_2)\bigr)  \\
&= \per_{m}\bigl(\cs(\mu) + \changecol(m) - \changecol(m_1)\bigr) + \changecol(\tilde x)
\\
& = \cs(\mu) + \changecol(M) + \changecol(\tilde x).
\end{align*}

{\it II) $m$ is a row move and $x$ is a column move.}  Notice that
$\changecol(S)=\changecol(S\cup x)$ and the strips $S$ and $S\cup x$
modify the same columns.  Therefore $\ecs(S\cup x,m)=
\ecs(S,m)=\cs(\eta)$ and the result follows immediately.

{\it III) $m$ is a column move and $x$ is a row move.} This is
similar to case \textit{II)}.

{\it IV) $m$ and $x$ are column moves.}  This case is basically the
same as case {\it I)}, except that special care needs to be taken
when there is special interference.  Suppose there is special
interference in $(S\cup x, m)$ but none in $(S,m)$.  In this case,
if $m \subset S$,  then we are done by
Lemma~\ref{L:perfectionspecialinters}.
If $m \cap S=\emptyset$, then $m\subseteq x$
and $(S,m)$ were interfering (not special interference). Therefore
$\push(S,m)=(S\cup m_{\rm  comp},m \cup m_{\rm comp})$ giving
$\push(S \cup x, m) = ((S \cup x \cup m_{\rm
  comp})
\setminus m, m_{\comp})$ and the equivalence $m_{\rm comp} x = (x\setminus m)
(m \cup m_{\rm comp})$ completing the cube.

Suppose there is special interference in $(S,m)$ but none in $(S\cup x,m)$.
In this case the negatively modified row
of $m$ is immediately below the negatively modified
row of $x$, and we have that $x$ and $M$ are matched below.
 So we have as in case {\it I)}
\begin{align*}\ers(S \cup x,m) &= \rs(\la) + \change_\rs(S\cup x) +
\changerow(m')\\
&= \rs(\mu) + \changerow(x) + \changerow(m') \\
&= \rs(\mu) + \changerow(\tilde x) + \changerow(M).
\end{align*}
All the other cases are as in case {\it I)}.
\end{proof}

\section{Pushouts of equivalent paths are equivalent}
\label{sec:commutingcube}

The goal of this section is to prove Proposition
\ref{P:pushseqequiv}. By Proposition \ref{P:commuaugmen} it suffices
to show that \textit{there exist} pushout sequences starting from
$(S,\bp)$ and $(S,\bp')$ respectively, such that the resulting final
pairs $(\tS,\bq)$ and $(\tS',\bq')$, satisfy $\tS=\tS'$ and
$\bq\equiv \bq'$. By Proposition \ref{P:canonicalpushseq} we may
assume that both pushout sequences start by maximizing the strip
$S$. We may therefore assume that $S$ is already maximal. Since
equivalences in the $k$-shape poset are generated by elementary
equivalences, we may assume by induction on the length of the paths,
that $\bp=\tn m \equiv \tm n = \bp'$ is an elementary equivalence
starting at $\la$.

To summarize, it suffices to show that given the elementary
equivalence $\tn m \equiv \tm n$ starting at $\la$ and a
$\la$-addable maximal strip $S$, there exist pushout sequences from
$(S,\tn m)$ and $(S,\tm n)$, producing $(\tS,\tN M)$ and $(\tS',\tM
N)$ respectively, such that $\tS=\tS'$ and $\tN M \equiv \tM N$.

Since $S$ is maximal, $(S,m)$ and $(S,n)$ are compatible. Let
$\push(S,m) = (S_m,M)$ and $\push(S,n) = (S_n,N)$. This furnishes
the three faces touching the vertex $\la$ in the cube pictured in
\eqref{E:cube}, and all vertices except $\omega$.
\begin{equation} \label{E:cube}
\begin{diagram}
\node[2]{\mu}  \arrow{sw,t}{M} \arrow[2]{e,t}{N}
\node[2]{\rho} \arrow{sw,t,3,..}{\tilde M} \\
\node{\eta} \arrow[2]{e,t,3,..}{\tilde N}
\node[2]{\omega}\\
\node[2]{\lambda} \arrow[2]{n,l,1}{S} \arrow[2]{e,b,1}{n}
\arrow{sw,b}{m} \node[2]{\kappa} \arrow{sw,b}{\tilde m}
\arrow[2]{n,r}{S_n} \\
\node{\nu} \arrow[2]{e,b}{\tilde n} \arrow[2]{n,l}{S_m}
\node[2]{\theta} \arrow[2]{n,r,3,..}{\tilde S}
\end{diagram}
\end{equation}
It suffices to prove the following.
\begin{enumerate}
\item If $\tn\ne\emptyset$ then $(S_m,\tn)$ is compatible. Let
$\push(S_m,\tn)=(\tS,\tN)$ with final shape $\omega$.
\item If $\tm\ne\emptyset$ then $(S_n,\tm)$ is compatible. Let
$\push(S_n,\tm)=(\tS',\tM)$ with final shape $\omega'$.
\item We may assume not both $\tm$ and $\tn$ are empty.
\begin{enumerate}
\item If $\tn\ne\emptyset$ and $\tm\ne\emptyset$ then
$\omega=\omega'$ and $\tN M \equiv \tM N$ is an elementary
equivalence.
\item If $\tn\ne\emptyset$ and $\tm=\emptyset$ then (with $\omega$
defined by (1)) $\omega/\rho$ is a move and the right face of
\eqref{E:cube} defines an augmentation move.
\item If $\tm\ne\emptyset$ and $\tn=\emptyset$ then (with $\omega=\omega'$
defined by (2)) $\omega/\eta$ is a move and the front face of
\eqref{E:cube} defines an augmentation move.
\end{enumerate}
\end{enumerate}

\subsection{Pushout of equivalences}

\begin{lem}\label{L:inter}
Suppose $(m,n)$ is an interfering pair of row (resp. column) moves
from $\la$ which is (either lower- or upper-) perfectible by adding
the set of cells $mn_\per$. Suppose $S = \mu/\lambda$ is a maximal
strip. Then for each string $s \in mn_\per$, we have $s \cap S =
\emptyset$ or $s \subset S$.
\end{lem}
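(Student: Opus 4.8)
The statement concerns the perfecting cells $mn_\per$ of an interfering perfectible pair $(m,n)$ of (say) row moves from $\la$, when $S = \mu/\la$ is a maximal strip. We want to show each string $s$ of $mn_\per$ is either disjoint from $S$ or entirely contained in $S$. The plan is to use the explicit description of $mn_\per$ given in Lemma~\ref{L:perfection}: assuming the top cell of $m$ is above that of $n$, a lower perfection $m_\per$ consists of translates of the first string $t_1$ of $n$ adjoined in the $r = \rk(m)$ columns just to the right of $c_{t_{r'},u}$, where $m \cup m_\per$ extends each string of $m$ below by $\ell' = \mathrm{length}(n)$ cells. Symmetrically for an upper perfection. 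So the cells of $mn_\per$ lie in a well-controlled set of rows and columns relative to $m$, $n$, and $\la$.

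\textbf{Key steps.} First I would reduce to the lower-perfection case (the upper case is transpose-analogous, but note $S$ is not transpose-symmetric, so some care is needed — see the second-to-last paragraph). Using Lemma~\ref{L:perfection}(1), write $m_\per$ as the union of translates of $t_1$: either the strings of $m_\per$ are the bottom extensions of the strings of $m$ (the cells completing $m \cup m_\per$ below), or they are the new translates of $n$ to the right (the cells completing $n \cup m_\per$). In the first subcase, I would argue that each string $u$ of $m_\per$ is the downward continuation of a string $s_i$ of $m$; since $S$ is reasonable with respect to $m$ (Proposition~\ref{L:Smreasonable}) — one has either $s_i \cap S = \emptyset$ or $s_i \subset S$ — I would show that adjoining the lower extension does not change this dichotomy. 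Concretely, if $s_i \subset S$ but some cell of its lower extension $u$ were not in $S$, that cell would be a lower augmentable corner of $S$ (it sits right-adjacent to cells of $\la$ and its column is a modified column of $S$), contradicting maximality via Proposition~\ref{P:maxstrip}. If $s_i \cap S = \emptyset$ I would use Lemma~\ref{L:Smleftcolumns} and Corollary~\ref{C:stripmodifiedrows} to see that the columns of $u$ contain no cells of $S$, so $u \cap S = \emptyset$. In the second subcase, each string $u$ of $m_\per$ is a translate of $t_1$ lying in the $r$ columns immediately right of $c_{t_{r'},u}$; by Property~\ref{C:rowmovecolsize} and Lemma~\ref{C:samesize} these columns all have the same size as the modified columns of $n$, and one uses reasonableness of $(S,n)$ plus Lemma~\ref{L:Smleftcolumns} to transfer the ``$s\subset S$ or $s\cap S = \emptyset$'' property from $n$'s string $t_1$ to its translates $u$.

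\textbf{Main obstacle.} The hard part will be the second subcase: showing that ``$t_1 \subset S$ implies $u \subset S$'' and ``$t_1 \cap S = \emptyset$ implies $u \cap S = \emptyset$'' for the rightward translates $u$ of $t_1$. The difficulty is that these translates lie in columns \emph{strictly between} the original move and $S$'s modified columns in some configurations, so one cannot directly invoke a hypothesis about $(S,n)$; instead one must track how $S$'s cells are distributed across a group of equal-size columns. The key tool will be Lemma~\ref{L:Smleftcolumns}: for a $\la$-addable string $s$, $s \subset S$ iff $S$ contains a cell in column $c_{s,u}$, combined with the fact (Property~\ref{L:Smmod}) that a modified column of $S$ cannot be negatively modified by a move, and Property~\ref{C:rowmovecolsize} forcing $S$ to treat all equal-size columns in a group uniformly. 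Once the distribution of $S$'s cells over the relevant column-group is pinned down, the dichotomy for $u$ follows by counting cells row by row, exactly as in the proof of Lemma~\ref{L:Smleftcolumns} itself.

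\textbf{Remark on the upper case.} For upper perfection the roles of $m$ and $n$ and of ``below'' and ``above'' are swapped, and $M_\per$ extends the strings of $n$ above and adjoins translates of $m$'s first string to the left. Because strips are transpose-asymmetric, I would not literally transpose; instead I would rerun the argument above with the roles interchanged, using that $S$ is still reasonable and maximal and invoking Lemma~\ref{L:upperaugmentcore} in place of the lower-augmentable-corner argument where upward extensions are involved. Since $\la$ need not be a core here, one uses Proposition~\ref{P:maxstrip} (no augmentable corners) rather than Lemma~\ref{L:upperaugmentcore} directly, exactly as in the proof of Proposition~\ref{L:Smreasonable}. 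This completes the plan.
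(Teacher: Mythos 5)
You have the right key tool in hand --- maximality of $S$ means no augmentable corners (Proposition~\ref{P:maxstrip}), and a cell of $mn_\per$ sitting at the transition from $S$ to non-$S$ produces exactly such a corner --- but your case decomposition is unnecessarily heavy and, more importantly, the case you yourself flag as the main obstacle is not actually closed. In your ``second subcase'' (the translates of $t_1$ adjoined to the right of $c_{t_{r'},u}$), you outline a transfer argument via Lemma~\ref{L:Smleftcolumns}, Property~\ref{L:Smmod}, and Property~\ref{C:rowmovecolsize}, and then declare that ``once the distribution of $S$'s cells over the relevant column-group is pinned down, the dichotomy for $u$ follows by counting cells row by row.'' That last sentence is where the proof would have to live, and it isn't carried out; in particular it is not obvious how to pin down the distribution of $S$-cells in columns that are positively modified by $m$ but not necessarily touched by $n$, and you do not address the possibility that $S$ contains only part of such a column-group.

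The paper avoids the case split entirely with a uniform local argument. Suppose some string $s$ of $mn_\per$ meets $S$ but is not contained in it; then $s$ contains contiguous cells $x\in S$ and $y\notin S$. One may assume $y$ is $\mu$-addable (if not, shift to an adjacent string of $mn_\per$). If $x$ lies above $y$, then $\col(x)$ is a modified column of $S$ and $y$ is a lower augmentable corner of $S$; if $x$ lies below $y$, then $\row(x)$ is a modified row of $S$ and $y$ is an upper augmentable corner of $S$. Either way maximality is contradicted. This is exactly the germ of your ``first subcase'' argument, but applied once and for all, without distinguishing whether $s$ extends a string of $m$ below or is a new translate of $n$'s first string, and without appealing to Lemma~\ref{L:perfection} at all. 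I would recommend scrapping the decomposition of $mn_\per$ and rerunning your lower-augmentable-corner observation at an arbitrary $S$/non-$S$ boundary inside an arbitrary string of $mn_\per$, supplying the short reduction to $y$ being $\mu$-addable; once you do, the lower and upper cases come out symmetrically and the lower/upper perfection distinction becomes irrelevant.
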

\begin{proof} Let $s$ be a string of $mn_\per$ and let
$x,y\in s$ be contiguous cells with $x \in S$ while $y \not \in S$.
Suppose $x$ is above $y$.  We may assume that $y$ is $\mu$-addable,
for otherwise we may shift left or down to another string of
$mn_\per$. Then the column of $x$ is a modified column of $S$ and so
$y$ is a lower augmentable corner of $S$, a contradiction.  Suppose
$x$ is below $y$. Then the row of $x$ is a modified row of $S$ and
so $y$ is an upper augmentable corner of $S$, again a contradiction.
\end{proof}

\begin{lem}\label{L:MNinterfere}
Suppose $M$ and $N$ interfere.  Then so do $m$ and $n$.
\end{lem}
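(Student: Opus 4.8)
The statement asserts that if the pushed-out moves $M$ and $N$ interfere, then the original moves $m$ and $n$ already interfere. My plan is to argue by contraposition: assume $(m,n)$ is non-interfering and deduce that $(M,N)$ is also non-interfering. Recall from the pushout construction (Subsections~\ref{SS:rowpushnointerfere}, \ref{SS:rowpushinterfere}, \ref{SS:columnpushnointerfere}, \ref{SS:columnpushinterfere}) that when $(m,n)$ is non-interfering, $M$ and $N$ are obtained essentially by the ``bump-up'' (or ``bump-right'') operators $\uparrow_S$, $\rightarrow_S$ applied to the relevant sub-collections of strings, with $\eta=(m^+)*\mu$ in each case; there is no completion set $m_\comp$. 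The key point is that interference of $M$ and $N$ is, by Remark~\ref{R:interference} (or its column analogue Lemma~\ref{L:specialint}), a purely local condition: it says that $S_m$ (the strip $\eta/\nu$) modifies a column $c^+$, the last string of $N$ has $c_{\cdot,u}=c$, and $\cs(\nu)_c = \cs(\nu)_{c^+}+1$ (with the roles of $M$, $N$ possibly swapped). So I must trace this local configuration back through the pushout to $\la$.

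First I would set up notation: let $S_m=\mu/\nu$ wait—rather $S_m=\eta_m/\nu$ where $\eta_m=m^+*\mu$ and $\nu=n*\la$, and similarly record which columns (resp.\ rows) are modified by $S_m$, by $N$, by $m$, by $n$, by $S$. The crucial observations are: (i) a modified column of $S_m$ is either a modified column of $S$ that was not ``consumed'' in the pushout, or comes from a bumped string of $m$; in either case its position and the column-shape values $\cs(\nu)_c$ are controlled by $\cs(\la)$, $\change_\cs(S)$, and $\change_\cs(m')$ via Lemma~\ref{L:pushbump}; (ii) the strings of $N$ are, by the alternative description (Proposition~\ref{P:pushcritcol} and the structure of $m_\comp$), translates of strings of $n$ occupying columns determined by $\change_\rs(n')$ after the shift $\rightarrow_S$ — and since $(m,n)$ is non-interfering there is in fact no $n_\comp$, so $N$'s modified rows/columns are exactly those of $n'$, shifted. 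Combining (i) and (ii), the hypothetical interference witness for $(M,N)$ — a column $c$ with $\cs(\nu)_c=\cs(\nu)_{c^+}+1$, $c^+$ modified by $S_m$, $c=c_{\text{last string of }N,u}$ — pulls back to the analogous witness for $(m,n)$ at $\la$, because the shift operators $\uparrow_S$, $\rightarrow_S$ preserve the relevant adjacency and size-difference data (this is exactly the content of ``$s$ and $t$ are translates'' in a move, together with Property~\ref{C:rowmovecolsize}). Hence $(m,n)$ interferes, contradicting the assumption.

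I expect two distinct cases to need handling, corresponding to whether $m,n$ are both row moves, both column moves, or one of each; the mixed case is likely easiest since a row move and a column move modify disjoint ``types'' of data (row shape versus column shape), so the local witness for interference of $M,N$ must involve data already present before pushing out. The main obstacle, I anticipate, is bookkeeping the possible presence of $m_\comp$ on the \emph{other} side: even if $(m,n)$ is non-interfering, the pushout $(S,m)$ or $(S,n)$ individually might still have been interfering with respect to $S$ (hence carry a nonempty $m_\comp$), and I must check that those completion strings do not spuriously create the adjacency $\cs(\nu)_c=\cs(\nu)_{c^+}+1$ that would make $(M,N)$ interfere. Here Lemma~\ref{L:mcompcol} (the completion strings of a row-move pushout share no columns with $m$) and Lemma~\ref{L:liesabove}/Lemma~\ref{L:upperbelow} on the column side are the tools that confine $m_\comp$ to the columns/rows of the last string of $m$, so that it cannot interact with the modified columns of $S_m$ in the forbidden way. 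Once the string-translation and $m_\comp$-confinement facts are in hand, the deduction is a finite check on $\change_\cs$ / $\change_\rs$ vectors, which I would not grind through in detail.
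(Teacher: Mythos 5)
Your core insight — that the pushout preserves string diagrams, so the column/row-shape data at the modified columns/rows is the same before and after, and hence the size-difference condition defining interference transfers — is exactly the idea the paper uses. The paper's short proof picks $c_N$ = rightmost negatively modified column of $N$ and $c_N^+$ = leftmost positively modified column of $M$, reads off the interference relation $\cs(\mu)_{c_N}=\cs(\mu)_{c_N^+}+1$, and then invokes the fact that the strings of $M$ and $N$ are translates of those of $m$ and $n$ (so the column shapes at the corresponding modified columns agree across the pushout, by Lemma~\ref{C:samesize} and condition~(2) of Proposition~\ref{P:pushcrit}) to conclude $\cs(\la)_{c_n}=\cs(\la)_{c_m}+1$, i.e.\ interference of $(m,n)$.

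Two comments on how your plan deviates. First, you argue by contraposition (assume $(m,n)$ non-interfering, deduce $(M,N)$ non-interfering), which is logically equivalent but practically more work: showing \emph{non}-interference means verifying that an entire $\cs$-vector is a partition, whereas the direct argument only needs to transport a single local witness back through the pushout. It is precisely because you chose the contrapositive that you find yourself having to worry that $m_\comp$ or $n_\comp$ might ``spuriously create'' interference; in the direct argument that worry evaporates, because the completion strings are themselves translates of the strings of $m$ (resp.\ $n$), so they carry the \emph{same} column-shape data and cannot introduce a size difference that wasn't already there at $\la$. Second, your ``mixed case'' (one row move, one column move) does not occur: interference is only defined between two row moves (via $\cs$) or two column moves (via $\rs$), and a row-move pushout produces a row move or the empty move. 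So you need only the row--row case and its transpose; the paper handles the first and then says the second is similar. With those two adjustments your plan collapses to the paper's argument.
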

\begin{proof}
We first suppose that $M$ and $N$ are row moves and
we assume without loss of generality that $M$ is above $N$.  Let $c_N$
be the rightmost negatively modified column of $N$, so that $c_N^+$ is
the leftmost positively modified column of $M$.  Since $M$ and $N$ interfere
we have $\cs(\mu)_{c_N}= \cs(\mu)_{c_N^+}+1$.  Now let
$c_n$ be the rightmost negatively modified column
of $n$, and let $c_m$ be the leftmost  positively modified column of $M$.
Since strings of $M$ and $N$ are translates of those of $m$ and $n$ we have
$\cs(\mu)_{c_N}=\cs(\la)_{c_n}$ and $\cs(\mu)_{c_N^+}=\cs(\la)_{c_m}$.
But then  $\cs(\la)_{c_n}=\cs(\la)_{c_m}+1$ so $m$ and $n$ interfere.

When $M$ and $N$ are column moves, the proof is similar.
\end{proof}

\begin{lem}\label{L:topequiv}
Let $(m,n)$ be a pair of moves from $\la$ that define an elementary
equivalence and let $S = \mu/\lambda$ be a maximal strip. Write
$\push(S,m) = (S_m,M)$ and $\push(S,n) = (S_n,N)$. Then the pair
$(M, N)$ defines an elementary equivalence.
\end{lem}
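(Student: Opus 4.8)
The statement to prove, Lemma~\ref{L:topequiv}, asserts that pushing out (against a fixed maximal strip $S$) the two sides $m$ and $n$ of an elementary equivalence yields moves $M$ and $N$ that again form an elementary equivalence. The plan is to argue case-by-case according to the type of elementary equivalence satisfied by $(m,n)$, as enumerated in Definition~\ref{D:rowcolcommute}, Definition~\ref{D:rowrowcommute}, and Definition~\ref{D:colcolcommute}. Throughout we exploit that $S$ is maximal, so by Propositions~\ref{L:Smreasonable} and \ref{P:Smcolreasonable} (and their normality analogue Proposition~\ref{L:Smreasonablenormal}) the pairs $(S,m)$ and $(S,n)$ are automatically reasonable, non-contiguous, and normal, and by Propositions~\ref{L:maximalinterfere} and \ref{L:maximalinterferecol} they are compatible. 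Thus $\push(S,m)=(S_m,M)$ and $\push(S,n)=(S_n,N)$ are well-defined, and one has explicit descriptions of $M$ and $N$ via the $m^+$-construction together with the perfecting cells $m_\comp$, $n_\comp$.

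\textbf{Key steps.} First I would treat the mixed case, where $m$ is a row move and $n$ a column move (or vice versa). Here the relevant sub-cases of Definition~\ref{D:rowcolcommute} are: $m,n$ disjoint and non-contiguous; $m$ continues above and below $n$; and $n$ continues above and below $m$. One shows that the corresponding relative position of $M$ and $N$ is inherited: the key observations are that the pushout against $S$ translates each string of $m$ (resp.\ $n$) rightward (resp.\ upward) by a controlled amount, by Lemmata~\ref{L:pushbump} and \ref{L:pushbumpcol}, and that intersections and contiguities are governed by diagonal distances, which are preserved up to the error bounds of Lemma~\ref{L:distancestrings} and Lemma~\ref{L:distancecontiguouscells}. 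Then I would treat the row--row case (Definition~\ref{D:rowrowcommute}), splitting into: non-interfering disjoint; interfering and lower/upper perfectible; matched above/below; $m$ continues above and below $n$; and the degenerate $n=\emptyset$ case. The perfectible case is the delicate one: one must show $(M,N)$ is again interfering — this is exactly Lemma~\ref{L:MNinterfere}, whose proof I would invoke — and then that it is again perfectible, with the perfecting cells $MN_\per$ being the pushout-image of $mn_\per$; Lemma~\ref{L:inter} guarantees each string of $mn_\per$ is entirely inside or outside $S$, which is precisely what lets the pushout act coherently on $m$, $n$, and $mn_\per$ simultaneously. The column--column case follows by the transpose argument. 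In each sub-case one verifies that the charge identity \eqref{E:charge} transfers: row moves keep charge $0$ under pushout (rank is preserved), and for column moves the pushout preserves rank and length (hence charge $r\ell$) — this is built into the definitions in \S\ref{SS:columnpushnointerfere}--\S\ref{SS:columnpushinterfere}.

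\textbf{Main obstacle.} The hard part will be the interfering/perfectible row--row sub-case and its interaction with whether the perfecting cells $mn_\per$ meet $S$. One must track three nested phenomena at once: the interference of $m$ and $n$ (a column-shape non-partition condition on $\cs(\la\cup m\cup n)$), the effect of the maximal strip $S$ absorbing some strings and shifting others, and the resulting interference of $M$ and $N$. The subtlety is that a string of $m$ matched below by $S$ is removed when forming $m'$, which can change \emph{which} columns are the critical interfering columns; one needs Lemma~\ref{L:MNinterfere} to relocate the interference correctly in $\mu$, and then a careful bookkeeping — parallel to the computation in case \textit{I)} of the proof of Proposition~\ref{L:commutingaugment} — to identify $MN_\per$ as the pushout of $mn_\per$ and confirm that $\tilde M M \equiv \tilde N N$ holds with matching charge. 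I expect this verification, together with the dual special-interference subtlety in the column case (handled as in Lemma~\ref{L:perfectionspecialinters}), to absorb the bulk of the argument, while all the disjoint and matched cases reduce quickly to the relative-position lemmata already established.
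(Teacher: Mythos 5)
Your plan shares the key ingredients — a case analysis on the type of elementary equivalence, exploitation of maximality via Propositions~\ref{L:Smreasonable}, \ref{P:Smcolreasonable}, \ref{L:Smreasonablenormal}, \ref{L:maximalinterfere}, \ref{L:maximalinterferecol}, and the central Lemmata~\ref{L:MNinterfere} and \ref{L:inter} — and at that level it agrees with the paper's proof. But two of your proposed moves go off track.

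First, the mixed case. Your plan rests on the distance bounds of Lemmata~\ref{L:distancestrings} and \ref{L:distancecontiguouscells}, but these alone do not give what is needed, namely that $(M,N)$ is reasonable and non-contiguous. The paper's proof of this case is a \emph{provenance} argument: given interacting strings $s\subset M$ and $t\subset N$, one determines whether each came from $m$ or $n$ unchanged, from a string pushed up (resp.\ right), or from $m_\comp$ (resp.\ $n_\comp$); in each sub-case a contradiction is derived, using that $S_m$ or $S_n$ is a horizontal strip, that $(m,n)$ already satisfied an elementary equivalence, and that $m_\comp$ (resp.\ $n_\comp$) lies in the rows (resp.\ columns) of the last string of $m$ (resp.\ $n$) by Propositions~\ref{L:maximalinterfere} and \ref{L:maximalinterferecol}. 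Diagonal distance is not the mechanism; tracking where each cell of $M$ and $N$ originates is, and nothing in your outline does that bookkeeping.

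Second, your identification of $MN_\per$ as "the pushout-image of $mn_\per$" is only right in one of two sub-cases. When $(S,m)$ is non-interfering, Lemma~\ref{L:inter} indeed shows $MN_\per$ sits in a subset of the columns of $mn_\per$. But when $(S,m)$ itself interferes and its pushout introduces $m_\comp$, the set $MN_\per$ has to be built afresh via the technique of Lemma~\ref{L:extendcompletion}; Lemma~\ref{L:inter} alone does not hand it to you. The paper separates these two sub-cases explicitly, and your write-up needs to as well.

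Some minor points. The directions of the bump are swapped in your description: a row move $m$ is shifted \emph{up} ($m^+=\uparrow_S m'$, by \eqref{E:mplusrow}) and a column move $n$ is shifted \emph{right} ($n^+=\rightarrow_S n'$, by \eqref{L:mpluscol}). Lemma~\ref{L:perfectionspecialinters} is not used in the paper's proof of this lemma (it enters in Lemma~\ref{L:maxcompleteequiv}). The paper also conditions on the configuration of the \emph{output} pair ($M$ and $N$ disjoint vs.\ intersecting vs.\ interfering) rather than the input type of $(m,n)$; this sidesteps the fact that, e.g., a matched $(m,n)$ can push out to a disjoint non-interfering $(M,N)$, so there is no clean "type is preserved" statement to prove. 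Finally, the column--column case is "similar to" the row--row case rather than its literal transpose, because strips are asymmetric in rows and columns, and because the perfection of a column move lies in the columns of its final string rather than its rows.
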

\begin{proof}
{\it I) $m$ and $n$ are row moves.} We may assume that $M$ and $N$
are nonempty. By Lemma \ref{L:initialstrings} the final string of
$m$ and of $n$ is not matched below by $S$.

Suppose $M$ and $N$ do not intersect. Since both are row moves they
are not contiguous. If $(M,n)$ is non-interfering then they satisfy
an elementary equivalence. So we may assume that $(M,N)$ is
interfering. By Lemma \ref{L:MNinterfere}, $(m,n)$ is interfering.
Without loss of generality let $m$ be above $n$. First suppose
$(m,n)$ is lower-perfectible by adding the set of cells $mn_\per$.
If $(S,m)$ is non-interfering then by Lemma \ref{L:inter}, $(M,N)$
is lower-perfectible where $MN_\per$ lies in a subset of the columns
of $mn_\per$ (even if $(S,n)$ interfered). If $(S,m)$ is interfering
then $(M,N)$ is lower-perfectible; the required additional strings
for $MN_\per$ (which lie on the same set of rows; see
Proposition~\ref{L:maximalinterfere}) can be constructed using the
technique of Lemma \ref{L:extendcompletion}. If $(m,n)$ is
upper-perfectible, one may similarly show that $(M,N)$ is
upper-perfectible.

Otherwise we may assume that $M$ and $N$ intersect. By the
definition of row equivalence we may assume that there exist strings
$s$ and $t$ of $M$ and $N$ respectively such that $s$ continues
above $t$ while $t$ continues below $s$.

Suppose the string $s$ (resp. $t$) belongs to $m_{\comp}$ (resp.
$n_{\comp}$) and the string $t$ (resp. $s$) comes from a string of
$n$ (resp. of $m$) that was pushed above. Then we obtain the
contradiction that $S_m$ (resp. $S_n$) is not a horizontal strip.

Suppose the string $s$ (resp. $t$) belongs to $m_{\comp}$ (resp.
$n_{\comp}$) and the string $t$ (resp. $s$) is a string of $n$
(resp. $m$). By Proposition \ref{L:maximalinterfere} $m_{\comp}$
(resp. $n_{\comp}$) lies on the rows of the last string of $m$
(resp. $n$), yielding the contradiction that $m$ and $n$ already
intersected and did not satisfy an elementary equivalence.

Suppose the string $s$ (resp. $t$) belongs to $m$ (resp. $n$) and
the string $t$ (resp. $s$) was pushed above. This is a contradiction
since $m$, $n$, and $S$ are all $\la$-addable.

In all other cases, one deduces the contradiction that $m$ and $n$
meet but do not satisfy an elementary equivalence.

{\it II) $m$ is a row move and $n$ is a column move.} It suffices to
check that $M$ and $N$ are reasonable and non-contiguous.

Suppose $M$ and $N$ are not reasonable.  Let $s$ and $t$ be strings of $M$
and $N$ respectively that are not reasonable.

Suppose the strings $s$ and $t$ belong to $m_{\comp}$ and
$n_{\comp}$ respectively. Let $a\in s\cap t$. By Proposition
\ref{L:maximalinterferecol}, $a\in s=n_\comp$ lies atop a cell $b$
of $n$. In particular $b\not\in \la$. Since $a\in M$ and $M$ is a
$\la\cup S$-addable horizontal strip, $b\in \la\cup S$, that is,
$b\in S$. Then we have the contradiction that either $S_m$ is not a
horizontal strip or $m$ and $n$ already intersected and did not
satisfy an elementary equivalence.

Suppose the strings $s$ and $t$ come from strings of $m$ and $n$
that have been pushed up and right respectively. Then we have the
contradiction that either $S_n$ is not a horizontal strip or $m$ and
$n$ already intersected and did not satisfy an elementary
equivalence.

Suppose the string $s$ belongs to $m_{\comp}$ and the string $t$ is
a string of $n$. By Proposition \ref{L:maximalinterfere} $m_{\comp}$
lies on the rows of the last string of $m$. This leads to the
contradiction that $m$ and $n$ already intersected and did not
satisfy an elementary equivalence.

All the other cases can easily be ruled out.

Now suppose $M$ and $N$ are contiguous. Suppose $M$ is above $N$.
Let $x$ and $y$ be cells of $M$ and $N$ respectively that are
contiguous. Suppose $y\in n_{\comp}$. By Proposition
\ref{L:maximalinterferecol} it follows that the cell $y^-$ just
below $y$ is in $n$ and $\row(y^-)$ is a modified row of $n$. This
implies that the cell below $x$ does not belong to $\lambda$ and
thus needs to belong to $S$. Therefore $x$ cannot be part of
$m_{\comp}$ since otherwise $S_m$ would not be a horizontal strip by
Lemma~\ref{L:mcompcol}. So the string that contains $x$ was pushed
up during the pushout.  But then we have the contradiction that $m$
and $n$ are contiguous.

Suppose that $y$ belongs to a string of $n$ that was pushed right
during the pushout. In this case the row of $y$ is not a modified
row of $S$ and we have that the cell $x^-$ immediately to the left
of $x$ is also in $S$, but $x^-\notin m$ for otherwise $m$ and $n$
would already be contiguous. By Proposition \ref{L:maximalinterfere}
it follows that $x\not\in m_\comp$. Since $x^-\in S$ it follows that
$x$ was pushed up during $\push(S,m)$. $c=\col(x)$ is not a modified
column of $S$ and we get that the cell below $y$ is also in $S$,
which yields the contradiction $\cs(\la)_{c^-} = \cs(\la)_c$.

Finally, suppose that $y$ belongs to $n$.  Since $m$ and $n$ are not
contiguous $x$ does not belong to $m$.  If $x \in m_{\comp}$ then by
Proposition \ref{L:maximalinterfere} there is a cell $z$ in its row
that belongs to $m$. But then a hook-length analysis shows that the
column of $z$ cannot be a modified column of $m$, a contradiction.
So $x$ belongs to a string of $M$ that was pushed up during the
pushout. Hence the column of $x$ is not a modified column of $S$ and
there is a cell of $S\cap n$ below $y$ that is contiguous with the
cell below $x$, contradicting the assumption that $m$ and $n$ are
not contiguous.

The case in which $M$ is below $N$, is similar.

{\it III) $m$ and $n$ are column moves.}  The proof is similar to
that of {\it I)} (using the fact that the perfection of a column
move lies on the same columns as its final string).
\end{proof}

\subsection{Commuting cube (non-degenerate case)}
Suppose that $m,n,\tilde m,\tilde n, M$ and $N$ are non-empty. Then
the following cube commutes
\begin{equation} \label{DiagCube}
\begin{diagram}
\node[2]{\mu}  \arrow{sw,t}{M} \arrow[2]{e,t}{N}
\node[2]{\rho} \arrow{sw,t,3}{\tilde M} \\
\node{\eta} \arrow[2]{e,t,3}{\tilde N}
\node[2]{\omega}\\
\node[2]{\lambda} \arrow[2]{n,l,1}{S} \arrow[2]{e,b,1}{n}
\arrow{sw,b}{m} \node[2]{\kappa} \arrow{sw,b}{\tilde m}
\arrow[2]{n,r}{S_n} \\
\node{\nu} \arrow[2]{e,b}{\tilde n} \arrow[2]{n,l}{S_m}
\node[2]{\theta} \arrow[2]{n,r,3}{\tilde S}
\end{diagram}
\end{equation}
so that the two horizontal faces are elementary equivalences and the
four vertical faces are pushouts.  The three faces touching
$\lambda$ are assumed to be given.

By Lemma \ref{L:topequiv}, the top face defines an elementary
equivalence. Since $M$ and $N$ are non-empty $\omega$ is determined
uniquely.

We will use Proposition~\ref{P:pushcrit} (or
Proposition~\ref{P:pushcritcol}) to show that there exists a $\tilde
S$ such that $\push(S_m,\tilde n)=(\tilde S,\tilde N)$ and
$\push(S_n,\tilde m)=(\tilde S,\tilde M)$.

\medskip

{\bf Preliminary claim:}  {\it Conditions (2) and (3) of
Proposition~\ref{P:pushcrit} (or Proposition~\ref{P:pushcritcol}) hold.}

\medskip

It is obvious that
conditions (2) holds since by definition of pushouts and
equivalences, $M$ and $N$ are moves whose strings have
the same diagrams respectively as $m$ and $n$
and thus  $\tilde M$ and $\tilde N$ are moves whose strings have
 the same diagram respectively as $\tilde m$ and $\tilde n$ ($M$ and $N$
 interfere in this case if and only if $m$ and $n$ interfere).
We will now see that condition (3) also holds, that is that $\theta
\subset \omega$.  Suppose $m$ and $n$ are row moves. Strings of
$\tilde m$ that are strings of $m$ are obviously contained in
$\omega$. Suppose $s^+$ is a string of $\tilde m$ that corresponds
to a string $s$ of $m$ that has been pushed up (let's say it
intersected with string $t$ of $n$) .  Then $s \subset t \in n$ and
we either have $t \subset S$ or $t \cap S= \emptyset$.  In the
former case, $s^+ \in M$ and thus $s^+ \subset \omega$.  In the
latter case, $s \in M \cap N$ with $t \in N$ and thus  $s^+ \in
\tilde M$, which gives $s^+ \subset \omega$.  Finally, suppose that
$s$ is a string in the perfection $mn_{\per}$ of $m$ and $n$.  By
Lemma~\ref{L:inter}, we either have $s \subset S$ or  $s \cap S=
\emptyset$.  In the former case, obviously $s \subset \omega$.  In
the latter case, since $M$ and $N$ are not empty by hypothesis, we
have that 
$(M,N)$ interferes. In every case $s$ will belong to $MN_\per$ and
will thus belong to $\omega$. If $m$ and $n$ are column moves, or if
$m$ is a row move and $n$ is a column move, then condition (3) is
shown in a similar way.

To check that the vertical faces are pushouts, it remains to verify
condition (1) of Proposition~\ref{P:pushcrit} (or
Proposition~\ref{P:pushcritcol}).

We will use the fact (see the proof of Lemma \ref{L:topequiv}) that
if $(m,n)$ is interfering and lower (resp. upper) perfectible and
$(M,N)$ is interfering, then $(M,N)$ is lower (resp. upper)
perfectible.

{\bf  I) $m$ and $n$ are row moves.} Let $m_\comp$ and $n_\comp$ be
the sets of cells added in the pushout perfections of $(S,m)$ and
$(S,n)$ respectively; they are empty in the non-interfering case.
Also let $mn_\per$ denote the set of cells defining the lower or
upper perfection of $(m,n)$, if it exists. We will repeatedly use
the fact (Proposition~\ref{L:maximalinterfere}) that $m_\comp$
(resp. $n_\comp$) all lie on the same row as the last string of $m$
(resp. $n$).

{\bf Main claim:}
\begin{equation} \label{E:csvec}
\ecs(S_m,\tilde n) = \cs(\omega) = \ecs(S_n,\tilde m)
\end{equation}
To prove \eqref{E:csvec}, it suffices to make a calculation with
modified columns.

We shall be dividing our study into
four cases according to the type of row equivalence: $m$ and $n$
do not interact, $m$ and $n$ are matched below, $m$ and $n$
are matched above, and $m$ and $n$ interfere.

\subsubsection{$m$ and $n$ do not interact} \label{subsubmn}
By Lemma
\ref{L:MNinterfere},  $M$ and $N$ do not interfere.
Furthermore, since
$m_\comp$ (resp. $n_\comp$) all lie on the same row as the last
string of $m$ (resp. $n$), we have
that $m_\comp$ does not interact with $n$,
$n_\comp$ does not interact with $m$, and $m_\comp$ does not interact
with $n_\comp$.
 In particular, $M$ and $N$ do
not interact.  It is thus not difficult to see that we
obtain
\begin{align*}
\ecs(S_m,\tilde n) = \cs(\omega)
= \ecs(S_n,\tilde m)=  \cs(\lambda) & +\Delta_{\cs}(S)+
\Delta_{\cs}(m')+\Delta_{\cs}(n')  \\
 & +
\Delta_{\cs}(m_{\comp})+\Delta_{\cs}(n_{\comp})
\end{align*}
where $m'$ and $n'$ originate respectively from
$(S,m)$ and $(S,n)$.

\subsubsection{$m$ and $n$ are matched below, with $m$ continuing above $n$}
By Lemma \ref{L:introw}, $n$ has rank greater than $m$.  By
maximality of $S$, we see that $m_\comp$ and $n_\comp$ cannot
intersect.  Thus, by Lemma \ref{L:introw} applied to $M$ and $N$,
positively modified columns of $m_\comp$ are positively modified
columns of $n$.
We conclude that there are three interesting types
of modified columns of $S$ (the other types interact in a manner
that was covered in
\ref{subsubmn}): (a) those which are positively modified
columns of both $m$ and $n$, (b) those which are immediately to the
right of negatively modified columns of $m$ and cause interference,
and (c) those which are immediately to the right of negatively
modified columns of $n$ and cause interference.  Each such column
will affect the vectors in \eqref{E:csvec} at three different
indices: ($+$) a positively modified column of $m \cup m_\comp$ or
$n \cup n_\comp$, ($-m$) a negatively modified column of $m \cup
m_\comp$, or ($-n$) a negatively modified column of $n \cup
n_\comp$.   For each of the cases (a), (b) and (c),  we
draw a cube whose edges give
the entries $(-m), (-n),(+)$ associated to the corresponding move
or strip in the cube \eqref{DiagCube}.  The commutation of the three cubes implies
that \eqref{E:csvec} is satisfied.  We will write $\bar 1$ to denote a
negatively
modified column.

$$
{\rm case~(a)} \qquad \qquad \qquad
\begin{diagram}
\node[2]{\cdot}  \arrow{sw,t}{\mbox{\tiny{0,0,0}}}
\arrow[2]{e,t}{\mbox{\tiny{0,0,0}}}
\node[2]{\cdot} \arrow{sw,t,3}{\mbox{\tiny{0,0,0}}} \\
\node{\cdot} \arrow[2]{e,t,3}{\mbox{\tiny{0,0,0}}}
\node[2]{\cdot}\\
\node[2]{\cdot} \arrow[2]{n,l,1}{\mbox{\tiny{0,0,1}}}
\arrow[2]{e,b,1}{\mbox{\tiny{0,$\bar 1$,1}}}
\arrow{sw,b}{\mbox{\tiny{$\bar 1$,0,1}}}
\node[2]{\cdot} \arrow{sw,b}{\mbox{\tiny{$\bar 1$,1,0}}}
\arrow[2]{n,r}{\mbox{\tiny{0,1,0}}} \\
\node{\cdot} \arrow[2]{e,b}{\mbox{\tiny{0,0,0}}}
\arrow[2]{n,l}{\mbox{\tiny{1,0,0}}}
\node[2]{\cdot} \arrow[2]{n,r,3}{\mbox{\tiny{1,0,0}}}
\end{diagram}
$$

$$
{\rm case~(b)} \qquad \qquad \qquad
\begin{diagram}
\node[2]{\cdot}  \arrow{sw,t}{\mbox{\tiny{$\bar 1$,0,1}}}
\arrow[2]{e,t}{\mbox{\tiny{0,$\bar 1$,1}}}
\node[2]{\cdot} \arrow{sw,t,3}{\mbox{\tiny{$\bar 1$,1,0}}} \\
\node{\cdot} \arrow[2]{e,t,3}{\mbox{\tiny{0,0,0}}}
\node[2]{\cdot}\\
\node[2]{\cdot} \arrow[2]{n,l,1}{\mbox{\tiny{1,0,0}}}
\arrow[2]{e,b,1}{\mbox{\tiny{0,$\bar 1$,1}}}
\arrow{sw,b}{\mbox{\tiny{0,0,0}}}
\node[2]{\cdot} \arrow{sw,b}{\mbox{\tiny{0,0,0}}}
\arrow[2]{n,r}{\mbox{\tiny{1,0,0}}} \\
\node{\cdot} \arrow[2]{e,b}{\mbox{\tiny{0,$\bar 1$,1}}}
\arrow[2]{n,l}{\mbox{\tiny{0,0,1}}}
\node[2]{\cdot} \arrow[2]{n,r,3}{\mbox{\tiny{0,1,0}}}
\end{diagram}
$$

$$
{\rm case~(c)} \qquad \qquad \qquad
\begin{diagram}
\node[2]{\cdot}  \arrow{sw,t}{\mbox{\tiny{0,0,0}}}
\arrow[2]{e,t}{\mbox{\tiny{0,$\bar 1$,1}}}
\node[2]{\cdot} \arrow{sw,t,3}{\mbox{\tiny{0,0,0}}} \\
\node{\cdot} \arrow[2]{e,t,3}{\mbox{\tiny{0,$\bar 1$,1}}}
\node[2]{\cdot}\\
\node[2]{\cdot} \arrow[2]{n,l,1}{\mbox{\tiny{0,1,0}}}
\arrow[2]{e,b,1}{\mbox{\tiny{0,0,0}}}
\arrow{sw,b}{\mbox{\tiny{0,0,0}}}
\node[2]{\cdot} \arrow{sw,b}{\mbox{\tiny{0,0,0}}}
\arrow[2]{n,r}{\mbox{\tiny{0,0,1}}} \\
\node{\cdot} \arrow[2]{e,b}{\mbox{\tiny{0,0,0}}}
\arrow[2]{n,l}{\mbox{\tiny{0,1,0}}}
\node[2]{\cdot} \arrow[2]{n,r,3}{\mbox{\tiny{0,0,1}}}
\end{diagram}
$$

As an example of how to read these tables: if we look at a modified
column $c$ of $S$ of type (a), there is a string $s \in m$ and a
string $t \in n$ which end on the same column.  Suppose the
negatively modified column of $s$ is $c'$ and that of $t$ is $c''$.
Then reading the edge corresponding to $\tilde m$ in the cube in
case (a), we get that the changes in $\cs$ across $\tm$ in columns
$c'$, $c''$, and $c$, are $-1$, $1$, and $0$ respectively.

\subsubsection{$m$ and $n$ are matched above, with $m$ continuing below $n$}
By Lemma \ref{L:introw}, $n$ has greater rank than $m$.  The
rightmost positively modified column of $n$ agrees with that of the
rightmost negatively modified column of $\tilde m$. Therefore, if
$\push(S_n, \tilde m)$ involves interference due to cells of $S\cap
S_n$, then one can deduce that all positively modified columns of
$n$ are positively modified columns of $S$.  But this implies that
$n'= \emptyset$ in the pushout of $(S,n)$, which (without the
existence of a modified column of $S$ which causes interference with
$m$ and $n$) leads to an empty move $N$ which we assume is not the
case. We conclude that there are three interesting types of modified
columns of $S$: (a) those which are positively modified columns of
$n$ but not $m$, (b) those which are positively modified columns of
$m$ but not $n$, and (c) those which are immediately to the right of
negatively modified columns of both $m$ and $n$ and cause
interference (with both $m$ and $n$). Note that case (b) is
especially interesting: interference always occurs when calculating
$\push(S_m, \tilde n)$. We list vectors in the indices: ($-$)
negatively modified columns of $n \cup n_\comp$, ($+n$) positively
modified columns of $n \cup n_\comp$, and ($+m$) positively modified
columns of $m \cup m_\comp$.  For each of the cases (a), (b) and
(c),  we draw a cube whose edges give the entries $(-), (+n),(+m)$
associated to the corresponding move or strip in the cube
\eqref{DiagCube}.

$$
{\rm case~(a)} \qquad \qquad \qquad
\begin{diagram}
\node[2]{\cdot}  \arrow{sw,t}{\mbox{\tiny{0,0,0}}}
\arrow[2]{e,t}{\mbox{\tiny{0,0,0}}}
\node[2]{\cdot} \arrow{sw,t,3}{\mbox{\tiny{0,0,0}}} \\
\node{\cdot} \arrow[2]{e,t,3}{\mbox{\tiny{0,0,0}}}
\node[2]{\cdot}\\
\node[2]{\cdot} \arrow[2]{n,l,1}{\mbox{\tiny{0,1,0}}}
\arrow[2]{e,b,1}{\mbox{\tiny{$\bar 1$,1,0}}}
\arrow{sw,b}{\mbox{\tiny{0,0,0}}}
\node[2]{\cdot} \arrow{sw,b}{\mbox{\tiny{0,0,0}}}
\arrow[2]{n,r}{\mbox{\tiny{1,0,0}}} \\
\node{\cdot} \arrow[2]{e,b}{\mbox{\tiny{$\bar 1$,1,0}}}
\arrow[2]{n,l}{\mbox{\tiny{0,1,0}}}
\node[2]{\cdot} \arrow[2]{n,r,3}{\mbox{\tiny{1,0,0}}}
\end{diagram}
$$

$$
{\rm case~(b)} \qquad \qquad \qquad
\begin{diagram}
\node[2]{\cdot}  \arrow{sw,t}{\mbox{\tiny{0,0,0}}}
\arrow[2]{e,t}{\mbox{\tiny{$\bar 1$,1,0}}}
\node[2]{\cdot} \arrow{sw,t,3}{\mbox{\tiny{0,0,0}}} \\
\node{\cdot} \arrow[2]{e,t,3}{\mbox{\tiny{$\bar 1$,1,0}}}
\node[2]{\cdot}\\
\node[2]{\cdot} \arrow[2]{n,l,1}{\mbox{\tiny{0,0,1}}}
\arrow[2]{e,b,1}{\mbox{\tiny{$\bar 1$,1,0}}}
\arrow{sw,b}{\mbox{\tiny{$\bar 1$,0,1}}}
\node[2]{\cdot} \arrow{sw,b}{\mbox{\tiny{0,$\bar 1$,1}}}
\arrow[2]{n,r}{\mbox{\tiny{0,0,1}}} \\
\node{\cdot} \arrow[2]{e,b}{\mbox{\tiny{0,0,0}}}
\arrow[2]{n,l}{\mbox{\tiny{1,0,0}}}
\node[2]{\cdot} \arrow[2]{n,r,3}{\mbox{\tiny{0,1,0}}}
\end{diagram}
$$

$$
{\rm case~(c)} \qquad \qquad \qquad
\begin{diagram}
\node[2]{\cdot}  \arrow{sw,t}{\mbox{\tiny{$\bar 1$,0,1}}}
\arrow[2]{e,t}{\mbox{\tiny{$\bar 1$,1,0}}}
\node[2]{\cdot} \arrow{sw,t,3}{\mbox{\tiny{0,$\bar 1$,1}}} \\
\node{\cdot} \arrow[2]{e,t,3}{\mbox{\tiny{0,0,0}}}
\node[2]{\cdot}\\
\node[2]{\cdot} \arrow[2]{n,l,1}{\mbox{\tiny{1,0,0}}}
\arrow[2]{e,b,1}{\mbox{\tiny{0,0,0}}}
\arrow{sw,b}{\mbox{\tiny{0,0,0}}}
\node[2]{\cdot} \arrow{sw,b}{\mbox{\tiny{0,0,0}}}
\arrow[2]{n,r}{\mbox{\tiny{0,1,0}}} \\
\node{\cdot} \arrow[2]{e,b}{\mbox{\tiny{0,0,0}}}
\arrow[2]{n,l}{\mbox{\tiny{0,0,1}}}
\node[2]{\cdot} \arrow[2]{n,r,3}{\mbox{\tiny{0,0,1}}}
\end{diagram}
$$

\subsubsection{$(m,n)$ is interfering and upper-perfectible with $m$ above $n$}
Then $(M,N)$ is interfering and upper-perfectible with $M$ above
$N$. The set of cells $m \cup \tilde n = n \cup \tilde m$ is a
horizontal strip, so there are no unexpected coincidences of
modified columns. There are three interesting types of modified
columns of $S$: (a) those which are positively modified columns of
$n$, (b) those which are positively modified columns of $m$, and (c)
those which are immediately to the right of negatively modified
columns of $m$, and cause interference. Note that modified columns
of $S$ are not negatively modified columns of $mn_\per$ by
maximality of $S$. Also note that all the modified columns of $S$ in
case (b) cause interference with $n$.

The edges of the cubes give vectors in the indices: ($-m$) negatively modified columns
of $m \cup mn_\per \cup m_\comp$, ($-n+m$) negatively modified
columns of $n$ or positively modified columns of $m \cup m_\comp$,
and ($+n$) positively modified columns of $n \cup n_\comp$.

$$
{\rm case~(a)} \qquad \qquad \qquad
\begin{diagram}
\node[2]{\cdot}  \arrow{sw,t}{\mbox{\tiny{0,0,0}}}
\arrow[2]{e,t}{\mbox{\tiny{0,0,0}}}
\node[2]{\cdot} \arrow{sw,t,3}{\mbox{\tiny{0,0,0}}} \\
\node{\cdot} \arrow[2]{e,t,3}{\mbox{\tiny{0,0,0}}}
\node[2]{\cdot}\\
\node[2]{\cdot} \arrow[2]{n,l,1}{\mbox{\tiny{0,0,1}}}
\arrow[2]{e,b,1}{\mbox{\tiny{0,$\bar 1$,1}}}
\arrow{sw,b}{\mbox{\tiny{0,0,0}}}
\node[2]{\cdot} \arrow{sw,b}{\mbox{\tiny{$\bar 1$,1,0}}}
\arrow[2]{n,r}{\mbox{\tiny{0,1,0}}} \\
\node{\cdot} \arrow[2]{e,b}{\mbox{\tiny{$\bar 1$,0,1}}}
\arrow[2]{n,l}{\mbox{\tiny{0,0,1}}}
\node[2]{\cdot} \arrow[2]{n,r,3}{\mbox{\tiny{1,0,0}}}
\end{diagram}
$$

$$
{\rm case~(b)} \qquad \qquad \qquad
\begin{diagram}
\node[2]{\cdot}  \arrow{sw,t}{\mbox{\tiny{0,0,0}}}
\arrow[2]{e,t}{\mbox{\tiny{0,$\bar 1$,1}}}
\node[2]{\cdot} \arrow{sw,t,3}{\mbox{\tiny{$\bar 1$,1,0}}} \\
\node{\cdot} \arrow[2]{e,t,3}{\mbox{\tiny{$\bar 1$,0,1}}}
\node[2]{\cdot}\\
\node[2]{\cdot} \arrow[2]{n,l,1}{\mbox{\tiny{0,1,0}}}
\arrow[2]{e,b,1}{\mbox{\tiny{0,0,0}}}
\arrow{sw,b}{\mbox{\tiny{$\bar 1$,1,0}}}
\node[2]{\cdot} \arrow{sw,b}{\mbox{\tiny{$\bar 1$,1,0}}}
\arrow[2]{n,r}{\mbox{\tiny{0,0,1}}} \\
\node{\cdot} \arrow[2]{e,b}{\mbox{\tiny{0,0,0}}}
\arrow[2]{n,l}{\mbox{\tiny{1,0,0}}}
\node[2]{\cdot} \arrow[2]{n,r,3}{\mbox{\tiny{0,0,1}}}
\end{diagram}
$$

$$
{\rm case~(c)} \qquad \qquad \qquad
\begin{diagram}
\node[2]{\cdot}  \arrow{sw,t}{\mbox{\tiny{$\bar 1$,1,0}}}
\arrow[2]{e,t}{\mbox{\tiny{0,0,0}}}
\node[2]{\cdot} \arrow{sw,t,3}{\mbox{\tiny{$\bar 1$,1,0}}} \\
\node{\cdot} \arrow[2]{e,t,3}{\mbox{\tiny{0,0,0}}}
\node[2]{\cdot}\\
\node[2]{\cdot} \arrow[2]{n,l,1}{\mbox{\tiny{1,0,0}}}
\arrow[2]{e,b,1}{\mbox{\tiny{0,0,0}}}
\arrow{sw,b}{\mbox{\tiny{0,0,0}}}
\node[2]{\cdot} \arrow{sw,b}{\mbox{\tiny{0,0,0}}}
\arrow[2]{n,r}{\mbox{\tiny{1,0,0}}} \\
\node{\cdot} \arrow[2]{e,b}{\mbox{\tiny{0,0,0}}}
\arrow[2]{n,l}{\mbox{\tiny{0,1,0}}}
\node[2]{\cdot} \arrow[2]{n,r,3}{\mbox{\tiny{0,1,0}}}
\end{diagram}
$$

\subsubsection{$(m,n)$ is interfering and lower-perfectible with $m$ above $n$}
In this case $(M,N)$ is interfering and lower-perfectible with $M$
above $N$. We first observe that the positively modified columns of
$mn_\per$ are not modified columns of $S$, for otherwise all the
positively modified columns of $n$ would be modified columns of $S$
(and thus $N=\emptyset$).
 There are three interesting types of modified columns of $S$:
(a) those which are positively modified columns of $n$, (b)
those which are positively modified columns of $m$,
and (c) those which are immediately to the right of
negatively modified columns of $m$, and cause interference.  Note
that if $m_\comp$ is non-empty, the leftmost positively modified
column of it will always interfere with the rightmost negatively
modified column of $mn_\per \subset \tilde n$ in the calculation of
$\push(S_m,\tilde n)$.

The edges of the cubes give vectors in the indices: ($-m$) negatively modified columns
of $m \cup m_\comp$, ($-n+m$) negatively modified columns of $n \cup
mn_\per$ or positively modified columns of $m \cup m_\comp$, and
($+n$) positively modified columns of $n \cup mn_\per$ (and also
positively modified columns of the perfection arising from
$\push(S_m,\tilde n)$).

$$
\begin{diagram}
{\rm case~(a)} \qquad \qquad \qquad
\node[2]{\cdot}  \arrow{sw,t}{\mbox{\tiny{0,0,0}}}
\arrow[2]{e,t}{\mbox{\tiny{0,0,0}}}
\node[2]{\cdot} \arrow{sw,t,3}{\mbox{\tiny{0,0,0}}} \\
\node{\cdot} \arrow[2]{e,t,3}{\mbox{\tiny{0,0,0}}}
\node[2]{\cdot}\\
\node[2]{\cdot} \arrow[2]{n,l,1}{\mbox{\tiny{0,0,1}}}
\arrow[2]{e,b,1}{\mbox{\tiny{0,$\bar 1$,1}}}
\arrow{sw,b}{\mbox{\tiny{0,0,0}}}
\node[2]{\cdot} \arrow{sw,b}{\mbox{\tiny{0,0,0}}}
\arrow[2]{n,r}{\mbox{\tiny{0,1,0}}} \\
\node{\cdot} \arrow[2]{e,b}{\mbox{\tiny{0,$\bar 1$,1}}}
\arrow[2]{n,l}{\mbox{\tiny{0,0,1}}}
\node[2]{\cdot} \arrow[2]{n,r,3}{\mbox{\tiny{0,1,0}}}
\end{diagram}
$$

$$
{\rm case~(b)} \qquad \qquad \qquad
\begin{diagram}
\node[2]{\cdot}  \arrow{sw,t}{\mbox{\tiny{0,0,0}}}
\arrow[2]{e,t}{\mbox{\tiny{0,$\bar 1$,1}}}
\node[2]{\cdot} \arrow{sw,t,3}{\mbox{\tiny{0,0,0}}} \\
\node{\cdot} \arrow[2]{e,t,3}{\mbox{\tiny{0,$\bar 1$,1}}}
\node[2]{\cdot}\\
\node[2]{\cdot} \arrow[2]{n,l,1}{\mbox{\tiny{0,1,0}}}
\arrow[2]{e,b,1}{\mbox{\tiny{0,0,0}}}
\arrow{sw,b}{\mbox{\tiny{$\bar 1$,1,0}}}
\node[2]{\cdot} \arrow{sw,b}{\mbox{\tiny{$\bar 1$,0,1}}}
\arrow[2]{n,r}{\mbox{\tiny{0,0,1}}} \\
\node{\cdot} \arrow[2]{e,b}{\mbox{\tiny{0,$\bar 1$,1}}}
\arrow[2]{n,l}{\mbox{\tiny{1,0,0}}}
\node[2]{\cdot} \arrow[2]{n,r,3}{\mbox{\tiny{1,0,0}}}
\end{diagram}
$$

$$
{\rm case~(c)} \qquad \qquad \qquad
\begin{diagram}
\node[2]{\cdot}  \arrow{sw,t}{\mbox{\tiny{$\bar 1$,1,0}}}
\arrow[2]{e,t}{\mbox{\tiny{0,0,0}}}
\node[2]{\cdot} \arrow{sw,t,3}{\mbox{\tiny{$\bar 1$,0,1}}} \\
\node{\cdot} \arrow[2]{e,t,3}{\mbox{\tiny{0,$\bar 1$,1}}}
\node[2]{\cdot}\\
\node[2]{\cdot} \arrow[2]{n,l,1}{\mbox{\tiny{1,0,0}}}
\arrow[2]{e,b,1}{\mbox{\tiny{0,0,0}}}
\arrow{sw,b}{\mbox{\tiny{0,0,0}}}
\node[2]{\cdot} \arrow{sw,b}{\mbox{\tiny{0,0,0}}}
\arrow[2]{n,r}{\mbox{\tiny{1,0,0}}} \\
\node{\cdot} \arrow[2]{e,b}{\mbox{\tiny{0,0,0}}}
\arrow[2]{n,l}{\mbox{\tiny{0,1,0}}}
\node[2]{\cdot} \arrow[2]{n,r,3}{\mbox{\tiny{0,0,1}}}
\end{diagram}
$$

\bigskip

{\bf  II) $m$ is a row move and $n$ is a column move}.
We need to show that $\ecs(S,m)=\cs(\omega)=\ecs(S_n,\tilde m)$ and
$\ers(S,n)=\rs(\omega)=\ers(S_m,\tilde n)$.  We will prove the first equality,
and the second one will follow from the same principles.  We need to
show that
$$
\per_m(\cs(\lambda)+\Delta_{\cs}(S)+\Delta_{\cs}(m'))=
\per_{\tilde m}(\cs(\kappa)+\Delta_{\cs}(S_n)+\Delta_{\cs}(\tilde m')) \, .
$$
We have that $\cs(\lambda)=\cs(\kappa)$ ($n$ is a column move) and
$\Delta_{\cs}(S)=\Delta_{\cs}(S_n)$ ($N$ is a column move and
$\cs(\lambda)=\cs(\kappa)$).
Since $m$ and $\tilde m$ are row moves affecting the same columns (by
definition of mixed equivalences) and $S$ and $S_n$ are strips with the same
modified columns, we have that
$\Delta_{\cs}(m')=\Delta_{\cs}(\tilde m')$.   Therefore
$$
\cs(\lambda)+\Delta_{\cs}(S)+\Delta_{\cs}(m')=
\cs(\kappa)+\Delta_{\cs}(S_n)+\Delta_{\cs}(\tilde m')
$$
The perfections of $m$ and
$\tilde m$ are the same (given $\Delta_{\cs}(m')=\Delta_{\cs}(\tilde m')$), and
the equality follows.

\bigskip

{\bf  III) $m$ and $n$ are column moves.}  The proof is basically
the same as when $m$ and $n$ are row moves.

\newpage

\subsection{Commuting cube (degenerate case $M=\emptyset$)}
Suppose that $m,n,\tilde m$ and $\tilde n$ are non-empty and that
$M=\emptyset$. Then one can check that we have one of the following
two situations:
\begin{equation} \label{DiagCube2}
\begin{diagram}
\node[2]{\mu}  \arrow{sw,t}{\emptyset} \arrow[2]{e,t}{N}
\node[2]{\rho} \arrow{sw,t,3}{\emptyset} \\
\node{\mu} \arrow[2]{e,t,3}{N}
\node[2]{\rho}\\
\node[2]{\lambda} \arrow[2]{n,l,1}{S} \arrow[2]{e,b,1}{n}
\arrow{sw,b}{m} \node[2]{\kappa} \arrow{sw,b}{\tilde m}
\arrow[2]{n,r}{S_n} \\
\node{\nu} \arrow[2]{e,b}{\tilde n} \arrow[2]{n,l}{S\setminus m}
\node[2]{\theta} \arrow[2]{n,r,3}{\tilde S}
\end{diagram}
\end{equation}
or
\begin{equation} \label{DiagCube4}
\begin{diagram}
\node[2]{\mu}  \arrow{sw,t}{\emptyset} \arrow[4]{e,t}{N}
\node[4]{\rho} \arrow{sw,t,3}{\tilde M} \\
\node{\mu} \arrow[2]{e,t,3}{\tilde N}
\node[2]{\eta} \arrow[2]{e,t,2}{x} \node[2]{\sigma}\\
\node[2]{\lambda} \arrow[2]{n,l,1}{S} \arrow[4]{e,b,2}{n}
\arrow{sw,b}{m} \node[4]{\kappa} \arrow{sw,b}{\tilde m}
\arrow[2]{n,r}{S_n} \\
\node{\nu} \arrow[2]{n,l,2}{S \setminus m} \arrow[2]{e,b}{\tilde n}
\node[2]{\theta} \arrow[2]{n,r,3}{\hat S} \arrow[2]{e,b,2}{\emptyset}
\node[2]{\theta}
 \arrow[2]{n,r,3}{\tilde S}
\end{diagram}
\end{equation}
One obtains the commuting cube \eqref{DiagCube2} except when $(m,n)$
interferes and the perfection $mn_{\per}$ is made of strings that
are translates of those of $m$, in which case one obtains the
commuting cube \eqref{DiagCube4}. We consider the case that $m$ and
$n$ are row moves as the column move case is similar. The
exceptionsl situation can occur in two ways: either $n$ is above $m$
and the lower perfection exists, or $m$ is above $n$ and the upper
perfection exists.

Suppose first that $n$ is above $m$. Let $mn_{\per}=\bar n_I \cup
\bar n_F$ where $\bar n_I$ are the strings of $mn_{\per}$ whose
positively modified columns (resp. rows) are modified columns (resp.
rows) of $S$. Suppose $n=n_I \cup n_F$ where $n_I$ are the strings
of $n$ whose prolongation in $mn_{\per}$ is given by $\bar n_I$.
Suppose $(S,n)$ interferes with pushout perfection given by
$n_\comp$ (if there is no interference then the situation is
simpler). Then $(S_n,\tm)$ also interferes; let $\bar{n}_\comp$ be
the cells which define the pushout perfection. Finally, $(S\setminus
m,\tn)$ also interferes with pushout perfection given by
$n_\comp\cup\bar{n}_\comp$. Then $N=n_I^+ \cup n_F^+\cup n_\comp$,
$\tN=n_F^+\cup \bar{n}_F^+\cup n_\comp \cup \bar{n}_\comp$, $\tilde
M=\bar n_F^+\cup \bar{n}_\comp$, and $x=n_I^+$ are such that $\tilde
M N \equiv x \tilde N$ is an elementary equivalence. The last
vertical face is then such that $x$ is an augmentation move. Note
that $\bar n_I$ may be empty in which case $x$ is empty and we have
an ordinary cube but with $\tM\ne\emptyset$.

Suppose $m$ is above $n$. Let $n = n_I \cup n_F$ where $n_I$
consists of the strings of $n$ whose positively modified columns are
positively modified columns of $S$. Let $mn_\per = \bar{n}_I \cup
\bar{n}_F$ where $\bar{n}_I$ consists of the strings of $mn_\per$
that extend the strings of $n_I$ above. Then $N = n_F^+ \cup
n_\comp$ where $n_\comp$ is defined as in $\push(S,n)$ and $\tN =
\bar{n}_F^+ \cup n_F^+ \cup m^+ \cup n_{\comp}$ since $S \setminus
m$ and $\tilde n$ interfere and its completion is $m^+ \cup
n_{\comp}$. With $\tilde M=\bar n_F \cup m^+$ and $x=\emptyset$ we
are in the situation of the commuting cube \eqref{DiagCube4}.

\subsection{Commuting cube (degenerate case $m=\emptyset$)}
Suppose that $m=\emptyset$ and that $\tilde m n=\tilde n \emptyset$
is an elementary equivalence.  This situation can be seen as a
degenerate case of the $M=\emptyset$ case where we have the
following commuting situation.
\begin{equation} \label{DiagCube6}
\begin{diagram}
\node[2]{\mu}  \arrow{sw,t}{\emptyset} \arrow[4]{e,t}{N}
\node[4]{\rho} \arrow{sw,t,3}{\tilde M} \\
\node{\mu} \arrow[2]{e,t,3}{\tilde N}
\node[2]{\eta} \arrow[2]{e,t,2}{x} \node[2]{\sigma}\\
\node[2]{\lambda} \arrow[2]{n,l,1}{S} \arrow[4]{e,b,2}{n}
\arrow{sw,b}{\emptyset} \node[4]{\kappa} \arrow{sw,b}{\tilde m}
\arrow[2]{n,r}{S_n} \\
\node{\lambda} \arrow[2]{n,l,2}{S} \arrow[2]{e,b}{\tilde n}
\node[2]{\theta} \arrow[2]{n,r,3}{\hat S} \arrow[2]{e,b,2}{\emptyset}
\node[2]{\theta}
 \arrow[2]{n,r,3}{\tilde S}
\end{diagram}
\end{equation}
where vertical faces are either pushouts or augmentations moves, and where
$\tilde M N \equiv x \tilde N$ is
an elementary equivalence.

The case where $x$ is non-empty will occur when $n=n_I \cup n_F$ and $\tilde
m=\bar n_I \cup \bar n_F$ is such that the positively modified columns
(resp. rows) of
$\bar n_I$  are positively modified columns (resp. rows) of $S$.
Then $N=n_I^+ \cup n_F^+$, $\tilde M=\bar n_F^+$,
$\tilde N=n_F^+ \cup \bar n_F^+$ and $x=n_I^+$ are such that
$\tilde M N \equiv x \tilde N$ is
an elementary equivalence.
The last vertical face is then such that $x$ is an augmentation move.

\subsection{Commuting cube (degenerate case $\tilde m=\emptyset$)} This case is similar
to the $m=\emptyset$ case.
 Another way to see this case is to consider
that if we have $\tilde n m \equiv \emptyset n$ then we can use
$\emptyset n \equiv n \emptyset$ (which leads trivially to a commuting
cube) to fall back on the already treated $n=\emptyset$ case
(which is equal to $m=\emptyset$ by symmetry).

\section{Pullbacks}
\label{sec:pullbacks}

Given a strip $S=\mu/\lambda$ and a class of paths
$[{\mathbf {p}}]$ in the
$k$-shape poset from $\lambda$ to $\nu$,
the pushout algorithm
gives rise to
a maximal strip $\tilde S=\eta/\nu$ and a unique class of paths $[{\mathbf
  {q}}]$
in the $k$-shape poset from $\mu$ to some $\eta$:
\begin{equation}
\begin{diagram}
\node{\la} \arrow{s,l}{S}
 \arrow{e,t}{[{\mathbf {p}}]} \node{\nu} \arrow{s,r}{\tilde S}\\
\node{\mu} \arrow{e,t}{[{\mathbf {q}}]} \node{\eta}
\end{diagram}
\end{equation}
Our goal is to show that this process is invertible when the strip
$S$ is reverse maximal. That is, given the maximal strip $\tilde
S=\eta/\nu$ and the class of paths $[{\mathbf
  {q}}]$
in the $k$-shape poset from $\mu$ to $\eta$,
we will describe a {\it pullback algorithm}
that gives back the maximal strip $S=\mu/\lambda$ and the class of paths
$[{\mathbf {p}}]$ in the
$k$-shape poset from $\lambda$ to $\nu$.
To indicate that we are in the pullback situation, the direction of the
arrows will be reversed
\begin{equation}
\begin{diagram}
\node{\la}
 \node{\nu}   \arrow{w,t}{[{\mathbf {p}}]} \\
\node{\mu} \arrow{n,l}{S}  \node{\eta} \arrow{w,t}{[{\mathbf {q}}]}
\arrow{n,r}{\tilde S}
\end{diagram}
\end{equation}

The situation in the reverse case is quite similar to the situation we have
encountered so far (which we will refer to as the {\it forward} case).
We will establish a dictionary that allows to translate between the
forward and reverse cases.  Then only the main results will be stated.

\section{Equivalences in the reverse case}
If $m = \mu/\lambda$ is a move from $\lambda$ then we say that $m$
is a {\it move to}  $\mu$. We write $m\#\mu = \lambda$. We will use
the same notation for the string decomposition $m=s_1 \cup \cdots
\cup s_{\ell}$ of the forward case also in the reverse situation.
That is, string $s_1$ is the leftmost and string $s_{\ell}$ is the
rightmost. The following dictionary translates between the forward
and reverse situations:
\begin{align*}
&\la \quad &\longleftrightarrow \quad &\mu \\
&\text{move $m$ from $\la$} \quad &\longleftrightarrow \quad &\text{move $m$ to $\mu$} \\
&\mu = m * \lambda \quad &\longleftrightarrow \quad &\la = m \# \mu \\
&\text{leftmost (rightmost) string of $m$}  \quad
    &\longleftrightarrow \quad &\text{rightmost (leftmost) string of $m$} \\
&\text{continues below (resp. above)} \quad
&\longleftrightarrow \quad  &\text{continues below (resp. above)} \\
&\text{column  to the right (resp. left)} \quad
    &\longleftrightarrow \quad &\text{column  to the left (resp. right)} \\
&\text{row above (resp. below)}  \quad
    &\longleftrightarrow \quad  &\text{row below (resp. above)} \\
&\text{shifting to the right (resp. up)} \quad
    &\longleftrightarrow \quad &\text{shifting to the left (resp. down)}
\end{align*}

\begin{notation} \label{N:arrowrev}
For two sets of cells $X$ and $Y$, let $\leftarrow_X(Y)$ (resp.
$\downarrow_X(Y)$) denote the result of shifting to the left (resp.
down), each row (resp. column) of $Y$ by the number of cells of $X$
in that row (resp. column).
\end{notation}

\subsection{Reverse mixed elementary equivalence}
Let $\tilde m$ and $\tilde M$ be respectively a row move and a
column move to $\gamma$. The contiguity of two moves is defined as
in the forward case (that is, whether two disjoint strings can be
joined to form one string).

\begin{definition}
A \defit{reverse mixed elementary equivalence} is a relation of the form
\eqref{E:elemequiv} satisfying \eqref{E:commudiagram} arising from a
row move $\tilde m$ and column move $\tilde M$ to some $\gamma\in\Ksh^k$, which has
one of the following forms:
\begin{enumerate}
\item $\tilde m$ and $\tilde M$ do not intersect and
no cell of $\tilde m$ is contiguous to a cell of $\tilde M$. Then $m = \tilde m$ and
$M = \tilde M$.
\item $\tilde m$ and $\tilde M$ intersect and
\begin{itemize}
\item[(a)] $\tilde m$ continues above and below $\tilde M$. Then
\begin{equation*}
 m=\leftarrow_{\tilde M}(\tilde m) \qquad\text{and}\qquad
M=\leftarrow_{\tilde m}(\tilde M)
\end{equation*}
\item[(b)] $\tilde M$ continues above and below $\tilde m$. Then
\begin{equation*}
m= \downarrow_{\tilde M}(\tilde m)\qquad\text{and}\qquad M=
\downarrow_{\tilde m}(\tilde M).
\end{equation*}
\end{itemize}
\end{enumerate}
\end{definition}

\begin{proposition} If $(\tm, \tM)$ defines a reverse
mixed elementary equivalence,
then the prescribed sets of cells $m$ and $M$ are reverse moves such
that $m \cup \tM= M\cup\tm$ (that is, there is a shape $\lambda= \tilde M \# (m
\# \gamma)$ such
that the diagram \eqref{E:commudiagram} commutes), and
\eqref{E:charge} holds.
\end{proposition}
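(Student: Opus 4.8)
The plan is to reduce the reverse mixed elementary equivalence to the already-proven forward case (Proposition~\ref{P:relmix}) via the dictionary relating the forward and reverse situations. The key observation is that the statement to prove is, verbatim, the reverse-direction analogue of Proposition~\ref{P:relmix}: we are given a row move $\tm$ and a column move $\tM$ \emph{to} a common $k$-shape $\gamma$, and we must produce reverse moves $m$, $M$ (a row move and a column move respectively) \emph{to} $\gamma$ from a common $\lambda$ so that the square \eqref{E:commudiagram} commutes and the charges balance. Since every edge in this square is a genuine move between $k$-shapes, the reverse square is literally the same commuting square as a forward one: reading the diamond \eqref{E:commudiagram} from $\gamma$ upward (i.e.\ taking $\gamma$ as the top vertex and $\lambda$ as the bottom vertex) turns ``moves to $\gamma$'' into ``moves from $\gamma$'' only after we also reflect the geometry. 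So the first step I would take is to make precise that the data of a reverse mixed elementary equivalence on $(\tm,\tM)$ at $\gamma$ is carried, under the transpose-free reflection described by the dictionary (``column to the right'' $\leftrightarrow$ ``column to the left'', ``row above'' $\leftrightarrow$ ``row below'', ``shift right'' $\leftrightarrow$ ``shift left'', ``shift up'' $\leftrightarrow$ ``shift down''), to the data of a forward mixed elementary equivalence. Concretely, under the point reflection $(i,j)\mapsto(-i,-j)$ of $\Z^2$, a $k$-shape's $180^\circ$-rotated complement within a large rectangle is again (the relevant local picture of) a $k$-shape, row-type strings go to row-type strings, column-type strings to column-type strings, ``continues above/below'' is preserved, and a row move to $\gamma$ becomes a row move from the reflected shape. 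The three cases in the reverse definition—(1) non-intersecting non-contiguous, (2)(a) $\tm$ continues above and below $\tM$, (2)(b) $\tM$ continues above and below $\tm$—map exactly onto cases (1), (2)(a), (2)(b) of Definition~\ref{D:rowcolcommute}, with $\leftarrow_{X}$ and $\downarrow_{X}$ becoming $\rightarrow_{X}$ and $\uparrow_{X}$.

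With that correspondence set up, the second step is a direct appeal: by Proposition~\ref{P:relmix}, the reflected forward pair defines a diamond equivalence, meaning the reflected prescribed sets of cells are forward moves (a row move and a column move) filling in a commuting square \eqref{E:commudiagram} over which the charge is balanced. Reflecting back, $m$ is a row move to $\gamma$ and $M$ a column move to $\gamma$, both issuing from a common $\lambda := \tM\#(m\#\gamma)$, with $m\cup\tM = M\cup\tm$ as sets of cells, which is exactly the commutativity assertion. The charge condition \eqref{E:charge} is automatic here exactly as noted after Definition~\ref{D:colcolcommute}: a row move has charge $0$ and a column move of rank $r$ and length $\ell$ has charge $r\ell$, which equals its cell count; since $m$ and $\tm$ are row moves (charge $0$) while $M$ and $\tM$ are column moves whose cell counts are preserved by reflection and by the identity $m\cup\tM = M\cup\tm$, equation \eqref{E:charge} reads $0 + |\tM| = |M| + 0$, and $|\tM| = |M|$ because $\tM$ and $M$ are translates (in cases (2)(a),(b)) or equal (in case (1)). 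So the charge step is a one-line check rather than a computation.

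The step I expect to be the genuine obstacle is the first one: verifying cleanly that the $180^\circ$ reflection really does convert a reverse mixed elementary equivalence into a forward one, uniformly across the three cases, without having to re-derive the combinatorial lemmas (Property~\ref{P:stringtypes}, Definition~\ref{D:stringtypes}, Lemma~\ref{L:moves_relative_position}, etc.) in mirrored form. The subtlety is that $k$-shapes are \emph{not} reflection-symmetric as global objects—only the $k$-boundary, and only locally near the move, behaves well—so I would phrase the reflection purely in terms of the local diagrammatic data of moves and strings (the $\bullet$/$\circ$ diagrams of Figure~\ref{F:stringdiags}, contiguity $|\diag(b)-\diag(b')|\in\{k,k+1\}$, and the ``continues above/below'' relations), all of which are manifestly invariant under $(i,j)\mapsto(-i,-j)$ combined with the swap of ``$k$'' and ``$k+1$'' roles of the two contiguity distances. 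Once that local invariance is stated as a lemma—essentially a formalization of the dictionary table—the proposition follows formally. An acceptable alternative, if the reflection bookkeeping proves awkward to state rigorously, is to simply repeat the proof of Proposition~\ref{P:relmix} line by line with the dictionary substitutions applied (as the excerpt explicitly invites: ``only the main results will be stated,'' with the dictionary doing the translation); this is longer but avoids any meta-argument about reflections.
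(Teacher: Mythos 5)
Your primary approach — a $180^\circ$ point reflection of $\Z^2$ — has a genuine gap. The $180^\circ$ rotation of a skew shape reverses its row-reading and column-reading sequences; since a $k$-shape $\gamma$ has $\rs(\gamma)$ and $\cs(\gamma)$ \emph{partitions}, the rotated copy of $\bdy\gamma$ has row-reading and column-reading equal to the reversals of those partitions, which are weakly \emph{increasing} — so it is not the $k$-boundary of any $k$-shape. There is thus no geometric involution on $\Ksh^k_N$ behind the dictionary through which your reduction could factor (the transpose is the only such involution used in the paper, and it swaps row and column moves, which is the wrong symmetry here). You can also see the dictionary is not spatial from two of its entries that you did not address: ``$k\leftrightarrow k+1$'' and ``$\Bot_c(\gamma)\leftrightarrow$ cell below $\Bot_c(\gamma)$'' are not images of any map of $\Z^2$; they record that the dictionary is a purely combinatorial relabeling which swaps the roles of the source and target of a move (and hence which cells of $\bdy\la\setminus\bdy\mu$ ``reappear'' as one traverses the move in the opposite direction). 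Restricting to ``the local picture'' does not rescue the argument either: condition \eqref{I:moveend} of Definition~\ref{D:defmove} is a global $k$-shape condition, and it must be verified for the produced $\la=\tM\#(m\#\gamma)$.

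Your fallback, however, is exactly the paper's intended argument: replicate the proof of Proposition~\ref{P:relmix} word for word under the dictionary substitutions, checking at each step that the forward lemmas being invoked (reasonableness, Lemma~\ref{L:stringint}, Lemma~\ref{L:nextint}, Remark~\ref{R:rowshift}, Property~\ref{P:cond5}, etc.) have valid reverse analogues — which the paper states but does not reprove, explicitly leaving this translation to the reader. Your charge argument is also correct as written: $m$ and $\tm$ are row moves of charge $0$; $M$ and $\tM$ are column moves whose charges equal their cell counts, and these are equal in each of the three cases since $M$ is either equal to $\tM$ or a translate of it. So promote the fallback to the main argument, drop the reflection framing, and the proof is sound.
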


Mixed elementary equivalences and reverse mixed elementary equivalences are inverse operations in the following sense.

\begin{proposition} $~$
\begin{enumerate}
\item
Suppose $(m,M)$ is a (forward) mixed elementary equivalence.  Then $(\tm,\tM)$ is a reverse mixed elementary equivalence (determining $(m,M)$).
\item
Suppose $(\tm,\tM)$ is a reverse mixed elementary equivalence.  Then $(m,M)$ is a mixed elementary equivalence (determining $(\tm,\tM)$).
\end{enumerate}
Furthermore in both cases, the type -- (1), (2)(a), (2)(b) -- of the
equivalence is preserved (see Definition~\ref{D:rowcolcommute}).
\end{proposition}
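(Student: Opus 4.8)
The plan is to treat the forward and reverse mixed elementary equivalences attached to a single commuting diamond \eqref{E:commudiagram} as the same combinatorial datum read from the two opposite corners $\la$ and $\gamma$, and then to verify that the shift operators used in the two constructions are mutually inverse. Concretely, given a forward mixed elementary equivalence $(m,M)$ from $\la$, Definition~\ref{D:rowcolcommute} and Proposition~\ref{P:relmix} produce $\tm,\tM$ and $\gamma$ so that $\tm$ (a row move) goes from $\nu$ to $\gamma$ and $\tM$ (a column move) goes from $\mu$ to $\gamma$; I must show $(\tm,\tM)$ satisfies the hypotheses of a reverse mixed elementary equivalence and that running the reverse construction on $(\tm,\tM)$ returns $(m,M)$ (and conversely, which will follow because the dictionary preceding the statement is an involution).

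First I would dispose of the purely formal part. In case (1) one has $\tm=m$ and $\tM=M$ as sets of cells, so the diamond is degenerate: $\gamma/\mu=M$, $\gamma/\nu=m$, and the disjointness and non-contiguity of $\tm,\tM$ at $\gamma$ are literally the same statements as for $m,M$ at $\la$; the reverse case~(1) then returns $m,M$ unchanged. In case (2)(a) forward, $m$ continues above and below $M$ and $\tm=\rightarrow_M(m)$, $\tM=\rightarrow_m(M)$ (case (2)(b) is the transpose with $\uparrow$ in place of $\rightarrow$). The reverse case~(2)(a) applied to $(\tm,\tM)$ outputs $\leftarrow_{\tM}(\tm)$ and $\leftarrow_{\tm}(\tM)$; since shifting a row horizontally does not change how many cells it contains, $\tM=\rightarrow_m(M)$ and $M$ have the same number of cells in each row, and $\tm=\rightarrow_M(m)$ and $m$ have the same number of cells in each row, whence
$$\leftarrow_{\tM}(\tm)=\leftarrow_{M}(\rightarrow_M(m))=m,\qquad \leftarrow_{\tm}(\tM)=\leftarrow_m(\rightarrow_m(M))=M,$$
and the transpose identities $\downarrow_{\tM}(\tm)=m$, $\downarrow_{\tm}(\tM)=M$ handle case (2)(b). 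Since the dictionary preserves the words ``row'', ``column'', ``above'' and ``below'', a forward case (1)/(2)(a)/(2)(b) produces a reverse equivalence of the same type.

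The remaining, geometric step is to match the positional hypotheses: $m,M$ disjoint and non-contiguous at $\la$ iff $\tm,\tM$ disjoint and non-contiguous at $\gamma$, and in the intersecting case ``$m$ continues above (resp.\ below) $M$'' iff ``$\tm$ continues above (resp.\ below) $\tM$'' (and likewise with $m,M$ interchanged). Here I would use that a row move is a horizontal strip and a column move a vertical strip (Property~\ref{P:movestrip}), Lemma~\ref{L:stringint}, and Property~\ref{L:notfinishsame}: in case (2)(a) the common part $m\cap M$ is a nonempty interior segment of each string $s\subset m$ meeting $M$; the operator $\rightarrow_M$ shifts exactly those rows of $m$ containing cells of $M$, while $\rightarrow_m$ moves the cells of $M$ out of the columns of $m$, so that $\tm\cap\tM$ is again a nonempty interior segment of the corresponding strings and $\tm$ continues above and below $\tM$. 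The disjoint non-contiguous case is immediate as no cells are moved. The converse direction (part (2)) then needs no new argument: the dictionary being an involution, a reverse mixed elementary equivalence is the forward one of the same diamond read from the other corner, so part (2) is part (1) applied with the roles of $\la$ and $\gamma$ exchanged.

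I expect the main obstacle to be making the ``interior segment'' bookkeeping of the intersecting case fully precise—in particular confirming that $\tm$ and $\tM$ still intersect, and are positioned as required, after the two shifts, since one must track which rows and columns acquire or lose cells. However, this is exactly the information already extracted in the proof of Proposition~\ref{P:relmix} and the lemmata of Subsection~\ref{SS:introwcolmoves}, so no essentially new difficulty arises and the residual work is routine diagram chasing.
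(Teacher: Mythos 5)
The paper states this proposition without proof, treating it as an immediate consequence of the forward--reverse dictionary set up just before it, so there is no proof in the paper to compare against. Your argument correctly supplies the implicit verification: the observation that the shift operators $\rightarrow_X$ (resp.\ $\uparrow_X$) preserve the per-row (resp.\ per-column) cell counts of their argument, so that $\tM=\rightarrow_m(M)$ and $M$ have identical row multiplicities and hence $\leftarrow_{\tM}(\tm)=\leftarrow_M(\rightarrow_M(m))=m$ (and its transpose for case (2)(b)), is precisely what makes the forward and reverse constructions mutually inverse, and the positional bookkeeping for the intersecting case (that $\tm\cap\tM$ is again a nonempty interior segment of each intersecting string) is, as you note, already contained in the proof of Proposition~\ref{P:relmix} and the lemmata of Subsection~\ref{SS:introwcolmoves}. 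Your reduction of part (2) to part (1) by reading the same diamond from the corner $\gamma$ rather than $\la$ is sound; that is exactly the paper's intent, and filling it in is routine.
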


\subsection{Reverse row elementary equivalence}
Let $\tilde m$ and $\tilde M$ be row moves to $\gamma$.
We say that $(\tm,\tM)$ is interfering if $\tm$ and $\tM$ do not intersect
and $\gamma \setminus(\tm\cup \tM)$ is not
a $k$-shape (or to be more precise $\cs(\gamma \setminus(\tm\cup \tM))$ is not
a partition).
Let $\tm=s_1 \cup
\cdots \cup s_r$ and $\tM=s_1' \cup \cdots
\cup s_{r'}'$ \defit{interfere}.  We immediately have
\begin{lemma} \label{L:interferereverse}

 Suppose $(\tm,\tM)$ is interfering and the top cell of $\tm$
  is above the top cell of $\tM$.  Then
\begin{enumerate}
\item  $c_{s_1',u}=c_{s_r,d}^+$. In particular, $\tm$ and $\tM$ are non-degenerate.
\item Every cell of $m$ is above every cell of $M$.
\item $\cs(\gamma)_{c_{s_r,d}}=\cs(\gamma)_{c_{s_1',u}}+1$.
\end{enumerate}
\end{lemma}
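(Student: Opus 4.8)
This lemma is precisely the transpose-plus-time-reversal analogue of Lemma~\ref{L:interfere}, so the plan is to prove it by translating the proof of Lemma~\ref{L:interfere} through the dictionary set up at the beginning of this section, after first recording the correct mirror-image statement. Concretely, given row moves $\tm$ and $\tM$ \emph{to} $\gamma$, set $\lambda_0 = \tM\#(\tm\#\gamma)$ (the common lower shape, which exists once interference has been analyzed away, but here we only need that $\tm\#\gamma$ and $\tM\#\gamma$ are $k$-shapes). The key point is that ``$\tm$ is a row move to $\gamma$'' means $\gamma/(\tm\#\gamma)$ is a row move in the forward sense, so all of the forward structural results -- Property~\ref{C:rowmovecolsize}, Property~\ref{P:cond5}, Remark~\ref{R:stringchange}, Lemma~\ref{L:distancestrings} -- apply verbatim to $\tm$ and to $\tM$ viewed as forward moves, and the dictionary tells us that ``leftmost/rightmost string'' and ``column to the left/right'' swap. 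The interference hypothesis, $\cs(\gamma\setminus(\tm\cup\tM))$ not a partition, is exactly the condition that the superposition of the two downward (forward) moves from $\lambda_0$ fails to be a $k$-shape, i.e.\ the forward-interference condition for the pair $(\tm,\tM)$ read as forward moves from $\lambda_0$.

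\textbf{Steps.} First I would restate each of (1), (2), (3) in terms of forward moves: (1) says the first positively-modified column of $\tM$ (reading $\tM$ as a reverse move, hence $c_{s_1',u}$ in the reverse indexing) is immediately to the right of the last negatively-modified column of $\tm$; under the dictionary this is the mirror of Lemma~\ref{L:interfere}(1), where ``just before'' becomes ``just after'' because ``column to the right'' $\leftrightarrow$ ``column to the left.'' Second, I would invoke Property~\ref{C:rowmovecolsize} for each of $\tm$, $\tM$: since $\cs(\gamma)$ is a partition but $\cs(\gamma\setminus(\tm\cup\tM))$ is not, the only way a violation can arise is that a column negatively modified by one move (so shortened when passing \emph{from} $\gamma$) sits immediately left or right of a column negatively modified by the other, with the two relevant column-lengths in $\cs(\gamma)$ differing by exactly $1$; this is the standard ``one-off'' analysis, and it yields simultaneously (1), the non-degeneracy (a degenerate move would force length $1$ and a single modified column, incompatible with the configuration just found), and (3), which is the $+1$ bookkeeping statement $\cs(\gamma)_{c_{s_r,d}}=\cs(\gamma)_{c_{s_1',u}}+1$. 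Third, for (2), I would argue as in Lemma~\ref{L:interfere}(2): using that $\tm,\tM$ are translates-of-strings row moves and that the negatively modified column of $\tm$ abuts the positively modified column of $\tM$, the cells of the lower shapes $m=\gamma/(\tm\#\gamma)$ restricted to the relevant diagonals, together with Lemma~\ref{L:distancestrings} applied to each move, force every cell of $m$ to lie strictly above every cell of $M$; the only subtlety is orienting ``above/below,'' which the dictionary preserves (``continues below/above'' $\leftrightarrow$ ``continues below/above''), so no sign flips occur in the vertical direction.

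\textbf{Main obstacle.} The routine calculations (the $\pm1$ column-length comparisons and the hook-length/contiguity estimates) are not the hard part; they are mechanical once the indices are set up. The genuine care required is in \emph{bookkeeping the dictionary correctly}: the reverse convention keeps the labels $s_1,\dots,s_r$ ``leftmost to rightmost'' but reverses which one is ``initial,'' and it reverses the horizontal orientation while preserving the vertical one, so statements like ``$c_{s_1',u}=c_{s_r,d}^+$'' must be checked to be the honest mirror of ``$c_{s_1,d}^-=c_{t_{r'},u}$'' in Lemma~\ref{L:interfere}, rather than an accidental transcription with a wrong superscript. I would therefore spend most of the write-up verifying that the $+$ in part (1) and the $+1$ in part (3) are consistent with Remark~\ref{R:stringchange} read through Notation~\ref{N:arrowrev}, and only then cite the forward proof of Lemma~\ref{L:interfere} to supply the remaining details; the proof itself can then be given as ``this is the reverse analogue of Lemma~\ref{L:interfere}, obtained by applying the dictionary above to its proof,'' followed by the short index check.
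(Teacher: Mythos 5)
Your approach matches the paper's. The paper gives no explicit proof of Lemma~\ref{L:interferereverse}; it is introduced with ``We immediately have'' and followed by a remark that does exactly what you propose, namely translating the forward interference condition from Lemma~\ref{L:interfere} through the dictionary (``$c_{s_1,d}^- = c_{t_{r'},u}$'' and its $+1$ companion become ``$c_{s_r,d}^+ = c_{s_1',u}$'' and ``$\cs(\gamma)_{c_{s_r,d}} = \cs(\gamma)_{c_{s_1',u}}+1$''). Your write-up, especially the ``main obstacle'' paragraph on verifying the indices against Remark~\ref{R:stringchange} and Notation~\ref{N:arrowrev}, is precisely the content the authors compress into that remark.

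One small caution on your second step: the interference configuration is not ``a column negatively modified by one move adjacent to a column negatively modified by the other''; rather (as in the forward Lemma~\ref{L:interfere}(1) and (3)) it is a column on which one move acts \emph{positively} adjacent to a column on which the other acts \emph{negatively}, with the two $\cs$-values in $\gamma$ differing by one. After applying the reverse dictionary, the cell $c_{s_r,d}$ is where $\tm$ shortens $\bdy$ (reverse-negative) and $c_{s_1',u}$ is where $\tM$ lengthens it (reverse-positive), so the sign pattern is mixed, not uniform. This does not change your plan -- the dictionary still gives the statement -- but it is exactly the sort of bookkeeping slip your last paragraph rightly warns against, and in a careful write-up you would want the signs recorded correctly before citing the forward proof.
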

\begin{remark}  Lemma~\ref{L:interferereverse} illustrates well how the
  forward-reverse dictionary is used.
The condition for interference in the
forward case is $c_{s_1,d}^-=c_{s_{r'}',u}$
and $\cs(\lambda)_{c_{s_1,d}}=\cs(\lambda)_{c_{s_{r'}'+1}}+1$ which translates
into
$c_{s_r,d}^+=c_{s_{1}',u}$
and $\cs(\gamma)_{c_{s_r,d}}=\cs(\gamma)_{c_{s_{1}'+1}}+1$ in the reverse
case.
\end{remark}

Suppose $\tm$ is a move of rank $r$ and length $\ell$ and $\tM$ is a
move of rank $r'$ and length $\ell'$, both to $\gamma$. Suppose also
that $(\tm,\tM)$ is interfering and that the top cell of $\tm$ is
above the top cell of $\tM$. A \defit{lower perfection} (resp.
\defit{upper perfection} is a $k$-shape of the
form $\gamma \setminus (\tm \cup \tM \cup m_\per)$ (resp. $\gamma
\setminus (\tm\cup \tM\cup M_\per)$) where $m_\per$ (resp. $M_\per$)
is a $(\gamma \setminus(\tm\cup \tM))$-removable skew shape such
that $\tm\cup m_\per$ (resp. $\tM\cup M_\per$) is a row move to $\tM
\#\la$ (resp. $\tm\#\la$) of rank $r$ (resp. $r'$) and length
$\ell+\ell'$ and $\tM \cup m_\per$ (resp. $\tm\cup M_\per$) is a row
move to $\tm \#\gamma$ (resp. $\tM \#\gamma$) of rank $r+r'$ and
length $\ell'$ (resp. $\ell$). If $(\tm,\tM)$ is interfering then it
is \textit{lower} (resp. \textit{upper}) \textit{perfectible} if it
admits a lower (resp. upper) perfection.

\begin{definition}
\label{D:Rrowrowcommute} A \defit{reverse row elementary equivalence} is a
relation of the form \eqref{E:elemequiv} satisfying
\eqref{E:commudiagram} arising from two row moves $\tm$ and $\tM$ to
some $k$-shape $\gamma$, which has one of the following forms:
\begin{enumerate}
\item $\tm$ and $\tM$ do not intersect and $\tm$ and $\tM$ do not interfere.  Then $m = \tm$ and $M = \tM$.
\item $(\tm,\tM)$ is interfering (and say the top cell of $\tm$ is above the top cell of $\tM$)
and is lower (resp. upper) perfectible by adding cells $m_\per$
(resp. $M_\per$). Then $m= \tm \cup m_\per$ (resp. $m=\tm\cup
M_\per$) and $M= \tM \cup m_\per$ (resp. $M=\tM\cup M_\per$).
\item $\tm$ and $\tM$ intersect and are matched above (resp. below).
In this case $m= \tm \setminus (\tm \cap \tM)$ and $ M= \tM
\setminus (\tm \cap \tM)$.
\item $\tm$ and $\tM$ intersect and $\tm$ continues above and below $\tM$. In this case
$m= \downarrow_{\tm\cap \tM}(\tm)$ and $M= \downarrow_{\tm\cap
\tM}(\tM)$.
\item $\tM=\emptyset$ and there is a row move $m_\per$ to $\tm \# \gamma$
such that $\tm\cup m_\per$ is a row move to $\gamma$.
Then $M=m_\per$ and $m=\tm\cup m_\per$.
\end{enumerate}
In case (2),(4) and (5) the roles of $\tm$ and $\tM$ may be exchanged.
\end{definition}
\begin{proposition} \label{P:reverseforwardrowmoves}
 $~$
\begin{enumerate}
\item
Suppose $(m,M)$ defines a (forward) row elementary equivalence that
produces the pair $(\tm,\tM)$. Then $(\tm,\tM)$ defines a reverse
row elementary equivalence that produces $(m,M)$.
\item
Suppose $(\tm,\tM)$ defines a reverse row elementary equivalence
that produces the pair $(m,M)$. Then $(m,M)$ defines a row
elementary equivalence that produces $(\tm,\tM)$.
\end{enumerate}
Furthermore we have:
\begin{itemize}
\item
$(m,M)$ is in Case (1) if and only if $(\tm,\tM)$ is in Case (1)
\item
$(m,M)$ is in Case (2) or (5) if and only if $(\tm,\tM)$ is in Case (3)
\item
$(m,M)$ is in Case (3) if and only if $(\tm,\tM)$ is in Case (2) or (5)
\item
$(m,M)$ is in Case (4) if and only if $(\tm,\tM)$ is in Case (4)
\end{itemize}
\end{proposition}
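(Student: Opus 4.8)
The plan is to prove Proposition~\ref{P:reverseforwardrowmoves} by exhibiting a case-by-case dictionary that inverts the construction of Definition~\ref{D:rowrowcommute}, using the translation rules set up just before the statement. Since both a forward row elementary equivalence and a reverse row elementary equivalence are, by definition, diamond equivalences of the shape \eqref{E:commudiagram}, and such a diamond is symmetric under flipping top-to-bottom, the heart of the matter is purely bookkeeping: show that if one reads the commuting square \eqref{E:commudiagram} from $\gamma$ back up to $\la$ instead of from $\la$ down to $\gamma$, then the defining conditions of Definition~\ref{D:rowrowcommute}(1)--(5) transform into the defining conditions of Definition~\ref{D:Rrowrowcommute}(1)--(5) according to the displayed correspondence of cases, and vice versa. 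So the first step is to observe that ``$(m,M)$ produces $(\tm,\tM)$'' and ``$(\tm,\tM)$ produces $(m,M)$'' refer to literally the same commuting diagram of four $k$-shapes and four moves, with only the labelling of source and target reversed; hence it suffices to match up the side conditions.

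First I would handle the easy cases. Case (1) $\leftrightarrow$ Case (1): the forward non-interference hypothesis is $\cs(\la\cup m\cup M)\in\Part$ and no string of $m$ is contiguous to a string of $M$; via the dictionary (``column to the right $\leftrightarrow$ column to the left'', $\la\cup m\cup M\leftrightarrow\gamma\setminus(\tm\cup\tM)$ when $\tm=m$, $\tM=M$) this is exactly the reverse non-interference hypothesis, and in both cases $\tm=m$, $\tM=M$ so the correspondence is trivial. Case (4) $\leftrightarrow$ Case (4): here $m$ and $M$ intersect with $m$ continuing above and below $M$; this relation is preserved under the dictionary (``continues below/above $\leftrightarrow$ continues below/above'') and the passage $m\mapsto\uparrow_{m\cap M}(m)$ in the forward case is the inverse of $\tm\mapsto\downarrow_{\tm\cap\tM}(\tm)$ in the reverse case by Notation~\ref{N:arrow}/\ref{N:arrowrev}; since $m\cap M=\tm\cap\tM$ as sets of cells these shifts genuinely invert one another. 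Case (3) $\leftrightarrow$ Case (2)/(5): when $m$ and $M$ are matched above (resp. below) and $\tm=m\setminus(m\cap M)$, $\tM=M\setminus(m\cap M)$, the pair $(\tm,\tM)$ viewed as moves \emph{to} $\gamma$ is disjoint and, by the computation in the proof of Proposition~\ref{P:rel} (strings $u_i=s_i\setminus t_i$), $(\tm,\tM)$ is interfering in the reverse sense with $\tm\cup\tM=m\cup M$ being perfectible; the perfection cells $m_\per$ are precisely the cells $m\cap M$ removed, so $m=\tm\cup m_\per$, $M=\tM\cup m_\per$ recovers the original pair — this is Case (2) of Definition~\ref{D:Rrowrowcommute}, degenerating to Case (5) exactly when one of $m,M$ was empty (the ``degeneration'' remark in Definition~\ref{D:rowrowcommute} already flags this). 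Conversely, a reverse interfering perfectible pair $(\tm,\tM)$ gives, after passing up to $\la=\tm\#(\tM\#\gamma)$, two forward row moves that intersect and are matched; Lemma~\ref{L:interferereverse} together with the remark translating the interference condition is what certifies that the relative position (``top cell of $m$ above top cell of $M$'') is consistent, and Lemma~\ref{L:perfection} (in its forward and reverse forms) gives uniqueness so that the two constructions are genuinely mutually inverse.

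The main obstacle I expect is Case (2)/(5) $\leftrightarrow$ Case (3): verifying that a forward \emph{interfering lower/upper perfectible} pair $(m,M)$, after reversal, becomes a \emph{matched} pair $(\tm,\tM)$ of reverse row moves, and that the perfection cells $m_\per$ (resp. $M_\per$) become exactly the intersection $\tm\cap\tM$. The subtlety is geometric: one must check that extending the strings of $m$ below by $\ell'$ cells (Lemma~\ref{L:perfection}(1)) and simultaneously adjoining $r$ translates to the right of $M$ produces, when read from the bottom, two row moves to $\gamma$ whose strings overlap in precisely those added cells, with the overlap being of ``matched above'' type if the perfection was lower and ``matched below'' type if it was upper. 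This requires carefully tracking which columns are positively and negatively modified under the reversal (the ``column to the right $\leftrightarrow$ column to the left'' swap interchanges $c_{s,u}$ and $c_{s,d}$ roles), and appealing to Property~\ref{C:rowmovecolsize} and Property~\ref{P:cond5} on both sides to see that no spurious columns enter. I would organize this by first proving the forward-to-reverse direction in full, then noting that the reverse-to-forward direction follows by the symmetry of \eqref{E:commudiagram} together with the uniqueness statements (Remark~\ref{R:movespec}, Proposition~\ref{P:uniquedecomp}, Lemma~\ref{L:perfection}), so that one never has to redo the case analysis twice.

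Finally, I would note that \eqref{E:charge} is automatic throughout: row moves have charge $0$ by Definition~\ref{D:charge}, so every relation among row moves trivially satisfies the charge-conservation condition, and hence each reverse row elementary equivalence is indeed a diamond equivalence. This closes the proof modulo the routine diagram-chase verifications in each of the five cases.
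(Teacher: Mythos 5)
The paper itself leaves this proposition unproved, relying on the ``dictionary'' framing at the start of \S\ref{sec:pullbacks}, so your strategy — exploit the top-to-bottom symmetry of the diamond \eqref{E:commudiagram} and then match the side conditions of Definitions~\ref{D:rowrowcommute} and~\ref{D:Rrowrowcommute} case by case — is exactly the route the paper intends, and the case correspondence you assert agrees with the proposition's bullet list. Two justifications need repair, though neither affects the overall structure. First, in the Case~(4)--Case~(4) argument you claim that $m\cap M=\tm\cap\tM$ as sets of cells; this is false, since $\uparrow_{m\cap M}$ moves the cells of $m\cap M$ themselves up by one in their columns, so $\tm\cap\tM$ is $m\cap M$ translated up one cell column-wise rather than $m\cap M$. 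The inversion still holds, but for the different reason that $m$ and $M$ are horizontal strips, so $m\cap M$ has at most one cell per column, and consequently $\uparrow_{m\cap M}$ and $\downarrow_{\tm\cap\tM}$ shift by exactly one cell in exactly the same set of columns; you should give that argument instead. Second, in the forward Case~(3)~$\to$~reverse Case~(2)/(5) direction you say the degeneration to reverse Case~(5) occurs ``exactly when one of $m,M$ was empty''; this cannot happen, since in forward Case~(3) the moves $m$ and $M$ intersect and so are both nonempty. The degeneration actually occurs exactly when $\tM$ (or $\tm$) is empty, i.e.\ when $M\subset m$ (equivalently, by Lemma~\ref{L:introw}, when $m$ and $M$ have the same rank in the matched situation) — reverse Case~(5) is the mirror image of forward Case~(5), with the empty move on the opposite side of the diamond.

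You are also a bit vague at the step that forward Case~(3) produces reverse interference when both $\tm$ and $\tM$ are nonempty; it is worth writing out that $\gamma\setminus(\tm\cup\tM)=\mu\cap\nu=\la\cup(m\cap M)$, that in the matched-below case (say) $m\cap M=t_1\cup\dotsm\cup t_p$ consists of the first $p<q$ strings of $M$, and that $\cs(\la\cup t_1\cup\dotsm\cup t_p)$ then fails to be a partition at the column $c_{t_p,u}$ because $\cs(\la)_{c_{t_p,u}}=\cs(\la)_{c_{t_{p+1},u}}$ by Lemma~\ref{C:samesize}. With these three points cleaned up, your proof is complete and matches the paper's intended translation argument.
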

\begin{remark}  According to Proposition~\ref{P:reverseforwardrowmoves},
it would seem  natural to join
Cases (2) and (5) of forward and reverse row elementary equivalences
under a single case.  Indeed, these are the only two cases that need perfections and one
can think of Case (5) as a degeneration of Case (2).   However, due to the
special nature of Case (5) (the presence of an empty move), we
prefer not to merge the two cases.
\end{remark}

\subsection{Reverse column elementary equivalence}  There is an obvious
transpose analogue of reverse row elementary equivalences which we shall
call {\it reverse column elementary equivalences}.

\subsection{Reverse diamond equivalences are generated by reverse
elementary equivalences}
A reverse diamond equivalence is just a usual diamond equivalence
$\tM m \equiv \tm M$ except that, instead of starting with $(m,M)$
and producing $(\tm,\tM)$, we start with $(\tm,\tM)$
and produce $(m,M)$. The next proposition follows immediately from
the forward situation and
Proposition~\ref{P:reverseforwardrowmoves}.

\begin{proposition} \label{P:Rdiamond} The equivalence relations generated respectively
by reverse diamond
equivalences and by reverse elementary equivalences are identical.
\end{proposition}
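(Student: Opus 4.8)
The plan is to deduce Proposition~\ref{P:Rdiamond} from its forward counterpart, Proposition~\ref{P:diamond}, together with the forward--reverse correspondence for elementary equivalences supplied by Proposition~\ref{P:reverseforwardrowmoves} and the analogous propositions for reverse mixed and reverse column elementary equivalences. The starting observation is that, as relations on paths in the $k$-shape poset, the reverse diamond equivalences and the (forward) diamond equivalences coincide: both are exactly the commuting, charge-preserving diamonds \eqref{E:commudiagram}--\eqref{E:charge} of moves between $k$-shapes, and the forward/reverse distinction enters only at the level of \emph{elementary} equivalences, where the structural hypotheses are imposed on the pair of moves leaving the source vertex (forward) rather than on the pair terminating at the sink vertex (reverse).

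First I would check that every reverse elementary equivalence is a reverse diamond equivalence. For reverse mixed elementary equivalences this is the content of the proposition stated immediately after their definition: the prescribed sets of cells satisfy $m\cup\tM=M\cup\tm$ and \eqref{E:charge} holds. For reverse row elementary equivalences, Proposition~\ref{P:reverseforwardrowmoves} places $(\tm,\tM)$ and $(m,M)$ at the two ends of an honest commuting diamond, and \eqref{E:charge} is satisfied since row moves have charge $0$; the reverse column case is the transpose and satisfies \eqref{E:charge} trivially. Hence the equivalence relation generated by reverse elementary equivalences is contained in the one generated by reverse diamond equivalences.

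For the reverse inclusion I would argue as follows. Let $\tM m\equiv\tm M$ be a reverse diamond equivalence; regarded simply as a commuting charge-preserving diamond it is also a forward diamond equivalence, so by Proposition~\ref{P:diamond} it is generated by a finite sequence of forward elementary equivalences. Applying Proposition~\ref{P:reverseforwardrowmoves} (and the reverse mixed and reverse column analogues) to each step of this sequence, one sees that, read from the sink end, every forward elementary equivalence \emph{is} a reverse elementary equivalence, the type --- Case (1), Cases (2)/(5), Case (3), Case (4) --- being translated according to the indicated dictionary. Thus $\tM m\equiv\tm M$ is generated by reverse elementary equivalences, and combining the two inclusions yields the proposition. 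Equivalently, one obtains the chain: the relation generated by reverse diamond equivalences equals the one generated by forward diamond equivalences (same generators), equals the one generated by forward elementary equivalences (Proposition~\ref{P:diamond}), equals the one generated by reverse elementary equivalences (the two sets of elementary equivalences coincide as diamonds, by Proposition~\ref{P:reverseforwardrowmoves} and its analogues).

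The only real work is bookkeeping. One must verify that ``generated by'' translates symmetrically under the forward--reverse dictionary: a sequence of forward elementary equivalences witnessing $\tM m\equiv\tm M$ must be reversed step by step into a sequence of reverse elementary equivalences linking the same two paths, and the degenerate cases involving empty moves (forward Case~(5) versus reverse Case~(3), and the trivial equivalences $\emptyset n\equiv n\emptyset$) must be matched up consistently. This is precisely what the step-by-step correspondence in Proposition~\ref{P:reverseforwardrowmoves} provides, so no combinatorial input beyond the already-established forward results and that proposition is required. I expect this bookkeeping to be the main (and essentially only) obstacle.
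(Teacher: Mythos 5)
Your argument is correct and is exactly the paper's intended proof: the paper disposes of this in a single sentence ("follows immediately from the forward situation and Proposition~\ref{P:reverseforwardrowmoves}"), and your write-up simply unpacks that sentence into the chain of equalities via the observation that forward and reverse diamonds coincide as relations and the two families of elementary equivalences coincide as sets of diamonds.
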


\section{Reverse operations on strips}

If $S= \mu/\lambda$ is a strip on $\lambda$ then we say that $S$ is
a strip {\it inside} $\mu$. To translate between the forward and
reverse situations we add these elements to our dictionary:

\begin{align*}
&\text{strip $S=\mu/\lambda$ on $\lambda$}  \quad &
\longleftrightarrow \quad &\text{strip $S=\mu/\lambda$ inside $\mu$}
\\
&\mu \quad & \longleftrightarrow \quad &\la \\
&\la \quad & \longleftrightarrow \quad &\mu \\
&\Bot_c(\gamma) \quad &\longleftrightarrow \quad &\text{cell below
$\Bot_c(\gamma)$} \\
&k\quad&\longleftrightarrow \quad&k+1 \\
&\text{lower (upper) augmentable corner} \quad &\longleftrightarrow
\quad &\text{lower (upper) reverse} \\
&&& \quad\quad\text{augmentable corner} \\
&\text{addable corner} \quad &\longleftrightarrow \quad
&\text{removable corner} \\
&\text{maximal strip (cover)} \quad &\longleftrightarrow \quad
&\text{reverse maximal strip (cover)} \\
&\changerow(S) \quad &\longleftrightarrow \quad &-\changerow(S) \\
&\changerow(m) \quad &\longleftrightarrow \quad &-\changerow(m) \\
\end{align*}

\begin{definition} Let $\mu\in\Ksh$ be fixed. Let $\Str^\mu\subset\Ksh$ be the
induced subgraph of $\nu\in\Ksh$ such that $\mu/\nu$ is a strip
inside $\mu$. If $\tm$ is a move such that $\lambda= \tm \# \nu$ in
$\Str^\mu$ we shall say that $\tm$ is a reverse $\mu$-augmentation
move from the strip $\mu/\nu$ to the strip $\mu/\la$. A reverse
augmentation of a strip $\tilde S=\mu/\la$ is a strip reachable from
$\tS$ via a reverse $\mu$-augmentation path. A strip $\tS=\mu/\la$
is reverse maximal if it admits no reverse $\mu$-augmentation move.
\end{definition}
Diagrammatically, a reverse augmentation move is such that the
following diagram commutes for strips $S$ and $\tilde S$ inside
$\mu$.
$$
\begin{diagram}
\node{\la} \node{\nu}  \arrow{w,t}{\tm} \\
\node{\mu} \arrow{n,l}{S}
\node{\mu} \arrow{n,r}{\tilde S} \arrow{w,b}{\emptyset}
\end{diagram}
$$

These definitions depend on a fixed $\mu\in\Ksh$, which shall
usually be suppressed in the notation. Later we shall consider
reverse augmentations of a given strip $\tS$, meaning reverse
$\mu$-augmentations where $\tilde S=\mu/\la$.


\begin{proposition} \label{L:augmentcolumnrev}
All reverse augmentation column moves of a
strip $\tS=\mu/\la$ have rank 1.
\end{proposition}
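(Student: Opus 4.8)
The statement is the exact transpose-and-reverse analogue of Property~\ref{L:augmentcolumn}, which asserted that every augmentation column move of a (forward) strip $S=\mu/\la$ has rank~$1$. The plan is to mimic that proof verbatim, translating every notion through the forward--reverse dictionary established in the preceding section. Concretely, the proof of Property~\ref{L:augmentcolumn} runs: if a column augmentation move $m$ had rank $\ge 2$, then its modified rows (which all share a common length, since the strings of a column move are translates occupying consecutive rows) would force $\rs(m*\mu)/\rs(\la)$ to contain two cells in columns of equal length, violating the requirement that $\rs(m*\mu)/\rs(\la)$ be a horizontal strip. In the reverse setting, a reverse $\mu$-augmentation column move $\tm$ to $\tS=\mu/\la$ satisfies $\la = \tm\#\nu$ for some $\nu$ with $\mu/\nu$ a strip, and the relevant horizontal-strip constraint is that $\rs(\mu)/\rs(\nu)$ — equivalently $\rs(\mu)/\rs(\tm\#\nu)$ — be a horizontal strip. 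Here $\changerow(\tm)$ is handled through the dictionary entry $\changerow(m)\longleftrightarrow -\changerow(m)$, so the positively/negatively modified rows swap roles, but the essential combinatorial fact — that the strings of a column move are mutual translates lying in consecutive rows of a common length — is transpose-invariant and hence unchanged.

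First I would spell out the setup: let $\tm = s_1\cup\dotsm\cup s_r$ be a reverse $\mu$-augmentation column move to the strip $\tS=\mu/\la$, with $\la = \tm\#\mu'$ where $\mu'\in\Str^\mu$, i.e.\ $\mu/\mu'$ is a strip. Since $\tm$ is a column move, by Definition~\ref{D:defmove} (transposed) its strings are translates of one another and their relevant modified rows all have the same length in $\rs(\gamma)$ for the appropriate intermediate shapes; in particular, if $r\ge 2$ then $\tm$ modifies at least two distinct rows of the same length. Next I would invoke Remark~\ref{R:posmodaugmen}: the negatively modified rows of a reverse augmentation move of $\tS$ are positively modified rows of $\tS$ (this uses the reverse dictionary but the statement of Remark~\ref{R:posmodaugmen} is symmetric in the forward/reverse passage). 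Then, tracking row shapes across $\tm$, one sees that $\rs(\la) = \rs(\mu/\mu')$-data shifted by $\changerow(\tm)$, and the existence of two equal-length modified rows in $\tm$ produces two cells of $\rs(\mu)/\rs(\la)$ — or rather of whichever of $\rs(\mu)/\rs(\la)$, $\rs(\mu)/\rs(\nu)$ is constrained to be a horizontal strip by the definition of strip and of reverse augmentation — sitting in rows of identical length, which is the desired contradiction with the horizontal-strip condition on $\rs$ in Definition~\ref{D:strip}.

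The main (and essentially only) obstacle is bookkeeping: one must be careful about \emph{which} intermediate $k$-shape carries the horizontal-strip constraint, since in the reverse picture we are removing $\tm$ from $\nu$ to obtain $\la$, and the strip condition on $\mu/\la$ versus $\mu/\nu$ must be correctly matched against the modified rows of $\tm$. I expect this to be routine once the dictionary is applied consistently, exactly as the authors suggest when they write ``We omit proofs which are essentially the same in the row and column cases'' and ``only the main results will be stated.'' Accordingly I would keep the proof to a single short paragraph: assume $\rk(\tm)\ge 2$ for contradiction, note the modified rows of $\tm$ all have equal length and are positively modified rows of $\tS$ by Remark~\ref{R:posmodaugmen}, and observe that this forces $\rs(\mu)/\rs(\la)$ (resp.\ $\rs(\mu)/\rs(\nu)$, as dictated by Definition~\ref{D:strip}) to fail to be a horizontal strip, contradicting that $\tm$ is a reverse $\mu$-augmentation move; hence $\rk(\tm)=1$.
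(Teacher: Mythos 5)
Your proposal is correct and takes exactly the approach the paper intends: the paper gives no explicit proof of this proposition — it belongs to the ``reverse'' section where the authors state that ``only the main results will be stated'' because everything translates through the dictionary — and your plan is precisely to transport the one-line proof of Property~\ref{L:augmentcolumn} across that dictionary. One small bookkeeping slip worth fixing: you write that the relevant constraint is ``$\rs(\mu)/\rs(\nu)$ --- equivalently $\rs(\mu)/\rs(\tm\#\nu)$'' being a horizontal strip, but with $\la=\tm\#\nu\subsetneq\nu$ these are distinct skew shapes; the constraint that actually produces the contradiction is on $\rs(\mu)/\rs(\la)$ (that $\mu/\la$ be a strip), and you correctly fall back on this in your closing paragraph. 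Concretely, if $\tm$ had rank $\geq 2$ it would positively modify two consecutive rows $R,R+1$ with $\rs(\nu)_R=\rs(\nu)_{R+1}$, whence $\rs(\mu)_{R+1}\geq\rs(\nu)_{R+1}=\rs(\nu)_R>\rs(\la)_R$, violating the horizontal-strip condition on $\rs(\mu)/\rs(\la)$; pinning down this one inequality removes the residual uncertainty you flag about ``which intermediate $k$-shape carries the constraint.''
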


Let $\tS = \mu/\la$ be a strip and $a$ be a removable corner of
$\la$. We will call $a$ a
\begin{enumerate}
\item
{\it lower reverse augmentable corner} of $\tS$ if removing $a$ from
$\la$ adds a box to $\bdy\la$ in a modified column $c$ of $\tS$.
\item
{\it upper reverse augmentable corner} of $\tS$ if $a$ does not lie
below a box in $\tS$ and removing $a$ from $\la$ adds a box to $\bdy
\la$ in a modified row $r$ of $\tS$.
\end{enumerate}

A $\lambda$-removable string $s$ of row-type (resp. column-type) can
be reverse extended below (resp. above) if there is a
$\lambda$-removable corner contiguous and below (resp. above) the
lowest (resp. highest) cell of $s$.
\begin{definition} \label{D:revaugmentation}
A \defit{reverse completion row move} is one in which all strings start in the same row.
It is maximal if the first string cannot be reverse extended below.
A \defit{reverse quasi-completion column
  move} is a reverse column
 augmentation move from a strip $\tS$ that contains
no lower reverse augmentable corner. A \defit{reverse completion
column move} is a reverse quasi-completion move from a strip $\tS$
that contains no upper reverse augmentable corner below its unique
(by Proposition~\ref{L:augmentcolumnrev}) string. A reverse
completion column move or a reverse quasi-completion column move is
maximal if its string cannot be reverse extended above. A reverse
completion move is a reverse completion row/column move.
\end{definition}

\begin{prop} \label{P:maxstripuniquerev} Let $\tilde S=\mu/\la$ be a strip.
\begin{enumerate}
\item $\tilde S$ has a unique maximal reverse augmentation $\tS' \in\Str^\mu$.
\item There is one equivalence class of paths
in $\Str^\mu$ from $\tS$ to $\tS'$.
\item The unique equivalence class of paths in $\Str^\mu$ from $\tilde S$ to
  $\tilde S'$
has a representative consisting entirely of maximal reverse completion moves.
\end{enumerate}
\end{prop}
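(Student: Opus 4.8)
The plan is to derive Proposition~\ref{P:maxstripuniquerev} by pure transposition-of-direction from its forward counterpart, Proposition~\ref{P:maxstripunique}, using the forward--reverse dictionary already set up for moves (Section~\ref{sec:pullbacks} and the dictionary tables) together with its extension to strips and augmentable corners. The key observation is that every ingredient of the forward proof of Proposition~\ref{P:maxstripunique} -- namely the structural lemmata on completion moves and augmentable corners (Lemmata~\ref{L:extendstring}, \ref{L:extendcompletion}, \ref{L:completionaugment}, \ref{L:columncompletionaugment}, \ref{L:anyaugment}), the fact that elementary equivalences among maximal completion moves exist (Lemmata~\ref{L:maxrowcolcommute}, \ref{L:maxmoverowcommute}, \ref{L:maxmovecolcommute}), and the equivalence-generation statement Proposition~\ref{P:diamond} -- has a reverse analogue obtained by applying the dictionary. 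Concretely, a reverse $\mu$-augmentation of $\tilde S=\mu/\la$ is a forward $\la'$-augmentation problem ``inside $\mu$'' after swapping the roles of $\la$ and $\mu$, swapping $k \leftrightarrow k+1$, swapping ``below $\Bot_c$'' with $\Bot_c$, and reversing the sign of $\changerow$; under this translation ``reverse maximal'' becomes ``maximal'', ``reverse completion move'' becomes ``completion move'', and ``lower/upper reverse augmentable corner'' becomes ``lower/upper augmentable corner.''

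First I would record precisely the reverse dictionary for strips and augmentable corners (this is already displayed in Section~\ref{sec:pullbacks}), and verify that under it the hypotheses ``$\mu/\nu$ is a strip of $k$-shapes'' translate correctly -- in particular that $\rs(\mu)/\rs(\nu)$ being a horizontal strip and $\cs(\mu)/\cs(\nu)$ being a vertical strip are self-dual conditions up to the sign reversal on $\changerow$. Second, I would state the reverse analogues of Propositions~\ref{P:maxstrip}, \ref{L:covercore}, \ref{L:stripcore} and the completion lemmata as corollaries of the forward results via the dictionary (no new argument needed, only the observation that the forward proofs never used anything outside the dictionary's scope). Third, with the reverse completion-move machinery in hand, I would run the verbatim analogue of the proof given for Proposition~\ref{P:maxstripunique}: induct on ``how close $\tilde S$ is to reverse maximal,'' reduce to the case of two distinct first reverse-augmentation moves, and if both are reverse maximal completion moves invoke the reverse analogues of Lemmata~\ref{L:maxrowcolcommute}--\ref{L:maxmovecolcommute} to get an augmentation equivalence between them, then close the diamond and apply the induction hypothesis; if a first move is not a reverse maximal completion move, use the reverse analogue of Lemma~\ref{L:completionstrip} (or Lemma~\ref{L:anyaugment} in the non-completion row case, Lemma~\ref{L:columncompletionaugment} in the column case) to rewrite the path so it begins with one. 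This simultaneously yields (1) uniqueness of the reverse maximal strip $\tilde S'$, (2) connectedness-up-to-equivalence of reverse augmentation paths from $\tilde S$ to $\tilde S'$, and (3) the existence of a representative built from reverse maximal completion moves.

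The main obstacle I anticipate is not the combinatorics -- those arguments transcribe cleanly -- but checking that the dictionary genuinely is an \emph{involution on all the relevant data}, so that ``apply the forward result'' is legitimate. The delicate points are: (a) the asymmetry of strips (transpose-asymmetric, as remarked after the definition of completion move) interacts with the $k \leftrightarrow k+1$ swap, so one must confirm that a reverse strip inside $\mu$ really does present, after the swap, as a genuine forward strip with all the partition/horizontal-strip side conditions intact; (b) the notions of interference and perfection for reverse row moves (defined via $\cs(\gamma\setminus(\tm\cup\tM))$ not being a partition) must be seen to match, under the dictionary, the forward interference/perfection conditions, which the remark following Lemma~\ref{L:interferereverse} already indicates but which must be used uniformly; and (c) one must make sure that the forward lemmata invoked (e.g.\ Lemma~\ref{L:upperaugmentcore}, used in Proposition~\ref{L:stripcore}) have reverse hypotheses that are actually satisfied -- e.g.\ ``$\la\in\Core^k$'' becomes ``$\mu\in\Core^{k+1}$,'' which is exactly the hypothesis needed in Proposition~\ref{P:reversemaxcore}. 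Once these bookkeeping checks are done, the proof is a two-line appeal: ``apply the forward--reverse dictionary to Proposition~\ref{P:maxstripunique} and its constituent lemmata.'' I would therefore spend most of the writeup making the dictionary-as-involution precise and leave the combinatorial core as ``identical to the proof of Proposition~\ref{P:maxstripunique}, after translation.''
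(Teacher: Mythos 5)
Your approach is essentially the same as the paper's: after setting up the forward--reverse dictionary in \S\ref{sec:pullbacks} and its extension to strips, augmentable corners, and completion moves, the paper states Proposition~\ref{P:maxstripuniquerev} (along with the rest of the reverse machinery) as a direct translation of Proposition~\ref{P:maxstripunique} without writing out a separate proof. The bookkeeping you flag -- verifying the dictionary is a genuine involution, that reverse interference/perfection matches the forward notions under translation, and that the $k\leftrightarrow k+1$ swap and transpose-asymmetry of strips interact correctly -- is exactly what the paper leaves implicit, so writing it out would only make the argument more explicit, not different.
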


\subsection{Reverse maximal strips}

\begin{prop}\label{P:Rmaxstrip}
A strip $\tS$ is reverse maximal if and only if it has no reverse augmentable corners.
\end{prop}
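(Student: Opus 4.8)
The statement to prove is Proposition~\ref{P:Rmaxstrip}: a strip $\tS$ is reverse maximal if and only if it has no reverse augmentable corners. The plan is to deduce this directly from the forward-case analogue, Proposition~\ref{P:maxstrip}, via the forward-reverse dictionary established in the preceding section. Recall that Proposition~\ref{P:maxstrip} asserts that a strip $S$ is maximal (admits no augmentation move) if and only if it has no augmentable corners, and that its proof combines Lemma~\ref{L:augmenmove} (any augmentation move contains an augmentable corner) with Lemmata~\ref{L:completionaugment} and~\ref{L:columncompletionaugment} (an augmentable corner can always be extended to a maximal completion move). The dictionary matches: strips on $\lambda$ with strips inside $\mu$; augmentation moves with reverse augmentation moves; lower/upper augmentable corners with lower/upper reverse augmentable corners; maximal completion moves with maximal reverse completion moves; and it swaps $k \leftrightarrow k+1$, rows above/below, columns left/right, and reverses signs of $\changerow$.

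\emph{Key steps, in order.} First I would restate the two halves as in the forward case. The easy direction: if $\tS$ admits a reverse augmentation move $\tm$, then $\tm$ contains a reverse augmentable corner of $\tS$; this is the reverse analogue of Lemma~\ref{L:augmenmove}, which states that a reverse augmentation row (resp. column) move has its top-left (resp. bottom-right) cell -- after applying the dictionary, the appropriate extremal cell -- equal to a lower (resp. upper) reverse augmentable corner of $\tS$. The proof is the verbatim transpose/reverse translation of the one-line argument for Lemma~\ref{L:augmenmove}. Second, the converse: if $\tS$ has a reverse augmentable corner, I would invoke the reverse analogues of Lemmata~\ref{L:completionaugment} and~\ref{L:columncompletionaugment} to produce a nonempty maximal reverse completion move (row type if the corner is lower reverse augmentable, column type if upper), hence $\tS$ is not reverse maximal. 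These reverse lemmata are exactly the statements obtained by pushing Lemmata~\ref{L:completionaugment} and~\ref{L:columncompletionaugment} through the dictionary; their proofs are the same word-for-word after translation (this is precisely the ``only the main results will be stated'' convention announced at the start of Section~\ref{sec:pullbacks}). Combining the two directions gives the proposition.

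\emph{Main obstacle.} The only real work is making sure the forward-reverse dictionary genuinely applies to the ingredients of the forward proof -- in particular that every notion invoked (\emph{augmentable corner}, \emph{completion move}, \emph{maximal}, the $k$-shape/$k{+}1$-core distinction, the conditions in Property~\ref{L:innerstrip} and Remark~\ref{R:rowshift} that the forward lemmas rely on) has a well-defined reverse counterpart that is preserved under the translation $k\leftrightarrow k+1$, $\lambda\leftrightarrow\mu$, etc. Since the dictionary has already been spelled out explicitly, and since the forward proofs of Proposition~\ref{P:maxstrip}, Lemma~\ref{L:augmenmove}, Lemma~\ref{L:completionaugment}, and Lemma~\ref{L:columncompletionaugment} use only those dictionary-stable notions, the translation goes through mechanically. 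So the proof I would write is short: state the reverse analogue of Lemma~\ref{L:augmenmove} for the ``only if'' direction, state the reverse analogues of Lemmata~\ref{L:completionaugment} and~\ref{L:columncompletionaugment} for the ``if'' direction (noting that their proofs are obtained from the forward ones by applying the dictionary), and conclude exactly as in the proof of Proposition~\ref{P:maxstrip}.
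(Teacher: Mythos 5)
Your proposal is correct and takes essentially the approach the paper intends: the paper explicitly announces at the start of the pullback sections that ``only the main results will be stated,'' relying on the forward--reverse dictionary, and Proposition~\ref{P:Rmaxstrip} is indeed stated without proof as the dictionary image of Proposition~\ref{P:maxstrip}. Your identification of the ingredients (Lemma~\ref{L:augmenmove} for the ``only if'' direction, Lemmata~\ref{L:completionaugment} and~\ref{L:columncompletionaugment} for the ``if'' direction) and their translation through the dictionary matches the intended argument.
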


\begin{proposition}\label{P:reversemaxcore}
Suppose $\mu$ is a $(k+1)$-core and $\tS = \mu/\la$ is a
reverse maximal
strip. Then $\la$ is a $(k+1)$-core.
\end{proposition}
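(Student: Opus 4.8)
The plan is to mirror the proof of Proposition~\ref{L:stripcore} through the forward--reverse dictionary set up in Sections 8--10: replacing $k$ by $k+1$, the fixed inner shape by the fixed outer shape, ``maximal'' by ``reverse maximal'', ``addable corner'' by ``removable corner'', and ``(lower/upper) augmentable corner'' by ``(lower/upper) reverse augmentable corner'', Proposition~\ref{L:stripcore} becomes exactly Proposition~\ref{P:reversemaxcore}. Concretely, for a strip $\tilde S=\mu/\lambda$ with $\mu$ a $(k+1)$-core, I would first reduce to the case of a single reverse maximal cover, then verify that a reverse maximal cover inside a $(k+1)$-core has a $(k+1)$-core as its inner shape, and finally feed this back through a cover decomposition of $\tilde S$ by induction.

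For the reduction, I would use the reverse analogue of Corollary~\ref{C:coverseq} (the dictionary translate of Lemma~\ref{L:coverseq}) to decompose $\tilde S$ as a chain $\mu=\lambda^{(\rho)}\supset\lambda^{(\rho-1)}\supset\dots\supset\lambda^{(0)}=\lambda$ in $\Ksh$ in which each $\lambda^{(i)}/\lambda^{(i-1)}$ is a cover-type string, its modified column being a prescribed modified column of $\tilde S$ taken in an order compatible with the groups of $\tilde S$. By Proposition~\ref{P:Rmaxstrip}, $\tilde S$ has no reverse augmentable corner, and a modified row or column of any cover in the chain is immediately a modified row or column of $\tilde S$. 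Hence a lower reverse augmentable corner of one of these covers is automatically one of $\tilde S$, which is impossible; and for an upper reverse augmentable corner $a\notin\tilde S$ of such a cover I would invoke the reverse analogue of Lemma~\ref{L:upperaugmentcore}, whose hypothesis ``$\lambda\in\Core^k$'' translates precisely to the hypothesis ``$\mu\in\Core^{k+1}$'' we are given, to conclude that $a$ is also an upper reverse augmentable corner of $\tilde S$, again a contradiction. So by Proposition~\ref{P:Rmaxstrip} every $\lambda^{(i)}/\lambda^{(i-1)}$ is reverse maximal.

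It then remains to prove the reverse analogue of Proposition~\ref{L:covercore}: if $C=\gamma/\kappa$ is a reverse maximal cover and $\gamma$ is a $(k+1)$-core, then $\kappa$ is a $(k+1)$-core. Passing from $\gamma$ to $\kappa$ lowers the hook length, by exactly $1$, only of cells in the modified row or column of $C$, so it suffices to check such a cell cannot acquire hook length $k+1$ in $\kappa$: for the modified row, the leftmost boundary cell of $\kappa$ in that row has hook length $<k$ in $\kappa$ (otherwise $C$ would admit a reverse augmentation, contradicting maximality), and cells further left already have hook length $>k+1$; the modified-column case is the transpose. Applying this fact inductively along the chain gives that $\lambda=\lambda^{(0)}$ is a $(k+1)$-core, completing the proof.

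I expect the main obstacle to be the hook-length bookkeeping in the reverse setting, where one \emph{removes} cells from $\kappa$ and thereby \emph{adds} cells to the $k$-boundary: one must confirm that the inequalities in the proofs of Lemma~\ref{L:coverseq}, Lemma~\ref{L:upperaugmentcore} and Proposition~\ref{L:covercore} really do invert correctly under the dictionary entries $k\leftrightarrow k+1$, $\Bot_c(\gamma)\leftrightarrow$ ``cell below $\Bot_c(\gamma)$'' and $\changerow(m)\leftrightarrow-\changerow(m)$, and that the reverse cover decomposition respects the grouping of modified columns just as in Corollary~\ref{C:coverseq}.
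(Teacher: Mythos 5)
Your proposal follows exactly the route the paper intends: Proposition~\ref{P:reversemaxcore} is one of the ``main results'' stated without proof after the forward--reverse dictionary, so the expected argument is the dictionary translate of Proposition~\ref{L:stripcore} together with its supporting lemmas. Your reduction to reverse maximal covers through the translates of Corollary~\ref{C:coverseq} and Lemma~\ref{L:upperaugmentcore}, and your observation that the core hypothesis of Lemma~\ref{L:upperaugmentcore} becomes the given hypothesis $\mu\in\Core^{k+1}$, are both in line with the paper.

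There is, however, one place where the translation is not a routine sign flip, and it sits inside your reverse analogue of Proposition~\ref{L:covercore}. In the forward direction a cell just outside $\bdy\la$ has $h_\la>k$, hence $h_\la\ge k+1$ as $\la\in\Core^k$; after adding the cover, $h_\mu\ge k+2$, safely two units above the dangerous value $k$, with no further work needed. In the reverse direction the cell $b^-$ just left of $b=\Left_r(\bdy\gamma)$ (when it exists) has $h_\gamma(b^-)>k$, hence $h_\gamma(b^-)\ge k+2$ as $\gamma\in\Core^{k+1}$; but after removing the cover, $h_\kappa(b^-)=h_\gamma(b^-)-1\ge k+1$, which may land \emph{exactly} on the dangerous value $k+1$. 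So your phrase ``cells further left already have hook length $>k+1$'' is precisely the assertion that needs proof, not an automatic consequence, and the reverse-maximality must be spent here rather than (or in addition to) on $b$ --- note that your stated bound $h_\kappa(b)<k$ is in fact automatic, since $h_\gamma(b)\le k$ already forces $h_\kappa(b)\le k-1$. Concretely: if $h_\gamma(b^-)=k+2$, then $h_\kappa(b^-)=k+1$ and $\kappa^t_{\col(b^-)}>\kappa^t_{\col(b)}$, so the $\kappa$-removable corner atop column $\col(b^-)$ exists, lies below no cell of $C$, and its removal puts $b^-$ into $\bdy\kappa$ in the modified row $r$; it is therefore an upper reverse augmentable corner of $C$, contradicting reverse maximality. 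This shift in \emph{where} maximality does its work is the genuine content hidden behind the dictionary entry $k\leftrightarrow k+1$, and it is exactly the sort of inversion you anticipated but did not carry out.
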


\section{Pullback of strips and moves}
\label{sec:pullback}

Let $(\tS,\tm)$ be a final pair where $\tS=\eta/\nu$ is a strip and
$\tm=\eta/\mu$ is a nonempty row move.

We say that $(\tS,\tm)$ is compatible if it is reasonable, not
contiguous, (and normal if $\tm$ is a column move) and is either (1)
non-interfering, or (2) is interfering but is also
pullback-perfectible; all these notions are defined below.

For compatible pairs $(\tS,\tm)$ we define a $k$-shape $\la\in\Ksh$
(see Subsections \ref{SS:rowpullnointerfere} and
\ref{SS:colpullnointerfere} for case (1) and
\ref{SS:rowpullinterfere} and \ref{SS:colpullinterfere} for case
(2)). This given, we define the pullback
\begin{equation} \label{E:pullbackdef}
  \pull(\tS,\tm) = (S,m)=(\mu/\la,\nu/\la)
\end{equation}
which produces an initial pair $(S,m)$ where $S$ is a strip and $m$
is a (possibly empty) move. This is depicted by the following
diagram.
\begin{equation*}
\begin{diagram}
\node{\la}   \node{\nu} \arrow{w,t,..}{m}   \\
\node{\mu} \arrow{n,t,..}{S}  \node{\eta}  \arrow{w,b}{\tm}
\arrow{n,b}{\tS}
\end{diagram}
\end{equation*}
If $\tS$ is a reverse maximal strip then $(\tS,\tm)$ is compatible
by Corollaries \ref{C:maxpullrow} and \ref{C:maxpullcol}.

\subsection{Reasonableness}
We say that the pair $(\tS,\tm)$ is {\it reasonable} if for every
string $s \subset \tm$, either $s \cap \tS = \emptyset$ or $s
\subset \tS$.

\begin{proposition}\label{L:RSmreasonable}
Let $(\tS,\tm)$ be a final pair with $\tS$ is a reverse maximal
strip. Then $(\tS,\tm)$ is reasonable.
\end{proposition}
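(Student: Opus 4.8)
The plan is to transcribe the proof of the forward counterpart, Proposition~\ref{L:Smreasonable}, through the forward/reverse dictionary established in the previous sections. In that dictionary the ambient shape $\eta$ of a final pair $(\tS,\tm)$ plays the role that the base shape $\la$ plays for an initial pair; a reverse maximal strip corresponds to a maximal strip; lower (resp. upper) reverse augmentable corners correspond to lower (resp. upper) augmentable corners; ``$\eta$-removable'' corresponds to ``$\la$-addable''; and left$\leftrightarrow$right, above$\leftrightarrow$below, addable$\leftrightarrow$removable, $k\leftrightarrow k+1$, $\change_{\rs}\leftrightarrow-\change_{\rs}$. The three ingredients of the forward argument are: the decomposition of a strip into cover-type strings (Corollary~\ref{C:coverseq}); the characterization of (reverse) maximal strips by the absence of (reverse) augmentable corners (here Proposition~\ref{P:Rmaxstrip}); and induction on the indent of a string of the move. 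Each has a reverse analogue once the dictionary is in place: in particular the reverse cover-type decomposition of $\tS$ is the reverse transcription of Corollary~\ref{C:coverseq}.

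Concretely I would proceed as follows. Fix a string $s=\{a_1,a_2,\dotsc,a_\ell\}$ of $\tm$, indexed so that $a_{i+1}$ is below $a_i$, and suppose some $a_i\in\tS$; it suffices to show $a_{i-1}\in\tS$ whenever $i>1$ and $a_{i+1}\in\tS$ whenever $i<\ell$, for then propagating along $s$ gives $s\subset\tS$. Fix the reverse cover-type decomposition $\tS=t_1\cup\dotsm\cup t_\rho$ in which the extreme cell of $t_j$ lies in the $j$-th modified column of $\tS$ and $t_j$ is taken maximal given $t_1,\dotsc,t_{j-1}$, and let $t$ be the string $t_j$ containing $a_i$. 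The induction runs on the indent $\Ind_{\tm}(s)$, measured in the reverse sense. In the base case $s$ is $\eta$-removable, so if $a_{i+1}\notin\tS$ then $a_{i+1}$ would be a lower reverse augmentable corner of $\tS$, contradicting Proposition~\ref{P:Rmaxstrip}; hence $a_{i+1}\in\tS$ and, by the choice of decomposition, $a_{i+1}\in t$, and symmetrically $a_{i-1}\in t$. For the inductive step, let $s'$ be the translate of $s$ inside $\tm$ immediately adjacent to it on the side of strictly smaller reverse indent, with $b_i$ the cell of $s'$ corresponding to $a_i$; then $b_i\in\tS$ and, by the inductive hypothesis, the cover-type string of $\tS$ through $b_i$ contains all of $s'$, so $\col(b_i)$ and hence $\col(a_i)$ is not a modified column of $\tS$. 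By the structure of the cover decomposition $t$ then has a cell immediately adjacent to $a_i$ on the appropriate side, giving $a_{i\pm1}\in t$, completing the induction.

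I expect the main obstacle to be purely mechanical: the dictionary reverses several orientations and exchanges a number of dual notions, so one must check that every appeal to (reverse) maximality, to the cover decomposition, and to the contiguity of removable corners is invoked in its correctly dualized form, and that the base and inductive steps of the indent induction are taken from the correct end of the string. Beyond Proposition~\ref{P:Rmaxstrip} and the reverse transcriptions of Corollary~\ref{C:coverseq} and Lemma~\ref{L:coverseq}, no new geometric input is needed; the argument is otherwise formally identical to the forward one.
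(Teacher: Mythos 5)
Your proposal takes exactly the approach the paper intends: Section~\ref{sec:pullbacks} explicitly announces that "only the main results will be stated" for the reverse case, the proofs being obtained by the forward/reverse dictionary, and the paper therefore gives no written proof of Proposition~\ref{L:RSmreasonable}. Your explicit transcription of the proof of Proposition~\ref{L:Smreasonable} through the dictionary (reverse cover decomposition of $\tS$, Proposition~\ref{P:Rmaxstrip} in place of Proposition~\ref{P:maxstrip}, indent induction from the $\eta$-removable end) is correct and in fact supplies more detail than the paper does.
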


\subsection{Contiguity}
We say that $(\tS,\tm)$ is \textit{contiguous} if there is a box $b
\notin \bdy \mu \cup \bdy \nu$ which is present in $\bdy (\mu \cap
\nu)$. Call such a $b$ an {\it appearing box}.

\begin{proposition}
Let $(\tS,\tm)$ be a final pair with $\tS$ a reverse maximal strip.
Then $(\tS,\tm)$ is non-contiguous.
\end{proposition}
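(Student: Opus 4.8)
The plan is to exploit the forward--reverse dictionary established in the excerpt, which translates between the pushout setting on strips $\mu/\la$ built on $\la$ and the pullback setting on strips $\eta/\nu$ sitting inside $\eta$. Under this dictionary, a reverse maximal strip $\tS=\eta/\nu$ (maximal inside $\eta$) corresponds to a maximal strip in the ordinary sense, and contiguity in the pullback case---the existence of an appearing box $b\notin\bdy\mu\cup\bdy\nu$ lying in $\bdy(\mu\cap\nu)$---is precisely the dictionary-image of contiguity in the pushout case (a disappearing cell $b\in\bdy\mu\cap\bdy\nu$ not present in $\bdy(\mu\cup\nu)$). So the statement is the mirror image of Corollary~\ref{C:rowmaxnotcontig} (for row moves $\tm$) and Corollary~\ref{C:maxcolnotcontig} (for column moves $\tm$), each of which was deduced from the reasonableness of $(\tS,\tm)$ together with the structural analysis of disappearing cells in Lemma~\ref{L:disappearing} / Lemma~\ref{L:disappearingcol}.

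First I would invoke Proposition~\ref{L:RSmreasonable}, which is stated just above and guarantees that a final pair $(\tS,\tm)$ with $\tS$ reverse maximal is reasonable; this is what makes the notion of contiguity meaningful in the pullback setting. Then I would translate the three conclusions of Lemma~\ref{L:disappearing} (for $\tm$ a row move) into statements about an appearing box $b=(r,c)$: under the dictionary, column $c$ is negatively modified by $\tm$ and contains no cells of $\tS$, row $r$ is modified by $\tS$, and column $c$ contains a lower reverse augmentable corner for $\tS$ (the dictionary exchanges ``upper augmentable'' with ``lower reverse augmentable'' and flips rows with rows in the obvious way, matching $\Bot_c$ with the cell below it). The existence of this reverse augmentable corner contradicts reverse maximality of $\tS$ by Proposition~\ref{P:Rmaxstrip}. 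For the case $\tm$ a column move I would do the same starting from Lemma~\ref{L:disappearingcol}, producing this time an upper reverse augmentable corner of $\tS$, again contradicting Proposition~\ref{P:Rmaxstrip} via the dictionary.

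Concretely I expect the proof to be short: one sentence citing Proposition~\ref{L:RSmreasonable}, then ``Suppose $(\tS,\tm)$ is contiguous with appearing box $b$; applying the forward--reverse dictionary to Lemma~\ref{L:disappearing} (resp.\ Lemma~\ref{L:disappearingcol}) we find that $\tS$ has a lower (resp.\ upper) reverse augmentable corner, contradicting Proposition~\ref{P:Rmaxstrip} and the reverse maximality of $\tS$.'' The only real content is checking that the dictionary entry for ``disappearing cell''/``appearing box'' and for ``(upper/lower) augmentable corner''/``(lower/upper) reverse augmentable corner'' is applied correctly---i.e.\ that the hook-length bookkeeping in Lemma~\ref{L:disappearing} transposes consistently with the convention $k\leftrightarrow k+1$ and $\Bot_c(\gamma)\leftrightarrow$ the cell below $\Bot_c(\gamma)$ listed in the dictionary.

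The main obstacle, such as it is, is purely bookkeeping: one must be careful that the dictionary is self-consistent here, because the pullback analogues of Lemma~\ref{L:disappearing} and Lemma~\ref{L:disappearingcol} are not re-proved from scratch in the excerpt but are meant to be read off. In particular, one should verify that an appearing box forces a cell configuration on $\bdy\la$ (where $\la=\tm\#\eta$ in the relevant intermediate shape) whose translate under the dictionary is exactly the disappearing-cell configuration, so that the conclusion ``column $c$ contains a reverse augmentable corner'' is genuinely the mirror of ``column $c$ contains an upper augmentable corner.'' Once that is checked, the appeal to Proposition~\ref{P:Rmaxstrip} closes the argument with no further work.
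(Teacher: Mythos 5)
Your proof is correct and follows the approach the paper intends: the paper states this proposition without an explicit argument, relying on the forward--reverse dictionary, and translating Lemma~\ref{L:disappearing} (row case) or Lemma~\ref{L:disappearingcol} (column case) and then appealing to Proposition~\ref{P:Rmaxstrip} is exactly that translation. One small bookkeeping slip: the paper's dictionary sends lower (resp.\ upper) augmentable corners to lower (resp.\ upper) \emph{reverse} augmentable corners --- it does not swap the two --- so the reverse analogue of Lemma~\ref{L:disappearing}(3) produces an \emph{upper} reverse augmentable corner and that of Lemma~\ref{L:disappearingcol}(3) a \emph{lower} one, the opposite of what you wrote; this has no bearing on the conclusion, since any reverse augmentable corner contradicts reverse maximality via Proposition~\ref{P:Rmaxstrip}.
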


\subsection{Row-type pullback:  interference}
Suppose that $(\tS,\tm)$ is reasonable and non-contiguous with $\tm$
a row move. If $s \subset \tm$ is contained inside $\tS$, we say
that $\tS$ {\it matches} $s$ below if $c_{s,d}$ is a modified column
of $\tS$.  Otherwise we say that $\tS$ {\it continues below} $s$.
Define $\tm'$ and $\tm'$ by
\begin{align} \label{eqrevmp}
\tm' &= \bigcup \,\{\,\text{strings $s$}  \subset \tm \mid \text{$s$
and $\tS$ are
  not matched below}\} \\
  \label{E:tm-def}
  \tm^-&=\downarrow_{\tS}\tm'.
\end{align} \
We say that $(\tS,\tm)$ is \textit{non-interfering} if $\cs(\eta)
-\changerev_\cs(\tS)- \changerev_\cs(\tm')$ is a partition and is
\textit{interfering} otherwise.

\subsection{Row-type pullback: non-interfering case}
\label{SS:rowpullnointerfere} Assume that $(\tS,\tm)$ reasonable,
non-contiguous, and non-interfering with $\tm$ a row move. Then we
define $(\tS,\tm)$ to be compatible, with $\la=\tm^-\# \nu$ and
define $\pull(\tS,\tm)$ by \eqref{E:pullbackdef}.

\begin{proposition}\label{L:Retastrip}
Let $(\tS,\tm)$ be a reasonable, non-contiguous and non-interfering
final pair. Then $\mu/\la$ is a strip.
\end{proposition}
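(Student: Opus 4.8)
\textbf{Proof plan for Proposition~\ref{L:Retastrip}.}

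The plan is to dualize the proof of Proposition~\ref{L:etastrip} via the forward-reverse dictionary, checking directly that $\mu/\la$ satisfies the three defining conditions of a strip: it is a horizontal strip, $\rs(\mu)/\rs(\la)$ is a horizontal strip, and $\cs(\mu)/\cs(\la)$ is a vertical strip. First I would record the analogue of Lemma~\ref{L:pushbump} in the reverse setting: the set of cells $\tm^-$ satisfies all conditions to be a (reverse) row move to $\nu$ except possibly the $k$-shape condition, and $\rs(\tm^-\#\nu)=\rs(\mu)=\rs(\la)+\changerev_\rs(\tS)$ while $\cs(\tm^-\#\nu)=\cs(\eta)-\changerev_\cs(\tS)-\changerev_\cs(\tm')$; under the non-interference hypothesis the latter is a partition, so $\la=\tm^-\#\nu\in\Ksh$ is a genuine $k$-shape and $m=\nu/\la$ is a (possibly empty) row move. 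This is the reverse-case counterpart of the fact quoted right before the statement that $\tS$ is a strip and $\tm$ a row move, and it is proved by the same induction on strings as Lemma~\ref{L:pushbump}, using the reverse analogue of Lemma~\ref{L:continuebelow} (continuing-below cells have a $\mu$-removable corner in the appropriate column) in place of the forward one.

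Next, for the horizontal strip claim: since $m=\nu/\la$ is built by shifting certain rows of $\tS$ down (that is the meaning of $\tm^-=\downarrow_{\tS}\tm'$), $\mu/\la$ is obtained from $\mu/\nu=\tS$ by moving the cells of $\tm'$ down below cells of $\la$; reasonableness and non-contiguity guarantee no two end up in the same column, exactly as in the forward argument. For the row-shape claim, $\rs(\mu)/\rs(\la)=\rs(\mu)/\rs(\nu)=\rs(\tS)$, which is a horizontal strip because $\tS$ is a strip. For the column-shape claim I would compute, mirroring Proposition~\ref{L:etastrip},
\begin{align*}
\cs(\mu)-\cs(\la) &= \changerev_\cs(\tS)+\changerev_\cs(\tm')-\changerev_\cs(\tm)\\
&= \changerev_\cs(\tS)-\changerev_\cs(\tm\setminus\tm'),
\end{align*}
observe that $\tm\setminus\tm'$ consists of the strings of $\tm$ matched below by $\tS$, so their positively modified columns cancel against modified columns of $\tS$, and their negatively modified columns avoid the modified columns of $\tS$ by the reverse analogue of Property~\ref{L:Smmod} (a modified column of $\tS$ cannot be negatively modified by $\tm$ — here one uses that $\tS$ is a strip \emph{inside} $\eta$, so the bottom cell of such a column in $\bdy\nu$ has hook $<k$). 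Hence $\cs(\mu)-\cs(\la)$ is a $0$-$1$ vector, giving the vertical strip.

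The main obstacle is not any single step but assembling the correct reverse-case bookkeeping: one must verify that the reverse analogues of Property~\ref{L:Smmod}, Lemma~\ref{L:Smleftcolumns}, Corollary~\ref{C:stripmodifiedrows}, and Lemma~\ref{L:continuebelow} all hold under the forward-reverse dictionary (in particular the swaps $\la\leftrightarrow\mu$, ``$k$''$\leftrightarrow$``$k+1$'', $\changerow(S)\leftrightarrow-\changerow(S)$, ``$\Bot_c$''$\leftrightarrow$``cell below $\Bot_c$''), since the non-contiguity and reasonableness inputs that feed the cancellation argument all rest on those lemmata. Once those are in place — and they are routine translations of statements already proved in \S\ref{sec:rowpushout} — the proof is the verbatim transpose of Proposition~\ref{L:etastrip}, and I would simply write ``the proof is identical to that of Proposition~\ref{L:etastrip} under the dictionary of \S\ref{sec:pullbacks}'' after displaying the column-shape computation above.
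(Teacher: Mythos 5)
Your proposal is correct and matches the paper's intended approach: the paper develops the pullback theory in \S\ref{sec:pullbacks}--\S\ref{sec:pullbackequivalence} ``in a brief form'' and deliberately omits the proof of Proposition~\ref{L:Retastrip}, expecting exactly this dualization of Proposition~\ref{L:etastrip} through the forward--reverse dictionary. Your column-shape computation $\cs(\mu)-\cs(\la)=\changerev_\cs(\tS)-\changerev_\cs(\tm\setminus\tm')$ and the ensuing cancellation argument via the reverse analogue of Property~\ref{L:Smmod} are the verbatim transposes of the forward-case proof, which is precisely what the paper intends.
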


\subsection{Row-type pullback: interfering case}
\label{SS:rowpullinterfere} Assume that $(\tS,\tm)$ is reasonable,
non-contiguous, and interfering with $\tm$ a row move. Say that
$(\tS,\tm)$ is \textit{pullback-perfectible} if there is a set of
cells $\tm_\comp$ inside $(\tm^-)\#\nu$ so that if $\la =
((\tm^-)\#\nu) \setminus \tm_\comp$ then $\mu/\la$ is a strip and
$\nu/\la$ is a row move to $\nu$ with the same initial string as
$\tm^-$.

\begin{proposition}\label{L:Rmaximalinterfere}
Suppose $(\tS,\tm)$ is a reasonable, non-contiguous, interfering
final pair such that $\tm$ is a row move and $\tS$ is a reverse
maximal strip. Then $(\tS,\tm)$ is pullback-perfectible. Furthermore
the strings of $\tm_\comp$ lie on exactly the same rows as the
initial string of $\tm$.
\end{proposition}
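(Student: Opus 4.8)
The strategy is to transport the proof of the forward analogue, Proposition~\ref{L:maximalinterfere}, through the forward--reverse dictionary established above. First I would check that interference is well-posed: reasonableness of $(\tS,\tm)$ is Proposition~\ref{L:RSmreasonable}, and non-contiguity of a final pair with $\tS$ reverse maximal is the non-contiguity proposition proved just above, so the sets $\tm'$ and $\tm^-$ of \eqref{eqrevmp}--\eqref{E:tm-def} are defined and interference makes sense. Under the dictionary (which swaps $\la\leftrightarrow\mu$, ``column to the right''$\leftrightarrow$``column to the left'', ``row above''$\leftrightarrow$``row below'', ``maximal''$\leftrightarrow$``reverse maximal'', and ``lower augmentable corner''$\leftrightarrow$``lower reverse augmentable corner''), the reverse analogue of Remark~\ref{R:interference} reads: $(\tS,\tm)$ is interfering precisely when there is a modified column $c$ of $\tS$ such that $c^+$ is the leftmost negatively modified column of the initial string $s$ of $\tm$ and $\cs(\eta)_c=\cs(\eta)_{c^+}+1$.

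Next I would mirror the geometric core of the forward argument. Writing $\la$ for $(\tm^-)\#\nu$, consider $b=\Bot_c(\bdy\la)$ and $b^+=\Bot_{c^+}(\bdy\la)$. Since $\tm$ is a move and $c^+$ is negatively modified by $s$, one obtains a hook-length estimate which, together with reverse maximality (Proposition~\ref{P:Rmaxstrip}, forbidding a lower reverse augmentable corner), forces $b$ and $b^+$ into the same row; iterating over all modified columns $c'$ of $\tS$ with $\cs(\eta)_{c'}=\cs(\eta)_c$ shows they occupy a common set of rows. Then, exactly as in the forward proof, a counting argument against the horizontal-strip condition on $\rs(\mu)/\rs(\la)$ bounds the relevant row-length gaps, and the reverse analogue of Lemma~\ref{L:extendcompletion} (and of Lemma~\ref{L:comparelengths} via Remark~\ref{R:rowshift}) shows that the initial string of $\tm$, viewed as $\la$-removable, can be reverse-extended to the left by a fixed number of cells in each of its rows, none lying below a cell of $\tS$. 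Let $\tm_\comp$ be the union of these cells; by construction its strings lie on exactly the rows of the initial string of $\tm$, which is the ``Furthermore'' claim.

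Finally, with $\la=((\tm^-)\#\nu)\setminus\tm_\comp$, I would verify the pullback-perfectibility conditions: that $\mu/\la$ is a horizontal strip (the cells of $\tm_\comp$ lie below cells of $\la$), that $\rs(\mu)/\rs(\la)$ is a horizontal strip (since $\tm$ is a row move, $\rs$ is constant along it), and that $\cs(\mu)/\cs(\la)$ is a vertical strip, by the dual of the column computation in Proposition~\ref{L:etastrip}; likewise $\nu/\la$ is a row move to $\nu$ with the same initial string as $\tm^-$, which is the termwise reversal of the pushout-perfectibility check. The main obstacle is not conceptual but bookkeeping: one must apply the directional substitutions of the dictionary consistently, so that the reverse-completion and reverse-extension lemmas occupy the slots of their forward counterparts and, in particular, so that ``reverse maximal $\Rightarrow$ no reverse augmentable corner'' (Proposition~\ref{P:Rmaxstrip}) does the work that ``maximal $\Rightarrow$ no augmentable corner'' (Proposition~\ref{P:maxstrip}) did in the forward case. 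Since the dictionary is a genuine symmetry of every definition involved, no new idea beyond this translation is needed, and pullback-perfectibility of $(\tS,\tm)$ follows.
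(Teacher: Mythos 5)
Your proposal takes exactly the approach the paper intends: the authors state at the start of the reverse/pullback sections that results there follow by translating the forward/pushout proofs through the explicit dictionary, and you have correctly carried out that translation of Proposition~\ref{L:maximalinterfere}, slotting in the reverse lemmas (Proposition~\ref{L:RSmreasonable}, Proposition~\ref{P:Rmaxstrip}, the reverse analogues of Lemma~\ref{L:extendcompletion} and Lemma~\ref{L:comparelengths}) for their forward counterparts and concluding with the ``Furthermore'' claim. The only caveats are the bookkeeping ones you yourself flag --- most notably getting the left/right orientation correct when transporting Remark~\ref{R:interference}, where the dictionary swaps ``column to the right'' with ``column to the left,'' so $c^+$ in your translated remark should read $c^-$ --- which would be resolved mechanically in a full write-out.
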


\begin{cor} \label{C:maxpullrow}

Suppose $(\tS,\tm)$ is a final pair such that $\tS$ is a reverse
maximal strip and $\tm$ is a row move. Then $(\tS,\tm)$ is
compatible.
\end{cor}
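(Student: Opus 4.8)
The plan is to deduce Corollary~\ref{C:maxpullrow} directly from Proposition~\ref{L:Rmaximalinterfere} together with the two reasonableness/non-contiguity results that precede it, exactly as the parenthetical claim after the corollary's original statement anticipates. First I would invoke Proposition~\ref{L:RSmreasonable} to get that $(\tS,\tm)$ is reasonable, and the unnamed proposition immediately following it (the reverse analogue of Corollary~\ref{C:rowmaxnotcontig}, which asserts that a final pair with $\tS$ reverse maximal is non-contiguous) to get that $(\tS,\tm)$ is non-contiguous. These two facts are precisely what is needed to make the notion of interference well-defined for the pair, so the dichotomy ``non-interfering or interfering'' applies.

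Next I would split into the two cases of the definition of compatibility for a row-move final pair. If $(\tS,\tm)$ is non-interfering, then by definition (see \S\ref{SS:rowpullnointerfere}) it is compatible, and there is nothing further to prove. If $(\tS,\tm)$ is interfering, then Proposition~\ref{L:Rmaximalinterfere} applies — its hypotheses (reasonable, non-contiguous, interfering, $\tm$ a row move, $\tS$ reverse maximal) are exactly what we have assembled — and yields that $(\tS,\tm)$ is pullback-perfectible. By the definition in \S\ref{SS:rowpullinterfere}, a reasonable, non-contiguous, interfering pair that is pullback-perfectible is compatible. In either case $(\tS,\tm)$ is compatible, which is the assertion.

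I do not expect any genuine obstacle here: this corollary is purely a bookkeeping consequence, packaging Propositions~\ref{L:RSmreasonable}, \ref{L:Rmaximalinterfere}, and the reverse non-contiguity proposition against the definition of compatibility. The only thing to be careful about is citing the correct reverse-side statements rather than their forward analogues (e.g.\ using the reverse non-contiguity proposition rather than Corollary~\ref{C:rowmaxnotcontig}), and noting that the dictionary of \S\ref{sec:pullbacks} guarantees these reverse statements hold by the same arguments as in the forward case. A clean writeup is:

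\begin{proof}
By Proposition~\ref{L:RSmreasonable} the pair $(\tS,\tm)$ is reasonable, and by the reverse analogue of Corollary~\ref{C:rowmaxnotcontig} (the non-contiguity proposition for final pairs with $\tS$ reverse maximal) it is non-contiguous. Hence the notion of interference is defined for $(\tS,\tm)$. If $(\tS,\tm)$ is non-interfering it is compatible by definition (Subsection~\ref{SS:rowpullnointerfere}). If $(\tS,\tm)$ is interfering, then since $\tS$ is reverse maximal and $\tm$ is a row move, Proposition~\ref{L:Rmaximalinterfere} shows $(\tS,\tm)$ is pullback-perfectible, and therefore compatible by definition (Subsection~\ref{SS:rowpullinterfere}). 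In either case $(\tS,\tm)$ is compatible.
\end{proof}
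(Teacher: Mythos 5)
Your proof is correct and takes the same approach the paper intends: the paper gives no explicit argument for this corollary, treating it as an immediate consequence of Proposition~\ref{L:RSmreasonable}, the unnamed non-contiguity proposition, Proposition~\ref{L:Rmaximalinterfere}, and the definition of compatibility in \S\ref{sec:pullback}; your writeup just spells out that bookkeeping.
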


\subsection{Column-type pullback: normality}
Suppose that $(\tS,\tm)$ is a reasonable final pair with $\tm$ a
column move. If $s \subset \tm$ is contained inside $\tS$, we say
that \textit{$\tS$ matches $s$ above} if $r_{s,u}$ is a modified row
of $\tS$. Otherwise we say that $\tS$ \textit{continues above} $s$.

Let $s \subset \tm$ be the final string of the move $\tm$. We say
that $(\tS,\tm)$ is \textit{normal} if it is reasonable, and, in the
case that $s$ is continued above,  (a) none of the modified rows of
$\tS$ contains boxes of $s$ and (b) the negatively modified row of
$s$ is not a modified row of $S$.

\begin{proposition}\label{L:RSmreasonablenormal}
Let $\tS$ be a reverse maximal strip and $\tm$ a column move.  Then
$(\tS,\tm)$ is normal and non-contiguous.
\end{proposition}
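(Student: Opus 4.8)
\textbf{Proof proposal for Proposition~\ref{L:RSmreasonablenormal}.}

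The plan is to prove this by passing through the forward--reverse dictionary, which is exactly the strategy announced in \S\ref{sec:pullbacks}: reverse statements about reverse maximal strips translate, under the dictionary, into forward statements about maximal strips. Concretely, given a reverse maximal strip $\tS=\mu/\la$ inside $\mu$ and a column move $\tm$ to $\mu$, I would first record what the dictionary does to the data: a reverse $\mu$-augmentation move of $\tS$ corresponds to an ordinary $\la'$-augmentation move of a maximal strip $S'$, the roles of $\la$ and $\mu$ are swapped, ``reverse maximal'' becomes ``maximal'', $k+1$ becomes $k$, ``continues above'' is preserved, and the sign of $\changerow$ is flipped. Under this translation, the assertion ``$(\tS,\tm)$ is normal and non-contiguous'' becomes precisely ``$(S,m)$ is normal and non-contiguous'' for the corresponding forward pair $(S,m)$ with $S$ maximal and $m$ a column move --- which is exactly Propositions~\ref{L:Smreasonablenormal} (normality) and Corollary~\ref{C:maxcolnotcontig} (non-contiguity). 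So the proof reduces to verifying that the dictionary genuinely carries one situation to the other.

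The key steps, in order, would be: (i) fix $\tS=\mu/\la$ reverse maximal and $\tm$ a column move to $\mu$; by Proposition~\ref{L:RSmreasonable}, $(\tS,\tm)$ is already reasonable, so it remains to handle normality and non-contiguity; (ii) spell out the correspondence of removable/addable corners --- a lower (resp.\ upper) reverse augmentable corner of $\tS$ is, under the dictionary, a lower (resp.\ upper) augmentable corner of the forward maximal strip, and note that Proposition~\ref{P:Rmaxstrip} (the reverse analogue of Proposition~\ref{P:maxstrip}) says reverse maximality is equivalent to having no reverse augmentable corners, matching the forward characterization; (iii) translate the defining condition of normality: ``$\tS$ continues above the final string $s$ of $\tm$'' maps to ``$S$ continues above the initial string of $m$'', and conditions (a)+(b) --- no modified row of $\tS$ meets $s$, and the negatively modified row of $s$ is not a modified row --- map to the corresponding (a)+(b) for the forward pair, where they hold by Proposition~\ref{L:Smreasonablenormal}; (iv) translate contiguity: an appearing box for $(\tS,\tm)$ corresponds to a disappearing cell for the forward pair, and Corollary~\ref{C:maxcolnotcontig} rules this out. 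Assembling these gives the claim.

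The main obstacle is step (ii)--(iii): one must check carefully that the dictionary entries are mutually consistent --- in particular that the replacement $k \leftrightarrow k+1$ together with ``$\Bot_c(\gamma) \leftrightarrow$ cell below $\Bot_c(\gamma)$'' correctly transports the hook-length bookkeeping that underlies the notions of string type, modified row/column, and augmentable corner. Specifically, a $\la$-removable corner $a$ such that removing $a$ adds a box of hook length $k$ in row $r$ of $\bdy\la$ must correspond, on the forward side, to an addable corner whose addition removes a box of hook length $k$ from a modified row; the shift in which cell plays the role of the column-bottom is exactly what makes $k$ turn into $k+1$. Once this translation is pinned down --- essentially by rereading Definitions~\ref{D:string}, \ref{D:stringtypes} and the augmentable corner definitions with the dictionary substitutions applied --- the rest is a formal transport of Proposition~\ref{L:Smreasonablenormal} and Corollary~\ref{C:maxcolnotcontig}, with no new combinatorial content. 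I would therefore keep the written proof short: state that the result follows from Propositions~\ref{L:RSmreasonable} and \ref{L:Smreasonablenormal} and Corollary~\ref{C:maxcolnotcontig} via the forward--reverse dictionary, after verifying that the dictionary preserves the type of move, the notion of modified row/column, and the notions of normality and contiguity.
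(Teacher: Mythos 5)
Your proposal is correct and matches the paper's intended approach: the paper states Proposition~\ref{L:RSmreasonablenormal} without explicit proof, deferring to the forward--reverse dictionary of \S\ref{sec:pullbacks}, and your argument is precisely the dictionary transport of Proposition~\ref{L:Smreasonablenormal} (normality of a maximal strip with a column move) and Corollary~\ref{C:maxcolnotcontig} (non-contiguity of a maximal strip with any move) that the paper leaves implicit. You also correctly flag the one point requiring care --- that the $k \leftrightarrow k+1$ and bottom-cell shift in the dictionary transport the hook-length bookkeeping consistently --- which is indeed where the substance lies.
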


\subsection{Column-type pullback: interference}
Define $\tm'$ and $\tm^-$ by
\begin{align}
\tm' &= \bigcup \,\{\,\text{strings $s$}  \subset \tm \mid \text{$s$
and $\tS$ are
  not matched above}\} \\
  \tm^- &= \leftarrow_{\tS} \tm'.
\end{align}
If $\tm'\ne\emptyset$ we say that $(\tS,\tm)$ is
\defit{non-interfering} if $\rs(\eta) -\changerev_\rs(\tS) -
\changerev(\tm')$ is a partition and \defit{interfering} otherwise.
If $\tm'=\emptyset$ we say that $(\tS,\tm)$ is non-interfering if
$\rs(\mu)/\rs(\nu)$ is a horizontal strip and interfering otherwise
(observe that $\rs(\eta) -\changerev_\rs(\tS) -
\changerev(\tm')=\rs(\eta) -\changerev_\rs(\tS)=\rs(\nu)$ is always
a partition in that case). The latter case is referred to as
\defit{special interference}.

\subsection{Column-type pullback: non-interfering case}
\label{SS:colpullnointerfere}

Assume that $(\tS,\tm)$ is normal, non-contiguous and
non-interfering with $\tm$ a column move. In this case we declare
$(\tS,\tm)$ to be compatible. $\tm^-$ is a move to $\nu$ and we
define $\la= \tm^- \# \nu$. The pullback is defined by
\eqref{E:pullbackdef}.

\begin{proposition}\label{L:Retastripcol}
Suppose $(\tS,\tm)$ is normal, non-contiguous and non-interfering.
Then $\mu/\la$ is a strip.
\end{proposition}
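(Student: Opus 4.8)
The plan is to translate the forward-case Proposition~\ref{L:etastrip} across the forward--reverse dictionary rather than to redo the argument from scratch. Recall that $(\tS,\tm)$ is a final pair, $\tm$ a column move from $\mu$ to $\eta$ (i.e.\ $\mu = \tm \# \eta$, but here set up as $\tm = \eta/\mu$), and $\tS = \eta/\nu$ a strip. We are assuming $(\tS,\tm)$ reasonable, non-contiguous and non-interfering, with $\la := \tm^- \# \nu$, and we must show $\mu/\la$ is a strip: that is, $\mu/\la$ is a horizontal strip, $\rs(\mu)/\rs(\la)$ is a horizontal strip of some common size $r$, and $\cs(\mu)/\cs(\la)$ is a vertical strip of the same size. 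By Proposition~\ref{L:augmentcolumnrev} the column move $\tm$ has rank $1$ (in the reverse setting the relevant augmentation moves do), so the bookkeeping is the same as on the forward side with the roles of $k$ and $k+1$, of ``above'' and ``below'', and of ``left'' and ``right'' swapped as in the dictionary.

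First I would verify that $\tm^-$, defined by $\tm^- = \leftarrow_{\tS}(\tm')$ with $\tm'$ the union of strings of $\tm$ not matched above by $\tS$, satisfies all the conditions for a reverse move to $\nu$ except possibly the $k$-shape condition on $\tm^- \# \nu$; this is the reverse analogue of Lemma~\ref{L:pushbumpcol}, and combined with the non-interference hypothesis (which by definition says $\rs(\eta) - \changerev_\rs(\tS) - \changerev_\rs(\tm')$ is a partition, or in the $\tm'=\emptyset$ case that $\rs(\mu)/\rs(\nu)$ is a horizontal strip) it gives $\rs(\mu/\la) = \rs(\la) + \changerev_\rs(\tS) + \changerev_\rs(\tm')$ and that $\la \in \Ksh$. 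The reverse analogue of Lemma~\ref{L:continuebelowcol} then ensures that the cells of $\tm^-$ (hence the cells removed in passing from $\mu$ down to $\la$) do not lie below cells of $\tS$, which is exactly what is needed to make $\mu/\la$ a horizontal skew shape. From $\la = \tm^- \# \nu$ and $\tS = \eta/\nu$ one reads off $\cs(\mu)/\cs(\la) = \cs(\eta)/\cs(\nu)$, which is a vertical strip because $\tS$ is a strip; similarly the row-shape difference telescopes to match $\rs(\mu)/\rs(\la)$ against $\rs(\eta)/\rs(\nu)$ up to the correction by $\tm'$, yielding a horizontal strip of the correct common rank. Assembling these three facts shows $\mu/\la$ is a strip.

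The step I expect to be the main obstacle is controlling the row-shape inequality $\rs(\nu)_{R^-} \ge \rs(\mu)_{R} \ge \rs(\nu)_R$ (in the reverse indexing) at the boundary rows: the row just above the topmost positively modified row of $\tm$, and the negatively modified row of the initial string of $\tm$. On the forward side this is precisely the delicate part of the proof of Proposition~\ref{L:etastripcol}, where one splits into the cases $\tm'=\emptyset$ (special interference excluded by hypothesis), $\tm = \tm'$ with $\tm$ degenerate or non-degenerate, and $\tm \ne \tm'$, each time using normality to pin down $\rs(\la)_{r^-} = \rs(\mu)_{r^-}$ (or the row-$r$ analogue) and the $k$-shape property of $\mu$ (respectively $\eta$) to derive a contradiction from failure of the inequality. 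I would carry the same case analysis over verbatim through the dictionary, replacing $\lambda \rightsquigarrow \mu$, $\mu \rightsquigarrow \nu$, $\rs(\mu)/\rs(\nu)$ horizontal-strip hypotheses by their reverse counterparts, and invoking normality and Lemma~\ref{L:normal}'s reverse form where the forward proof invokes them; since the excerpt explicitly authorizes omitting proofs that are ``essentially the same in the row and column cases,'' the cleanest writeup simply records the dictionary translation and then says the argument of Proposition~\ref{L:etastripcol} applies, flagging only the boundary-row estimate as the point where one must check the hypotheses line up.
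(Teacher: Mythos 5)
Your proposal matches the paper's implicit strategy: Proposition~\ref{L:Retastripcol} is stated without proof precisely because, after the dictionary of \S\ref{sec:pullbacks} is set up, it is the translate of Proposition~\ref{L:etastripcol}, and you carry that translation through correctly --- including the identity $\cs(\mu)/\cs(\la)=\cs(\eta)/\cs(\nu)$, the use of the reverse analogues of Lemmata~\ref{L:pushbumpcol} and \ref{L:continuebelowcol}, and the identification of the boundary-row inequality and its case split (degenerate/non-degenerate, $\tm=\tm'$ versus $\tm\ne\tm'$, and $\tm'=\emptyset$) as the delicate step. One slip worth correcting: Proposition~\ref{L:augmentcolumnrev} concerns reverse \emph{augmentation} column moves of a strip; it does not imply that the column move $\tm$ being pulled back has rank $1$, and indeed the forward proof of Proposition~\ref{L:etastripcol} makes no rank-$1$ assumption on $m$. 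That parenthetical claim should be dropped; it plays no role in the rest of your argument.
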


\subsection{Column-type pullback: interfering case}
\label{SS:colpullinterfere} Assume that $(\tS,\tm)$ is normal,
non-contiguous and interfering with $\tm$ a column move. Say that
$(\tS,\tm)$ is \textit{pullback-perfectible} if there is a set of
cells $\tm_\comp$ inside $(\tm^-)\#\nu$ so that if $\la =
((\tm^-)\#\nu) \setminus \tm_\comp$ then $\mu/\la$ is a strip and
$\nu/\la$ is a row move to $\nu$ with the same initial string as
$\tm^-$. In the case that $(\tS,\tm)$ is pullback-perfectible, we
declare that $(\tS,\tm)$ is compatible and use the above $\la$ to
define the pullback via \eqref{E:pullbackdef}.

\begin{proposition} Suppose
$(\tS,\tm)$ is reasonable, normal, non-contiguous and interfering
with $\tm$ a column move and $\tS$ a reverse maximal strip. Then
$(\tS,\tm)$ is pullback-perfectible. Furthermore $\tm_\comp$
consists of a single string that lies on the same columns as the
initial string of $\tm$.
\end{proposition}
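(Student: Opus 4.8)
The plan is to prove this as the reverse analogue of Proposition~\ref{L:maximalinterferecol}, carrying that proof across the forward--reverse dictionary. Two structural facts make the translation legitimate and should be invoked at the outset. First, by Proposition~\ref{L:RSmreasonablenormal} a final pair whose strip $\tS$ is reverse maximal is automatically normal and non-contiguous, so it genuinely makes sense to speak of interference of $(\tS,\tm)$ (these properties are merely recalled among the hypotheses). Second, by Proposition~\ref{P:Rmaxstrip}, ``$\tS$ reverse maximal'' is equivalent to ``$\tS$ has no reverse augmentable corners'', and this is exactly the form in which maximality of $S$ is used in the forward argument: wherever the proof of Proposition~\ref{L:maximalinterferecol} derives a contradiction from an augmentable corner of a maximal strip, the reverse proof derives the same contradiction from a reverse augmentable corner.

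First I would dichotomize on whether $\tm^-$ is empty, matching the two halves of Proposition~\ref{L:maximalinterferecol}. If $\tm^-\neq\emptyset$, then $\rs(\eta)-\changerev_\rs(\tS)-\changerev_\rs(\tm')$ fails to be a partition; the row--shape bookkeeping of the forward proof, translated through the dictionary, forces the negatively modified row $r^-$ of the initial string $s$ of $\tm$ to satisfy a relation of the form $\rs(\mu)_{r^-}=\rs(\eta)_{r}$ (here the reverse maximality of $\tS$ rules out the alternative, which would produce a reverse augmentable corner), and then the reverse analogue of Lemma~\ref{L:everywhere}(3) places a removable corner of $\la$ directly below the first cell of $s$. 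Since $s$ is removable and the reverse analogue of Lemma~\ref{L:stripcomparelengths} applies, one obtains a removable corner below every cell of $\tm$, and $\tm_\comp$ is then assembled as a single column--type string exactly as in the reverse analogue of Lemma~\ref{L:columncompletionaugment}. If $\tm^-=\emptyset$ we are in the special interference case: $\tm=s$ has rank $1$, and the reverse version of Lemma~\ref{L:specialint} pins down the negatively modified row $r^-$ of $\tm$, with the row $r$ a modified row of $\tS$ and $\rs(\mu)_{r^-}=\rs(\eta)_{r^-}=\rs(\eta)_r$ by normality; again the reverse analogue of Lemma~\ref{L:everywhere}(3) supplies the removable corner below the first cell of $s$, and $\tm_\comp$ is built as before.

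Finally, setting $\la=((\tm^-)\#\nu)\setminus\tm_\comp$, I would verify that $\mu/\la$ is a strip: the cells of $\tm_\comp$ lie below cells of $\tm$, which makes $\mu/\la$ a horizontal strip; $\cs(\mu)/\cs(\la)=\cs(\eta)/\cs(\nu)$ is a vertical strip by construction; and $\rs(\mu)/\rs(\la)$ is a horizontal strip by the same estimate as in Proposition~\ref{L:maximalinterferecol}, with the positively modified row of $\tm_\comp$ now one row below the relevant row rather than above it. Uniqueness of $\tm_\comp$, and the assertion that it is a single string occupying the columns of the initial string of $\tm$, are forced by the fact that $\rs(\eta)/\rs(\nu)$ is a horizontal strip together with the translate property of the strings of $\tm$. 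The hard part will be the careful transcription of the hook-length inequalities through the dictionary, where ``above/below'', ``left/right'', and $k\leftrightarrow k+1$ all flip simultaneously; the one genuine content point to be checked is that reverse maximality substitutes cleanly for maximality at every step where the forward proof contradicts the existence of an augmentable corner, which is guaranteed by Proposition~\ref{P:Rmaxstrip}.
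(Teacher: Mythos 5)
Your proposal is correct and follows exactly the route the paper intends: the pullback section deliberately omits proofs, relying on the forward--reverse dictionary, and you have carried out precisely this translation of Proposition~\ref{L:maximalinterferecol}, correctly identifying Propositions~\ref{L:RSmreasonablenormal} and \ref{P:Rmaxstrip} as the reverse substitutes for normality/non-contiguity and maximality, and correctly splitting on $\tm^-$ empty versus not (mirroring the $m^+$ dichotomy) with the reverse forms of Lemmata~\ref{L:everywhere}(3), \ref{L:stripcomparelengths}, \ref{L:specialint} and \ref{L:columncompletionaugment} supplying the removable corners and the single-string structure of $\tm_\comp$.
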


\begin{cor} \label{C:maxpullcol}
Suppose $(\tS,\tm)$ is a final pair with $\tS$ a reverse
maximal strip and $\tm$ a column move. Then
$(\tS,\tm)$ is compatible.
\end{cor}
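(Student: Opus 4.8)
The plan is to obtain this corollary as a formal consequence of the two immediately preceding results, in exact parallel with the forward situation (where compatibility of $(S,m)$ for a maximal strip $S$ is assembled from Propositions~\ref{L:Smreasonablenormal} and~\ref{L:maximalinterferecol}) and with the row-move case Corollary~\ref{C:maxpullrow}. No new combinatorics is needed; the proof is a short bookkeeping argument unwinding the definition of compatibility for column-type pullbacks.

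First I would apply Proposition~\ref{L:RSmreasonablenormal}: since $\tS$ is reverse maximal and $\tm$ is a column move, the pair $(\tS,\tm)$ is normal and non-contiguous. By the definition of normality, a normal pair is in particular reasonable, so $(\tS,\tm)$ is reasonable, normal, and non-contiguous. This is exactly the package of hypotheses under which the sets $\tm'$ and $\tm^-$, and hence the notion of interference of $(\tS,\tm)$, are defined. I would then split into two cases. If $(\tS,\tm)$ is non-interfering, then by the definition of compatibility for column-type pullbacks in the non-interfering case (Subsection~\ref{SS:colpullnointerfere}), $(\tS,\tm)$ is compatible. If $(\tS,\tm)$ is interfering, then I would invoke the Proposition stated immediately before this corollary (the reverse analogue of Proposition~\ref{L:maximalinterferecol}): its hypotheses, namely that $(\tS,\tm)$ be reasonable, normal, non-contiguous, interfering, with $\tm$ a column move and $\tS$ a reverse maximal strip, are precisely what we have just established, so $(\tS,\tm)$ is pullback-perfectible, and hence compatible by the definition in the interfering case (Subsection~\ref{SS:colpullinterfere}). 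In both cases $(\tS,\tm)$ is compatible, which is the assertion.

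I expect no genuine obstacle in this argument: the technical content has already been carried out in Proposition~\ref{L:RSmreasonablenormal} and in the reverse interfering-column proposition (which in turn are deduced from their forward counterparts via the forward--reverse dictionary). The only point to be careful about is noting that ``normal'' subsumes ``reasonable'', so that the dichotomy ``non-interfering versus interfering'' is well-posed for $(\tS,\tm)$; once this is observed, the corollary follows immediately.
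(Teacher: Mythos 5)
Your argument is correct and is precisely how the corollary is meant to follow from the two preceding results: Proposition~\ref{L:RSmreasonablenormal} supplies normality (hence reasonableness) and non-contiguity, and the interfering case is handled by the proposition immediately before the corollary, with the non-interfering case being compatible by definition. The observation that normality subsumes reasonableness, so that interference is well-posed, is exactly the one small bookkeeping point one must note, and you do.
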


\section{Pullbacks sequences are all equivalent}
\label{sec:pullbacksequence}
Given a strip $\tilde S=\eta/\nu$ and a path ${\mathbf {q}}$ in the
$k$-shape poset from $\mu$ to $\eta$,
one can do a sequence of pullbacks and reverse augmentations to obtain
a reverse maximal strip $S=\mu/\la$ and a
path ${\mathbf {p}}$ in the $k$-shape poset
from $\la$ to $\nu$:
\begin{equation}
\begin{diagram}
\node{\la}
 \node{\nu}  \arrow{w,t}{{\mathbf {p}}}
\\
\node{\mu} \arrow{n,l}{S}
\node{\eta} \arrow{n,r}{\tilde S} \arrow{w,t}{{\mathbf {q}}}
\end{diagram}
\end{equation}
Such a process, which we will call a {\it pullback sequence},
can always be done since we have seen that
a reverse maximal strip is compatible with any move.
As in the forward case,
it does not matter which pullout sequence is used since they give
rise to
equivalent paths (and therefore to a unique reverse maximal strip $S$).
\begin{proposition} \label{P:commuaugmenrev}
 Let $\tS=\eta/\nu$ be strip and  ${\mathbf {q}}$ a
path in the $k$-shape poset from $\mu$ to $\eta$, and suppose that a
given pullback sequence gives rise to a reverse maximal strip $S=\mu/\la$
and a path ${\mathbf {p}}$ in the $k$-shape poset
from $\la$ to $\nu$. Then any other given pullback sequence gives
rise to the reverse maximal strip $S=\mu/\la$
and a path ${\mathbf {\tilde p}}$ equivalent to ${\mathbf {p}}$.
\end{proposition}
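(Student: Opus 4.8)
The proof is the mirror image of that of Proposition~\ref{P:commuaugmen}, carried out through the forward--reverse dictionary of Sections~\ref{sec:pullbacks}--\ref{sec:pullback}. The first step is to reduce to the case $\bq=m$ a single move, by induction on the number of moves in $\bq$: a pullback sequence consumes the moves of $\bq$ one at a time while maintaining a strip, so this reduction is verbatim. We may moreover assume that one of the two pullback sequences is the \emph{canonical} one---repeatedly reverse--maximize the current strip, then pull it back against the next move, ending with a reverse--maximization---whose existence is guaranteed, exactly as for Proposition~\ref{P:canonicalpushseq}, by Corollaries~\ref{C:maxpullrow} and \ref{C:maxpullcol} (a reverse maximal strip is compatible with any move) together with Proposition~\ref{P:maxstripuniquerev}.

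With $\bq=m$ fixed, let the canonical pullback sequence from $(\tS,m)$ produce $(S,\bp)$. If the second sequence begins with a reverse augmentation move $m'$ of $\tS$, fold $m'$ into the reverse--maximization using Proposition~\ref{P:maxstripuniquerev} and apply the induction hypothesis to the pair obtained by replacing $\tS$ with its reverse augmentation by $m'$ (which is strictly closer to reverse maximal); this disposes of that case. If $\tS$ is already reverse maximal there is nothing to prove. Otherwise the second sequence begins with a pullback, so $(\tS,m)$ is compatible; write $\pull(\tS,m)=(S_m,M)$ and let $x$ be the first move of the canonical reverse--maximization of $\tS$, a maximal reverse completion move. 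The crux is the reverse analogue of Proposition~\ref{L:commutingaugment}: the pair $(\tS,m)$ together with $x$ fits into a commuting cube whose left and right faces are pullbacks, whose front and back faces are reverse augmentations, and whose top face is an elementary equivalence $\tilde M\, x\equiv \tilde x\, M$. Granting this, the chain of equivalences produced exactly as in the forward proof---using Proposition~\ref{P:maxstripuniquerev} to commute reverse--maximizations past $M$, then the top face of the cube, then the induction hypothesis applied to the pair whose strip has been reverse augmented by $x$---yields $\bp\equiv\bp'$ and the equality of the output strips.

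Establishing the reverse analogue of Proposition~\ref{L:commutingaugment} is the main obstacle, and it rests on the reverse analogues of Lemmas~\ref{L:maxcomplete}, \ref{L:maxcompletecol}, \ref{L:maxcompleteequiv} and \ref{L:perfectionspecialinters}: that a maximal reverse completion move $x$ for a strip $\tS$ and the move $M$ output by $\pull(\tS,m)$ satisfy an elementary equivalence, and that the resulting augmented pair is pullback--compatible. Each of these is a case analysis (row versus column move, strings intersecting, interfering, or disjoint) identical in structure to its forward counterpart: the pullback compatibility criteria are the reverse forms of Propositions~\ref{P:pushcrit} and \ref{P:pushcritcol}, and the correspondence of equivalences in Propositions~\ref{P:reverseforwardrowmoves} and \ref{P:Rdiamond} shows that each forward case produces the correct reverse equivalence. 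The one point demanding genuine care is that interference and perfection have their row and column indices reflected in the reverse setting---contrast Lemma~\ref{L:interferereverse} and the remark after it with Lemma~\ref{L:interfere}---so in transcribing the interference analyses (and, for the reverse analogue of the full Proposition~\ref{P:pushseqequiv}, the $(-,0,+)$ bookkeeping of Section~\ref{sec:commutingcube}) one must consistently swap ``column to the right/left'' and ``row above/below'' per the dictionary; charge conservation \eqref{E:charge} then holds automatically, since every face of the cube is a mixed or column equivalence or is assembled from row equivalences and perfections of matching size.
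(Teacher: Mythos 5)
Your proof is correct and matches the paper's intended approach: the paper states Proposition~\ref{P:commuaugmenrev} without proof, deferring to the forward--reverse dictionary established in Sections~\ref{sec:pullbacks}--\ref{sec:pullback}, and your argument carries out exactly that translation of the proof of Proposition~\ref{P:commuaugmen}, correctly identifying the reverse analogues of Proposition~\ref{L:commutingaugment} and of Lemmas~\ref{L:maxcomplete}, \ref{L:maxcompletecol}, \ref{L:maxcompleteequiv}, and \ref{L:perfectionspecialinters} as the load-bearing steps.
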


\section{Pullbacks of equivalent paths are equivalent}
\label{sec:pullbackequivalence}
 The next proposition tells us
that the pullbacks of equivalent paths produce equivalent paths.
\begin{proposition} \label{P:commucuberev}
 Let $\tS=\eta/\nu$ be a strip and and let ${\mathbf {q}}$ and
${\mathbf {q'}}$ be equivalent paths
in the $k$-shape poset from $\mu$ to $\eta$.  If the
 pullback sequence associated to $\tS$ and  ${\mathbf {q}}$
gives rise to a reverse maximal strip $S=\mu/\la$
and a path ${\mathbf {p}}$ in the $k$-shape poset
from $\la$ to $\nu$,  then the
 pullback sequence associated to $\tS$ and  ${\mathbf {q}'}$
gives rise to the same reverse maximal strip $S=\mu/\la$
and a path ${\mathbf {p}'}$ equivalent to ${\mathbf {p}}$.
\end{proposition}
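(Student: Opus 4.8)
The plan is to mirror exactly the strategy used in the forward case (Section \ref{sec:commutingcube}, proving Proposition \ref{P:pushseqequiv}), translating every ingredient through the forward–reverse dictionary established in Sections 8--10. First I would reduce to the case where the strip $\tS = \eta/\nu$ is \emph{reverse maximal}: by Proposition \ref{P:commuaugmenrev} (the reverse analogue of Proposition \ref{P:commuaugmen}), any two pullback sequences from the same pair produce the same reverse maximal strip and equivalent paths, so we may assume both pullback sequences begin by reverse-maximizing $\tS$; hence without loss of generality $\tS$ is already reverse maximal. Next, since the equivalence $\bq \equiv \bq'$ is generated by elementary equivalences (Proposition \ref{P:Rdiamond} on the reverse side, but really we just need that equivalences are generated by elementary ones), I would induct on the length of the paths and reduce to the case where $\bq = \tilde N M \equiv \tilde M N = \bq'$ is a single elementary equivalence in the $k$-shape poset ending at $\eta$.

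With this reduction in place, the core of the argument is the reverse analogue of the commuting cube. Given the elementary equivalence $\tilde N M \equiv \tilde M N$ ending at $\eta$ and the reverse maximal strip $\tS = \eta/\nu$, I would first apply pullbacks: $(\tS, N)$ and $(\tS, M)$ are compatible since $\tS$ is reverse maximal (Corollaries \ref{C:maxpullrow} and \ref{C:maxpullcol}), giving $\pull(\tS, N) = (S_N, n)$ and $\pull(\tS, M) = (S_M, m)$. This produces three faces of a cube meeting at $\eta$. The key technical statement to establish — the reverse analogue of Lemma \ref{L:topequiv} — is that the pair $(n, m)$ of pulled-back moves again defines an elementary equivalence; this follows by the same case analysis (row/row, row/column, column/column, with sub-cases according to the type of elementary equivalence and whether interference/perfection occurs), using the forward–reverse dictionary to transfer Lemmata \ref{L:inter}, \ref{L:MNinterfere}, and the reverse reasonableness/non-contiguity/normality statements (Propositions \ref{L:RSmreasonable}, \ref{L:RSmreasonablenormal}). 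Then I would verify that $(S_N, m)$ and $(S_M, n)$ are compatible and that their pullbacks land on a common initial shape $\la$ with a common strip $S = \mu/\la$, using the reverse pullback criteria (the analogues of Propositions \ref{P:pushcrit} and \ref{P:pushcritcol}) — here the bookkeeping is done by checking that the expected row/column shapes agree, which is the reverse version of the ``main claim'' \eqref{E:csvec} together with the three explicit commuting cubes of vectors in Section \ref{sec:commutingcube}. The degenerate cases $M = \emptyset$, $m = \emptyset$, $\tilde m = \emptyset$ are handled exactly as in the forward case, producing cubes with an extra reverse augmentation edge as in diagrams \eqref{DiagCube2}, \eqref{DiagCube4}, \eqref{DiagCube6}.

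Finally, having built the reverse commuting cube, I would conclude as in Subsection \ref{SS:Sectionmaxcomp}: the bottom face gives the elementary equivalence $\tilde n M \equiv \tilde m N$ of pulled-back moves (where $\tilde n, \tilde m$ are the images of $\tilde N, \tilde M$), and after reverse-maximizing the resulting strips, Proposition \ref{P:maxstripuniquerev} forces the final reverse maximal strip $S = \mu/\la$ to be the same along both pullback sequences, while the paths $\bp$ and $\bp'$ differ by the elementary equivalence coming from the bottom face composed with equivalences coming from the reverse maximization; hence $\bp \equiv \bp'$. The main obstacle I anticipate is the same one that dominates the forward proof: the exhaustive verification that, in every sub-case of every type of elementary equivalence, the pulled-back moves still satisfy the (rather delicate) reverse reasonableness, non-contiguity, normality, and interference conditions, and that the perfections are compatible — i.e.\ establishing the reverse analogue of Lemma \ref{L:topequiv} together with the row/column-shape bookkeeping. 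The saving grace is that, by the forward–reverse dictionary and Propositions \ref{P:reverseforwardrowmoves} and \ref{P:Rdiamond}, each such verification is literally the transpose-and-reverse of a verification already carried out in Sections \ref{sec:equivalence}, \ref{sec:rowpushout}, \ref{sec:columnpushout}, and \ref{sec:commutingcube}, so no genuinely new combinatorial phenomenon arises and the proof reduces to a careful translation.
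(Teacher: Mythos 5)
Your plan follows exactly the approach the paper intends. The paper states Proposition \ref{P:commucuberev} without an explicit proof, having set up the forward--reverse dictionary of \S\ref{sec:pullbacks}--\S\ref{sec:pullbackequivalence} precisely so that this and its companion results can be obtained by translating the forward arguments of \S\ref{sec:pushaug} and \S\ref{sec:commutingcube}. Your reductions (to $\tS$ reverse maximal via Proposition \ref{P:commuaugmenrev}, then by induction to a single elementary equivalence), the reverse commuting cube built from the three faces at the terminal vertex, the reverse analogue of Lemma \ref{L:topequiv} as the key technical step, the expected row/column shape bookkeeping via the reverse analogues of Propositions \ref{P:pushcrit} and \ref{P:pushcritcol}, and the degenerate cases with an extra (reverse) augmentation edge are all the correct ingredients, and Proposition \ref{P:reverseforwardrowmoves} together with the dictionary does guarantee the verifications transpose faithfully.

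One caution on the bookkeeping: with $\bq = \tN M$ and $\bq' = \tM N$ ending at $\eta$, the moves that land at $\eta$ (and hence are the first to be pulled back against $\tS = \eta/\nu$) are $\tN$ and $\tM$, not $N$ and $M$. So the first pullbacks should be $\pull(\tS,\tN)$ and $\pull(\tS,\tM)$, producing the analogues of the $\tn$ and $\tm$ edges of the paper's cube \eqref{E:cube}; the later pullbacks then pull back $M$ and $N$ from the intermediate strips, producing the $m$ and $n$ edges and the common inner shape $\la$. Your labels $N, M, n, m, \tilde n, \tilde m$ drift between these roles in several places (e.g.\ you write $\pull(\tS,N)$ and later describe the bottom face as $\tilde n M \equiv \tilde m N$, mixing pulled-back and original moves), which would need to be fixed in a written-out proof. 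The underlying cube structure you describe is nevertheless the right one, and modulo this relabeling the plan matches the paper's intended argument.
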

Propositions~\ref{P:commucuberev} and
Proposition~\ref{P:commuaugmenrev} provide an algorithm, which we
will call the {\it pullback algorithm}, that, given a strip
$\tS=\eta/\nu$ and a class of paths $[{\mathbf {q}}]$ in the
$k$-shape poset from $\mu$ to $\eta$, gives rise to a reverse
maximal strip $S=\mu/\la$ and a unique class of paths $[{\mathbf
  {p}}]$
in the $k$-shape poset from $\la$ to $\nu$:
\begin{equation}
\begin{diagram}
\node{\la}
 \node{\nu}  \arrow{w,t}{[{\mathbf {p}}]} \\
\node{\mu} \arrow{n,l}{S}
 \node{\eta}   \arrow{w,t}{[{\mathbf {q}}]} \arrow{n,r}{\tilde S}
\end{diagram}
\end{equation}

\section{Pullbacks are inverse to pushouts} \label{S:prooftheo}
\begin{prop}\label{P:pullpush} $~$
\begin{enumerate}
\item
Let $(S,m)$ be a compatible initial pair with $\push(S,m) = (\tS,\tm)$.
If $\tm$ is not empty
then $(\tS,\tm)$ is a
compatible final pair such that $\pull(\tS,\tm) = (S,m)$.
If $\tm$ is empty then $m$ is a reverse augmentation move on the strip
$\tS$.
\item If $\tm$ is an augmentation move on the strip $S$
such that $\tm * S= \tilde S$,
then
$(\tS,\tm)$ is a
compatible final pair such that $\pull(\tS,\tm) = (S,\emptyset)$.
\item
Let $(\tS,\tm)$ be a compatible final pair with $\pull(\tS,\tm) =
(S,m)$.
If $m$ is not empty then $(S,m)$ is a
compatible initial pair such that $\push(S,m) = (\tS,\tm)$.
If $m$ is empty then $\tm$ is an augmentation move on the strip $S$.
\item If $m$ is a reverse  augmentation move on the strip $\tilde S$
such that $m \# \tS= S$,
then  $(S,m)$ is a
compatible initial pair such that $\push(S,m) = (\tS,\emptyset)$.
\end{enumerate}
\end{prop}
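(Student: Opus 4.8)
The plan is to prove Proposition~\ref{P:pullpush} by reducing each of the four parts to the alternative ``expected shape'' descriptions of pushout and pullback. The key observation is that Proposition~\ref{P:pushcrit} (and its column analogue Proposition~\ref{P:pushcritcol}) characterize $\push(S,m)$ purely in terms of the data $(\cs(\eta)=\ecs(S,m), \text{$\eta/\mu$ is a translate-compatible move}, \nu\subset\eta)$, while the pullback definitions (Subsections~\ref{SS:rowpullnointerfere}--\ref{SS:colpullinterfere}) build $\la$ from $\eta$ by the inverse operation $\tm^- = \downarrow_{\tS}\tm'$ (resp. $\leftarrow_{\tS}\tm'$) followed by removal of $\tm_\comp$. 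The forward--reverse dictionary in Section~\ref{sec:pullbacks} is set up precisely so that $\ecs$ and $\per_m$ on the one side correspond to $\cs(\eta)-\changerev_\cs(\tS)-\changerev_\cs(\tm')$ and $\per_{S,m}$ on the other. So the heart of the argument is a bookkeeping check that these two descriptions are genuinely inverse bijections on compatible pairs.

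First I would prove part~(1). Given a compatible initial pair $(S=\mu/\la,m=\nu/\la)$ with $\push(S,m)=(\tS=\eta/\nu,\tm=\eta/\mu)$, I need to show $(\tS,\tm)$ is a compatible final pair with $\pull(\tS,\tm)=(S,m)$ when $\tm\ne\emptyset$. Reasonableness and non-contiguity of $(\tS,\tm)$ follow by rereading the proofs of reasonableness/non-contiguity in the pushout construction (Propositions~\ref{L:etastrip}, \ref{L:etastripcol}, and the arguments in Proposition~\ref{P:pushcrit}) with the roles of inner and outer shape swapped according to the dictionary; in particular a disappearing cell for $(S,m)$ is exactly an appearing box for $(\tS,\tm)$. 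Then I verify that $\tm^-$ computed for $(\tS,\tm)$ equals $m'$ (plus whatever was bumped) for $(S,m)$: the strings of $m$ matched below by $S$ are precisely the strings $s\subset\tm$ with $s\cap\tS\ne\emptyset$ that $\tS$ matches below, by Lemma~\ref{L:initialstrings} and Lemma~\ref{L:Smleftcolumns}, and the up-shift $\uparrow_S$ used in $m^+$ is undone by the down-shift $\downarrow_{\tS}$ used in $\tm^-$. For the column shape, $\cs(\eta)=\ecs(S,m)$ means $\cs(\eta)-\changerev_\cs(\tS)-\changerev_\cs(\tm')$ recovers $\cs(\mu)$ (up to the perfection correction), so $\mu/\la$ is the correct strip; the interfering case is handled by checking $m_\comp=\tm_\comp$, which follows from the uniqueness clauses (Proposition~\ref{P:uniquedecomp}, and the uniqueness of $m_\comp$/$\tm_\comp$). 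If $\tm=\emptyset$, then $\push$ produced $\eta=(m^+)*\mu$ with $m^+$ entirely absorbed; reading the diagram \eqref{E:pushdiag} with $\eta/\mu$ empty shows $\mu\subset\nu$, and one checks $m=\nu/\mu$ is a reverse augmentation move on $\tS=\eta/\nu$ directly from the definitions of reverse augmentation.

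Part~(2) is the easy case: an augmentation move $\tm$ on $S$ gives a commuting square with an empty top edge (the diagram right after the definition of augmentation move), so trivially $\ecs$-type data is preserved and $\pull(\tS,\tm)=(S,\emptyset)$ is immediate once one observes that the pullback of a pair whose outer move is an honest augmentation produces the empty initial move. Part~(3) is the mirror of part~(1): starting from a compatible final pair and applying $\pull$, I run the same dictionary translation in reverse, using Propositions~\ref{P:pushcrit}/\ref{P:pushcritcol} as the target characterization. Part~(4) mirrors part~(2). In all four parts the genuinely new content is just checking that the shift operators $\uparrow_S/\downarrow_{\tS}$ and $\rightarrow_S/\leftarrow_{\tS}$, the splitting into matched/unmatched strings, and the perfection vectors $\per_m/\per_{S,m}$ are mutually inverse; everything else is quoting earlier propositions.

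The main obstacle will be the interfering cases, specifically verifying that $m_\comp$ for $(S,m)$ coincides with $\tm_\comp$ for $(\tS,\tm)$ — i.e. that the ``extra'' strings added on the forward side are exactly the ``extra'' strings removed on the reverse side, sitting on the same rows/columns. This requires carefully combining Proposition~\ref{L:maximalinterfere} (strings of $m_\comp$ lie on the rows of the final string of $m$, no column shared with $S$) with the analogous Proposition~\ref{L:Rmaximalinterfere} (strings of $\tm_\comp$ lie on the rows of the initial string of $\tm$), and noting that ``final string of $m$'' translates to ``initial string of $\tm$'' under the dictionary. The special interference case for column moves (Lemma~\ref{L:specialint} and its reverse) needs a separate but parallel check. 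I expect no conceptual difficulty beyond disciplined application of the dictionary, but the case analysis — row/column, interfering/non-interfering, $\tm$ empty or not — is where the length of the argument lives.
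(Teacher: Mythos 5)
Your proposal is correct and takes exactly the approach the paper uses: the paper's own proof is a two-sentence remark that the non-empty cases follow from the expected row/column shape characterizations (Propositions~\ref{P:pushcrit} and \ref{P:pushcritcol} and their pullback analogues) and that the empty cases are immediate. You have simply unpacked the same argument — the forward--reverse dictionary, the inverse shift operators, and the matching of $m_\comp$ with $\tm_\comp$ — in more detail than the authors chose to record.
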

\begin{proof}
The non-empty cases follow from the alternative descriptions
of pushouts and its analogue for pullbacks via expected row and column shape.
The empty cases are immediate.
\end{proof}

We now prove Theorem~\ref{T:pushpullbij}. As already mentioned after
the statement of Theorem~\ref{T:pushpullbij}, it suffices to prove
the case where $S$ and $T$ are single strips.  That is, we need to
show that given a reverse maximal strip $S=\mu/\lambda$ and a class
of paths $[{\mathbf {p}}]$ from $\lambda$ to $\nu$, the pushout
algorithm gives rise to a maximal strip $\tilde S=\eta/\nu$ and the
class of paths $[{\mathbf
  {q}}]$ from $\mu$ to a $\eta$:
\begin{equation}
\begin{diagram}
\node{\la} \arrow{s,l}{S}
 \arrow{e,t}{[{\mathbf {p}}]} \node{\nu} \arrow{s,r}{\tilde S}\\
\node{\mu} \arrow{e,t}{[{\mathbf {q}}]} \node{\eta}
\end{diagram}
\end{equation}
if and only if given the maximal
strip $\tilde S=\eta/\nu$ and the class of paths $[{\mathbf
  {q}}]$
from $\mu$ to $\eta$,
the {pullback algorithm}
gives rise to the reverse maximal strip $S=\mu/\lambda$ and the class of paths
$[{\mathbf {p}}]$ from $\lambda$ to $\nu$:
\begin{equation}
\begin{diagram}
\node{\la}
 \node{\nu}   \arrow{w,t}{[{\mathbf {p}}]} \\
\node{\mu} \arrow{n,l}{S}  \node{\eta} \arrow{w,t}{[{\mathbf {q}}]}
\arrow{n,r}{\tilde S}
\end{diagram}
\end{equation}

Suppose we are given a reverse maximal
strip $S=\mu/\lambda$ and a class of paths
$[{\mathbf {p}}]$, and suppose that the pushout algorithm leads to
the maximal strip $\tilde S=\eta/\nu$ and the class of paths $[{\mathbf
  {q}}]$.  As we have seen, this implies that any pushout sequence
leads to
the maximal strip $\tilde S=\eta/\nu$ and the class of paths $[{\mathbf
  {q}}]$.  By Proposition~\ref{P:pullpush}, every pushout sequence can be
reverted to give a pullback sequence from the maximal strip $\tilde
S=\eta/\nu$ and the class of paths $[{\mathbf{q}}]$.  This ensures that there
is at least one pullback sequence from
the maximal strip $\tilde S=\eta/\nu$ and the class of paths $[{\mathbf
  {q}}]$ that leads to the reverse maximal
strip $S=\mu/\lambda$ and the class of paths $[{\mathbf {p}}]$. As
we have seen, this implies that the pullback algorithm always leads
to the reverse maximal strip $S=\mu/\lambda$ and the class of paths
$[{\mathbf {p}}]$.  Therefore the pullback of a pushout gives back
the initial pair. We can prove that the pushout of a pullback gives
back the final pair in a similar way. Note that for the bijection to
work, we need $S$ to be reverse maximal and $\tS$ to be maximal.
This is because the pushout algorithm produces a maximal strip while
the pullback algorithm yields a reverse maximal strip.

\begin{appendix}

\section{Tables of branching polynomials} \label{Appendix}

\noindent
We list here all the branching polynomials $\tilde b_{\mu \lambda}^{(k)}(t)$
for partitions of degree up to 6.

\bigskip

\noindent \underline{Degree 2:}

\begin{center}
\begin{tabular}{|c||c|c|}
\hline
$b_{\mu \lambda}^{(2)}$  & $1^2$ & 2 \\
\hline\hline
$1^2$& 1 & $t$  \\ \hline
\end{tabular}

\end{center}

\noindent \underline{Degree 3:}

\begin{center}
\begin{tabular}{|c||c|c|}
\hline
$b_{\mu \lambda}^{(2)}$  & $1^3$ & 21 \\
\hline\hline
$1^3$& 1 & $t^2$  \\ \hline
\end{tabular}
\qquad \begin{tabular}{|c||c|c|c|}
\hline
$b_{\mu \lambda}^{(3)}$ & $1^3$ & 21 & 3\\
\hline\hline
$1^3$& 1 & $t$ & \\ \hline
21&   & 1& $t$\\ \hline
\end{tabular}
\end{center}

\noindent \underline{Degree 4:}

\begin{center}
\begin{tabular}{|c||c|c|c|}
\hline
$b_{\mu \lambda}^{(2)}$  & $1^4$ & $21^2$ & $2^2$ \\
\hline\hline
$1^4$& 1 & $t^2+t^3$ & $t^4$  \\ \hline
\end{tabular}
\qquad \qquad
 \begin{tabular}{|c ||c|c|c|c| }
\hline
$b_{\mu \lambda}^{(3)}$ & $1^4$ &$21^2$ & $2^2$ & 31  \\
\hline
\hline
$1^4$ & 1 &  & $t^2$ &  \\ \hline
$21^2$ &  & 1 &  &   \\ \hline
$2^2$ &  &  & $1$ & $t$ \\ \hline
\end{tabular}

\bigskip

\begin{tabular}
{|c||c|c|c|c|c| }
\hline
{$b_{\mu \lambda}^{(4)}$} &
{$1^4$} & {$21^2$} &{$2^2$} & {$31$} & {$4$}
 \\
\hline
\hline
{$1^4$} & {1} & $t$ & & &
\\
\hline
{$21^2$} & & {1} & &$t$ &
\\
\hline
{$2^2$} & & & {1} & &
\\
\hline
{$31$} & & & & {1} & {$t$}
\\
\hline
\end{tabular}
\end{center}

\noindent \underline{Degree 5:}

\begin{center}
\begin{tabular}{|c||c|c|c|}
\hline
$b_{\mu \lambda}^{(2)}$  & $1^5$ & $21^3$ & $2^21$   \\
\hline\hline
$1^5$& 1 & $t^3+t^4$ & $t^6$  \\ \hline
\end{tabular}
\qquad \qquad
\begin{tabular}
{|c ||c|c|c|c|c| }
\hline
$b_{\mu \lambda}^{(3)}$ &
$1^5$ & $21^3$ & $2^21$ & $31^2$ & 32
\\
\hline
\hline
{$1^5$ }
& 1 & $t^2$ & $t^3$  &   &
\\
\hline
{$21^3$}
& & 1 & $t$ & $t^2$  &
\\
\hline
{$2^21$}
&  &  & $1$  & $t$  & $t^2$
\\
\hline
\end{tabular}
\qquad

\bigskip

\begin{tabular}
{|c||c|c|c|c|c|c|c| }
\hline
{$b_{\mu \lambda}^{(4)}$} &
{$1^5$} & {$21^3$} &{$2^21$} & {$31^2$} & {$32$} & {41}
 \\
\hline
\hline
{$1^5$} & {1} &  & $t^2$ & & &
\\
\hline
{$21^3$} & & {1} & &   & &
\\
\hline
{$2^21$} & & & {1} & & $t$ &
\\
\hline
{$31^2$} & & & &  {1} & &
\\
\hline
{$32$} & & & & & {1} & $t$
\\
\hline
\end{tabular}
\qquad
\begin{tabular}
{|c||c|c|c|c|c|c|c| }
\hline
{$b_{\mu \lambda}^{(5)}$} &
{$1^5$} & {$21^3$} &{$2^21$} & {$31^2$} & {$32$} & {41} & 5
 \\
\hline
\hline
{$1^5$} & {1} & $t$ & & & & &
\\
\hline
{$21^3$} & & {1} &  &$t$ & & &
\\
\hline
{$2^21$} & & & 1 &  & & &
\\
\hline
{$31^2$} & & & & 1 &  & $t$ &
\\
\hline
{$32$} & & & & & 1 &  &
\\
\hline
{$41$} & & & & & & 1 & $t$
\\
\hline
\end{tabular}
\end{center}

\bigskip

\noindent \underline{Degree 6:}

\begin{center}
\begin{tabular}{|c||c|c|c|c|}
\hline
$b_{\mu \lambda}^{(2)}$  & $1^6$ & $21^4$ & $2^21^2$ & $2^3$   \\
\hline\hline
$1^6$& 1 & $t^3+t^4+t^5$ & $t^6+t^7+t^8$ & $t^9$ \\ \hline
\end{tabular}

\bigskip

\begin{tabular}
{|c||c|c|c|c|c|c|c| }
\hline
{$b_{\mu \lambda}^{(3)}$} &
{$1^6$} & {$21^4$} &{$2^21^2$} & {$2^3$}  & {$31^3$} & 321 & $3^2$
 \\
\hline
\hline
{$1^6$} & {1} & $t^2$ & $t^4$ & & & &
\\
\hline
{$21^4$} & & {1} &  &$t^2$ & $t^2$ &  &
\\
\hline
{$2^21^2$} & & & 1 &  &  $t$ & $t^2$ &
\\
\hline
{$2^3$} & & & & 1 &  & $t^2$  & $t^3$
\\
\hline
\end{tabular}

\bigskip

\begin{tabular}
{|c||c|c|c|c|c|c|c|c|c|}
\hline
{$b_{\mu \lambda}^{(4)}$} &
{$1^6$} & {$21^4$} &{$2^21^2$} & {$2^3$} & {$31^3$} & {$321$} & $3^2$ & $41^2$ &$42$
 \\
\hline
\hline
{$1^6$} & {1} &  &  & $t^3$ &  & & & &
\\
\hline
{$21^4$} & & {1} &$t$  & & & &  & &
\\
\hline
{$2^21^2$} & & & {1} &  & &   &$t^2$ & &
\\
\hline
{$2^3$} & & & & {1} &  & $t$& & &
\\
\hline
{$31^3$} & & & & & 1 &  &  & &
\\
\hline
{$321$} & & & & &  &1 &  & $t$&
\\
\hline
{$3^3$} & & & & & &  & 1 &  &$t$
\\
\hline
\end{tabular}

\bigskip

\begin{tabular}
{|c||c|c|c|c|c|c|c|c|c|c| }
\hline
{$b_{\mu \lambda}^{(5)}$} &
{$1^6$} & {$21^4$} &{$2^21^2$} & {$2^3$} & {$31^3$} & {$321$} & $3^2$ & $41^2$ &$42$ &$51$
 \\
\hline
\hline
{$1^6$} & {1} &  & $t^2$ & & & & & & &
\\
\hline
{$21^4$} & & {1} &  & & & &  & & &
\\
\hline
{$2^21^2$} & & & {1} &  & &$t$  & & & &
\\
\hline
{$2^3$} & & & & {1} &  & & & & &
\\
\hline
{$31^3$} & & & & & 1 &  &  &  & &
\\
\hline
{$321$} & & & & &  &1 &  & & $t$ &
\\
\hline
{$3^3$} & & & & & &  & 1&  & &
\\
\hline
{$41^2$} & & & & & &  &  & 1& &
\\
\hline
{$42$} & & & & & &  & &  &$1$ &$t$
\\
\hline
\end{tabular}

\begin{tabular}
{|c||c|c|c|c|c|c|c|c|c|c|c| }
\hline
{$b_{\mu \lambda}^{(6)}$} &
{$1^6$} & {$21^4$} &{$2^21^2$} & {$2^3$} & {$31^3$} & {$321$} & $3^2$ & $41^2$ &$42$ &$51$ & 6
 \\
\hline
\hline
{$1^6$} & {1} & $t$ &  & & & & & & & &
\\
\hline
{$21^4$} & & {1} &  &  & $t$  & &  & & & &
\\
\hline
{$2^21^2$} & & & {1} &  & &  & & & & &
\\
\hline
{$2^3$} & & & & {1} &  & & & & & &
\\
\hline
{$31^3$} & & & & & 1 &  &  & $t$ & & &
\\
\hline
{$321$} & & & & &  &1 &  & &  & &
\\
\hline
{$3^3$} & & & & & &  & 1&  & & &
\\
\hline
{$41^2$} & & & & & &  &  & 1& & $t$ &
\\
\hline
{$42$} & & & & & &  & &  &$1$ &  &
\\
\hline
{$51$} & & & & & &  & &  &   & 1 & $t$
\\
\hline
\end{tabular}

\end{center}

\end{appendix}


\begin{thebibliography}{LLMS}

\bibitem{AB} S. Assaf and S. Billey, private communication.

\bibitem{B} J.~Blasiak, Cyclage, catabolism, and the affine Hecke
algebra, preprint arXiv:1001.1569.

\bibitem{Chen} L.-C.~Chen, Ph. D. Thesis, U. C. Berkeley, 2010.

\bibitem{H}
M. Haiman, \emph{Hilbert schemes, polygraphs, and the Macdonald
positivity conjecture}, J. Am. Math. Soc. {\bf{14}} (2001), 941--1006.

\bibitem{Lam:affstan} T. Lam,
\emph{Affine Stanley symmetric functions},
Amer. J. Math.  {\bf 128}  (2006),  no. 6, 1553--1586.

\bibitem{Lam:kSchur} T. Lam,
\emph{Schubert polynomials for the affine Grassmannian},
J. Amer. Math. Soc.  {\bf 21}  (2008),  no. 1, 259--281.

\bibitem{Lam:note} T. Lam,
\emph{Affine Schubert classes, Schur positivity, and combinatorial Hopf
algebras}
preprint, 2009, {\tt arXiv:0906.0385}.

\bibitem{LLMS} T. Lam, L. Lapointe, J. Morse, and M. Shimozono,
\emph{Affine insertion and Pieri rules for the affine Grassmannian},
to appear in Memoirs of the AMS.

\bibitem{LLM} L. Lapointe, A. Lascoux, and J. Morse,
\emph{Tableau atoms and a new Macdonald positivity conjecture},
Duke Math. J.  {\bf 116}  (2003),  no. 1, 103--146.

\bibitem{LM:filtr} L. Lapointe and J. Morse,
\emph{Schur function analogs for a filtration of the symmetric
function space},
J. Combin. Theory Ser. A  {\bf 101}  (2003),  no. 2, 191--224.


\bibitem{LM:cores} L. Lapointe and J. Morse,
\emph{Tableaux on $k+1$-cores, reduced
words for  affine permutations, and $k$-Schur expansions},   J. Comb. Th. A
{\bf 112} (2005), 44--81.

\bibitem{LM:ktab}
L. Lapointe and J. Morse,
\emph{A $k$-tableau characterization of $k$-Schur functions},
Adv. Math.  {\bf 213}  (2007),  no. 1, 183--204.

\bibitem{LM:QC}
L. Lapointe and J. Morse,
\emph{Quantum cohomology and the $k$-Schur basis},
Trans. Amer. Math. Soc. {\bf 360} (2008), 2021--2040.

\bibitem{LP}
L. Lapointe and M.E. Pinto, in preparation.

\bibitem{M} I.~G. Macdonald, {Symmetric Functions and Hall
Polynomials}, 2nd edition, Clarendon Press, Oxford, 1995.

\end{thebibliography}
\end{document}